\setlist[itemize]{topsep=0pt,partopsep=0pt,itemsep=0pt,parsep=0pt}
\setlist[itemize,1]{label={\small\textbullet}}
\setlist[itemize,2]{label={\tiny\textbullet}}
\setlist[itemize,3]{label=$\cdot$}
\setlist[enumerate]{topsep=0pt,partopsep=0pt,itemsep=0pt,parsep=0pt}
\setlist[enumerate,1]{label=\roman*)}
\setlist[enumerate,2]{label=\alph*)}
\setlist[enumerate,3]{label=\arabic*)}
\title{Two Disjoint Alternating Paths in Bipartite Graphs}
\date{}
\DeclareRobustCommand{\authorthing}{
	\begin{center}
		Archontia C.\@ Giannopoulou -- National and Kapodistrian University of Athens\\
		\href{mailto:archontia.giannopoulou@gmail.com}{archontia.giannopoulou@gmail.com}\\
		Sebastian Wiederrecht\thanks{Supported by the ANR project ESIGMA (ANR-17-CE23-0010) and by the ERC consolidator grant DISTRUCT-648527.} -- LIRMM, University of Montpellier\\
		\href{mailto:sebastian.wiederrecht@gmail.com}{sebastian.wiederrecht@gmail.com}
\end{center}}
\author{\authorthing}
\begin{document}
\maketitle

\begin{abstract}
A bipartite graph $B$ is called a brace if it is connected and every matching of size at most two in $B$ is contained in some perfect matching of $B$ and A cycle $C$ in $B$ is called conformal if $B-\V{C}$ has a perfect matching.
We show that there do not exist two disjoint alternating paths that form a cross over a conformal cycle $C$ in a brace $B$ if and only if one can reduce $B$, by an application of a matching theoretic analogue of small clique sums, to a planar brace $H$ in which $C$ bounds a face.
We then utilise this result and provide a polynomial time algorithm which solves the $2$-linkage problem for alternating paths in bipartite graphs with perfect matchings.

\noindent \textbf{Keywords}: Perfect Matching, Disjoint Paths, Matching Minor, Bipartite Graphs, Planar Graphs
\end{abstract}

\section{Introduction}\label{sec:introduction}

Two of the key aspects in graph minor theory are those of embeddability into surfaces and linkedness, where the latter is the ability to link up a number of prescribed pairs of vertices by pairwise vertex disjoint paths.
In their series of papers on graph minors, Robertson and Seymour proved a general structure theorem that explains how graphs excluding a fixed minor $H$ can be constructed with a surprisingly comprehensive list of ingredients (see \cite{lovasz2006graph} for a summary).
Two of these ingredients are graphs embeddable in surfaces and the, so-called, clique sums.
For a non-negative integer $k$, a graph $G$ is a \emph{$k$-clique sum} of two graphs $G_1$ and $G_2$, if both $G_i$ contain a clique $Q_i$ of size $k$ and $G$ can be obtained from $G_1$ and $G_2$ by identifying $Q_1$ and $Q_2$ into a single clique $Q$ and possibly
forgetting some of the edges of $Q$.
Especially the concept of clique sums has proven itself in the search for exact characterisations of $H$-minor free graphs, at least for relatively small graphs $H$.
By adding graphs on surfaces, or planar graphs in this case, as the next ingredient, one reaches the notion of \emph{flatness} which plays a key role in a weaker version of the structure theorem for graphs excluding large complete graphs as minors known as the \emph{Flat Wall Theorem} \cite{robertson1995graph}.
In essence, a graph $H$ can be said to be \emph{flat} in some graph $G$, if $G$ can be constructed via (relatively) small clique sums from smaller graphs $G_1,\dots,G_n$, $n\geq 1$, such that the graph $G_i$ with $H\subseteq G_i$ is planar.
Please note that this is not the actual definition but rather a simplification to illustrate the flavour of the idea behind flatness.

An exceptional case occurs if we restrict $H$ to be some cycle $C$.
We say that $C$ is \emph{$C$-flat} in a graph $G$ if $G$ can be constructed via $k$-clique sums, where $k\leq 3$, from graphs $G_1,\dots,G_n$, where the graph $G_i$ that contains $C$ is planar, and $C$ bounds a face of $G_i$.
In the case of flat cycles, one immediately finds a flavour of linkedness.
A cycle $C$ is said to have a $C$-cross in $G$, if there exist distinct vertices $s_1,s_2,t_1,t_2$ that occur on $C$ in the order listed, and paths $P_1$ and $P_2$ such that:
Both $P_i$ are internally vertex disjoint from $C$, $P_1$ and $P_2$ are vertex disjoint and each $P_i$ has $s_i$ and $t_i$ as its endpoints.
A classic theorem, to which we will refer as the \emph{Two Paths Theorem}, links the notions of $C$-flatness and $C$-crosses.
The theorem has been obtained in many different forms with various techniques by a plethora of authors over time \cite{jung1970verallgemeinerung,seymour1980disjoint,shiloach1980polynomial,thomassen19802,robertson1990graph}.

\begin{theorem}[\emph{Two Paths Theorem}]\label{thm:twopaths}
	A cycle $C$ in a graph $G$ has \textbf{no} $C$-cross in $G$ if and only if it is $C$-flat in $G$.
\end{theorem}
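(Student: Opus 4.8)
The plan is to prove the two implications separately, treating the easier direction, ``$C$-flat $\Rightarrow$ no $C$-cross'', first and then the converse by induction on $|V(G)|+|E(G)|$. \textbf{From flatness to the absence of a cross.} Fix a decomposition of $G$ into atoms $G_1,\dots,G_n$ via clique sums of order at most $3$ with $C\subseteq G_i$, $G_i$ planar, and $C$ bounding a face of $G_i$. Since passing to the graph obtained by forgetting no edges in the clique sums only adds edges and preserves the hypotheses, I may assume no edge is forgotten. I then induct on $n$. If $n=1$, embed $G=G_i$ so that $C$ bounds the outer face; then $G-E(C)$ lies in the closed disc $D$ with $\partial D=C$, a hypothetical cross $P_1,P_2$ lies in $D$, and the arc $P_1$ splits $D$ into two sub-discs carrying $s_2$ and $t_2$ respectively, so $P_2$ must meet $P_1$ in a vertex, contradicting disjointness. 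If $n>1$, pick an atom $G_j$ ($j\ne i$) glued along a clique $Q$, $|Q|\le 3$; it induces a separation of $G$ whose $C$-side $G'$ is the union of the remaining atoms. A maximal subpath of $P_\ell$ with all internal vertices in $V(G_j)\setminus Q$ has both ends in $Q$; since $P_1,P_2$ are disjoint and $|Q|\le 3$, one of them --- say $P_2$ --- has no such subpath and hence lies in $G'$, while every such subpath of $P_1$ can be replaced by a single edge of $Q$. This produces a $C$-cross in $G'$, contradicting induction. (The bound $|Q|\le 3$ is exactly what is used: a $4$-separator could absorb one crossing excursion of each path.)

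\textbf{From the absence of a cross to flatness --- reductions.} If $G$ is disconnected, or has a separation $(A,B)$ of order at most $3$ with $C\subseteq G[A]$ and $V(B)\setminus V(A)\ne\emptyset$, chosen minimally so that $G[B\setminus A]$ is connected and every vertex of $A\cap B$ has a neighbour in $B\setminus A$, then form $G_A,G_B$ by adding a clique on $A\cap B$ to $G[A]$, resp.\ to $G[B]$, so that $G$ is their clique sum of order $\le 3$ along $A\cap B$. The re-routing argument above, now replacing clique edges used by a would-be $C$-cross in $G_A$ with internally disjoint paths through $G[B]$ (whose existence is exactly what the minimal choice of separation buys), shows $G_A$ has no $C$-cross; by induction $G_A$ is $C$-flat, and pre-composing its decomposition with the clique sum to $G_B$ exhibits $G$ as $C$-flat. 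So I may assume $G$ is connected (hence $2$-connected) and \emph{$C$-reduced}: no separation of order at most $3$ cuts a nonempty vertex set off from $C$.

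\textbf{From the absence of a cross to flatness --- the reduced case.} Now I claim a $C$-reduced $G$ without a $C$-cross is planar with $C$ facial, so that $C$ is $C$-flat with $n=1$. Work with the bridges of $C$ (the $C$-components together with their feet, and the chords). Being $C$-reduced forbids two distinct bridges from sharing three feet (else the nonempty interior of one, attached only at those three feet, would be cut off from $C$ by a $3$-separator) and forbids a bridge from having a $\le 3$-separator isolating a piece from its feet. No two bridges interlace: feet $a,b$ of $B_1$ and $c,d$ of $B_2$ in the cyclic order $a,c,b,d$ on $C$ would give a path through $B_1$ from $a$ to $b$ and a path through $B_2$ from $c$ to $d$, two disjoint paths internally disjoint from $C$ with interleaved ends, i.e.\ a $C$-cross. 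Granting, finally, that each bridge can be drawn inside $D$ with its feet on $\partial D$, the classical theory of bridges assembles all of them --- pairwise non-interlacing, each disc-drawable --- into a single planar embedding with $C$ facial.

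\textbf{Expected main obstacle.} The decisive point is the single-bridge statement invoked last: if $C\cup B$ has no $C$-cross (and $B$ has no $\le 3$-separator isolating a piece of its feet) then $B$ is disc-drawable with feet on $\partial D$. I would prove it either by taking a drawing of $B$ in $D$ with the fewest crossings, supposing it positive, and extracting from the four corners of a crossing four internally disjoint arcs to $C$ that recombine into a $C$-cross; or by adjoining an apex adjacent to all of $V(C)$, applying Kuratowski, and reading a $C$-cross off the resulting $K_5$- or $K_{3,3}$-subdivision, which must use the apex since $C\cup B$ is itself $C$-reduced. The remaining technical care concerns the case $|Q|=3$ (resp.\ order-$3$ separations in the reduction step), where a path may want to traverse two of the three separator vertices and one must check the corresponding internally disjoint connections on the cut-off side, together with the bookkeeping that every clique sum produced stays of order at most $3$ and that $C$ remains inside the designated planar atom throughout.
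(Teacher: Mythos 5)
The paper itself states this theorem without proof, citing Jung, Seymour, Shiloach, Thomassen and Robertson--Seymour, so there is no in-paper argument to compare against; what follows assesses your sketch on its own terms. The outline is the classical reduction-plus-bridges scheme. The easy direction is fine: the disc argument for $n=1$ is correct, and for $n>1$ the observation that the two disjoint cross paths can make at most one excursion into a non-$C$ atom through a clique of size at most three, replaceable by one clique edge, is the right one (you should say explicitly that a path traversing two clique edges consecutively counts as a single excursion). The reduction of the converse to the $C$-reduced case is also essentially sound, modulo the same kind of bookkeeping about how a would-be cross in $G_A$ can touch the added triangle.

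The substantive gaps are all in the $C$-reduced case, and you have located the hard part without closing it. Two concrete problems. First, your justification that $C$-reducedness forbids two bridges from sharing three feet is wrong as written: the phrase ``attached only at those three feet'' presumes the bridge has exactly three feet, but in a $C$-reduced graph every non-chord bridge has at least four feet, so the shared triple is not a separator. The conclusion is still true, but the correct argument produces a cross directly (if $B_1,B_2$ share $a,b,c$ and $B_1$ has a further foot $d_1$ on the $a$--$b$ arc not through $c$, then a $d_1$--$c$ path inside $B_1$ and an $a$--$b$ path inside $B_2$ interleave on $C$). Second, and more seriously, you cannot cite ``the classical theory of bridges'' as a black box to assemble the bridges into a planar embedding with $C$ facial: the overlap-graph criterion you are implicitly appealing to characterises planarity by quantifying over all cycles of $G$, whereas here you only control the bridges of the one cycle $C$, so the embedding has to be built by hand from the non-overlapping order of bridges around $C$ together with the single-bridge disc-embedding lemma. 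That lemma --- which you yourself flag as the ``expected main obstacle'' --- is exactly where Kuratowski's theorem enters, and neither of the two routes you propose (minimum-crossing drawing, or apex plus Kuratowski) is close to complete as written; each requires a genuine case analysis of how a $K_5$- or $K_{3,3}$-subdivision can sit relative to $C$ in a $C$-reduced graph. In short, this is a correct high-level plan, but the step you have compressed to one sentence is the theorem.
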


By considering Menger's Theorem, one can get the impression that in terms of connectivity via paths, graphs and digraphs are not that different.
Indeed, if $D$ is an acyclic digraph, i.\@e.\@ a digraph without directed cycles, then the directed version of the $k$-Linkage Problem can be solved in polynomial time \cite{fortune1980directed}.
In the case $k=2$ there even exists a directed analogue for the Two Paths Theorem \cite{thomassen19852}.

However, once one allows for the existence of directed cycles, even the Directed $2$-Disjoint Paths Problem becomes NP-hard \cite{fortune1980directed}.
This hardness result has far-reaching consequences for structural digraph theory.
Deciding whether a given digraph $D$ contains a subdivision of a digraph $H$ can be NP-complete even for relatively small digraphs \cite{bang2015finding}, while the complexity of deciding whether $D$ contains $H$ as a \emph{butterfly minor}\footnote{A notion of minors suited for digraphs.} remains unknown for almost all digraphs $H$.
Similarly, attempts of generalising the Flat Wall Theorem to the world of digraphs \cite{giannopoulou2020directed} obtain weaker structural
insight due to the lack of a Two Paths Theorem.

So can we hope for a generalisation of the Two Paths Theorem, at least in some sense, for the directed case?
In this paper we present a possible answer to this question in terms of a matching theoretic result, how exactly this relates to digraphs will be discussed in the \hyperref[sec:conclusion]{conclusion}.
For this let us introduce some definitions and the structural result that acts as the key tool for the results of this paper.

Let $G$ be a graph, a set $F\subseteq\E{G}$ of pairwise disjoint edges is called a \emph{matching}, and a vertex $v\in\V{G}$ is said to be \emph{covered} by $F$ if $F$ contains an edge that has $v$ as an endpoint.
The set of all vertices covered by $F$ is denoted by $\V{F}$.
A matching $M$ is \emph{perfect} if $\V{M}=\V{G}$ and an edge $e\in\E{G}$ is \emph{admissible} if there exists a perfect matching $M'$ of $G$ with $e\in M'$.
A graph $G$ is \emph{matching covered} if it is connected and all of its edges are admissible.

A set $X\subseteq\V{G}$ is \emph{conformal} if $G-X$ has a perfect matching.
If $M$ is a perfect matching and $\E{G-X}\cap M$ is a perfect matching of $G-X$, we call $X$ \emph{$M$-conformal}.
A subgraph $H$ of $G$ for which $\V{H}$ is ($M$-)conformal is called \emph{($M$-)conformal}.
If $v\in\V{G}$ is a vertex of degree exactly two, we call the process of contracting both edges incident with $v$ and removing all loops and parallel edges afterwards the \emph{bicontraction} of $v$.
A graph $H$ that can be obtained from a conformal subgraph of $G$ by repeatedly applying bicontractions is called a \emph{matching minor}.
A path $P$ is \emph{$M$-alternating} if there exists a set $S\subseteq\V{P}$ of endpoints of $P$ such that $P-S$ is a conformal subgraph of $G$ and we say that $P$ is \emph{alternating} if there exists a perfect matching $M$ of $G$ such that $P$ is $M$-alternating.
$P$ is $M$-conformal if $S=\emptyset$ and $P$ is \emph{internally $M$-conformal} if $S$ contains both endpoints of $P$.

Let $G$ be a graph and $X\subseteq\V{G}$.
We denote by $\Cut{X}$ the set of edges in $G$ with exactly one endpoint in $X$ and call $\Cut{X}$ the \emph{edge cut} around $X$ in $G$.
Let us denote by $\Perf{G}$ the set of all perfect matchings in $G$.
An edge cut $\Cut{X}$ is \emph{tight} if $\Abs{\Cut{X}\cap M}=1$ for all perfect matchings $M\in\Perf{G}$.
If $\Cut{X}$ is a tight cut and $\Abs{X}\geq 2$, it is \emph{non-trivial}.
Identifying the \emph{shore} $X$ of a non-trivial tight cut $\Cut{X}$ into a single vertex is called a \emph{tight cut contraction} and the resulting graph $G'$ can easily be seen to be matching covered again.
A bipartite matching covered graph without non-trivial tight cuts is called a \emph{brace}.
It follows from a famous result of Lov\'asz \cite{lovasz1987matching} that the braces of a bipartite matching covered graph $B$ are uniquely determined.

Similar to how every $2$-connected minor of a graph $G$ must be a minor of one of its blocks, every brace that is a matching minor of some bipartite matching covered graph $B$ be a matching minor of some brace of $B$.
One brace in particular, namely $K_{3,3}$, has been at the centre of attention because of its connections to $K_{3,3}$-free Orientations (see \cite{mccuaig2004polya} for an overview on the topic).

\paragraph{Notational Conventions}
Let us also fix some conventions for this paper.
Given integers $i,j\in\Z$ we denote by $[i,j]$ the set $\CondSet{z\in\Z}{1\leq z\leq j}$.
This means in particular that $[i,j]=\emptyset$ in case $j<i$.
Given two sets $X$ and $Y$ we denote by $X\Delta Y$ their \emph{symmetric difference} $\Brace{X\setminus Y}\cup Y\setminus X$.

Since the whole paper mainly deals with bipartite graphs it is convenient for us to always assume that any bipartite graph $B$ comes together with a bipartition of its vertex set into two stable sets which we call $V_1$ and $V_2$.
These two sets are called the \emph{colour classes} of $B$ and in our figures $V_1$ is usually depicted as a family of black vertices, while the vertices of $V_2$ are depicted as white.
We sometimes write $\Vi{i}{B}$ in case we want to emphasise that we are speaking of the vertices of colour $i\in [1,2]$ in a specific bipartite graph $B$.

Given a bipartite graph $B$ with a perfect matching we say that $B$ \emph{contains} $K_{3,3}$ if it has a matching minor isomorphic to $K_{3,3}$.
If $B$ does not contain $K_{3,3}$ we say that $B$ is \emph{$K_{3,3}$-free}.

The following definition can be seen as a matching theoretic variant of clique sums.

\begin{definition}[$4$-Cycle Sum]\label{def:cyclesum}
For every $i\in\Set{1,2,3}$ let $B_i$ be a bipartite graph with a perfect matching and $C_i$ be a conformal cycle of length four in $B_i$.
A \emph{$4$-cycle-sum} of $B_1$ and $B_2$ at $C_1$ and $C_2$ is a graph $B'$ obtained by identifying $C_1$ and $C_2$ into the cycle $C'$ and possibly forgetting some of its edges.
If a bipartite graph $B''$ is a $4$-cycle-sum of $B'$ and some bipartite and matching covered graph $B_3$ at $C'$ and $C_3$, then $B''$ is called a \emph{trisum} of $B_1$, $B_2$ and $B_3$.
\end{definition}

The \emph{Heawood graph} is the bipartite graph associated with the incidence matrix of the Fano plane, see \cref{fig:heawood} for an illustration.
Including one exception in the form of the Heawood graph, the structure theorem for $K_{3,3}$-free braces bears a striking resemblance to Wagner's characterisation of $K_5$ minor free graphs \cite{wagner1937eigenschaft}.

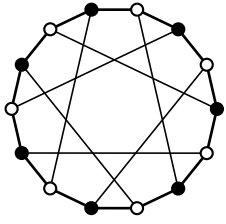
\begin{figure}[!h]
	\centering
	\begin{tikzpicture}[scale=0.9]
		\pgfdeclarelayer{background}
		\pgfdeclarelayer{foreground}
		\pgfsetlayers{background,main,foreground}
		
		\foreach \x in {2,4,6,8,10,12,14}
		{
			\node[v:main] () at (\x*25.71:15mm){};
		}
		
		\foreach \x in {1,3,5,7,9,11,13}
		{
			\node[v:mainempty] () at (\x*25.71:15mm){};
		}
		
		\begin{pgfonlayer}{background}
			\foreach \x in {2,4,6,8,10,12,14}
			{
				\draw[e:mainthin] (\x*25.71:15mm) to (128.55+\x*25.71:15mm);
			}
			
			\foreach \x in {1,...,14}
			{
				\draw[e:main] (\x*25.71:15mm) to (25.71+\x*25.71:15mm);
			}
		\end{pgfonlayer}
	\end{tikzpicture}
	\caption{The Heawood graph $H_{14}$.}
	\label{fig:heawood}
\end{figure}

\begin{theorem}\cite{mccuaig2004polya,robertson1999permanents}\label{thm:trisums}
	A brace is $K_{3,3}$-free if and only if it either is isomorphic to the Heawood graph, or it can be obtained from planar braces by repeated application of the \hyperref[def:cyclesum]{trisum} operation.
\end{theorem}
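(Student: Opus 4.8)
The plan is to treat the two implications entirely separately, since they are wildly asymmetric in difficulty: the direction ``built from planar braces, or the Heawood graph $\Rightarrow$ $K_{3,3}$-free'' is elementary, while its converse is the genuinely deep structural statement, and I would reduce the latter to a single clearly identified obstacle.

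For the direction ``$\Leftarrow$'', one case is $B\cong H_{14}$ and the other is that $B$ is built from planar braces by trisums, which I would handle by induction on the number of trisum operations. A planar brace is $K_{3,3}$-free because taking a conformal subgraph only removes vertices and edges and a bicontraction is a series suppression of a degree-two vertex, so both operations preserve planarity, whereas $K_{3,3}$ is not planar. That $H_{14}$ is $K_{3,3}$-free is a finite verification, which I would organise using that $H_{14}$ is cubic, edge- and vertex-transitive, and of girth six: a model of a $K_{3,3}$ matching minor consists of six branch vertices joined by internally disjoint conformal paths of prescribed parities, and tracking those parities inside only fourteen vertices forces either a repeated vertex or a cycle shorter than six. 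For the inductive step I would show that a $4$-cycle-sum of $K_{3,3}$-free braces is again $K_{3,3}$-free: if $B''$ is such a sum along the conformal quadrilateral $C'$ and carried a matching minor $J\cong K_{3,3}$, then, since the four vertices of $C'$ separate $B''$ into the parts coming from the summands and each such part retains a perfect matching after deleting $\V{C'}$, one can reroute and then discard all but one side of $C'$ --- repairing the matching across $C'$ using conformality --- so that the model of $J$ ends up inside a single summand, contradicting its $K_{3,3}$-freeness. The lemma to isolate and prove carefully here is that a conformal $4$-cycle cannot nontrivially split a $K_{3,3}$ matching minor; it needs a short but attentive case analysis of how the branchings of the model meet $C'$, and it is the only real content of this direction.

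For the direction ``$\Rightarrow$'', let $B$ be a $K_{3,3}$-free brace and induct on $\Abs{\V{B}}$. If $B$ is planar or is $H_{14}$ we are done, so assume $B$ is non-planar and not the Heawood graph. The plan is to produce a conformal $4$-cycle $C$ whose four vertices separate $B$ into two sides, reattach the edges of $C$ to each side to obtain graphs $B_1$ and $B_2$, and conclude that $B$ is the $4$-cycle-sum of $B_1$ and $B_2$ at $C$. Here $K_{3,3}$-freeness of the $B_i$ comes for free: since $C$ is conformal in $B$, the graph $B-\V{C}$ has a perfect matching, it is the disjoint union of the two sides with $\V{C}$ removed, so each side minus $\V{C}$ has a perfect matching, hence each $B_i$ is a conformal subgraph of $B$ and therefore inherits $K_{3,3}$-freeness; one also has to check that the $B_i$ are braces, which follows from $C$ being a separating conformal $4$-cycle in a brace. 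Applying the induction hypothesis to the strictly smaller $B_i$ writes each as $H_{14}$ or a trisum of planar braces, and composing with the sum at $C$ presents $B$ itself as built from planar braces by trisums.

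The whole weight of the theorem therefore rests on one step: showing that a non-planar $K_{3,3}$-free brace other than $H_{14}$ admits such a separating conformal $4$-cycle. Being non-planar, $B$ contains a subdivision of $K_{3,3}$ or of $K_5$ by Kuratowski's theorem, but $K_{3,3}$-freeness says no such subdivision can be realised ``conformally'' --- some subdividing path has the wrong parity, or two of them conflict --- and one must convert this non-conformality defect, using the absence of non-trivial tight cuts, into the desired conformal $4$-cycle while proving that $H_{14}$ is the unique graph in which the procedure fails. This is precisely the heart of the structural characterisation of Pfaffian braces due to McCuaig and to Robertson, Seymour and Thomas (which, combined via Little's theorem with the identification of Pfaffian bipartite graphs as exactly the $K_{3,3}$-matching-minor-free ones, yields the statement), and I would invoke their analysis --- in particular McCuaig's brace generation theorem, which supplies the inductive skeleton --- rather than re-derive what is a long and delicate argument. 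The forward direction together with the two ``lifting'' lemmas, by contrast, are self-contained and short, and I expect the \emph{only} genuine obstacle to be this last conversion step.
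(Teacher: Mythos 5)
The paper does not prove this theorem; it imports it verbatim from McCuaig \cite{mccuaig2004polya} and Robertson, Seymour and Thomas \cite{robertson1999permanents}, so there is no in-paper argument against which to measure your sketch. Evaluating the sketch on its own terms, the overall architecture (easy direction by induction on trisums, hard direction via a separating conformal $4$-cycle, invoking the brace generation theorem for the existence of that $4$-cycle) is the right one, but two steps you treat as routine contain real gaps.

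First, in the $\Rightarrow$ direction, the inference ``each side minus $\V{C}$ has a perfect matching, hence each $B_i$ is a conformal subgraph of $B$ and therefore inherits $K_{3,3}$-freeness'' does not hold as written. The $4$-cycle sum forgets edges of $C$, so after reattaching $\E{C}$ to a side the resulting $B_i$ typically contains edges of $C$ that $B$ lacks; $B_i$ is then not a subgraph of $B$, conformal or otherwise. What you actually need is that $B_i$ is a \emph{matching minor} of $B$: the missing edges of $C$ have to be manufactured by bicontracting internally $M$-conformal paths that cross through the other summands, and one must argue that such paths exist, are pairwise compatible, and leave a conformal set behind. This is exactly where the $2$-extendibility of the other summands is consumed and it cannot be skipped. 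Second, in the $\Leftarrow$ direction, the ``rerouting'' lemma is the entire mathematical content of that implication, and the sketch does not engage with the genuinely awkward case in which the six branch vertices of the $K_{3,3}$ model are distributed across several summands (so there is no single summand to ``push the model into''), nor with why a subpath venturing through another summand can be replaced by one inside a single summand that still alternates correctly with respect to the ambient perfect matching. Both issues are resolvable --- they are resolved in \cite{mccuaig2004polya} --- but naming them as ``short but attentive case analysis'' understates the work; the machinery this very paper builds around Lemma~\ref{lemma:goodcrossesmeanK33} and Proposition~\ref{prop:4cycleK33} is a measure of how much is actually hiding there.
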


\begin{corollary}\cite{mccuaig2004polya,robertson1999permanents}\label{cor:pfaffianalg}
	There exists an algorithm that decides, given a brace $B$ as input, whether $B$ contains $K_{3,3}$ as a matching minor in time $\Fkt{\mathcal{O}}{\Abs{\V{B}}^3}$.
\end{corollary}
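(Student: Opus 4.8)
The plan is to turn the structure theorem \cref{thm:trisums} into a recursive decomposition algorithm, much as one turns Wagner's theorem into a test for a $K_5$-minor. On input a brace $B$ with $n=\Abs{\V{B}}$ we first dispose of the terminal cases. If $n$ is below a fixed absolute constant we enumerate all conformal subgraphs of $B$, apply bicontractions exhaustively, and check directly whether $K_{3,3}$ occurs; this costs $\Fkt{\mathcal{O}}{1}$. Otherwise we test in $\Fkt{\mathcal{O}}{n}$ time whether $B$ is planar: if it is, then, since conformal subgraphs and bicontractions both preserve planarity and $K_{3,3}$ is non-planar, $B$ is $K_{3,3}$-free and we stop. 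We also test in $\Fkt{\mathcal{O}}{1}$ time whether $B$ is isomorphic to the Heawood graph, in which case $B$ is again $K_{3,3}$-free.

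If none of these cases applies, we look for a $4$-cycle-sum decomposition of $B$. Concretely, we search for a separation of $B$ of order four whose ``middle'' is, after possibly re-inserting some edges, a conformal $4$-cycle $C$, and such that the two sides of the separation, each completed by all four edges of $C$, are braces $B_1$ and $B_2$; that these candidate pieces are indeed braces can be checked through the characterisation of braces as the matching covered bipartite graphs without non-trivial tight cuts, which reduces to a bounded number of flow computations. We then recurse on $B_1$ and $B_2$, thereby realising the iterated trisum of \cref{def:cyclesum}. Since the Heawood graph has girth six and hence contains no conformal $4$-cycle, it can never appear as a proper summand of a $4$-cycle-sum; consequently, in a maximal decomposition of $B$ every indecomposable piece is a bona fide brace summand, and we recurse until all pieces are indecomposable. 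The algorithm answers ``$K_{3,3}$-free'' exactly if every piece at the bottom of the recursion is planar, and ``contains $K_{3,3}$'' otherwise.

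Correctness has two ingredients. First, by \cref{thm:trisums} a $K_{3,3}$-free brace that is neither planar nor the Heawood graph is a non-trivial trisum; hence an indecomposable piece that is $K_{3,3}$-free must be planar (the Heawood case having been excluded above), which shows that a failed search correctly certifies a $K_{3,3}$ matching minor. Second, and this is the delicate step which I expect to cost the real work, one needs that $K_{3,3}$, being a brace, is a matching minor of a $4$-cycle-sum $B$ of $B_1$ and $B_2$ if and only if it is a matching minor of $B_1$ or of $B_2$; granting this, $K_{3,3}$-freeness of $B$ reduces cleanly to $K_{3,3}$-freeness of the pieces. To prove it one takes a conformal model of $K_{3,3}$ in $B$, analyses how it meets the four glueing vertices and the -- possibly only partially present -- glueing cycle, and argues that a model of a brace as small and as highly connected as $K_{3,3}$ can be rerouted to lie inside one side of the separation while remaining conformal and surviving the bicontractions; the interplay of conformality, bicontraction, and the glueing edges that are forgotten in $B$ but present in the $B_i$ is the principal obstacle.

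For the running time, the recursion tree has only polynomially many nodes, and at each node the planarity and Heawood tests cost $\Fkt{\mathcal{O}}{n}$ while finding a decomposing order-four separation, together with verifying the brace property of the two sides via flow computations, costs polynomial time as well; hence the whole algorithm runs in polynomial time, which is all the rest of the paper needs. Organising the separator search and the flow computations carefully, as is done in \cite{mccuaig2004polya}, reduces the bound to $\Fkt{\mathcal{O}}{n^3}$.
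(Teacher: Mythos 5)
The paper states this corollary as a cited fact with no proof in the text; it is the Pfaffian recognition algorithm of McCuaig and of Robertson, Seymour, and Thomas, so there is no in-paper argument to compare yours against. Your sketch -- make \cref{thm:trisums} constructive and recurse -- is the right spirit, but as written it has two genuine gaps.

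First, you correctly identify but do not discharge the central correctness lemma: that $K_{3,3}$ is a matching minor of a $4$-cycle sum $B$ of braces $B_1,\dots,B_k$ if and only if it is a matching minor of some $B_i$. This is not a formality. A conformal bisubdivision of $K_{3,3}$ in a summand $B_i$ may use edges of the glueing $4$-cycle $C$ that were forgotten when forming $B$; conversely, a conformal bisubdivision in $B$ may traverse $C$ in ways that cannot simply be folded into one side, because conformality of the model depends on the rest of the graph. The paper itself spends considerable effort on exactly this kind of interaction (\cref{lemma:pathsintrisums}, \cref{lemma:badcrossesanddisjointcycles}, \cref{lemma:bigintersectionscannotbereduced}), which is a measure of how much work that step really costs; ``analyse how the model meets the four glueing vertices and reroute'' is a plan, not a proof. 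Note also that \cref{thm:trisums} is phrased in terms of trisums, which require at least three summands, and the intermediate two-summand sum in your formulation need not itself be a brace, so the two-piece transfer claim is not even the statement the structure theorem gives you directly.

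Second, the running time. Searching over candidate $4$-sets of vertices is already $\Theta(|\V{B}|^4)$ before you test the pieces for being braces, so the recursion you describe is a polynomial-time algorithm but not visibly an $\Fkt{\mathcal{O}}{\Abs{\V{B}}^3}$ one. Your only justification for the target bound is to appeal to \cite{mccuaig2004polya}, i.e.\ the very source the corollary is attributed to, which is circular as a proof of the bound. The amortisation and data-structure choices that bring the decomposition search down to cubic time are a substantive part of those works and would need to be reproduced, not invoked.
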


\subsection{A Matching Theoretic Two Paths Theorem}

The Two Paths Theorem describes the existence of crosses over cycles.
In the setting of matching theory, the cycles of a graph each fall into one of two categories: conformal cycles and non-conformal cycles.
The following lemma servers as an argument that, for the purpose of linking the existence of a cross to a topological property, it suffices to consider conformal cycles.

\begin{lemma}[\cite{mccuaig2004polya}]\label{lemma:conformalfaces}
	Let $B$ be a bipartite and planar matching covered graph, then every facial cycle of $B$ is conformal.
\end{lemma}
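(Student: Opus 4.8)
The plan is to show that $B-\V{F}$ has a perfect matching by a local switching argument on the perfect matchings of $B$, using planarity only to keep the switches ``inside'' the face bounded by $F$.

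First observe that $B$ is $2$-connected: a cutvertex $v$ would, by running the argument ``some perfect matching through an edge at $v$ matches $v$ into one component of $B-v$'' over all edges incident with $v$, force every component of $B-v$ to admit a perfect matching on its own, so that $v$ cannot be covered, a contradiction. Hence every face of $B$ is bounded by a cycle and, after re\nobreakdash-embedding, we may assume that $F$ is the outer face, so that $B$ is drawn in a closed disc $\Delta$ with $\partial\Delta=F$. Since $B$ is bipartite, $F$ is an even cycle, so $\Abs{\Vi{1}{B}\cap\V{F}}=\Abs{\Vi{2}{B}\cap\V{F}}$. For a perfect matching $M$ of $B$ call a vertex of $\V{F}$ \emph{internal} if its $M$\nobreakdash-partner also lies on $F$; a short count (each $M$\nobreakdash-edge leaving $\V{F}$ on the $\Vi{1}{}$\nobreakdash-side is matched by one on the $\Vi{2}{}$\nobreakdash-side, and $F$ has equally many vertices of each colour) shows that $M$ has equally many non-internal vertices of each colour, and that $M\setminus\Cut{\V{F}}$ is a perfect matching of $B-\V{F}$ exactly when \emph{all} of $\V{F}$ is internal. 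So it suffices to produce one perfect matching of $B$ for which $\V{F}$ is entirely internal.

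Choose a perfect matching $M$ of $B$ maximising the number of internal vertices of $\V{F}$, and suppose some $v\in\Vi{1}{B}\cap\V{F}$ is non-internal, with $\Set{v,z}\in M$ and $z\notin\V{F}$. Let $v^+\in\Vi{2}{B}\cap\V{F}$ be a neighbour of $v$ on $F$; then $\Set{v,v^+}\in\E{F}\setminus M$, and since $B$ is matching covered $\Set{v,v^+}$ lies in some perfect matching, hence there is an $M$\nobreakdash-alternating cycle through $\Set{v,v^+}$. Take such a cycle $C$ minimising $\Abs{\V{C}\cap\V{F}}$. If $\V{C}\cap\V{F}=\Set{v,v^+}$, then $C$ meets $\partial\Delta$ only along the edge $\Set{v,v^+}$, and replacing $M$ by $M\Delta\E{C}$ makes both $v$ and $v^+$ internal while turning no internal vertex non-internal (the only vertices of $\V{F}$ whose status changes are $v$ and $v^+$), contradicting the choice of $M$. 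It therefore remains to rule out $\Abs{\V{C}\cap\V{F}}\geq 3$: here one wants to use that all of $\V{C}\cap\V{F}$ lies on $\partial\Delta$ while $C$ itself lies in $\Delta$, so that some subarc of $C$ between two vertices $u,u'\in\V{F}$ cuts off a subdisc of $\Delta$ containing no further vertex of $F$, and to ``uncross'' $C$ along the corresponding arc of $F$ to reduce $\Abs{\V{C}\cap\V{F}}$.

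This last step is the main obstacle: the uncrossing has to be carried out so that the resulting closed walk is again $M$\nobreakdash-alternating (an arc of $F$ need not alternate with respect to $M$), which means one must really track the matching along both curves rather than argue purely topologically — one likely has to work with several alternating cycles simultaneously, or pass to an innermost configuration and combine it with a local modification of $M$ near the boundary. An alternative route avoiding this is to induct on $\Abs{\V{B}}$: if $B$ is not a brace it has a non-trivial tight cut $\Cut{X}$ which (since $F$ is facial) can be chosen with $\V{F}$ contained in one shore, and the corresponding tight-cut contraction yields a smaller planar matching covered graph still containing $F$ as a facial cycle; the base case is that of a planar brace, where $F$ being a $4$\nobreakdash-cycle is handled directly by the fact that any two disjoint edges of a brace extend to a perfect matching, and longer facial cycles of planar braces require a separate structural argument.
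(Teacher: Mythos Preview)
The paper does not prove this lemma; it is quoted from \cite{mccuaig2004polya} without argument, so there is nothing in the paper to compare your attempt against.

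Taken on its own, your proposal is not a proof, and you acknowledge as much. In the main line the uncrossing step is a genuine gap: an arc of $F$ between two visits of the alternating cycle $C$ need not be $M$-alternating, so replacing a subarc of $C$ by it does not in general produce another $M$-alternating cycle, and you supply no mechanism to repair this beyond a vague appeal to ``several alternating cycles'' or ``an innermost configuration''. Your fallback via tight-cut induction fails more concretely. The assertion that some non-trivial tight cut can be chosen with $\V{F}$ contained in one shore is false: take two $4$-cycles sharing an edge, with vertices $a_1,a_2,a_3\in V_1$ and $b_1,b_2,b_3\in V_2$ and edges $a_ib_j$ for $\Abs{i-j}\le 1$. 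This graph is planar, bipartite, matching covered, and not a brace; its only non-trivial tight cuts are $\Cut{\Set{a_1,a_2,b_1}}$ and $\Cut{\Set{a_2,a_3,b_3}}$, and one checks directly that each of them separates the vertex set of every facial cycle, in particular the inner $4$-cycle $a_1b_1a_2b_2$ for which $\V{F}\subsetneq\V{B}$. So the inductive step does not go through as stated. And even if it did, your base case for planar braces is left as ``requires a separate structural argument'', which is just the original statement with one extra hypothesis and no indication of how that hypothesis helps.
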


If we are interested in a statement like the one of \cref{thm:twopaths}, then we would expect the cycle which does not have a cross to bound a face in some kind of reduction of the original graph.
If this is the case, then \cref{lemma:conformalfaces} implies that our cycle is conformal.
Indeed, if our cycle is conformal in the end, our reductions should not have changed this and thus it should have been conformal even before applying any sort of reduction.
Hence it makes sense to only consider 'crosses' over conformal cycles.

Next we need a notion of reduction that is appropriate for the setting of bipartite graphs with perfect matchings.
We have already seen the use of the $4$-cycle sum in the \hyperref[thm:trisums]{characterisation} of bipartite graphs excluding $K_{3,3}$ as a matching minor.
The significance of the \hyperref[def:cyclesum]{trisum-operation} is that, besides one small exception, it provides a way to combine braces into larger braces.

Let $T_{10}$ be the $4$-cycle-sum of three $K_{3,3}$ at a $4$-cycle $C$ such that no edge of $C$ is in $\E{T_{10}}$.

\begin{lemma}[\cite{mccuaig2004polya}]\label{lemma:trisums}
	Let $k\geq 3$, and let $B,B_1,\dots,B_k$ be bipartite graphs such that $B$ is not isomorphic to $T_{10}$.
	Suppose $B$ is a $4$-cycle-sum of $B_1,\dots,B_n$ at the $4$-cycle $C$, then $G$ is a brace if and only if $B_1,\dots,B_n$ are braces.
\end{lemma}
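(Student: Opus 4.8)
The plan is to set up a correspondence between the perfect matchings of $B$ and those of the pieces $B_1,\dots,B_n$ and then to verify the two defining properties of a brace -- being matching covered and having no non-trivial tight cut -- through this correspondence. Since the only vertices shared by two pieces are those of the glueing cycle $C$, every edge of $B$ lies in $\E{B_i}$ for a unique $i$, apart from the retained edges of $C$ which lie in every $\E{B_i}$; hence every perfect matching $M$ of $B$ restricts to each $B_i$. The first step is the following normal form: for each $M\in\Perf{B}$ there is a single perfect matching $N$ of the four-cycle $C$ such that, for every $i$, the set $M\cap\E{B_i}$ covers $\V{B_i}\setminus\V{C_i}$ together with a set $T_i\subseteq\V{C_i}$ whose complement in $\V{C_i}$ induces a union of edges of $C$, and $M\cap\E{B_i}$ completed inside $C_i$ by the restriction of $N$ to $\V{C_i}\setminus T_i$ is a perfect matching of $B_i$. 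That one common $N$ works for all pieces is forced by the colour balance of $C$ together with conformality of the $C_i$: whenever a vertex of $C$ is matched by $M$ outside $C$, the whole edge of $N$ through it must be, and both of its endpoints go into the same piece. Conversely, any family $(M_i)_i$ of perfect matchings of the $B_i$ that is \emph{compatible} -- sharing a common $N$ and, for each edge $uv$ of $N$, having at most one piece match $u$ (equivalently $v$) outside $C$ -- glues back to a perfect matching of $B$. Establishing this dictionary is a finite case check on how the four vertices of $C$ can be matched.

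For the direction ``$B$ a brace $\Rightarrow$ every $B_i$ a brace'': connectivity of each $B_i$ is clear since $B$ is connected and the sets $\V{B_i}\setminus\V{C_i}$ hang off $C$, and admissibility of an edge $e\in\E{B_i}$ follows by restricting to $B_i$ a perfect matching of $B$ through $e$. For the absence of non-trivial tight cuts I would argue contrapositively: from a non-trivial tight cut $\Cut{X}$ of some $B_i$ one builds a non-trivial tight cut of $B$ by adjoining to $X$, on every other piece $B_j$, the vertex set of a fixed perfect matching of $B_j-\V{C_j}$ chosen on the side determined by which vertices of $C$ lie in $X$; conformality of the $C_j$ keeps this colour-balanced, and the dictionary shows the resulting cut is still tight in $B$, contradicting that $B$ is a brace.

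The substantive direction is ``all $B_i$ braces and $B\not\cong T_{10}$ $\Rightarrow$ $B$ a brace''. That $B$ is matching covered is the easy half: $B$ is connected, and to see that an edge $e$ is admissible one takes a perfect matching $M_{i_0}$ of the piece $B_{i_0}$ containing $e$ with $e\in M_{i_0}$, reads off the perfect matching $N$ of $C$ it prescribes, and uses the hypothesis that each other $B_j$ is a brace -- so every matching of size at most two extends to a perfect matching of $B_j$ -- to choose a perfect matching $M_j$ of $B_j$ compatible with $M_{i_0}$ along $C$; by the dictionary these assemble into a perfect matching of $B$ through $e$. The core is to show $B$ has no non-trivial tight cut. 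Assume $\Cut{X}$ is one. Using that each $B_i$ is a brace, together with conformality of the $C_i$, one first shows that tightness of $\Cut{X}$ in $B$ forces the trace $X\cap\V{B_i}$ to be $\emptyset$, all of $\V{B_i}$, a single vertex, or $\V{B_i}$ minus a single vertex for every $i$ -- that is, $\Cut{X}$ induces only trivial cuts inside each piece. One then checks that the only way to patch such trivial traces together along $C$ into a non-trivial tight cut of $B$ is that there are exactly three pieces with vertices outside $C$, each isomorphic to $K_{3,3}$, and every edge of $C$ has been forgotten, i.e.\ $B\cong T_{10}$; this is excluded by hypothesis.

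I expect this last patching step to be the main obstacle. Its heart is the fact that a brace other than $K_{3,3}$ has at least two neighbours of any monochromatic antipodal pair of $C_i$ in $\V{B_i}\setminus\V{C_i}$ (a piece with $\V{B_i}=\V{C_i}$ contributing nothing to $B$), whereas $K_{3,3}$ has exactly one: if every piece contributed only one such neighbour and no edge of $C$ survived, then for such a pair $S\subseteq\V{C}$ one would get $\Abs{N_B(S)}=3<4=\Abs{S}+2$, contradicting the matching-extension property of the brace $B$. Carrying out this bookkeeping over all the ways $X$ can meet the four vertices of $C$, and using $n\geq 3$, isolates $T_{10}$ as the unique exception. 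The remaining ingredients -- the perfect-matching dictionary and the contrapositive lifting of a tight cut from a piece to $B$ -- are technical but routine once the normal form is in place.
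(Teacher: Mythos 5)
This lemma is cited from McCuaig's \emph{Pólya's Permanent Problem} and is not proved in the present paper, so there is no in-paper proof to compare against; I review your sketch on its own merits. Your perfect-matching ``dictionary'' between $\Perf{B}$ and compatible families $(M_i)_i$ on the pieces is the right scaffolding, and the colour-balance observation that each $T_i\subseteq\V{C}$ (the $C$-vertices matched into piece $B_i$) must contain one vertex of each colour is correct. You do need to be more careful about retained versus forgotten edges of $C$ when reassembling (an edge $uv$ of the common $N$ can be used by every piece only if $uv\in\E{B}$), but that part of the framework is sound. The contrapositive lifting of a non-trivial tight cut from a piece $B_i$ to $B$ also needs care, because edges of $C_i$ not retained in $B$ can make the lifted cut behave differently in $B$ than the original did in $B_i$.

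The real problem is the central step asserting that tightness of $\CutG{B}{X}$ forces the trace $X\cap\V{B_i}$ to be $\emptyset$, a singleton, a co-singleton, or all of $\V{B_i}$. This is false, and $T_{10}$ itself is a counterexample. Write $T_{10}$ as the trisum of three copies of $K_{3,3}$ at $C=(a_1,b_1,a_2,b_2)$ with all of $\E{C}$ forgotten, where piece $B_i$ has the two extra vertices $u_i\in V_1$, $v_i\in V_2$. Then $X=\{a_1,a_2,v_1,v_2,v_3\}$ yields a non-trivial tight cut of $T_{10}$ -- one checks $\CutG{T_{10}}{X}=\{u_1v_1,u_2v_2,u_3v_3\}$ meets every perfect matching in exactly one edge -- yet the trace $X\cap\V{B_i}=\{a_1,a_2,v_i\}$ is none of your four shapes, and it is not even a tight cut of $B_i\cong K_{3,3}$: its minority is $\{v_i\}$ and $N_{B_i}(v_i)=\{a_1,a_2,u_i\}\not\subseteq\{a_1,a_2\}$, so \cref{lemma:ktightmajorityminority} fails. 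The underlying issue is that $B_i$ is not a conformal subgraph of $B$ once edges of $C$ are forgotten, and not every perfect matching of $B_i$ is a restriction of one of $B$, so tightness in $B$ does not transfer to $B_i$. Your neighbourhood count -- that $\Abs{N_B(\{a_1,a_2\})}$ drops below $4$ only when every essential piece is $K_{3,3}$, there are exactly three of them and $\E{C}$ is gone -- is correct and does locate $T_{10}$, but it verifies only one instance of the $2$-extendibility inequality and cannot by itself exclude non-trivial tight cuts whose minorities are disjoint from $\V{C}$. The patching step therefore needs a genuinely different argument than the one you propose.
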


The last piece we need is the definition of a 'cross' itself.

Let $B$ be a bipartite graph with a perfect matching $M$ and let $C$ be a conformal cycle in $B$.
Let $P$ be an $M$-alternating path in $B$.
If $P$ is internally $M$-conformal we say that $P$ is of \emph{type 1}, if $M$ is a perfect matching of $P$, we say $P$ is of \emph{type 2}, and otherwise exactly one of the end-edges of $P$ must belong to $M$, in this case $P$ is of \emph{type 3}.

\begin{definition}[Matching Cross]\label{def:matchingcross}
	Let $B$ be a bipartite graph with a perfect matching $M$ and let $C$ be a conformal cycle in $B$.
	The cycle $C$ is said to have a \emph{matching cross} if there exists a perfect matching $M$ and vertices $s_1$, $s_2$, $t_1$, $t_2$, called the \emph{pegs} of the cross, that appear on $C$ in the order listed such that there exist paths $P_1$ and $P_2$ satisfying the following properties:
	\begin{itemize}
		\item for each $i\in[1,2]$, $P_i$ has endpoints $s_i$ and $t_i$ and is otherwise disjoint from $C$,
		\item $P_1$ and $P_2$ are $M$-alternating, and
		\item $P_1$ and $P_2$ are vertex disjoint.
	\end{itemize}
	A matching cross over a conformal cycle $C$ is said to be \emph{strong} if it also meets the following requirements:
	\begin{itemize}
		\item  $\Abs{V_1\cap\Set{s_1,s_2,t_1,t_2}}=\Abs{V_2\cap\Set{s_1,s_2,t_1,t_2}}$, and
		\item $P_1$ and $P_2$ are of the same type
	\end{itemize}
	In case $C$ has a matching cross with paths $P_1$ and $P_2$ such that $C+P_1+P_2$ is a conformal subgraph of $B$ we say that $C$ has a \emph{conformal cross}.
\end{definition}

Please note that any path of type 1 or 2 must be of odd length and every path of type 3 is of even length.
Hence the two paths of a conformal cross are either both of type 1 or 2, or both of type 3.
Moreover, if exactly one of the two paths is of type 2, then this path together with one of the two subpaths of $C$ connecting its endpoints forms an alternating cycle.
By switching the perfect matching along this cycle we arrive at a perfect matching for which both paths are of the same type.
Therefore, if we are faced with a conformal cross we may assume this cross to be strong.

\begin{definition}\label{def:foreduction}
	Let $B$ be a brace and $C$ a conformal cycle in $B$.
	A brace $H$ is called a \emph{first order $C$-reduction} of $B$, if there exist braces $H$, $B_1,B_2$, and a $4$-cycle $K$ such that $B$ is a trisum of $H$, $B_1$, and $B_2$ at $K$, there exists $i\in[1,2]$ such that $\V{C}\cap\V{K}\subseteq V_i$, and $C\subseteq H$.
	A brace $H_{\ell}$ is called a \emph{$C$-reduction} of $B$ if there is a sequence of braces $H_1,\dots,H_{\ell}$ such that $B=H_1$ and $H_{i+1}$ is a first order $C$-reduction of $H_i$ for all $i\in[1,\ell-1]$.
\end{definition}

We can now state the main result of this paper, a Two Paths Theorem for braces.

\begin{theorem}\label{prop:twopathsinbraces}
	Let $B$ be a brace and $C$ a conformal cycle in $B$, then $C$ has \textbf{no} \hyperref[def:matchingcross]{matching cross} in $B$ if and only if $B$ does not contain $K_{3,3}$ as a matching minor and there exists a planar $C$-reduction of $B$ in which $C$ bounds a face.
\end{theorem}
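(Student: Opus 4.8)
\emph{Proof strategy.} We plan to prove the two directions separately, using as the engine of both a \textbf{transfer lemma}: if $H$ is a first order $C$-reduction of a brace $B$, then $C$ has a matching cross in $B$ if and only if it has one in $H$ (and hence, by iteration, the same holds for arbitrary $C$-reductions). To prove it we write $B$ as a trisum of $H$, $B_1$, $B_2$ at a $4$-cycle $K$ with $\V{C}\cap\V{K}$ monochromatic and $C\subseteq H$, so that $C$ and $K$ share no edge; a matching cross of $B$ then enters the pendant pieces $B_1,B_2$ only along subpaths with both endpoints on $K$, and since $K$ is a conformal $4$-cycle and each $B_i$ is a brace, every such subpath can be replaced either by an edge of $K$ or by an alternating subpath inside $H$ taken with respect to a perfect matching obtained from $M$ by switching along conformal cycles supported in the pieces; a matching cross of $H$ is lifted to $B$ in the same spirit. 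Granting the transfer lemma, the implication from right to left is immediate: if $C$ bounds a face of the planar brace $H$ then $C$ has no cross in $H$ even in the ordinary sense (a facial cycle of a plane graph cannot be crossed), hence no matching cross, and the transfer lemma carries this back to $B$; this is the role that \cref{thm:twopaths} and \cref{lemma:conformalfaces} play in the classical picture.

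For the implication from left to right we would argue by a minimal counterexample: let $B$ be a brace with a conformal cycle $C$ that has no matching cross but fails the right-hand side, with $\Abs{\V{B}}$ minimum. First we show that $B$ is $K_{3,3}$-free, using a \textbf{linkage lemma}: every brace that contains $K_{3,3}$ as a matching minor has a (strong) matching cross over every conformal cycle. The $K_{3,3}$-model serves as a highly connected hub, and brace connectivity --- every matching of size at most two extends to a perfect matching --- lets us join four pegs of $C$, chosen to realise the balanced colour pattern, to the hub by two vertex-disjoint alternating paths of a common type. Together with a direct verification that the Heawood graph $H_{14}$ has a matching cross over each of its conformal cycles (by vertex- and arc-transitivity only one cycle per admissible length needs to be checked, and a Hamiltonian cycle carries two interleaving chords since $H_{14}$ is nonplanar), \cref{thm:trisums} and \cref{lemma:trisums} then force $B$ to be either planar or, as $B\ne T_{10}$, a trisum of strictly smaller braces $B_1,B_2,B_3$ at a $4$-cycle $K$.

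If $B$ is planar we would show that $C$ bounds a face, so that $B$ is itself the required planar $C$-reduction: otherwise $C$ separates the plane and, imitating the planar case of \cref{thm:twopaths}, we either find a matching cross --- a contradiction --- or produce a first order $C$-reduction of $B$, after which we conclude as in the trisum case. If $B$ is a nontrivial trisum, the crucial claim is that a first order $C$-reduction of $B$ exists; if it did not, then $C$ would meet two of the subgraphs $B_1-\V{K}$, $B_2-\V{K}$, $B_3-\V{K}$ or would meet $\V{K}$ in both colour classes, and in either case we would route two vertex-disjoint alternating paths between four pegs of $C$ through the pieces avoiding the relevant arc of $C$, together with edges of $K$, producing a matching cross and a contradiction. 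So $B$ has a first order $C$-reduction $H$; the cycle $C$ is still conformal in $H$ (using that $K$ is conformal in $H$) and, by the transfer lemma, still has no matching cross in $H$, while $H$ is a smaller brace, so minimality yields a planar $C$-reduction of $H$, and hence of $B$, in which $C$ bounds a face. Since $B$ is also $K_{3,3}$-free this contradicts the choice of $B$, and therefore no counterexample exists.

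The hardest part will be turning a ``crossing'' configuration into an actual matching cross, both inside the linkage lemma and in the trisum analysis: one must control simultaneously that the two paths are vertex disjoint, that they alternate with respect to a single common perfect matching, and that their pegs appear in the prescribed cyclic order with a balanced colour pattern and a common type (so that, in fact, a strong cross is produced). Brace connectivity supplies the paths, but matching up the parities and the colours along $C$ is where the real combinatorial difficulty concentrates; by comparison the rerouting in the transfer lemma --- in particular handling edges of $K$ that were forgotten in a trisum --- and the planar subcase with its chords of $C$ are more routine, though not without technicalities.
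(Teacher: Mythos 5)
Your outline has the right high-level shape, but the central device you rely on --- the ``transfer lemma'' in its claimed bidirectional form --- is not true, and this undercuts the proposal in a fundamental way. The easy direction (a matching cross of $C$ in $H$ lifts to $B$) works by routing across $K$ through $B_i$ using \cref{lemma:pathsintrisums}. The converse fails. Write $B$ as a trisum at $K$ with $\V{C}\cap\V{K}$ monochromatic. A path $P_1$ of an $M$-cross in $B$ may enter and leave a pendant piece $B_i$ through two vertices $x,y$ of $K$ lying in the \emph{same} colour class; then $xP_1y$ has even length, so $M$ matches exactly one of $x,y$ inside $B_i$. This ``daring'' excursion cannot be replaced by an edge of $K$ (there is no $xy$ edge), and it cannot in general be replaced by an alternating path inside $H$, not even after switching $M$ along conformal cycles in the pieces: the missing $M$-edge at that endpoint is stuck on the wrong side of $K$. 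In the paper's terminology, what survives in $H$ is only a \emph{diffuse $C$-$M_1$-precross through $K$} which may well be \emph{daring} (\cref{lemma:badcrossesanddisjointcycles}), and the crux is that a daring precross does \emph{not} yield a matching cross over $C$ in $H$; it merely certifies that \emph{no} planar $C$-reduction of $H$ can have $C$ bounding a face (\cref{lemma:diffuseprecrossesmeannonplanar}). Your transfer lemma collapses these two distinct conclusions, and the ``routine rerouting'' you defer to is precisely the part that does not go through.

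A second gap is the ``linkage lemma.'' Asserting that a brace containing $K_{3,3}$ has a strong matching cross over every conformal cycle because a $K_{3,3}$-model acts as a ``highly connected hub'' is not an argument: the pegs must be compatible with a single perfect matching, and the paths must be of a common type, which is exactly where the difficulty lies. The paper's route is through the much sharper statement that every $4$-cycle in a $K_{3,3}$-containing brace sits inside a \emph{conformal} bisubdivision of $K_{3,3}$ (\cref{prop:4cycleK33}), established by carefully shrinking a conformal $K_{3,3}$-bisubdivision towards $C$ using tight cuts and jumps (\cref{lemma:C4inK33,lemma:reducedK33,lemma:reducedK33withjump,lemma:reducedK33mustsplit}); crosses over longer conformal cycles are then derived by chording $C$ and applying \cref{lemma:extendingcrosses}. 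That machinery cannot be replaced by a generic connectivity appeal. Your Heawood and planar subcases are fine, and the minimal-counterexample skeleton is sound, but as written the two load-bearing lemmas are respectively false and unsubstantiated.
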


Some of the intermediate results that lead to \cref{prop:twopathsinbraces} can be used to solve a slightly altered version of $2$-DAPP.

\begin{definition}[The (Bipartite) $k$-Matching Linkage Problem]
	Let $B$ be a bipartite graph with a perfect matching, $k\in\N$ a positive integer, and let $s_1,\dots,s_k\in V_1$ as well as $t_1,\dots,t_k\in V_2$ be $2k$ pairwise distinct vertices in $B$.
	A \emph{matching linkage} in $B$ for the \emph{terminals} $s_1,\dots,s_k,t_1,\dots,t_k$ is a perfect matching $M$ and a collection $P_1,\dots,P_k$ of pairwise disjoint and internally $M$-conformal paths such that $P_i$ has endpoints $s_i$ and $t_i$ for each $i\in[1,k]$.
	
	The \emph{(bipartite) $k$-Matching Linkage Problem} ($k$-MLP) is the question whether, given tuples $\Brace{s_1,\dots,s_k}$ and $\Brace{t_1,\dots,t_k}$ of vertices as above, there exists a matching linkage for the terminals $s_1,\dots,s_k,t_1,\dots,t_k$ in $B$.
\end{definition}

Please note that one can always turn an instance of $k$-DAPP into polynomially many instances of $k$-MLP by replacing vertices which appear several times as a terminal with a selection of their distance-$2$-neighbours\footnote{If $G$ is a graph and $v\in\V{G}$, then a \emph{distance $2$-neighbour} of $v$ is a vertex from $\NeighboursG{G}{\NeighboursG{G}{v}}\setminus\Brace{\NeighboursG{G}{v}\cup\Set{v}}$.}.
Hence $k$-DAPP and $k$-MLP are polynomial time equivalent, the difference is that $k$-MLP can be easier to work with since the possibility of several terminals being the same vertex does not have to be taken into account.

\begin{theorem}\label{prop:2linkage}
	Let $B$ be a bipartite graph with a perfect matching, and let $s_1,s_2\in V_1$ as well as $t_1,t_2\in V_2$ be four distinct vertices.
	There exists an algorithm that decides $2$-MLP for the terminals $s_1,s_2,t_1,t_2$ in time $\Fkt{\mathcal{O}}{\Abs{\V{B}}^5}$.
\end{theorem}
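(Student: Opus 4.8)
The plan is to reduce $2$-MLP to a constant number of auxiliary instances of a matching-cross detection problem, and then to dispatch those instances by combining the structural dichotomy of \cref{prop:twopathsinbraces} with the cubic-time $K_{3,3}$-test of \cref{cor:pfaffianalg}. The starting point is a reformulation of the object we are looking for: for some perfect matching $M$ there are disjoint internally $M$-conformal paths $P_1$ from $s_1$ to $t_1$ and $P_2$ from $s_2$ to $t_2$ if and only if there are disjoint paths $P_1,P_2$ with these endpoints, each of odd length, such that $B$ still has a perfect matching after deleting the interiors $\V{P_i}\setminus\Set{s_i,t_i}$; indeed, $P_i$ without its endpoints then has an even number of vertices and hence a unique perfect matching, and assembling these with a perfect matching of the remainder yields $M$. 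Distinguishing the constantly many ``shapes'' for how $M$ saturates $s_1,s_2,t_1,t_2$ -- among themselves along one of a few patterns, or each to a private neighbour lying outside the two paths -- I would, for each shape, hinge a small gadget to $B$, namely a short conformal cycle attached to the four terminals all of whose remaining vertices have degree two, engineered so that (i) no path internally disjoint from the cycle can use a degree-two vertex as an endpoint, so that the pegs of any matching cross over the gadget cycle are forced to be the four attachment vertices in a prescribed cyclic order, and (ii) the matching behaviour the gadget forces turns the two ``inner'' halves of such a cross into precisely a pair of disjoint internally $M$-conformal paths of the shape under consideration. One then checks directly that $B$ admits a matching linkage for $s_1,s_2,t_1,t_2$ if and only if at least one of these constantly many auxiliary graphs admits a matching cross over its gadget cycle.

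The next step is to push this matching-cross question down to braces. The auxiliary graph has a tight cut decomposition into braces that is computable in polynomial time and organised tree-like; since a perfect matching meets every tight cut $\Cut{X}$ in exactly one edge, a conformal cycle and an $M$-alternating path can interact with $\Cut{X}$ only in very restricted ways. Propagating this along the decomposition tree -- and absorbing the pendant branches into a bounded amount of additional gadgetry recording the behaviour they force at their cut vertices -- reduces, at the cost of polynomially many, in fact $\Fkt{\mathcal{O}}{\Abs{\V{B}}^2}$, further choices, to detecting a matching cross with the prescribed pegs over a conformal cycle inside a single brace.

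Inside such a brace $B''$ the dichotomy of \cref{prop:twopathsinbraces} finishes the argument. If $B''$ contains $K_{3,3}$ as a matching minor, then its conformal cycle has a matching cross, and since the gadget has forced the pegs this is exactly the cross we want, so the instance is positive; $K_{3,3}$-containment is tested in time $\Fkt{\mathcal{O}}{\Abs{\V{B''}}^3}$ by \cref{cor:pfaffianalg}. Otherwise, by \cref{thm:trisums}, $B''$ is either the Heawood graph, which we settle by a finite case check, or it is obtained from planar braces by repeated trisums, and then \cref{prop:twopathsinbraces} tells us that the cross is absent precisely when there is a planar $C$-reduction of $B''$ in which the cycle bounds a face; the crosses that do exist are governed by that planar structure together with the behaviour at the at most cubically many $4$-cycles along which the trisums are performed, so the answer can be read off and the corresponding planar reduction, when it exists, constructed in polynomial time.

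For the running time, the reduction leaves $\Fkt{\mathcal{O}}{\Abs{\V{B}}^2}$ brace-level instances (the quadratic factor arising from the choices made in the gadget constructions and in the passage through the tight cut decomposition, for instance guessing the two matching edges incident to $s_1$ and $s_2$), each decided in time $\Fkt{\mathcal{O}}{\Abs{\V{B}}^3}$, dominated by the single invocation of \cref{cor:pfaffianalg}; all the decompositions, gadget constructions and planarity computations stay within this budget, so the total is $\Fkt{\mathcal{O}}{\Abs{\V{B}}^5}$. I expect the genuine obstacle to be the reduction to braces: one has to show that the prescribed-peg matching-cross question is neither destroyed nor spuriously created when a tight cut slices the conformal cycle or one of the cross paths, so that the brace-local answers really do recombine into the global one, and that the bounded interface data carried across each cut suffices to describe every way the two disjoint alternating paths can thread through the decomposition tree.
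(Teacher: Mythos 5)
Your high-level plan matches the paper: guess $\Fkt{\mathcal{O}}{\Abs{\V{B}}^2}$ ways of saturating the four terminals, attach a constant-size gadget that turns the linkage question into a cross question over a short cycle, push that question through the tight cut decomposition to a single brace, and decide there via $K_{3,3}$-detection (\cref{cor:pfaffianalg}); the running time budget is also allocated the same way, so the $\Fkt{\mathcal{O}}{\Abs{\V{B}}^5}$ bound is plausible.

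However, there are two genuine gaps. First, you phrase everything in terms of \emph{matching crosses} and try to finish via \cref{prop:twopathsinbraces}, but the notion that is actually equivalent to a matching linkage is the \emph{conformal} cross over the gadget $4$-cycle. A matching cross over that cycle need not be conformal: the cross's perfect matching can match $a_1$ or $b_2$ to a vertex outside $C+P_1+P_2$, in which case the truncated paths, though alternating, are incompatible with the edges you guessed for $F$, and there is no lift to a perfect matching of $B$ extending $F$. Reporting such a cross as a ``yes'' is a potential false positive. The paper's equivalence for the $F$-instance is stated precisely for conformal crosses, and the $K_{3,3}$-free case is then settled in one line: a $K_{3,3}$-free brace contains no conformal bisubdivision of $K_{3,3}$, hence no conformal cross over any $4$-cycle (\cref{lemma:goodcrossesmeanK33}), so the answer is simply ``no.'' Your proposed fallback -- reading the answer off a planar $C$-reduction and the trisum $4$-cycles -- is both unnecessary and, via the matching-cross/conformal-cross conflation, incorrect.

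Second, the passage ``to a single brace'' is exactly where you acknowledge a gap, and it is indeed the crux, but the vague ``absorbing pendant branches into additional gadgetry'' with polynomially many further choices is not the mechanism the paper uses and would not obviously preserve the running time. The paper instead proves a specific anchoring lemma (\cref{lemma:conformaldomino}): because the two new gadget vertices have degree exactly three (not degree two, as you write), any conformal $K_{3,3}$-bisubdivision through the gadget $4$-cycle is forced to live inside a single brace of the auxiliary graph, and conversely one exists iff that brace contains $K_{3,3}$ as a matching minor. That lemma, not \cref{prop:twopathsinbraces}, is what makes the reduction to braces both correct and deterministic (no further choices); its proof in turn leans on \cref{prop:4cycleK33} and \cref{lemma:goodcrossesmeanK33}, which are the ``conformal'' strengthenings you would need in place of the generic Two Paths statement.
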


This result is somewhat surprising since, as we discuss in the \hyperref[sec:conclusion]{conclusion}, the variant of the $2$-MLP where we ask the paths to be alternating for a fixed perfect matching $M$ is polynomial time equivalent to the Directed $2$-Disjoint Paths Problem which is still NP-complete.

\paragraph{The project so far}

This paper is part of the larger project of extending the graph minors theory of Robertson and Seymour to bipartite graphs with perfect matchings.

A matching theoretic analogue of treewidth, called \emph{perfect matching width}, was introduced by Norine \cite{norine2005matching}.
Together with Hatzel and Rabinovich, the third author also derived a grid theorem for bipartite graphs with perfect matchings and perfect matching width from the related Directed Grid Theorem \cite{hatzel2019cyclewidth,rabinovich2019cyclewidth}.
This grid theorem was then further refined by the authors and Stephan Kreutzer in \cite{giannopoulou2021excluding} which allowed for a structural characterisation of all classes of bipartite graphs with perfect matchings of bounded perfect matching width:
the exclusion of a planar and matching covered matching minor.
In \cite{giannopoulou2021excluding} the complexity of $t$-DAPP on bipartite graphs of bounded perfect matching width was also discussed and a parametrised algorithm with $t$ and the perfect matching width as parameters was presented.

The current paper lays the necessary ground work towards a weak structure theorem or Flat Wall Theorem for bipartite graphs with perfect matchings.

\paragraph{Organisation of the Paper and Proof of \Cref{prop:twopathsinbraces}}\label{sec:mainthm}

Our proof of \cref{prop:twopathsinbraces} can be broken down into two essential pieces.
The first is \cref{prop:pfaffiancrosses} which characterises the existence of matching crosses over conformal cycles in $K_{3,3}$-free braces.
\Cref{sec:pfaffian} is dedicated to its proof, but the planar case, which is handled in \cref{sec:planar}, plays a major role.

\begin{proposition}\label{prop:pfaffiancrosses}
	Let $B$ be a $K_{3,3}$-free brace and $C$ a conformal cycle in $B$.
	Then there is \textbf{no} matching cross over $C$ in $B$ if and only if there exists a planar $C$-reduction of $B$ in which $C$ bounds a face.
\end{proposition}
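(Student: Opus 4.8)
The plan is to induct on the structure of $K_{3,3}$-free braces provided by \cref{thm:trisums}. If $B$ is isomorphic to the Heawood graph, then both sides of the equivalence fail: the Heawood graph is non-planar and, being the exceptional case of \cref{thm:trisums}, cannot be written as a trisum of braces, so it admits no nontrivial $C$-reduction and hence no planar one; and a direct computation, exploiting the vertex- and edge-transitivity of the Heawood graph, shows that every conformal cycle in it has a matching cross. Otherwise $B$ is obtained from planar braces by repeated trisums, and we induct on the number of trisums. In the base case $B$ is planar, and the statement is then exactly what is proved in \cref{sec:planar}, where it is deduced from the classical Two Paths Theorem (\cref{thm:twopaths}) together with \cref{lemma:conformalfaces} by rephrasing $k$-clique sums with $k\le 3$ as $4$-cycle sums. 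For the inductive step we write $B$ as a trisum of braces $B_1,B_2,B_3$ at a $4$-cycle $K$; since $T_{10}$ contains $K_{3,3}$ we have $B\neq T_{10}$, so \cref{lemma:trisums} applies, each $B_i$ is a brace, and by \cref{thm:trisums} each $B_i$ is $K_{3,3}$-free and is itself obtained from planar braces by strictly fewer trisums, so the inductive hypothesis is available for it.

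For the backward implication, assume towards a contradiction that $C$ has a matching cross in $B$ while some planar $C$-reduction $H$ of $B$ has $C$ bounding a face, and fix a chain $B=H_1,\dots,H_\ell=H$ of first order $C$-reductions witnessing this. It suffices to prove the following descent step: if $H_m$ is a trisum of braces $H_{m+1},B_1,B_2$ at a $4$-cycle $K$ with $\V{C}\cap\V{K}$ contained in a single colour class and $C\subseteq H_{m+1}$, then a matching cross over $C$ in $H_m$ produces a matching cross over $C$ in $H_{m+1}$; descending along the chain then yields a matching cross over $C$ in the planar graph $H$ in which $C$ is facial, and this cross is in particular a $C$-cross, contradicting \cref{thm:twopaths}. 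To carry out the descent we start from a matching cross in $H_m$ with paths $P_1,P_2$, pegs on $C$, and perfect matching $M$. Since $\V{K}$ separates $B_1$ and $B_2$ from $H_{m+1}$ in $H_m$ and $\Abs{\V{K}}=4$, every maximal subpath of $P_1$ or of $P_2$ leaving $H_{m+1}$ has both its endpoints among the at most four vertices of $K$, and we replace each such subpath by an alternating detour inside $H_{m+1}$ of the same parity between the same two vertices of $K$. Because $K$ is conformal in the brace $H_{m+1}$ and the at most two vertices of $C$ on $K$ lie in one colour class, these detours exist, can be chosen internally disjoint from $C$ and from one another, and are compatible with a single perfect matching of $H_{m+1}$ obtained by adjusting $M$ on $H_{m+1}$ using the conformality of $K$; this produces the desired matching cross in $H_{m+1}$.

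For the forward implication we again induct on the number of trisums, invoking \cref{sec:planar} when $B$ is planar. Otherwise write $B$ as a trisum of $K_{3,3}$-free braces $B_1,B_2,B_3$ at a $4$-cycle $K$ as above, and suppose $C$ has no matching cross in $B$. The crux is the \emph{confinement claim}: some summand $B_j$ contains $C$, and $\V{C}\cap\V{K}$ lies in a single colour class, so that the trisum decomposition of $B$ exhibits a first order $C$-reduction of $B$ to $B_j$. We argue it by contraposition: if $C$ used non-$K$ edges of two distinct summands, or met both colour classes of $\V{K}$, then $C$ would pass through $\V{K}$ in at least two vertices lying on the boundaries of distinct summands, and one could assemble a matching cross over $C$ in $B$ out of two internally disjoint alternating paths routed through distinct summands between suitably chosen vertices of $\V{K}$ — equivalently, between suitably chosen pegs on $C$. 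That the required alternating paths of the right type exist follows from the conformality of $K$ in each summand and the fact that each $B_i$ is a brace, while the parity and type observations following \cref{def:matchingcross} let us make both of them of the same type for a common perfect matching. Granting the confinement claim, every matching cross over $C$ in $B_j$ gives rise to one in $B$ (rerouting through another summand any use of an edge of $K$ missing from $B$), so $C$ has no matching cross in $B_j$; by the inductive hypothesis $B_j$ has a planar $C$-reduction $H$ in which $C$ bounds a face, and composing the first order reduction $B\to B_j$ with the reductions realising $B_j\to H$ exhibits $H$ as a planar $C$-reduction of $B$ with $C$ facial.

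The main obstacle is the manipulation of alternating paths through $4$-cycle summands, which underlies both the descent step and the confinement claim: in each case two $M$-alternating paths, the two branches of a cross, must be routed through the $4$-vertex set $\V{K}$ so as to remain pairwise disjoint, remain alternating for one common perfect matching, and keep their endpoints on $C$ in the prescribed cyclic order. Controlling the matching-theoretic type of the rerouted segments — precisely the point at which the parity remarks following \cref{def:matchingcross} become indispensable — is the delicate part, and I expect the bulk of \cref{sec:pfaffian} to be devoted to it; by comparison, the exceptional Heawood case is a finite verification.
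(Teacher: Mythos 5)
Your overall strategy (planar base case plus induction over the trisum decomposition, with the Heawood graph handled as a finite exception) is the right skeleton and matches the paper's, but the \emph{descent step} in your backward direction has a genuine gap that cannot be repaired in the way you propose.

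You claim that a matching cross over $C$ in $H_m$ yields one in $H_{m+1}$ by ``replacing each subpath leaving $H_{m+1}$ by an alternating detour inside $H_{m+1}$ of the same parity between the same two vertices of $K$.'' This fails when such a subpath re-enters $H_{m+1}$ at two vertices of $K$ in the \emph{same} colour class. That even-length subpath, together with the rest of the cross, determines the colours and the matching status of its endpoints on $K$ in a way that need not be realisable by any alternating path lying entirely inside $H_{m+1}$: the detour you need is an even-length alternating path whose endpoints are either both $M$-covered inside $H_{m+1}$ or both $M$-covered outside it, and adjusting $M$ on $H_{m+1}$ to arrange this can conflict with the requirement that the other branch of the cross, and the given perfect matching of $C$ itself, remain intact. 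This is exactly what the paper isolates as a \emph{daring diffuse precross} (\cref{lemma:badcrossesanddisjointcycles}): a cross in $H_m$ is equivalent to a cross in $H_{m+1}$ \emph{or} a successful daring precross through $K$, and the second possibility is genuinely weaker than a cross. The paper then needs an entirely separate and substantial argument (\cref{lemma:diffuseprecrossesmeannonplanar}, which relies on \cref{lemma:bigintersectionscannotbereduced}, the non-planarity of the Rotunda, and a further inductive analysis of splitting sets) to show that the existence of a daring precross already forces every $C$-reduction to be non-planar or have $C$ non-facial. Your proposal has no analogue of this: it assumes the reroute always succeeds, which is false, and so the contradiction with the Two Paths Theorem at the bottom of the chain does not follow.

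Two smaller points. First, your description of the planar base case as ``deduced from the classical Two Paths Theorem together with \cref{lemma:conformalfaces}'' understates what is required: the Two Paths Theorem produces an \emph{ordinary} $C$-cross, not an $M$-alternating one, and the hard half of \cref{prop:planarcrosses} (existence of a \emph{matching} cross when $C$ is separating) needs the brace-specific arguments of \cref{lemma:braceparts,lemma:planarnonbracecrosses} rather than the topological theorem. Second, your confinement claim in the forward direction is morally the content of \cref{lemma:crossesovercrossingcycles} and would need to be spelled out, but it is the daring-precross case in the backward direction that is the real missing idea.
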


The second part is \cref{prop:4cycleK33}, which guarantees conformal crosses over $4$-cycles in braces that contain $K_{3,3}$, this proposition is proved in \cref{sec:nonpfaffian}.

\begin{proposition}\label{prop:4cycleK33}
	Let $B$ be a brace containing $K_{3,3}$ and $C$ a $4$-cycle in $B$, then there exists a conformal bisubdivision of $K_{3,3}$ with $C$ as a subgraph.
\end{proposition}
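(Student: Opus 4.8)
\textbf{Proof proposal for \Cref{prop:4cycleK33}.}

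The plan is to start from an arbitrary conformal bisubdivision $K$ of $K_{3,3}$ in $B$, whose existence is guaranteed by the hypothesis that $B$ contains $K_{3,3}$, and then \emph{move} it, step by step, until it contains the prescribed $4$-cycle $C$ as a subgraph. Write $C=c_1c_2c_3c_4$ with $c_1,c_3\in V_1$ and $c_2,c_4\in V_2$. Since $C$ is conformal, fix a perfect matching $M$ of $B$ with $C$ being $M$-conformal; we may assume $c_1c_2,c_3c_4\in M$. The key structural fact I would lean on is that braces are well connected in a matching-theoretic sense: every brace on at least six vertices is internally $4$-connected in the bipartite-adjacency sense, and, more to the point, any two disjoint admissible edges extend to a perfect matching, which lets us route conformal alternating paths with a great deal of freedom.

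First I would reduce to the case where $C$ shares at least one vertex with $K$. If $\V C\cap\V K=\emptyset$, pick a vertex $c$ of $C$ and an internal vertex (degree-$2$ branch vertex of the subdivision) or a branch vertex $v$ of $K$ of the opposite colour, and use the fact that $B$ is matching covered together with a counting/augmenting-path argument to find an $M$-alternating path from $c$ into $K$; rerouting a single subdivided edge of $K$ along this path produces a new conformal bisubdivision of $K_{3,3}$ that meets $C$. Iterating, I would push more and more of $C$ into $K$: the main device throughout is that if $P$ is a segment of $C$ (an $M$-alternating path) with both endpoints on $K$ and otherwise disjoint from $K$, then $P$ either already is part of a subdivided edge of $K$, or it forms, together with part of $K$, an alternating structure that can be ``rotated'' so as to absorb $P$. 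This is exactly the kind of local surgery on bisubdivisions that is standard for $K_{3,3}$ or $K_5$ subdivisions, but here every move must preserve conformality, which is why one tracks the perfect matching $M$ (updating it along alternating cycles when necessary) rather than just the topology.

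The heart of the argument is the final configuration analysis: once $C$ is ``attached'' to $K$ along three or four of its vertices, I would do a case distinction on \emph{which} parts of the $K_{3,3}$-skeleton the four vertices $c_1,c_2,c_3,c_4$ land on -- on branch vertices, in the interiors of subdivided edges, or a mix -- and in each case exhibit a conformal bisubdivision of $K_{3,3}$ containing all of $C$. Because $K_{3,3}$ is vertex-transitive and edge-transitive, the number of genuinely distinct cases is small. The colour constraint ($c_1,c_3$ black, $c_2,c_4$ white, alternating around $C$) is crucial: it forces $C$ to sit across the bipartition of $K$ in a way compatible with the $K_{3,3}$-structure, ruling out the configurations that would otherwise obstruct the surgery. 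I would also use \Cref{lemma:trisums} in the contrapositive spirit -- if no such conformal bisubdivision through $C$ existed, the brace could be ``cut'' at $C$ into smaller pieces, none of which contains $K_{3,3}$, contradicting that $B$ does -- as a sanity check and, where convenient, as an actual induction on $\Abs{\V B}$: take a $4$-cycle sum decomposition at $C$ and locate $K_{3,3}$ in one of the summands, then pull the bisubdivision back through the sum, which naturally makes $C$ a subgraph of it.

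The step I expect to be the main obstacle is the rerouting lemma that underlies both the ``push $C$ into $K$'' phase and the final case analysis: namely, showing that an $M$-alternating segment of $C$ with endpoints on $K$ can always be incorporated into a conformal bisubdivision of $K_{3,3}$ \emph{without destroying conformality elsewhere}. The difficulty is that swapping a subdivided edge of $K$ for a new alternating path $P$ may leave some vertices previously matched within $K$ now unmatched, and repairing this requires chasing an augmenting/alternating walk through $B-\V K$ whose existence must be certified using bracehood (no non-trivial tight cuts) rather than mere high connectivity. I would isolate this as a standalone lemma -- something like ``in a brace, any conformal subgraph $H$ and any $M$-alternating $H$-path can be combined into a conformal subgraph $H'$ with $H'\supseteq (H-e)+P$ for a suitable subdivided edge $e$'' -- prove it by a tight-cut/Tutte-type counting argument, and then the rest of the proof becomes bookkeeping over the finitely many placements of $C$ relative to the $K_{3,3}$-skeleton.
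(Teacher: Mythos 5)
Your high-level strategy -- start from a conformal bisubdivision of $K_{3,3}$, track a perfect matching $M$ with $C$ being $M$-conformal, and reroute the bisubdivision step by step until it contains $C$ as a subgraph -- is the same strategy the paper follows in \cref{sec:nonpfaffian}. But the steps you leave as ``bookkeeping'' are where essentially all of the proof lives, and some of them do not work as sketched. First, you cannot simply ``start from an arbitrary conformal bisubdivision $K$,'' because $K$ may only be conformal for a matching that does not make $C$ conformal; the paper already needs a non-trivial argument (\cref{lemma:C4andK33}, which proceeds through $M$-conformal M\"obius ladders via \cref{lemma:conformalmoebiusstrips} and an explicit shrinking argument) just to produce a single $M$ for which both $C$ and some $K_{3,3}$-bisubdivision are $M$-conformal. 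Second, pushing $\V{C}$ into $\V{K}$ (\cref{lemma:C4inK33}) does not finish the job: even after $\V{C}\subseteq\V{K}$, the two $M$-edges of $C$ may lie on a single subdivided edge of $K$ in one of the ``ordered'' or ``reversed'' configurations of \cref{fig:threeconfigurations}, or be split in a way that is not ``nice,'' and in none of these does $C$ sit as a subgraph of $K$. Eliminating these cases is where the machinery of $V_i$-jumps, tight cuts inside the conformal subgraph (\cref{lemma:tightcutsincofnromalsubgraphs}), and the sequence \cref{lemma:reducedK33}, \cref{lemma:reducedK33withjump}, \cref{lemma:reducedK33mustsplit} is forced; you name the rerouting lemma as the main obstacle but do not indicate how to discharge it, and it is genuinely the hard part.

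Separately, the auxiliary idea of invoking \cref{lemma:trisums} as a ``sanity check'' or an induction via a $4$-cycle sum decomposition at $C$ is a dead end: a generic $4$-cycle $C$ in a brace is not a separating set, so there is no $4$-cycle sum decomposition of $B$ at $C$ to speak of, and even when one exists, finding a conformal $K_{3,3}$-bisubdivision inside one summand gives no reason for that bisubdivision to contain $C$ as a subgraph -- it may miss $C$ entirely or route across $C$ in a way incompatible with the desired configuration. The paper never decomposes at $C$ in the proof of this proposition; the $4$-cycle sum only appears in the $K_{3,3}$-free part of the theory.
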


In \cref{sec:nonplanar}, we establish some preliminary results which are needed for both the $K_{3,3}$-free case and the case where $B$ contains $K_{3,3}$, especially regarding paths and matching crosses over $4$-cycles.
An important role, in order to bridge between the existence of matching crosses and \cref{prop:4cycleK33} is held by the following lemma.

\begin{lemma}\label{lemma:goodcrossesmeanK33}
	Let $B$ be a brace and $C$ a $4$-cycle in $B$, then there is a conformal cross over $C$ in $B$ if and only if $C$ is contained in a conformal bisubdivision of $K_{3,3}$.	
\end{lemma}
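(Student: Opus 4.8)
The statement is an equivalence, so I would prove the two directions separately, and the harder one is clearly the forward direction (a conformal cross over a $4$-cycle produces a conformal bisubdivision of $K_{3,3}$). For the easy direction, suppose $C$ sits inside a conformal bisubdivision $H$ of $K_{3,3}$ in $B$, say with branch vertices $a_1,a_2,a_3\in V_1$ and $b_1,b_2,b_3\in V_2$. Since $C$ has length four, it must run through exactly two branch vertices of each colour together with (parts of) the internally disjoint branch paths connecting them; after relabelling, $C$ uses $a_1,a_2,b_1,b_2$ and the four branch paths among them. The remaining branch vertices $a_3,b_3$ are joined to $\{a_1,a_2,b_1,b_2\}$ by the six further branch paths of $H$, and two suitably chosen vertex-disjoint pairs of these paths (one through $a_3$, one through $b_3$) meet $C$ only in four vertices appearing in cross order $s_1,s_2,t_1,t_2$, yielding two disjoint paths $P_1,P_2$. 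Conformality of $H$, restricted to $C+P_1+P_2$, gives exactly the conformal-cross condition $\V{C+P_1+P_2}$ conformal; the alternating condition on each $P_i$ follows from the perfect matching of $B$ witnessing conformality of $H$. One must be slightly careful about parity — whether the cross obtained is of type~1, 2 or 3 — but by the paragraph following Definition~\ref{def:matchingcross} we may always switch the matching along an alternating cycle to make the cross strong, so this is not an obstacle.

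For the forward direction, assume $C = s_1 b s_2 \cdots$ (a $4$-cycle) has a conformal cross with disjoint $M$-alternating paths $P_1$ (from $s_1$ to $t_1$) and $P_2$ (from $s_2$ to $t_2$) such that $G' := C + P_1 + P_2$ is a conformal subgraph of $B$. As noted in the excerpt, we may assume the cross is strong, so $P_1,P_2$ are of the same type and the pegs are colour-balanced: two in $V_1$ and two in $V_2$. Because $C$ is a $4$-cycle, its four vertices alternate in colour, so along $C$ the pegs in cyclic order $s_1,s_2,t_1,t_2$ are coloured either $(V_1,V_2,V_1,V_2)$ or $(V_2,V_1,V_2,V_1)$; in particular $s_1,t_1$ lie in different colour classes, hence $P_1$ (and likewise $P_2$) is of even length, so both paths are of type~3. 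The plan is then to take the conformal subgraph $G'$, observe it is a bisubdivision of a cubic graph on six branch vertices (the four pegs, each of degree three in $G'$, plus — wait, the pegs only have degree three; $G'$ has exactly these four degree-$3$ vertices), and argue that after the bicontractions allowed for matching minors $G'$ reduces to a graph on four branch vertices. Four branch vertices is not enough for $K_{3,3}$, so $G'$ alone does not suffice: the cross must be augmented.

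The real work, and the main obstacle, is therefore to enlarge the conformal cross inside $B$ to a full conformal $K_{3,3}$-bisubdivision containing $C$. Here I would exploit the brace hypothesis: in a brace every matching of size $\le 2$ extends to a perfect matching, and braces are $3$-connected, in fact ``matching $3$-connected'' in the relevant conformal sense. Starting from $G'$, the two type-$3$ paths each have both end-edges in $M$; the four pegs are $M$-covered by edges of $C$ or of the $P_i$. I would look at $B - \V{G'}$: since $G'$ is conformal, $B-\V{G'}$ has a perfect matching, and connectivity of the brace forces edges from $\V{G'}$ into the rest of $B$ that can be routed, via $M$-alternating paths through the exterior, so as to create a third ``vertical'' connection and a third ``horizontal'' one — i.e.\ to add the missing branch vertices $a_3\in V_1$, $b_3\in V_2$ and the six branch paths joining them to the configuration, all while preserving conformality. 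Concretely, one picks a vertex $a_3 \in V_1 \setminus \V{G'}$ with $M$-partner $b_3$, and uses brace connectivity to find three internally disjoint $M$-alternating paths from $a_3$ to three of the $V_2$-vertices of $G'$ and three from $b_3$ to three of the $V_1$-vertices, chosen disjointly and compatibly with the matching; the standard fan/Menger argument in matching-covered graphs (together with switching along alternating cycles to fix parities and maintain conformality, as in the strong-cross reduction) then assembles a conformal bisubdivision of $K_{3,3}$ with $C$ as a subgraph. Verifying that all these paths can be chosen simultaneously disjoint and that the union stays conformal — rather than merely that the individual pieces are alternating — is the delicate point, and I expect it to be where most of the proof's length goes; it is plausibly here that one actually invokes a rerouting lemma proved in Section~\ref{sec:nonplanar} on paths and matching crosses over $4$-cycles.
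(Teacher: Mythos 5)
Your backward direction is essentially the paper's, and is correct: a $4$-cycle $C$ in a $K_{3,3}$-bisubdivision $H$ uses four unsubdivided branch edges spanning two branch vertices in each colour, and the two branch paths through the remaining branch vertices $a_3,b_3$ give the cross once the matching is adjusted so that $H$ is $M$-conformal.

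The forward direction is where your plan goes off course. You correctly reduce to a strong cross with two type-$3$ (even-length) paths $P_1,P_2$ and observe that $G' = C+P_1+P_2$ has exactly four degree-$3$ vertices, so $G'$ alone is not a $K_{3,3}$-bisubdivision. But the missing two branch vertices are \textbf{not} to be sought outside $G'$: they are the \emph{inner} vertices $u_1\in\V{P_1}$ and $u_2\in\V{P_2}$, which can be chosen in opposite colour classes and in the opposite colour to the endpoints of their own path. With $u_1,u_2$ as the fifth and sixth branch vertices, each $P_i$ splits into two branch paths, the four edges of $C$ give four more, and the \emph{only} thing still needed is a single internally $M$-conformal $u_1$--$u_2$ path $R$ that avoids the unique matching edge of $C$ and is internally disjoint from $P_1+P_2$; the brace hypothesis (via $2$-extendibility) produces such a $Q$ from which $R$ is extracted as the subpath between the last vertex of $Q$ on $P_1$ and the next vertex on $P_2$, using the fact that $M$-alternation forces those intersection vertices to have the right colours. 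Then $C+P_1+P_2+R$ is the required $M$-conformal $K_{3,3}$-bisubdivision. Your plan to pick a matched pair $a_3,b_3$ in $\V{B}\setminus\V{G'}$ and route six disjoint $M$-alternating paths is not merely harder to carry out — it targets the wrong shape: $G'$ already has four of the six branch vertices in place, and what a $K_{3,3}$ bisubdivision containing $C$ demands is that $P_1$ and $P_2$ each be broken at a branch vertex, not that two fresh branch vertices of degree three be attached to $G'$ by six further paths. As a small further slip, with pegs coloured cyclically $(V_1,V_2,V_1,V_2)$ the vertices $s_1$ and $t_1$ lie in the \emph{same} colour class, which is exactly why each $P_i$ has even length; you stated the opposite while still drawing the correct parity conclusion.
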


As a last piece, we need to be able to make use of strong matching crosses over certain conformal cycles to find (not necessarily strong) matching crosses more easily.
This is especially useful when paired with \cref{prop:4cycleK33} but also finds applications in other places within this chapter. 

\begin{lemma}\label{lemma:extendingcrosses}
	Let $B$ be a brace and $C$ a conformal cycle.
	If $C'\neq C$ is a conformal cycle such that there exists an edge $e\notin\E{C}$ with both endpoints on $C$ with $C'\subseteq C+e$, and there is a perfect matching $M$ of $B$ and $M$-alternating paths $L$ and $R$ that form a matching cross over $C'$, then there exists a matching cross over $C$ in $B$ that does not use $e$.
\end{lemma}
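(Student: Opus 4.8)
The goal is to prove \cref{lemma:extendingcrosses}: given a matching cross over $C'\subseteq C+e$ (where $e\notin\E C$ has both ends on $C$), produce a matching cross over $C$ itself that avoids $e$. The plan is to analyse how the edge $e$ splits $C$ and where the pegs of the given cross sit relative to the two arcs of $C$ cut off by the endpoints of $e$.

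\medskip

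\textbf{Setup.} Write $u,v$ for the two endpoints of $e$ on $C$; they divide $C$ into two arcs $C = A \cup B$ with $A\cap B = \{u,v\}$. Since $C' \subseteq C+e$ and $C'$ is a cycle different from $C$, we have $e \in \E{C'}$ and $C' = e \cup A$ or $C' = e \cup B$; say without loss of generality $C' = e \cup A$. Let $M$ be the perfect matching and $L,R$ the $M$-alternating paths realizing the cross over $C'$ with pegs $s_1,s_2,t_1,t_2$ appearing in this cyclic order on $C'$. I would first dispose of the easy case: if none of the four pegs is an interior vertex of the arc $B$ and none of the paths $L,R$ uses $e$ as an end-edge or passes through $u$ or $v$ in a way forcing $e$, then $L,R$ already form a matching cross over $C$ (the pegs lie on $A \subseteq C$, and the cyclic order on $C$ restricted to $A$ is the same as on $C'$), and we are done — note $e\notin\E L\cup\E R$ automatically since $L,R$ are internally disjoint from $C'\ni e$... wait, $e$ could still be an end-edge. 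So the genuine content is handling the case where one peg is $u$ or $v$ and the incident cross-path leaves along $e$.

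\medskip

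\textbf{Main case: a cross-path uses $e$.} Suppose, say, $L$ has $s_1 = u$ and its first edge is $e = uv$. Then $v$ is an internal vertex of $L$, hence $v \notin \{s_2,t_1,t_2\}$ and $v$ is not on $R$. Let $L = u\,e\,v\,L'$ where $L'$ is the portion of $L$ from $v$ to $t_1$. Now I would reroute: since $v$ lies on the arc $B$ (between $u$ and the other endpoint), walk along $B$ from $u$ to $v$; call this sub-arc $B_{uv} = B$. Replace $L$ by $\widehat L := B_{uv} \cup L'$, a walk from $u$ to $t_1$ using only the arc $B$ of $C$ and then $L'$. The crucial points to check are: (i) $\widehat L$ is a path (the only possible collision is between an interior vertex of $B$ and a vertex of $L'$; if such a collision occurs, shortcut $\widehat L$ to the first such vertex and correspondingly re-choose the peg $t_1$ to be that collision vertex moved onto $C$ — one must verify this preserves the cyclic peg order, which it does because $B$ and $A$ are on "opposite sides"); (ii) $\widehat L$ is $M$-alternating — here one uses that $C$ is conformal and that along the arc $B$ the matching $M$ need not alternate, so this requires care: instead of taking the full arc $B$, I would take a suitable $M$-alternating re-route, possibly switching $M$ along the alternating cycle $e \cup B$ (which is $M$-alternating because $C' = e\cup A$ and $C$ are both conformal, forcing $e\cup B$ to be alternating) to a new matching $M^\ast$ for which the arc $B$ behaves correctly; (iii) $e \notin \E{\widehat L}\cup\E R$. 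After the matching switch, $R$ remains alternating for $M^\ast$ provided $R$ avoids $B$ and $e$, which it does.

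\medskip

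\textbf{The obstacle and the bookkeeping.} The main obstacle is case analysis bloat: each of $L$ and $R$ could independently have an endpoint at $u$ or $v$ and could independently use $e$, and the pegs could coincide with $u,v$ in several configurations; additionally the "type" of each path (type 1/2/3 in the sense defined before \cref{def:matchingcross}) interacts with whether switching along $e\cup B$ is needed and whether end-edges lie in $M$. I expect the cleanest organization is: (1) reduce to the situation where $e\in\E L\cup\E R$ (otherwise trivial, modulo checking pegs lie in the right cyclic order on $C$); (2) observe $e$ cannot be an interior edge of $L$ or $R$ since those are internally disjoint from $C'\ni e$ — so $e$ is an end-edge, forcing one peg to be $u$ or $v$; (3) at most one of $L,R$ can use $e$ since they are vertex-disjoint and $e$ has only two endpoints; if one uses $e$ as its terminal edge with the peg at (say) $u$, then the other endpoint $v$ is free; (4) perform the reroute-and-switch described above, using conformality of $C$ and $C'$ to guarantee $e\cup B$ is $M$-alternating, then take $M^\ast := M \mathbin{\Delta} \E{e\cup B}$ if needed; (5) verify the resulting $\widehat L$ (after shortcutting) together with $R$ is a matching cross over $C$ avoiding $e$, checking the cyclic peg order via the fact that interior vertices of $B$ sit on $C$ "between" $u$ and $v$ on the side not containing $s_2,t_1,t_2$. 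The length/parity remarks following \cref{def:matchingcross} are the tool for controlling which end-edges lie in the matching after the switch.
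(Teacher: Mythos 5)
There is a genuine gap: you have the difficulty of this lemma backwards. You invest almost all your effort in the case where $e$ appears as an end-edge of $L$ or $R$, but this case cannot occur. The paths $L,R$ of a matching cross over $C'$ are, by definition, internally disjoint from $C'$. Since $e\in\E{C'}$, \emph{both} endpoints of $e$ lie on $C'$. If $e$ were an end-edge of, say, $L$ with peg $u$, then $v$ would be an internal vertex of $L$ lying on $C'$, contradicting internal disjointness; and $e$ cannot be an interior edge of $L$ for the same reason. So $e\notin\E{L}\cup\E{R}$ always, and the case you call ``the genuine content'' is vacuous.

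The actual content of the lemma, which your ``easy case'' silently skips over, is that $L$ and $R$, while internally disjoint from $C'$, may very well intersect the \emph{interior} of the arc $B=C-\E{C'}$ that $C$ has and $C'$ does not. You state that in the easy case ``$L,R$ already form a matching cross over $C$'' because the pegs lie on $A\subseteq C$ in the right cyclic order, but this does not verify that $L,R$ are internally disjoint from $C$ — that is precisely what can fail. The paper's proof addresses this directly and without any matching switch: it observes that the peg order is preserved, and then \emph{shortcuts} $L$ (and, if necessary, $R$) to a subpath $x_1Lx_2$ whose endpoints are the last intersection with $C$ separated from $w_1$ by the pegs of $R$ and the next intersection with $C$; a subpath of an $M$-alternating path is $M$-alternating for the same $M$, so no re-matching is needed. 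Your proposed rerouting through the arc $B$, by contrast, requires that $e\cup B$ be an $M$-alternating cycle, which you justify by appeal to conformality of $C$ and $C'$ — but conformality of a cycle (existence of \emph{some} perfect matching avoiding it) does not imply it is $M$-conformal for the \emph{particular} $M$ realizing the given cross, so that step is unsupported as well.
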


\begin{proof}
	Since $C'\subseteq C+e$, $C'-e$ forms a subpath of $C$, hence the order of the endpoints of $L$ and $R$ on $C$ is the same as the order of these vertices on $C'$.
	So in case $L$ and $R$ are internally disjoint from $C$, they immediately form a matching cross over $C$ as well.
	Suppose exactly one of these paths, say $L$, intersects $C$.
	Let $v_R$ and $w_R$ be the endpoints of $R$ and $v_1$, $w_1$ be the endpoints of $L$.
	Then let $x_1$ be the last vertex of $L$ on $C$ we encounter when traversing along $L$ starting with $v_1$ such that $x_1$ is separated from $w_1$ on $C$ by $v_2$ and $w_2$.
	Next let $x_2$ be the first vertex of $L$ that lies on $C$ we encounter after $x_1$.
	Then $x_2$ must belong to the same component of $C-v_2-w_2$ as $w_1$ and thus $x_1$ and $x_2$ are separated on $C$ by $v_2$ and $w_2$.
	Since $L$ and $R$ are disjoint and $M$-alternating, so are $x_1Lx_2$ and $R$ and thus we have found a matching cross over $C$.
	So now assume that also $R$ intersects $C$.
	In this case, let $y_1$ be the last vertex we encounter when traversing along $R$ starting in $v_2$ such that $y_1$ and $w_2$ belong to different components of $C-x_1-x_2$.
	Then let $y_2$ be the first vertex of $C$ we encounter on $R$ after $y_1$.
	By choice of $y_1$, $y_1$ and $y_2$ must belong to different components of $C-x_1-x_2$ and $y_1Ry_2$ is internally disjoint from $C$.
	Hence $x_1Lx_2$ and $y_1Ry_2$ form a matching cross over $C$.
	Moreover, since $e$ is not contained in either $L$ or $R$, we have found a matching cross over $C$ which does not contain $e$.
\end{proof}

The four results above combined yield a short proof of \cref{prop:twopathsinbraces}.

\begin{proof}[Proof of \Cref{prop:twopathsinbraces}]
	Let $B$ be a brace and $C$ a conformal cycle in $B$.
	Suppose $B$ is Pfaffian, then \cref{prop:pfaffiancrosses} immediately yields both directions of our claim.
	Hence we may assume $B$ to be non-Pfaffian.
	If $C$ is a $4$-cycle, then \cref{prop:4cycleK33} guarantees the existence of a conformal cross over $C$ in $B$.
	So we may assume $C$ to have length at least six.
	Let $P$ be a subpath of $C$ of length three, so $P$ consists of exactly four vertices, two of each colour class.
	Let $a\in V_1$ and $b\in V_2$ be the endpoints of $P$.
	If the edge $ab$ does not exist in $B$ we introduce it, please note that introducing an edge does not change the status of $B$ being a brace, nor can $B+ab$ be $K_{3,3}$-free if $B$ is not.
	Hence $C'\coloneqq P+ab$ is a $4$-cycle in $B+ab$ such that $C'\subseteq C+ab$.
	By \cref{prop:4cycleK33} there is a conformal bisubdivision $L$ of $K_{3,3}$ in $B+ab$ that has $C'$ as a subgraph.
	By \cref{lemma:goodcrossesmeanK33} there is a conformal cross over $C'$.
	Please note that, with $L$ being a bisubdivision of $K_{3,3}$, we may choose a perfect matching $M$ of $B$ such that $L$ is $M$-conformal and $ab\notin M$.
	An application of \cref{lemma:extendingcrosses} now yields a matching cross over $C$ in $B+ab$ that does not contain $ab$.
	With $ab\notin M$ this means that there is a matching cross over $C$ in $B$.
\end{proof}

With \cref{prop:twopathsinbraces}, we have a tool that can help us to obtain an algorithmic solution of $2$-MLP.
In fact, it is \cref{prop:4cycleK33}, which provides the important insight.
\Cref{thm:twopaths} can be used to solve the $2$-Linkage Problem by introducing a small local construction.
In \cref{sec:algorithm}, we describe how a similar construction can be used for the $2$-MLP.

\section{Matching Crosses in Planar Braces}\label{sec:planar}

In this section we establish the base case of \cref{prop:twopathsinbraces} in form of an exact characterisation of the existence of matching crosses over conformal cycles in planar braces.

\begin{proposition}\label{prop:planarcrosses}
	Let $B$ be a brace and $C$ a conformal cycle in $B$, then there exists a strong matching cross over $C$ in $B$ if and only if $C$ does not bound a face.
\end{proposition}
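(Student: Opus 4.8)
The plan is to prove both directions of the equivalence, with the forward direction (a face-bounding cycle has no cross) being the easy topological one and the converse (a non-facial conformal cycle admits a \emph{strong} matching cross) requiring the real work. For the forward direction, suppose $C$ bounds a face in a planar embedding of $B$. Any matching cross over $C$ would give paths $P_1$, $P_2$ drawn in the complementary region, with endpoints interleaved on $C$; by the Jordan curve theorem two such internally disjoint paths with interleaved endpoints on the boundary of a disk must intersect, contradicting their disjointness. Since $C$ bounds a face, $B$ is in particular planar, so this argument applies verbatim; the ``strong'' qualifier is irrelevant here since there is no cross at all.

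For the converse, assume $C$ does not bound a face of $B$. First I would fix a planar embedding of $B$ (the proposition only makes sense for planar $B$, or else ``bounds a face'' should be read as ``$B$ is planar and $C$ bounds a face''; I will assume $B$ is planar). Since $C$ is a cycle that is not facial, it separates the plane into two open regions $R_1$, $R_2$ each of which contains at least one vertex of $B$ in its interior. Let $B_1 = C + (\text{everything in } \overline{R_1})$ and $B_2 = C + (\text{everything in } \overline{R_2})$; both are matching covered subgraphs sharing exactly $C$. Now I want to produce, inside each side, an $M$-alternating path across $C$ — two such paths, one per side, with interleaved endpoints, will form the cross, and the fact that $C$ is conformal (with witnessing perfect matching $M$, which can be taken to restrict to perfect matchings on $B - \V{C}$ on each side) together with $B$ being a brace (hence highly connected in the matching sense, in particular $2$-extendable and $3$-connected as a graph) should force each side to contain an $M$-alternating path from one arc of $C$ to the ``opposite'' arc. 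The key local tool should be that in a brace, for any two edges of $C$, some perfect matching contains both (brace property on matchings of size two), and that connectivity inside $R_i \cup C$ routes a path between prescribed arcs.

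The heart of the argument is controlling the \emph{pegs} so that the resulting cross is strong: the pegs must be split evenly between $V_1$ and $V_2$, and the two paths must be of the same type. Here I would exploit the colour-alternation of bipartite matchings together with the remark in the paper that if the two paths end up of different types one can switch $M$ along an alternating cycle formed by a type-$2$ path and an arc of $C$ to repair this; and that a type-$1$/type-$2$ path has odd length (so its endpoints lie in different colour classes, automatically balancing two of the four pegs) while type-$3$ has even length. So after obtaining \emph{some} matching cross I would massage it: if a path is longer than needed, take a subpath between the first and last vertices on $C$ whose ``sides'' on $C$ are correct (as in the proof of \cref{lemma:extendingcrosses}), and adjust $M$ by switching along alternating cycles to equalise types and hence colour-balance the pegs.

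The main obstacle I expect is exactly this last bookkeeping: guaranteeing that the two independently-constructed alternating paths on the two sides of $C$ can be simultaneously made to have matched types and a colour-balanced peg set, rather than just producing a (possibly non-strong) cross. The paper seems to anticipate this, since it proves \cref{prop:pfaffiancrosses} with only ``matching cross'' and separately notes that conformal crosses may be assumed strong; I would structure the proof so that the non-strong cross is obtained by a connectivity/Menger-type argument inside each side of the embedding, and then invoke the alternating-cycle switching and subpath-truncation moves to upgrade it to a strong cross. The second-largest obstacle is making precise, in the planar-brace setting, the claim that a non-facial cycle must have vertices of $B$ strictly inside both complementary regions and that each region together with $C$ is itself matching covered and conformal — this is where I would lean on \cref{lemma:conformalfaces} and the uniqueness/structure of tight cuts to rule out degenerate configurations.
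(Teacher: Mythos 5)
Your high-level framework matches the paper's: the forward direction is the Jordan-curve argument, and for the converse you fix a planar embedding, note that a non-facial $C$ is separating, and aim to route one alternating path through each side of $C$. Where your proposal diverges from the paper, and where it has genuine gaps, is in the construction of those paths and in the upgrade to a \emph{strong} cross.

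The most serious gap is that your ``connectivity inside $R_i \cup C$ routes a path between prescribed arcs'' step tacitly assumes that $\Inner{B}{C}$ and $\Outer{B}{C}$ are braces. They are matching covered (Lemma~\ref{lemma:planarnonbraceparts}), but need not be braces, and the whole difficulty of the paper's proof lives in the non-brace case (Lemma~\ref{lemma:planarnonbracecrosses}): when $\Inner{B}{C}$, say, has a non-trivial tight cut, the desired internally $M$-conformal path from a chosen pair of pegs to the opposite arc of $C$ may genuinely not exist inside $\Inner{B}{C}$ for your first choice of $M$ and terminals. The paper handles this by locating a maximal tight cut $\CutG{B'}{X}$, showing via Lemma~\ref{lemma:planartightcutsthroughsepcycles} that it must cross $C$ in a controlled way, choosing the pegs relative to the cut, and then either finding a path avoiding a critical matched edge or exhibiting a second tight cut that forces $\Inner{B}{C}$ to have a facial cycle through both chosen pegs, from which a new matching $M'$ and path are manufactured. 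This case analysis is not something a generic ``Menger-type connectivity'' argument supplies, and your proposal never touches it.

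The second gap is the ``obtain some matching cross, then massage it into a strong one by alternating-cycle switching and truncation.'' The switching remark in the paper only applies to \emph{conformal} crosses, and even there only repairs the situation where exactly one path is of type $2$: in that case the type-$2$ path plus an arc of $C$ forms an alternating cycle. For a general matching cross, you can have one path of type $1$ (odd length, endpoints in different colour classes) and the other of type $3$ (even length, endpoints in the same colour class); then the peg multiset is unbalanced and there is no alternating cycle along which to switch. Truncation as in Lemma~\ref{lemma:extendingcrosses} keeps the cross a cross but does not change the type parities, so it cannot fix this either. The paper instead builds the cross so that it is strong from the outset: it chooses the four pegs balanced between $V_1$ and $V_2$ before invoking Lemma~\ref{lemma:braceears}, and, when $C$ is a $4$-cycle (where Lemma~\ref{lemma:braceears} does not apply because there is no non-adjacent pair of opposite colours), it finds a conformal bisubdivision of the cube on each side via Corollary~\ref{lemma:cubeorK33} and reads off the strong cross explicitly from their union. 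Your proposal does not address the $4$-cycle case separately, and this is precisely where your ``route a path between opposite arcs'' step has no opposite arcs to work with.
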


Since every matching cross over $C$ in $B$ also is an ordinary\footnote{Ordinary here means a standard undirected cross in the sense of \cref{thm:twopaths}.} $C$-cross, the existence of such a cross immediately certifies that it is impossible to draw $B$ such that $C$ bounds a face.
We call a cycle $C$ in a planar graph $B$ \emph{separating} if $B$ cannot be drawn in a way such that $C$ bounds a face.
So we only need to show the reverse direction.

Before we can continue, some additional information about braces is necessary.
Let $G$ be a graph and $k$ be a positive integer.
We say that $G$ is \emph{$k$-extendible} if it is connected, has at least $2k+2$ vertices, and for every matching $F\subseteq\E{G}$ there exists a perfect matching $M$ of $G$ with $F\subseteq M$.

\begin{theorem}[\cite{lovasz2009matching}]\label{thm:braces}
	A bipartite graph $B$ is a brace if and only if it is either isomorphic to $C_4$, or it is $2$-extendible.
\end{theorem}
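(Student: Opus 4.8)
The plan is to show the two directions separately, using the tight-cut machinery already introduced. First I would handle the easy implication: if $B$ is isomorphic to $C_4$ then it is trivially a brace (it is connected, matching covered, and has only four vertices so there is no shore of size $\geq 2$ whose complement also has size $\geq 2$, hence no non-trivial tight cut). If $B$ is $2$-extendible, then in particular taking $F=\emptyset$ shows $B$ has a perfect matching, and taking $F=\{e\}$ for any edge $e$ shows every edge is admissible; together with connectedness this makes $B$ matching covered. It remains to rule out non-trivial tight cuts. Suppose $\Cut{X}$ is a tight cut with $\Abs{X}\geq 2$ and (by symmetry) $\Abs{\V{B}\setminus X}\geq 2$. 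Since $B$ is bipartite and matching covered, a counting argument on the colour classes forces $\Abs{X\cap V_1}=\Abs{X\cap V_2}$ is impossible while $\Cut{X}$ is tight unless the cut has a very restricted form; more usefully, pick two edges $e_1,e_2$ of the cut $\Cut{X}$ that can be completed to a matching of size two (this uses that both shores are large enough and $B$ is matching covered, so $\Cut{X}$ contains at least two edges and one can choose them vertex-disjoint after possibly replacing one by another cut edge). By $2$-extendibility there is a perfect matching $M\supseteq\{e_1,e_2\}$, so $\Abs{\Cut{X}\cap M}\geq 2$, contradicting tightness. Hence $B$ has no non-trivial tight cut and is a brace.

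For the converse, suppose $B$ is a brace that is not isomorphic to $C_4$. Being a brace it is bipartite and matching covered, hence connected with a perfect matching; I must show $\Abs{\V{B}}\geq 6$ and that every matching $F$ of size at most two extends to a perfect matching. The size bound follows because a bipartite matching-covered graph on four vertices that is not $C_4$ would have to be $K_{2,2}$ minus structure or have multiplicities, and in fact the only bipartite brace on $4$ vertices is $C_4$; anything larger with a perfect matching has at least $6$ vertices since colour classes are balanced. For the extension property, the key step is the following: if $F=\{e\}$, admissibility (matching coveredness) gives the extension directly. If $F=\{e_1,e_2\}$ is a matching of size two, I would argue by contradiction. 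Let $U=\V{B}\setminus\V{F}$; if $B-\V{F}$ had no perfect matching, then by König/Hall there is a set witnessing the deficiency, and one translates this into a barrier, which in a matching-covered bipartite graph corresponds to a tight cut. More precisely, the standard fact (Lovász, \cite{lovasz1987matching}) is that a bipartite matching-covered graph $B$ fails to be $2$-extendible precisely when it has a non-trivial tight cut; so the contrapositive of exactly what we want is the definition of a brace. I would cite or reprove this equivalence: given a size-two matching $F=\{s_1t_1,s_2t_2\}$ that does not extend, Hall's condition fails in $B-\V{F}$, yielding a set $S\subseteq V_1\setminus\V{F}$ with $\Abs{\NeighboursG{B-\V{F}}{S}}<\Abs{S}$; adding back appropriate terminal vertices and using that $B$ itself has a perfect matching (so Hall holds in $B$) pins down a set $X$ with $\Cut{X}$ tight and both shores of size $\geq 2$, contradicting that $B$ is a brace.

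The main obstacle I anticipate is the bookkeeping in the converse direction: translating "some size-$\leq 2$ matching does not extend'' into "there is a non-trivial tight cut'' requires carefully choosing the shore $X$ from the Hall violator and verifying both that $\Cut{X}$ meets \emph{every} perfect matching in exactly one edge (tightness) and that $\Abs{X},\Abs{\V{B}\setminus X}\geq 2$. The tightness verification is where one genuinely uses that $B$ is bipartite and matching-covered — in a matching-covered bipartite graph, a set $X$ with $\Abs{X\cap V_1}-\Abs{X\cap V_2}=1$ and with no edge of any perfect matching "wasted'' inside gives a tight cut, and one must check the Hall violator produces such an $X$ rather than a degenerate one. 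An alternative, cleaner route is to invoke the known theorem of Lovász and Plummer that equates $2$-extendibility of bipartite graphs with bracehood directly, in which case the proof reduces to the $C_4$ edge case plus the vertex-count observation; since the excerpt cites \cite{lovasz2009matching} for this very statement, presenting the short argument above and deferring the tight-cut/barrier equivalence to that reference is the expedient choice.
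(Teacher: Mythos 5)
The paper states this theorem with a citation to Lovász and Plummer's \emph{Matching Theory} \cite{lovasz2009matching} and offers no proof of its own, so there is no in-paper argument to compare yours against. Your outline is nonetheless substantively on the right track, and the main thing worth saying is where it would need to be firmed up to become self-contained. In the forward direction, the only soft spot is the claim that a non-trivial tight cut admits two vertex-disjoint edges. This does not follow from ``the shores are large enough'' alone; the clean justification is that if every edge of $\Cut{X}$ were incident to one vertex $v$, then $v$ would separate the rest of its shore from the other shore, contradicting the $2$-connectivity that matching-coveredness already gives via \cref{thm:extendibilitytoconnectivity} (with $k=1$), and then K\H{o}nig on the bipartite cut produces two disjoint edges; $2$-extendibility then yields a perfect matching containing both, violating tightness.

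The converse is the genuinely nontrivial direction, and as written you only gesture at it. The missing content is exactly the ``bookkeeping'' you flag: take a Hall violator $S\subseteq V_1\setminus\V{F}$ for $B-\V{F}$ with $F=\Set{s_1t_1,s_2t_2}$. Because $B$ has a perfect matching, $\Abs{S}\le\Abs{\Neighbours{S}}$, and the violation gives $\Abs{\Neighbours{S}}\le\Abs{S}+1$. The case $\Abs{\Neighbours{S}}=\Abs{S}$ is impossible in a connected matching-covered graph: then $\Neighbours{S}$ is saturated by $S$ in every perfect matching, so any edge from $\Neighbours{S}$ into $V_1\setminus S$ is inadmissible and cannot exist, disconnecting $S\cup\Neighbours{S}$ from the rest. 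Hence $\Abs{\Neighbours{S}}=\Abs{S}+1$, and $X\coloneqq S\cup\Neighbours{S}$ satisfies the hypotheses of \cref{lemma:ktightmajorityminority}, giving a tight cut; it is non-trivial since $S\ne\emptyset$ forces $\Abs{X}\ge 3$ while $s_1,s_2\in\Complement{X}$ gives $\Abs{\Complement{X}}\ge 2$. This contradicts bracehood and completes the implication. Since the paper itself defers to the reference, citing it is an acceptable shortcut, but the converse as you have written it is an outline, not a proof.
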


\begin{theorem}[\cite{plummer1980n}]\label{thm:extendibilitytoconnectivity}
	Let $k\in\N$ be a positive integer.
	Then every $k$-extendible graph is $\Brace{k+1}$-connected.
\end{theorem}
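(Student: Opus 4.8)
I would prove the contrapositive: assuming $G$ is not $(k+1)$-connected, I will produce a matching $F$ with $|F|\le k$ that lies in no perfect matching, so $G$ is not $k$-extendible. If $G$ has no perfect matching or $|V(G)|<2k+2$, then $G$ already fails to be $k$-extendible for a trivial reason, so I may assume $G$ has a perfect matching, $|V(G)|\ge 2k+2$, and hence (as $|V(G)|>k+1$) that $G$ has a vertex cut of size at most $k$; let $S$ be a \emph{minimum} one, put $t:=|S|\le k$, and let $C_1,\dots,C_r$ with $r\ge 2$ be the components of $G-S$. The basic fact I would use throughout is that $N_G(C_i)=S$ for every $i$, by minimality of $S$: otherwise $N_G(C_i)\subsetneq S$ would already separate $C_i$ from the nonempty remainder of $G-S$.

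\textbf{The Hall-type reduction lemma.} The key tool is: if $A$ is a union of some of the $C_i$ with $|A|\ge t$, then $S$ (or $S$ minus one prescribed vertex) can be saturated by a matching using only edges between $S$ and $A$. To see this one checks Hall's condition in the bipartite graph between $S$ and $A$: if it failed, witnessed by $T\subseteq S$ with $|N_G(T)\cap A|<|T|$, then $W:=(S\setminus T)\cup(N_G(T)\cap A)$ would satisfy $|W|<t$ and separate $A\setminus N_G(T)$ — nonempty since $|N_G(T)\cap A|<|T|\le t\le|A|$ — from $T$, which reaches the rest of $G$ only through the components outside $A$ (as $N_G(C_\ell)=S\supseteq T$); this contradicts $t=\kappa(G)$.

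\textbf{The main line of argument.} Applying the lemma with $A=V(G)\setminus S$ (allowed since $|V(G)\setminus S|\ge t+2$), I fix a matching $F_0$ of size $t\le k$ saturating $S$ by edges into the components; write $X_i:=V(F_0)\cap C_i$, so $G-V(F_0)=\bigsqcup_i(C_i\setminus X_i)$. If some $C_i\setminus X_i$ has odd order — in particular if an untouched component has odd order — then $G-V(F_0)$ has an odd component, hence no perfect matching, and we are done. Otherwise every $C_i\setminus X_i$ is of even order; if some component $C_j$ is untouched, I pick $s\in S$, matched by $F_0$ into some $C_i$, and re-route $s$ along an edge into $C_j$ — possible since $N_G(C_j)=S$ and every vertex of $C_j$ is free — obtaining a matching of size $t$ leaving $|C_i|-|X_i|+1$ vertices in $C_i$, an odd number, so $G$ minus the new matching has an odd component.

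\textbf{The remaining case and the expected obstacle.} What survives is the ``blocked'' situation: every component is touched, every $C_i\setminus X_i$ is of even order, and for each $s\in S$ matched into $C_{i(s)}$ one has $N_G(s)\cap C_j\subseteq X_j$ for all $j\ne i(s)$ (equivalently, no single re-routing helps). I expect handling this to be the hard part. The plan is to show that the blocked situation forces a component $C^*$ with $|C^*|\ge t$: a free vertex $b$ exists (since $\sum_i|C_i\setminus X_i|=|V(G)|-2t\ge 2$), and the blocked condition confines the $S$-neighbours of $b$ to $\{s:i(s)=j\}$, where $b\in C_j$, so $t\le\delta(G)\le\deg_G(b)\le(|C_j|-1)+|X_j|$; pressing this inequality — taking $b$ in a component with the most free vertices, and noting separately that if $b$ has no $S$-neighbour at all then $|C_j|\ge t+1$ outright — should pin down some component of size at least $t$. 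Granting that, set $D:=V(G)\setminus(S\cup C^*)$ and invoke the lemma: either saturate $S$ inside $C^*$ (good when $|D|$ is odd, as then $D$ is an odd component of $G$ minus the matching), or saturate all but one vertex of $S$ inside $C^*$ and route the last vertex of $S$ into $D$, leaving $|C^*|-(t-1)$ vertices in $C^*$; a short parity check (using $|C^*|\equiv|V(G)|-t-|D|\pmod 2$) then shows that in every parity case the matching built this way leaves a component of odd order, contradicting $k$-extendibility. The delicate points are making the ``blocked $\Rightarrow$ large component'' step precise and checking that the parity distinction in the large-component step is exhaustive; everything else is the short Hall-and-parity reasoning above, together with the routine observation that a graph violating any defining clause of $k$-extendibility trivially fails to be $k$-extendible.
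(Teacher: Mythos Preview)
The paper does not prove this theorem; it is quoted with a citation to Plummer and used as a black box, so there is no in-paper argument to compare against. I will therefore just assess your attempt directly.

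Your overall strategy and your Hall-type lemma are sound, but the ``blocked'' case contains a genuine gap: the assertion that blocking forces a component of size at least $t$ is false. Take $t=6$ with two components $C_1,C_2$ each of size $5$ and $|X_1|=|X_2|=3$; all residues are even, both components are touched, your degree inequality $t\le(|C_j|-1)+|X_j|$ yields only $|C_j|\ge 4$, and the blocked adjacency pattern can be arranged consistently with $\kappa(G)=t$. Since your parity endgame needs to push all of $S$ into a single component when $|D|$ is odd, the large-component plan cannot be completed here.

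The repair is shorter than the route you sketched and is already latent in your own lemma. The same smaller-cut contradiction, applied with $A=C_j$ a single component, shows that for every $T\subseteq S$ with $|T|\le|C_j|$ one has $|N_G(T)\cap C_j|\ge|T|$. Hence for \emph{any} target vector $(m_1,\dots,m_r)$ with $0\le m_j\le|C_j|$ and $\sum_j m_j=t$, and any partition $S=S_1\cup\dots\cup S_r$ with $|S_j|=m_j$, one can saturate each $S_j$ into its own $C_j$. Now, from your $F_0$ with all residues even, pick $j_1$ with a free vertex and $j_2\ne j_1$ that is touched; such a pair exists, since some component has a free vertex (as $|V(G)|-2t\ge 2$), some component is touched (as $t\ge 1$), and if these could only coincide then every other component would be untouched and hence entirely free, a contradiction. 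Re-saturate with $m_{j_1}=|X_{j_1}|+1$, $m_{j_2}=|X_{j_2}|-1$, and all other $m_j=|X_j|$. The new residue $|C_{j_1}|-m_{j_1}$ is odd, so this matching of size $t\le k$ fails to extend. This one-step perturbation replaces the large-component search entirely.
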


Extendibility in bipartite graphs can be expressed in many different ways.
Of particular interest for us will be the existence of disjoint alternating paths, a property similar to strong $k$-connectivity in digraphs.

\begin{theorem}[\cite{plummer1986matching,aldred2003m}]\label{thm:bipartiteextendibility}
	Let $B$ be a bipartite graph and $k\in\N$ a positive integer.
	The following statements are equivalent.
	\begin{enumerate}
		\item $B$ is $k$-extendible.
		\item $\Abs{V_1}=\Abs{V_2}$, and for all non-empty $S\subseteq V_1$, $\Abs{\NeighboursG{B}{S}}\geq \Abs{S}+k$.
		\item For all sets $S_1\subseteq V_1$ and $S_2\subseteq V_2$ with $\Abs{S_1}=\Abs{S_2}\leq k$ the graph $B-S_1-S_2$ has a perfect matching.
		\item There is a perfect matching $M\in\Perf{B}$ such that for every $v_1\in V_1$, every $v_2\in V_2$ there are $k$ pairwise internally disjoint internally $M$-conformal paths with endpoints $v_1$ and $v_2$.
		\item For every perfect matching $M\in\Perf{B}$, every $v_1\in V_1$, every $v_2\in V_2$ there are $k$ pairwise internally disjoint internally $M$-conformal paths with endpoints $v_1$ and $v_2$.
	\end{enumerate}
\end{theorem}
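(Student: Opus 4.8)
The plan is to establish the equivalence of the five statements by a cycle of implications, organised as two ``triangles'' that share the vertex $(3)$: the triangle $(1)\Rightarrow(2)\Rightarrow(3)\Rightarrow(1)$ among the three ``static'' conditions, and the triangle $(3)\Rightarrow(5)\Rightarrow(4)\Rightarrow(3)$ which brings in the two alternating-path conditions. As a preliminary remark, for $k\ge 1$ each of the five conditions forces $\Perf{B}\neq\emptyset$ and $\Abs{V_1}=\Abs{V_2}$ (explicit in $(2)$; obtained from $(3)$ by deleting empty sets; and built into the intended reading of $(4)$, with $(5)$ applied only in the presence of a perfect matching), so we may fix a perfect matching throughout and, in $(2)$, read the neighbourhood bound over nonempty proper subsets of $V_1$.

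The triangle on $(1)$, $(2)$, $(3)$ is the classical deficiency-Hall characterisation of $k$-extendibility. Two of its arrows are immediate: for $(3)\Rightarrow(1)$, given a matching $F$ with $\Abs{F}=m\le k$ put $S_i\coloneqq V_i\cap\V{F}$, obtain a perfect matching $M'$ of $B-S_1-S_2$ from $(3)$, and note $F\cup M'$ is a perfect matching of $B$ containing $F$ (while connectivity of $B$ and the bound $\Abs{\V{B}}\ge 2k+2$ are forced, since a disconnected or too small $B$ already fails $(3)$ for a suitable small balanced deletion); for $(2)\Rightarrow(3)$, if $\Abs{S_1}=\Abs{S_2}=\ell\le k$ then for every nonempty $T\subseteq V_1\setminus S_1$ one has $\Abs{\NeighboursG{B-S_1-S_2}{T}}\ge\Abs{\NeighboursG{B}{T}}-\ell\ge\Abs{T}+k-\ell\ge\Abs{T}$, so Hall's theorem yields a matching saturating $V_1\setminus S_1$, which is perfect because the two sides of $B-S_1-S_2$ have equal size. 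The remaining arrow $(1)\Rightarrow(2)$ is proved contrapositively: from a nonempty proper $S\subseteq V_1$ with $\Abs{\NeighboursG{B}{S}}\le\Abs{S}+k-1$, chosen minimal, König's theorem lets one build a matching of size at most $k$ that lies in no perfect matching, contradicting $(1)$.

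For the triangle on $(3)$, $(4)$, $(5)$, fix a perfect matching $M$ and let $D(B,M)$ be the digraph on $\V{B}$ obtained by orienting every edge of $M$ from its $V_2$-endpoint towards its $V_1$-endpoint and every edge not in $M$ from its $V_1$-endpoint towards its $V_2$-endpoint. Checking alternation along walks shows that, for $v_1\in V_1$ and $v_2\in V_2$, the internally $M$-conformal paths of $B$ with endpoints $v_1$ and $v_2$ are exactly the directed $v_1$--$v_2$ paths of $D(B,M)$, and a family of them is internally disjoint in $B$ precisely when the corresponding directed paths are internally disjoint in $D(B,M)$. By Menger's theorem, for this $M$ the existence of $k$ pairwise internally disjoint internally $M$-conformal $v_1$--$v_2$ paths is then equivalent to there being no set $W\subseteq\V{B}\setminus\Set{v_1,v_2}$ with $\Abs{W}\le k-1$ separating $v_1$ from $v_2$ in $D(B,M)$. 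To prove $(3)\Rightarrow(5)$ we assume such a $W$ exists and let $R$ be the set of vertices reachable from $v_1$ in $D(B,M)-W$; following $M$-edges shows that $M$ matches $(R\cap V_1)\setminus\Set{v_1}$ into $R\cap V_2$ and $R\cap V_2$ into $(R\cap V_1)\cup(W\cap V_1)$, while every non-$M$-edge out of $R\cap V_1$ enters $W\cap V_2$, and assembling $W$ with one of $v_1,v_2$ and trimming to balance the colour classes produces $S_1\subseteq V_1$, $S_2\subseteq V_2$ with $\Abs{S_1}=\Abs{S_2}\le k$ for which $B-S_1-S_2$ has no perfect matching, contradicting $(3)$. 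Finally $(5)\Rightarrow(4)$ holds because $\Perf{B}\neq\emptyset$, and $(4)\Rightarrow(3)$ runs the reachability argument in reverse: a balanced pair $S_1,S_2$ of size at most $k$ with $B-S_1-S_2$ having no perfect matching yields, via König's theorem, a deficient set, which lifted through $D(B,M)$ for the distinguished $M$ of $(4)$ gives a separator of size at most $k-1$ between two suitably chosen terminals, contradicting $(4)$.

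The main obstacle is the quantitative bookkeeping in the arrows that cross between ``$k$ internally disjoint alternating paths'' and the deletion/neighbourhood formulations. In translating a Menger separator $W$ of $D(B,M)$ into a balanced pair $S_1,S_2$ (and back), one must keep precise track of how $W$ splits between $V_1$ and $V_2$, of whether the $M$-partners of $v_1$ and $v_2$ lie in the reachable set $R$, and of the resulting off-by-one corrections, so that the single constant $k$ matches up in ``$k$ disjoint paths'', ``$\Abs{\NeighboursG{B}{S}}\ge\Abs{S}+k$'' and ``delete $k$ vertices from each side'' simultaneously; the same delicacy appears in the König step for $(1)\Rightarrow(2)$, where one typically passes to a minimal deficient set before the blocking matching can be exhibited. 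The small and degenerate cases — $S=V_1$, disconnected $B$, and $\Abs{\V{B}}$ close to $2k$ — are routine but must be disposed of separately before the general argument applies.
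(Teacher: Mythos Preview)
The paper does not prove this theorem; it is quoted from \cite{plummer1986matching,aldred2003m} and used as a black box throughout, so there is no in-paper proof to compare your proposal against.

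That said, your outline is the standard one and is correct in spirit. The two triangles $(1)\Rightarrow(2)\Rightarrow(3)\Rightarrow(1)$ and $(3)\Rightarrow(5)\Rightarrow(4)\Rightarrow(3)$ are exactly how the cited references organise the argument: the first triangle is the deficiency form of Hall's theorem, and the second is Menger's theorem applied to the auxiliary digraph $D(B,M)$ that you describe (this digraph is in fact the $M$-direction $\DirM{B}{M}$ used elsewhere in the paper, up to splitting each vertex back into its two preimages). Your identification of the delicate step is accurate: in $(3)\Rightarrow(5)$ the conversion of a Menger separator $W$ of size $k-1$ into a balanced deletion $S_1,S_2$ of size at most $k$ requires absorbing one of the terminals (or its $M$-partner) to balance the colour classes, and the off-by-one is precisely where the constant $k$ is preserved. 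The sketch you give for $(1)\Rightarrow(2)$ is also the standard one, though you should be explicit that the minimal deficient set $S$ satisfies $\Abs{\NeighboursG{B}{S'}}\ge\Abs{S'}+k$ for all proper nonempty $S'\subsetneq S$, which is what lets you extract a matching of size $k$ inside $B[S\cup\NeighboursG{B}{S}]$ that saturates $\NeighboursG{B}{S}$ from the wrong side and hence cannot extend.
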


Since every brace is $3$-connected by \cref{thm:braces,thm:extendibilitytoconnectivity}, we can rely on the uniqueness of plane embeddings for $3$-connected graphs \cite{whitney1992congruent}.

Let $B$ be a brace and $C$ a conformal and separating cycle in $B$.
Then by Whitney's Theorem \cite{whitney1992congruent} the interior and the exterior of $C$ are the same in every drawing of $B$ (up to the choice of the outer face).
In what follows, we always assume $B$ to come with a fixed drawing to avoid ambiguity.
We denote the subgraph of $B$ induced by the interior of $C$ together with $C$ itself by $\Inner{B}{C}$ and the subgraph of $B$ induced by the exterior of $C$ together with $C$ is denoted by $\Outer{B}{C}$.
In both graphs $C$ bounds a face and, since $C$ is conformal, both graphs have a perfect matching.

\begin{lemma}\label{lemma:planarnonbraceparts}
	Let $B$ be a planar brace and $C$ a conformal and separating cycle in $B$, then $\Inner{B}{C}$ and $\Outer{B}{C}$ are matching covered.
\end{lemma}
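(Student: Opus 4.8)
The plan is to show that every edge of $\Inner{B}{C}$ (and symmetrically of $\Outer{B}{C}$) lies in some perfect matching, and that the graph is connected; connectivity is immediate since $B$ is $3$-connected and $C$ together with the interior is an induced subgraph containing $C$. The heart of the matter is admissibility of edges. I would split the edges into two groups: those on $C$ and those strictly inside. For an edge $e\in\E{C}$, note that $C$ is conformal in $B$, so there is a perfect matching $N$ of $B$ with $\E{C}\cap N$ a perfect matching of $C$; exactly one of the two ``alternating'' perfect matchings of the even cycle $C$ contains $e$, and switching $N$ along $C$ if necessary we may assume $e\in N$. Now I claim $N$ restricted to $\Inner{B}{C}$ is already a perfect matching of $\Inner{B}{C}$: every vertex in the interior is matched by $N$, and since $\Cut{X}$ for $X=\V{\Inner{B}{C}}\setminus\V{C}$ contains no edges (the interior vertices have all their neighbours inside $\Inner{B}{C}$ by planarity of the embedding), no matching edge of $N$ leaves the interior except possibly along $C$, which is fine. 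Hence $e$ is admissible in $\Inner{B}{C}$.

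For an edge $e=uv$ lying strictly in the interior (at least one endpoint not on $C$), I would argue as follows. First extend $\{e\}$ to a larger matching that saturates $C$ in a controlled way. Pick a perfect matching $M_C$ of the cycle $C$; then $\{e\}\cup M_C$ is a matching of $B$ of size $1+\Abs{\E{C}}/2$, but this need not extend to a perfect matching directly since $\Abs{\{e\}\cup M_C}$ can exceed $2$ and braces are only guaranteed $2$-extendible. Instead I would use \cref{thm:bipartiteextendibility}(iii): since $B$ is $2$-extendible, for any $S_1\subseteq V_1$, $S_2\subseteq V_2$ with $\Abs{S_1}=\Abs{S_2}\le 2$, $B-S_1-S_2$ has a perfect matching. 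The cleaner route: delete the two endpoints $u,v$ of $e$ and instead match them to each other; one of $u,v$ is in each colour class since $e$ is an edge of a bipartite graph, so by \cref{thm:bipartiteextendibility}(iii) with $S_i=\{$the endpoint of $e$ in $V_i\}$ (here $k=1$ suffices), $B-u-v$ has a perfect matching $M'$, and $M'\cup\{e\}$ is a perfect matching of $B$ containing $e$. But I must further arrange that this matching respects the interior/exterior split, i.e.\ uses exactly the right number of $C$-edges — here is where I would invoke that $\Cut{\V{C}}$, or rather the cut separating interior from exterior, is crossed by exactly the edges of $C$ in the planar embedding, so any perfect matching of $B$ restricts to perfect matchings of $\Inner{B}{C}-F$ and $\Outer{B}{C}-F$ where $F=M\cap\E{C}$ is the set of $C$-edges used, possibly after symmetric-difference adjustment along $C$ to make $F$ a perfect matching of $C$ itself.

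The technically delicate point — and the step I expect to be the main obstacle — is precisely this last bookkeeping: given a perfect matching $M$ of $B$ with $e\in M$ and $e$ in the interior, I need a perfect matching $M^\ast$ of $B$ with $e\in M^\ast$ whose restriction to $\Inner{B}{C}$ is perfect. The natural fix is that $M\triangle$(alternating cycle $C$) toggles which of the two perfect matchings of $C$ is ``used''; since $C$ is a cycle, $M\cap\E{C}$ is a (possibly empty) union of paths on $C$, and one shows that by switching along alternating cycles contained in $C\cup(M\text{-edges of the boundary})$ one can push $M$ to agree with one of the two perfect matchings of $C$ on $\E{C}$ without disturbing $e$. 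This is a standard alternating-path/alternating-cycle manipulation, but it needs care to ensure the switches stay away from $e$ and from the other shore — one uses that $C$ is separating so an alternating cycle through a $C$-edge that also enters the interior would have to exit through another $C$-edge, keeping all toggles ``local'' to $C$. Once this is settled, both $\Inner{B}{C}$ and $\Outer{B}{C}$ have every edge admissible and are connected, hence matching covered, completing the proof.
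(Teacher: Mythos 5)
Your argument takes a genuinely different route from the paper. You try to establish matching coveredness directly, edge by edge, by exhibiting for each edge $e$ of $\Inner{B}{C}$ a perfect matching of $B$ containing $e$ whose restriction to $\Inner{B}{C}$ is perfect. The paper instead uses the equivalence in \cref{thm:bipartiteextendibility}: it suffices to fix \emph{one} perfect matching $M$ of $\Inner{B}{C}$ and show every $a\in V_1$, $b\in V_2$ in $\Inner{B}{C}$ is joined by an internally $M$-conformal path. Choosing $M$ so that $C$ is $M$-conformal from the start, $2$-extendibility of $B$ gives two internally disjoint internally $M$-conformal $a$--$b$ paths, and at least one can be rerouted along the $M$-conformal arc of $C$ between its first and last $C$-vertex so as to stay inside $\Inner{B}{C}$. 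That formulation never has to reconcile two competing perfect matchings, which is exactly where your version runs into trouble.

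Your treatment of edges $e\in\E{C}$ is fine (up to the slightly misstated claim that $\Cut{X}$ ``contains no edges''; you mean no edges to the exterior). The genuine gap is the interior-edge case, and you correctly flag it as the delicate step, but the fix you sketch does not close it. Starting from a perfect matching $M$ of $B$ with $e\in M$ and a perfect matching $N$ with $C$ $N$-conformal, the symmetric difference $M\Delta N$ is a union of alternating cycles; if $e$ lies on one of them, say $O$, then after undoing every component of $M\Delta N$ except $O$ you obtain a matching $M^\ast$ with $e\in M^\ast$ and $M^\ast\cap\E{C}=(N\cap\E{C})\Delta(O\cap\E{C})$. Since the even cycle $C$ has exactly two perfect matchings, $M^\ast\cap\E{C}$ is a perfect matching of $C$ only when $O\cap\E{C}$ is either empty or all of $\E{C}$, and nothing forces $O$ to behave like that. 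Your appeal to ``alternating cycles contained in $C\cup(M\text{-edges of the boundary})$'' does not rescue this: that subgraph is $C$ with some pendant matching edges attached, and its only cycle is $C$ itself, which is $M$-alternating only if $M\cap\E{C}$ is already a perfect matching of $C$ --- precisely what you are trying to arrange. So the ``standard alternating-cycle manipulation'' you invoke is not a one-step switch, and as written the interior-edge admissibility is unproven. Either you would need a substantially more careful iterative argument (or a Hall-type counting argument using $|N_B(S)|\geq|S|+2$), or you should switch to the path-criterion formulation of \cref{thm:bipartiteextendibility}, which is what the paper does and which avoids the problem entirely.
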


\begin{proof}
	It suffices to show the claim for one of the two graphs.
	Moreover, by \cref{thm:bipartiteextendibility} it suffices to show for a single perfect matching $M$, that any pair $a\in V_1$, $b\in V_2$ of vertices is linked by an internally $M$-conformal path.
	So let us fix a perfect matching $M$ for which $C$ is $M$-conformal.
	Since $B$ is a brace, given $a\in \Vi{1}{\Inner{B}{C}}$ and $b\in \Vi{2}{\Inner{B}{C}}$, \cref{thm:bipartiteextendibility} guarantees the existence of two internally disjoint and internally $M$-conformal paths $P_1$, $P_2$ from $a$ to $b$.
	If one of these paths is disjoint from $C$, there is nothing to show.
	Hence we may assume that both meet $C$ and, by a similar argument, both of them need to contain an edge of $\Outer{B}{C}-\E{C}$.
	Let $b_1$ be the first vertex of $P_1$ on $C$ when traversing $P_1$ from $a$ towards $b$ and let $a_1$ be the last vertex of $P_1$ on $C$.
	Then $a_1$ and $b_1$ separate $C$ into two paths, one of them being $M$-conformal, let $P'$ be this path.
	Moreover, $P_1b_1$ and $a_1P_1$ are internally $M$-conformal, and all three paths are contained in $\Inner{B}{C}$.
	Hence $P_1b_1P'a_1P_1$ is an internally $M$-conformal $a$-$b$-path in $\Inner{B}{C}$, and we are done.
\end{proof}

Towards the next partial result we need to introduce some additional information on tight cuts in bipartite graphs.

Let $B$ be a bipartite graph, and $X\subseteq\V{G}$.
If $\Abs{X\cap V_1}=\Abs{X\cap V_2}$ we say that $X$ is \emph{balanced}, otherwise it is \emph{unbalanced}.
Suppose $X$ is unbalanced, then there are $i,j\in[1,2]$, and $k\in\N$ such that $\Abs{X\cap V_i}=\Abs{X\cap V_j}+k$.
In this case we call $X\cap V_i$ the \emph{majority} of $X$, denoted by $\Majority{X}$, and $X\cap V_j$ is the \emph{minority}, denoted by $\Minority{X}$.

\begin{lemma}[See the proof of Lemma 1.4 in \cite{lovasz1987matching}]\label{lemma:ktightmajorityminority}
	Let $B$ be a bipartite matching covered graph and $X\subseteq\V{B}$ where $\Abs{X}$ is odd.
	Then $\Cut{X}$ is a tight cut if and only if $\Abs{\Majority{X}}-\Abs{\Minority{X}}=1$ and $\Neighbours{\Minority{X}}\subseteq\Majority{X}$. 
\end{lemma}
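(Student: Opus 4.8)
The plan is to analyze what a tight cut $\Cut{X}$ demands on both sides of the bipartition simultaneously, using the fact that a matching covered bipartite graph has $\Abs{V_1}=\Abs{V_2}$ and that every perfect matching uses exactly one edge of $\Cut{X}$. Since $\Abs{X}$ is odd, $X$ cannot be balanced, so the notions of $\Majority{X}$ and $\Minority{X}$ are well-defined; write $i$ for the colour of the majority and $j$ for that of the minority. Fix any perfect matching $M$. The edges of $M$ either lie entirely inside $X$, entirely outside $X$, or cross $\Cut{X}$. An edge of $M$ inside $X$ matches one vertex of $X\cap V_1$ with one of $X\cap V_2$, so after removing all such edges the vertices of $X$ left uncovered (within $X$) split as $\Abs{\Majority{X}}-\Abs{\Minority{X}}$ many majority-vertices on one side and nothing on the minority side; each of those must be matched by $M$ across $\Cut{X}$. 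Hence $\Abs{\Cut{X}\cap M}\ge \Abs{\Majority{X}}-\Abs{\Minority{X}}$ for every $M$, and so tightness forces $\Abs{\Majority{X}}-\Abs{\Minority{X}}=1$ — this gives the first conjunct, and it also pins down that the unique crossing edge of any perfect matching must have its endpoint in $X$ lying in $\Majority{X}$, never in $\Minority{X}$.

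For the neighbourhood condition I would argue as follows for the forward direction. Suppose $\Cut{X}$ is tight and take any vertex $v\in\Minority{X}$; I must show every neighbour of $v$ lies in $\Majority{X}\subseteq X$. If $v$ had a neighbour $u\notin X$, then the edge $uv$ crosses $\Cut{X}$ and has its endpoint in $X$ — namely $v$ — in the minority colour class. Since $B$ is matching covered, $uv$ is admissible, so there is a perfect matching $M$ containing $uv$; but then $M$'s unique crossing edge is $uv$, whose $X$-endpoint is in $\Minority{X}$, contradicting the observation from the previous paragraph. Hence $\Neighbours{\Minority{X}}\subseteq X$, and since neighbours of a colour-$j$ vertex lie in $V_i$, in fact $\Neighbours{\Minority{X}}\subseteq X\cap V_i=\Majority{X}$.

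For the converse, assume $\Abs{\Majority{X}}-\Abs{\Minority{X}}=1$ and $\Neighbours{\Minority{X}}\subseteq\Majority{X}$; I want $\Abs{\Cut{X}\cap M}=1$ for every perfect matching $M$. The lower bound $\Abs{\Cut{X}\cap M}\ge1$ is just connectivity together with $X\ne\V{B}$ (it follows from $\Abs{X}$ being odd that $\Cut{X}\ne\emptyset$, and matching covered graphs are connected, so some edge of $M$ must meet $X$ in the right way — more carefully, count as before: the $M$-edges inside $X$ leave at least one majority vertex to be matched outward). For the upper bound, note that $\Neighbours{\Minority{X}}\subseteq\Majority{X}$ means $M$ matches every minority vertex to a majority vertex inside $X$; these matched pairs use up $\Abs{\Minority{X}}$ majority vertices, leaving exactly $\Abs{\Majority{X}}-\Abs{\Minority{X}}=1$ majority vertex, which is then the sole $X$-endpoint of a crossing edge, and no minority vertex contributes a crossing edge. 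So $\Abs{\Cut{X}\cap M}=1$, i.e. $\Cut{X}$ is tight.

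The main obstacle I anticipate is bookkeeping the counting argument cleanly in the converse direction, making sure the appeal to ``at least one majority vertex must be matched outward'' is justified without circularity (it is, because $M$ restricted to $X$ is a matching on $X$ and $\Abs{X\cap V_i}>\Abs{X\cap V_j}$ forbids a perfect matching of $B[X]$, so some vertex of $X$ is matched across the cut, and by the colour parity it must be a majority vertex). Once that is phrased carefully the two inequalities meet at $1$ and the lemma follows; everything else is the admissibility trick from matching covered-ness plus the elementary colour-class arithmetic.
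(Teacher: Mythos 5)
Your proof is correct. The paper does not reproduce an argument for this lemma (it simply cites the proof of Lemma~1.4 in Lov\'asz's paper), and the counting-plus-admissibility argument you give — bounding $\Abs{\Cut{X}\cap M}$ from below by $\Abs{\Majority{X}}-\Abs{\Minority{X}}$ via the matching restricted to $X$, deducing that the unique crossing edge of any perfect matching must leave $X$ from a majority vertex, and then using admissibility of each edge to rule out minority-to-outside edges — is exactly the standard argument behind the cited result; the converse direction likewise follows by the colour-class counting you describe.
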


\begin{lemma}\label{lemma:planartightcutsthroughsepcycles}
	Let $B$ be a planar brace, $C$ be a conformal and separating cycle in $B$ such that $B'\in\Set{\Inner{B}{C},\Outer{B}{C}}$ is not a brace, and $\CutG{B'}{X}$ be a non-trivial tight cut in $B'$.
	Then $\Abs{X\cap\V{C}}\geq 3$ and $\Abs{\Complement{X}\cap\V{C}}\geq 3$.
\end{lemma}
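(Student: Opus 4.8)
The plan is to argue by contradiction using \cref{lemma:ktightmajorityminority}. Suppose without loss of generality that $B'=\Inner{B}{C}$ is the part that is not a brace, and let $\CutG{B'}{X}$ be a non-trivial tight cut with, say, $\Abs{X\cap\V{C}}\leq 2$; by replacing $X$ with $\Complement{X}$ if necessary we may assume this holds for the shore $X$ we work with. Since $C$ bounds a face of $B'$ and $C$ is a cycle, the set $\V{C}$ is exactly the "boundary" between the interior-of-$C$ region and the rest of $B'$; more precisely, $B'-\V{C}$ is the subgraph induced on the strict interior of $C$, and every edge of $B'$ with one endpoint strictly inside $C$ and the other on or outside is an edge incident with $\V{C}$. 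I would first record this planarity/separation fact carefully, because it is what forces structure on $X$: if $\Abs{X\cap\V{C}}\leq 2$, then the cut $\CutG{B'}{X}$ is "localised" near at most two vertices of the separating cycle $C$.

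Next I would use the connectivity guaranteed by \cref{thm:extendibilitytoconnectivity} together with \cref{lemma:planarnonbraceparts}: by \cref{lemma:planarnonbraceparts}, $B'$ is matching covered, and since $C$ bounds a face in $B'$, one checks that $B'$ is in fact $3$-connected away from the trivial obstructions — or, more robustly, I would argue directly that a tight cut $\CutG{B'}{X}$ with $\Abs{X\cap\V{C}}\leq 2$ yields a vertex cut of $B$ of size at most $2$ (or size at most $2$ together with a degeneracy that contradicts $B$ being a brace). The mechanism: the neighbourhood of $\Minority{X}$ lies in $\Majority{X}$ by \cref{lemma:ktightmajorityminority}, and combining the at-most-two vertices of $X\cap\V{C}$ with the structure of the cut produces a separation of $B$ itself into two sizeable pieces meeting in at most two vertices. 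Since every brace is $3$-connected, this is the desired contradiction. The symmetric argument rules out $\Abs{\Complement{X}\cap\V{C}}\leq 2$, and together these give $\Abs{X\cap\V{C}}\geq 3$ and $\Abs{\Complement{X}\cap\V{C}}\geq 3$.

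I expect the main obstacle to be the bookkeeping in the case analysis for $\Abs{X\cap\V{C}}\in\Set{0,1,2}$, and in particular making the "localised cut yields a small separator of $B$" step fully rigorous. When $\Abs{X\cap\V{C}}=0$, the shore $X$ lies entirely in the strict interior of $C$, and then $\CutG{B'}{X}$ is already an edge cut of $B$ whose shore is separated from the exterior of $C$ by at most the (at most $|\V C|$ many, but really the relevant) vertices of $C$ adjacent to $X$ — here one must check these are few, using that $\Cut{X}$ being tight and $C$ being a face forces $\Neighbours{X}$ to be small; if $\Abs{X}\geq 2$ this should contradict that $B$ has no non-trivial tight cut directly, or at least that $B$ is $3$-connected. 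When $\Abs{X\cap\V{C}}=1$ or $2$, the vertices of $X$ on $C$, plus the constraint $\Neighbours{\Minority{X}}\subseteq\Majority{X}$, should again bound the order of the separation of $B$ by $2$. The delicate point is ensuring both sides of the induced separation of $B$ are large enough (at least one vertex, and not a single edge forming a trivial cut) so that it genuinely contradicts $3$-connectivity rather than being vacuous; I would handle this by noting that $\Complement{X}\cap\V{C}$ being large (in the case we are trying to contradict) guarantees one side is big, while non-triviality of the original tight cut ($\Abs{X}\geq 2$) guarantees the other side is big.
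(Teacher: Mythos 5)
Your overall strategy — argue by contradiction, assume $\Abs{X\cap\V{C}}\leq 2$ after possibly swapping $X$ and $\Complement{X}$, and exploit \cref{lemma:ktightmajorityminority} — is the right starting point and matches the paper. But the pivot you make next, from "small intersection with $C$" to "a vertex cut of $B$ of size at most $2$", does not go through, and this is where your proposal and the paper genuinely diverge.

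The difficulty is that nothing forces a small \emph{vertex} separator here. From \cref{lemma:ktightmajorityminority} you get $\Neighbours{\Minority{X}}\subseteq\Majority{X}$, but $\Majority{X}$ can be arbitrarily large, so "removing whatever separates $\Minority{X}$ from the rest" is not a small set; and the at-most-two vertices of $X\cap\V{C}$ do not by themselves separate anything in $B$ because $\CutG{B'}{X}$ already contains edges between $X$ and $\Complement{X}$ that stay inside $B'$ and avoid $C$. So the "localised near $C$, hence small vertex cut" step is a gap, and your fallback to $3$-connectivity via \cref{thm:extendibilitytoconnectivity} attacks a weaker invariant than the one actually violated.

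What the paper does instead is keep everything at the level of tight cuts: it observes that the very edge cut $\CutG{B}{X}$ (with the \emph{same} shore $X\subseteq\V{B'}\subseteq\V{B}$) is already a non-trivial tight cut in $B$ itself, which is an immediate contradiction to $B$ being a brace — a much sharper obstruction than $3$-connectivity. The reason this works is that the few vertices of $X\cap\V{C}$ are forced into the majority side: a vertex $v\in\Minority{X}\cap\V{C}$ would need $\Neighbours{v}\subseteq\Majority{X}$, but its neighbours on $C$ lie in $\Complement{X}$ when $\Abs{X\cap\V{C}}$ is small, which is impossible. Hence passing from $B'$ to $B$ changes neither $\Minority{X}$ nor its neighbourhood, so the imbalance $\Abs{\Majority{X}}-\Abs{\Minority{X}}=1$ and the containment $\Neighbours{\Minority{X}}\subseteq\Majority{X}$ persist in $B$, and \cref{lemma:ktightmajorityminority} gives tightness of $\CutG{B}{X}$ in $B$. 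If you want to stay close to the spirit of your connectivity idea, the correct quantitative form is the expansion property from \cref{thm:bipartiteextendibility}~(ii): for a brace, $\Abs{\NeighboursG{B}{S}}\geq\Abs{S}+2$ for every nonempty proper $S$ in one colour class, whereas the argument above pins $\Abs{\NeighboursG{B}{\Minority{X}}}\leq\Abs{\Minority{X}}+1$; but this is an edge-expansion/tight-cut contradiction, not a size-$2$ vertex cut.
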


\begin{proof}
	Suppose $\Abs{X\cap\V{C}}\leq 1$.
	By symmetry, it suffices to treat this case.
	With \cref{lemma:ktightmajorityminority} we know that the minority of $X$ has no edge to a vertex of $\Complement{X}$, without loss of generality let us assume the majority of $\CutG{B}{X}$ to be in $V_1$.
	If $\V{C}\cap X=\emptyset$, then clearly $\CutG{B}{X}$ must be a non-trivial tight cut in $B$, which is impossible.
	Hence there must exist a unique vertex $a\in\V{C}\cap X$.
	Moreover, since this vertex has neighbours in $C$ which do not belong to $X$, $a\in V_1$.
	But also in this case $\CutG{B}{X}$ is non-trivially tight in $B$ and the claim follows.
\end{proof}

\begin{corollary}\label{cor:planar4cycledifference}
	Let $B$ be a planar brace, $C$ a conformal and separating $4$-cycle in $B$, then $\Inner{B}{C}$ and $\Outer{B}{C}$ are braces.
\end{corollary}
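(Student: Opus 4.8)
The plan is to argue by contradiction, relying on the definition that a bipartite matching covered graph is a brace precisely when it has no non-trivial tight cut. Suppose, without loss of generality, that $\Inner{B}{C}$ is not a brace. By \cref{lemma:planarnonbraceparts} the graph $\Inner{B}{C}$ is matching covered, and it is bipartite and connected, so the only way it can fail to be a brace is for it to contain a non-trivial tight cut; fix such a cut $\CutG{\Inner{B}{C}}{X}$.

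Next I would simply invoke \cref{lemma:planartightcutsthroughsepcycles} with $B'=\Inner{B}{C}$. Its hypotheses are exactly what we have assembled: $B$ is a planar brace, $C$ is a conformal and separating cycle in $B$, the graph $B'\in\Set{\Inner{B}{C},\Outer{B}{C}}$ is not a brace (this is our contradiction hypothesis), and $\CutG{B'}{X}$ is a non-trivial tight cut in $B'$. The conclusion of that lemma is that $\Abs{X\cap\V{C}}\geq 3$ and $\Abs{\Complement{X}\cap\V{C}}\geq 3$.

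The contradiction is then a counting triviality: since $C$ is a $4$-cycle and $C\subseteq B'$, the set $\V{C}$ is partitioned by $X$ and $\Complement{X}$ into two pieces whose sizes sum to $4$, so they cannot both have size at least $3$. Hence $\Inner{B}{C}$ has no non-trivial tight cut and is therefore a brace; by symmetry the same argument applies to $\Outer{B}{C}$. I do not expect any real obstacle here, as all of the substance lives in \cref{lemma:planarnonbraceparts,lemma:planartightcutsthroughsepcycles}; the only point requiring a moment's care is verifying that the hypotheses of \cref{lemma:planartightcutsthroughsepcycles} are genuinely met — in particular that $C$ is still present as a cycle of $B'$ with all four of its vertices — which holds because both $\Inner{B}{C}$ and $\Outer{B}{C}$ contain $C$ by construction.
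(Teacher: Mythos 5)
Your proof is correct and matches the paper's intent: the corollary is stated without an explicit proof precisely because it follows by the counting argument you give from \cref{lemma:planartightcutsthroughsepcycles}, together with \cref{lemma:planarnonbraceparts} to ensure the reduced graph is matching covered (hence not a brace only by virtue of having a non-trivial tight cut). Nothing to add.
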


Please note that \cref{lemma:planarnonbraceparts,lemma:planartightcutsthroughsepcycles,cor:planar4cycledifference} can be extended to non-planar braces as well.
The general version of \cref{cor:planar4cycledifference} is due to McCuaig.

\begin{lemma}[\cite{mccuaig2004polya}]\label{lemma:4cyclesumwithtwosummands}
	Let $B_1$ and $B_2$ be bipartite graphs with a common $4$-cycle $C$ and otherwise disjoint. If $B_1+B_2$ is a brace, then so are $B_1$ and $B_2$.
\end{lemma}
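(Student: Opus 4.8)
The plan is to deduce this from the machinery already set up for the planar case, but rephrased so it applies to arbitrary bipartite graphs. We want to show that if $B_1$ and $B_2$ share exactly a $4$-cycle $C$ (and are otherwise disjoint) and $B \coloneqq B_1 + B_2$ is a brace, then each $B_i$ is a brace. Note $B_i \neq C_4$ is automatic unless $B_{3-i}$ equals $C$, a degenerate case we can dispose of directly; so by \cref{thm:braces} it suffices to prove each $B_i$ is $2$-extendible, and for that we use criterion (iii) of \cref{thm:bipartiteextendibility}: for all $S_1 \subseteq \Vi{1}{B_i}$, $S_2 \subseteq \Vi{2}{B_i}$ with $\Abs{S_1} = \Abs{S_2} \leq 2$, the graph $B_i - S_1 - S_2$ has a perfect matching.

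First I would observe that $C$ is a conformal cycle in $B$: indeed, since $B$ is a brace it is $2$-extendible, so taking any edge $e \in \E{C}$ and extending $\Set{e}$ to a perfect matching $M$ of $B$, the cycle $C$ is $M$-alternating; switching $M$ along $C$ if necessary we may assume $C$ is $M$-conformal, and then $M \cap \E{B_i}$ witnesses that each $B_i$ has a perfect matching and that $\V{C}$ is conformal in each $B_i$. The key structural input is then the analogue of \cref{lemma:planartightcutsthroughsepcycles} for the (not necessarily planar) decomposition $B = B_1 + B_2$: any tight cut $\CutG{B_i}{X}$ of $B_i$, when pushed out to $B$ by adjoining the appropriate shore of $B_{3-i}$, must remain tight in $B$ unless $X$ meets $\V{C}$ in at least three vertices on both sides — but $\Abs{\V{C}} = 4$, so $\Abs{X \cap \V{C}} \geq 3$ and $\Abs{\Complement{X} \cap \V{C}} \geq 3$ is impossible. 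This shows $B_i$ has no non-trivial tight cut; together with matching-coveredness (which follows as in \cref{lemma:planarnonbraceparts}, replacing "interior/exterior of $C$ in the plane embedding" by $B_i$ and $B_{3-i}$, and using that both pieces have a perfect matching conformal with $C$) we get that $B_i$ is a brace.

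Concretely, to run the tight-cut argument I would take a proper subset $X \subseteq \V{B_i}$ with $2 \leq \Abs{X}$ and $\Abs{X} \leq \Abs{\V{B_i}} - 2$ such that $\CutG{B_i}{X}$ is tight, use \cref{lemma:ktightmajorityminority} to get $\Neighbours{\Minority{X}} \subseteq \Majority{X}$ inside $B_i$, and then examine how $X$ interacts with $\V{C}$. If $\Abs{X \cap \V{C}} \leq 2$ then the $4$-cycle $C$ has at least two vertices outside $X$; one then builds $Y = X \cup (Z \setminus \V{C})$ where $Z$ is the shore of $B_{3-i}$ forced by the conformal perfect matching, and checks that $\CutG{B}{Y}$ is again tight in $B$ — because every perfect matching of $B$ restricts to perfect matchings of $B_1$ and $B_2$ sharing exactly one edge of $C$, so the count $\Abs{\CutG{B}{Y} \cap M} = 1$ is inherited. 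Since $B$ is a brace, $Y$ must be trivial, i.e. $\Abs{Y} \leq 1$ or $\Abs{\Complement{Y}} \leq 1$, which forces $X$ to be trivial in $B_i$. A symmetric argument when $\Abs{\Complement{X} \cap \V{C}} \leq 2$ finishes the case analysis.

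The main obstacle I anticipate is the bookkeeping in the non-planar setting: without a fixed embedding we cannot speak of "interior" and "exterior," so the clean separation used in \cref{lemma:planarnonbraceparts} and \cref{lemma:planartightcutsthroughsepcycles} must be replaced by an argument that is purely combinatorial — tracking which shore of $B_{3-i}$ to glue on, and verifying that a perfect matching of $B$ really does decompose compatibly across $C$ (this uses that $\Cut{\V{B_{3-i}}}$, or rather $\Cut{\V{B_{3-i}} \setminus \V{C}}$ suitably interpreted, behaves like a tight cut relative to $C$, which is exactly the content of $C$ being a $4$-cycle shared by the two parts). Once the correct shore is identified the tightness transfer is a one-line counting argument, and matching-coveredness of $B_i$ is the same path-rerouting trick as in \cref{lemma:planarnonbraceparts} with $\Outer{B}{C}$ replaced by $B_{3-i}$. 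I would also remark, as the excerpt does, that this generalizes \cref{cor:planar4cycledifference}, so where convenient the planar proof can be cited almost verbatim with the embedding-specific language stripped out.
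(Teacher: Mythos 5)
The paper states this lemma with a citation to McCuaig and gives no proof of its own, so there is nothing to compare against directly. Evaluated on its own terms, your overall plan is sound: establish that each $B_i$ is matching covered by the path-rerouting trick of \cref{lemma:planarnonbraceparts} (which uses only that $\V{C}$ separates $B_1-\V{C}$ from $B_2-\V{C}$ in $B$, not planarity), and establish that $B_i$ has no non-trivial tight cut by lifting any such cut to a non-trivial tight cut of $B$. But several steps as written are wrong or garbled.

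First, a perfect matching $M$ containing a single edge of $C$ does not make $C$ $M$-alternating (the other two vertices of $C$ may be matched off $C$), so "switching along $C$" is not available. Instead, extend the size-two matching consisting of two opposite edges of $C$ using $2$-extendibility of $B$; then $C$ is $M$-conformal outright and $M\cap\E{B_i}$ is a perfect matching of $B_i$. Second, and more seriously, the claim that "every perfect matching of $B$ restricts to perfect matchings of $B_1$ and $B_2$ sharing exactly one edge of $C$" is false: a general perfect matching of $B$ need not restrict to a perfect matching of either summand (a vertex of $\V{C}$ can be matched into $B_1-\V{C}$ while another is matched into $B_2-\V{C}$), and even when $C$ is $M$-conformal the two restrictions share two edges of $C$, not one. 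So one cannot certify tightness of $\CutG{B}{Y}$ by "inheriting the count"; the correct route is \cref{lemma:ktightmajorityminority}, i.e.\ verify $\Abs{\Majority{Y}}-\Abs{\Minority{Y}}=1$ and $\Neighbours{B}{\Minority{Y}}\subseteq\Majority{Y}$.

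Third, the case split and the construction of $Y$ are misaligned. From a non-trivial tight cut $\CutG{B_i}{X}$, \cref{lemma:ktightmajorityminority} forces that if a minority-coloured vertex of $C$ lies in $X$, then both majority-coloured vertices of $C$ do too, hence $\Abs{X\cap\V{C}}\geq 3$. So when $\Abs{X\cap\V{C}}\leq 2$ one has $\Minority{X}\cap\V{C}=\emptyset$, and already $Y\coloneqq X$ works — since $\Minority{X}$ avoids $\V{C}$, its $B$-neighbours equal its $B_i$-neighbours, so $\CutG{B}{X}$ is itself a non-trivial tight cut of $B$; no adjoining is needed, nor is there a separate "symmetric" subcase for $\Complement{X}$. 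Only when $\Abs{X\cap\V{C}}\geq 3$ should you set $Y\coloneqq X\cup\Brace{\V{B_{3-i}}\setminus\V{C}}$ — there is no ambiguity of "shore," you always adjoin all of $B_{3-i}$ outside $C$ — and then check the two conditions of \cref{lemma:ktightmajorityminority}, using that $B_{3-i}-\V{C}$ is colour-balanced and that $\Vi{j}{B_{3-i}}\subseteq Y$ for $j$ the majority colour. In both cases non-triviality of $\CutG{B}{Y}$ follows from $2\leq\Abs{X}\leq\Abs{\V{B_i}}-2$. With these repairs the argument is complete and correct.
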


The next lemma is a slightly restated version of Lemma 46 from \cite{mccuaig2004polya} which can be derived with the methods presented there.

\begin{lemma}[\cite{mccuaig2004polya}]\label{lemma:braceears}
	Let $B$ be a planar brace, $C$ be a facial cycle of $B$, $M$ be a perfect matching of $B$ for which $C$ is $M$-conformal, and $a\in \Vi{1}{C}$, $b\in \Vi{2}{C}$ be two vertices with $ab\notin\E{C}$.
	Then there exists an internally $M$-conformal $a$-$b$-path $P$ in $B$ which is internally disjoint from $C$.
\end{lemma}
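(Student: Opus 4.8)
\textbf{Proof proposal for Lemma~\ref{lemma:braceears}.}

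The plan is to reduce the statement to an application of \cref{thm:bipartiteextendibility}, specifically the equivalence between braces and the existence of two internally disjoint internally $M$-conformal $a$-$b$-paths, and then to use planarity to "clean up" one of these paths so that it becomes internally disjoint from $C$. First I would fix the given drawing of $B$, which is unique by Whitney's Theorem since $B$ is $3$-connected, so that $C$ bounds a face $F$; without loss of generality $F$ is the outer face. Since $B$ is a brace and $a \in \Vi{1}{C}$, $b \in \Vi{2}{C}$ with $ab \notin \E{C}$, item iv) of \cref{thm:bipartiteextendibility} gives two internally disjoint internally $M$-conformal $a$-$b$-paths $Q_1, Q_2$. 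If either $Q_i$ is internally disjoint from $C$ we are done, so assume both meet $C$ in an internal vertex.

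The key step is the following surgery. Traverse $Q_1$ from $a$ to $b$ and let $u$ be the last internal vertex of $Q_1$ lying on $C$ and $w$ the first such vertex; then the subpaths $aQ_1 w$ and $u Q_1 b$ are internally disjoint from $C$ (and internally $M$-conformal, being subpaths of an internally $M$-conformal path that start/end at $a$ or $b$), and $u Q_1 w$ is an $M$-conformal subpath drawn entirely outside $F$ (since $C$ bounds the outer face, the interior of $Q_1$ lies inside $C$, except possibly at $a, b$ themselves if those were the only $C$-vertices — but then $Q_1$ would already be internally disjoint from $C$). Now $C$ is split by $u$ and $w$ into two arcs. Because $C$ is $M$-conformal and $u,w$ have opposite colours (as $a \ne w$ sits strictly between $a$ and $b$ along an alternating path, a careful parity count shows $u, w$ are of different colours precisely when needed), exactly one of these two arcs, call it $P'$, is internally $M$-conformal together with the matching behaviour at its endpoints matching up with $a Q_1 w$ and $u Q_1 b$. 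I would then form $P \coloneqq a Q_1 w \cup P' \cup u Q_1 b$: this is an $a$-$b$-walk living in the graph, and since $a Q_1 w$ and $u Q_1 b$ meet $C$ only in $w$ and $u$ respectively while $P' \subseteq C$, the walk $P$ is internally disjoint from $C$ after suppressing any accidental revisits; its alternation is consistent because each piece is internally $M$-conformal and the matching edges at the glue-points $u, w$ are supplied by $C$ (using that $C$ is $M$-conformal and $ab \notin \E{C}$, matching parities line up exactly as in the proof of \cref{lemma:planarnonbraceparts}).

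The main obstacle I anticipate is the parity bookkeeping: one must verify that the colours of $u$ and $w$ are such that precisely one of the two $C$-arcs between them is the "right" alternating segment to splice in, so that $P$ is genuinely internally $M$-conformal rather than merely alternating with a wrong end-edge. This is where the hypotheses $a \in \Vi{1}{C}$, $b \in \Vi{2}{C}$, $ab \notin \E{C}$, and "$C$ is $M$-conformal" all get used together, exactly as in the closing paragraph of the proof of \cref{lemma:planarnonbraceparts}, which handles the analogous gluing. A secondary point is to confirm $P$ can be taken to be a genuine path: if the concatenation revisits vertices one takes a shortest $a$-$b$-subpath inside $C + a Q_1 w + u Q_1 b$, and since all ingredients are internally $M$-conformal and $C$-internal only at $u, w$, such a subpath inherits the internally $M$-conformal property and internal disjointness from $C$. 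Once the parity claim is settled the rest is routine.
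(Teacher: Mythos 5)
Your surgery produces a path that is \emph{not} internally disjoint from $C$, which is exactly the property the lemma asks for. After you splice in the arc $P'\subseteq C$, every internal vertex of $P'$ (and the glue vertices $u$ and $w$ themselves) lies on $C$ and is an internal vertex of $P$. "Suppressing accidental revisits" cannot save this: the offending vertices are not revisits of $aQ_1w$ or $uQ_1b$, they are genuinely new vertices of $P$ that happen to live on $C$. What you have reconstructed is the argument used in the proof of \cref{lemma:planarnonbraceparts}, but that lemma only needs an internally $M$-conformal $a$-$b$-path \emph{contained in} $\Inner{B}{C}$ (or $\Outer{B}{C}$), with no requirement of disjointness from $C$; here the disjointness from $C$ is the entire content of the statement.

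Stepping back, the strategy of invoking \cref{thm:bipartiteextendibility}\,(iv) to get two internally disjoint internally $M$-conformal $a$-$b$-paths and then rerouting one of them along $C$ is structurally unable to produce a $C$-avoiding path, precisely because the rerouting uses $C$. A correct proof has to exploit the facial hypothesis in a stronger way (this is McCuaig's ear argument): since $C$ bounds a face, $B$ lies entirely on one side of $C$, and one must extract a path whose interior lives strictly inside that region, rather than on $C$. Your argument never uses that $C$ is facial except to fix the embedding, so it would "prove" the same statement for any $M$-conformal cycle in a planar brace — and that stronger claim is false, as separating cycles show. That over-generality is itself a signal that the planarity/facial hypothesis is not being used where it must be. The paper does not reprove this lemma; it is cited from McCuaig, so there is no in-paper proof to compare against, but the proposal as written does not establish the stated conclusion.
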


As a special case, we first assume that we are interested in a (strong) matching cross over some separating cycle $C$ in a planar brace $B$ where both $\Inner{B}{C}$ and $\Outer{B}{C}$ are braces.
We also need some deeper insight in how conformal bisubdivisions of $K_{3,3}$ and the cube can appear in braces with respect to cycles of length four.

\begin{lemma}[\cite{mccuaig2004polya}]\label{lemma:cubeorK33original}
	Let $B$ be a brace and $C$ a $4$-cycle such that $B-\V{C}$ is connected, let $uv\in\E{C}$ and $x,y\in\V{B}\setminus\V{C}$ such that $ux,vy\in\E{B}$.
	Then $B$ contains a conformal bisubdivision of the cube with $C+ux+vy$ as a subgraph, or $B$ contains a conformal bisubdivision of $K_{3,3}$ with $C$ as a subgraph.
\end{lemma}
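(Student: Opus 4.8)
The plan is to fix colours so that $C=uvwz$ with $uv,vw,wz,zu\in\E{C}$, $u,w\in V_1$ and $v,z\in V_2$; then $ux\in\E{B}$ forces $x\in V_2$ and $vy\in\E{B}$ forces $y\in V_1$. Since $x$ exists we have $B\neq C_4$, so $B$ is $2$-extendible by \cref{thm:braces} and hence $3$-connected by \cref{thm:extendibilitytoconnectivity}; extending the matching $\Set{uv,wz}$ to a perfect matching shows moreover that $C$ is conformal. Because the bisubdivisions we must produce have to be \emph{conformal}, the whole argument is carried out with internally $M$-conformal paths for a perfect matching $M$ chosen according to the case, so that the union of $C$ with the added paths is automatically $M$-conformal; I keep this bookkeeping layer implicit below. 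The proof then splits on whether $C$ has a (topological) $C$-cross in $B$.

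Suppose first that $C$ has a $C$-cross, with disjoint paths $P_1$ and $P_2$ that are internally disjoint from $C$. Since $C$ is a $4$-cycle these paths join the two pairs of opposite vertices of $C$, namely $\Set{u,w}$ and $\Set{v,z}$, and, being bipartite paths between equally coloured ends, they have even length, so each has a non-empty interior in the connected graph $B-\V{C}$. Choosing a shortest path in $B-\V{C}$ between these two interiors and joining the two cross-paths along it produces a subdivision of $K_{3,3}$ in which $C$ is the $4$-cycle through $u,v,w,z$; after arranging the choices so that the two new branch vertices receive the colours dictated by the bipartition of $B$, this is a genuine bisubdivision. It then remains to make it \emph{conformal}: one first replaces the $C$-cross by a conformal cross — this is where brace-ness, rather than mere matching-coveredness, of $B$ is used — and then \cref{lemma:goodcrossesmeanK33} provides a conformal bisubdivision of $K_{3,3}$ with $C$ as a subgraph, i.e.\ the first outcome.

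So assume that $C$ has no $C$-cross. By the Two Paths Theorem, \cref{thm:twopaths}, $C$ is $C$-flat in $B$, and together with the $3$-connectivity of $B$ this lets one reduce to the case where $B$ itself is a planar brace in which $C$ bounds a face and which still carries the edges $ux$ and $vy$ (this reduction is a technical point in its own right). Working inside this planar brace with $C$ facial, one uses $2$-extendibility and \cref{lemma:braceears} — iterated, and with the first two paths started along the prescribed edges $ux$ and $vy$ — to route four pairwise internally disjoint, internally $M$-conformal paths out of $u,v,w,z$ into the inside of $C$ that terminate, in the cyclic order inherited from $C$, on a common $4$-cycle $C^{*}=u^{*}v^{*}w^{*}z^{*}$ with $u^{*},w^{*}\in V_2$ and $v^{*},z^{*}\in V_1$. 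The union of $C$, $C^{*}$ and these four spokes is a conformal bisubdivision of the cube, and it contains $C+ux+vy$; this is the second outcome.

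The main obstacle is this second case: extracting from ``$C$ is $C$-flat'' a planar brace with $C$ facial, and then manufacturing the opposite $4$-cycle $C^{*}$ together with four internally disjoint, internally $M$-conformal spokes in the correct cyclic order while forcing the $u$- and $v$-spokes to begin with $ux$ and $vy$ and keeping the whole object $M$-conformal. I would expect this to require an induction on $\Abs{\V{B}}$ — peeling off degree-two bicontractions or small $4$-cycle-sums, applying the lemma to the smaller brace, and lifting the resulting cube — with the conformality threaded through every reduction, and with a refinement of \cref{lemma:braceears} as the natural engine for producing the spokes. A further point that needs care is the conformality upgrade in the first case, i.e.\ passing, inside a brace, from an arbitrary $C$-cross to a conformal one.
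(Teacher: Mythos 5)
The paper does not prove this lemma; it imports it from McCuaig \cite{mccuaig2004polya}, so your attempt has to stand on its own, and both of its branches have gaps that go beyond missing technicalities.

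In your first case you split on the existence of an ordinary topological $C$-cross and then invoke a ``conformality upgrade'' to pass to a conformal cross, attributing it to brace-ness. That step is the whole difficulty and is not available here. By \cref{lemma:goodcrossesmeanK33}, a conformal cross over $C$ is exactly the statement that $C$ lies in a conformal bisubdivision of $K_{3,3}$, and containing a topological $K_{3,3}$-subdivision is a far weaker property than containing $K_{3,3}$ as a matching minor: non-planar $K_{3,3}$-free braces such as the Rotunda exist, and this separation is what the whole paper is organised around. So a topological cross in a brace does not produce a conformal cross; the upgrade you need is, in the $K_{3,3}$-containing case, essentially \cref{prop:4cycleK33} --- one of the paper's main theorems, proved via conformal M\"obius ladders (\cref{lemma:conformalmoebiusstrips}) and a long chain of reduction lemmas, not by a routine appeal to $2$-extendibility. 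The dichotomy you want is ``conformal cross vs.\ none,'' not ``topological cross vs.\ none.''

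Your second case fails at the reduction step. The Two Paths Theorem together with $3$-connectivity decomposes $B$ by \emph{graph-theoretic} clique sums of order at most three and yields a planar piece with $C$ facial, but that decomposition does not interact with perfect matchings: the planar piece need not be matching covered, is typically not a brace, need not keep $C$ conformal, and need not retain the edges $ux,vy$. The matching-theoretic analogue of such a reduction --- a $C$-reduction by $4$-cycle sums --- is exactly what this paper develops, and the statement that would make your plan legitimate, \cref{prop:pfaffiancrosses}, itself relies on \cref{lemma:cubeorK33}, a corollary of the lemma you are proving, so that route would be circular. Your sketch of the opposite $4$-cycle $C^{*}$ and the four conformal spokes via \cref{lemma:braceears} is reasonable once one genuinely sits inside a planar brace with $C$ facial, but neither branch of your argument gets to that situation.
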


The following is a slight weakening of the lemma above.

\begin{corollary}[\cite{mccuaig2004polya}]\label{lemma:cubeorK33}
	Let $B$ be a brace and $C$ a $4$-cycle such that $B-\V{C}$ is connected, then $B$ contains a conformal bisubdivision of the cube or $K_{3,3}$ with $C$ as a subgraph.
\end{corollary}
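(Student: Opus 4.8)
\textbf{Proof proposal for \Cref{lemma:cubeorK33}.}

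The plan is to deduce this corollary directly from \Cref{lemma:cubeorK33original} by supplying appropriate objects $uv$, $x$, and $y$. We are given a brace $B$ and a $4$-cycle $C$ with $B-\V{C}$ connected; write $C=a_1b_1a_2b_2a_1$ with $a_1,a_2\in V_1$ and $b_1,b_2\in V_2$. The goal is to find an edge $uv\in\E{C}$ together with vertices $x,y\in\V{B}\setminus\V{C}$ so that $ux,vy\in\E{B}$, after which \Cref{lemma:cubeorK33original} produces a conformal bisubdivision of the cube containing $C+ux+vy$ (hence one containing $C$) or a conformal bisubdivision of $K_{3,3}$ containing $C$, which is exactly what we want.

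First I would argue that every vertex of $C$ has a neighbour outside $C$. Since $B$ is a brace it is $3$-connected by \Cref{thm:braces} and \Cref{thm:extendibilitytoconnectivity}, and $\Abs{\V{B}}\ge 8$ because a brace that is not $C_4$ is $2$-extendible and thus has at least $2\cdot 2+2=6$ vertices, and in fact any brace properly containing a $4$-cycle has strictly more than four vertices; so $\V{B}\setminus\V{C}\neq\emptyset$. A vertex $v$ of $C$ whose only neighbours lie on $C$ would have degree at most $2$ in $B$ (it has exactly two neighbours on $C$), contradicting $3$-connectivity. Hence each of $a_1,a_2,b_1,b_2$ has at least one neighbour in $\V{B}\setminus\V{C}$. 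Now pick any edge $uv$ of $C$, say $u=a_1$ and $v=b_1$; let $x\in\V{B}\setminus\V{C}$ be a neighbour of $a_1$ and $y\in\V{B}\setminus\V{C}$ a neighbour of $b_1$. If $x\neq y$ we are immediately in the situation of \Cref{lemma:cubeorK33original} and done. The only obstruction is that the unique outside-neighbour of $a_1$ coincides with the unique outside-neighbour of $b_1$, and similarly for the other three edges of $C$; I would handle this by instead choosing the edge $uv$ of $C$ more carefully, or by using $3$-connectivity again: if $a_1$ had a single neighbour $x$ outside $C$, then $\Set{x}\cup(\V{C}\setminus\Set{a_1})$ would have to contain a cut of size $\le 3$ separating $a_1$ from the rest unless $a_1$ has further neighbours, so in fact the degree bound forces each vertex of $C$ to have at least two neighbours outside $C$ once $\Abs{\V{B}}$ is large enough, or a short separate check covers the few small cases. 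With two distinct outside-neighbours available at $a_1$, at least one of them differs from a fixed outside-neighbour of $b_1$, giving the required distinct $x,y$.

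The main obstacle I anticipate is precisely this degenerate configuration where the outside-neighbours collapse, and ruling it out cleanly for all brace-sized graphs rather than just asymptotically; I expect a couple of lines invoking $2$-extendibility (via condition (iii) of \Cref{thm:bipartiteextendibility}, which says $B-S_1-S_2$ has a perfect matching for all $\Abs{S_1}=\Abs{S_2}\le 2$) to show that, e.g., $a_1$ and $b_2$ together see at least $\Abs{\Set{a_1,b_2}}+2$ vertices including $b_1$ and $a_2$, forcing additional outside-neighbours, will settle it. Once distinct $x,y$ with $ux,vy\in\E{B}$ for some edge $uv\in\E{C}$ are secured, the corollary follows verbatim from \Cref{lemma:cubeorK33original}, simply by discarding the extra information that the bisubdivision contains $ux$ and $vy$ and keeping only that it contains $C$.
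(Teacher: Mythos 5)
Your plan --- obtain a neighbour outside $C$ for each vertex of $C$ via $3$-connectivity, then hand the configuration to \Cref{lemma:cubeorK33original} --- is exactly the derivation the paper intends when it calls the statement a ``slight weakening'' of \Cref{lemma:cubeorK33original}, and the first half of your argument is fine: $B-\V{C}$ connected forces $\V{B}\setminus\V{C}\neq\emptyset$, so $B\neq C_4$, hence $B$ is $2$-extendible and therefore $3$-connected by \Cref{thm:braces} and \Cref{thm:extendibilitytoconnectivity}, and each vertex of $C$, having exactly two neighbours on $C$, must have at least one neighbour off $C$. But the entire second half, the ``main obstacle'' you set up about the degenerate case $x=y$, is a phantom. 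The vertices $u$ and $v$ are the endpoints of an edge of the bipartite graph $B$ and therefore lie in opposite colour classes; consequently their respective neighbours $x\in V_2$ and $y\in V_1$ lie in opposite colour classes as well and can never coincide. No second invocation of $2$-extendibility, no degree bound beyond $\deg_B(w)\ge 3$ for $w\in\V{C}$, and no case analysis on the size of $B$ is required --- any edge $uv$ of $C$ and any choice of off-cycle neighbours $x$ of $u$ and $y$ of $v$ works. Two small corrections while we are here: your claim $\Abs{\V{B}}\ge 8$ is unjustified and in fact false ($2$-extendibility gives only $\ge 6$, and $K_{3,3}$ on six vertices satisfies all hypotheses of the corollary), and the nonemptiness of $\V{B}\setminus\V{C}$ is most cleanly read off directly from the hypothesis that $B-\V{C}$ is connected, since a connected graph has a vertex.
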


Please note that a version of \cref{lemma:cubeorK33} can be found in \cite{robertson1999permanents}, where the containment of a $4$-cycle in a conformal bisubdivision of the cube is referred to as being 'fat' while being a subgraph of some conformal bisubdivision of $K_{3,3}$ is called having a '$C$-cross'.
Sadly these two are not mutually exclusive as one can see in the example in \cref{fig:basegraphs}.
However, there is a deeper connection between the existence of matching crosses, especially conformal ones, and the existence of conformal bisubdivisions of $K_{3,3}$ when it comes to $4$-cycles.
We revisit this topic in \cref{sec:nonplanar}.

\begin{figure}[h!]
	\begin{subfigure}{0.3\textwidth}
		\centering
		\begin{tikzpicture}[scale=0.9]
			\pgfdeclarelayer{background}
			\pgfdeclarelayer{foreground}
			\pgfsetlayers{background,main,foreground}
			
			\node[v:mainempty] () at (-0.7,-0.7){};
			\node[v:main] () at (-0.7,0.7){};
			\node[v:main] () at (0.7,-0.7){};
			\node[v:mainempty] () at (0.7,0.7){};
			\node[v:main] () at (-0.4,0.1){};
			\node[v:mainempty] at (0.4,0.1){};
			
			\begin{pgfonlayer}{background}
				\draw[e:main] (-0.7,-0.7) rectangle (0.7,0.7);
				\draw[e:main] (-0.4,0.1) -- (0.4,0.1);
				\draw[e:main] (-0.7,0.7)-- (0.4,0.1);
				\draw[e:main] (0.7,0.7)-- (-0.4,0.1);
				\draw[e:main] (-0.7,-0.7) -- (-0.4,0.1);
				\draw[e:main] (0.7,-0.7) -- (0.4,0.1);
			\end{pgfonlayer}
		\end{tikzpicture}
	\end{subfigure}
	\begin{subfigure}{0.3\textwidth}
		\centering
		\begin{tikzpicture}[scale=0.9]
			\pgfdeclarelayer{background}
			\pgfdeclarelayer{foreground}
			\pgfsetlayers{background,main,foreground}
			
			\node[v:mainempty] () at (-0.5,-0.5){};
			\node[v:main] () at (-0.5,0.5){};
			\node[v:main] () at (0.5,-0.5){};
			\node[v:mainempty] () at (0.5,0.5){};
			
			\node[v:main] () at (-1,-1){};
			\node[v:mainempty] () at (1,-1){};
			\node[v:mainempty] () at (-1,1){};
			\node[v:main] () at (1,1){};

			\begin{pgfonlayer}{background}
				\draw[e:main] (-1,-1) rectangle (1,1);
				\draw[e:main] (-0.5,-0.5) rectangle (0.5,0.5);
				\draw[e:main] (-0.5,-0.5) -- (-1,-1);
				\draw[e:main] (-0.5,0.5) -- (-1,1);
				\draw[e:main] (0.5,-0.5) -- (1,-1);
				\draw[e:main] (0.5,0.5) -- (1,1);
			\end{pgfonlayer}
			
		\end{tikzpicture}
	\end{subfigure}
	\begin{subfigure}{0.3\textwidth}
		\centering
		\begin{tikzpicture}[scale=0.9]
			
			\pgfdeclarelayer{background}
			\pgfdeclarelayer{foreground}
			\pgfsetlayers{background,main,foreground}
			
			\node[v:main] () at (-1,-1){};
			\node[v:mainempty] () at (1,-1){};
			\node[v:mainempty] () at (-1,1){};
			\node[v:main] () at (1,1){};
			
			\node[v:mainempty] () at (-0.7,-0.7){};
			\node[v:main] () at (-0.7,0.7){};
			\node[v:main] () at (0.7,-0.7){};
			\node[v:mainempty] () at (0.7,0.7){};
			\node[v:main] () at (-0.4,0.1){};
			\node[v:mainempty] at (0.4,0.1){};

			\begin{pgfonlayer}{background}
				\draw[e:marker] (-0.7,-0.7) rectangle (0.7,0.7);

				\draw[e:main] (-0.7,-0.7) rectangle (0.7,0.7);
				\draw[e:main] (-0.4,0.1) -- (0.4,0.1);
				\draw[e:main] (-0.7,0.7)-- (0.4,0.1);
				\draw[e:main] (0.7,0.7)-- (-0.4,0.1);
				\draw[e:main] (-0.7,-0.7) -- (-0.4,0.1);
				\draw[e:main] (0.7,-0.7) -- (0.4,0.1);
				
				\draw[e:main] (-1,-1) rectangle (1,1);
				\draw[e:main] (-0.7,-0.7) -- (-1,-1);
				\draw[e:main] (-0.7,0.7) -- (-1,1);
				\draw[e:main] (0.7,-0.7) -- (1,-1);
				\draw[e:main] (0.7,0.7) -- (1,1);
			\end{pgfonlayer}
			
		\end{tikzpicture}
	\end{subfigure}
	\caption{From left to right: $K_{3,3}$, the cube, and a brace with a $4$-cycle (the marked one) which is contained in both, a conformal bisubdivision of $K_{3,3}$ and a conformal bisubdivision of the cube.
		Please note that one could get rid of the fact that the marked cycle is separating by adding additional edges between vertices from different colour classes.}
	\label{fig:basegraphs}
\end{figure}
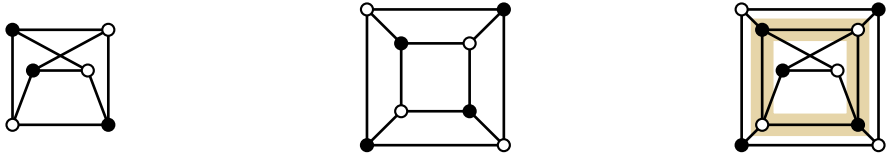

\begin{lemma}\label{lemma:braceparts}
	Let $B$ be a planar brace and $C$ a conformal and separating cycle in $B$ such that $\Inner{B}{C}$ and $\Outer{B}{C}$ both are braces.
	Then there exists a strong matching cross over $C$ in $B$.
\end{lemma}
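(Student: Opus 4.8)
The plan is to route one path of the cross entirely through $\Inner{B}{C}$ and the other entirely through $\Outer{B}{C}$. Any two such paths automatically meet only possibly on $C$, so disjointness off $C$ comes for free, and the whole problem reduces to a good choice of endpoints on $C$ together with a compatible ambient perfect matching. Note that if $\Inner{B}{C}$ were equal to $C$ then $C$ would bound a face of $B$, so both $\Inner{B}{C}$ and $\Outer{B}{C}$ properly contain $C$; being braces, by \cref{thm:braces} they are $2$-extendible, hence $3$-connected by \cref{thm:extendibilitytoconnectivity}, and $C$ is a facial cycle of each, so \cref{lemma:braceears} and \cref{thm:bipartiteextendibility} are available inside each part.

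\textbf{The case $\Abs{\V{C}}\geq 6$.} Fix a perfect matching $M$ of $B$ for which $C$ is $M$-conformal (one exists: $\V{C}$ is conformal and $C$ itself contributes a perfect matching of $B[\V{C}]$). It restricts to perfect matchings $M_{\mathrm{in}}$ of $\Inner{B}{C}$ and $M_{\mathrm{out}}$ of $\Outer{B}{C}$, each still making $C$ conformal. Write $C=v_1v_2\cdots v_{2k}$ with $k\geq 3$ and the $v_i$ of alternating colour. The pairs $\Set{v_1,v_4}$ and $\Set{v_2,v_5}$ each consist of one vertex of each colour class, are non-adjacent on $C$, and occur on $C$ in the cyclic order $v_1,v_2,v_4,v_5$. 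Applying \cref{lemma:braceears} inside $\Inner{B}{C}$ with endpoints $v_1,v_4$ yields an internally $M_{\mathrm{in}}$-conformal path $P_1$ from $v_1$ to $v_4$ that is internally disjoint from $C$, and applying it inside $\Outer{B}{C}$ with endpoints $v_2,v_5$ yields an internally $M_{\mathrm{out}}$-conformal path $P_2$ from $v_2$ to $v_5$ internally disjoint from $C$. Since $P_1$ meets $C$ only in $\Set{v_1,v_4}$ and $P_2$ only in $\Set{v_2,v_5}$, the paths are vertex disjoint; they are of type $1$ with respect to $M$, their four endpoints split evenly between the colour classes, and they occur in the correct cyclic order, so $P_1,P_2$ form a strong matching cross over $C$.

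\textbf{The case $\Abs{\V{C}}=4$.} Write $C=v_1v_2v_3v_4$ with $v_1,v_3\in V_1$. Now the four pegs must be exactly $v_1,v_2,v_3,v_4$, so (after renaming) $P_1$ joins $v_1$ to $v_3$ and $P_2$ joins $v_2$ to $v_4$; since a path between two vertices of one colour class has even length and the even-length alternating paths are exactly those of type $3$, both $P_1$ and $P_2$ must be of type $3$. Here \cref{lemma:braceears} is useless, since on a $4$-cycle every pair of opposite-coloured vertices is joined by an edge of $C$. Instead: using $2$-extendibility of $\Inner{B}{C}$ and its $3$-connectivity, pick a perfect matching $M^{\mathrm{in}}$ of $\Inner{B}{C}$ containing $v_2v_3$ together with an edge $v_1z$ to some interior neighbour $z$ of $v_1$; then $C$ is \emph{not} $M^{\mathrm{in}}$-conformal, which is precisely what is needed. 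One constructs a type-$3$ $M^{\mathrm{in}}$-alternating path $P_1$ from $v_1$ to $v_3$, internally disjoint from $C$ and with $v_1$ its $M^{\mathrm{in}}$-covered endpoint: start from the edge $v_1z$, extend by an internally $M^{\mathrm{in}}$-conformal path supplied by \cref{thm:bipartiteextendibility} to reach the $M^{\mathrm{in}}$-partner $x'$ of some interior neighbour $x$ of $v_3$, and finish with $x'x$ and $xv_3$ (the degenerate possibility $x=z$ just gives $P_1=v_1zv_3$). The path can be kept away from $v_2$ and $v_4$ using the planar embedding of $\Inner{B}{C}$ with $C$ facial together with the observation that $v_2$ is matched to the endpoint $v_3$. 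Symmetrically, choosing a perfect matching $M^{\mathrm{out}}$ of $\Outer{B}{C}$ through $v_1v_4$ and an edge $v_2z'$ to an exterior vertex produces a type-$3$ $M^{\mathrm{out}}$-alternating path $P_2$ from $v_2$ to $v_4$, internally disjoint from $C$, with $v_2$ its $M^{\mathrm{out}}$-covered endpoint. Finally set $M:=\Brace{M^{\mathrm{in}}\setminus\Set{v_2v_3}}\cup\Brace{M^{\mathrm{out}}\setminus\Set{v_1v_4}}$. The two pieces are vertex disjoint and together cover all of $\V{B}$ (they cover $\Inner{B}{C}-\Set{v_2,v_3}$ and $\Outer{B}{C}-\Set{v_1,v_4}$, and $\Set{v_1,v_4}$ and $\Set{v_2,v_3}$ are complementary in $\V{C}$), so $M$ is a perfect matching of $B$, and by construction $P_1$ and $P_2$ are still $M$-alternating of type $3$ — the endpoints $v_3$ and $v_4$ are now matched across $C$ to the other side. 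Since $P_1\subseteq\Inner{B}{C}$ and $P_2\subseteq\Outer{B}{C}$ are disjoint off $C$ and meet $C$ in complementary pairs, they form a strong matching cross over $C$.

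\textbf{Main obstacle.} I expect the real work to lie in the $\Abs{\V{C}}=4$ case: producing the two type-$3$ alternating paths that are internally disjoint from $C$ while keeping their near-perfect matchings $M^{\mathrm{in}},M^{\mathrm{out}}$ compatible enough to glue into a single perfect matching of $B$. The $2$-extendibility of the two parts provides ample alternating connectivity, but one must argue carefully, via the planar embedding with $C$ facial, that each path can avoid the two "wrong" corners of $C$; the $\Abs{\V{C}}\geq 6$ case, by contrast, falls out almost immediately from \cref{lemma:braceears}.
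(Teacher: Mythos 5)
Your proof of the case $\Abs{\V{C}}\geq 6$ is essentially the paper's own argument (pick a perfect matching making $C$ conformal, invoke \cref{lemma:braceears} once inside and once outside for non-adjacent opposite-colour pairs interleaved along $C$). For the $4$-cycle case, however, you take a genuinely different route. The paper appeals to \cref{lemma:cubeorK33}: since $\Inner{B}{C}$ and $\Outer{B}{C}$ are planar braces (hence $K_{3,3}$-free), each contains a conformal bisubdivision of the cube with $C$ as a subgraph; these two bisubdivisions are then glued on $C$, and the strong matching cross is read off directly from the explicit structure of two cubes sharing a face, as drawn in \cref{fig:planecross}. Your approach instead builds the two type-$3$ alternating paths by hand, using $2$-extendibility of the two parts to choose $M^{\mathrm{in}}\supseteq\Set{v_1z,\,v_2v_3}$ and $M^{\mathrm{out}}\supseteq\Set{v_2z',\,v_1v_4}$, and then splicing these near-matchings into a single perfect matching of $B$. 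That idea is sound and, to my mind, more elementary and self-contained than the appeal to cube bisubdivisions; what each approach buys is a trade-off between reusing a heavy prepackaged lemma (paper) and doing some careful bookkeeping about alternating paths (you).

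One detail in your construction is shaky and deserves fixing. You route the path as $v_1z$, then an internally $M^{\mathrm{in}}$-conformal path $Q$ from $z$ to $x'$, then $x'x$, $xv_3$. As written, nothing prevents $Q$ from passing through $v_3$ (together with $v_2$, since $v_2v_3\in M^{\mathrm{in}}$), which would make the concatenation fail to be a simple path, nor is it clear that $Q$ misses $v_4$. The cleaner and planarity-free way to get the same conclusion is to take $Q$ to be an internally $M^{\mathrm{in}}$-conformal $z$-$v_3$-path directly. Then $v_1$ cannot be an inner vertex of $Q$ because its $M^{\mathrm{in}}$-partner is the endpoint $z$, and $v_2$ cannot be an inner vertex because its $M^{\mathrm{in}}$-partner is the endpoint $v_3$; since \cref{thm:bipartiteextendibility} supplies two internally disjoint such paths and $v_4$ is not an endpoint, at least one of them also avoids $v_4$. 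Setting $P_1\coloneqq v_1zQ$ gives the desired type-$3$ path internally disjoint from $C$ without any appeal to the embedding. The symmetric construction outside, and your observation that $M\coloneqq\Brace{M^{\mathrm{in}}\setminus\Set{v_2v_3}}\cup\Brace{M^{\mathrm{out}}\setminus\Set{v_1v_4}}$ is a perfect matching of $B$ under which $P_1$ and $P_2$ remain type-$3$, then go through verbatim. With that patch your proof of the $4$-cycle case is complete and correct, and genuinely different from the paper's.
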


\begin{proof}
	First, assume $C$ to have at least length $6$.
	In this case, we can select a perfect matching $M$ of $B$ such that $C$ is $M$-conformal, which in turn implies that $M$ contains perfect matchings $M_{\text{int}}$ and $M_{\text{out}}$ of $\Inner{B}{C}$ and $\Outer{B}{C}$ respectively.
	Now select vertices $s_1$, $s_2$, $t_1$, and $t_2$ such that they appear on $C$ in the order listed where $s_1,s_2\in V_1$, and $t_1,t_2\in V_2$.
	According to \cref{lemma:braceears}, we may choose an internally $M$-conformal $s_1$-$t_1$-path $P_1$ in $\Inner{B}{C}$ and an internally $M$-conformal $s_2$-$t_2$-path $P_2$ in $\Outer{B}{C}$ such that each of the $P_i$ is internally disjoint from $C$.
	Then $P_1$ and $P_2$ form a $C$-cross in $B$, and we are done.
	
	What remains is the case where $C=\Brace{s_1,t_1,s_2,t_2}$ is a $4$-cycle.
	By calling upon \cref{lemma:cubeorK33} we can find a conformal bisubdivision of the cube in each of the two braces such that each of these cubes contains $C$ as a subgraph.
	Let $H_1$ be a conformal bisubdivision of the cube in $\Inner{B}{C}$ and let $H_2$ be a conformal bisubdivision of the cube in $\Outer{B}{C}$.
	One can easily see, that $H\coloneqq H_1+H_2$ is a conformal subgraph of $B$.
	We can now use $H$ to find the required matching cross over $C$, as illustrated in \cref{fig:planecross}.
\end{proof}

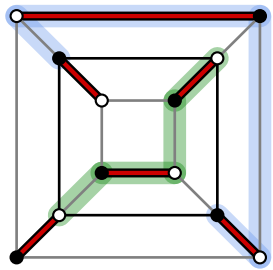
\begin{figure}[h!]
	\centering
	\begin{tikzpicture}[scale=0.8]
		\pgfdeclarelayer{background}
		\pgfdeclarelayer{foreground}
		\pgfsetlayers{background,main,foreground}
		
		\node[v:main] () at (-0.6,-0.6){};
		\node[v:mainempty] () at (-0.6,0.6){};
		\node[v:mainempty] () at (0.6,-0.6){};
		\node[v:main] () at (0.6,0.6){};
		
		\node[v:mainempty] () at (-1.3,-1.3){};
		\node[v:main] () at (-1.3,1.3){};
		\node[v:main] () at (1.3,-1.3){};
		\node[v:mainempty] () at (1.3,1.3){};
		
		\node[v:main] () at (-2,-2){};
		\node[v:mainempty] () at (-2,2){};
		\node[v:mainempty] () at (2,-2){};
		\node[v:main] () at (2,2){};   
		
		\begin{pgfonlayer}{background}
			\draw[e:marker,color=myLightBlue] (-1.3,1.3) -- (-2,2);
			\draw[e:marker,color=myLightBlue] (-2,2) -- (2,2);
			\draw[e:marker,color=myLightBlue] (2,2) -- (2,-2);
			\draw[e:marker,color=myLightBlue] (2,-2) -- (1.3,-1.3);
			
			\draw[e:marker,color=myGreen] (1.3,1.3) -- (0.6,0.6);
			\draw[e:marker,color=myGreen] (0.6,0.6) -- (0.6,-0.6);
			\draw[e:marker,color=myGreen] (0.6,-0.6) -- (-0.6,-0.6);
			\draw[e:marker,color=myGreen] (-0.6,-0.6) -- (-1.3,-1.3);
			
			\draw[e:main] (-1.3,-1.3) rectangle (1.3,1.3);
			
			\draw[e:main,color=gray] (-2,2) -- (-2,-2) -- (2,-2) -- (2,2);
			
			\draw[e:main,color=gray] (-0.6,-0.6) -- (-0.6,0.6) -- (0.6,0.6) -- (0.6,-0.6);
			
			\draw[e:coloredborder] (0.6,-0.6) -- (-0.6,-0.6);
			\draw[e:coloredthin,color=BostonUniversityRed] (0.6,-0.6) -- (-0.6,-0.6);
			
			\draw[e:coloredborder] (2,2) -- (-2,2);
			\draw[e:coloredthin,color=BostonUniversityRed] (2,2) -- (-2,2);
			
			\draw[e:coloredborder] (-1.3,-1.3) -- (-2,-2);
			\draw[e:coloredthin,color=BostonUniversityRed]  (-1.3,-1.3) -- (-2,-2);
			
			\draw[e:coloredborder] (1.3,-1.3) -- (2,-2);
			\draw[e:coloredthin,color=BostonUniversityRed]  (1.3,-1.3) -- (2,-2);
			
			\draw[e:main,color=gray] (-1.3,-1.3) -- (-0.6,-0.6);
			\draw[e:main,color=gray] (1.3,-1.3) -- (0.6,-0.6);
			
			\draw[e:coloredborder] (-1.3,1.3) -- (-0.6,0.6);
			\draw[e:coloredthin,color=BostonUniversityRed]  (-1.3,1.3) -- (-0.6,0.6);
			
			\draw[e:coloredborder] (1.3,1.3) -- (0.6,0.6);
			\draw[e:coloredthin,color=BostonUniversityRed]  (1.3,1.3) -- (0.6,0.6);
			
			\draw[e:main,color=gray] (-1.3,1.3) -- (-2,2);
			\draw[e:main,color=gray] (1.3,1.3) -- (2,2);

		\end{pgfonlayer}
	\end{tikzpicture}
	\caption{The conformal subgraph $H$ in the proof of \cref{lemma:braceparts} together with a strong matching cross over the separating $4$-cycle $C$.}
	\label{fig:planecross}
\end{figure}

\begin{lemma}\label{lemma:tightcutsthroughseparators}
	Let $B$ be a bipartite matching covered graph, $M$ a perfect matching of $B$, $e=ab\in M$ with $a\in V_1$, $b\in V_2$, $X\subseteq V_1\setminus\Set{a}$ and $Y\subseteq V_2\setminus\Set{b}$ such that every internally $M$-conformal $X$-$Y$-path in $B$ contains $e$.
	Then there exists a tight cut $\CutG{B}{Z}$ in $B$ with $X\subseteq Z$, $Y\subseteq\Complement{Z}$, and $e\in\CutG{B}{Z}$.
\end{lemma}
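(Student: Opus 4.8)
We are given a bipartite matching covered graph $B$, a perfect matching $M$, an edge $e=ab\in M$, and sets $X\subseteq V_1\setminus\{a\}$, $Y\subseteq V_2\setminus\{b\}$ such that every internally $M$-conformal $X$-$Y$-path passes through $e$. The goal is a tight cut $\CutG{B}{Z}$ with $X\subseteq Z$, $Y\subseteq\Complement Z$, $e\in\CutG{B}{Z}$. The natural candidate for $Z$ is "the set of vertices reachable from $X$ by internally $M$-conformal paths that do not use $e$", suitably closed up so that it becomes a shore. Let me think about how such paths behave in a bipartite graph with a fixed perfect matching.

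An internally $M$-conformal path starting at a vertex $x\in V_1$ alternates: first a non-matching edge out of $x$, then the matching edge at the endpoint, then a non-matching edge, etc. So from $x\in X\subseteq V_1$ we can reach: its non-matching neighbours in $V_2$, then via matching edges their $M$-partners in $V_1$, and so on. Define $Z$ to be $X$ together with every vertex $v$ such that there is an internally $M$-conformal path from some $x\in X$ to $v$ avoiding $e$, and also throw in the $M$-partner of every such vertex (so $Z$ is a union of matching edges except possibly we need to also put $a$ in or keep $b$ out — this is exactly where $e$ enters). Concretely I would set $Z$ so that $a\in Z$ but $b\notin Z$, and otherwise $Z$ is closed under taking $M$-partners. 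The point is: reachability from $X$ by $e$-avoiding internally $M$-conformal paths naturally stops just before crossing $e$, because to use $e$ you'd have to arrive at $a$ (reachable) and then be forced to take the matching edge $ab$ — but a path reaching $a$ that came in on a non-matching edge must continue along $e$ to reach anything new on the far side. So $a\in Z$, $b\notin Z$, and $X\subseteq Z$, $Y\cap Z=\emptyset$ (the latter because a path reaching $Y$ would be an internally $M$-conformal $X$-$Y$-path avoiding $e$, contradiction), and $e\in\CutG B Z$.

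**The main work: showing $\CutG B Z$ is tight.**

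Equivalently, by \cref{lemma:ktightmajorityminority} (or directly), I need $|\Cut Z\cap M'|=1$ for every $M'\in\Perf B$. The cleanest route is to show that $Z$ is a union of $M$-edges except that it contains $a$ but not $b$ — i.e. $|\Majority Z|-|\Minority Z|=1$ with $a$ the lone unmatched-into-$Z$ vertex — and then verify $\Neighbours{\Minority Z}\subseteq\Majority Z$, which is the criterion of \cref{lemma:ktightmajorityminority} once $|Z|$ is odd. Here $\Minority Z=Z\cap V_2$ and $\Majority Z=Z\cap V_1$; every vertex of $Z\cap V_2$ is matched by $M$ into $Z\cap V_1$ (by the $M$-partner closure), so $|Z\cap V_1|=|Z\cap V_2|+1$ with $a$ the exception. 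The crucial claim is: every neighbour (in $B$, not just via $M$) of a vertex $w\in Z\cap V_2$ lies in $Z$. Suppose $w\in Z\cap V_2$ has a neighbour $u\in V_1\setminus Z$. Then $w$ is reached by an $e$-avoiding internally $M$-conformal path $P$ from some $x\in X$, ending at $w$; if the last edge of $P$ is a non-matching edge we can append the non-matching edge $wu$? — no, two non-matching edges in a row is not $M$-alternating. So I need to be careful about whether $P$ arrives at $w$ along a matching edge or a non-matching edge, and this dictates how the "reachable set" should really be defined — one must track parity/arrival-type, or equivalently build $Z$ as the shore obtained by a more careful BFS-like closure. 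The honest statement: define $Z$ as the smallest set containing $X\cup\{a\}$, not containing $b$, closed under "if $v\in V_2\cap Z$ and $vu\notin M$ then add $u$'s $M$-partner... " — essentially mimicking the Edmonds-Gallai / tight-cut construction. This closure is exactly engineered so that $\Neighbours{Z\cap V_2}\subseteq Z\cap V_1$, because the only way to be a $V_2$-vertex in $Z$ is to have been reached along an $M$-alternating route, and any further neighbour continues that route; the sole obstruction would be arriving at $b$, which is excluded by hypothesis ($b\notin Z$ precisely because reaching $b$ would complete an $X$-$Y$-path avoiding $e$ — wait, only if $b\in Y$; in general one also needs that no vertex of $Z$ has a non-matching edge to $b$, which again forces such an edge's route into an $X$-$Y$... no — $b$ need not be in $Y$). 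So the genuinely delicate point, and the one I expect to be the main obstacle, is proving $b\notin Z$ together with $\Neighbours{Z\cap V_2}\subseteq Z\cap V_1$ simultaneously: one must show that the reachability closure never "escapes" across $e$, i.e. that the hypothesis about $X$-$Y$-paths through $e$ is strong enough to confine the closure to one side.

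**Executing the obstacle.**

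To handle it, I would argue by contradiction on the existence of a vertex $w\in Z\cap V_2$ with a neighbour $u\notin Z$. Trace an $e$-avoiding internally $M$-conformal path $P$ realizing $w\in Z$. Since $w\in V_2$ and $P$ is internally $M$-conformal with a $V_1$-endpoint $x$, and $B$ is bipartite, $P$ has odd length iff... — I'd case on the length/type of $P$ at $w$. If $P$ reaches $w$ via the matching edge $M(w)w$ (so the next step of any extension must be non-matching), then $wu\notin M$ (since $M(w)\ne u$ as $M(w)\in Z$, $u\notin Z$), so $P+wu$ is again internally $M$-conformal and $e$-avoiding (provided $u\ne a$: but $a\in Z$, so $u\ne a$; and the edge $wu\ne e$ since $e=ab$ and $u\ne a$), hence $u$ should have been in $Z$ — contradiction, so this case is impossible once $Z$ is defined as a genuine closure. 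If $P$ reaches $w$ via a non-matching edge, then $w$'s own matching edge $M(w)w$ extends $P$, putting $M(w)\in Z$, which is consistent; and for the neighbour $u$: $wu$ is non-matching, and $P$ ended non-matching at $w$, so $P$ cannot be extended through $wu$ directly — but here's the subtlety: maybe $w$ was put into $Z$ via a *different* path $P'$ arriving along the matching edge, so WLOG pick that $P'$ and we are back in the previous case. Formalizing "whenever $w\in Z\cap V_2$, $w$ is reachable by an $e$-avoiding internally $M$-conformal path arriving along a non-matching edge" (so that its matching partner, not being reachable, gives the defect vertex... no wait we want partner *in* $Z$) — the right invariant is: $Z\cap V_2=\{w: \exists$ $e$-avoiding int.\ $M$-conf.\ path $X\to w$ of *even* internal structure arriving non-matching$\}$ and $Z\cap V_1=X\cup\{a\}\cup\{M(w):w\in Z\cap V_2\}$. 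With this definition, $|Z\cap V_1|=|Z\cap V_2|+1$ automatically (the $+1$ is $a$; one checks $x\in X$ gives $x=M(w)$ for $w$ the first step, and $a$'s partner $b\notin Z$), $|Z|$ is odd, $X\subseteq Z$, $Y\cap Z=\emptyset$, $e\in\Cut Z$, and the neighbour-containment $\Neighbours{Z\cap V_2}\subseteq Z\cap V_1$ holds by the extension argument above. Then \cref{lemma:ktightmajorityminority} gives that $\Cut Z$ is tight. I expect the only real work to be nailing the invariant and checking $b\notin Z$ uses the hypothesis exactly once (if some $e$-avoiding int.\ $M$-conf.\ path reached $b$ arriving non-matching, then since $b\in V_2$ and... it would contradict either $Y$-avoidance only if $b\in Y$; so actually $b\notin Z$ must follow from: reaching $b$ along a non-matching edge and then continuing along $e$ reaches $a$, fine, that doesn't create an $X$-$Y$-path — hmm, so $b\notin Z$ is instead *built in* by fiat, and one must check consistency, i.e. that $b$ has no non-matching neighbour in $Z\cap V_1\setminus\{a\}$ that would be "forced" to pull $b$ in; but $Z\cap V_1$-membership never forces $V_2$-membership in my definition, so there is no inconsistency). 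I'd write this up carefully; the rest is bookkeeping.
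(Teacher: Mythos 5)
Your overall plan --- grow $Z$ by reachability from $X$ via $e$-avoiding alternating paths, close it up so that it is a union of $M$-edges up to one exceptional vertex, and verify tightness via \cref{lemma:ktightmajorityminority} --- is the right idea, and is essentially what the paper's proof does, realising the closure as $\Fkt{Up}{X}$, the union of elementary components of $B-a-b$ lying above those meeting $X$, and then adjoining $b$. But your construction puts the \emph{wrong} endpoint of $e$ inside $Z$, and this is not a harmless convention that the "fiat'' repairs. Since $B$ is bipartite, an $M$-alternating walk that leaves $X\subseteq V_1$ on a non-matching edge visits $V_2$ at odd steps (always entered along a non-matching edge) and $V_1$ at even steps (always entered along a matching edge); in particular it can reach $a\in V_1$ only along $a$'s unique $M$-edge $ba=e$, so $a$ is never in the reachability closure, whereas $b$ may very well be. The vertex the closure is "pressed against'' is therefore $b$, and it is $b$ that must be absorbed so that the minority $Z\cap V_1$ has all of its neighbours inside $Z$. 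Adjoining $a$ instead makes $Z\cap V_2$ the minority, and forward reachability gives you no control over the $V_1$-neighbours of a reached $V_2$-vertex: those are the vertices one might have stepped \emph{from}, not where one can step \emph{to}. The repair you propose for this ("WLOG pick a path $P'$ arriving at $w$ along a matching edge'') is vacuous, since by the parity observation above an alternating walk from $V_1$ starting on a non-matching edge never enters a $V_2$-vertex along a matching edge.

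Concretely, take $B=C_6=v_1v_2v_3v_4v_5v_6v_1$ with $V_1=\Set{v_1,v_3,v_5}$, $M=\Set{v_1v_2,v_3v_4,v_5v_6}$, $e=v_5v_6$ (so $a=v_5$, $b=v_6$), $X=\Set{v_1}$, $Y=\Set{v_4}$. The hypothesis holds: the unique internally $M$-conformal $v_1$-$v_4$-path is $v_1v_6v_5v_4$, which uses $e$. Your $e$-avoiding reachability from $X$ catches only $v_6=b$, which you then delete, so $Z$ is $\Set{v_1,v_5}$, or $\Set{v_1,v_2,v_5}$ if one reads your parenthetical as taking whole $M$-edges plus $a$ minus $b$. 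Neither is a tight-cut shore: the second perfect matching $\Set{v_2v_3,v_4v_5,v_6v_1}$ meets $\CutG{B}{\Set{v_1,v_2,v_5}}$ in three edges, and the minority vertex $v_2$ has the neighbour $v_3\notin Z$. The shore the lemma promises is $Z=\Set{v_1,v_2,v_6}$, the reached $M$-edges together with $b$ and without $a$, exactly the paper's $\Fkt{Up}{X}\cup\Set{b}$; the paper's use of whole elementary components also handles, for free, the secondary gap that your closure does not obviously include the $M$-partner of each $x\in X$.
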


\begin{proof}
	By assumption, there is no internally $M$-conformal $X$-$Y$-path in $B-a-b$ and thus no vertex of $X$ can share an elementary component with a vertex of $Y$.
	Since, by definition, each elementary component would be matching covered and thus, \cref{thm:bipartiteextendibility} would guarantee the existence of such a path.
	Let $\Fkt{Up}{X}$ be the set of all vertices $w$ of $B-a-b$ such that there exist elementary components $K_X$ and $K_w$ with $K_X\leq_2 K_w$ where $K_X$ contains a vertex of $X$ and $w\in\V{K_w}$.
	Then $Y\cap\Fkt{Up}{X}=\emptyset$.
	Moreover, there is no $\Vi{1}{\Fkt{Up}{X}}$-$\Vi{2}{\Complement{\Fkt{Up}{X}}}$-path in $B-a-b$ at all.
	Hence in $B$ $\Fkt{Up}{X}\cup\Set{b}$ is a set of odd cardinality, where no vertex of $V_1$ has a neighbour outside of it and the difference between the number of $V_1$-vertices and the number of $V_1$-vertices is exactly one.
	So by \cref{lemma:ktightmajorityminority} $\CutG{B}{\Fkt{Up}{X}\cup\Set{b}}$ is a tight cut with $Y\subseteq\Complement{\Fkt{Up}{X}\cup\Set{b}}$.
\end{proof}

Together with the upcoming lemma, \cref{cor:planar4cycledifference,lemma:braceparts} imply \cref{prop:planarcrosses}.

\begin{lemma}\label{lemma:planarnonbracecrosses}
	Let $B$ be a planar brace, $C$ a conformal and separating cycle in $B$ such that $B'\in\Set{\Inner{B}{C},\Outer{B}{C}}$ is not a brace.
	Then there exists a strong matching cross over $C$ in $B$.
\end{lemma}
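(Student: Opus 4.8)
The plan is to exploit the non-trivial tight cut structure of the non-brace part $B'$ together with the fact (\cref{lemma:planartightcutsthroughsepcycles}) that any such cut meets $C$ on at least three vertices on each side. Let $B'\in\Set{\Inner{B}{C},\Outer{B}{C}}$ be the part that is not a brace, and without loss of generality say $B'=\Inner{B}{C}$. Fix a perfect matching $M$ of $B$ for which $C$ is $M$-conformal; this splits into perfect matchings $M_{\text{int}}$ of $B'$ and $M_{\text{out}}$ of the other part $B''=\Outer{B}{C}$. Pick a \emph{minimal} shore $X$ of a non-trivial tight cut $\CutG{B'}{X}$ in $B'$, chosen so that the matching edge $e=ab\in M_{\text{int}}$ crossing $\CutG{B'}{X}$ has, say, its $V_1$-endpoint $a$ on the majority side; by minimality the tight-cut contraction of $X$ (or of $\Complement X$) is a brace, and we may assume $C$ still meets both sides. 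First I would record, using \cref{lemma:planartightcutsthroughsepcycles}, that $C$ has at least three vertices in $X$ and at least three in $\Complement X$, and since $C$ is an even cycle conformally matched inside $C$, we can locate four vertices $s_1,s_2,t_1,t_2$ on $C$, appearing in this cyclic order, with $s_1,s_2\in V_1$ and $t_1,t_2\in V_2$, placed so that one $s_i,t_i$ pair sits on one side of $\CutG{B'}{X}$ and the other pair on the other side — this is where the "three on each side" count is spent, together with a parity argument on how $C$ alternates relative to the cut.

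Next I would build the two alternating paths. For the pair of pegs lying in the shore that contracts to a brace — say this is $\Complement X$, so $B'/X$ restricted appropriately is a brace with $C\cap\Complement X$ a facial subpath — I would invoke \cref{lemma:braceears} inside that brace to get an internally $M$-conformal $s_1$-$t_1$-path $P_1$ staying inside $\Complement X$ and internally disjoint from $C$. For the other pair $s_2,t_2$, which lie on the exterior part $C\cap X$, I would route the path $P_2$ through the \emph{other} side of the original cycle, i.e.\ inside $B''=\Outer{B}{C}$: apply \cref{lemma:braceears} to $B''$ (which \cref{lemma:planarnonbraceparts} tells us is matching covered, but here we actually want a brace statement — so instead we first replace $B''$ by a brace via tight-cut contractions, or directly use \cref{lemma:braceears}'s proof technique inside a matching-covered graph via \cref{thm:bipartiteextendibility} and the planar facial structure) to obtain an internally $M$-conformal $s_2$-$t_2$-path $P_2$ internally disjoint from $C$, living on the opposite side of $C$ from $P_1$. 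Since $P_1\subseteq\Inner{B}{C}$ and $P_2\subseteq\Outer{B}{C}$ and both are internally disjoint from $C$, they are automatically vertex disjoint, and their endpoints interleave on $C$, so $P_1,P_2$ form a matching cross over $C$. Finally I would check strongness: the peg multiset $\Set{s_1,s_2,t_1,t_2}$ is balanced by construction (two in $V_1$, two in $V_2$), and the two paths can be taken of the same type — since both are chosen internally $M$-conformal, i.e.\ of type~1, after possibly switching $M$ along an alternating cycle formed with a subpath of $C$ as in the remark following \cref{def:matchingcross}.

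The main obstacle I expect is the bookkeeping in the first paragraph: arranging that a non-trivial tight cut of $B'$ can be \emph{chosen} so that $C$ is split into two arcs each containing a legitimate peg pair of the correct colours in the correct cyclic order, and in particular ruling out the degenerate situation where $C\cap X$, though of size $\ge 3$, is colour-unbalanced in a way that prevents extracting a $V_1$-vertex and a $V_2$-vertex from it to serve as $s_i,t_i$. Handling this likely requires looking not at an arbitrary tight cut but at a carefully minimal one, using \cref{lemma:ktightmajorityminority} to control the majority/minority and \cref{lemma:tightcutsthroughseparators} in the contrapositive to guarantee that if no good cut existed then $B'$ would have been a brace after all. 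A secondary technical point is ensuring \cref{lemma:braceears} (stated for planar \emph{braces}) can be applied on the relevant sides: this is resolved either by first performing tight-cut contractions to reduce $B'$ and $B''$ to braces in which the relevant arc of $C$ is still facial — which is safe because contracting a shore disjoint from the chosen pegs does not disturb the path we extract — or by re-deriving the needed "facial ear" statement directly for planar matching-covered graphs from \cref{lemma:planarnonbraceparts} and \cref{thm:bipartiteextendibility}.
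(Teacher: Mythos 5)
Your peg placement cannot yield a matching cross; this is the central gap. By \cref{def:matchingcross} the pegs $s_1,s_2,t_1,t_2$ appear on $C$ in that cyclic order with $P_i$ joining $s_i$ to $t_i$, so $\{s_1,t_1\}$ must \emph{separate} $\{s_2,t_2\}$ on $C$. But (as the paper's proof records) the tight cut $\CutG{B'}{X}$ meets $E(C)$ in exactly two edges, so that $C$ is partitioned into exactly two contiguous arcs $Q_1\subseteq\Complement{X}$ and $Q_2\subseteq X$; more than two would contradict tightness, since $C$ is conformal in $B'$ and every arc of $C$ lying wholly in one shore has a one-vertex colour imbalance forcing a matching edge across the cut. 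Putting both $s_1,t_1$ on one arc and both $s_2,t_2$ on the other leaves $s_2$ and $t_2$ in the \emph{same} component of $C-s_1-t_1$: the pegs do not interleave. In fact the cyclic order $s_1,s_2,t_1,t_2$ you require, combined with your side-assignment, would force the cut to cross $C$ at least four times. The two paths you would build are nested ears, not a cross, so the claimed ``automatic'' interleaving at the end of your second paragraph fails.

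The fix, and the paper's approach, is to make \emph{each} cross path straddle the cut: one peg of $P_1$ in $Q_1$ and the other in $Q_2$, and likewise for $P_2$. The paper chooses the tight cut maximising $|\Complement{X}|$, identifies the two boundary edges $a_1b_1,a_2b_2$ of $C$ in the cut (with $a_i\in V_1\cap X$, $b_i\in V_2\cap\Complement{X}$), and uses \cref{thm:bipartiteextendibility} to produce a $Q_1$--$Q_2$ path $P_1$ internally disjoint from $C$; the endpoints of $P_1$ split $C$ into arcs $Y,\Complement{Y}$, and the second path is then sought from $\Complement{Y}\cap V_1$ to $Y\cap V_2$. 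Only when no such second path exists in $B'$ (Case~2 of the paper, which needs a further tight-cut argument and exhibits a face of $B'$ distinct from $C$) does the proof route a path through $\Outer{B}{C}$, and even then that path's pegs still straddle $Y$ and $\Complement{Y}$. For that external path the paper applies \cref{thm:bipartiteextendibility} directly in the brace $B$, sidestepping the issue you raise about $\Outer{B}{C}$ possibly not being a brace. Your instinct to exploit the tight-cut structure together with ear-type arguments is sound, but the pegs must be chosen with one of each pair on each shore of the cut, not both of one pair on each.
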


\begin{proof}
	By \cref{cor:planar4cycledifference} we may assume $\Abs{\V{C}}\geq 6$.
	Let $\CutG{B'}{X}$ be a non-trivial tight cut in $B'$ maximising $\Abs{\Complement{X}}$.
	Without loss of generality let us assume the majority of $X$ to be in $V_1$.
	With \cref{lemma:planartightcutsthroughsepcycles} and the fact that for every selection of three distinct edges of $C$, at least two of them belong to a common perfect matching, one can see that $\CutG{B'}{X}$ separates $C$ into two non-trivial paths $Q_1$ and $Q_2$ such that $\V{Q_1}\subseteq\Complement{X}$ and $\V{Q_2}\subseteq X$.
	Let $a_1, a_2\in X\cap V_1$ be the endpoints of $Q_2$ and let $b_1,b_2\in\Complement{X}\cap V_2$ be the endpoints of $Q_1$ such that $a_ib_i\in\E{C}$ for both $i\in[1,2]$.
	Next let $M$ be a perfect matching such that $C$ is $M$-conformal and $a_2b_2\in M$, moreover let $a'_1b_1, a_1b'_1\in M$.
	Since the majority of $X$ is in $V_1$, there cannot exist an internally $M$-conformal path starting at some vertex of $V_1\cap\Complement{X}$ and ending in a vertex of $V_2\cap X$ that avoids $a_2b_2$.
	However, with $B$ being a brace and \cref{thm:bipartiteextendibility} there must be two internally disjoint and internally $M$-conformal $a'_1$-$x$-paths for every $x\in X\cap V_2$ and one of them must avoid $a_2b_2$.
	Let us choose $b_X\in V_2\cap X$ such that there exists an internally $M$-conformal path $P$ from $a'_1$ to $b_X$ with the following properties:
	\begin{itemize}
		\item $a_{\Complement{X}}$ is the last vertex of $\V{C}\cap\Complement{X}$ along $P$ starting in $a'_1$, and
		\item $\V{P}\cap\V{Q_2}=\Set{b_X}$.
	\end{itemize} 
	Then $a_{\Complement{X}}\in V_1$ and $P_1\coloneqq a_{\Complement{X}}P$ is an internally $M$-conformal path which is internally disjoint from $C$.
	Moreover, $\V{P_1}\cap\V{B'}=\V{P_1}\cap\V{C}=\Set{a_{\Complement{X}},b_X}$.
	Let $Q_3$ be the component of $Q_1-a_{\Complement{X}}$ containing $b_1$, let $Y$ be the component of $C-a_{\Complement{X}}-b_X$ containing $Q_3$ and at last let $\Complement{Y}$ denote the other component of $C-a_{\Complement{X}}-b_X$.
	Let $e\in M$ be the edge covering $b_X$.
	What follows is a case distinction on the existence of some internally $M$-conformal path from $\Complement{Y}\cap V_1$ to $Y\cap V_2$.
	
	\textbf{Case 1:} There exists an internally $M$-conformal path $P'$ from $\Complement{Y}\cap V_1$ to $Y\cap V_2$ that avoids $e$.
	
	If this is the case, let $b_Y$ be the first vertex of $Y$ encountered while traversing along $P'$ starting in $\Complement{Y}\cap V_1$.
	Then let $a_{\Complement{Y}}$ be the last vertex of $P'$ in $\Complement{Y}\cap V_1$ encountered before $b_Y$.
	Now $P_2\coloneqq a_{\Complement{Y}}P'b_Y$ is an internally $M$-conformal path with no inner vertex on $C$ that is disjoint from $P_1$.
	Moreover, the vertices $a_{\Complement{Y}}$, $a_{\Complement{X}}$, $b_Y$, and $b_X$ appear on $C$ in the order listed and thus $P_1$ and $P_2$ form a strong matching cross over $C$ in $B$.
	
	\textbf{Case 2:} All internally $M$-conformal $(\Complement{Y}\cap V_1)$-$(Y\cap V_2)$-paths contain $e$.
	
	Then the deletion of both endpoints of $e$ in $B'$ leaves at least two elementary components, some containing vertices of $\Complement{Y}\cap V_1$ and some of the others containing vertices of $Y\cap V_2$ but never both.
	Thus, by \cref{lemma:tightcutsthroughseparators}, there exists a tight cut $\CutG{B'}{Z}$ with $\Complement{Y}\cap V_1\subseteq Z$, $Y\cap V_2\subseteq\Complement{Z}$, and $e\in\CutG{B}{Z}$.
	Indeed, the majority of $Z$ must be in $V_2$.
	Since $X$ is odd, one of the two sets $X\cap Z$ and $X\setminus Z$ must be odd and therefore, by \cref{lemma:tightcutuncrossing}, one of these sets defines a tight cut in $B'$.
	Clearly $a_2,b_X\in X\cap Z$ and thus $\Abs{X\cap Z}>1$.
	Since $\CutG{B}{Z}$ cannot contain more than two edges of $C$ and by choice of $\CutG{B}{X}$ there cannot be a non-trivial tight cut $\CutG{B}{X'}$ with $X'\subset X$ in $B'$, hence
	$X\setminus Z=\Set{a_1}$ and $a_1b_X\in M$.
	
	With an argument similar to the one in \textbf{Case 1} one can see that, in case there exists an internally $M$-conformal $(Y\cap V_1)$-$(\Complement{Y}\cap V_2)$-path avoiding the edge $e'\in M$ covering $a_{\Complement{X}}$, we are done again.
	So we may also assume that $e'$ meets all internally $M$-conformal $(Y\cap V_1)$-$(\Complement{Y}\cap V_2)$-paths.
	Then, with arguments as before, one derives the existence of a non-trivial tight cut $\CutG{B'}{Z'}$ such that $a_1\in Z'$ and $\Complement{Y}\cap B\subseteq\Complement{Z'}$.
	In the end we arrive at the conclusion that $X\cap Z'=\Set{a_2}$ and $e'\in\CutG{B'}{Z'}$.
	
	Moreover, this means that there is neither an internally $M$-conformal $(Y\cap V_1)$-$(\Complement{Y}\cap V_2)$-path, nor an internally $M$-conformal $(\Complement{Y}\cap V_1)$-$(Y\cap V_2)$-path in $B'$ after deleting the four vertices in $S\coloneqq e\cup e'$.
	Hence there cannot be any $Y$-$\Complement{Y}$-path in $B'-S$.
	This means there must be a face $C'$ of $B'$ containing both $a_{\Complement{X}}$ and $b_X$ that is distinct from $C$.
	Since $B$ itself is a brace, there must be an internally $M$-conformal path $P'_1$ from $Y\cap V_1$ to $\Complement{Y}\cap V_2$ that avoids $e'$ and this path may be chosen to be internally disjoint from $B'$.
	In particular, we may choose the endpoints of $P'_1$ to be disjoint from one of the two paths of $C'$, say $R$, connecting $a_{\Complement{X}}$ and $b_X$.
	This is due to the fact that every internal vertex of a path in $C\cap C'$ must be of degree two in $B'$, $B'\neq C$, and $a_{\Complement{X}}b_X\notin\E{C}$.
	Since $B'$ is matching covered by \cref{lemma:planarnonbraceparts}, \cref{lemma:conformalfaces} guarantees the existence of a perfect matching $M'$ of $B'$ for which $C'$ is $M'$-conformal.
	Let us choose $M'$ such that $R$ is an internally $M'$-conformal path and let $P'_2$ be an internally $M'$-conformal subpath of $R$ with both endpoints on $C$ and otherwise disjoint from $C$.
	All of these choices are possible since $a_{\Complement{X}}$ and $b_X$ belong to different colour classes of $B'$.
	At last let us set $M''\coloneqq M'\cup \Brace{M\setminus\E{B'}}$.
	Since $B'$ is an $M$-conformal subgraph of $B$, $M''$ is a perfect matching of $B$, and by construction, both $P'_1$ and $P'_2$ are internally $M''$-conformal paths.
	Now $P'_1$ and $P'_2$ form a strong matching cross over $C$ in $B$.
\end{proof}

We are ready to derive the main result of this section.

\begin{proof}[Proof of \cref{prop:planarcrosses}]
	Let $C$ be a conformal cycle in a planar brace.
	If $C$ does not bound a face, it is separating and thus \cref{lemma:braceparts,lemma:planarnonbracecrosses} guarantee the existence of a strong matching cross over $C$ in $B$.
	For the reverse suppose there exists a strong matching cross over $C$ in $B$, then exactly one path of the cross must lie in the interior of $C$ for every plane embedding of $B$.
	Hence $C$ does not bound a face in any plane embedding of $B$.
\end{proof}

\section{Paths and Matching Crosses through $4$-Cycle Sums}\label{sec:nonplanar}

Cycles of length four play a key role in many aspects of bipartite matching theory as we have seen in \cref{thm:trisums}.
In particular, the $4$-cycle sum operation and the fact, that no perfect matching $M$ for which an $M$-conformal matching cross over $C_4$ exists can contain a perfect matching of $C_4$ itself are things to be considered.
While tight cut contractions only preserve special types of matching crosses\footnote{It is possible to lose a matching cross if for example the tight cut separates the endpoints of both paths of the cross, in both tight cut contractions, what remains of the paths is now a set of two paths with a common endpoint.}, at some point they are not applicable any more to further decompose a given graph.

In the spirit of the Two Paths Theorem, we want to decompose our graph further while maintaining a fixed subgraph, in most cases the conformal cycle for which we seek a matching cross.
This section, and the following ones, exist to describe exactly the interaction between conformal cycles, matching crosses over these cycles, $4$-cycle sums, and matching crosses over $4$-cycles in both $K_{3,3}$-free braces which are non-planar and in braces containing $K_{3,3}$.

A brace $B$ is a \emph{maximal} $4$-cycle sum at the $4$-cycle $C$ of the braces $B_1,\dots,B_{\ell}$, $\ell\geq 3$, if there do not exist braces $H_1,\dots,H_n$, $n>\ell$ such that $B$ is a $4$-cycle sum of $H_1,\dots, H_m$ at $C$.

\begin{lemma}\label{lemma:pathsintrisums}
	Let $B$ be a $K_{3,3}$-free brace, $n\geq 3$, $B_1,\dots,B_{\ell}$ braces such that $B$ is a maximal $4$-cycle sum of $B_1,\dots,B_{\ell}$ at the $4$-cycle $C=\Brace{a_1,b_1,a_2,b_2}$ and let $M$ be a perfect matching of $B_1$.
	Then for every choice of $x\in\Set{a_1,a_2}$ and $y\in\Set{b_1,b_2}$ there exists a perfect matching $M'$ of $B$ such that $M\setminus\E{C}\subseteq M'$ and there are paths $P_1$ and $P_2$  in $B-\V{B_1-\V{C}}$ where
	\begin{itemize}
		\item $P_1$ has endpoints $x$ and $y$, while $P_2$ connects $\Set{a_1,a_2}\setminus\Set{x}$ to $\Set{b_1,b_2}\setminus\Set{y}$,
		\item $P_1$ and $P_2$ are disjoint, and
		\item $P_1+P_2+\CondSet{e\in M'}{e\cap \V{C}\neq\emptyset}$ is an $M'$-conformal subgraph of $B$.
	\end{itemize}	
\end{lemma}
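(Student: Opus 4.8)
The plan is to analyse the graph $B'\coloneqq B-\V{B_1-\V{C}}$, obtained from $B$ by deleting the interior of $B_1$, and to realise $P_1$ and $P_2$ entirely inside $B'$. First I would record the structure of $B'$: it is exactly the $4$-cycle sum of $B_2,\dots,B_{\ell}$ at $C$; it is matching covered; $C$ is a conformal $4$-cycle of $B'$ (since $B'-\V{C}$ is the disjoint union of the graphs $B_j-\V{C}$, each of which has a perfect matching); and, matching $\V{C}$ along $C$ and the interiors separately, $B'$ has a perfect matching, so $\V{B_1-\V{C}}$ is conformal in $B$ and $B'$ is a conformal subgraph of $B$, hence $K_{3,3}$-free. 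By \cref{lemma:trisums}, if $\ell\geq 4$ then $B'$ is itself a brace; if $\ell=3$ then $B'=B_2+B_3$ need not be a brace, but $B_2$ and $B_3$ are, and I would treat this case by routing one path inside $B_2$ and the other inside $B_3$ and gluing along $C$, in the spirit of the proof of \cref{lemma:braceparts}. A point I want to use to control the interaction with $C$ is that, by maximality of the decomposition, no piece $B_j$ can have $C$ as a separating cycle: if it did and $B_j$ were planar, cutting $B_j$ along $C$ would, by \cref{lemma:4cyclesumwithtwosummands}, produce two braces and refine the decomposition at $C$; so $C$ bounds a face in every planar piece, which is precisely the hypothesis needed to invoke \cref{lemma:braceears}.

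Next I would examine how $M$ meets $C$. The edges of $B$ with both endpoints on $\V{C}$ are among the four edges of $C$, so $M\setminus\E{C}$ is a matching covering $\V{B_1-\V{C}}$ together with a set $S\subseteq\V{C}$ of corners matched into the interior of $B_1$; since $M\cap\E{C}$ must be a matching of $C$ on $\V{C}\setminus S$, we get $\Abs{S}\in\Set{0,2,4}$, and when $\Abs{S}=2$ the set $S$ consists of two $C$-adjacent corners, one of each colour. In every case $\V{C}\setminus S$ is conformal in $B'$ (for $S=\emptyset$ match $\V{C}$ along $C$; otherwise use that $B'$ is matching covered), so I may set $M'\coloneqq\Brace{M\setminus\E{C}}\cup N$ for a perfect matching $N$ of $B'-S$ to be chosen; this is a perfect matching of $B$ with $M\setminus\E{C}\subseteq M'$, and it remains to choose $N$ together with the paths. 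Fixing $x\in\Set{a_1,a_2}$ and $y\in\Set{b_1,b_2}$, observe that since $\V{C}=\Set{a_1,a_2,b_1,b_2}$ and $P_1,P_2$ are vertex-disjoint with these four corners as their endpoints, $P_1$ and $P_2$ are automatically internally disjoint from $C$; thus what is required is a pair of vertex-disjoint, internally $M'$-conformal paths in $B'$, one from $x$ to $y$ and one between the remaining $a$-corner and the remaining $b$-corner, whose end-edges fit together with $\CondSet{e\in M'}{e\cap\V{C}\neq\emptyset}$ into a conformal subgraph. Crucially the endpoints of $P_1$, and likewise of $P_2$, are $C$-adjacent corners, so this is a \emph{non-crossing} configuration.

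To produce the paths when $B'$ is a brace, I would choose $N$ so that $C$ is $N$-conformal with $N\cap\E{C}$ equal to the pair of $C$-edges joining $x$ to the other $b$-corner and $y$ to the other $a$-corner when $S=\emptyset$, and so that $N$ matches $S$ suitably when $\Abs{S}\in\Set{2,4}$; then \cref{lemma:braceears}, applied in $B'$ with the facial cycle $C$ (facial by the maximality reduction, or applied in the relevant planar piece), yields an internally $N$-conformal $x$-$y$-path internally disjoint from $C$ and avoiding the other two corners, while the edge of $C$ between the remaining two corners, together with the interior matching edges at those corners, completes $P_2$. Disjointness is immediate since $P_2$ stays on $C$ together with matching edges into the interior of $B_1$, and the parity of $C$ as a conformal cycle guarantees the end-edges assemble into an $M'$-conformal subgraph. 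When $\ell=3$ and $B'$ fails to be a brace, I route one path in $B_2$ and the other in $B_3$ via \cref{lemma:braceears} in each, and glue along $C$ as in \cref{lemma:braceparts}.

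The step I expect to be the main obstacle is the simultaneous, disjoint routing of the two internally $M'$-conformal paths with all four corners prescribed as endpoints while keeping track of which corners are matched into $B_1$, into $B'$, or along $C$: one must verify, for each admissible pattern of $S$ and each choice of $x,y$, that $N$ and the two paths can be chosen so that $P_1+P_2+\CondSet{e\in M'}{e\cap\V{C}\neq\emptyset}$ is conformal. The second delicate point is the case of a non-brace $B'$ (or a non-planar $K_{3,3}$-free piece), where \cref{lemma:braceears} is not directly available; there I expect to fall back on a tight-cut argument, using \cref{lemma:tightcutsthroughseparators} and an uncrossing step in the spirit of \cref{lemma:planarnonbracecrosses}, to reduce to a piece in which the desired conformal path can be found.
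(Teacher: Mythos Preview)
Your approach has a concrete technical gap: \cref{lemma:braceears} requires the two endpoints $a,b$ to satisfy $ab\notin\E{C}$, but on a $4$-cycle every vertex of $V_1$ is adjacent to every vertex of $V_2$, so for \emph{any} choice of $x\in\{a_1,a_2\}$ and $y\in\{b_1,b_2\}$ you have $xy\in\E{C}$ and the lemma simply does not apply. This is not a technicality you can sidestep by choosing a different pair, since the same obstruction hits $P_2$ as well. A second, smaller issue is that your fallback ``$P_2$ is the edge of $C$ between the remaining two corners'' presupposes that this edge survives in $B$, but the $4$-cycle sum may have forgotten it. Finally, the reliance on \cref{lemma:braceears} forces you into planarity hypotheses on the pieces $B_j$, and your tight-cut fallback for non-planar pieces is only a sketch.

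The paper takes a quite different and more direct route that avoids all of this. It applies \cref{lemma:cubeorK33} to a single summand $B_j$ with $j\geq 2$: since $B$ (and hence $B_j$) is $K_{3,3}$-free, the $4$-cycle $C$ is contained in a conformal bisubdivision $H$ of the cube inside $B_j$. The required paths $P_1,P_2$ are then read off explicitly from $H$ by a short case analysis on how many edges of $\E{C}\cap M$ there are (zero, one, or two), with the perfect matching $M'$ assembled from $M\setminus\E{C}$, a suitable perfect matching $M_H$ of $H$, and perfect matchings of the remaining $B_i-\V{C}$. No planarity is needed, no facial-cycle argument, and no edge of $C$ is assumed to be present in $B$: the paths live in the cube bisubdivision, not on $C$. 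Your structural observations about $S=\V{C}\cap\V{M\setminus\E{C}}$ and the assembly of $M'$ are correct and are exactly what the paper does; the missing idea is to replace the planar ear lemma by the cube.
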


\begin{proof}
	By \cref{lemma:cubeorK33} for every $i\in[2,\ell]$ $C$ is contained in a conformal bisubdivision of the cube, or of $K_{3,3}$.
	Since $B$ is $K_{3,3}$-free the later can never be true and thus for every $j\in[2,\ell]$, $C$ is contained in a conformal bisubdivision $H$ of the cube in $B_j$.
	As $H$ is conformal in $B_j$, every perfect matching of $H$ can be combined with a perfect matching of $B_j-\V{H}$ to a perfect matching of $B_j$.
	In general, let $M_H$ be a perfect matching of $H$ such that for every $e\in\E{C}\cap M_H$ both endpoints of $e$ are covered by edges of $M\setminus\E{C}$ and a vertex of $C$ is covered by a non-$\E{C}$-edge in $M_H$ if and only if it is covered by an edge of $M\cap\E{C}$.
	Moreover, let $M_j$ be a perfect matching of $B_j-\V{H}$, and for each $i\in2,\ell\setminus\Set{j}$ let $M_i$ be a perfect matching of $B_i-\V{C}$.
	The matching $M_i$ clearly exists since the $B_i$ are braces, and therefore they are $2$-extendible.
	Then $\Brace{M\setminus\E{C}}\cup\Brace{M_H\setminus\E{C}}\cup\bigcup_{i=1,i\neq j}^nM_i$ is a perfect matching of $B$.
	Hence it suffices to show that, for any $M$ we are given, we can choose the matching $M_j$ such that we are able to find the desired paths within $H$.
	What follows is a discussion of these paths depending on the number of edges in $M\cap\E{C}$.
	We present these matchings together with the paths in \cref{fig:cubepaths0edges,fig:cubepaths1edge}.
	As $H$ is a bisubdivision of the cube, each perfect matching of $H$ mirrors a perfect matching $M_H'$ of the cube in the sense that a bisubdivided edge of the cube is $M_H$-conformal if and only if the corresponding edge of the cube belongs to $M_H'$.
\end{proof}

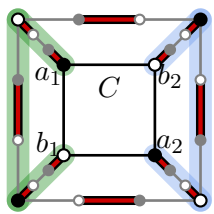
\begin{figure}[h!]
	\centering
	\begin{tikzpicture}
		\pgfdeclarelayer{background}
		\pgfdeclarelayer{foreground}
		\pgfsetlayers{background,main,foreground}
		
		\node[v:mainempty] () at (-0.6,-0.6){};
		\node[v:main] () at (-0.6,0.6){};
		\node[v:main] () at (0.6,-0.6){};
		\node[v:mainempty] () at (0.6,0.6){};
		\node[v:main] () at (-1.2,-1.2){};
		\node[v:mainempty] () at (-1.2,1.2){};
		\node[v:mainempty] () at (1.2,-1.2){};
		\node[v:main] () at (1.2,1.2){};
		
		\node[v:mainemptygray] () at (0.4,1.2){};
		\node[v:maingray] () at (0.4,-1.2){};
		\node[v:maingray] () at (-0.4,1.2){};
		\node[v:mainemptygray] () at (-0.4,-1.2){};
		
		\node[v:mainemptygray] () at (-1.2,-0.4){};
		\node[v:maingray] () at (-1.2,0.4){};
		\node[v:maingray] () at (1.2,-0.4){};
		\node[v:mainemptygray] () at (1.2,0.4){};
		
		\node[v:maingray] () at (0.8,0.8){};
		\node[v:mainemptygray] () at (1,1){};
		\node[v:maingray] () at (-0.8,0.8){};
		\node[v:mainemptygray] () at (-1,1){};
		\node[v:maingray] () at (0.8,-0.8){};
		\node[v:mainemptygray] () at (1,-1){};       
		\node[v:maingray] () at (-0.8,-0.8){};
		\node[v:mainemptygray] () at (-1,-1){};
		
		\node () at (0,0.3){$C$};
		\node () at (-0.8,0.45){$a_{1}$};
		\node () at (-0.8,-0.45){$b_{1}$};
		\node () at (0.8,0.45){$b_{2}$};
		\node () at (0.8,-0.45){$a_{2}$};
		
		\begin{pgfonlayer}{background}
			
			\draw[e:marker,color=myLightBlue] (0.6,0.6) -- (1.2,1.2);
			\draw[e:marker,color=myLightBlue] (1.2,1.2) -- (1.2,-1.2);
			\draw[e:marker,color=myLightBlue] (1.2,-1.2) -- (0.6,-0.6);
			\draw[e:marker,color=myGreen] (-0.6,0.6) -- (-1.2,1.2);
			\draw[e:marker,color=myGreen] (-1.2,1.2) -- (-1.2,-1.2);
			\draw[e:marker,color=myGreen] (-1.2,-1.2) -- (-0.6,-0.6);
			
			\draw[e:main] (-0.6,-0.6) rectangle (0.6,0.6);
			
			\draw[e:main,color=gray] (0.8,0.8) -- (1,1);
			\draw[e:main,color=gray] (-0.8,0.8) -- (-1,1);
			\draw[e:main,color=gray] (0.8,-0.8) -- (1,-1);
			\draw[e:main,color=gray] (-0.8,-0.8) -- (-1,-1);

			\draw[e:coloredborder] (1,1) -- (1.2,1.2);
			\draw[e:coloredthin,color=BostonUniversityRed] (1,1) -- (1.2,1.2);
			\draw[e:coloredborder] (0.6,0.6) -- (0.8,0.8);
			\draw[e:coloredthin,color=BostonUniversityRed] (0.6,0.6) -- (0.8,0.8);
			\draw[e:coloredborder] (1,-1) -- (1.2,-1.2);
			\draw[e:coloredthin,color=BostonUniversityRed] (1,-1) -- (1.2,-1.2);
			\draw[e:coloredborder] (0.6,-0.6) -- (0.8,-0.8);
			\draw[e:coloredthin,color=BostonUniversityRed] (0.6,-0.6) -- (0.8,-0.8);
			\draw[e:coloredborder] (-1,1) -- (-1.2,1.2);
			\draw[e:coloredthin,color=BostonUniversityRed] (-1,1) -- (-1.2,1.2);
			\draw[e:coloredborder] (-0.6,0.6) -- (-0.8,0.8);
			\draw[e:coloredthin,color=BostonUniversityRed] (-0.6,0.6) -- (-0.8,0.8);
			\draw[e:coloredborder] (-1,-1) -- (-1.2,-1.2);
			\draw[e:coloredthin,color=BostonUniversityRed] (-1,-1) -- (-1.2,-1.2);
			\draw[e:coloredborder] (-0.6,-0.6) -- (-0.8,-0.8);
			\draw[e:coloredthin,color=BostonUniversityRed] (-0.6,-0.6) -- (-0.8,-0.8);
			
			\draw[e:coloredborder] (-0.4,1.2) -- (0.4,1.2);
			\draw[e:coloredthin,color=BostonUniversityRed] (-0.4,1.2) -- (0.4,1.2);
			\draw[e:coloredborder] (-0.4,-1.2) -- (0.4,-1.2);
			\draw[e:coloredthin,color=BostonUniversityRed] (-0.4,-1.2) -- (0.4,-1.2);
			\draw[e:coloredborder] (-1.2,-0.4) -- (-1.2,0.4);
			\draw[e:coloredthin,color=BostonUniversityRed] (-1.2,-0.4) -- (-1.2,0.4);
			\draw[e:coloredborder] (1.2,-0.4) -- (1.2,0.4);
			\draw[e:coloredthin,color=BostonUniversityRed] (1.2,-0.4) -- (1.2,0.4);

			\draw[e:main,color=gray] (-1.2,1.2) -- (-0.4,1.2);
			\draw[e:main,color=gray] (1.2,1.2) -- (0.4,1.2);
			\draw[e:main,color=gray] (-1.2,-1.2) -- (-0.4,-1.2);
			\draw[e:main,color=gray] (1.2,-1.2) -- (0.4,-1.2);
			\draw[e:main,color=gray] (-1.2,-1.2) -- (-1.2,-0.4);
			\draw[e:main,color=gray] (-1.2,1.2) -- (-1.2,0.4);
			\draw[e:main,color=gray] (1.2,-1.2) -- (1.2,-0.4);
			\draw[e:main,color=gray] (1.2,1.2) -- (1.2,0.4);

		\end{pgfonlayer}
	\end{tikzpicture}
	\caption{A perfect matching $M_H$ of $H$ where all four vertices of $C$ are matched to vertices of $H-\V{C}$. Two paths, as requested by the assertion of \cref{lemma:pathsintrisums}, are marked.}
	\label{fig:cubepaths0edges}
\end{figure}

\begin{figure}
	\begin{subfigure}{0.24\textwidth}
		\centering
		\begin{tikzpicture}
			\pgfdeclarelayer{background}
			\pgfdeclarelayer{foreground}
			\pgfsetlayers{background,main,foreground}
			
			\node[v:mainempty] () at (-0.6,-0.6){};
			\node[v:main] () at (-0.6,0.6){};
			\node[v:main] () at (0.6,-0.6){};
			\node[v:mainempty] () at (0.6,0.6){};
			\node[v:main] () at (-1.2,-1.2){};
			\node[v:mainempty] () at (-1.2,1.2){};
			\node[v:mainempty] () at (1.2,-1.2){};
			\node[v:main] () at (1.2,1.2){};
			
			\node[v:mainemptygray] () at (0.4,1.2){};
			\node[v:maingray] () at (0.4,-1.2){};
			\node[v:maingray] () at (-0.4,1.2){};
			\node[v:mainemptygray] () at (-0.4,-1.2){};
			
			\node[v:mainemptygray] () at (-1.2,-0.4){};
			\node[v:maingray] () at (-1.2,0.4){};
			\node[v:maingray] () at (1.2,-0.4){};
			\node[v:mainemptygray] () at (1.2,0.4){};
			
			\node[v:maingray] () at (0.8,0.8){};
			\node[v:mainemptygray] () at (1,1){};
			\node[v:maingray] () at (-0.8,0.8){};
			\node[v:mainemptygray] () at (-1,1){};
			\node[v:maingray] () at (0.8,-0.8){};
			\node[v:mainemptygray] () at (1,-1){};       
			\node[v:maingray] () at (-0.8,-0.8){};
			\node[v:mainemptygray] () at (-1,-1){};
			
			\node () at (0,0.3){$C$};
			\node () at (-0.8,0.45){$a_{1}$};
			\node () at (-0.8,-0.45){$b_{1}$};
			\node () at (0.8,0.45){$b_{2}$};
			\node () at (0.8,-0.45){$a_{2}$};
			
			\begin{pgfonlayer}{background}
				
				\draw[e:marker,color=myGreen] (0.6,0.6) -- (1.2,1.2);
				\draw[e:marker,color=myGreen] (1.2,1.2) -- (1.2,-1.2);
				\draw[e:marker,color=myGreen] (1.2,-1.2) -- (0.6,-0.6);
				\draw[e:marker,color=myLightBlue] (-0.6,0.6) -- (-1.2,1.2);
				\draw[e:marker,color=myLightBlue] (-1.2,1.2) -- (-1.2,-1.2);
				\draw[e:marker,color=myLightBlue] (-1.2,-1.2) -- (-0.6,-0.6);

				\draw[e:main,color=gray] (-1.2,1.2) -- (-0.4,1.2);
				\draw[e:main,color=gray] (1.2,1.2) -- (0.4,1.2);
				\draw[e:main,color=gray] (-1.2,-1.2) -- (-0.4,-1.2);
				\draw[e:main,color=gray] (1.2,-1.2) -- (0.4,-1.2);
				\draw[e:main,color=gray] (-1.2,-0.4) -- (-1.2,0.4);
				\draw[e:main,color=gray] (1.2,-0.4) -- (1.2,0.4);
				
				\draw[e:main] (0.6,0.6) -- (0.6,-0.6) -- (-0.6,-0.6) -- (-0.6,0.6);
				
				\draw[e:main,color=gray] (1,1) -- (1.2,1.2);
				\draw[e:main,color=gray] (0.6,0.6) -- (0.8,0.8);
				\draw[e:main,color=gray] (1,-1) -- (1.2,-1.2);
				\draw[e:main,color=gray] (0.6,-0.6) -- (0.8,-0.8);
				\draw[e:main,color=gray] (-1,1) -- (-1.2,1.2);
				\draw[e:main,color=gray] (-0.6,0.6) -- (-0.8,0.8);
				\draw[e:main,color=gray] (-1,-1) -- (-1.2,-1.2);
				\draw[e:main,color=gray] (-0.6,-0.6) -- (-0.8,-0.8);
				
				\draw[e:coloredborder] (-0.4,1.2) -- (0.4,1.2);
				\draw[e:coloredthin,color=BostonUniversityRed] (-0.4,1.2) -- (0.4,1.2);
				\draw[e:coloredborder] (-0.4,-1.2) -- (0.4,-1.2);
				\draw[e:coloredthin,color=BostonUniversityRed] (-0.4,-1.2) -- (0.4,-1.2);
				
				\draw[e:coloredborder] (-0.6,0.6) -- (0.6,0.6);
				\draw[e:coloredthin,color=BostonUniversityRed] (-0.6,0.6) -- (0.6,0.6);
				
				\draw[e:coloredborder](-1.2,-1.2) -- (-1.2,-0.4);
				\draw[e:coloredthin,color=BostonUniversityRed](-1.2,-1.2) -- (-1.2,-0.4);
				
				\draw[e:coloredborder](-1.2,1.2) -- (-1.2,0.4);
				\draw[e:coloredthin,color=BostonUniversityRed](-1.2,1.2) -- (-1.2,0.4);
				
				\draw[e:coloredborder](1.2,-1.2) -- (1.2,-0.4);
				\draw[e:coloredthin,color=BostonUniversityRed](1.2,-1.2) -- (1.2,-0.4);
				
				\draw[e:coloredborder](1.2,1.2) -- (1.2,0.4);
				\draw[e:coloredthin,color=BostonUniversityRed](1.2,1.2) -- (1.2,0.4);
				
				\draw[e:coloredborder] (0.8,0.8) -- (1,1);
				\draw[e:coloredthin,color=BostonUniversityRed](0.8,0.8) -- (1,1);
				\draw[e:coloredborder] (-0.8,0.8) -- (-1,1);
				\draw[e:coloredthin,color=BostonUniversityRed](-0.8,0.8) -- (-1,1);
				\draw[e:coloredborder] (0.8,-0.8) -- (1,-1);
				\draw[e:coloredthin,color=BostonUniversityRed](0.8,-0.8) -- (1,-1);        
				\draw[e:coloredborder] (-0.8,-0.8) -- (-1,-1);
				\draw[e:coloredthin,color=BostonUniversityRed](-0.8,-0.8) -- (-1,-1);

			\end{pgfonlayer}
		\end{tikzpicture}
	\end{subfigure}
	\begin{subfigure}{0.24\textwidth}
		\centering
		\begin{tikzpicture}
			\pgfdeclarelayer{background}
			\pgfdeclarelayer{foreground}
			\pgfsetlayers{background,main,foreground}
			
			\node[v:mainempty] () at (-0.6,-0.6){};
			\node[v:main] () at (-0.6,0.6){};
			\node[v:main] () at (0.6,-0.6){};
			\node[v:mainempty] () at (0.6,0.6){};
			\node[v:main] () at (-1.2,-1.2){};
			\node[v:mainempty] () at (-1.2,1.2){};
			\node[v:mainempty] () at (1.2,-1.2){};
			\node[v:main] () at (1.2,1.2){};
			
			\node[v:mainemptygray] () at (0.4,1.2){};
			\node[v:maingray] () at (0.4,-1.2){};
			\node[v:maingray] () at (-0.4,1.2){};
			\node[v:mainemptygray] () at (-0.4,-1.2){};
			
			\node[v:mainemptygray] () at (-1.2,-0.4){};
			\node[v:maingray] () at (-1.2,0.4){};
			\node[v:maingray] () at (1.2,-0.4){};
			\node[v:mainemptygray] () at (1.2,0.4){};
			
			\node[v:maingray] () at (0.8,0.8){};
			\node[v:mainemptygray] () at (1,1){};
			\node[v:maingray] () at (-0.8,0.8){};
			\node[v:mainemptygray] () at (-1,1){};
			\node[v:maingray] () at (0.8,-0.8){};
			\node[v:mainemptygray] () at (1,-1){};       
			\node[v:maingray] () at (-0.8,-0.8){};
			\node[v:mainemptygray] () at (-1,-1){};
			
			\node () at (0,0.3){$C$};
			\node () at (-0.8,0.45){$a_{1}$};
			\node () at (-0.8,-0.45){$b_{1}$};
			\node () at (0.8,0.45){$b_{2}$};
			\node () at (0.8,-0.45){$a_{2}$};
			
			\begin{pgfonlayer}{background}
				
				\draw[e:marker,color=myLightBlue] (-0.6,0.6) -- (-1.2,1.2);
				\draw[e:marker,color=myLightBlue] (-1.2,1.2) -- (1.2,1.2);
				\draw[e:marker,color=myLightBlue] (1.2,1.2) -- (0.6,0.6);
				
				\draw[e:marker,color=myGreen] (-0.6,-0.6) -- (-1.2,-1.2);
				\draw[e:marker,color=myGreen] (-1.2,-1.2) -- (1.2,-1.2);
				\draw[e:marker,color=myGreen] (1.2,-1.2) -- (0.6,-0.6);
				
				\draw[e:main,color=gray] (-1.2,-1.2) -- (-0.4,-1.2);
				\draw[e:main,color=gray] (1.2,-1.2) -- (0.4,-1.2);
				
				\draw[e:main,color=gray] (-1.2,-1.2) -- (-1.2,0.4);
				\draw[e:main,color=gray] (-1.2,1.2) -- (-1.2,-0.4);
				\draw[e:main,color=gray] (1.2,-1.2) -- (1.2,0.4);
				\draw[e:main,color=gray] (1.2,1.2) -- (1.2,-0.4);

				\draw[e:main] (0.6,0.6) -- (0.6,-0.6) -- (-0.6,-0.6) -- (-0.6,0.6);
				
				\draw[e:main,color=gray] (1,1) -- (1.2,1.2);
				\draw[e:main,color=gray] (0.6,0.6) -- (0.8,0.8);
				
				\draw[e:main,color=gray] (-1,1) -- (-1.2,1.2);
				\draw[e:main,color=gray] (-0.6,0.6) -- (-0.8,0.8);
				
				\draw[e:main,color=gray] (-0.4,1.2) -- (0.4,1.2);
				
				\draw[e:coloredborder] (-0.6,0.6) -- (0.6,0.6);
				\draw[e:coloredthin,color=BostonUniversityRed] (-0.6,0.6) -- (0.6,0.6);
				
				\draw[e:coloredborder] (-0.4,-1.2) -- (0.4,-1.2);
				\draw[e:coloredthin,color=BostonUniversityRed] (-0.4,-1.2) -- (0.4,-1.2);
				
				\draw[e:coloredborder] (-1.2,-0.4) -- (-1.2,0.4);
				\draw[e:coloredthin,color=BostonUniversityRed] (-1.2,-0.4) -- (-1.2,0.4);
				\draw[e:coloredborder] (1.2,-0.4) -- (1.2,0.4);
				\draw[e:coloredthin,color=BostonUniversityRed] (1.2,-0.4) -- (1.2,0.4);

				\draw[e:coloredborder] (-1,-1) -- (-1.2,-1.2);
				\draw[e:coloredthin,color=BostonUniversityRed] (-1,-1) -- (-1.2,-1.2);
				\draw[e:coloredborder] (-0.6,-0.6) -- (-0.8,-0.8);
				\draw[e:coloredthin,color=BostonUniversityRed] (-0.6,-0.6) -- (-0.8,-0.8);
				\draw[e:coloredborder] (0.6,-0.6) -- (0.8,-0.8);
				\draw[e:coloredthin,color=BostonUniversityRed] (0.6,-0.6) -- (0.8,-0.8);
				\draw[e:coloredborder] (1,-1) -- (1.2,-1.2);
				\draw[e:coloredthin,color=BostonUniversityRed] (1,-1) -- (1.2,-1.2);
				
				\draw[e:coloredborder] (0.8,0.8) -- (1,1);
				\draw[e:coloredthin,color=BostonUniversityRed](0.8,0.8) -- (1,1);
				\draw[e:coloredborder] (-0.8,0.8) -- (-1,1);
				\draw[e:coloredthin,color=BostonUniversityRed](-0.8,0.8) -- (-1,1);
				
				\draw[e:coloredborder] (-1.2,1.2) -- (-0.4,1.2);
				\draw[e:coloredthin,color=BostonUniversityRed] (-1.2,1.2) -- (-0.4,1.2);
				\draw[e:coloredborder] (1.2,1.2) -- (0.4,1.2);
				\draw[e:coloredthin,color=BostonUniversityRed] (1.2,1.2) -- (0.4,1.2);

			\end{pgfonlayer}
		\end{tikzpicture}
	\end{subfigure}
	\begin{subfigure}{0.24\textwidth}
		\centering
		\begin{tikzpicture}
			\pgfdeclarelayer{background}
			\pgfdeclarelayer{foreground}
			\pgfsetlayers{background,main,foreground}
			
			\node[v:mainempty] () at (-0.6,-0.6){};
			\node[v:main] () at (-0.6,0.6){};
			\node[v:main] () at (0.6,-0.6){};
			\node[v:mainempty] () at (0.6,0.6){};
			\node[v:main] () at (-1.2,-1.2){};
			\node[v:mainempty] () at (-1.2,1.2){};
			\node[v:mainempty] () at (1.2,-1.2){};
			\node[v:main] () at (1.2,1.2){};
			
			\node[v:mainemptygray] () at (0.4,1.2){};
			\node[v:maingray] () at (0.4,-1.2){};
			\node[v:maingray] () at (-0.4,1.2){};
			\node[v:mainemptygray] () at (-0.4,-1.2){};
			
			\node[v:mainemptygray] () at (-1.2,-0.4){};
			\node[v:maingray] () at (-1.2,0.4){};
			\node[v:maingray] () at (1.2,-0.4){};
			\node[v:mainemptygray] () at (1.2,0.4){};
			
			\node[v:maingray] () at (0.8,0.8){};
			\node[v:mainemptygray] () at (1,1){};
			\node[v:maingray] () at (-0.8,0.8){};
			\node[v:mainemptygray] () at (-1,1){};
			\node[v:maingray] () at (0.8,-0.8){};
			\node[v:mainemptygray] () at (1,-1){};       
			\node[v:maingray] () at (-0.8,-0.8){};
			\node[v:mainemptygray] () at (-1,-1){};
			
			\node () at (0,0.3){$C$};
			\node () at (-0.8,0.45){$a_{1}$};
			\node () at (-0.8,-0.45){$b_{1}$};
			\node () at (0.8,0.45){$b_{2}$};
			\node () at (0.8,-0.45){$a_{2}$};
			
			\begin{pgfonlayer}{background}
				\draw[e:marker,color=myGreen] (0.6,0.6) -- (1.2,1.2);
				\draw[e:marker,color=myGreen] (1.2,1.2) -- (1.2,-1.2);
				\draw[e:marker,color=myGreen] (1.2,-1.2) -- (0.6,-0.6);
				\draw[e:marker,color=myLightBlue] (-0.6,0.6) -- (-1.2,1.2);
				\draw[e:marker,color=myLightBlue] (-1.2,1.2) -- (-1.2,-1.2);
				\draw[e:marker,color=myLightBlue] (-1.2,-1.2) -- (-0.6,-0.6);
				
				\draw[e:main,color=gray] (-1.2,1.2) -- (-0.4,1.2);
				\draw[e:main,color=gray] (1.2,1.2) -- (0.4,1.2);
				\draw[e:main,color=gray] (-1.2,-1.2) -- (-0.4,-1.2);
				\draw[e:main,color=gray] (1.2,-1.2) -- (0.4,-1.2);
				\draw[e:main,color=gray] (-1.2,-0.4) -- (-1.2,0.4);
				\draw[e:main,color=gray] (1.2,-0.4) -- (1.2,0.4);
				
				\draw[e:main] (0.6,0.6) -- (0.6,-0.6);
				\draw[e:main] (-0.6,-0.6) -- (-0.6,0.6);
				
				\draw[e:main,color=gray] (1,1) -- (1.2,1.2);
				\draw[e:main,color=gray] (0.6,0.6) -- (0.8,0.8);
				\draw[e:main,color=gray] (1,-1) -- (1.2,-1.2);
				\draw[e:main,color=gray] (0.6,-0.6) -- (0.8,-0.8);
				\draw[e:main,color=gray] (-1,1) -- (-1.2,1.2);
				\draw[e:main,color=gray] (-0.6,0.6) -- (-0.8,0.8);
				\draw[e:main,color=gray] (-1,-1) -- (-1.2,-1.2);
				\draw[e:main,color=gray] (-0.6,-0.6) -- (-0.8,-0.8);
				
				\draw[e:coloredborder] (-0.4,1.2) -- (0.4,1.2);
				\draw[e:coloredthin,color=BostonUniversityRed] (-0.4,1.2) -- (0.4,1.2);
				\draw[e:coloredborder] (-0.4,-1.2) -- (0.4,-1.2);
				\draw[e:coloredthin,color=BostonUniversityRed] (-0.4,-1.2) -- (0.4,-1.2);
				
				\draw[e:coloredborder] (-0.6,0.6) -- (0.6,0.6);
				\draw[e:coloredthin,color=BostonUniversityRed] (-0.6,0.6) -- (0.6,0.6);
				\draw[e:coloredborder] (-0.6,-0.6) -- (0.6,-0.6);
				\draw[e:coloredthin,color=BostonUniversityRed] (-0.6,-0.6) -- (0.6,-0.6);
				
				\draw[e:coloredborder](-1.2,-1.2) -- (-1.2,-0.4);
				\draw[e:coloredthin,color=BostonUniversityRed](-1.2,-1.2) -- (-1.2,-0.4);
				
				\draw[e:coloredborder](-1.2,1.2) -- (-1.2,0.4);
				\draw[e:coloredthin,color=BostonUniversityRed](-1.2,1.2) -- (-1.2,0.4);
				
				\draw[e:coloredborder](1.2,-1.2) -- (1.2,-0.4);
				\draw[e:coloredthin,color=BostonUniversityRed](1.2,-1.2) -- (1.2,-0.4);
				
				\draw[e:coloredborder](1.2,1.2) -- (1.2,0.4);
				\draw[e:coloredthin,color=BostonUniversityRed](1.2,1.2) -- (1.2,0.4);
				
				\draw[e:coloredborder] (0.8,0.8) -- (1,1);
				\draw[e:coloredthin,color=BostonUniversityRed](0.8,0.8) -- (1,1);
				\draw[e:coloredborder] (-0.8,0.8) -- (-1,1);
				\draw[e:coloredthin,color=BostonUniversityRed](-0.8,0.8) -- (-1,1);
				\draw[e:coloredborder] (0.8,-0.8) -- (1,-1);
				\draw[e:coloredthin,color=BostonUniversityRed](0.8,-0.8) -- (1,-1);        
				\draw[e:coloredborder] (-0.8,-0.8) -- (-1,-1);
				\draw[e:coloredthin,color=BostonUniversityRed](-0.8,-0.8) -- (-1,-1);

			\end{pgfonlayer}
		\end{tikzpicture}
	\end{subfigure}
	\begin{subfigure}{0.24\textwidth}
		\centering
		\begin{tikzpicture}
			\pgfdeclarelayer{background}
			\pgfdeclarelayer{foreground}
			\pgfsetlayers{background,main,foreground}
			
			\node[v:mainempty] () at (-0.6,-0.6){};
			\node[v:main] () at (-0.6,0.6){};
			\node[v:main] () at (0.6,-0.6){};
			\node[v:mainempty] () at (0.6,0.6){};
			\node[v:main] () at (-1.2,-1.2){};
			\node[v:mainempty] () at (-1.2,1.2){};
			\node[v:mainempty] () at (1.2,-1.2){};
			\node[v:main] () at (1.2,1.2){};
			
			\node[v:mainemptygray] () at (0.4,1.2){};
			\node[v:maingray] () at (0.4,-1.2){};
			\node[v:maingray] () at (-0.4,1.2){};
			\node[v:mainemptygray] () at (-0.4,-1.2){};
			
			\node[v:mainemptygray] () at (-1.2,-0.4){};
			\node[v:maingray] () at (-1.2,0.4){};
			\node[v:maingray] () at (1.2,-0.4){};
			\node[v:mainemptygray] () at (1.2,0.4){};
			
			\node[v:maingray] () at (0.8,0.8){};
			\node[v:mainemptygray] () at (1,1){};
			\node[v:maingray] () at (-0.8,0.8){};
			\node[v:mainemptygray] () at (-1,1){};
			\node[v:maingray] () at (0.8,-0.8){};
			\node[v:mainemptygray] () at (1,-1){};       
			\node[v:maingray] () at (-0.8,-0.8){};
			\node[v:mainemptygray] () at (-1,-1){};
			
			\node () at (0,0.3){$C$};
			\node () at (-0.8,0.45){$a_{1}$};
			\node () at (-0.8,-0.45){$b_{1}$};
			\node () at (0.8,0.45){$b_{2}$};
			\node () at (0.8,-0.45){$a_{2}$};
			
			\begin{pgfonlayer}{background}
				
				\draw[e:marker,color=myLightBlue] (-0.6,0.6) -- (-1.2,1.2);
				\draw[e:marker,color=myLightBlue] (-1.2,1.2) -- (1.2,1.2);
				\draw[e:marker,color=myLightBlue] (1.2,1.2) -- (0.6,0.6);
				
				\draw[e:marker,color=myGreen] (-0.6,-0.6) -- (-1.2,-1.2);
				\draw[e:marker,color=myGreen] (-1.2,-1.2) -- (1.2,-1.2);
				\draw[e:marker,color=myGreen] (1.2,-1.2) -- (0.6,-0.6);
				
				\draw[e:main] (0.6,0.6) -- (0.6,-0.6);
				\draw[e:main] (-0.6,-0.6) -- (-0.6,0.6);
				
				\draw[e:main,color=gray] (1,1) -- (1.2,1.2);
				\draw[e:main,color=gray] (0.6,0.6) -- (0.8,0.8);
				\draw[e:main,color=gray] (1,-1) -- (1.2,-1.2);
				\draw[e:main,color=gray] (0.6,-0.6) -- (0.8,-0.8);
				\draw[e:main,color=gray] (-1,1) -- (-1.2,1.2);
				\draw[e:main,color=gray] (-0.6,0.6) -- (-0.8,0.8);
				\draw[e:main,color=gray] (-1,-1) -- (-1.2,-1.2);
				\draw[e:main,color=gray] (-0.6,-0.6) -- (-0.8,-0.8);
				
				\draw[e:main,color=gray] (-0.4,1.2) -- (0.4,1.2);
				\draw[e:main,color=gray] (-0.4,-1.2) -- (0.4,-1.2);
				
				\draw[e:main,color=gray] (1.2,1.2) -- (1.2,0.4);
				\draw[e:main,color=gray] (1.2,-1.2) -- (1.2,-0.4);
				\draw[e:main,color=gray] (-1.2,-1.2) -- (-1.2,0.4);
				\draw[e:main,color=gray] (-1.2,-1.2) -- (-1.2,-0.4);
				
				\draw[e:coloredborder] (-0.6,0.6) -- (0.6,0.6);
				\draw[e:coloredthin,color=BostonUniversityRed] (-0.6,0.6) -- (0.6,0.6);
				\draw[e:coloredborder] (-0.6,-0.6) -- (0.6,-0.6);
				\draw[e:coloredthin,color=BostonUniversityRed] (-0.6,-0.6) -- (0.6,-0.6);

				\draw[e:coloredborder] (0.8,0.8) -- (1,1);
				\draw[e:coloredthin,color=BostonUniversityRed](0.8,0.8) -- (1,1);
				\draw[e:coloredborder] (-0.8,0.8) -- (-1,1);
				\draw[e:coloredthin,color=BostonUniversityRed](-0.8,0.8) -- (-1,1);
				\draw[e:coloredborder] (0.8,-0.8) -- (1,-1);
				\draw[e:coloredthin,color=BostonUniversityRed](0.8,-0.8) -- (1,-1);        
				\draw[e:coloredborder] (-0.8,-0.8) -- (-1,-1);
				\draw[e:coloredthin,color=BostonUniversityRed](-0.8,-0.8) -- (-1,-1);
				
				\draw[e:coloredborder] (-1.2,-0.4) -- (-1.2,0.4);
				\draw[e:coloredthin,color=BostonUniversityRed] (-1.2,-0.4) -- (-1.2,0.4);
				\draw[e:coloredborder] (1.2,-0.4) -- (1.2,0.4);
				\draw[e:coloredthin,color=BostonUniversityRed] (1.2,-0.4) -- (1.2,0.4);
				
				\draw[e:coloredborder] (-1.2,1.2) -- (-0.4,1.2);
				\draw[e:coloredthin,color=BostonUniversityRed] (-1.2,1.2) -- (-0.4,1.2);
				\draw[e:coloredborder] (1.2,1.2) -- (0.4,1.2);
				\draw[e:coloredthin,color=BostonUniversityRed] (1.2,1.2) -- (0.4,1.2);
				
				\draw[e:coloredborder] (-1.2,-1.2) -- (-0.4,-1.2);
				\draw[e:coloredthin,color=BostonUniversityRed] (-1.2,-1.2) -- (-0.4,-1.2);
				\draw[e:coloredborder] (1.2,-1.2) -- (0.4,-1.2);
				\draw[e:coloredthin,color=BostonUniversityRed] (1.2,-1.2) -- (0.4,-1.2);

			\end{pgfonlayer}
		\end{tikzpicture}
	\end{subfigure}
	
	\caption{A bisubdivision of the cube together with a perfect matching.
		Two paths, as requested by the assertion of \cref{lemma:pathsintrisums}, are marked.
		All other cases, in particular the ones regarding to the exact identity of $e$, can be derived from this by symmetry. }
	\label{fig:cubepaths1edge}
\end{figure}
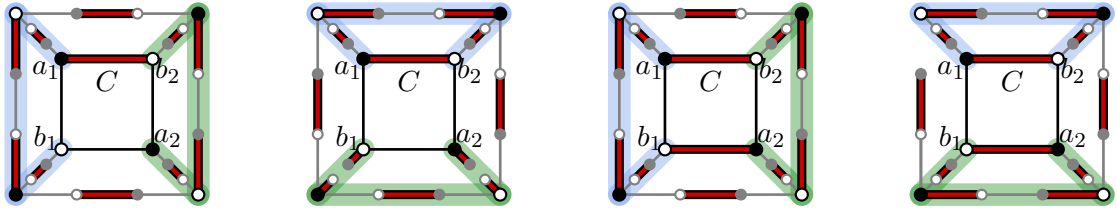

\begin{lemma}\label{lemma:matchingsingoodcrosses}
	Let $B$ be a brace and $C$ a $4$-cycle in $B$ as well as $P_1$, $P_2$ two paths that form a conformal cross over $C$.
	Then for every $e\in C$, $C+P_1+P_2$ has a perfect matching $M_e$ such that $\Set{e}= M_e\cap\E{C}$.
\end{lemma}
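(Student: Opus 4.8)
The plan is a direct combinatorial argument that never leaves $H\coloneqq C+P_1+P_2$: I will describe all the ``obvious'' perfect matchings of $H$ and check that each uses exactly one edge of $C$, with the four of them hitting all four edges of $C$.

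First I would fix notation for the skeleton. Write $C=s_1s_2t_1t_2s_1$, where $s_1,s_2,t_1,t_2$ are the pegs of the cross listed in the cyclic order in which they appear on $C$. Since $C$ has length four and $B$ is bipartite, $s_1,t_1$ lie in one colour class and $s_2,t_2$ in the other; hence $P_1$ (a path from $s_1$ to $t_1$) and $P_2$ (a path from $s_2$ to $t_2$) each have even length, so in particular each $P_i$ has at least one internal vertex (equivalently, each $P_i$ is of type $3$). Because $P_1$ and $P_2$ are vertex disjoint and $P_i$ meets $C$ only in $\{s_i,t_i\}$, the edge sets $\E{C}$, $\E{P_1}$, $\E{P_2}$ are pairwise disjoint, and every internal vertex of $P_i$ has all of its $H$-neighbours on $P_i$.

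The key observation is then a dichotomy for how $H$ can be matched along each $P_i$. Since every internal vertex of $P_1$ sees only $P_1$ inside $H$, any matching of $H$ that saturates all internal vertices of $P_1$ restricts on $P_1$ to a matching leaving exactly one endpoint of $P_1$ uncovered, and each of the two choices determines that restriction uniquely; call the two resulting matchings $N_1^{s_1}$ (covering $\V{P_1}\setminus\{s_1\}$) and $N_1^{t_1}$ (covering $\V{P_1}\setminus\{t_1\}$). Define $N_2^{s_2}$ and $N_2^{t_2}$ analogously on $P_2$. Now fix $e\in\E{C}$. Viewing $C$ as $K_{2,2}$ with parts $\{s_1,t_1\}$ and $\{s_2,t_2\}$, the edge $e$ joins some $x\in\{s_1,t_1\}$ to some $y\in\{s_2,t_2\}$; set $M_e\coloneqq N_1^{x}\cup N_2^{y}\cup\{e\}$. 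The three pieces are pairwise vertex disjoint and together cover $(\V{P_1}\setminus\{x\})\cup(\V{P_2}\setminus\{y\})\cup\{x,y\}=\V{H}$, so $M_e$ is a perfect matching of $H$; and since $N_1^{x}\subseteq\E{P_1}$ and $N_2^{y}\subseteq\E{P_2}$ both avoid $\E{C}$, we get $M_e\cap\E{C}=\{e\}$. Letting $e$ range over the four edges of $C$ (equivalently, over the four choices of $(x,y)$) yields the assertion.

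I do not expect a genuine obstacle; the only mildly delicate point is the bookkeeping in the second paragraph — confirming that the pegs being in cyclic order on the $4$-cycle forces $P_1$ and $P_2$ to have even length, so that the ``exactly two interior-saturating matchings'' dichotomy applies, and that $\E{C}$, $\E{P_1}$, $\E{P_2}$ are genuinely pairwise disjoint, so that $M_e$ really meets $C$ in precisely the edge $e$.
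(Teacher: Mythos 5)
Your proof is correct, and it takes a genuinely different route from the paper's. The paper starts from the perfect matching $M$ of $H\coloneqq C+P_1+P_2$ that is guaranteed by the conformality hypothesis, observes that it must contain exactly one edge of $C$, and then produces the remaining three matchings $M_e$ by switching along $M$-alternating cycles in $H$ (with a case distinction on whether $e$ and the initial $M$-edge of $C$ are disjoint or share a vertex). You instead build all four matchings from scratch: since each $P_i$ joins two same-colour pegs of the $4$-cycle, it has even length and admits exactly two interior-saturating matchings $N_i^{s_i}$, $N_i^{t_i}$; gluing $N_1^x\cup N_2^y\cup\Set{e}$ for $e=xy$ yields $M_e$ directly. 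Your construction is a bit more explicit and has the pleasant side effect of not using the conformality assumption at all (the cross structure over a $4$-cycle already forces $H$ to have a perfect matching), whereas the paper's switching argument is terser but leans on the hypothesis. Both are valid proofs of the stated lemma.
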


\begin{proof}
	By definition, since $P_1$ and $P_2$ form a conformal cross over $C$, there is a perfect matching $M$ of $H\coloneqq C+P_1+P_2$.
	Since $C$ is a $4$-cycle, $P_1$ and $P_2$ each must connect two vertices of the same colour on $C$ and thus $M$ must contain exactly one edge, say $e'$, of $C$ since the cross is conformal, hence $M_{e'}\coloneqq M$.
	Let $e\in\E{C}\setminus\Set{e'}$ be another edge of $C$.
	If $e$ and $e'$ are disjoint, then $P_1+P_2+e+e'$ is an $M$-conformal cycle which contains all edges of $M_{e'}$ and thus, $M_e\coloneqq \E{P_1+P_2+e+e'}\setminus M_{e'}$ is a perfect matching containing $e$.
	Otherwise, $e$ and $e'$ share exactly one endpoint and $e$ is incident with an endpoint of $P_i$ for some $i\in[1,2]$.
	Then $P_i+e+e'$ is an $M_{e'}$-conformal cycle and thus $M_e\coloneqq \Brace{M_{e'}\setminus\E{P_i+e+e'}}\cup\Brace{\E{P_i+e+e'}\setminus M_{e'}}$ is a perfect matching as required. 	
\end{proof}

A last and essential tool before we dive into the more specific cases of $K_{3,3}$-free and $K_{3,3}$-containing braces is the observation on conformal crosses over $4$-cycles in form of \cref{lemma:goodcrossesmeanK33}.

\begin{figure}[h!]
	\centering
	\begin{tikzpicture}
		\pgfdeclarelayer{background}
		\pgfdeclarelayer{foreground}
		\pgfsetlayers{background,main,foreground}
		
		\node[v:main] () at (-1.2,-1.2){};
		\node[v:mainempty] () at (-1.2,1.2){};
		
		\node[v:mainempty] () at (1.2,-1.2){};
		\node[v:main] () at (1.2,1.2){};
		
		\node () at (-1.45,1){$C$};
		\node[v:main] () at (0.6,0){};
		\node[v:mainempty] () at (-0.6,0){};
		
		\node[v:mainemptygray] () at (0.2,0){};
		\node[v:maingray] () at (-0.2,0){};
		
		\node[v:mainemptygray] () at (0.8,-0.4){};
		\node[v:maingray] () at (1,-0.8){};
		\node[v:maingray] () at (-0.8,-0.4){};
		\node[v:mainemptygray] () at (-1,-0.8){};
		
		\node[v:maingray] () at (-0.2,0.27){};
		\node[v:mainemptygray] () at (0.6,0.8){};
		\node[v:mainemptygray] () at (0.2,0.27){};
		\node[v:maingray] at (-0.6,0.8){};
		
		\begin{pgfonlayer}{background}
			\draw[e:marker,color=myLightBlue] (-1.2,1.2) -- (0.6,0);
			\draw[e:marker,color=myLightBlue] (0.6,0) -- (1.2,-1.2);
			
			\draw[e:marker,color=myGreen] (1.2,1.2) -- (-0.6,0);
			\draw[e:marker,color=myGreen ] (-0.6,0) -- (-1.2,-1.2);
			
			\draw[e:main] (-1.2,-1.2) -- (-1.2,1.2) -- (1.2,1.2) -- (1.2,-1.2);
			
			\draw[e:main,color=gray] (-1.2,-1.2) -- (-1,-0.8);
			\draw[e:main,color=gray] (1.2,-1.2) -- (1,-0.8);
			
			\draw[e:main,color=gray] (-0.8,-0.4) -- (-0.6,0);
			\draw[e:main,color=gray] (0.8,-0.4) -- (0.6,0);
			
			\draw[e:main,color=gray] (-0.6,0) -- (-0.2,0);
			\draw[e:main,color=gray] (0.6,0) -- (0.2,0);
			
			\draw[e:main,color=gray] (-0.2,0.27) -- (0.6,0.8);
			\draw[e:main,color=gray] (0.2,0.27) -- (-0.6,0.8);
			
			\draw[e:coloredborder] (-1.2,-1.2) -- (1.2,-1.2);
			\draw[e:coloredthin,color=BostonUniversityRed] (-1.2,-1.2) -- (1.2,-1.2);
			
			\draw[e:coloredborder] (-0.2,0) -- (0.2,0);
			\draw[e:coloredthin,color=BostonUniversityRed] (-0.2,0) -- (0.2,0);
			
			\draw[e:coloredborder] (-1,-0.8) -- (-0.8,-0.4);
			\draw[e:coloredthin,color=BostonUniversityRed] (-1,-0.8) -- (-0.8,-0.4);
			\draw[e:coloredborder] (1,-0.8) -- (0.8,-0.4);
			\draw[e:coloredthin,color=BostonUniversityRed] (1,-0.8) -- (0.8,-0.4);
			
			\draw[e:coloredborder] (0.6,0) -- (0.2,0.27);
			\draw[e:coloredthin,color=BostonUniversityRed] (0.6,0) -- (0.2,0.27);
			\draw[e:coloredborder] (-0.6,0) -- (-0.2,0.27);
			\draw[e:coloredthin,color=BostonUniversityRed] (-0.6,0) -- (-0.2,0.27);
			
			\draw[e:coloredborder] (0.6,0.8) -- (1.2,1.2);
			\draw[e:coloredthin,color=BostonUniversityRed] (0.6,0.8) -- (1.2,1.2);
			
			\draw[e:coloredborder] (-0.6,0.8) -- (-1.2,1.2);
			\draw[e:coloredthin,color=BostonUniversityRed] (-0.6,0.8) -- (-1.2,1.2);

		\end{pgfonlayer}
		
	\end{tikzpicture}
	\caption{A bisubdivision of $K_{3,3}$ together with a perfect matching $M$ and two disjoint $M$-alternating paths that form a conformal cross over the $4$-cycle $C$.}
	\label{fig:goodcrossinK33}
\end{figure}
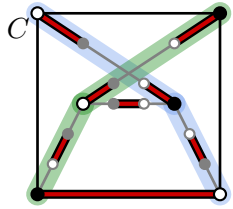

\begin{proof}[Proof of \cref{lemma:goodcrossesmeanK33}]
	First, assume that there exists a conformal bisubdivision $H$ of $K_{3,3}$ that has $C$ as a subgraph.
	Then we may choose a perfect matching $M$ of $B$ such that it contains a perfect matching of $H$ as seen in \cref{fig:goodcrossinK33}.
	The conformal cross over $C$ is also presented in the same figure.
	
	For the reverse let $M$ be a perfect matching of $B$ and $P_1$, $P_2$ two $M$-alternating paths that form an $M$-conformal cross over $C$.
	Since both, $P_1$ and $P_2$, must be of even length, they contain at least one inner vertex $v_1$ and $v_2$ respectively.
	Moreover, no endpoint of $P_1$ belongs to the same colour class of an endpoint of $P_2$, and thus $v_1$ and $v_2$ can be chosen such that they belong to different colour classes, and, for each $i$, the colour class of $v_i$ is different from the colour class of the endpoints of $P_i$.
	Let $e\in M\cap\E{C}$ be the unique edge of $C$ that belongs to $M$.
	With $B$ being a brace and \cref{thm:bipartiteextendibility} there must exist an internally $M$-conformal $v_1$-$v_2$ path $Q$ that avoids $e$.
	For each $i\in[1,2]$ let $x_i$ be the endpoint of $P_i$ that is not incident with $e$.
	We claim that $Q$ contains a subpath $R$ which is internally disjoint from $P_1$ and $P_2$, and for each $i\in[1,2]$ an endpoint of $R$ is an inner vertex of $P_i$.
	Let $u_1$ be the last vertex of $Q$ when traversing along $Q$ starting in $v_1$, which belongs to $P_1$.
	Then, as $P_1$ is $M$-alternating and the only vertex of $P_1$ not covered by $M$ within the path belongs to $e$, $u_1$ must be incident to an edge of $M\cap \E{P_1}$, and thus $P_1u_1$ is of even length.
	Hence $u_1$ and $v_1$ belong to the same colour class of $B$, which is different from the colour of $x_1$.
	Clearly, $u_1Q$ still contains an inner vertex of $P_2$, let $u_2$ be the first vertex of $P_2$ we encounter when traversing along $u_1Q$ starting in $u_1$.
	By the same arguments as above, the edge of $M$ that covers $u_2$ must belong to $P_2$ and thus $u_2$ and $v_2$ have the same colour, which is different from $v_2$.
	Consequently, $u_2\neq v_2$ and $R\coloneqq u_1Qu_2$ is an internally $M$-conformal path as required.
	Since $R$ is internally $M$-conformal and $C+P_1+P_2$ is $M$-conformal, $H\coloneqq C+P_1+P_2+R$ is also $M$-conformal.
	Moreover, for each $i\in[1,2]$, the vertex $u_i$ divides $P_i$ into two subpaths, and as $u_i$ has a different colour than any of the two endpoints of $P_i$, both of these paths must be of odd length.
	Hence $H$ is a conformal bisubdivision of $K_{3,3}$ in $B$.
\end{proof}

\section{Matching Crosses in $K_{3,3}$-free Braces}\label{sec:pfaffian}

To proceed towards the proof of \cref{prop:twopathsinbraces}, we need to describe how the structure of $K_{3,3}$-free braces, especially the non-planar $K_{3,3}$-free braces we obtain via the trisum operation from \cref{thm:trisums} by using planar braces as the base building blocks, behave regarding the existence of matching crosses.
The purpose of this section is to establish the $K_{3,3}$-free part of \cref{prop:twopathsinbraces} in the form of \cref{prop:pfaffiancrosses}.

In what follows we are concerned with $K_{3,3}$-free braces that are not planar.
By \cref{thm:trisums} there is a single exception to the $K_{3,3}$-free braces constructed from planar braces by the trisum operation, namely the Heawood graph.
While the Heawood graph does not contain a single $4$-cycle, in order to prove \cref{prop:pfaffiancrosses}, we have to discuss its cycles.

\begin{lemma}\label{lemma:heawoodcrosses}
	Let $C$ be a conformal cycle of the Heawood graph $H_{14}$, then there exists a conformal cross over $C$ in $H_{14}$.
\end{lemma}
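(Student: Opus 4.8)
The plan is to exploit the high symmetry and small size of the Heawood graph $H_{14}$. Since $H_{14}$ is vertex-transitive and edge-transitive, and in fact its automorphism group acts transitively on many natural configurations, the number of orbits of conformal cycles is very small; in particular, all conformal cycles of a given length are equivalent under $\Fkt{\mathrm{Aut}}{H_{14}}$. First I would determine which cycle lengths can occur for a conformal cycle: the girth of $H_{14}$ is $6$, and since $H_{14}$ is bipartite with $14$ vertices, a conformal cycle $C$ must have even length and $H_{14}-\V{C}$ must have a perfect matching, so $\Abs{\V{C}}$ is divisible by $2$ and $14-\Abs{\V{C}}$ is even; thus $\Abs{\V{C}}\in\Set{6,8,10,12,14}$, and one checks (using that $H_{14}$ has no $4$-cycles) which of these actually admit a conformal representative. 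I expect that only a handful of orbits arise — realistically conformal cycles of length $6$, and possibly a couple of longer lengths — and that up to automorphism there is essentially one conformal cycle to handle per length.

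Next, for each orbit representative $C$, I would exhibit an explicit conformal cross. Concretely: fix a perfect matching $M$ of $H_{14}$ such that $C$ is $M$-conformal (this exists by the definition of conformal), pick four vertices $s_1,s_2,t_1,t_2$ in the cyclic order on $C$ with $s_1,s_2\in V_1$ and $t_1,t_2\in V_2$ (a valid choice since $C$ has equal numbers of black and white vertices), and then display two vertex-disjoint $M$-alternating paths $P_1$ from $s_1$ to $t_1$ and $P_2$ from $s_2$ to $t_2$, internally disjoint from $C$, such that $C+P_1+P_2$ is $M$-conformal. Because $H_{14}$ is a concrete $14$-vertex graph (the incidence graph of the Fano plane), each such verification is a finite check; I would present the paths in a figure, much in the style of \cref{fig:goodcrossinK33}. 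A convenient uniform approach: since $H_{14}$ is a brace, it is $2$-extendible by \cref{thm:braces}, hence $3$-connected by \cref{thm:extendibilitytoconnectivity}, so by \cref{thm:bipartiteextendibility} for any $s_i\in V_1,t_i\in V_2$ there are $k$ internally disjoint internally $M$-conformal $s_i$–$t_i$ paths; the real work is to route two such paths for the two pairs simultaneously while staying off $C$ and conformal, which the explicit structure makes possible.

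Alternatively, and perhaps more cleanly, I would reduce longer conformal cycles to the length-$6$ case using \cref{lemma:extendingcrosses}: if $C$ is a long conformal cycle and $e=ab$ is a chord realising a shorter conformal cycle $C'\subseteq C+e$, then a conformal cross over $C'$ yields a matching cross over $C$ not using $e$. In $H_{14}$ one would need to check that every conformal cycle of length $\geq 8$ has such a chord giving a shorter conformal cycle — but here one must be careful, since \cref{lemma:extendingcrosses} produces only a matching cross, not necessarily a conformal one, so I would either verify that the resulting cross can be upgraded (using the remark after \cref{def:matchingcross} on switching along alternating cycles) or simply handle each length directly by the explicit-figure method. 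The main obstacle, then, is purely the bookkeeping: correctly enumerating the $\Fkt{\mathrm{Aut}}{H_{14}}$-orbits of conformal cycles and, for each, producing matching $M$ and two conformal alternating paths with the right endpoint colours in cyclic order. This is finite and mechanical but needs care with the bipartition, since the cross must respect that $s_1,s_2$ and $t_1,t_2$ lie in opposite colour classes and alternate correctly around $C$; getting the parity and colour constraints consistent with the Fano-plane incidence structure is the one place where a naive attempt can fail.
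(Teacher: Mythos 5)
Your proposal follows essentially the same route as the paper: exploit the symmetry of $H_{14}$ to reduce to a finite check of one representative conformal cycle per orbit, fix a perfect matching, and exhibit explicit conformal crosses in a figure. Two small corrections to tighten the plan: the paper uses that the complement of every perfect matching of $H_{14}$ is a Hamilton cycle, so all perfect matchings are equivalent under $\Fkt{\mathrm{Aut}}{H_{14}}$ and one may fix a single matching outright; and the exclusion of conformal cycles of length $8$ and $12$ does not follow merely from the absence of $4$-cycles — it relies on the (cited) fact that no conformal cycle of $H_{14}$ has length divisible by four, leaving exactly lengths $6$, $10$, $14$ to handle. Your caveat about the detour via \cref{lemma:extendingcrosses} is exactly right: that lemma only produces a matching cross, not a conformal one, which is why the paper sticks with the direct figure-based verification.
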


\begin{figure}[!h]
	\centering
	\begin{subfigure}{0.3\textwidth}
		\centering
		\begin{tikzpicture}
			\pgfdeclarelayer{background}
			\pgfdeclarelayer{foreground}
			\pgfsetlayers{background,main,foreground}
			
			\foreach \x in {2,4,6,8,10,12,14}
			{
				\node[v:main] () at (\x*25.71:15mm){};
			}
			
			\foreach \x in {1,3,5,7,9,11,13}
			{
				\node[v:mainempty] () at (\x*25.71:15mm){};
			}
			
			\begin{pgfonlayer}{background}
				
				\draw[e:marker,myGreen] (257.1:15mm) -- (25.65:15mm);
				\draw[e:marker,myGreen] (257.1:15mm) -- (282.81:15mm);
				\draw[e:marker,myGreen] (282.81:15mm) -- (308.52:15mm);

				\draw[e:marker,myLightBlue] (0:15mm) -- (128.55:15mm);
				\draw[e:marker,myLightBlue] (77.13:15mm) -- (102.84:15mm); 
				\draw[e:marker,myLightBlue] (102.84:15mm) -- (128.55:15mm);
				\draw[e:marker] (308.52:15mm) -- (334.23:15mm);
				\draw[e:marker] (334.23:15mm) -- (0:15mm);
				\draw[e:marker] (0:15mm) -- (25.71:15mm);
				\draw[e:marker] (25.71:15mm) -- (51.42:15mm);
				\draw[e:marker] (51.42:15mm) -- (77.13:15mm);
				\draw[e:marker] (308.52:15mm) -- (77.13:15mm);
				
				\foreach \x in {2,4,6,8,10,12,14}
				{
					\draw[e:main] (\x*25.71:15mm) to (128.55+\x*25.71:15mm);
				}
				
				\foreach \x in {1,3,5,7,9,11,13}
				{
					\draw[e:main] (\x*25.71:15mm) to (25.71+\x*25.71:15mm);
				}
				
				\foreach \x in {2,4,6,8,10,12,14}
				{
					\draw[e:coloredborder] (\x*25.71:15mm) to (25.71+\x*25.71:15mm);
					\draw[e:coloredthin,color=BostonUniversityRed] (\x*25.71:15mm) to (25.71+\x*25.71:15mm);
				}

			\end{pgfonlayer}
		\end{tikzpicture}
	\end{subfigure}
	\begin{subfigure}{0.3\textwidth}
		\centering
		\begin{tikzpicture}
			\pgfdeclarelayer{background}
			\pgfdeclarelayer{foreground}
			\pgfsetlayers{background,main,foreground}
			
			\foreach \x in {2,4,6,8,10,12,14}
			{
				\node[v:main] () at (\x*25.71:15mm){};
			}
			
			\foreach \x in {1,3,5,7,9,11,13}
			{
				\node[v:mainempty] () at (\x*25.71:15mm){};
			}
			
			\begin{pgfonlayer}{background}
				\draw[e:marker,myGreen] (231.39:15mm) -- (257.1:15mm);
				\draw[e:marker,myGreen] (257.1:15mm) -- (282.81:15mm);
				\draw[e:marker,myGreen] (282.81:15mm) -- (308.52:15mm);
				
				\draw[e:marker,myLightBlue] (334.23:15mm) -- (0:15mm);
				\draw[e:marker,myLightBlue] (0:15mm) -- (25.71:15mm);
				\draw[e:marker,myLightBlue] (25.71:15mm) -- (51.42:15mm);
				
				\draw[e:marker] (308.52:15mm) -- (77.13:15mm);
				\draw[e:marker] (308.52:15mm) -- (334.23:15mm);
				\draw[e:marker] (334.23:15mm) -- (205.68:15mm);
				\draw[e:marker] (205.68:15mm) -- (231.39:15mm);
				\draw[e:marker] (231.39:15mm) -- (102.84:15mm);
				\draw[e:marker] (102.85:15mm) -- (128.55:15mm);
				\draw[e:marker] (128.55:15mm) -- (154.26:15mm);
				\draw[e:marker] (154.26:15mm) -- (179.97:15mm);
				\draw[e:marker] (179.97:15mm) -- (51.42:15mm);
				\draw[e:marker] (51.42:15mm) -- (77.13:15mm);
				
				\foreach \x in {2,4,6,8,10,12,14}
				{
					\draw[e:main] (\x*25.71:15mm) to (128.55+\x*25.71:15mm);
				}
				
				\foreach \x in {1,3,5,7,9,11,13}
				{
					\draw[e:main] (\x*25.71:15mm) to (25.71+\x*25.71:15mm);
				}
				
				\foreach \x in {2,4,6,8,10,12,14}
				{
					\draw[e:coloredborder] (\x*25.71:15mm) to (25.71+\x*25.71:15mm);
					\draw[e:coloredthin,color=BostonUniversityRed] (\x*25.71:15mm) to (25.71+\x*25.71:15mm);
				}

			\end{pgfonlayer}
		\end{tikzpicture}
	\end{subfigure}
	\begin{subfigure}{0.3\textwidth}
		\centering
		\begin{tikzpicture}
			\pgfdeclarelayer{background}
			\pgfdeclarelayer{foreground}
			\pgfsetlayers{background,main,foreground}
			
			\foreach \x in {2,4,6,8,10,12,14}
			{
				\node[v:main] () at (\x*25.71:15mm){};
			}
			
			\foreach \x in {1,3,5,7,9,11,13}
			{
				\node[v:mainempty] () at (\x*25.71:15mm){};
			}
			
			\begin{pgfonlayer}{background}
				
				\draw[e:marker,myLightBlue] (0:15mm) -- (128.55:15mm);
				\draw[e:marker,myGreen] (257.1:15mm) -- (25.65:15mm);
				
				\foreach \x in {1,...,14}
				{
					\draw[e:marker] (\x*25.71:15mm) to (25.71+\x*25.71:15mm);
				}
				
				\foreach \x in {2,4,6,8,10,12,14}
				{
					\draw[e:main] (\x*25.71:15mm) to (128.55+\x*25.71:15mm);
				}
				
				\foreach \x in {1,3,5,7,9,11,13}
				{
					\draw[e:main] (\x*25.71:15mm) to (25.71+\x*25.71:15mm);
				}
				
				\foreach \x in {2,4,6,8,10,12,14}
				{
					\draw[e:coloredborder] (\x*25.71:15mm) to (25.71+\x*25.71:15mm);
					\draw[e:coloredthin,color=BostonUniversityRed] (\x*25.71:15mm) to (25.71+\x*25.71:15mm);
				}

			\end{pgfonlayer}
		\end{tikzpicture}
	\end{subfigure}
	\caption{The Heawood graph $H_{14}$ together with a perfect matching and the three, up to symmetry, different conformal cycles in $H_{14}$.
		For each of these cycles, we provide a conformal cross.}
	\label{fig:heawoodcrosses}
\end{figure}
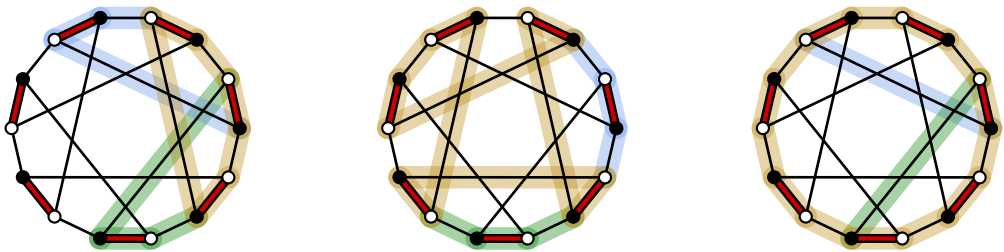

\begin{proof}
	It is known that the Heawood graph has, up to automorphisms, exactly one perfect matching, as, for example, the complement of every perfect matching of $H_{14}$ is a Hamilton cycle.
	See \cite{abreu2004graphs} for a discussion on the matter.
	Moreover, no conformal cycle in $H_{14}$ is of a length that is a multiple of four, for more details on that please consult \cite{mccuaig2000even}.
	Indeed, for every fixed length $\ell$ of a conformal cycle in $H_{14}$ it suffices to find a cross for one of them, as, again, the graph is highly symmetric.
	Hence in order to prove the assertion, it suffices to fix a perfect matching and check conformal cycles of length $14$, $10$, and $6$.
	This is done in \cref{fig:heawoodcrosses}.
\end{proof}

Let us first observe that the $4$-cycle itself, on which a trisum operation has been performed, must have a strong matching cross.

\begin{lemma}\label{lemma:badcrosses}
	Let $B$ be a $K_{3,3}$-free brace that is not the Heawood graph, $\ell\geq 3$, $B_1,\dots,B_{\ell}$ braces such that $B$ is a maximal $4$-cycle sum of $B_1,\dots,B_{\ell}$ at the $4$-cycle $C=\Brace{a_1,b_1,a_2,b_2}$.
	Then for any choice of two distinct values $i,j\in[1,\ell]$ and any choice of $x\in\Set{a_1,a_2}$ and $y\in\Set{b_1,b_2}$, there is a strong matching cross over $C$ with paths $P_1$ and $P_2$ in $B_i+B_j$ such that the $P_i$ are $M$-alternating for some perfect matching $M$ of $B$ for which $x$ and $y$ are the two vertices of $C$ which are covered by edges of $M\cap\Brace{\E{P_1}\cup\E{P_2}\cup\E{C}}$.
\end{lemma}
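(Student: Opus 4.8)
The plan is to reduce $C$, together with two of the summands, to two bisubdivided cubes glued along $C$, and then to invoke the cross construction already performed for exactly that configuration inside the proof of \cref{lemma:braceparts}. First I would record two consequences of maximality. No $B_k$ is isomorphic to $C_4$: since $C_4$ is itself a $4$-cycle-sum of two copies of $C_4$, a $C_4$-summand could be split in two, producing a decomposition of $B$ into strictly more braces; hence, by \cref{thm:braces}, every $B_k$ has at least six vertices. Likewise every $B_k-\V{C}$ is connected, since otherwise $\V{C}$ would separate $B_k$ and, by \cref{lemma:4cyclesumwithtwosummands}, exhibit $B_k$ as a $4$-cycle-sum of two braces, again enlarging the decomposition. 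Now fix distinct $i,j\in[1,\ell]$. Any conformal subgraph of $B_k$ that contains $C$ is still conformal in $B$, so a conformal bisubdivision of $K_{3,3}$ inside $B_i$ or $B_j$ would make $K_{3,3}$ a matching minor of $B$, which is impossible; thus \cref{lemma:cubeorK33} yields conformal bisubdivisions $H_i\subseteq B_i$ and $H_j\subseteq B_j$ of the cube, each having $C$ as a (necessarily facial) $4$-cycle.

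Because $\V{B_i}\cap\V{B_j}=\V{C}$ and $C$, being a bipartite $4$-cycle, is induced in $B$, the subgraphs $H_i$ and $H_j$ meet exactly in $C$, so $H\coloneqq H_i+H_j$ is a conformal subgraph of $B$ of precisely the shape treated in the $4$-cycle case of the proof of \cref{lemma:braceparts}: two bisubdivided cubes identified along a common quadrilateral face. Running that construction (the one in \cref{fig:planecross}) inside $H$ produces a strong matching cross over $C$ with paths $P_1\subseteq H_i$ and $P_2\subseteq H_j$, each joining a pair of opposite corners of $C$, and a perfect matching $M_H$ of $H$ with $M_H\cap\E{C}=\emptyset$ for which $P_1,P_2$ are $M_H$-alternating of even length --- hence of type $3$ --- the two corners of $C$ covered by $M_H$-edges lying on $P_1\cup P_2$ being the endpoints of one edge of $C$, while the other two corners are matched by $M_H$ along edges into the interiors of $H_i$ and $H_j$. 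Since any $x\in\Set{a_1,a_2}$ and $y\in\Set{b_1,b_2}$ span an edge of $C$, and the construction may be started at any of the four edges of $C$ (the face $C$ being edge-transitive under the automorphisms of the cube), I may arrange these two $M_H$-covered corners to be exactly $x$ and $y$. It then remains to extend $M_H$ to a perfect matching $M$ of $B$ by adjoining perfect matchings of $B_i-\V{H_i}$, of $B_j-\V{H_j}$, and of $B_k-\V{C}$ for each $k\notin\Set{i,j}$ --- all available because $H_i,H_j$ are conformal and the $B_k$ are $2$-extendible. Then $M\cap\E{H}=M_H$, so $P_1,P_2$ are $M$-alternating; the cross is strong, its pegs being the four vertices of $C$ with two in each colour class and both paths of type $3$; and since $M\cap\E{C}=\emptyset$ while the corners other than $x,y$ are covered by $M$-edges disjoint from $P_1\cup P_2\cup C$, the set of $C$-vertices covered by $M\cap(\E{P_1}\cup\E{P_2}\cup\E{C})$ is exactly $\Set{x,y}$.

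The step I expect to be the main obstacle is the matching bookkeeping in the middle of the second paragraph: exhibiting, for every choice of $(x,y)$, one conformal perfect matching of $H$ that simultaneously makes both diagonal paths alternating, uses no edge of $C$, and matches precisely the prescribed pair of corners by edges lying on $P_1\cup P_2$. Thanks to the rigidity of the cube this is only a finite check --- essentially the content of \cref{fig:planecross} together with the face-fixing automorphisms of the cube --- but it must be carried out with care; everything else rests only on \cref{lemma:cubeorK33,lemma:braceparts,lemma:4cyclesumwithtwosummands} and the definition of $4$-cycle-sums.
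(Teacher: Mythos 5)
Your proof follows the same route as the paper's: invoke \cref{lemma:cubeorK33} inside $B_i$ and $B_j$ to obtain conformal bisubdivisions of the cube containing $C$, glue them along $C$ into a bisubdivision of $H_{12}$, and read off the strong cross together with a suitable perfect matching from \cref{fig:planecross} and the face symmetry of the cube, finally extending the matching conformally to the rest of $B$. You are somewhat more careful than the paper about discharging the hypotheses of \cref{lemma:cubeorK33} (ruling out a $C_4$-summand and checking $B_k-\V{C}$ is connected via maximality and \cref{lemma:4cyclesumwithtwosummands}) and about which corners of $C$ are covered, but the substance is identical.
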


\begin{proof}
	The lemma is almost identical to \cref{lemma:braceparts}.
	In both $B_i$ and $B_j$ there exists a conformal bisubdivision of the cube which has $C$ as a subgraph by \cref{lemma:cubeorK33} since $B$ cannot contain a conformal bisubdivision of $K_{3,3}$.
	Hence $B_i+B_j$ contains a conformal bisubdivision of the graph $H_{12}$, which is obtained by identifying two cubes on one $4$-cycle.
	See \cref{fig:planecross} for an illustration of a bisubdivision of $H_{12}$.
	The figure also shows perfect matchings of the respective conformal bisubdivision one can find, together with two paths that make up a strong matching cross as desired.
	The exact matching cross depending on the choices of $x$ and $y$ can be obtained from the paths illustrated in \cref{fig:planecross} by symmetry.
	By adjusting the perfect matching such that exactly the requested edges belong to $M$, our proof is complete.
\end{proof}

In the following sections we sometimes encounter the situation of two $M$-alternating paths which are not disjoint.
However, one can observe that whenever two $M$-alternating paths meet such that their union contains an $M$-conformal cycle $C$, we may switch $M$ along this cycle $C$ to obtain a new perfect matching $N$ together with two $N$-alternating paths that intersect slightly less then before.
This relatively simple observation yields a general and powerful tool for bipartite graphs.

\begin{lemma}[Bipartite Untangling Lemma, \cite{mccuaig2001brace}]\label{lemma:untangletwopaths}
	Let $G$ be a bipartite graph with a perfect matching $M$ and $a_1,a_2\in V_1$ and $b_1b_1\in V_2$ four distinct vertices in $G$.
	Let further $P_1$ and $P_2$ be two internally $M$-conformal paths such that $P_i$ has endpoints $a_i$ and $b_i$.
	Then there exists a perfect matching $M'\in\Perf{G}$ together with two internally $M'$-conformal paths $P'_1$ and $P'_2$ such that:
	\begin{enumerate}
		\item $P'_i$ has endpoints $a_i$ and $b_i$ for both $i\in\Set{1,2}$,
		
		\item $P'_1+P'_2$ is a subgraph of $P_1+P_2$,
		
		\item $M\setminus\E{P_1+P_2}=M'\setminus\E{P_1+P_2}$, and
		
		\item either $P'_1\cap P'_2$ is an $M'$-conformal path or $P'_1$ and $P'_2$ are disjoint.
	\end{enumerate}
\end{lemma}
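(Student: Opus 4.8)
The plan is to fix an extremal configuration and show that it is already of the required shape. Call a triple $(N,Q_1,Q_2)$ \emph{admissible} if $N\in\Perf{G}$ with $N\setminus\E{P_1+P_2}=M\setminus\E{P_1+P_2}$, each $Q_i$ is an internally $N$-conformal path with endpoints $a_i$ and $b_i$, and $Q_1+Q_2$ is a subgraph of $P_1+P_2$. Every admissible triple automatically satisfies (i)--(iii), and $(M,P_1,P_2)$ is admissible; since only finitely many subgraphs of $P_1+P_2$ and finitely many restrictions of matchings to $\E{P_1+P_2}$ are involved, I can pick among all admissible triples one, $(M',P'_1,P'_2)$, minimising $\Abs{\E{P'_1+P'_2}}$, and it then remains to verify (iv) for this choice. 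Throughout I will use the elementary facts that an internally $N$-conformal path with endpoints in opposite colour classes has odd length, that its two end-edges are not in $N$, and that each \emph{internal} vertex of such a path is saturated by an edge of the path lying in $N$.

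The first step is to observe that the two paths cannot meet ``loosely'': if a vertex $v$ lies on both $P'_1$ and $P'_2$ and is internal to each, then the $M'$-edge at $v$ must lie on $P'_1\cap P'_2$, because its other endpoint is the $M'$-partner of $v$, hence a neighbour of $v$ on $P'_1$ and also a neighbour of $v$ on $P'_2$, and maximality of the component of $P'_1\cap P'_2$ through $v$ excludes every candidate outside the common part. Consequently each connected component $W$ of $P'_1\cap P'_2$ is itself an internally $M'$-conformal path of odd length, and each of its two ends is either an endpoint of $P'_1$ or of $P'_2$, or a vertex at which exactly one of the two paths branches off.

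Next I would rule out two or more components. Suppose $W$ and $W'$ are distinct components. When the two paths traverse them in the same cyclic order, orienting $P'_1$ from $a_1$ with $W$ before $W'$, the subpath $A$ of $P'_1$ strictly between $W$ and $W'$ has interior disjoint from $P'_2$, while $P'_2$ contains a subpath $B$ with the same endpoints whose interior is disjoint from $P'_1$; replacing $A$ by $B$ inside $P'_1$ yields an admissible triple (internal $M'$-conformality survives because the $M'$-partners of the two splice vertices lie in $W$ and $W'$, which remain) that uses only edges of $P'_1+P'_2$ but omits every edge of $A$, so the minimised quantity strictly drops --- a contradiction. When the cyclic orders disagree (the ``crossing'' case), I would instead locate an $M'$-conformal cycle inside $P'_1+P'_2$ meeting both $W$ and $W'$, switch $M'$ along it (permissible, since its edges lie in $P_1+P_2$), and then reroute as before; producing this cycle and checking that the reroute strictly decreases the edge count is the technical heart of the argument. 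Hence $P'_1\cap P'_2$ is empty or a single path $W$.

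If this single path $W$ is empty, or already $M'$-conformal, then (iv) holds. Otherwise $W$ is internally $M'$-conformal with an end-edge at some vertex $z$ not in $M'$, so $z$ is $M'$-saturated by an edge $f$ lying on exactly one of $P'_1,P'_2$ and off $W$; the two branches of $P'_1$ and $P'_2$ leaving $W$ at $z$, together with a matched portion of $W$, bound an $M'$-conformal cycle $Z\subseteq P'_1+P'_2$ through $z$ containing $f$, and switching $M'$ along $Z$ turns the $z$-end edge of $W$ into a matching edge and $f$ into a non-matching one, after which a final reroute inside $P'_1+P'_2$ deletes $f$ from the union, again contradicting minimality; so $W$ is $M'$-conformal and (iv) follows. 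I expect the main obstacle, here as in the crossing case, to be verifying for every relative position of the relevant branches that the cycle used for switching is genuinely $M'$-conformal, that the rerouted paths stay internally alternating --- which relies on the branch vertices carrying the colour that forces odd length --- and that each operation strictly shrinks $\Abs{\E{P'_1+P'_2}}$ rather than merely relocating edges; the observation of the first step, that common internal vertices carry common matching edges, is exactly what makes this case analysis finite and keeps the auxiliary cycles conformal.
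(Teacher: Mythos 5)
The paper cites this lemma from McCuaig's brace--generation paper and gives no proof of its own, so your argument has to stand on its own merits. The extremal set-up and your opening observation --- that a vertex interior to both $P'_1$ and $P'_2$ must have its $M'$-edge on $P'_1\cap P'_2$ --- are sound. The argument breaks, however, at the step you yourself flag as the ``technical heart''. Both the crossing reduction and, more seriously, the final step (forcing the last remaining component $W$ of $P'_1\cap P'_2$ to be $M'$-conformal) rely on finding an $M'$-conformal cycle $Z\subseteq P'_1+P'_2$ and switching along it. But once $P'_1\cap P'_2$ has a single component $W$, the graph $P'_1+P'_2$ is an X-shaped \emph{tree}: the four arms leaving $W$ are pairwise vertex-disjoint away from $W$, since any shared vertex of two arms would lie in a second intersection component. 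A tree has no cycle, so there is nothing to switch along. This is not an idle worry: let $G$ be the path $v_0v_1\cdots v_7$ with two pendant edges $uv_0$ and $v_7w$, let $M=\{uv_0,v_1v_2,v_3v_4,v_5v_6,v_7w\}$, and take $a_1=v_2$, $b_1=v_7$, $a_2=v_0$, $b_2=v_5$, $P_1=v_2\cdots v_7$, $P_2=v_0\cdots v_5$. Then $P_1+P_2$ is a single path, so conditions (i)--(iii) force $P'_i=P_i$, yet $P'_1\cap P'_2=v_2v_3v_4v_5$ is not conformal in $G$ at all (deleting it forces $v_1$ to pair with $v_0$, stranding $u$), hence cannot be $M'$-conformal. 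So not only does the proposed cycle switch have nothing to act on, the target statement appears to fail in this configuration --- which strongly suggests there is a missing hypothesis (e.g.\ that no terminal lies in the interior of the other path, as in the paper's actual applications) or that clause (iv) is meant to read ``internally $M'$-conformal''. You should resolve this against McCuaig's original statement before trying to close the gap.

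Two secondary issues. In the two-component step, taking $W,W'$ consecutive on $P'_1$ does make the interior of $A$ disjoint from $P'_2$, but the $P'_2$-segment $B$ between them need not have interior disjoint from $P'_1$: it may pass through further intersection components or revisit vertices of $P'_1$ lying before $W$ or after $W'$, so the splice $a_1P'_1\cdot B\cdot P'_1b_1$ is in general a walk, not a path, and minimality does not obviously drop. And your description of the components is internally inconsistent: if an end $w$ of $W$ is a vertex where the two paths branch apart (so $w$ is interior to both), then by your own first observation the $M'$-edge at $w$ is the $W$-edge, i.e.\ $W$ is \emph{covered} by $M'$ at $w$, so $W$ cannot be internally $M'$-conformal at that end. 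These are probably repairable; the absence of any cycle in the single-component case is the structural obstacle.
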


\begin{lemma}\label{lemma:easycrossesovercrossingcycles}
	Let $B$ be a $K_{3,3}$-free brace, $\ell\geq 3$, $B_1,\dots,B_{\ell}$ braces such that $B$ is a maximal $4$-cycle sum of $B_1,\dots,B_{\ell}$ at the $4$-cycle $C$ and let $C'$ be a conformal cycle in $B_1$ that also exists in $B$ such that $C'\cap C$ is a non-trivial path.
	Then there is a strong matching cross over $C'$ in $B$.
\end{lemma}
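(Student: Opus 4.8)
The plan is to split into cases according to the number of edges of the path $P\coloneqq C'\cap C$. Since $C$ is a $4$-cycle and $C'$ an even cycle, $P$ has an odd number of edges, hence exactly $1$ or $3$; write $C'=P+Q$, where $Q$ is the complementary arc of $C'$, whose interior lies in $\V{B_1}\setminus\V{C}$, and relabel the corners of $C=\Brace{a_1,b_1,a_2,b_2}$ so that $P$ has endpoints $a_1\in V_1$ and $b_1\in V_2$. Fix two distinct indices $i,j\in[2,\ell]$; this is possible because $\ell\geq 3$, and is the only point where the hypothesis $\ell\geq 3$ is used.

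First suppose $P$ has three edges. Then $C'$ passes through all four corners of $C$, and reading the shared $3$-path off $C$ shows that these corners occur along $C'$ in the cyclic order $a_1,b_2,a_2,b_1$, which up to reversal is their cyclic order on $C$. By \cref{lemma:badcrosses} there is a perfect matching $M$ of $B$ and a strong matching cross over $C$ whose paths $R_1,R_2$ lie in $B_i+B_j$ and join the two diagonal corner pairs $\Brace{a_1,a_2}$ and $\Brace{b_1,b_2}$; their interiors avoid $B_1$, hence $C'$. Since $\Brace{a_1,a_2}$ and $\Brace{b_1,b_2}$ still interleave along $C'$, the pair $(R_1,R_2)$ is a strong matching cross over $C'$.

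Now suppose $P$ has one edge, so $P=e_0\coloneqq a_1b_1$ and $a_2,b_2\notin\V{C'}$. Take again the cross over $C$ from \cref{lemma:badcrosses}, with paths $R_1$ from $a_1$ to $a_2$ and $R_2$ from $b_1$ to $b_2$ in $B_i+B_j$, alternating for a perfect matching $M$ of $B$ chosen so that the cross uses edges at $C$ covering precisely $a_2$ and $b_2$; this lets $Q$ be kept $M$-alternating up to a switch inside $B_1$. The idea is to \emph{prolong} $R_1$ and $R_2$ into $B_1$ so that their new endpoints land in the interior of $Q$. Indeed, if one extends $R_1$ by an $a_2$-$q'$-path $E_1$ and $R_2$ by a $b_2$-$q''$-path $E_2$, with $q',q''$ distinct interior vertices of $Q$ and $E_1,E_2$ having all interior vertices outside $\V{C'}$, then the four pegs $a_1,b_1,q',q''$ occur along $C'$ in this cyclic order and the two paths $R_1+E_1$ and $R_2+E_2$ join the interleaving pairs $\Brace{a_1,q'}$ and $\Brace{b_1,q''}$, so they form a matching cross over $C'$; choosing $q',q''$ in opposite colour classes and, if necessary, switching $M$ along an alternating cycle formed by one of the two cross paths and a subpath of $C'$ (as in the remark after \cref{def:matchingcross}) makes the cross strong. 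To obtain $E_1$ and $E_2$ disjoint, $M$-alternating, and with interiors avoiding $\V{C'}$, one uses the extendibility of the brace $B_1$ (\cref{thm:bipartiteextendibility}) together with the conformal bisubdivision of the cube in $B_1$ containing $C$ provided by \cref{lemma:cubeorK33} — it is the cube and not $K_{3,3}$ because the relevant part of $B$ contains no conformal bisubdivision of $K_{3,3}$ — and finally \cref{lemma:untangletwopaths} to make $E_1$ and $E_2$ genuinely disjoint while preserving alternation. Should no such prolongations exist, $B_1$ has a $3$-cut through $a_1$ and $b_1$ separating $\Brace{a_2,b_2}$ from the interior of $Q$; one then argues inside $B_1$: in that situation $C'$ does not bound a face of $B_1$ (or $B_1$ is non-planar, or $B_1$ is the Heawood graph), so \cref{prop:planarcrosses}, respectively \cref{lemma:heawoodcrosses}, already produces a strong matching cross over $C'$ inside $B_1$, and this cross survives in $B$ since $R_1$ and $R_2$ can be substituted for any edges of $C$ forgotten when forming the $4$-cycle sum.

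The main obstacle is the case that $P$ has a single edge, and within it the construction of the two disjoint alternating prolongations $E_1,E_2$ inside $B_1$ (or, in the degenerate subcase, extracting a cross directly from $B_1$ and carrying it back through the $4$-cycle sum). The parity split, the three-edge case, and the bookkeeping that turns a matching cross into a strong one are all routine once the earlier lemmas are available.
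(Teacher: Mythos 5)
Your proof breaks at the very first step. The claim ``since $C$ is a $4$-cycle and $C'$ an even cycle, $P$ has an odd number of edges'' is false. Writing $C' = P \cup Q$ with $Q$ the complementary arc of $C'$, the parity constraint only forces $\lvert\E{P}\rvert \equiv \lvert\E{Q}\rvert \pmod 2$; there is nothing preventing both from being even. In particular $P$ can have length two: then $P$ has three vertices, its two endpoints lie in the same colour class, and $Q$ has even length, so $C'$ is still even. The paper's proof treats this length-two case explicitly — it is the case where exactly one vertex $u$ of $C$ is missing from $C'$, and only one of the two cross paths over $C$ produced by \cref{lemma:badcrosses} needs to be prolonged into $B_1$. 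That is structurally different from your length-one case (where both cross paths must be prolonged) and from your length-three case (where neither needs to be). By eliminating length two up-front, your proposal simply never proves the lemma for that family of intersections.

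A secondary concern: even in the length-one case your argument is far from a proof. The statement ``should no such prolongations exist, $B_1$ has a $3$-cut through $a_1$ and $b_1$ \dots so \cref{prop:planarcrosses} already produces a strong matching cross'' conflates an obstruction to ordinary disjoint paths with an obstruction to disjoint \emph{alternating} paths; the latter is governed by $2$-extendibility (\cref{thm:bipartiteextendibility}), not $3$-connectivity, and the fallback does not correctly close that gap. The paper avoids this by choosing the two alternating paths in $B_1$ first (using $2$-extendibility and \cref{lemma:untangletwopaths}, splitting further on whether $ab\in M$), and only then completing them through the other summands with \cref{lemma:badcrosses} — the reverse of the order you propose, and it is exactly this ordering that makes the alternation and disjointness tractable. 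The three-edge case and the high-level strategy otherwise match the paper.
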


\begin{proof}
	Since $C\cap C'$ is a non-trivial path, let us call it $K$, the two cycles share at least two vertices, and $K$ contains at least one vertex of every colour class.
	
	Let us first assume, $K$ has length one.
	Let $M$ be a perfect matching of $B_1$ such that $C'$ is $M$-conformal and $M$ contains as many edges of $C$ as possible.
	Let $a\in V_1$ and $b\in V_2$ be the two vertices of $C-\V{K}$.
	We divide this case into two subcases, one, where $C$ is $M$-conformal as well and the second, where for every possible choice of $M$ we have $ab\notin M$.
	
	So first let us assume $ab\in M$.
	Then, since $B_1$ is a brace and by \cref{thm:bipartiteextendibility}, there exists an internally $M$-conformal path $L'$ with $a$ as one endpoint and a vertex $b'$ of $\Vi{2}{C'}$ as its other endpoints such that $L'$ is internally disjoint from $C'$ and avoids $K$.
	Similarly, there is an internally $M$-conformal path $R'$ with endpoints $b$ and $a'\in\Vi{1}{C'}\setminus\V{K}$.
	By \cref{lemma:untangletwopaths} there exists a perfect matching $M'$ which coincides with $M$ everywhere outside of $L'+R'$ together with internally $M'$-conformal paths $L$ and $R$ linking $a$ and $b'$, and $b$ and $a'$ respectively such that either $L$ and $R$ are disjoint, or $L\cap R$ is an $M'$-conformal path.
	In the later case, $ab$ together with the $a$-$b$-subpath of $L+R$ forms an $M'$-conformal cycle $O$.
	In a slight abuse of notation let us adjust $M'$\footnote{We still keep the name '$M'$' for better readability} in these cases to be the perfect matching $M'\Delta\E{O}$.
	Let $u$ and $v$ be the endpoints of $L\cap R$ such that $u$ appears on $L$ before $v$ when traversing along $L$ starting from $a$.
	We also might have to adjust our definition of $L$ and $R$ slightly, depending on the way, $L$ and $R$ currently connect to $C'$.
	In case we adjust the perfect matching, we also adjust $L$ to be the $M'$-alternating path $LuR$, while $R$ is adjusted to be the path $RvL$.
	
	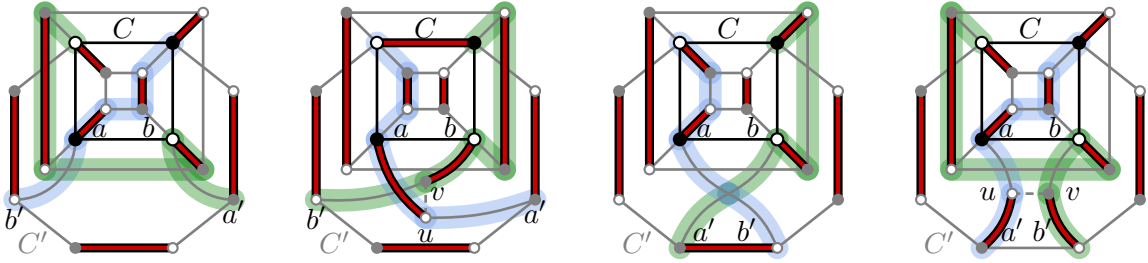
\begin{figure}[h!]
		\centering
		\begin{subfigure}{0.24\textwidth}
			\centering
			\begin{tikzpicture}[scale=0.8]
				\pgfdeclarelayer{background}
				\pgfdeclarelayer{foreground}
				\pgfsetlayers{background,main,foreground}
				
				\node () at (0,1.05){$C$};
				\node[gray] () at (-1.5,-2.5){$C'$};
				
				\node () at (-0.4,-0.65){$a$};
				\node () at (0.4,-0.6){$b$};
				\node() at (1.8,-2){$a'$};
				\node() at (-1.8,-2.1){$b'$};
				
				\node[v:mainemptygray] () at (-0.3,-0.3){};
				\node[v:maingray] () at (-0.3,0.3){};
				\node[v:maingray] () at (0.3,-0.3){};
				\node[v:mainemptygray] () at (0.3,0.3){};
				
				\node[v:main] () at (-0.8,-0.8){};
				\node[v:mainempty] () at (-0.8,0.8){};
				\node[v:mainempty] () at (0.8,-0.8){};
				\node[v:main] () at (0.8,0.8){};
				
				\node[v:mainemptygray] () at (-1.3,-1.3){};
				\node[v:maingray] () at (-1.3,1.3){};
				\node[v:maingray] () at (1.3,-1.3){};
				\node[v:mainemptygray] () at (1.3,1.3){};   
				
				\node[v:maingray] () at (-1.8,0){};
				\node[v:mainemptygray] () at (1.8,0){};
				
				\node[v:mainemptygray] () at (-1.8,-1.8){};
				\node[v:maingray] () at (1.8,-1.8){};
				
				\node[v:maingray] () at (-0.8,-2.6){};
				\node[v:mainemptygray] () at (0.8,-2.6){};
				
				\begin{pgfonlayer}{background}
					
					\draw[e:marker,myLightBlue] (0.8,0.8) -- (0.3,0.3);
					\draw[e:marker,myLightBlue] (0.3,0.3) -- (0.3,-0.3);
					\draw[e:marker,myLightBlue] (0.3,-0.3) -- (-0.3,-0.3);
					\draw[e:marker,myLightBlue] (-0.3,-0.3) -- (-0.8,-0.8);
					\draw[e:marker,bend left=45,myLightBlue] (-0.8,-0.8) to (-1.8,-1.8);
					
					\draw[e:marker,myGreen] (-0.8,0.8) -- (-1.3,1.3);
					\draw[e:marker,myGreen] (-1.3,1.3) -- (-1.3,-1.3);
					\draw[e:marker,myGreen] (-1.3,-1.3) -- (1.3,-1.3);
					\draw[e:marker,myGreen] (1.3,-1.3) -- (0.8,-0.8);
					\draw[e:marker,myGreen,bend right=45] (0.8,-0.8) to (1.8,-1.8);
					
					\draw[e:main] (-0.8,-0.8) rectangle (0.8,0.8);
					
					\draw[e:main,color=gray] (-1.3,1.3) -- (1.3,1.3) -- (1.3,-1.3) -- (-1.3,-1.3);
					
					\draw[e:main,color=gray] (0.3,0.3) -- (-0.3,0.3) -- (-0.3,-0.3) -- (0.3,-0.3);
					
					\draw[e:main,color=gray] (0.3,0.3) -- (0.8,0.8);
					\draw[e:main,color=gray] (0.3,-0.3) -- (0.8,-0.8);
					
					\draw[e:main,color=gray] (-0.8,0.8) -- (-1.3,1.3);
					\draw[e:main,color=gray] (-0.8,-0.8) -- (-1.3,-1.3);
					
					\draw[e:main,color=gray] (-0.8,0.8) -- (-1.8,0);
					\draw[e:main,color=gray] (0.8,0.8) -- (1.8,0);
					
					\draw[e:main,color=gray] (-1.8,-1.8) -- (-0.8,-2.6);
					\draw[e:main,color=gray] (1.8,-1.8) -- (0.8,-2.6);
					
					\draw[e:main,color=gray,bend left=45] (-0.8,-0.8) to (-1.8,-1.8);
					\draw[e:main,color=gray,bend right=45] (0.8,-0.8) to (1.8,-1.8);
					
					\draw[e:coloredborder] (0.3,-0.3) -- (0.3,0.3);
					\draw[e:coloredthin,color=BostonUniversityRed] (0.3,-0.3) -- (0.3,0.3);

					
					\draw[e:coloredborder] (-0.3,0.3) -- (-0.8,0.8);
					\draw[e:coloredthin,color=BostonUniversityRed] (-0.3,0.3) -- (-0.8,0.8);
					\draw[e:coloredborder] (-0.3,-0.3) -- (-0.8,-0.8);
					\draw[e:coloredthin,color=BostonUniversityRed] (-0.3,-0.3) -- (-0.8,-0.8);
					
					\draw[e:coloredborder] (-1.3,-1.3) -- (-1.3,1.3);
					\draw[e:coloredthin,color=BostonUniversityRed]  (-1.3,-1.3) -- (-1.3,1.3);
					
					\draw[e:coloredborder] (0.8,0.8) -- (1.3,1.3);
					\draw[e:coloredthin,color=BostonUniversityRed] (0.8,0.8) -- (1.3,1.3);
					\draw[e:coloredborder] (0.8,-0.8) -- (1.3,-1.3);
					\draw[e:coloredthin,color=BostonUniversityRed] (0.8,-0.8) -- (1.3,-1.3);
					
					\draw[e:coloredborder] (-1.8,0) -- (-1.8,-1.8);
					\draw[e:coloredthin,color=BostonUniversityRed] (-1.8,0) -- (-1.8,-1.8);
					\draw[e:coloredborder] (1.8,0) -- (1.8,-1.8);
					\draw[e:coloredthin,color=BostonUniversityRed] (1.8,0) -- (1.8,-1.8);
					
					\draw[e:coloredborder] (-0.8,-2.6) -- (0.8,-2.6);
					\draw[e:coloredthin,color=BostonUniversityRed] (-0.8,-2.6) -- (0.8,-2.6);

				\end{pgfonlayer}
			\end{tikzpicture}
		\end{subfigure}
		\begin{subfigure}{0.24\textwidth}
			\centering
			\begin{tikzpicture}[scale=0.8]
				\pgfdeclarelayer{background}
				\pgfdeclarelayer{foreground}
				\pgfsetlayers{background,main,foreground}
				
				\node () at (0,1.05){$C$};
				\node[gray] () at (-1.5,-2.5){$C'$};

				\node () at (-0.4,-0.65){$a$};
				\node () at (0.4,-0.6){$b$};
				
				\node() at (1.8,-2){$a'$};
				\node() at (-1.8,-2.1){$b'$};
				
				\node[v:mainemptygray] () at (-0.3,-0.3){};
				\node[v:maingray] () at (-0.3,0.3){};
				\node[v:maingray] () at (0.3,-0.3){};
				\node[v:mainemptygray] () at (0.3,0.3){};
				
				\node[v:main] () at (-0.8,-0.8){};
				\node[v:mainempty] () at (-0.8,0.8){};
				\node[v:mainempty] () at (0.8,-0.8){};
				\node[v:main] () at (0.8,0.8){};
				
				\node[v:mainemptygray] () at (-1.3,-1.3){};
				\node[v:maingray] () at (-1.3,1.3){};
				\node[v:maingray] () at (1.3,-1.3){};
				\node[v:mainemptygray] () at (1.3,1.3){}; 
				
				\node[v:maingray] () at (-1.8,0){};
				\node[v:mainemptygray] () at (1.8,0){};
				
				\node[v:mainemptygray] () at (-1.8,-1.8){};
				\node[v:maingray] () at (1.8,-1.8){};
				
				\node[v:maingray] () at (-0.8,-2.6){};
				\node[v:mainemptygray] () at (0.8,-2.6){};
				
				\node[v:maingray]() at (0,-1.5){};
				\node () at (0.2,-1.7){$v$};
				
				\node[v:mainemptygray] at (0,-2.1){};
				\node[] () at (0,-2.4){$u$};
				
				\begin{pgfonlayer}{background}
					
					\draw[e:marker,myGreen,bend right=13] (-1.8,-1.8) to (0,-1.5);
					\draw[e:marker,myGreen,bend right=20] (0,-1.5) to (0.8,-0.8);
					\draw[e:marker,myGreen] (0.8,-0.8) -- (1.3,-1.3);
					\draw[e:marker,myGreen] (1.3,-1.3) -- (1.3,1.3);
					\draw[e:marker,myGreen] (1.3,1.3) -- (0.8,0.8);
					
					\draw[e:marker,bend left=10,myLightBlue] (1.8,-1.8) to (0,-2.1);
					\draw[e:marker,bend left=20,myLightBlue] (0,-2.1) to (-0.8,-0.8);
					\draw[e:marker,myLightBlue] (-0.8,-0.8) -- (-0.3,-0.3);
					\draw[e:marker,myLightBlue] (-0.3,-0.3) -- (-0.3,0.3);
					\draw[e:marker,myLightBlue] (-0.3,0.3) -- (-0.8,0.8);
					
					\draw[e:main,color=gray] (-1.3,1.3) -- (1.3,1.3);
					\draw[e:main,color=gray] (1.3,-1.3) -- (-1.3,-1.3);
					
					\draw[e:main] (0.8,0.8) -- (0.8,-0.8) -- (-0.8,-0.8) -- (-0.8,0.8);
					
					\draw[e:main,color=gray] (0.3,0.3) -- (0.8,0.8);
					\draw[e:main,color=gray] (0.3,-0.3) -- (0.8,-0.8);
					
					\draw[e:main,color=gray] (0.3,0.3) -- (-0.3,0.3);
					\draw[e:main,color=gray] (-0.3,-0.3) -- (0.3,-0.3);
					
					\draw[e:main,color=gray] (-0.3,0.3) -- (-0.8,0.8);
					\draw[e:main,color=gray] (-0.3,-0.3) -- (-0.8,-0.8);
					
					\draw[e:main,color=gray] (-0.8,0.8) -- (-1.3,1.3);
					\draw[e:main,color=gray] (-0.8,-0.8) -- (-1.3,-1.3);
					
					\draw[e:main,color=gray] (0.8,0.8) -- (1.3,1.3);
					\draw[e:main,color=gray] (0.8,-0.8) -- (1.3,-1.3);
					
					\draw[e:main,color=gray] (-0.8,0.8) -- (-1.8,0);
					\draw[e:main,color=gray] (0.8,0.8) -- (1.8,0);
					
					\draw[e:main,color=gray] (-1.8,-1.8) -- (-0.8,-2.6);
					\draw[e:main,color=gray] (1.8,-1.8) -- (0.8,-2.6);
					
					\draw[e:main,color=gray,bend right=13] (-1.8,-1.8) to (0,-1.5);
					\draw[e:main,color=gray,bend left=10] (1.8,-1.8) to (0,-2.1);
					
					\draw[e:main,color=gray,densely dashed] (0,-1.5) -- (0,-2.1);
					
					\draw[e:coloredborder] (0.3,-0.3) -- (0.3,0.3);
					\draw[e:coloredthin,color=BostonUniversityRed] (0.3,-0.3) -- (0.3,0.3);
					
					\draw[e:coloredborder] (-0.3,0.3) -- (-0.3,-0.3);
					\draw[e:coloredthin,color=BostonUniversityRed] (-0.3,0.3) -- (-0.3,-0.3);
					
					\draw[e:coloredborder] (-0.8,0.8) -- (0.8,0.8);
					\draw[e:coloredthin,color=BostonUniversityRed] (-0.8,0.8) -- (0.8,0.8);
					
					\draw[e:coloredborder] (-1.3,-1.3) -- (-1.3,1.3);
					\draw[e:coloredthin,color=BostonUniversityRed]  (-1.3,-1.3) -- (-1.3,1.3);
					
					\draw[e:coloredborder] (1.3,1.3) -- (1.3,-1.3);
					\draw[e:coloredthin,color=BostonUniversityRed] (1.3,1.3) -- (1.3,-1.3);

					\draw[e:coloredborder] (-1.8,0) -- (-1.8,-1.8);
					\draw[e:coloredthin,color=BostonUniversityRed] (-1.8,0) -- (-1.8,-1.8);
					\draw[e:coloredborder] (1.8,0) -- (1.8,-1.8);
					\draw[e:coloredthin,color=BostonUniversityRed] (1.8,0) -- (1.8,-1.8);
					
					\draw[e:coloredborder] (-0.8,-2.6) -- (0.8,-2.6);
					\draw[e:coloredthin,color=BostonUniversityRed] (-0.8,-2.6) -- (0.8,-2.6);
					
					\draw[e:coloredborder,bend right=20] (0,-1.5) to (0.8,-0.8);
					\draw[e:coloredthin,color=BostonUniversityRed,bend right=20] (0,-1.5) to (0.8,-0.8);
					
					\draw[e:coloredborder,bend left=20] (0,-2.1) to (-0.8,-0.8);
					\draw[e:coloredthin,color=BostonUniversityRed,,bend left=20] (0,-2.1) to (-0.8,-0.8);

				\end{pgfonlayer}
			\end{tikzpicture}
		\end{subfigure}
		\begin{subfigure}{0.24\textwidth}
			\centering
			\begin{tikzpicture}[scale=0.8]
				\pgfdeclarelayer{background}
				\pgfdeclarelayer{foreground}
				\pgfsetlayers{background,main,foreground}
				
				\node () at (0,1.05){$C$};
				\node[gray] () at (-1.5,-2.5){$C'$};

				\node () at (-0.4,-0.65){$a$};
				\node () at (0.4,-0.6){$b$};
				
				\node[v:mainemptygray] () at (-0.3,-0.3){};
				\node[v:maingray] () at (-0.3,0.3){};
				\node[v:maingray] () at (0.3,-0.3){};
				\node[v:mainemptygray] () at (0.3,0.3){};
				
				\node[v:main] () at (-0.8,-0.8){};
				\node[v:mainempty] () at (-0.8,0.8){};
				\node[v:mainempty] () at (0.8,-0.8){};
				\node[v:main] () at (0.8,0.8){};
				
				\node[v:mainemptygray] () at (-1.3,-1.3){};
				\node[v:maingray] () at (-1.3,1.3){};
				\node[v:maingray] () at (1.3,-1.3){};
				\node[v:mainemptygray] () at (1.3,1.3){}; 
				
				\node[v:maingray] () at (-1.8,0){};
				\node[v:mainemptygray] () at (1.8,0){};
				
				\node[v:mainemptygray] () at (-1.8,-1.8){};
				\node[v:maingray] () at (1.8,-1.8){};
				
				\node[v:maingray] () at (-0.8,-2.6){};
				\node () at (-0.4,-2.3){$a'$};
				\node[v:mainemptygray] () at (0.8,-2.6){};
				\node() at (0.3,-2.3){$b'$};
				
				\begin{pgfonlayer}{background}
					
					\draw[e:marker,color=myGreen] (0.8,0.8) -- (1.3,1.3);
					\draw[e:marker,color=myGreen] (1.3,1.3) -- (1.3,-1.3);
					\draw[e:marker,color=myGreen] (1.3,-1.3) -- (0.8,-0.8);
					\draw[e:marker,color=myGreen,bend left=20] (0.8,-0.8) to (0,-1.7);
					\draw[e:marker,color=myGreen,bend right=20] (0,-1.7) to (-0.8,-2.6);
					
					\draw[e:marker,myLightBlue] (-0.8,0.8) -- (-0.3,0.3);
					\draw[e:marker,myLightBlue] (-0.3,0.3) -- (-0.3,-0.3);
					\draw[e:marker,myLightBlue] (-0.3,-0.3) -- (-0.8,-0.8);
					\draw[e:marker,bend right=20,myLightBlue] (-0.8,-0.8) to (0,-1.7);
					\draw[e:marker,bend left=20,myLightBlue] (0,-1.7) to (0.8,-2.6);
					
					\draw[e:main] (-0.8,-0.8) rectangle (0.8,0.8);
					
					\draw[e:main,color=gray] (-1.3,1.3) -- (1.3,1.3) -- (1.3,-1.3) -- (-1.3,-1.3);
					
					\draw[e:main,color=gray] (0.3,0.3) -- (0.8,0.8);
					\draw[e:main,color=gray] (0.3,-0.3) -- (0.8,-0.8);
					
					\draw[e:main,color=gray] (0.3,0.3) -- (-0.3,0.3) -- (-0.3,-0.3) -- (0.3,-0.3);
					
					\draw[e:main,color=gray] (-0.8,0.8) -- (-1.3,1.3);
					\draw[e:main,color=gray] (-0.8,-0.8) -- (-1.3,-1.3);
					
					\draw[e:main,color=gray] (-0.8,0.8) -- (-1.8,0);
					\draw[e:main,color=gray] (0.8,0.8) -- (1.8,0);
					
					\draw[e:main,color=gray] (-1.8,-1.8) -- (-0.8,-2.6);
					\draw[e:main,color=gray] (1.8,-1.8) -- (0.8,-2.6);
					
					\draw[e:main,color=gray,bend right=20] (-0.8,-0.8) to (0,-1.7);
					\draw[e:main,color=gray,bend left=20] (0,-1.7) to (0.8,-2.6);
					
					\draw[e:main,color=gray,bend left=20] (0.8,-0.8) to (0,-1.7);
					\draw[e:main,color=gray,bend right=20] (0,-1.7) to (-0.8,-2.6);

					\draw[e:coloredborder] (-0.3,0.3) -- (-0.8,0.8);
					\draw[e:coloredthin,color=BostonUniversityRed] (-0.3,0.3) -- (-0.8,0.8);
					\draw[e:coloredborder] (-0.3,-0.3) -- (-0.8,-0.8);
					\draw[e:coloredthin,color=BostonUniversityRed] (-0.3,-0.3) -- (-0.8,-0.8);
					
					\draw[e:coloredborder] (0.3,-0.3) -- (0.3,0.3);
					\draw[e:coloredthin,color=BostonUniversityRed] (0.3,-0.3) -- (0.3,0.3);
					
					\draw[e:coloredborder] (-1.3,-1.3) -- (-1.3,1.3);
					\draw[e:coloredthin,color=BostonUniversityRed]  (-1.3,-1.3) -- (-1.3,1.3);
					
					\draw[e:coloredborder] (0.8,0.8) -- (1.3,1.3);
					\draw[e:coloredthin,color=BostonUniversityRed] (0.8,0.8) -- (1.3,1.3);
					\draw[e:coloredborder] (0.8,-0.8) -- (1.3,-1.3);
					\draw[e:coloredthin,color=BostonUniversityRed] (0.8,-0.8) -- (1.3,-1.3);
					
					\draw[e:coloredborder] (-1.8,0) -- (-1.8,-1.8);
					\draw[e:coloredthin,color=BostonUniversityRed] (-1.8,0) -- (-1.8,-1.8);
					\draw[e:coloredborder] (1.8,0) -- (1.8,-1.8);
					\draw[e:coloredthin,color=BostonUniversityRed] (1.8,0) -- (1.8,-1.8);
					
					\draw[e:coloredborder] (-0.8,-2.6) -- (0.8,-2.6);
					\draw[e:coloredthin,color=BostonUniversityRed] (-0.8,-2.6) -- (0.8,-2.6);

				\end{pgfonlayer}
			\end{tikzpicture}
		\end{subfigure}
		\begin{subfigure}{0.24\textwidth}
			\centering
			\begin{tikzpicture}[scale=0.8]
				\pgfdeclarelayer{background}
				\pgfdeclarelayer{foreground}
				\pgfsetlayers{background,main,foreground}
				
				\node () at (0,1.05){$C$};
				\node[gray] () at (-1.5,-2.5){$C'$};

				\node () at (-0.4,-0.65){$a$};
				\node () at (0.4,-0.6){$b$};
				
				\node[v:mainemptygray] () at (-0.3,-0.3){};
				\node[v:maingray] () at (-0.3,0.3){};
				\node[v:maingray] () at (0.3,-0.3){};
				\node[v:mainemptygray] () at (0.3,0.3){};
				
				\node[v:main] () at (-0.8,-0.8){};
				\node[v:mainempty] () at (-0.8,0.8){};
				\node[v:mainempty] () at (0.8,-0.8){};
				\node[v:main] () at (0.8,0.8){};
				
				\node[v:mainemptygray] () at (-1.3,-1.3){};
				\node[v:maingray] () at (-1.3,1.3){};
				\node[v:maingray] () at (1.3,-1.3){};
				\node[v:mainemptygray] () at (1.3,1.3){};  
				
				\node[v:maingray] () at (-1.8,0){};
				\node[v:mainemptygray] () at (1.8,0){};
				
				\node[v:mainemptygray] () at (-1.8,-1.8){};
				\node[v:maingray] () at (1.8,-1.8){};
				
				\node[v:maingray] () at (-0.8,-2.6){};
				\node () at (-0.3,-2.3){$a'$};
				\node[v:mainemptygray] () at (0.8,-2.6){};
				\node() at (0.2,-2.3){$b'$};
				
				\node[v:mainemptygray] () at (-0.3,-1.7){};
				\node () at (-0.7,-1.7){$u$};
				\node[v:maingray] () at (0.3,-1.7){};
				\node () at (0.7,-1.7){$v$};
				
				\begin{pgfonlayer}{background}
					
					\draw[e:marker,myGreen] (-0.8,0.8) -- (-1.3,1.3);
					\draw[e:marker,myGreen] (-1.3,1.3) -- (-1.3,-1.3);
					\draw[e:marker,myGreen] (-1.3,-1.3) -- (1.3,-1.3);
					\draw[e:marker,myGreen] (1.3,-1.3) -- (0.8,-0.8);
					\draw[e:marker,myGreen,bend right=20] (0.8,-0.8) to (0.3,-1.7);
					\draw[e:marker,myGreen,bend right=20] (0.3,-1.7) to (0.8,-2.6);
					
					\draw[e:marker,myLightBlue] (0.8,0.8) -- (0.3,0.3);
					\draw[e:marker,myLightBlue] (0.3,0.3) -- (0.3,-0.3);
					\draw[e:marker,myLightBlue] (0.3,-0.3) -- (-0.3,-0.3);
					\draw[e:marker,myLightBlue] (-0.3,-0.3) -- (-0.8,-0.8);
					\draw[e:marker,bend left=20,myLightBlue] (-0.8,-0.8) to (-0.3,-1.7);
					\draw[e:marker,bend left=20,myLightBlue] (-0.3,-1.7) to (-0.8,-2.6);
					
					\draw[e:main] (-0.8,-0.8) rectangle (0.8,0.8);
					
					\draw[e:main,color=gray] (-1.3,1.3) -- (1.3,1.3) -- (1.3,-1.3) -- (-1.3,-1.3);
					
					\draw[e:main,color=gray] (0.3,0.3) -- (0.8,0.8);
					\draw[e:main,color=gray] (0.3,-0.3) -- (0.8,-0.8);
					
					\draw[e:main,color=gray] (0.3,0.3) -- (-0.3,0.3) -- (-0.3,-0.3) -- (0.3,-0.3);
					
					\draw[e:main,color=gray] (-0.8,0.8) -- (-1.3,1.3);
					\draw[e:main,color=gray] (-0.8,-0.8) -- (-1.3,-1.3);
					
					\draw[e:main,color=gray] (-0.8,0.8) -- (-1.8,0);
					\draw[e:main,color=gray] (0.8,0.8) -- (1.8,0);
					
					\draw[e:main,color=gray] (-1.8,-1.8) -- (-0.8,-2.6);
					\draw[e:main,color=gray] (1.8,-1.8) -- (0.8,-2.6);
					
					\draw[e:main,color=gray] (-0.8,-2.6) -- (0.8,-2.6);
					
					\draw[e:main,color=gray,densely dashed] (-0.3,-1.7) -- (0.3,-1.7);
					
					\draw[e:main,color=gray,bend left=20] (-0.8,-0.8) to (-0.3,-1.7);
					\draw[e:main,color=gray,bend right=20] (0.8,-0.8) to (0.3,-1.7);

					\draw[e:coloredborder] (-0.3,0.3) -- (-0.8,0.8);
					\draw[e:coloredthin,color=BostonUniversityRed] (-0.3,0.3) -- (-0.8,0.8);
					\draw[e:coloredborder] (-0.3,-0.3) -- (-0.8,-0.8);
					\draw[e:coloredthin,color=BostonUniversityRed] (-0.3,-0.3) -- (-0.8,-0.8);
					
					\draw[e:coloredborder] (0.3,-0.3) -- (0.3,0.3);
					\draw[e:coloredthin,color=BostonUniversityRed] (0.3,-0.3) -- (0.3,0.3);
					
					\draw[e:coloredborder] (-1.3,-1.3) -- (-1.3,1.3);
					\draw[e:coloredthin,color=BostonUniversityRed]  (-1.3,-1.3) -- (-1.3,1.3);
					
					\draw[e:coloredborder] (0.8,0.8) -- (1.3,1.3);
					\draw[e:coloredthin,color=BostonUniversityRed] (0.8,0.8) -- (1.3,1.3);
					\draw[e:coloredborder] (0.8,-0.8) -- (1.3,-1.3);
					\draw[e:coloredthin,color=BostonUniversityRed] (0.8,-0.8) -- (1.3,-1.3);
					
					\draw[e:coloredborder] (-1.8,0) -- (-1.8,-1.8);
					\draw[e:coloredthin,color=BostonUniversityRed] (-1.8,0) -- (-1.8,-1.8);
					\draw[e:coloredborder] (1.8,0) -- (1.8,-1.8);
					\draw[e:coloredthin,color=BostonUniversityRed] (1.8,0) -- (1.8,-1.8);
					
					\draw[e:coloredborder,bend left=20] (-0.3,-1.7) to (-0.8,-2.6);
					\draw[e:coloredthin,color=BostonUniversityRed,bend left=20] (-0.3,-1.7) to (-0.8,-2.6);
					
					\draw[e:coloredborder,bend right=20] (0.3,-1.7) to (0.8,-2.6);
					\draw[e:coloredthin,color=BostonUniversityRed,bend right=20] (0.3,-1.7) to (0.8,-2.6);

				\end{pgfonlayer}
			\end{tikzpicture}
		\end{subfigure}
		
		\caption{The four different ways to obtain a strong matching cross over $C'$ in the first subcase of the first case in the proof of \cref{lemma:easycrossesovercrossingcycles}.}
		\label{fig:pfaffiancase1crosses1}
	\end{figure}
	
	In either case, we can now use \cref{lemma:badcrosses} to find a perfect matching $M''$ of $B$ such that $\E{M'}\setminus\E{C}\subseteq M''$ together with two $M''$-alternating paths $P_1$ and $P_2$ such that these paths are disjoint and are completely contained in $B-\V{B_1-\V{C}}$.
	Indeed, $P_1$ and $P_2$ can be chosen such that $P_1L$ and $P_2R$ form a strong matching cross over $C'$ in $B$ as illustrated in \cref{fig:pfaffiancase1crosses1}.
	
	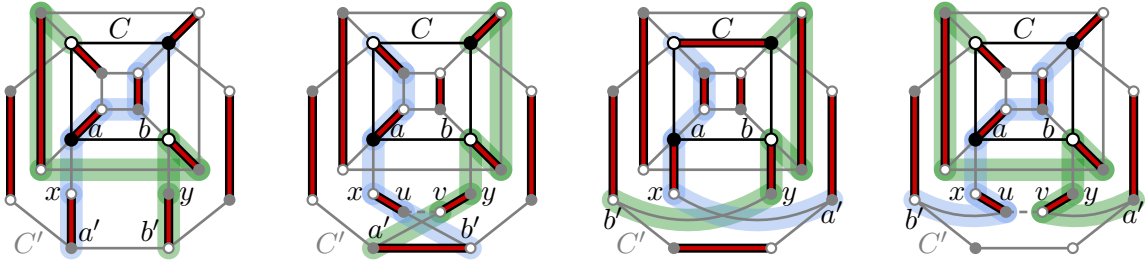
\begin{figure}[h!]
		\centering
		\begin{subfigure}{0.24\textwidth}
			\centering
			\begin{tikzpicture}[scale=0.8]
				\pgfdeclarelayer{background}
				\pgfdeclarelayer{foreground}
				\pgfsetlayers{background,main,foreground}
				
				\node () at (0,1.05){$C$};
				\node[gray] () at (-1.5,-2.5){$C'$};
				
				\node () at (-0.4,-0.65){$a$};
				\node () at (0.4,-0.6){$b$};
				\node() at (-0.5,-2.3){$a'$};
				\node() at (0.5,-2.3){$b'$};
				
				\node[v:mainemptygray] () at (-0.3,-0.3){};
				\node[v:maingray] () at (-0.3,0.3){};
				\node[v:maingray] () at (0.3,-0.3){};
				\node[v:mainemptygray] () at (0.3,0.3){};
				
				\node[v:main] () at (-0.8,-0.8){};
				\node[v:mainempty] () at (-0.8,0.8){};
				\node[v:mainempty] () at (0.8,-0.8){};
				\node[v:main] () at (0.8,0.8){};
				
				\node[v:mainemptygray] () at (-1.3,-1.3){};
				\node[v:maingray] () at (-1.3,1.3){};
				\node[v:maingray] () at (1.3,-1.3){};
				\node[v:mainemptygray] () at (1.3,1.3){};   
				
				\node[v:maingray] () at (-1.8,0){};
				\node[v:mainemptygray] () at (1.8,0){};
				
				\node[v:mainemptygray] () at (-1.8,-1.8){};
				\node[v:maingray] () at (1.8,-1.8){};
				
				\node[v:maingray] () at (-0.8,-2.6){};
				\node[v:mainemptygray] () at (0.8,-2.6){};
				
				\node[v:mainemptygray] () at (-0.8,-1.7){};
				\node () at (-1.1,-1.7){$x$};
				
				\node[v:maingray] () at (0.8,-1.7){};
				\node () at (1.1,-1.75){$y$};
				
				\begin{pgfonlayer}{background}
					
					\draw[e:marker,myLightBlue] (0.8,0.8) -- (0.3,0.3);
					\draw[e:marker,myLightBlue] (0.3,0.3) -- (0.3,-0.3);
					\draw[e:marker,myLightBlue] (0.3,-0.3) -- (-0.3,-0.3);
					\draw[e:marker,myLightBlue] (-0.3,-0.3) -- (-0.8,-0.8);
					\draw[e:marker,myLightBlue] (-0.8,-0.8) -- (-0.8,-1.7);
					\draw[e:marker,myLightBlue] (-0.8,-2.6) -- (-0.8,-1.7);

					\draw[e:marker,myGreen] (-0.8,0.8) -- (-1.3,1.3);
					\draw[e:marker,myGreen] (-1.3,1.3) -- (-1.3,-1.3);
					\draw[e:marker,myGreen] (-1.3,-1.3) -- (1.3,-1.3);
					\draw[e:marker,myGreen] (1.3,-1.3) -- (0.8,-0.8);
					\draw[e:marker,myGreen] (0.8,-0.8) -- (0.8,-1.7);
					\draw[e:marker,myGreen] (0.8,-2.6) -- (0.8,-1.7);
					
					\draw[e:main] (-0.8,-0.8) rectangle (0.8,0.8);
					
					\draw[e:main,color=gray] (-1.3,1.3) -- (1.3,1.3) -- (1.3,-1.3) -- (-1.3,-1.3);
					
					\draw[e:main,color=gray] (0.3,0.3) -- (-0.3,0.3) -- (-0.3,-0.3) -- (0.3,-0.3);
					
					\draw[e:main,color=gray] (0.3,0.3) -- (0.8,0.8);
					\draw[e:main,color=gray] (0.3,-0.3) -- (0.8,-0.8);
					
					\draw[e:main,color=gray] (-0.8,0.8) -- (-1.3,1.3);
					\draw[e:main,color=gray] (-0.8,-0.8) -- (-1.3,-1.3);
					
					\draw[e:main,color=gray] (-0.8,0.8) -- (-1.8,0);
					\draw[e:main,color=gray] (0.8,0.8) -- (1.8,0);
					
					\draw[e:main,color=gray] (-1.8,-1.8) -- (-0.8,-2.6);
					\draw[e:main,color=gray] (1.8,-1.8) -- (0.8,-2.6);
					
					\draw[e:main,color=gray] (-0.8,-2.6) -- (0.8,-2.6);
					
					\draw[e:main,color=gray] (-0.8,-0.8) -- (-0.8,-1.7);
					\draw[e:main,color=gray] (0.8,-0.8) -- (0.8,-1.7);

					\draw[e:coloredborder] (0.3,-0.3) -- (0.3,0.3);
					\draw[e:coloredthin,color=BostonUniversityRed] (0.3,-0.3) -- (0.3,0.3);

					
					\draw[e:coloredborder] (-0.3,0.3) -- (-0.8,0.8);
					\draw[e:coloredthin,color=BostonUniversityRed] (-0.3,0.3) -- (-0.8,0.8);
					\draw[e:coloredborder] (-0.3,-0.3) -- (-0.8,-0.8);
					\draw[e:coloredthin,color=BostonUniversityRed] (-0.3,-0.3) -- (-0.8,-0.8);
					
					\draw[e:coloredborder] (-1.3,-1.3) -- (-1.3,1.3);
					\draw[e:coloredthin,color=BostonUniversityRed]  (-1.3,-1.3) -- (-1.3,1.3);
					
					\draw[e:coloredborder] (0.8,0.8) -- (1.3,1.3);
					\draw[e:coloredthin,color=BostonUniversityRed] (0.8,0.8) -- (1.3,1.3);
					\draw[e:coloredborder] (0.8,-0.8) -- (1.3,-1.3);
					\draw[e:coloredthin,color=BostonUniversityRed] (0.8,-0.8) -- (1.3,-1.3);
					
					\draw[e:coloredborder] (-1.8,0) -- (-1.8,-1.8);
					\draw[e:coloredthin,color=BostonUniversityRed] (-1.8,0) -- (-1.8,-1.8);
					\draw[e:coloredborder] (1.8,0) -- (1.8,-1.8);
					\draw[e:coloredthin,color=BostonUniversityRed] (1.8,0) -- (1.8,-1.8);
					
					\draw[e:coloredborder] (0.8,-1.7) -- (0.8,-2.6);
					\draw[e:coloredthin,color=BostonUniversityRed] (0.8,-1.7) -- (0.8,-2.6);
					
					\draw[e:coloredborder] (-0.8,-1.7) -- (-0.8,-2.6);
					\draw[e:coloredthin,color=BostonUniversityRed] (-0.8,-1.7) -- (-0.8,-2.6);

				\end{pgfonlayer}
			\end{tikzpicture}
		\end{subfigure}
		\begin{subfigure}{0.24\textwidth}
			\centering
			\begin{tikzpicture}[scale=0.8]
				\pgfdeclarelayer{background}
				\pgfdeclarelayer{foreground}
				\pgfsetlayers{background,main,foreground}
				
				\node () at (0,1.05){$C$};
				\node[gray] () at (-1.5,-2.5){$C'$};

				\node () at (-0.4,-0.65){$a$};
				\node () at (0.4,-0.6){$b$};
				
				\node[v:mainemptygray] () at (-0.3,-0.3){};
				\node[v:maingray] () at (-0.3,0.3){};
				\node[v:maingray] () at (0.3,-0.3){};
				\node[v:mainemptygray] () at (0.3,0.3){};
				
				\node[v:main] () at (-0.8,-0.8){};
				\node[v:mainempty] () at (-0.8,0.8){};
				\node[v:mainempty] () at (0.8,-0.8){};
				\node[v:main] () at (0.8,0.8){};
				
				\node[v:mainemptygray] () at (-1.3,-1.3){};
				\node[v:maingray] () at (-1.3,1.3){};
				\node[v:maingray] () at (1.3,-1.3){};
				\node[v:mainemptygray] () at (1.3,1.3){}; 
				
				\node[v:maingray] () at (-1.8,0){};
				\node[v:mainemptygray] () at (1.8,0){};
				
				\node[v:mainemptygray] () at (-1.8,-1.8){};
				\node[v:maingray] () at (1.8,-1.8){};
				
				\node[v:maingray] () at (-0.8,-2.6){};
				\node () at (-0.7,-2.25){$a'$};
				\node[v:mainemptygray] () at (0.8,-2.6){};
				\node() at (0.8,-2.25){$b'$};
				
				\node[v:mainemptygray] () at (-0.8,-1.7){};
				\node () at (-1.1,-1.7){$x$};
				
				\node[v:maingray] () at (0.8,-1.7){};
				\node () at (1.1,-1.75){$y$};
				
				\node[v:maingray] at (-0.3,-2){};
				\node () at (-0.3,-1.7){$u$};
				
				\node[v:mainemptygray] at (0.3,-2){};
				\node () at (0.3,-1.7){$v$};
				
				\begin{pgfonlayer}{background}
					
					\draw[e:marker,color=myGreen] (0.8,0.8) -- (1.3,1.3);
					\draw[e:marker,color=myGreen] (1.3,1.3) -- (1.3,-1.3);
					\draw[e:marker,color=myGreen] (1.3,-1.3) -- (0.8,-0.8);
					\draw[e:marker,myGreen] (0.8,-0.8) -- (0.8,-1.7);
					\draw[e:marker,myGreen] (0.8,-1.7) -- (0.3,-2);
					\draw[e:marker,myGreen] (0.3,-2) -- (-0.8,-2.6);
					
					\draw[e:marker,myLightBlue] (-0.8,0.8) -- (-0.3,0.3);
					\draw[e:marker,myLightBlue] (-0.3,0.3) -- (-0.3,-0.3);
					\draw[e:marker,myLightBlue] (-0.3,-0.3) -- (-0.8,-0.8);
					\draw[e:marker,myLightBlue] (-0.8,-0.8) -- (-0.8,-1.7);
					\draw[e:marker,myLightBlue] (-0.8,-1.7) -- (-0.3,-2);
					\draw[e:marker,myLightBlue] (-0.3,-2) -- (0.8,-2.6);
					
					\draw[e:main] (-0.8,-0.8) rectangle (0.8,0.8);
					
					\draw[e:main,color=gray] (-1.3,1.3) -- (1.3,1.3) -- (1.3,-1.3) -- (-1.3,-1.3);
					
					\draw[e:main,color=gray] (0.3,0.3) -- (0.8,0.8);
					\draw[e:main,color=gray] (0.3,-0.3) -- (0.8,-0.8);
					
					\draw[e:main,color=gray] (0.3,0.3) -- (-0.3,0.3) -- (-0.3,-0.3) -- (0.3,-0.3);
					
					\draw[e:main,color=gray] (-0.8,0.8) -- (-1.3,1.3);
					\draw[e:main,color=gray] (-0.8,-0.8) -- (-1.3,-1.3);
					
					\draw[e:main,color=gray] (-0.8,0.8) -- (-1.8,0);
					\draw[e:main,color=gray] (0.8,0.8) -- (1.8,0);
					
					\draw[e:main,color=gray] (-1.8,-1.8) -- (-0.8,-2.6);
					\draw[e:main,color=gray] (1.8,-1.8) -- (0.8,-2.6);
					
					\draw[e:main,color=gray] (-0.8,-0.8) -- (-0.8,-1.7);
					\draw[e:main,color=gray] (0.8,-0.8) -- (0.8,-1.7);
					
					\draw[e:main,color=gray,densely dashed] (-0.3,-2) -- (0.3,-2);
					
					\draw[e:main,color=gray] (-0.3,-2) -- (0.8,-2.6);
					\draw[e:main,color=gray] (0.3,-2) -- (-0.8,-2.6);
					
					\draw[e:coloredborder] (-0.3,0.3) -- (-0.8,0.8);
					\draw[e:coloredthin,color=BostonUniversityRed] (-0.3,0.3) -- (-0.8,0.8);
					\draw[e:coloredborder] (-0.3,-0.3) -- (-0.8,-0.8);
					\draw[e:coloredthin,color=BostonUniversityRed] (-0.3,-0.3) -- (-0.8,-0.8);
					
					\draw[e:coloredborder] (0.3,-0.3) -- (0.3,0.3);
					\draw[e:coloredthin,color=BostonUniversityRed] (0.3,-0.3) -- (0.3,0.3);
					
					\draw[e:coloredborder] (-1.3,-1.3) -- (-1.3,1.3);
					\draw[e:coloredthin,color=BostonUniversityRed]  (-1.3,-1.3) -- (-1.3,1.3);
					
					\draw[e:coloredborder] (0.8,0.8) -- (1.3,1.3);
					\draw[e:coloredthin,color=BostonUniversityRed] (0.8,0.8) -- (1.3,1.3);
					\draw[e:coloredborder] (0.8,-0.8) -- (1.3,-1.3);
					\draw[e:coloredthin,color=BostonUniversityRed] (0.8,-0.8) -- (1.3,-1.3);
					
					\draw[e:coloredborder] (-1.8,0) -- (-1.8,-1.8);
					\draw[e:coloredthin,color=BostonUniversityRed] (-1.8,0) -- (-1.8,-1.8);
					\draw[e:coloredborder] (1.8,0) -- (1.8,-1.8);
					\draw[e:coloredthin,color=BostonUniversityRed] (1.8,0) -- (1.8,-1.8);
					
					\draw[e:coloredborder] (-0.8,-2.6) -- (0.8,-2.6);
					\draw[e:coloredthin,color=BostonUniversityRed] (-0.8,-2.6) -- (0.8,-2.6);
					
					\draw[e:coloredborder] (-0.8,-1.7) -- (-0.3,-2);
					\draw[e:coloredthin,color=BostonUniversityRed](-0.8,-1.7) -- (-0.3,-2);
					
					\draw[e:coloredborder] (0.8,-1.7) -- (0.3,-2);
					\draw[e:coloredthin,color=BostonUniversityRed](0.8,-1.7) -- (0.3,-2);

				\end{pgfonlayer}
			\end{tikzpicture}
		\end{subfigure}
		\begin{subfigure}{0.24\textwidth}
			\centering
			\begin{tikzpicture}[scale=0.8]
				\pgfdeclarelayer{background}
				\pgfdeclarelayer{foreground}
				\pgfsetlayers{background,main,foreground}
				
				\node () at (0,1.05){$C$};
				\node[gray] () at (-1.5,-2.5){$C'$};

				\node () at (-0.4,-0.65){$a$};
				\node () at (0.4,-0.6){$b$};
				
				\node() at (1.8,-2){$a'$};
				\node() at (-1.8,-2.1){$b'$};
				
				\node[v:mainemptygray] () at (-0.3,-0.3){};
				\node[v:maingray] () at (-0.3,0.3){};
				\node[v:maingray] () at (0.3,-0.3){};
				\node[v:mainemptygray] () at (0.3,0.3){};
				
				\node[v:main] () at (-0.8,-0.8){};
				\node[v:mainempty] () at (-0.8,0.8){};
				\node[v:mainempty] () at (0.8,-0.8){};
				\node[v:main] () at (0.8,0.8){};
				
				\node[v:mainemptygray] () at (-1.3,-1.3){};
				\node[v:maingray] () at (-1.3,1.3){};
				\node[v:maingray] () at (1.3,-1.3){};
				\node[v:mainemptygray] () at (1.3,1.3){}; 
				
				\node[v:maingray] () at (-1.8,0){};
				\node[v:mainemptygray] () at (1.8,0){};
				
				\node[v:mainemptygray] () at (-1.8,-1.8){};
				\node[v:maingray] () at (1.8,-1.8){};
				
				\node[v:maingray] () at (-0.8,-2.6){};
				\node[v:mainemptygray] () at (0.8,-2.6){};

				\node[v:mainemptygray] () at (-0.8,-1.7){};
				\node () at (-1.1,-1.7){$x$};
				
				\node[v:maingray] () at (0.8,-1.7){};
				\node () at (1.1,-1.75){$y$};
				
				\begin{pgfonlayer}{background}
					
					\draw[e:marker,myGreen] (0.8,-0.8) -- (1.3,-1.3);
					\draw[e:marker,myGreen] (1.3,-1.3) -- (1.3,1.3);
					\draw[e:marker,myGreen] (1.3,1.3) -- (0.8,0.8);
					\draw[e:marker,myGreen] (0.8,-0.8) -- (0.8,-1.7);
					\draw[e:marker,myGreen,bend left=30] (0.8,-1.7) to (-1.8,-1.8);

					\draw[e:marker,myLightBlue] (-0.8,-0.8) -- (-0.3,-0.3);
					\draw[e:marker,myLightBlue] (-0.3,-0.3) -- (-0.3,0.3);
					\draw[e:marker,myLightBlue] (-0.3,0.3) -- (-0.8,0.8);
					\draw[e:marker,myLightBlue] (-0.8,-0.8) -- (-0.8,-1.7);
					\draw[e:marker,bend right=30,myLightBlue] (-0.8,-1.7) to (1.8,-1.8);
					
					\draw[e:main,color=gray] (-1.3,1.3) -- (1.3,1.3);
					\draw[e:main,color=gray] (1.3,-1.3) -- (-1.3,-1.3);
					
					\draw[e:main] (0.8,0.8) -- (0.8,-0.8) -- (-0.8,-0.8) -- (-0.8,0.8);
					
					\draw[e:main,color=gray] (0.3,0.3) -- (0.8,0.8);
					\draw[e:main,color=gray] (0.3,-0.3) -- (0.8,-0.8);
					
					\draw[e:main,color=gray] (0.3,0.3) -- (-0.3,0.3);
					\draw[e:main,color=gray] (-0.3,-0.3) -- (0.3,-0.3);
					
					\draw[e:main,color=gray] (-0.3,0.3) -- (-0.8,0.8);
					\draw[e:main,color=gray] (-0.3,-0.3) -- (-0.8,-0.8);
					
					\draw[e:main,color=gray] (-0.8,0.8) -- (-1.3,1.3);
					\draw[e:main,color=gray] (-0.8,-0.8) -- (-1.3,-1.3);
					
					\draw[e:main,color=gray] (0.8,0.8) -- (1.3,1.3);
					\draw[e:main,color=gray] (0.8,-0.8) -- (1.3,-1.3);
					
					\draw[e:main,color=gray] (-0.8,0.8) -- (-1.8,0);
					\draw[e:main,color=gray] (0.8,0.8) -- (1.8,0);
					
					\draw[e:main,color=gray] (-1.8,-1.8) -- (-0.8,-2.6);
					\draw[e:main,color=gray] (1.8,-1.8) -- (0.8,-2.6);
					
					\draw[e:main,color=gray,bend right=30] (-0.8,-1.7) to (1.8,-1.8);
					\draw[e:main,color=gray,bend left=30] (0.8,-1.7) to (-1.8,-1.8);
					
					\draw[e:coloredborder] (0.3,-0.3) -- (0.3,0.3);
					\draw[e:coloredthin,color=BostonUniversityRed] (0.3,-0.3) -- (0.3,0.3);
					
					\draw[e:coloredborder] (-0.3,0.3) -- (-0.3,-0.3);
					\draw[e:coloredthin,color=BostonUniversityRed] (-0.3,0.3) -- (-0.3,-0.3);
					
					\draw[e:coloredborder] (-0.8,0.8) -- (0.8,0.8);
					\draw[e:coloredthin,color=BostonUniversityRed] (-0.8,0.8) -- (0.8,0.8);
					
					\draw[e:coloredborder] (-1.3,-1.3) -- (-1.3,1.3);
					\draw[e:coloredthin,color=BostonUniversityRed]  (-1.3,-1.3) -- (-1.3,1.3);
					
					\draw[e:coloredborder] (1.3,1.3) -- (1.3,-1.3);
					\draw[e:coloredthin,color=BostonUniversityRed] (1.3,1.3) -- (1.3,-1.3);

					\draw[e:coloredborder] (-1.8,0) -- (-1.8,-1.8);
					\draw[e:coloredthin,color=BostonUniversityRed] (-1.8,0) -- (-1.8,-1.8);
					\draw[e:coloredborder] (1.8,0) -- (1.8,-1.8);
					\draw[e:coloredthin,color=BostonUniversityRed] (1.8,0) -- (1.8,-1.8);
					
					\draw[e:coloredborder] (-0.8,-2.6) -- (0.8,-2.6);
					\draw[e:coloredthin,color=BostonUniversityRed] (-0.8,-2.6) -- (0.8,-2.6);
					
					\draw[e:coloredborder] (-0.8,-0.8) -- (-0.8,-1.7);
					\draw[e:coloredthin,color=BostonUniversityRed] (-0.8,-0.8) -- (-0.8,-1.7);
					
					\draw[e:coloredborder] (0.8,-0.8) -- (0.8,-1.7);
					\draw[e:coloredthin,color=BostonUniversityRed] (0.8,-0.8) -- (0.8,-1.7);

				\end{pgfonlayer}
			\end{tikzpicture}
		\end{subfigure}
		\begin{subfigure}{0.24\textwidth}
			\centering
			\begin{tikzpicture}[scale=0.8]
				\pgfdeclarelayer{background}
				\pgfdeclarelayer{foreground}
				\pgfsetlayers{background,main,foreground}
				
				\node () at (0,1.05){$C$};
				\node[gray] () at (-1.5,-2.5){$C'$};

				\node () at (-0.4,-0.65){$a$};
				\node () at (0.4,-0.6){$b$};
				
				\node() at (1.8,-2){$a'$};
				\node() at (-1.8,-2.1){$b'$};
				
				\node[v:mainemptygray] () at (-0.3,-0.3){};
				\node[v:maingray] () at (-0.3,0.3){};
				\node[v:maingray] () at (0.3,-0.3){};
				\node[v:mainemptygray] () at (0.3,0.3){};
				
				\node[v:main] () at (-0.8,-0.8){};
				\node[v:mainempty] () at (-0.8,0.8){};
				\node[v:mainempty] () at (0.8,-0.8){};
				\node[v:main] () at (0.8,0.8){};
				
				\node[v:mainemptygray] () at (-1.3,-1.3){};
				\node[v:maingray] () at (-1.3,1.3){};
				\node[v:maingray] () at (1.3,-1.3){};
				\node[v:mainemptygray] () at (1.3,1.3){};  
				
				\node[v:maingray] () at (-1.8,0){};
				\node[v:mainemptygray] () at (1.8,0){};
				
				\node[v:mainemptygray] () at (-1.8,-1.8){};
				\node[v:maingray] () at (1.8,-1.8){};
				
				\node[v:maingray] () at (-0.8,-2.6){};
				\node[v:mainemptygray] () at (0.8,-2.6){};
				
				\node[v:mainemptygray] () at (-0.8,-1.7){};
				\node () at (-1.1,-1.7){$x$};
				
				\node[v:maingray] () at (0.8,-1.7){};
				\node () at (1.1,-1.75){$y$};
				
				\node[v:maingray] at (-0.3,-2){};
				\node () at (-0.3,-1.7){$u$};
				
				\node[v:mainemptygray] at (0.3,-2){};
				\node () at (0.3,-1.7){$v$};
				
				\begin{pgfonlayer}{background}
					
					\draw[e:marker,myGreen] (-0.8,0.8) -- (-1.3,1.3);
					\draw[e:marker,myGreen] (-1.3,1.3) -- (-1.3,-1.3);
					\draw[e:marker,myGreen] (-1.3,-1.3) -- (1.3,-1.3);
					\draw[e:marker,myGreen] (1.3,-1.3) -- (0.8,-0.8);
					\draw[e:marker,myGreen] (0.8,-0.8) -- (0.8,-1.7);
					\draw[e:marker,myGreen] (0.8,-1.7) -- (0.3,-2);
					\draw[e:marker,myGreen,bend right=20] (0.3,-2) to (1.8,-1.8);

					\draw[e:marker,myLightBlue] (0.8,0.8) -- (0.3,0.3);
					\draw[e:marker,myLightBlue] (0.3,0.3) -- (0.3,-0.3);
					\draw[e:marker,myLightBlue] (0.3,-0.3) -- (-0.3,-0.3);
					\draw[e:marker,myLightBlue] (-0.3,-0.3) -- (-0.8,-0.8);
					\draw[e:marker,myLightBlue] (-0.8,-0.8) -- (-0.8,-1.7);
					\draw[e:marker,myLightBlue] (-0.8,-1.7) -- (-0.3,-2);
					\draw[e:marker, bend left=20,myLightBlue] (-0.3,-2) to (-1.8,-1.8);

					\draw[e:main] (-0.8,-0.8) rectangle (0.8,0.8);
					
					\draw[e:main,color=gray] (-1.3,1.3) -- (1.3,1.3) -- (1.3,-1.3) -- (-1.3,-1.3);
					
					\draw[e:main,color=gray] (0.3,0.3) -- (0.8,0.8);
					\draw[e:main,color=gray] (0.3,-0.3) -- (0.8,-0.8);
					
					\draw[e:main,color=gray] (0.3,0.3) -- (-0.3,0.3) -- (-0.3,-0.3) -- (0.3,-0.3);
					
					\draw[e:main,color=gray] (-0.8,0.8) -- (-1.3,1.3);
					\draw[e:main,color=gray] (-0.8,-0.8) -- (-1.3,-1.3);
					
					\draw[e:main,color=gray] (-0.8,0.8) -- (-1.8,0);
					\draw[e:main,color=gray] (0.8,0.8) -- (1.8,0);
					
					\draw[e:main,color=gray] (-1.8,-1.8) -- (-0.8,-2.6);
					\draw[e:main,color=gray] (1.8,-1.8) -- (0.8,-2.6);
					
					\draw[e:main,color=gray] (-0.8,-2.6) -- (0.8,-2.6);
					
					\draw[e:main,color=gray] (-0.8,-0.8) -- (-0.8,-1.7);
					\draw[e:main,color=gray] (0.8,-0.8) -- (0.8,-1.7);
					
					\draw[e:main,color=gray,densely dashed] (-0.3,-2) -- (0.3,-2);
					
					\draw[e:main,color=gray,bend left=20] (-0.3,-2) to (-1.8,-1.8);
					\draw[e:main,color=gray,bend right=20] (0.3,-2) to (1.8,-1.8);
					
					\draw[e:coloredborder] (-0.3,0.3) -- (-0.8,0.8);
					\draw[e:coloredthin,color=BostonUniversityRed] (-0.3,0.3) -- (-0.8,0.8);
					\draw[e:coloredborder] (-0.3,-0.3) -- (-0.8,-0.8);
					\draw[e:coloredthin,color=BostonUniversityRed] (-0.3,-0.3) -- (-0.8,-0.8);
					
					\draw[e:coloredborder] (0.3,-0.3) -- (0.3,0.3);
					\draw[e:coloredthin,color=BostonUniversityRed] (0.3,-0.3) -- (0.3,0.3);
					
					\draw[e:coloredborder] (-1.3,-1.3) -- (-1.3,1.3);
					\draw[e:coloredthin,color=BostonUniversityRed]  (-1.3,-1.3) -- (-1.3,1.3);
					
					\draw[e:coloredborder] (0.8,0.8) -- (1.3,1.3);
					\draw[e:coloredthin,color=BostonUniversityRed] (0.8,0.8) -- (1.3,1.3);
					\draw[e:coloredborder] (0.8,-0.8) -- (1.3,-1.3);
					\draw[e:coloredthin,color=BostonUniversityRed] (0.8,-0.8) -- (1.3,-1.3);
					
					\draw[e:coloredborder] (-1.8,0) -- (-1.8,-1.8);
					\draw[e:coloredthin,color=BostonUniversityRed] (-1.8,0) -- (-1.8,-1.8);
					\draw[e:coloredborder] (1.8,0) -- (1.8,-1.8);
					\draw[e:coloredthin,color=BostonUniversityRed] (1.8,0) -- (1.8,-1.8);
					
					\draw[e:coloredborder] (-0.8,-1.7) -- (-0.3,-2);
					\draw[e:coloredthin,color=BostonUniversityRed](-0.8,-1.7) -- (-0.3,-2);
					
					\draw[e:coloredborder] (0.8,-1.7) -- (0.3,-2);
					\draw[e:coloredthin,color=BostonUniversityRed](0.8,-1.7) -- (0.3,-2);

				\end{pgfonlayer}
			\end{tikzpicture}
		\end{subfigure}
		
		\caption{The four different ways to obtain a strong matching cross over $C'$ in the second subcase of the first case in the proof of \cref{lemma:easycrossesovercrossingcycles}.}
		\label{fig:pfaffiancase1crosses2}
	\end{figure}
	
	So now let us assume $ab\notin M$.
	Then there exist vertices $x,y\in\V{B}\setminus\Brace{\V{C}\cup\V{C'}}$ such that $ax,by\in M$.
	Let $P$ be an internally $M$-conformal path connecting $x$ to some vertex $a'$ of $\Vi{1}{C'}$ such that $P$ is internally disjoint from $C'$ and avoids $K$.
	Similarly we choose $Q$ to be an internally $M$-conformal path connecting $y$ to some vertex $b'\in\Vi{2}{C'}$ while avoiding $K$ and being internally disjoint from $C'$.
	Suppose one of the two paths contains the initial matching edge of the other.
	Since these cases are symmetric, it suffices to consider one of them, so let us assume $P$ contains $by$.
	Then $Py$ is an internally $M$-conformal path that is disjoint from $C'$ and does not meet $K$ at all.
	Hence $Pyba$ is an $M$-conformal cycle and thus $M\Delta\E{Pyba}$ is a perfect matching of $B_1$ for which $C'+K$ is conformal.
	Since we ruled this possibility out by the first case this cannot happen, and thus $P$ does not contain $by$, and neither does $Q$ contain the edge $ax$.
	By calling upon \cref{lemma:untangletwopaths} again, we find a perfect matching $M'$ and paths $L'$ and $R'$ such that $L'\cap R'$ is either empty or an $M'$-conformal path, $M'$ equals $M$ outside of $axP$ and $byQ$, and $R$ and $L$ connect $\Set{a,b}$ to $\Set{a',b'}$ while being internally disjoint from $C'$ and avoiding $K$.
	If case $L'\cap R'$ is empty let $L\coloneqq L'$ and $R\coloneqq R'$.
	Otherwise, let $u$ and $v$ be the endpoints of $L'\cap R'$ such that $u$ is the first vertex of $R'$ one encounters when traversing $L'$ starting in $a$.
	Then $ab$ together with the unique $a$-$b$-subpath of $L'+R'$ forms an $M'$-conformal cycle $O$.
	By adjusting $M'$ to be the perfect matching $M'\Delta\E{O}$ and setting $L\coloneqq L'uR'$, $R\coloneqq R'vL'$ we have found two disjoint $M'$-alternating paths that can be extended to form a strong matching cross over $C'$ in $B$ by using \cref{lemma:badcrosses} as before.
	Please note that this step might alter the perfect matching $M'$ again with regards to the edges of $C$.
	See \cref{fig:pfaffiancase1crosses2} for an illustration of the cases that might arise.
	
	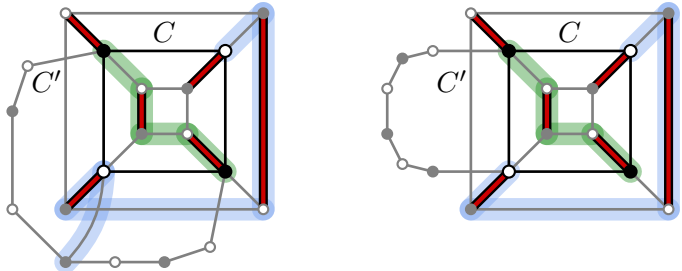
\begin{figure}[h!]
		\centering
		\begin{subfigure}[b]{0.3\textwidth}
			\begin{tikzpicture}
				\pgfdeclarelayer{background}
				\pgfdeclarelayer{foreground}
				\pgfsetlayers{background,main,foreground}
				
				\node[v:ghost] () at (0,-2.2){};
				\node () at (0,1.05){$C$};
				\node () at (-1.55,0.4){$C'$};
				
				\node[v:maingray] () at (-0.3,-0.3){};
				\node[v:mainemptygray] () at (-0.3,0.3){};
				\node[v:mainemptygray] () at (0.3,-0.3){};
				\node[v:maingray] () at (0.3,0.3){};
				
				\node[v:mainempty] () at (-0.8,-0.8){};
				\node[v:main] () at (-0.8,0.8){};
				\node[v:main] () at (0.8,-0.8){};
				\node[v:mainempty] () at (0.8,0.8){};
				
				\node[v:maingray] () at (-1.3,-1.3){};
				\node[v:mainemptygray] () at (-1.3,1.3){};
				\node[v:mainemptygray] () at (1.3,-1.3){};
				\node[v:maingray] () at (1.3,1.3){}; 
				
				\node[v:mainemptygray] () at (0.6,-1.8){};
				\node[v:maingray] () at (-0,-2){};
				\node[v:mainemptygray] () at (-0.65,-2){};
				\node[v:maingray] () at (-1.3,-2){};
				\node[v:mainemptygray] () at (-2,-1.3){};
				\node[v:maingray] () at (-2,0){};
				\node[v:mainemptygray] () at (-1.8,0.6){};
				
				\begin{pgfonlayer}{background}
					
					\draw[e:marker,myLightBlue] (0.8,0.8) -- (1.3,1.3);
					\draw[e:marker,myLightBlue] (1.3,1.3) -- (1.3,-1.3);
					\draw[e:marker,myLightBlue] (1.3,-1.3) -- (-1.3,-1.3);
					\draw[e:marker,myLightBlue] (-1.3,-1.3) -- (-0.8,-0.8);
					\draw[e:marker,bend left=20,myLightBlue] (-0.8,-0.8) to (-1.3,-2);

					\draw[e:marker,myGreen] (-0.8,0.8) -- (-0.3,0.3);
					\draw[e:marker,myGreen] (-0.3,0.3) -- (-0.3,-0.3);
					\draw[e:marker,myGreen] (-0.3,-0.3) -- (0.3,-0.3);
					\draw[e:marker,myGreen] (0.3,-0.3) -- (0.8,-0.8);
					
					\draw[e:main] (-0.8,-0.8) rectangle (0.8,0.8);
					
					\draw[e:main,color=gray]  (1.3,-1.3) -- (-1.3,-1.3) -- (-1.3,1.3) -- (1.3,1.3);
					
					\draw[e:main,color=gray] (-0.3,0.3) -- (0.3,0.3) -- (0.3,-0.3) -- (-0.3,-0.3);
					
					\draw[e:main,color=gray] (-0.3,0.3) -- (-0.8,0.8);
					\draw[e:main,color=gray] (-0.3,-0.3) -- (-0.8,-0.8);
					
					\draw[e:main,color=gray] (0.8,0.8) -- (1.3,1.3);
					\draw[e:main,color=gray] (0.8,-0.8) -- (1.3,-1.3);
					
					\draw[e:main,color=gray] (0.8,-0.8) -- (0.6,-1.8){} -- (-0,-2) -- (-0.65,-2) -- (-1.3,-2){} -- (-2,-1.3){} -- (-2,0) -- (-1.8,0.6) -- (-0.8,0.8);
					
					\draw[e:main,color=gray,bend left=20] (-0.8,-0.8) to (-1.3,-2);
					
					\draw[e:coloredborder] (1.3,1.3) -- (1.3,-1.3);
					\draw[e:coloredthin,color=BostonUniversityRed] (1.3,1.3) -- (1.3,-1.3);
					
					\draw[e:coloredborder] (-0.3,0.3) -- (-0.3,-0.3);
					\draw[e:coloredthin,color=BostonUniversityRed] (-0.3,0.3) -- (-0.3,-0.3);
					
					\draw[e:coloredborder] (0.3,0.3) -- (0.8,0.8);
					\draw[e:coloredthin,color=BostonUniversityRed] (0.3,0.3) -- (0.8,0.8);
					
					\draw[e:coloredborder] (0.3,-0.3) -- (0.8,-0.8);
					\draw[e:coloredthin,color=BostonUniversityRed] (0.3,-0.3) -- (0.8,-0.8);
					
					\draw[e:coloredborder] (-0.8,0.8) -- (-1.3,1.3);
					\draw[e:coloredthin,color=BostonUniversityRed] (-0.8,0.8) -- (-1.3,1.3);
					
					\draw[e:coloredborder] (-0.8,-0.8) -- (-1.3,-1.3);
					\draw[e:coloredthin,color=BostonUniversityRed] (-0.8,-0.8) -- (-1.3,-1.3);

				\end{pgfonlayer}
			\end{tikzpicture}
		\end{subfigure}
		\begin{subfigure}[b]{0.3\textwidth}
			\begin{tikzpicture}
				\pgfdeclarelayer{background}
				\pgfdeclarelayer{foreground}
				\pgfsetlayers{background,main,foreground}
				
				\node[v:ghost] () at (0,-2.2){};

				\node () at (0,1.05){$C$};
				\node () at (-1.55,0.4){$C'$};

				\node[v:maingray] () at (-0.3,-0.3){};
				\node[v:mainemptygray] () at (-0.3,0.3){};
				\node[v:mainemptygray] () at (0.3,-0.3){};
				\node[v:maingray] () at (0.3,0.3){};
				
				\node[v:mainempty] () at (-0.8,-0.8){};
				\node[v:main] () at (-0.8,0.8){};
				\node[v:main] () at (0.8,-0.8){};
				\node[v:mainempty] () at (0.8,0.8){};
				
				\node[v:maingray] () at (-1.3,-1.3){};
				\node[v:mainemptygray] () at (-1.3,1.3){};
				\node[v:mainemptygray] () at (1.3,-1.3){};
				\node[v:maingray] () at (1.3,1.3){}; 
				
				\node[v:mainemptygray] () at (-1.8,0.8){};
				\node[v:maingray] () at (-1.8,-0.8){};
				
				\node[v:maingray] () at (-2.2,0.7){};
				\node[v:mainemptygray] () at (-2.2,-0.7){};
				
				\node[v:mainemptygray] () at (-2.4,0.3){};
				\node[v:maingray] () at (-2.4,-0.3){};
				
				\begin{pgfonlayer}{background}
					
					\draw[e:marker,myLightBlue] (0.8,0.8) -- (1.3,1.3);
					\draw[e:marker,myLightBlue] (1.3,1.3) -- (1.3,-1.3);
					\draw[e:marker,myLightBlue] (1.3,-1.3) -- (-1.3,-1.3);
					\draw[e:marker,myLightBlue] (-1.3,-1.3) -- (-0.8,-0.8);
					
					\draw[e:marker,myGreen] (-0.8,0.8) -- (-0.3,0.3);
					\draw[e:marker,myGreen] (-0.3,0.3) -- (-0.3,-0.3);
					\draw[e:marker,myGreen] (-0.3,-0.3) -- (0.3,-0.3);
					\draw[e:marker,myGreen] (0.3,-0.3) -- (0.8,-0.8);
					
					\draw[e:main] (-0.8,-0.8) rectangle (0.8,0.8);
					
					\draw[e:main,color=gray]  (1.3,-1.3) -- (-1.3,-1.3) -- (-1.3,1.3) -- (1.3,1.3);
					
					\draw[e:main,color=gray] (-0.3,0.3) -- (0.3,0.3) -- (0.3,-0.3) -- (-0.3,-0.3);
					
					\draw[e:main,color=gray] (-0.3,0.3) -- (-0.8,0.8);
					\draw[e:main,color=gray] (-0.3,-0.3) -- (-0.8,-0.8);
					
					\draw[e:main,color=gray] (0.8,0.8) -- (1.3,1.3);
					\draw[e:main,color=gray] (0.8,-0.8) -- (1.3,-1.3);
					
					\draw[e:main,color=gray] (-0.8,0.8) -- (-1.8,0.8);
					\draw[e:main,color=gray] (-0.8,-0.8) -- (-1.8,-0.8);
					
					\draw[e:main,color=gray] (-1.8,0.8) -- (-2.2,0.7);
					\draw[e:main,color=gray] (-1.8,-0.8) -- (-2.2,-0.7);
					
					\draw[e:main,color=gray] (-2.2,-0.7) -- (-2.4,-0.3);
					\draw[e:main,color=gray] (-2.2,0.7) -- (-2.4,0.3);
					
					\draw[e:main,color=gray] (-2.4,-0.3) -- (-2.4,0.3);
					
					\draw[e:coloredborder] (1.3,1.3) -- (1.3,-1.3);
					\draw[e:coloredthin,color=BostonUniversityRed] (1.3,1.3) -- (1.3,-1.3);
					
					\draw[e:coloredborder] (-0.3,0.3) -- (-0.3,-0.3);
					\draw[e:coloredthin,color=BostonUniversityRed] (-0.3,0.3) -- (-0.3,-0.3);
					
					\draw[e:coloredborder] (0.3,0.3) -- (0.8,0.8);
					\draw[e:coloredthin,color=BostonUniversityRed] (0.3,0.3) -- (0.8,0.8);
					
					\draw[e:coloredborder] (0.3,-0.3) -- (0.8,-0.8);
					\draw[e:coloredthin,color=BostonUniversityRed] (0.3,-0.3) -- (0.8,-0.8);
					
					\draw[e:coloredborder] (-0.8,0.8) -- (-1.3,1.3);
					\draw[e:coloredthin,color=BostonUniversityRed] (-0.8,0.8) -- (-1.3,1.3);
					
					\draw[e:coloredborder] (-0.8,-0.8) -- (-1.3,-1.3);
					\draw[e:coloredthin,color=BostonUniversityRed] (-0.8,-0.8) -- (-1.3,-1.3);

				\end{pgfonlayer}
			\end{tikzpicture}
			
		\end{subfigure}
		\caption{Strong matching crosses over $C'$ in the second and third case in the proof of \cref{lemma:easycrossesovercrossingcycles}.}
		\label{fig:pfaffiancase1crosses3}
	\end{figure}
	So now let us assume $K$ to be of length two.
	In this case, there is a unique vertex $u\in\V{C}$ that does not belong to  $C'$.
	For this case, let us choose $M$ to be a perfect matching of $B_1$ for which $C$ is $M$-conformal.
	Since $B$ is a brace, by \cref{thm:bipartiteextendibility}, there exists an internally $M$-conformal path $P$ connecting $v$ to a vertex of $C'$ while avoiding any vertex in $\V{K}$.
	We then use \cref{lemma:badcrosses} to find paths $L'$ and $R$ together with a perfect matching $M'$ of $B$ such that $L\coloneqq L'P$ and $R$ form a strong matching cross over $C'$ as illustrated in \cref{fig:pfaffiancase1crosses3}.
	
	At last, consider the case where $K$ contains all of $C$.
	Here \cref{lemma:badcrosses} yields the strong matching cross over $C'$ in $B$ immediately.
\end{proof}

The above lemma illustrates why we cannot allow to always reduce a brace along a $4$-cycle sum.
In some cases, even the small separator given by the $4$-cycle is enough to provide a matching cross.
The following lemmas aim to make this observation more general and exact.

\begin{lemma}\label{lemma:crossesovercrossingcycles}
	Let $B$ be a $K_{3,3}$-free brace, $\ell\geq 3$, $B_1,\dots,B_{\ell}$ braces such that $B$ is a maximal $4$-cycle sum of $B_1,\dots,B_{\ell}$ at the $4$-cycle $C$ and let $C'$ be a conformal cycle in $B_1$ that also exists in $B$ such that $\Abs{\V{C'}\cap\V{C}}\geq 2$ and $\V{C}\cap\V{C'}$ contains vertices of both colour classes, then there is a matching cross over $C'$ in $B$.
\end{lemma}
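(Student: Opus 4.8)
The plan is to induct on $\Abs{\V{C'}}$ and to reduce, by adding suitable chords of $C'$ that happen to be edges of $C$, to the situation already handled by \cref{lemma:easycrossesovercrossingcycles}. Write $C=\Brace{a_1,b_1,a_2,b_2}$ with $a_1,a_2\in V_1$, $b_1,b_2\in V_2$. Since $C$ is a $4$-cycle, any black and any white vertex of $C$ are adjacent on $C$; thus, picking $a\in V_1\cap\V{C'}\cap\V{C}$ and $b\in V_2\cap\V{C'}\cap\V{C}$, the edge $ab$ lies on $C$ and hence in $\E{B_1}$ (as $C$ is a $4$-cycle in every summand). First I would add to $B$ any edges of $C$ that it is missing — this preserves the hypotheses of the lemma, and the matching cross produced in the end will use no edge of $C$ at all, as is visible in the constructions of \cref{lemma:badcrosses,lemma:easycrossesovercrossingcycles} and from \cref{lemma:extendingcrosses}, so it remains a matching cross over $C'$ in the original graph. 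Next, if $C'=C$ then \cref{lemma:badcrosses} already yields a (strong) matching cross over $C'$, and if $C'\cap C$ is a non-trivial path then \cref{lemma:easycrossesovercrossingcycles} does; in particular the case $\Abs{\V{C'}}=4$ needs no further argument, since a $4$-cycle that shares two vertices of distinct colours with $C$ either equals $C$ or meets it in the edge joining those two vertices (plus possibly one more).

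\textbf{Key step.} So suppose $\Abs{\V{C'}}\geq 6$, $C'\neq C$, and $C'\cap C$ is not a non-trivial path; because $C$ is a $4$-cycle, $C'\cap C$ is then disconnected or edgeless, and a short case check produces an edge $e=pq\in\E{C}$ with $p,q\in\V{C'}\cap\V{C}$ and $e\notin\E{C'}$, i.e.\ a genuine chord of $C'$. I would then fix a perfect matching $M$ of $B_1$ whose restriction to the even cycle $C'$ is a perfect matching of $C'$ (combine a perfect matching of $B_1-\V{C'}$ with one of the two perfect matchings of $C'$). The chord $e$ splits $C'$ into two arcs $A,A'$ from $p$ to $q$, each of odd length, and a parity count along $A$ shows that $M$ matches $p$ into $A$ precisely when it matches $q$ into $A$; hence $p$ and $q$ are matched into a common arc, say $A$, and the interior of the other arc $A'$ is then matched by $M$ within itself. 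It follows that $M$ restricts to a perfect matching of $B_1-\V{A}$, so $C'':=A+e$ is a conformal cycle of $B_1$. This $C''$ lies in $C'+e$, contains the edge $e$ of $C$ — so it shares two vertices of distinct colours with $C$ — and has $\Abs{\V{C''}}=\Abs{\V{A}}\leq\Abs{\V{C'}}-2$, the arc $A'$ having length at least three. The induction hypothesis then gives a matching cross over $C''$ in $B$, and since $C''\subseteq C'+e$ with $e\notin\E{C'}$, both ends of $e$ on $C'$, and $C''\neq C'$, \cref{lemma:extendingcrosses} turns it into a matching cross over $C'$ in $B$, closing the induction.

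\textbf{Main obstacle.} I expect the one genuinely delicate point to be the claim that $C''=A+e$ is conformal. It rests on the matching-theoretic fact that, for a perfect matching of an even cycle and a chord of that cycle, the two chord-endpoints are matched into the same one of the two arcs determined by the chord, and the interior of the other arc is then self-matched; this is exactly what makes $M$ restrict to a perfect matching of $B_1-\V{A}$ (one also uses here that no $M$-edge leaves $\V{C'}$). Everything else is routine: the elementary finite case analysis confirming that a chord $e\in\E{C}$ with the stated properties exists whenever $C'\cap C$ fails to be a non-trivial path (if $C'\cap C$ has an isolated vertex $c$, take for $e$ the edge of $C$ from $c$ to a same-arc neighbour lying in $\V{C'}$; otherwise $C'\cap C$ consists of two disjoint edges of $C$ and one takes the edge of $C$ between their endpoints), the harmless bookkeeping around the edges of $C$ possibly absent from $B$, and the direct appeals to \cref{lemma:badcrosses,lemma:easycrossesovercrossingcycles,lemma:extendingcrosses}.
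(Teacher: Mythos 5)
Your proof is correct, and it reaches the conclusion by essentially the same toolkit the paper uses (a chord $e\in\E{C}\setminus\E{C'}$, the small cycle $A+e$, \cref{lemma:extendingcrosses}, with \cref{lemma:badcrosses} and \cref{lemma:easycrossesovercrossingcycles} as the base), but it repackages the argument as a clean induction on $\Abs{\V{C'}}$ instead of the paper's case analysis on $\Abs{\V{C}\cap\V{C'}}\in\{2,3,4\}$. Two things are genuinely different. First, in the paper's Case~3 ($\V{C}\subseteq\V{C'}$) the authors split on the cyclic order of $a_1,a_2,b_1,b_2$ along $C'$ and, when that order disagrees with $C$, construct the cross directly via \cref{lemma:pathsintrisums}; your induction never needs \cref{lemma:pathsintrisums} at all, since any chord of $C$ works and the inductive descent eventually lands in the nontrivial-path or $C'=C$ base cases. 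Second, you make the conformality of $C''=A+e$ explicit via the parity observation that, for any perfect matching of $C'$, the chord-ends $p,q$ are matched into a common arc and the interior of the other arc is self-matched — the paper relies on the equivalent fact but states it more tersely (``exactly one of the two paths, say $P_1$, is also $M$-conformal''). Your flagged ``main obstacle'' is precisely this step and you resolve it correctly; it is also worth noting that $\Abs{\E{A'}}\geq 3$ is forced because $A'$ has odd length and cannot equal the non-$C'$-edge $e$, so $\Abs{\V{C''}}\leq\Abs{\V{C'}}-2$ and the induction strictly decreases. Finally, the bookkeeping about edges of $C$ that may have been forgotten in $B$ (work in $B+\E{C}$ and observe the final cross avoids those edges) is something the paper's own proof handles with the same degree of informality — the paper forms cycles such as $P_1+xy$ and invokes \cref{lemma:extendingcrosses} without dwelling on whether $xy\in\E{B}$ — so you are, if anything, slightly more careful than the original on this point.
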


\begin{proof}
	We divide this proof into three cases.
	\begin{enumerate}
		\item[\textbf{1:}] $\Abs{\V{C}\cap\V{C'}}=2$ and the vertices in $\V{C}\cap\V{C'}$ are adjacent on $C$.
		\item[\textbf{2:}] $\Abs{\V{C}\cap\V{C'}}=3$.
		\item[\textbf{3:}] $\Abs{\V{C}\cap\V{C'}}=4$.
	\end{enumerate}
	
	\textbf{Case 1:} $\Abs{\V{C}\cap\V{C'}}=2$ and the vertices in $\V{C}\cap\V{C'}$ are adjacent on $C$.
	
	In this case, let $x$ and $y$ be the two adjacent vertices of $C$ belong to $C'$.
	In case $xy\in\E{C'}$, we are done immediately by \cref{lemma:easycrossesovercrossingcycles}.
	If $xy\notin\E{C'}$, then $x$ and $y$ divide $C'$ into two paths of odd length, say $P_1$ and $P_2$, both with endpoints $x$ and $y$.
	If $M$ is a perfect matching of $B$ such that $C'$ is $M$-conformal, then exactly one of the two paths, say $P_1$ is also $M$-conformal and thus $P_1+xy$ is an $M$-conformal cycle as well.
	By \cref{lemma:easycrossesovercrossingcycles} $P_1+xy$ has a strong matching cross and thus, by \cref{lemma:extendingcrosses}, $C'$ must have a matching cross in $B$.
	
	\textbf{Case 2:} $\Abs{\V{C}\cap\V{C'}}=3$.
	
	In case $C\cap C'$ is a subpath of $C'$, we are done immediately by \cref{lemma:easycrossesovercrossingcycles}.
	Hence we may assume that this is not the case.
	Next, suppose $C'$ contains exactly one edge of $C$ and there is $xy\in\E{C}$ such that $x,y\in\V{C'}$, but $xy\notin\E{C'}$.
	Let $z$ be the remaining vertex of $C$ on $C'$, then $x$ and $y$ separate $C'$ into two paths, where one of them, say $P$, does not contain $z$.
	We may choose a perfect matching $M$ of $B$ such that $P$ is internally $M$-conformal.
	Then $K\coloneqq C'-P+xy$ is also an $M$-conformal cycle, and by our assumption, $K\cap C$ is a subpath of $C$.
	Hence we may apply \cref{lemma:easycrossesovercrossingcycles} together with \cref{lemma:extendingcrosses} to obtain a matching cross over $C'$.
	At last assume that $C'$ does not contain an edge of $C$.
	If we call the vertices of $C$ on $C'$ $x$, $y$, and $z$ again such that $x$ and $z$ belong to the same colour class, we again find the path $P$ avoiding $z$ but connecting $x$ and $y$ as before.
	But we also find a path $Q\subseteq C'$ that connects $z$ and $y$ and avoids $x$.
	By choosing a perfect matching $M$ of $B$ such that $C'$ is $M$-conformal and $Q$ is internally $M$-conformal, we have found a perfect matching for which $K'\coloneqq C'-Q+yz$ is an $M$-conformal cycle.
	For this cycle, we find a matching cross as discussed above, and by applying \cref{lemma:extendingcrosses} again, we obtain a matching cross for $C'$ as well.
	
	\textbf{Case 3:} $\Abs{\V{C}\cap\V{C'}}=4$.
	
	Let $C=\Brace{a_1,b_1,a_2,b_2}$.
	If the vertices of $C$ appear on $C'$ in the same order as they do on $C$, we can use \cref{lemma:badcrosses} to find a strong matching cross over $C$ whose paths are internally disjoint from $B_1$.
	Hence we have found a strong matching cross over $C'$ in $B$.
	
	Hence the vertices of $C$ do not appear on $C'$ in the order listed.
	The only way this is possible is, if they appear on $C'$ in the order $a_1$, $a_2$, $b_1$, $b_2$, or $a_1$, $a_2$, $b_2$, $b_1$.
	In both cases we can use \cref{lemma:pathsintrisums} to obtain a perfect matching $M$ of $B$ and internally $M$-conformal paths $P_1$ and $P_2$ which are internally disjoint from $B_1$ such that $P_1$ connects $a_1$ and $b_1$ while $P_2$ connects $a_2$ and $b_2$ in case the order of appearance is $a_1$, $a_2$, $b_1$, $b_2$.
	Otherwise, $P_1$ and $P_2$ may be chosen such that $P_1$ connects $a_1$ and $b_2$ while $P_2$ connects $a_2$ and $b_1$.
	Either way, the two paths form a strong matching cross over $C'$ in $B$.
\end{proof}

Let $B$ be a $K_{3,3}$-free brace, $C$ a $4$-cycle in $B$ such that $B-C$ is not connected, and $C'$ be any conformal cycle in $B$.
Suppose there is a matching cross over $C'$ in $B$ such that at least one of the paths of this cross uses $C$ and extends into another component of $B-C$.
Let $B'$ be a brace with $C'\subseteq B'$ such that $B$ is made from a set of $K_{3,3}$-free braces including $B'$ via a $4$-cycle sum at $C$.
Then some information on the matching cross over $C'$ in $B$ should also exist in $B'$.
Our goal is to use this information to show that even $B'$ cannot be planar while $C'$ bounds a face.

Let $B$ be a $K_{3,3}$-free brace, $C$ a $4$-cycle in $B$, $M$ a perfect matching and $C'$ a conformal cycle in $B$.
A tuple $\Brace{P_1,P_2,Q,M}$ is a \emph{diffuse $C'$-$M$-precross through $C$} if $P_1$, $P_2$ and $Q$ are pairwise disjoint, internally disjoint from $C'$, all are $M$-alternating paths\footnote{In particular, $P_1$ and $P_2$ are allowed to be single vertices of $C'$.}, and for each $i\in[1,2]$, $P_i$ is a $\V{C'}$-$\V{C}$-path such that the endpoints of the $P_i$ on $C'$ belong to different components of $C'-\V{Q}$.
Additionally, we require among the endpoints of the $P_i$ on $C$ at least one of each colour class to be covered by an edge of $\E{C}\cap M$.
A diffuse $C'$-$M$-precross through $C$ is \emph{daring} if the endpoints of the $P_i$ on $C$ belong to the same colour class.

\begin{lemma}\label{lemma:difuseprecrossesintrisums}
	Let $B$ be a $K_{3,3}$-free brace, $\ell\geq 3$, $B_1,\dots,B_{\ell}$ braces such that $B$ is a maximal $4$-cycle sum of $B_1,\dots,B_{\ell}$ at the $4$-cycle $C$ and let $C'$ be a conformal cycle in $B_1$ that also exists in $B$.
	If there is a diffuse $C'$-$M_1$-precross $H$ through $C$ in $B_1$, for some perfect matching $M_1$ of $B_1$, which is not daring, then there are matching crosses over $C'$ in $B_1$ and $B$.
\end{lemma}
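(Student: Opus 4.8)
The plan is to glue the two legs $P_1$ and $P_2$ of the precross together through $C$ into a single chord of $C'$, and then to recognise that this chord, together with $Q$, is a matching cross over $C'$. First I would set up notation and record why \emph{not daring} is exactly what makes this possible. Let $a$ be the endpoint of $P_1$ on $C$ and $\beta$ the endpoint of $P_2$ on $C$; since $H$ is not daring these lie in different colour classes, so after possibly swapping $P_1$ and $P_2$ we may take $a\in V_1$, $\beta\in V_2$. As $C$ is a four-cycle it is $K_{2,2}$ on its vertex set, so $a$ and $\beta$ are adjacent and $a\beta\in\E{B_1}$. Because $P_i$ is a $\V{C'}$-$\V{C}$-path it meets $C$ only at $a$ resp. $\beta$, so $P_1,P_2$ use no edge of $C$; and since $a$ is the unique $V_1$- and $\beta$ the unique $V_2$-endpoint of $P_1,P_2$ on $C$, the "covering" clause of a diffuse precross forces both $a$ and $\beta$ to be covered by an edge of $\E{C}\cap M_1$, whence the last edges of $P_1$ at $a$ and of $P_2$ at $\beta$ are both non-matching. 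Finally, as $Q$ is disjoint from $P_1\cup P_2$ it avoids $a$ and $\beta$, so the only edge of $C$ that $Q$ could possibly contain is $a'\beta'$, where $\Set{a',\beta'}=\V{C}\setminus\Set{a,\beta}$.

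To build the cross in $B_1$ I would distinguish two cases. If $a\beta\in M_1$, set $R:=P_1+a\beta+P_2$; by the two observations above $R$ strictly alternates in and out of $M_1$ away from its ends, so $R$ is an $M_1$-alternating $s_1$-$s_2$-path, where $s_i$ is the endpoint of $P_i$ on $C'$. If $a\beta\notin M_1$, then the edges of $\E{C}\cap M_1$ covering $a$ and $\beta$ are distinct, so $\E{C}\cap M_1=\Set{a\beta',a'\beta}$ is a perfect matching of $C$; hence $C$ is $M_1$-conformal and $N:=M_1\Delta\E{C}$ is again a perfect matching of $B_1$, now with $a\beta\in N$. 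The paths $P_1,P_2,Q$ stay $N$-alternating — they contain no edge of $C$, except that $Q$ might be the single edge $a'\beta'$, which remains alternating — and $R:=P_1+a\beta+P_2$ is $N$-alternating. In both cases, writing $q_1,q_2$ for the endpoints of $Q$ on $C'$, the vertices $q_1,s_1,q_2,s_2$ occur in this cyclic order on $C'$, since $\V{Q}\cap\V{C'}=\Set{q_1,q_2}$ and $s_1,s_2$ lie in different components of $C'-\Set{q_1,q_2}$; moreover $Q$ and $R$ are vertex disjoint, are otherwise disjoint from $C'$ (we may assume $a,\beta\notin\V{C'}$, the remaining degenerate cases being similar or easier), and are alternating for one common perfect matching. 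Hence $Q$ and $R$ form a matching cross over $C'$ in $B_1$.

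To pass to $B$, I would note that the only edges of $C$ appearing on $Q\cup R$ are $a\beta$ and, in the degenerate case, $a'\beta'$. If these edges survive the $4$-cycle sum producing $B$, then $Q$ and $R$ already lie in $B$, and extending the chosen matching by a perfect matching of each $B_j-\V{C}$, $j\in[2,\ell]$ — which exist since $C$ is conformal in $B_j$ — gives a perfect matching of $B$ witnessing the same cross. If one of these $C$-edges was forgotten, I would reroute through another summand, which is possible because $\ell\geq 3$: by \cref{lemma:pathsintrisums}, applied with the appropriate choice of corners, there is a perfect matching $M'$ of $B$ with $M_1\setminus\E{C}\subseteq M'$ together with an internally $M'$-conformal $a$-$\beta$-path in $B-\V{B_1-\V{C}}$ whose first and last edges are the $M'$-edges covering $a$ and $\beta$; substituting this path for the edge $a\beta$ in $R$ (and, if needed, rerouting the chord $a'\beta'$ between $a'$ and $\beta'$ in the same way) again yields a matching cross over $C'$, now in $B$.

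I expect the main obstacle to be the middle step: showing $R$ is alternating for a single perfect matching of $B_1$. The subtlety is that the way $M_1$ meets $C$ may compel the switch to $N=M_1\Delta\E{C}$, after which one must verify that $P_1$, $P_2$ and $Q$ are still alternating — which is precisely where it is crucial that $Q$ can meet $C$ only in $\Set{a',\beta'}$ and, in the one bad configuration, collapses to the single edge $a'\beta'$. A secondary but routine point is matching up the alternation at $a$ and $\beta$ between $P_1,P_2$ and the replacement path furnished by \cref{lemma:pathsintrisums}.
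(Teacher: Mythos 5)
Your proof is correct and follows essentially the same route as the paper's: both rest on the observation that the not-daring condition places the $C$-ends of $P_1,P_2$ in different colour classes, hence makes them adjacent on $C$, and both invoke \cref{lemma:pathsintrisums} to bridge across the $4$-cycle sum; the only real difference is direction, as the paper first builds the $B$-cross via \cref{lemma:pathsintrisums} and then contracts the bridging path to the single edge $a\beta$ to obtain the $B_1$-cross, whereas you build the $B_1$-cross directly using $a\beta\in\E{B_1}$ (switching $M_1$ along $C$ when $a\beta\notin M_1$) and then expand $a\beta$ into a bridge to obtain the $B$-cross. One small terminological slip worth fixing: the $a$-$\beta$-path whose first and last edges are the $M'$-edges covering $a$ and $\beta$ is $M'$-conformal, not \emph{internally} $M'$-conformal.
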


\begin{proof}
	Let $H=\Brace{P_1,P_2,Q,M_1}$.
	Since $H$ is not daring, we may apply \cref{lemma:pathsintrisums} to find a perfect matching $M$ of $B$ with $M_1\setminus\E{C}\subseteq M$ together with a path $R$ in $B_j$ for some $j\in[2,\ell]$ such that $P_1RP_2^{-1}$ is $M$-alternating and disjoint from $Q$.
	Note that such a matching exists in particular because either $C$ is $M$-conformal, or the edge connecting the endpoints of the $P_i$ on $C$ must belong to $M$.
	Hence in this case we have found our matching cross over $C'$ in $B$.
	Since $R$ is a subpath of an $M$-alternating path, it itself is $M$-alternating.
	Moreover, with $H$ not being daring, the endpoints of $R$ belong to different colour classes, and thus $R$ is either $M$-conformal or internally $M$-conformal.
	Either way, the endpoints, let us call them $x$ and $y$, are adjacent on $C$ and there exists a perfect matching $M'$ of $B_1$ such that $\E{B_1-\V{C}}\cap M\subseteq M'$ and $xy\in M'$ if and only if $R$ is $M$-conformal.
	Hence $P_1xyP_2^{-1}$ is an $M'$-alternating path which forms, together with $Q$, a matching cross over $C'$ in $B_1$.
\end{proof}

Hence diffuse precrosses can always be extended to actual crosses if they are not daring.
Next we inspect daring precrosses more closely.

Let $B$ be a $K_{3,3}$-free brace, $C$ a $4$-cycle in $B$, $M$ a perfect matching and $C'$ an $M$-conformal cycle in $B$.
A diffuse $C'$-$M$-precross through $C$ $\Brace{P_1,P_2,Q,M}$ is \emph{successful} if it is daring, and either $P_1$ is internally $M$-conformal while $P_2$ is $M$-conformal or $P_1$ and $P_2$ are both of even length, and the endpoint of $P_1$ on $C'$ is covered by an edge of $\E{P_1}\cap M$ if and only if the endpoint of $P_2$ on $C'$ is \textbf{not} covered by an edge of $\E{P_2}\cap M$.

\begin{lemma}\label{lemma:badcrossesanddisjointcycles}
	Let $B$ be a $K_{3,3}$-free brace, $\ell\geq 3$, $B_1,\dots,B_{\ell}$ braces such that $B$ is a maximal $4$-cycle sum of $B_1,\dots,B_{\ell}$ at the $4$-cycle $C$ and let $C'$ be a conformal cycle in $B_1$ that also exists in $B$ such that there is $i\in[1,2]$ with $\V{C}\cap\V{C'}\subseteq V_i$.
	Then there is a matching cross over $C'$ in $B$ if and only if one of the following is true
	\begin{enumerate}
		\item there is a matching cross over $C'$ in $B_1$,
		\item there is a perfect matching $M_1$ of $B_1$ such that there exists a diffuse $C'$-$M_1$-precross through $C$, which is successful.
	\end{enumerate}
\end{lemma}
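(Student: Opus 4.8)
The plan is to prove the two implications separately; the substance is entirely in the ``only if'' direction.

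For ``if'', the first possibility is immediate: if $M_1$ is a perfect matching of $B_1$ witnessing a matching cross over $C'$ in $B_1$, then for each $j\in[2,\ell]$ pick a perfect matching $M_j$ of $B_j-\V{C}$ (which exists since $B_j$ is a brace, hence $C_4$ or $2$-extendible by \cref{thm:braces,thm:bipartiteextendibility}); the sets $M_1,\dots,M_\ell$ are pairwise disjoint and cover $\V{B}$, so $M:=\bigcup_j M_j$ is a perfect matching of $B$ with $M\cap\E{B_1}=M_1$, and the two paths of the cross, being contained in $B_1$, are $M$-alternating in $B$. For the second possibility, let $(P_1,P_2,Q,M_1)$ be a successful diffuse $C'$-$M_1$-precross through $C$; being daring, the endpoints of $P_1,P_2$ on $C$ lie in one colour class, say they are $a_1,a_2$. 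Using \cref{lemma:pathsintrisums} (for a suitable choice of $x\in\{a_1,a_2\}$, $y\in\{b_1,b_2\}$) together with one edge of $C$ — or directly from the conformal cube bisubdivisions of the $B_j$ given by \cref{lemma:cubeorK33} — one obtains a perfect matching $M$ of $B$ with $M_1\setminus\E{C}\subseteq M$ and an $M$-alternating $a_1$-$a_2$-path $R$ whose internal vertices lie in some $B_j-\V{C}$ and which is disjoint from $Q$ (disjointness holds because $Q$ meets $\V{C}$ only in the two vertices not used by $P_1$ and $P_2$). The two cases in the definition of ``successful'' are precisely the two parities of the behaviour of $M$ on the edges of $C$ at $a_1,a_2$ under which $P_1+R+P_2^{-1}$ is again an $M$-alternating path; together with $Q$ this is a matching cross over $C'$ in $B$, and the degenerate case where $P_1$ or $P_2$ is a single vertex of $\V{C}\cap\V{C'}$ is handled the same way.

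For ``only if'', fix a matching cross $(L,R,M)$ over $C'$ in $B$ minimising $\Abs{(\E{L}\cup\E{R})\setminus\E{B_1}}$. Since the parts $B_1,B_2-\V{C},\dots,B_\ell-\V{C}$ pairwise intersect only in $\V{C}$ and every peg of the cross lies on $C'\subseteq B_1$, the portions of $L$ and $R$ outside $B_1$ decompose into \emph{excursions}: subpaths whose internal vertices lie in $\V{B}\setminus\V{B_1}$ and whose two endpoints lie on $\V{C}$, each excursion being contained in a single $B_j-\V{C}$. If there is no excursion then $L,R\subseteq B_1$, and extending $M\cap(\E{L}\cup\E{R})$ to a perfect matching $M_1$ of $B_1$ by adjoining edges of $C$ (recall $C\subseteq B_1$ is a conformal $4$-cycle there) turns $L,R$ into a matching cross over $C'$ in $B_1$, that is, possibility (i). If there is exactly one excursion, say $L=L_1+E+L_2$ with $E$ an excursion, then $(L_1,L_2^{-1},R,M_1)$, with $M_1$ the corresponding adjusted perfect matching of $B_1$, is a diffuse $C'$-$M_1$-precross through $C$: the cyclic order $s_1,s_2,t_1,t_2$ of the pegs yields the separation condition on $C'$, and a final switch of $M_1$ along an $M_1$-alternating cycle arranges the condition on $\E{C}\cap M_1$. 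If this precross is not daring, \cref{lemma:difuseprecrossesintrisums} already produces a matching cross over $C'$ in $B_1$. If it is daring, one distinguishes by the type of $L$: when $L$ has type $3$, reading off the type-$3$ behaviour of $E$ at its two equally coloured endpoints forces the precross to be successful, giving possibility (ii); when $L$ has type $1$ or $2$, the excursion can be shortcut through one of the two oppositely coloured vertices of $C$ inside $B_1$ (its edges being present in $B_1$), or, if that vertex is blocked by $R$, re-routed by \cref{lemma:untangletwopaths} inside $B_1$, again yielding a matching cross over $C'$ in $B_1$.

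It remains to dispose of the case where $L$ and $R$ together contain at least two excursions; I would use \cref{lemma:pathsintrisums} and the Bipartite Untangling Lemma \cref{lemma:untangletwopaths} inside the relevant $B_j$ to merge or delete excursions: two excursions whose port-pairs are not monochromatic can be replaced by a single through-path using only two ports, lowering $\Abs{(\E{L}\cup\E{R})\setminus\E{B_1}}$ and contradicting minimality, so one is reduced to the one-excursion case unless there are exactly two excursions joining the monochromatic pairs $\{a_1,a_2\}$ and $\{b_1,b_2\}$ — a configuration from whose four $B_1$-pieces one extracts, after the usual matching switches, a successful diffuse precross. The main obstacle is exactly this daring analysis: checking that in every sub-case one genuinely lands in (i) or (ii). This rests on a careful bookkeeping of the types of the extracted subpaths of $L$ and $R$ against the behaviour of the perfect matching on the four edges of $C$, and on performing the alternating-cycle switches that normalise that behaviour without destroying either the disjointness of the two paths or the cyclic order of the pegs on $C'$.
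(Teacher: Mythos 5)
The most important gap is in your ``if'' direction for possibility (i). You take the cross $(L,R,M_1)$ in $B_1$, pick perfect matchings $M_j$ of $B_j-\V{C}$ for $j\geq 2$, and declare $M:=\bigcup_j M_j$ a perfect matching of $B$ with the same two alternating paths. This silently assumes that neither $M_1$ nor the paths $L,R$ use an edge of the $4$-cycle $C$ that is forgotten in the $4$-cycle sum (recall from Definition~\ref{def:cyclesum} that $B$ is obtained by identifying the two copies of $C$ \emph{and possibly forgetting some of its edges}). If $M_1$ or $L$ contains such an edge, your $M$ is not a matching of $B$, or $L$ is not a path of $B$ at all. Handling exactly this case is the bulk of the paper's forward direction: the paper isolates the first and last vertices $x,y$ of $C$ on $L$ and performs a case analysis on whether $x,y$ are adjacent or equally coloured, feeding into non-daring precrosses (and \cref{lemma:difuseprecrossesintrisums}) or successful ones. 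Your treatment of possibility (ii) is also a little loose: \cref{lemma:pathsintrisums} produces disjoint paths between \emph{oppositely} coloured vertices of $C$, whereas a daring precross needs an $M$-alternating path between two vertices of $C$ of the \emph{same} colour, contained in some $B_j$; one can indeed extract such a path from the conformal cube bisubdivision guaranteed by \cref{lemma:cubeorK33}, but ``together with one edge of $C$'' does not achieve this (the edge changes colour), and the required type of the $a_1$-$a_2$ path must be matched against the two parity alternatives in the definition of ``successful''.

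Your ``only if'' direction is organized differently from the paper's: you run an extremal argument (minimize $\Abs{(\E{L}\cup\E{R})\setminus\E{B_1}}$) and decompose the excursions through $C$, where the paper instead enumerates directly how many vertices of $C$ each of $L$ and $R$ uses (zero, one, two, or more) and rewires inside $B_1$ in each case, detecting when a conformal $K_{3,3}$ bisubdivision would be forced (contradicting $K_{3,3}$-freeness). Your organization is a legitimate alternative and would likely yield a cleaner write-up, but as written it is incomplete: you gesture at merging or deleting excursions ``using \cref{lemma:pathsintrisums} and the Bipartite Untangling Lemma'' but do not show that two excursions with non-monochromatic port-pairs can actually be collapsed inside $B_1$ while preserving a matching cross over $C'$, nor do you treat the case where both $L$ and $R$ carry excursions (the configuration that in the paper leads to a conformal cross over $C$ and a contradiction with $K_{3,3}$-freeness via \cref{lemma:goodcrossesmeanK33}). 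You acknowledge this yourself; the ``careful bookkeeping'' you defer is, in effect, the content of the lemma, so the proposal as it stands does not prove the statement.
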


\begin{proof}
	Let us first prove that in case one of our conditions holds true we find a matching cross over $C'$ in $B$.
	First let $L$ and $R$ be the two paths of a matching cross over $C'$ in $B_1$ and let $M$ be a perfect matching such that $L$ and $R$ are $M$-alternating.
	If $L$ and $R$ also exist in $B$, we are done.
	So let us assume exactly one of them, say $L$, contains an edge of $C$ which does not exist in $B$, note that this is the only possibility why $L$ is not an alternating path in $B$.
	Let us traverse along $L$ starting in one of its endpoints, and let $x$ be the first vertex of $C$ we encounter, while $y$ is the last vertex of $C$ on $L$.
	In case $xy\in\E{C}$, we either have $xy\in M$, or $xy\notin M$, but $xLy$ is $M$-conformal, hence $xLyx$ is an $M$-conformal cycle, and we can adjust $M$ to include the edge $xy$.
	Then, after possibly adjusting the matching as above, $\Brace{Lx,yL,R,M}$ is a diffuse $C'$-$M$-precross through $C$ which is not daring.
	Hence we are done by \cref{lemma:difuseprecrossesintrisums}.
	So we may assume $x$ and $y$ not to be adjacent on $C$ and thus they belong to the same colour class.
	In this case, $xLy$ is an $M$-alternating path of even length and thus can neither be internally $M$-conformal, nor $M$-conformal.
	However, since $L$ contains an edge of $C$, there must be a vertex $z\in\V{C}\setminus\Set{x,y}$ such that one of the edges $xz$, $yz$ belongs to $L$.
	Without loss of generality let us assume $xz\in\E{L}$.
	In case $xz\in M$, $\Brace{Lx,yL,R,M}$ is a successful diffuse $C'$-$M$-precross through $C$ and we are done by \cref{lemma:difuseprecrossesintrisums}.
	So assume $xz\notin M$.
	Then $zLy$ must be an $M$-conformal path, and thus $zLyz$ is an $M$-conformal cycle.
	Hence we may adjust $M$ such that $yz\in M$ and again $\Brace{Lx,yL,R,M}$ is a successful diffuse $C'$-$M$-precross through $C$ and we are done by \cref{lemma:difuseprecrossesintrisums}.
	Hence we may now assume $L$ and $R$ to contain an edge of $C$ each.
	Since $C$ only has four vertices, this means there is a unique edge $x_Ly_L\in\E{L}\cap\E{C}$ and a unique edge $x_Ry_R\in\E{R}\cap\E{C}$.
	Now \cref{lemma:pathsintrisums} provides us with the two paths in $B_2$ that are necessary to be combined with $Lx_L$, $y_LL$ and $Rx_R$, $y_RR$ respectively in order to obtain a strong matching cross over $C'$ in $B$.
	
	So now we have to show that the existence of a matching cross in $B$ implies the existence of one of the two structures above.
	Let $R$ and $L$ be two $M$-alternating paths for some perfect matching $M$ that form a matching cross over $C'$.
	If neither $L$ nor $R$ contains a vertex of $C$, $L$ and $R$ must be completely contained in $B_1$ and thus form a strong matching cross over $C'$ in $B_1$ as well.
	So let us assume that exactly one of the two paths contains a vertex of $C$ and further assume, without loss of generality, that $L$ is that path.
	In case $L$ contains exactly one vertex of $C$, it cannot contain any vertex of $B_i-\V{C}$ for any $i\in[2,\ell]$ since $C$ separates the $B_j$ from each other.
	Hence in this case, $L$ and $R$ again also exist in $B_1$.
	So assume that $L$ contains exactly two vertices of $C$, say $v$ and $w$.
	Then either $vw\in\E{C}$, or $v$ and $w$ belong to the same colour class.
	In the first case, either $vw\in\E{L}$, or $vLwv$ is an $M$-conformal cycle, and we may adjust $M$ such that $LvwL$ is $M$-alternating.
	In any case, after possibly adjusting $M$, $LvwL$ and $R$ form a matching cross over $C'$ in $B_1$ for some perfect matching $M_1$ where $vw\in M_1$ if and only if $vLw$ is $M$-conformal.
	In the second case, $vLw$ must be of even length.
	In case exactly one of $Lv$ and $wL$ is of even length, there must be a vertex $u\in\V{C}\setminus\Set{v,w}$ such that $u$ is not covered by an edge of $M\cap\E{B_1}$, or $vLw=vuw$.
	Then we may choose a perfect matching $M_1$ of $B_1$ such that $M_1\setminus\E{C}\subseteq M$ and  $LvuwL$ is an $M_1$ alternating path of the same type as $L$ in $B_1$.
	Hence there is a matching cross over $C'$ in $B_1$.
	Thus we may assume $Lv$ and $wL$ to either both be of odd or both be of even length.
	In total, this means that $L$ is of even length.
	Hence, by choosing $M_1$ as before, $\Brace{Lv,wL,R,M_1}$ is a successful diffuse $C'$-$M_1$-precross over $C'$ in $B_1$.
	In case $L$ contains more than two vertices of $C$, let $v$ be the first vertex of $C$ on $L$ and $w$ be the last one.
	Let $Q$ be a shortest $v$-$w$-path on $C$, then $Q$ and $vLw$ are of the same parity, and we can choose a perfect matching $M_1$ such that both $R$ and $LvQwL$ are $M_1$ alternating in $B_1$, thus forming a matching cross over $C'$ in $B_1$.
	
	With this, we may now assume that both $L$ and $R$ contain vertices of $C$.
	If $R$ contains exactly one vertex of $C$ and the edge of $M$ covering this vertex belongs to $B_1$, this case can be handled the same way as the cases where $R$ does not contain any vertex of $C$.
	If, on the other hand, $R$ contains exactly one vertex of $C$ and the edge of $M$ covering this vertex does not belong to $B_1$ this means this vertex of $C$ is an endpoint of $R$ and thus belongs to $C'$.
	Let us assume that both $L$ and $R$ contain exactly one vertex of $C$ each and the edges of $M$ covering these vertices do not belong to $B_1$.
	This means that these endpoints of $L$ and $R$ on $C$ must belong to the same colour class by our assumption and thus no edge of $M$ that covers a vertex of $C$ can belong to $B_1$.
	But then we may choose a perfect matching $M_1$ of $B_1$ such that $M\cap\E{B_1}\subseteq M_1$ and $C$ is $M_1$-conformal, and then $L$ and $R$ sill are $M_1$-alternating paths in $B_1$.
	Hence we have found a matching cross over $C'$ in $B_1$.
	Now assume that $L$ contains more than one vertex of $C$, while $R$ still contains exactly one vertex, say $u$, of $C$ for which the edge of $M$ covering it does not belong to $B_1$.
	Let $x$ be the first vertex of $L$ on $C$ and $y$ be the last vertex.
	Then $xLy$ is of even length if and only if $x$ and $y$ belong to the same colour class.
	Suppose this is the case, then for one $z\in\Set{x,y}$ the edge of $M$ covering $z$ cannot belong to $B_1$ since $u$ cannot belong to the same colour class as $x$ and $y$.
	Hence there exists a perfect matching $M_1$ of $B_1$ with $M\cap\E{B_1}\subseteq M_1$ and $zu\in M_1$ and therefore $\Brace{xL,yL,R,M_1}$ is a successful $C'$-$M_1$-precross through $C$ in $B_1$.
	Otherwise, $x$ and $y$ belong to different colour classes and thus $xLy$ is either $M$-conformal or internally $M$-conformal.
	In the first case, $xLyx$ is an $M$-conformal cycle, and we can adjust $M$ such that $xy\in M$, in the second case, $LxyL$ already is an $M$-alternating path.
	Hence there is a perfect matching $M_1$ of $B_1$ such that $R$ and $LxyL$ form a matching cross over $C'$ in $B_1$. 
	Thus we may assume both $L$ and $R$ to contain exactly two vertices of $C$ each.
	Let $v_X$, $w_X$ be the two vertices of $V{C}\cap\V{X}$ for each $X\in\Set{L,R}$.
	In case $v_L$ and $w_L$ are adjacent on $C$, then so are $v_R$ and $w_R$, and we can choose a perfect matching $M_1$ such that $v_Xw_X\in M_1$ if and only if $v_XXw_X$ is an $M$-conformal path and $M_1\setminus\E{C}\subseteq M$.
	Then $Lv_Lw_LL$ and $Rv_Rw_RR$ form a matching cross over $C'$ in $B_1$.
	In case $v_L$ and $w_L$ belong to the same colour class, then so must $v_R$ and $w_R$.
	In this case, $v_LLw_L$ and $v_RRw_R$ form a matching cross over $C$ in $B$ and since these paths are alternating, each of them must contain an edge of $M$.
	Moreover, we can change $M$ to a perfect matching $M'$ such that $M'$ coincides with $M$ on $\E{B_i}\setminus\E{C}$ for all $i\in[2,\ell]$, and $C+v_LLw_L+v_RRw_R$ is $M'$-conformal.
	Indeed this means that $v_LLw_L$ and $v_RRw_R$ form a conformal cross over $C$ in $B$ and thus, by \cref{lemma:goodcrossesmeanK33}  $B$ cannot be $K_{3,3}$-free which contradicts our assumption.
\end{proof}

A graph that plays a huge role in non-planar $K_{3,3}$-free braces that are \textbf{not} the Heawood graph is the \emph{Rotunda}.
The \emph{Rotunda} is the graph obtained by performing the $4$-cycle sum operation on three cubes at a single common $4$-cycle $C$ and then forgetting all edges of $C$.
An important observation is the non-planarity of the Rotunda.

\begin{figure}[h!]
	\centering
	\begin{tikzpicture}[scale=1]
		\pgfdeclarelayer{background}
		\pgfdeclarelayer{foreground}
		\pgfsetlayers{background,main,foreground}
		
		\node[v:mainempty] () at (-0.5,2.5){};
		\node[v:main] () at (0.6,2.5){};
		\node[v:main] () at (-1.2,2){};
		\node[v:mainempty] () at (0,2){};
		
		\node[v:mainempty] () at (-0.5,0.3){};
		\node[v:main] () at (0.5,0.3){};
		\node[v:main] () at (-1,-0.3){};
		\node[v:mainempty] () at (0,-0.3){};
		
		\node[v:mainempty] () at (-0.5,-2){};
		\node[v:main] () at (0.6,-2){};
		\node[v:main] () at (-1.2,-2.5){};
		\node[v:mainempty] () at (-0,-2.5){};
		
		\node[v:main] () at (-0.9,0.8){};
		\node[v:mainempty] () at (1.7,0.8){};
		\node[v:mainempty] () at (-2.25,-0.8){};
		\node[v:main] () at (0.4,-0.8){};

		\begin{pgfonlayer}{background}
			\draw[e:main] (-0.5,2.5) -- (0.6,2.5) -- (0,2) -- (-1.2,2) -- (-0.5,2.5);    
			\draw[e:main] (-0.5,0.3) -- (0.5,0.3) -- (0,-0.3) -- (-1,-0.3) -- (-0.5,0.3);
			\draw[e:main] (-0.5,-2) -- (0.6,-2) -- (0,-2.5) -- (-1.2,-2.5) -- (-0.5,-2);

			\draw[e:main] (-0.5,2.5) -- (-0.9,0.8);
			\draw[e:main] (0.6,2.5) -- (1.7,0.8);
			\draw[e:main] (-1.2,2) -- (-2.25,-0.8);
			\draw[e:main] (0,2) -- (0.4,-0.8);   
			
			\draw[e:main] (-0.9,0.8) -- (-0.5,0.3);
			\draw[e:main] (0.5,0.3) -- (1.7,0.8);
			\draw[e:main] (-1,-0.3) -- (-2.25,-0.8);
			\draw[e:main] (0,-0.3) -- (0.4,-0.8);
			
			\draw[e:main] (-0.9,0.8) -- (-0.5,-2);
			\draw[e:main] (1.7,0.8) -- (0.6,-2);
			\draw[e:main] (-2.25,-0.8) -- (-1.2,-2.5);
			\draw[e:main] (0.4,-0.8) -- (-0,-2.5);

		\end{pgfonlayer}

	\end{tikzpicture}
	\caption{The smallest bipartite and non-planar $K_{3,3}$-free brace that is not isomorphic to the Heawood graph: The Rotunda.}
	\label{fig:rotunda}
\end{figure}
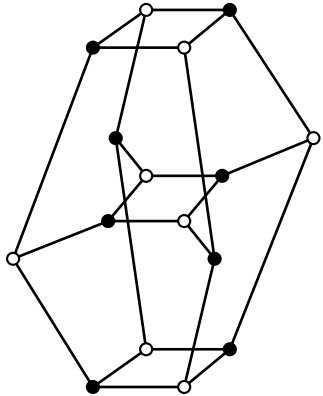

\begin{observation}\label{obs:rotundanonplanar}
	The Rotunda is \textbf{not} planar.
\end{observation}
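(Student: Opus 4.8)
The plan is to exhibit $K_{3,3}$ as a minor of the Rotunda. First I would unravel the definition: the Rotunda $R$ has vertex set consisting of the four vertices $c_1,c_2,c_3,c_4$ of the common $4$-cycle $C$, listed in this cyclic order, together with, for each of the three cubes, the four vertices of the face opposite to $C$. For $i\in\Set{1,2,3}$ these latter four vertices form a $4$-cycle $D^{(i)}=d^{(i)}_1d^{(i)}_2d^{(i)}_3d^{(i)}_4$, and the edges of $R$ are precisely the twelve edges of the three cycles $D^{(i)}$ together with the twelve ``spoke'' edges $c_jd^{(i)}_j$ (the cube edges joining a face to its opposite face), while the four edges of $C$ have been forgotten.

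Next I would contract, for each $i\in\Set{1,2,3}$, the connected $4$-cycle $D^{(i)}$ to a single vertex $v_i$. Since distinct cubes are disjoint off $C$ and the edges of $C$ have been forgotten, the resulting minor $H$ of $R$ has vertex set $\Set{c_1,c_2,c_3,c_4,v_1,v_2,v_3}$ and edge set exactly $\CondSet{c_jv_i}{i\in[1,3],\ j\in[1,4]}$; in other words $H\cong K_{3,4}$. As $K_{3,4}$ contains $K_{3,3}$ as a subgraph, and the class of planar graphs is closed under taking minors, it follows that $R$ is not planar.

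I do not expect a genuine obstacle here: the only point requiring care is the bookkeeping of the $4$-cycle-sum construction, namely that the three cubes meet pairwise exactly in $C$ and that inside each cube the face opposite to $C$ is vertex-disjoint from $C$ and joined to it by the four spokes, both facts being immediate from the structure of the cube and \cref{def:cyclesum}. One could instead exhibit an explicit bisubdivision of $K_{3,3}$ inside $R$, routing three pairwise disjoint paths through the three cycles $D^{(i)}$, but the minor argument above is shorter and I would prefer it.
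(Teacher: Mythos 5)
The paper states \cref{obs:rotundanonplanar} without a proof, relying only on the accompanying figure of the Rotunda. Your argument is correct and gives the natural justification: contracting each of the three outer $4$-cycles $D^{(i)}$ to a single vertex $v_i$ collapses the Rotunda onto $K_{3,4}$, with $\Set{c_1,c_2,c_3,c_4}$ on one side and $\Set{v_1,v_2,v_3}$ on the other, so $K_{3,3}$ is a minor of the Rotunda, and since planarity is closed under taking minors the Rotunda is non-planar. It is worth remarking that some such argument is genuinely needed: the Rotunda has $16$ vertices and $24$ edges, which does not exceed the bound $2\cdot 16 - 4 = 28$ for bipartite planar graphs, so an Euler-type edge count would not settle the question, and one really must exhibit the $K_{3,3}$ minor (or equivalently, since $K_{3,3}$ has maximum degree three, a $K_{3,3}$-subdivision, as you also sketch at the end).
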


From \cref{thm:trisums,lemma:cubeorK33} one can derive the following.

\begin{corollary}\label{cor:pfaffiantrisumshaverotunda}
	Let $B,B_1,\dots,B_{\ell}$, $\ell\geq 3$, be braces such that $B$ is $K_{3,3}$-free and a $4$-cycle sum of $B_1,\dots,B_{\ell}$ at a $4$-cycle $C$.
	Then $B$ contains a conformal bisubdivision of the Rotunda.
\end{corollary}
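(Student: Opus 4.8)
The plan is to locate, inside three of the $\ell\ge 3$ summands, conformal bisubdivisions of the cube that pairwise share exactly the $4$-cycle $C$ and are otherwise separated, and then to glue them along $C$ while deleting $\E{C}$. First I would pass from the given summands to \emph{slices} with connected $\V{C}$-complement. Since $B-\V{C}$ is the disjoint union of the graphs $B_j-\V{C}$, I pick three summands, say $B_1,B_2,B_3$, that are not equal to $C_4$, choose a connected component $K_i$ of $B_i-\V{C}$ for $i\in\{1,2,3\}$, and let $D_i$ be the subgraph of $B_i$ induced by $\V{K_i}\cup\V{C}$. Splitting $B_i$ along $C$ and invoking \cref{lemma:4cyclesumwithtwosummands} shows $D_i$ is a brace; moreover $D_i-\V{C}=K_i$ is connected, $C$ is conformal in $D_i$ (the $4$-cycle sum keeps $C$ conformal in each summand, and a perfect matching of $B_i-\V{C}$ restricts to one of $K_i$), and the sets $\V{D_i}\setminus\V{C}$ lie in pairwise distinct components of $B-\V{C}$. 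Then \cref{lemma:cubeorK33} applied to each $D_i$ yields a conformal bisubdivision of the cube \emph{or} of $K_{3,3}$, with $C$ as a subgraph.

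The crucial step is to exclude the $K_{3,3}$ alternative. If some $D_i$ contained a conformal bisubdivision of $K_{3,3}$ with $C$ as a subgraph, then by \cref{lemma:goodcrossesmeanK33} there would be a conformal cross over $C$ in $D_i$. Its two paths join diagonally opposite, hence equally coloured, vertices of the $4$-cycle $C$, so they have length at least two; being internally disjoint from $C$ they use no edge of $C$, hence they are paths of $B$, and the conformal subgraph $C+P_1+P_2$ contains all of $\V{C}$, so its complement in $B$ decomposes over the summands and has a perfect matching. Thus $C+P_1+P_2$ is conformal in $B$, giving a conformal cross over $C$ in $B$, and \cref{lemma:goodcrossesmeanK33} applied in $B$ forces a conformal bisubdivision of $K_{3,3}$ in $B$, contradicting that $B$ is $K_{3,3}$-free. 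So each $D_i$ contains a conformal bisubdivision $H_i$ of the cube with $C$ as a subgraph.

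It remains to glue and to check conformality. Since $\V{H_i}\cap\V{H_j}=\V{C}$ for $i\ne j$ and the only edges of $H_i$ joining two vertices of $\V{C}$ are those of $C$, the subgraph $R\coloneqq(H_1\cup H_2\cup H_3)-\E{C}$ of $B$ is precisely a bisubdivision of the $4$-cycle sum of three cubes at $C$ with every edge of $C$ forgotten, i.e.\ a bisubdivision of the Rotunda. For conformality of $R$ in $B$: the Rotunda is a $4$-cycle sum of three cubes, which are braces, and it is not isomorphic to $T_{10}$ (it has $16$ vertices, $T_{10}$ has $10$), so by \cref{lemma:trisums} the Rotunda is a brace and in particular has a perfect matching; a bisubdivision of a graph with a perfect matching again has one, so $R$ has a perfect matching $N$. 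Finally $\V{R}\supseteq\V{C}$, so $B-\V{R}$ is the disjoint union of the graphs $B_i-\V{H_i}$ for $i\in\{1,2,3\}$ and $B_j-\V{C}$ for the remaining summands, each of which has a perfect matching because $H_i$ is conformal in $D_i$ and $C$ is conformal in every summand; adjoining such a matching to $N$ produces a perfect matching of $B$ that restricts to $N$ on $R$, so $R$ is a conformal bisubdivision of the Rotunda in $B$. The one genuinely delicate point is the exclusion of the $K_{3,3}$ alternative; everything else is bookkeeping about how perfect matchings distribute across a $4$-cycle sum, and the key observation that makes the exclusion work is that a conformal cross over a $4$-cycle uses none of the cycle's edges.
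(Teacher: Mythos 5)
Your proof is correct and reconstructs what the paper's terse reference to \cref{thm:trisums} and \cref{lemma:cubeorK33} leaves to the reader: slice each of three chosen summands to a brace whose $\V{C}$-complement is connected, apply \cref{lemma:cubeorK33}, rule out the $K_{3,3}$ alternative using the $K_{3,3}$-freeness of $B$, and glue the three cube bisubdivisions after deleting $\E{C}$. The slicing via \cref{lemma:4cyclesumwithtwosummands} is exactly the move needed, since \cref{lemma:cubeorK33} requires a connected $\V{C}$-complement, and the conformality bookkeeping at the end is sound: $\V{R}\supseteq\V{C}$ so $B-\V{R}$ really does split over the summands into pieces that all have perfect matchings.

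Two remarks. First, the exclusion step is more roundabout than necessary: you invoke \cref{lemma:goodcrossesmeanK33} in both directions. A shorter route is to observe that a conformal $K_{3,3}$-bisubdivision $L\subseteq D_i$ with $C\subseteq L$ is already conformal in $B$: since $D_i$ is conformal in $B_i$, $L$ is conformal in $B_i$, and since $\V{C}\subseteq\V{L}$ the graph $B-\V{L}$ splits over the summands with each piece carrying a perfect matching, so $B$ itself would contain a conformal $K_{3,3}$-bisubdivision. Second, both your argument and this shorter one tacitly assume $\E{C}\subseteq\E{B}$. The $4$-cycle sum operation may forget edges of $C$, in which case neither $C+P_1+P_2$ nor $L$ is literally a subgraph of $B$ (their edges of $C$ live only in $B_i$), and the exclusion step does not immediately close; in particular \cref{lemma:goodcrossesmeanK33} cannot even be invoked over $C$ in $B$ if $C$ is not a cycle of $B$. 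This subtlety does not affect the final Rotunda bisubdivision $R$, which uses no edge of $C$, and the paper sweeps it under the rug as well (its applications of the corollary ensure $\E{C}\subseteq\E{B}$ by adding those edges back first), so it is not a fault of your write-up so much as of the corollary's bare statement; a fully self-contained proof should either record $\E{C}\subseteq\E{B}$ as a hypothesis or reroute the missing $C$-edges of $L$ through another summand before appealing to the $K_{3,3}$-freeness of $B$.
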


\begin{lemma}\label{lemma:bigintersectionscannotbereduced}
	Let $B,B_1,\dots,B_{\ell}$, $\ell\geq 3$, be braces such that $B$ is $K_{3,3}$-free and a maximal $4$-cycle sum of $B_1,\dots,B_{\ell}$ at the $4$-cycle $C$.
	Moreover, let $C'$ be a conformal cycle in $B_1$ that also exists in $B$.
	If $\V{C}\cap\V{C'}$ contains a vertex of each colour class, every $C'$-reduction of $B$ to some brace $H$ is \textbf{not} planar.
\end{lemma}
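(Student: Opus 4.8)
The plan is to prove, by induction on the length of a $C'$-reduction sequence $B=H_1,H_2,\dots,H_n=H$, the slightly stronger statement: every such $H$ is again a $4$-cycle sum of at least three braces at the $4$-cycle $C$ (with $C$ a conformal $4$-cycle of each summand), one summand contains $C'$, and $\V{C}\cap\V{C'}$ still contains a vertex of each colour class. Granting this, non-planarity of $H$ is immediate: $H$ is $K_{3,3}$-free (it inherits from $B$, via \cref{thm:trisums}, a construction out of planar braces by $4$-cycle sums, noting that $B$, having a $4$-cycle, is not the Heawood graph), so \cref{cor:pfaffiantrisumshaverotunda} produces a conformal bisubdivision of the Rotunda inside $H$; since a bisubdivision contains a subdivision and the Rotunda is non-planar (\cref{obs:rotundanonplanar}), $H$ is non-planar. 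The base case $n=1$ is exactly the hypothesis of the lemma.

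For the inductive step it suffices to check the invariant for a first-order $C'$-reduction $H_2$ of $B$ and then apply the induction hypothesis to the reduction of length $n-1$ from $H_2$ to $H$. So write $B$ as a trisum of $H_2,D_1,D_2$ at a $4$-cycle $K$ with $\V{C'}\cap\V{K}\subseteq V_i$ for some $i\in[1,2]$ and $C'\subseteq H_2$ (\cref{def:foreduction}), and let $B_1,\dots,B_{\ell}$ be the summands of $B$ at $C$, indexed so that $C'\subseteq B_1$. Note first $K\neq C$, since otherwise $\V{C}\cap\V{C'}=\V{K}\cap\V{C'}\subseteq V_i$ would be monochromatic. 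I would then establish two facts. \textbf{(a)} $\V{K}$ lies in a single summand $B_j$. Indeed, since the $B_m$ meet pairwise exactly in $\V{C}$ and $K$ is a connected $4$-cycle, if $\V{K}$ met $\V{B_p}\setminus\V{C}$ and $\V{B_q}\setminus\V{C}$ for $p\neq q$ then $K$ would consist of two vertices $c,c'\in\V{C}$ of one colour class together with one vertex $k_p\in\V{B_p}\setminus\V{C}$ and one $k_q\in\V{B_q}\setminus\V{C}$; the interior of any non-trivial removed part $D_t$ is a component of $B-\V{K}$, hence contained in $\V{B_m}\setminus\V{C}$ for a single $m$, so whichever of $k_p,k_q$ lies outside $\V{B_m}$ has at most the two neighbours $c,c'$ inside $D_t$ — contradicting that a non-trivial brace is $3$-connected (\cref{thm:braces,thm:extendibilitytoconnectivity}), hence has minimum degree at least three (and at least one of $D_1,D_2$ is non-trivial, as otherwise $H_2=B$). \textbf{(b)} No summand $B_m$ and no vertex of $\V{C}$ is removed; that is, $\V{C}\subseteq\V{H_2}$ and no $B_m$ is entirely removed. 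This is proved by a colour-tracking argument along the $4$-cycle $C\cong K_{2,2}$: the two vertices $u\in V_1$, $v\in V_2$ of $\V{C}\cap\V{C'}$ lie in $\V{C'}\subseteq\V{H_2}$; if some vertex $w$ of $\V{C}$, or some summand, lay in the interior of a removed part $D_t$, one chases the edges of $C$ incident with $w$ (respectively with $u,v$) to conclude that $u$ or $v$ would lie in $\V{D_t}\cap\V{H_2}=\V{K}$, forcing $\V{C'}\cap\V{K}$ to contain a vertex of both colour classes, contrary to $\V{C'}\cap\V{K}\subseteq V_i$.

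Combining (a) and (b): the reduction is localised to $B_j$, which decomposes as a trisum at $K$ whose part retained in $H_2$ is a brace $B_j'$ by \cref{lemma:trisums} (with $B_j\neq T_{10}$, as $B_j$ is $K_{3,3}$-free); all the other summands and all of $\V{C}$ survive; and so $H_2$ is the $4$-cycle sum at $C$ of $B_1,\dots,B_j',\dots,B_{\ell}$ — still at least three braces, with the summand containing $C'$ present and $\V{C}\cap\V{C'}$ unchanged, i.e.\ the invariant holds. The main technical burden, and the part I expect to be most delicate, is the bookkeeping in (b): carefully identifying the components of $B-\V{K}$ and how the summands at $C$ attach to the three parts of the trisum at $K$, so as to rule out every way in which a vertex of $\V{C}$ could be separated from $C'$. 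The leverage throughout is that the braces $B_1,\dots,B_{\ell}$ meet pairwise exactly in $\V{C}$, together with their $3$-connectivity, which rigidly constrains how the two $4$-cycles $C$ and $K$ can interact.
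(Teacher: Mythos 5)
Your high-level strategy — induct on the length of the reduction sequence, show a first-order $C'$-reduction preserves the maximal-$4$-cycle-sum-at-$C$ structure (with at least three summands, and $\V{C}\cap\V{C'}$ still meeting both colour classes), then conclude non-planarity from \cref{cor:pfaffiantrisumshaverotunda} and \cref{obs:rotundanonplanar} — is exactly the paper's strategy (the paper phrases it as a minimal counterexample). The issue is in the execution of the inductive step, which is where the real content of the lemma lives.

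Your fact (a) contains the unjustified step ``the interior of any non-trivial removed part $D_t$ is a component of $B-\V{K}$, hence contained in $\V{B_m}\setminus\V{C}$ for a single $m$.'' A connected subset of $B-\V{K}$ that meets $\V{C}\setminus\V{K}$ can span several summands $B_m$ (every vertex of $\V{C}$ lies in \emph{all} the $B_m$'s), so the ``hence'' amounts to asserting that $\V{D_t}\setminus\V{K}$ avoids $\V{C}\setminus\V{K}$ — which is precisely the first half of your fact (b), $\V{C}\subseteq\V{H_2}$. So (a) presupposes (b), and (b) is never actually proved: you describe it as ``a colour-tracking argument'' and defer it as ``the part I expect to be most delicate,'' but that argument is the heart of the lemma. (For comparison, the paper establishes $\V{C}\subseteq\V{H_h}$ by ruling out that $K$ separates $C$, using a construction from bisubdivisions of the cube and \cref{lemma:goodcrossesmeanK33} — see the graph $R$ of \cref{fig:thehorror} — and then, separately and symmetrically, that the removed $H'_k$'s all land in a single summand $B_i$.) Two further points your sketch glosses over: in a $4$-cycle sum at $C$ (or $K$) edges of the gluing cycle may be forgotten, so $B_r-\Set{c,c'}$ and $H_k-\V{K}$ are not literally induced subgraphs of $B$; the paper sidesteps this by passing to $B+\E{C}$ and $B+\E{C}+\E{K}$, which is harmless but does need to be said. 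And after localizing, you invoke \cref{lemma:trisums} to get the brace $B_j'$; this needs $K$ to be a conformal $4$-cycle of $B_j$ and $B_j$ to genuinely be a trisum of three braces at $K$, neither of which is free. In short: right skeleton, but the interleaved facts (a)/(b) and the conformality bookkeeping — the technical core — are missing.
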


\begin{proof}
	Let $B$ be a minimal counterexample to the assertion, that is, the claim holds for every $C'$-reduction of any $C'$-reduction of $B$.
	Let $K$ be a $4$-cycle and $H_1,\dots,H_m$, $m\geq 3$, be braces such that $B$ is a maximal $4$-cycle sum of $H_1,\dots,H_m$  at $K$ with $C'\subseteq H_h$ for some $h\in[1,m]$.
	Moreover, let $K$ be chosen such that $\V{K}\cap\V{C'}$ contains vertices from at most one colour class of $B$, then $C\neq K$ and $C$ contains at least one vertex of $C'$ which does not belong to $K$.
	Hence $\Abs{\V{K}\cap\V{C}}\leq 3$
	
	Let us first observe that for any choice of $Z\in\Set{C,K}$, $B+\E{Z}$ still is a $K_{3,3}$-free brace as this does not change $B$ being a maximal $4$-cycle sum of the braces associated with $Z$ at the $4$-cycle $Z$, and by \cref{thm:trisums} all of these braces are $K_{3,3}$-free.
	Indeed, we claim that $B'\coloneqq B+\E{C}$ is a maximal $4$-cycle sum of $H'_1,\dots,H'_m$, where $H'_i\coloneqq H_i+\CondSet{xy\in\E{C}}{x,y\in\V{H_i}}$, at the $4$-cycle $K$.
	Suppose $K$ does not separate $C$, i.\@e.\@ $\V{C}\setminus\V{K}$ belongs to a unique component of $B'-\V{K}$, there is a unique $i\in[1,m]$ such that $\V{C}\subseteq\V{H_i}$ and in this case our claim holds true.
	Indeed that means if $\Abs{\V{C}\cap\V{K}}\leq 1$ or $\Abs{\V{C}\cap\V{K}}=3$ the claim follows immediately as in those cases $K$ does not separate vertices of $C$.
	So let us assume $\Abs{\V{K}\cap\V{C}}=2$ and $K$ separates $C$.
	Let $\Set{a_1,a_2}=\V{C}\cap\V{K}$, note that the $a_i$ belong to the same colour class, say $V_1$, of $B$.
	Let $\Set{b_1,b_2}\coloneqq\V{C}\setminus\V{K}$ and let $\Set{c_1,c_2}\coloneqq\V{K}\setminus\V{C}$.
	Without loss of generality let us assume $b_1\in\V{H'_1}$ and $b_2\in\V{H'_2}$.
	As $m\geq 3$, there is also some $H'_3$.
	For each $i\in[1,2]$ let $L_i$ be the $4$-cycle in $H'_i$ with vertex set $\Set{a_1,a_2,c_i,b_i}$.
	By \cref{lemma:cubeorK33original,thm:trisums}, since $B'$ is $K_{3,3}$-free, in $H'_1$ there is a conformal bisubdivision $R_1$ of the cube that contains $L_1+a_2c_2$ as a subgraph.
	Similarly, in $H'_2$ there is a conformal bisubdivision $R_2$ of the cube which contains $L_2+a_1c_1$ as a subgraph.
	Moreover, $H'_3$ has a conformal bisubdivision $R_3$ of the cube with $K$ as a subgraph.
	For each $i\in[1,3]$ let $R_i'$ be obtained from $R_i$ by removing all inner vertices of the paths that correspond to a bisubdivided edge of $K$.
	Then let $R\coloneqq R_1'+R_2'+R_3'$.
	By construction $R$ is a conformal subgraph of $B'$ and $C\subseteq R$.
	Careful inspection reveals, that there is a conformal cross over $C$ in $R$, see \cref{fig:thehorror}, and thus, by \cref{lemma:goodcrossesmeanK33} there must be a conformal bisubdivision of $K_{3,3}$ in $B'$.
	As $B'$ is $K_{3,3}$-free, this is a contradiction, and thus $K$ can never separate $C$.
	
	\begin{figure}[h!]
		\centering
		\begin{tikzpicture}[scale=0.7]
			\pgfdeclarelayer{background}
			\pgfdeclarelayer{foreground}
			\pgfsetlayers{background,main,foreground}
			
			\node[v:mainempty] at (-0.75,0.75){};
			\node[v:main] at (0.75,0.75){};
			\node[v:mainempty] at (0.75,-0.75){};
			\node[v:main] at (-0.75,-0.75){};
			
			\node[v:mainempty] at (-5,4.5){};
			\node[] at (-5.4,5.1){$b_{1}$};
			
			\node[v:mainempty] at (5,-4.5){};
			\node[] at (5.4,-5.1){$b_{2}$};

			\node[v:main] at (5,3){};
			\node at (5.5,3.5){$a_{2}$};
			
			\node[v:main] at (-5,-3){};
			\node at (-5.3,-3.5){$a_{1}$};
			
			\node[v:main] at (-4.25,3.42){};
			\node[v:main] at (4.25,-3.42){};
			
			\node[v:main] at (-2.5,1){};
			\node[v:main] at (2.5,-1){};
			
			\node[v:mainempty] at (4.25,1.8){};
			\node[v:mainempty] at (-4.25,-1.8){};
			
			\node[v:mainempty] at (0,2.5){};
			\node[] at (0.2,2.9){$c_{2}$};
			
			\node[v:mainempty] at (0,-2.5){};
			\node[] at (0,-3){$c_{1}$};
			
			\begin{pgfonlayer}{background}
				\draw[e:marker,myLightBlue] (-4.25,3.42) -- (-5,4.5);  
				\draw[e:marker,myLightBlue] (4.25,-3.42) -- (5,-4.5);
				\draw[e:marker,myLightBlue] (-4.25,3.42) -- (-4.25,-1.8);
				\draw[e:marker,myLightBlue,bend left=20] (0,-2.5) to (-2.5,1);
				\draw[e:marker,myLightBlue] (0,-2.5) -- (4.25,-3.42);
				\draw[e:marker,myLightBlue] (-2.5,1) -- (-4.25,-1.8);
				
				\draw[e:marker,myGreen] (-0.75,0.75) -- (-5,-3);
				\draw[e:marker,myGreen] (0.75,-0.75) -- (5,3);
				\draw[e:marker,myGreen] (0.75,-0.75) -- (-0.75,-0.75);
				\draw[e:marker,myGreen] (-0.75,-0.75) -- (-0.75,0.75);
				
				\draw[e:main] (-0.75,-0.75) rectangle (0.75,0.75);      
				\draw[e:main,DarkGoldenrod] (-5,4.5) -- (5,3) -- (5,-4.5) -- (-5,-3) -- (-5,4.5);
				
				\draw[e:main] (-4.25,3.42) -- (-5,4.5);  
				\draw[e:main] (4.25,-3.42) -- (5,-4.5);    
				
				\draw[e:main] (-0.75,0.75) -- (-5,-3);
				\draw[e:main] (0.75,-0.75) -- (5,3);
				
				\draw[e:main] (2.5,-1) -- (5,3);
				\draw[e:main] (-2.5,1) -- (-5,-3);
				
				\draw[e:main] (4.25,-3.42) -- (4.25,1.8);
				\draw[e:main] (-4.25,3.42) -- (-4.25,-1.8);
				
				\draw[e:main] (0,2.5) -- (-4.25,3.42);
				\draw[e:main] (0,2.5) -- (-2.5,1);
				\draw[e:main] (0,2.5) -- (0.75,0.75);
				\draw[e:main,bend left=20] (0,2.5) to (2.5,-1);
				
				\draw[e:main] (0,-2.5) -- (4.25,-3.42);
				\draw[e:main] (0,-2.5) -- (2.5,-1);
				\draw[e:main] (0,-2.5) -- (-0.75,-0.75);
				\draw[e:main,bend left=20] (0,-2.5) to (-2.5,1);
				
			\end{pgfonlayer}
		\end{tikzpicture}	    
		\caption{The graph $R$ from the proof of \cref{lemma:bigintersectionscannotbereduced} with the $4$-cycle $C$ as a subgraph and a conformal cross over $C$.} 
		\label{fig:thehorror}
	\end{figure}
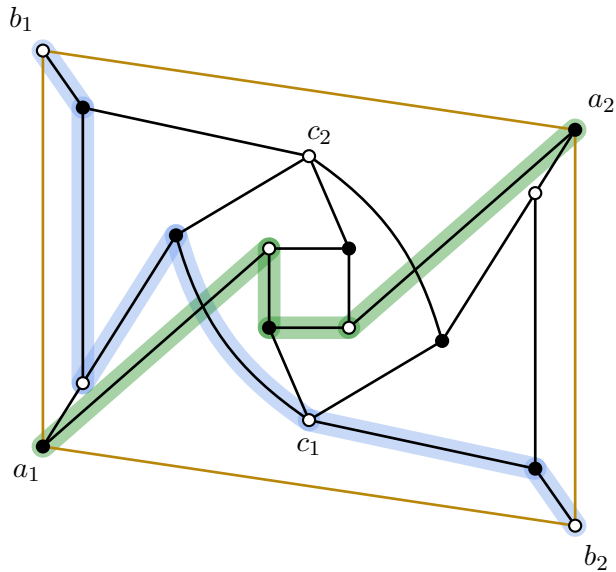
	
	Consequently the graph $B''\coloneqq B'+\E{K}$ is $K_{3,3}$-free and thus, with the same arguments as above, $B''$ is a $4$-cycle sum of $B_1',\dots,B_{\ell}'$ at $C$, where $B'_i\coloneqq B_i+\CondSet{xy\in\E{K}}{x,y\in\V{B_i}}$ for all $i\in[1,\ell]$.
	Indeed, from the discussion above one can derive that there are $i\in[1,\ell]$ and $j\in[1,m]$ such that $\bigcup_{k\in[1,m]\setminus\Set{j}}\V{H_k'}\subseteq\V{B_i}$.
	If $i\neq 1$, then, as $C'\subseteq H_h$, we must have $j=h$ and $H'_h$ still contains all $B_k$ for $k\in[1,\ell]\setminus\Set{i}$, as well as a $C$-reduction of $B_i$.
	So we may assume $i=1$.
	By assumption, we have that $\V{C}\cap\V{C'}$ contains a vertex of each colour class of $B$ and thus, in this case, $\bigcup_{k\in[2,\ell]}\V{B'_k}\subseteq\V{H_h'}$ implying that $H_h'$ is a $4$-cycle sum of at least $3$ braces at the cycle $C$.
	Consequently, by \cref{cor:pfaffiantrisumshaverotunda}, in both cases $H_h'$ contains a conformal bisubdivision of the Rotunda and thus is not planar.
	This contradicts $B$ being a minimal counterexample and thus completes our proof.
\end{proof}

We can now combine the above lemma with our previous observation on precrosses to rule out any planar $C'$-reductions if there exists a diffuse $C'$-$M$-precross though a $4$-cycle $C$ which shares vertices of at most one colour class with $C'$.

\begin{lemma}
	\label{lemma:diffuseprecrossesmeannonplanar}
	Let $B$ be a $K_{3,3}$-free brace, $M$ a perfect matching of $B$, $C$ a $4$-cycle, and $C'$ a conformal cycle for which $\V{C}\cap\V{C'}$ contains vertices from at most one colour class of $B$ such that there exists a diffuse $C'$-$M$-precross through $C$ in $B$, then there does \textbf{not} exist a $C$-reduction of $B$ to a brace $H$ such that $H$ is planar and $C$ bounds a face of $H$.
\end{lemma}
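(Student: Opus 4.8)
The plan is to argue by contradiction and to induct on the length of the reduction. Suppose that $B$ carries a diffuse $C'$-$M$-precross $(P_1,P_2,Q,M)$ through $C$ and, nevertheless, that there is a $C$-reduction $B=H_1,H_2,\dots,H_{\ell}=H$ with $H$ planar and $C$ bounding a face of $H$. I would show that the precross survives a single reduction step: after possibly replacing $C'$ by another conformal cycle $C''$ (still meeting $C$ in vertices of at most one colour class) and $M$ by a suitable perfect matching, $H_2$ carries a diffuse $C''$-precross through $C$. Since $H_2,\dots,H_{\ell}=H$ is a $C$-reduction of $H_2$ realising the same planarity and facial configuration, induction on $\ell$ collapses the problem to the base case $\ell=1$, in which $B$ itself is planar and $C$ is a facial $4$-cycle.

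For the descent step I use the trisum structure of the first reduction: $B$ is a trisum of $H_2,B_1,B_2$ at a $4$-cycle $K$ with $\V{C}\cap\V{K}$ contained in one colour class and $C\subseteq H_2$; equivalently $B$ is a maximal $4$-cycle sum of at least three braces at $K$, one summand containing $C$. The task is to control what the paths $P_1,P_2,Q$ do when they meet $K$ and enter the removed pieces. If all three avoid those pieces away from $K$, then the precross already lives in $H_2$. Otherwise a path dips into a removed summand and must cross $K$; here \cref{lemma:pathsintrisums} reroutes the offending portion through another summand, and \cref{lemma:difuseprecrossesintrisums} together with \cref{lemma:badcrossesanddisjointcycles} --- read with $K$ in the role of their sum-cycle, after first normalising how $C'$ meets $K$ via \cref{lemma:extendingcrosses} and \cref{lemma:crossesovercrossingcycles} --- force the outcome to be either a genuine matching cross over $C'$ already present in $H_2$, or a diffuse precross through $C$ inside $H_2$. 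In the former case I convert the matching cross over $C'$ back into a diffuse precross through $C$; and if along the way $\V{C}\cap\V{C'}$ turns out to meet both colour classes, I instead invoke \cref{lemma:bigintersectionscannotbereduced} directly, which is exactly why the hypothesis confines $\V{C}\cap\V{C'}$ to at most one colour class. Either way the precross persists into $H_2$, and here the daring versus non-daring dichotomy of the precross, and the bookkeeping of which end-edges of $P_1,P_2$ lie in the matching, is where the real care is needed.

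For the base case $B$ is planar and $C$ is a facial $4$-cycle; in particular $C$ is not a separator of the $3$-connected graph $B$, so $B-\V{C}$ is connected, and by \cref{cor:planar4cycledifference} both $\Inner{B}{C}$ and $\Outer{B}{C}$ are braces. By the Two Paths Theorem (\cref{thm:twopaths}), equivalently by \cref{prop:planarcrosses}, there is no (strong) matching cross over $C$ in $B$. The contradiction is obtained by completing the precross into such a cross: using $2$-extendibility (\cref{thm:bipartiteextendibility}) and the connectivity of $B-\V{C}$ one links the two endpoints $w_1,w_2$ of $P_1,P_2$ on $C$ by an alternating path internally disjoint from $C$ and from $P_1,P_2,Q$, which together with $Q$ (separating the $C'$-ends of $P_1,P_2$ along $C'$) already yields a matching cross over $C'$ in $B$; the colour-class restriction on $\V{C}\cap\V{C'}$, together with the ``covered by an edge of $\E{C}\cap M$'' clauses, then forces this cross to traverse $C$ in a way that can be rerouted, via one edge of $C$, into a matching cross over $C$ itself. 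The step I expect to be the main obstacle is precisely this last conversion: getting the two resulting arcs to interleave correctly on $C$ requires a case analysis on whether the precross is daring and on how the ends of $P_1$, $P_2$, $Q$ sit on $C$ and on $C'$, and it is there that the hypothesis on $\V{C}\cap\V{C'}$ does its essential work.
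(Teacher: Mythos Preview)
The statement as printed in the paper contains a typo: the conclusion should read ``$C'$-reduction'' and ``$C'$ bounds a face of $H$'', not $C$. This is clear both from the paper's own proof (which explicitly concludes that $C'$ cannot bound a face of $H'$) and from the way the lemma is applied in the proof of \cref{prop:pfaffiancrosses}. You followed the literal statement, and this has sent your base case in the wrong direction: you are trying to manufacture a matching cross over the $4$-cycle $C$, whereas the actual contradiction is with $C'$ being facial.

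With the corrected reading, the paper's argument is considerably simpler than your plan in one decisive respect: it does not try to maintain the precross as an $M$-alternating structure through the reductions. After an initial analysis showing that at most one of $P_1,P_2,Q$ can genuinely leave the surviving summand through any reduction $4$-cycle $K$ (two such excursions would produce a conformal cross over $K$, hence a $K_{3,3}$), the paper drops the matching entirely and tracks $P_1,P_2,Q$ merely as pairwise disjoint \emph{ordinary} paths, replacing each excursion by a one- or two-edge detour along the relevant $K_i$. In the final planar brace one then joins the ends of the surviving $P_1$- and $P_2$-paths by a subpath of $C$ to obtain an ordinary $C'$-cross, and \cref{thm:twopaths} forbids $C'$ from bounding a face. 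No alternating path between $w_1,w_2$ on $C$ is required, and no conversion from a cross over $C'$ to one over $C$ is ever attempted---the step you yourself flagged as the main obstacle simply does not arise. Your descent step has a related problem: the lemmas you invoke (\cref{lemma:difuseprecrossesintrisums}, \cref{lemma:badcrossesanddisjointcycles}) concern crosses over $C'$ relative to the sum-cycle and do not hand you back a precross \emph{through $C$} inside $H_2$; the paper sidesteps this entirely by abandoning alternation and tracking only disjointness.
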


\begin{proof}
	Let $Q$ be the path of our $C'$-$M$-precross through $C$ and let $P_1$, $P_2$ be the two paths connecting $C'$ to $C$ such that their endpoints belong to different components of $C-\V{Q}$.
	
	Suppose there is a $4$-cycle $K$ in $B$ such that $B$ is a maximal $4$-cycle sum of the braces $H_1,\dots,H_m$, $m\geq 3$ at $K$, $H_1$ is a non-trivial $C'$-reduction of $H$, and $\V{C}\subseteq H_1$.
	Then there is a diffuse $C'$-$M'$-precross through $C$ in $H_1$ for some perfect matching $M'$, or there is at most one path $W\in\Set{P_1,P_2,Q}$ such that $\V{W}\cap\bigcup_{i=2}^m\V{H_i}\setminus\V{K}\neq\emptyset$.
	
	Suppose there are two paths $W_1,W_2\in\Set{P_1,P_2,Q}$ such that $\V{W_k}\cap\bigcup_{i=2}^m\V{H_i}\setminus\V{K}\neq\emptyset$ for both $k\in[1,2]$.
	Then let $W'_k$ be the subpath of $W_k$ in $\sum_{i=2}^m H_i$.
	If one of the $W'_k$ is of even length, then so is the other one.
	Indeed, if both are of even length, then each of them must have an edge incident to one of its endpoints in $\sum_{i=2}^m H_i$ that belongs to $M$.
	Moreover, as $H_1$ is a $C'$-reduction and therefore $\V{K}\cap\V{C'}$ contains vertices from at most one colour class of $B$, the edges of $M$ that are incident to the other endpoints of the $W_k'$ must belong to $H_1$.
	Hence there exists $h\in[2,m]$ such that $W_1'$ and $W_2'$ belong to $H_h$ and, in $H_h$ these paths form a conformal cross over $K$.
	Consequently, by \cref{lemma:goodcrossesmeanK33}, $H_h$ has a conformal bisubdivision of $K_{3,3}$ which, by \cref{thm:trisums}, contradicts $B$ being $K_{3,3}$-free.
	Hence the $W_k'$ are of odd length and thus are either internally $M$-conformal or $M$-conformal.
	In either case, for each $k\in[1,2]$ the endpoints $u_k$, $v_k$ of $W_k'$ are adjacent on $K$ and there exists a perfect matching $M_1'$ of $H_1$ with $M_1'\setminus\E{C}\subseteq M$ and $u_kv_k\in M_1'$ if and only if $W_k'$ is $M$-conformal.
	Thus there is a diffuse $C'$-$M'$-precross through $C$ in $H_1$.
	
	If there is a path $W\in\Set{P_1,P_2,Q}$ such that $\V{W}\cap\bigcup_{i=2}^m\V{H_i}\setminus\V{K}\neq\emptyset$, then either the endpoints of $W'$, which is the subpath of $W$ starting on the first vertex of $K$ and ending on the last vertex of $K$ when traversing along $W$, are adjacent on $K$, or at most one vertex of $\V{K}\setminus\V{W}$ belongs to another path from $\Set{P_1,P_2,Q}\setminus\Set{W}$.
	As we have seen above, no path besides $W$ may leave $H_1$ through $K$, hence all edges of the other paths in $\Set{P_1,P_2,Q}\setminus\Set{W}$ belong to $H_1$.
	This is particularly true for the edges of $M$ on these paths.
	Indeed, this means that  all four vertices of $K$ must be matched inside $H_1$ by $M$.
	However, $W'$ is a path of even length and therefore must contain an edge of $M$ that covers one of its endpoints.
	By definition and our assumption that $W'$ contains exactly two vertices of $K$, which are of the same colour, no edge of $W'$ belongs to $H_1$, which is impossible.
	
	A set $S$ of vertices with $\Abs{S\cap V_1}=\Abs{S\cap V_2}=2$ is called \emph{splitting} if there exist braces $L_1,\dots,Lq$, $q\geq 3$, such that $B$ is a $4$-cycle sum of $L_1,\dots,L_q$ at a $4$-cycle with vertex set $S$. 
	Let us call a set $S\subseteq\V{B}$ with $\Abs{S\cap V_1}=\Abs{S\cap V_2}=2$ \emph{well behaved}, if $B$ is a maximal $4$-cycle sum of the braces $H_1,\dots,H_m$, $m\geq 3$ at the $4$-cycle $K'$ with vertex set $S$, $H_1$ is a non-trivial $C'$-reduction of $H$, and $\V{C}\subseteq H_1$, or $S$ is not splitting.
	Let $H'$ be a $C'$-reduction of $B$ such that no splitting set $S$ in $H'$ is well behaved and let $K_1,\dots,K_{\ell}$ be the $4$-cycles used to reduce $B$ to $H'$.
	Let the $K_i$ be numbered in the order in which the $K_i$ were used to construct a non-trivial $C'$-reduction of $B$ to some brace $J_i$ in order to eventually reach $H'$.
	We claim that either $H'$ is non-planar, or $C'$ does not bound a face of $H'$.
	
	Since $B$ is $K_{3,3}$-free and contains a $4$-cycle it cannot be isomorphic to the Heawood graph.
	Suppose $H'$ still has a splitting set, then, by \cref{cor:pfaffiantrisumshaverotunda}, $H'$ is non-planar.
	Hence we may assume $H'$ to be planar for the sake of this claim.
	Next we iteratively construct paths $R_Q^i$, $R_{P_1}^i$, and  $R_{P_2}^i$ such that for each $W\in\Set{P_1,P_2,Q}$, $R_W^i$ is a path in $J_i$ and all three paths are disjoint.
	For each $W\in\Set{P_1,P_2,Q}$ let $R_W^1\coloneqq W$.
	The construction is pretty straight forward.
	Suppose in $J_i$, $i\in[1,\ell-1]$, the subpath of $R_W^i$ starting with its first vertex, $u_W^i$, on $K_{i+1}$ and ending on its last vertex, $v_W^i$, has edges that do not belong to $J_{i+1}$.
	Then either $u_W^i$ and $v_W^i$ are adjacent and we can set $R_W^{i+1}\coloneqq R_W^iu_W^iv_W^iR_W^i$, or they are not adjacent, in which case we have seen that there is a path $U$ of length two on $K_i$ such that $R_W^{i+1}\coloneqq R_W^iu_W^iUv_W^iR_W^i$ is a path and disjoint from the other two paths.
	If $R_W^i$ has no such subpath we simply set $R_W^{i+1}\coloneqq R_W^i$.
	
	Then the paths $R_W^{\ell}$, for $W\in\Set{P_1,P_2,Q}$, are pairwise disjoint paths in $H'$ such that $R_Q^{\ell}$ has both endpoints on $C'$ and $R_{P_1}^{\ell}$ and $R_{P_2}^{\ell}$ connect $C'$ to $C$.
	Moreover, the endpoints of $R_{P_1}^{\ell}$ and $R_{P_2}^{\ell}$ on $C'$ belong to different components of $C'-\V{R_Q^{\ell}}$.
	As all three paths are internally disjoint from $C$, we can now connect $R_{P_1}^{\ell}$ and $R_{P_2}^{\ell}$ on $C$ in order to create an ordinary cross\footnote{Ordinary in this context means that our paths are not necessarily alternating for any perfect matching.}.
	However, this means that $C'$ cannot bound a face of $H'$ by \cref{thm:twopaths}.
	
	To finalise the proof we have to show that, in case $H'$ is non-planar, we still cannot find a planar $C'$-reduction of $H'$ such that $C'$ bounds a face.
	For this note that, by \cref{lemma:bigintersectionscannotbereduced}, no splitting set $S$ can contain vertices of $C'$ from more than one colour class, or otherwise, the claim would follow immediately.
	Observe that every splitting set $S$ in $H'$ in this case must separate $C$ from $C'$.
	Clearly $P_1$ and $P_2$ are separated by $S$.
	
	If also $Q$ is separated by $S$, we have found two disjoint alternating paths that connect $C'$ to $S$ and that belong to a matching cross over $C'$.
	Suppose the two disjoint subpaths of $Q$ that link $C'$ to $S$ both have their endpoints in $S$ in the same colour class.
	Then, if we were to complete this matching cross we would, in particular, obtain a conformal cross over a $4$-cycle with vertex set $S$.
	As $S$ is splitting and $B$ $K_{3,3}$-free, by \cref{lemma:goodcrossesmeanK33} this is impossible.
	Hence, if $C_S$ is the $4$-cycle with vertex set $S$ and $H''$ is the $C'$-reduction of $H'$ at $S$, we either find a matching cross over $C'$, again implying that $H''$ is not planar, as otherwise, we would be done or find a diffuse $C'$-$M'$-precross through $C_S$.
	Hence, in either case, we simply re-enter a previously discussed case and thus our proof is complete.
\end{proof}

\begin{observation}\label{lemma:noreductionsofplanarbraces}
	Let $B$ be a brace and $C$ a conformal cycle in $B$.
	If $B$ is planar there does not exist a $C$-reduction of $B$.
\end{observation}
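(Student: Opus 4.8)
The plan is to derive a contradiction from the non-planarity of the Rotunda (\cref{obs:rotundanonplanar}), using \cref{cor:pfaffiantrisumshaverotunda}. First I would reduce the statement to showing that a planar brace $B$ admits no \emph{first order} $C$-reduction: if a sequence of braces $H_1=B,H_2,\dots,H_\ell$ with $\ell\geq 2$ witnessed a $C$-reduction of $B$, then $H_2$ would in particular be a first order $C$-reduction of $H_1=B$, so it suffices to rule the latter out. From now on I would assume, for contradiction, that some brace $H$ is a first order $C$-reduction of $B$.

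Next I would record two easy facts. First, $B$ is $K_{3,3}$-free: a conformal subgraph of $B$ is a subgraph, a bicontraction is a contraction, and both operations preserve planarity, so every matching minor of the planar graph $B$ is planar; since $K_{3,3}$ is not planar, $B$ has no $K_{3,3}$ matching minor. Second, unwinding \cref{def:foreduction} and \cref{def:cyclesum}: the existence of a first order $C$-reduction means that there are braces $H,B_1,B_2$ and a $4$-cycle $K$ such that $B$ is a trisum of $H,B_1,B_2$ at $K$, which by definition is exactly the statement that $B$ is a $4$-cycle sum of the three braces $H,B_1,B_2$ at the $4$-cycle $K$.

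To finish, I would invoke \cref{cor:pfaffiantrisumshaverotunda} with $\ell=3$ (all three summands $H,B_1,B_2$ are braces and $B$ is $K_{3,3}$-free), obtaining a conformal bisubdivision $R$ of the Rotunda inside $B$. Since $R$ is a subdivision of the Rotunda, it is non-planar by \cref{obs:rotundanonplanar}, and since $R$ is a subgraph of $B$, the brace $B$ itself is non-planar — contradicting the hypothesis. Hence $B$ has no first order $C$-reduction, and by the reduction in the first paragraph it has no $C$-reduction at all.

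I do not expect a genuine obstacle here: the argument is essentially immediate once the cited results are in place. The only points that require care are (i) correctly unpacking the definitions so that ``$B$ is a trisum of $H,B_1,B_2$ at $K$'' is read as ``$B$ is a $4$-cycle sum of three braces at $K$'', making \cref{cor:pfaffiantrisumshaverotunda} literally applicable, and (ii) the observation that planarity is inherited by subgraphs, by subdivisions, and by matching minors, which is precisely what makes a planar $B$ incompatible with containing a conformal bisubdivision of the non-planar Rotunda.
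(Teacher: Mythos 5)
Your proof is correct and takes essentially the same route as the paper: reduce to ruling out a single $4$-cycle sum decomposition into three or more braces, note that planarity forces $B$ to be $K_{3,3}$-free, invoke \cref{cor:pfaffiantrisumshaverotunda} to produce a conformal bisubdivision of the Rotunda, and derive a contradiction from \cref{obs:rotundanonplanar}. The only cosmetic difference is that the paper phrases the intermediate claim as ``a planar brace cannot be a maximal $4$-cycle sum of three or more braces'' while you phrase it via first-order $C$-reductions, but the substance is identical.
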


\begin{proof}
	We prove a stronger result, namely, that a planar brace cannot be a maximal $4$-cycle sum of three or more braces.
	Since $B$ is planar, it is $K_{3,3}$-free and thus does not contain conformal bisubdivisions of $K_{3,3}$.
	Let us assume $B$ is a maximal $4$-cycle sum of the braces $B_1,\dots,B_{\ell}$, $\ell\geq 3$, at the $4$-cycle $C$.
	Then the claim follows immediately from \cref{cor:pfaffiantrisumshaverotunda}.
\end{proof}

With this, everything is in place to prove \Cref{prop:pfaffiancrosses}.

\begin{proof}[Proof of \Cref{prop:pfaffiancrosses}]
	Let $B$ be a minimal counterexample to the forward direction of the assertion.
	So let us assume that $B$ is $K_{3,3}$-free, there is a conformal cycle $C$ in $B$ which has \textbf{no} cross, but every $C$-reduction of $B$ to a brace $H$ is either non-planar or $C$ bounds \textbf{no} face of $H$.
	Indeed, we may assume that $B$ is not isomorphic to the Heawood graph since every conformal cycle here has a matching cross \cref{lemma:heawoodcrosses}.
	However, in every $C$-reduction of $B$, our assertion holds.
	We claim that this means there is \textbf{no} $C$-reduction of $B$.
	
	Suppose there was one and let $H$ be a $C$-reduction of $B$.
	By the minimality of $B$ this means that either $H$ has a $C$-reduction to some brace $H'$ such that $C$ bounds a face of $H'$ or there is a matching cross over $C$ in $H$.
	In the first case, there exists a $C$-reduction of $B$ to $H'$ and thus we have a contradiction to $B$ being a counterexample.
	So we may consider the second case and assume that there is a matching cross over $C$ in $H$.
	Let $B$ be a maximal $4$-cycle sum of the braces $H,B_1,\dots,B_{\ell}$, $\ell\geq 2$ at the $4$-cycle $C'$.
	Then, since there is a matching cross over $C$ in $H$, \cref{lemma:crossesovercrossingcycles,lemma:badcrossesanddisjointcycles} imply that there also must exist a matching cross over $C$ in $B$ which again is a contradiction.
	
	Suppose $B$ is planar.
	By \cref{prop:planarcrosses}, this means that $C$ must either bound a face of $B$ or have a strong matching cross in $B$.
	Since neither is correct by our assumption, $B$ cannot be planar.
	So $B$ is neither planar nor does there exist a $C$-reduction.
	According to \cref{thm:trisums} $B$ must either be isomorphic to the Heawood graph or be a maximal $4$-cycle sum at some $4$-cycle $K$ of $K_{3,3}$-free braces $H_1,\dots,H_m$, $m\geq 3$.
	The first case is impossible by assumption.
	If $\V{K}\cap\V{C}$ contains vertices from at most one colour class of $B$, there would be a $C$-reduction in $B$ which we already ruled out, hence we must have $\Abs{\V{K}\cap\V{C}}\geq 2$ and $\V{K}\cap\V{C}$ contains a vertex of each of the two colour classes.
	Then \cref{lemma:crossesovercrossingcycles} implies the existence of a matching cross over $C$ in $B$.
	So, in either case, we reach a contradiction which means that there is no minimal counterexample and thus our proof is complete.
	
	For the reverse let $B$ be a minimal counterexample to the assertion such that $B$ is Pfaffian, there is a conformal cycle $C$ in $B$ which has a matching cross, but there is a $C$-reduction of $B$ to $H$ such that $H$ is planar and $C$ bounds a face.
	First, suppose $H$ is isomorphic to $B$.
	Then, since $C$ bounds a face of $B$, \cref{prop:planarcrosses} implies that there cannot be a matching cross over $C$ in $B$.
	Consequently, $B$ is non-planar.
	Since there is a $C$-reduction of $B$, $B$ is not the Heawood graph.
	Let $K$ be a $4$-cycle such that $B$ is a $4$-cycle sum of the braces $B_1,\dots,B_{\ell}$ at $K$ where $H$ is a $C$-reduction of $B_1$.
	With $H$ being a $C$-reduction of $B$, this must exist.
	Thus, as $B$ is a minimal counterexample and $H$ is planar such that $C$ bounds a face, there is no matching cross over $C$ in $B_1$.
	
	Since $B_1$ is a $C$-reduction of $B$, $\V{K}\cap\V{C}$ cannot contain vertices from both colour classes of $B$.
	Moreover, with $B$ being Pfaffian, by \cref{thm:trisums}, none of the $B_i$ can have a conformal bisubdivision of $K_{3,3}$.
	Hence, by \cref{lemma:badcrossesanddisjointcycles}, there exists a perfect matching $M_1$ of $B_1$ such that there is a diffuse $C$-$M_1$-precross through $K$ which is daring.
	However, in this case, \cref{lemma:diffuseprecrossesmeannonplanar} tells us that no $C$-reduction of $B_1$ can be planar such that $C$ bounds a face.
	As we assumed $H$ to be a $C$-reduction of $B_1$, this is a contradiction.
\end{proof}

\section{Matching Crosses in Braces containing $K_{3,3}$}\label{sec:nonpfaffian}

With \cref{prop:pfaffiancrosses}, we already have one half of \cref{prop:twopathsinbraces}.
To prove the $K_{3,3}$-containing part of our main result, we essentially need to strengthen \cref{lemma:cubeorK33} in the form of \cref{prop:4cycleK33}.

In light of \cref{lemma:goodcrossesmeanK33}, this means that every $4$-cycle in a $K_{3,3}$-containing brace has a conformal cross.
As a first step, we need to establish that we can always find a perfect matching $M$ in a $K_{3,3}$-containing brace such that a prescribed $4$-cycle $C$ is $M$-conformal and there exists an $M$-conformal bisubdivision of $K_{3,3}$ in $B$.
To do this, we make use of a helpful lemma of McCuaig once more.

Recall the definition of the odd m\"obius ladders.
For a m\"obius ladder $\mathscr{M}_{4k+2}$ with $k\geq 2$ we call an edge $e$ a \emph{rung} if it lies on two $4$-cycles.
The \emph{rungs} of a $\mathscr{M}_{4k+2}$ bisubdivision are the paths that correspond to the bisubdivided rungs of the Möbius ladder.
The \emph{base cycle} of $\mathscr{M}_{4k+2}$ is the Hamiltoncycle $C$ that consists entirely of non-rung edges.
The \emph{base cycle} of a $\mathscr{M}_{4k+2}$ bisubdivision is the cycle that consists entirely of the paths corresponding to the non-rung edges of $\mathscr{M}_{4k+2}$.

\begin{lemma}[\cite{mccuaig2004polya}]\label{lemma:conformalmoebiusstrips}
	Let $B$ be a bipartite graph with a conformal bisubdivision of $K_{3,3}$ and $M$ a perfect matching of $B$, then $B$ contains an $M$-conformal bisubdivision $L$ of $\mathscr{M}_{4k+2}$ for some $k\geq 1$.
	Furthermore, the rungs of $L$ are $M$-conformal in case $k\geq 2$.
\end{lemma}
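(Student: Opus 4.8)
The plan is to reconstruct McCuaig's argument \cite{mccuaig2004polya} by an exchange argument. Note first that $K_{3,3}$ is the smallest odd M\"obius ladder, $K_{3,3}=\mathscr{M}_6$, so the family $\mathcal{F}$ of all pairs $\Brace{L,N}$ in which $L$ is a bisubdivision of $\mathscr{M}_{4k+2}$ for some $k\geq 1$ and $N\in\Perf{B}$ is a perfect matching witnessing that $L$ is conformal (that is, $N\cap\E{L}$ is a perfect matching of $L$) is non-empty by hypothesis. Choose $\Brace{L,N}\in\mathcal{F}$ with $\Abs{M\Delta N}$ as small as possible; this is legitimate since $\Abs{M\Delta N}$ ranges over the non-negative integers. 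We claim that $L$ is $M$-conformal, which is the first assertion.

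Suppose not. Then $M\cap\E{L}$ is not a perfect matching of $L$, so there is an edge $vw\in M$ with $v\in\V{L}$ but $vw\notin\E{L}$. Since $N\cap\E{L}$ is a perfect matching of $L$, the vertex $v$ is matched by $N$ along an edge of $L$, so $M$ and $N$ disagree at $v$ and hence $v$ lies on a non-trivial (even) alternating cycle $O$ of $M\Delta N$, and $O$ meets $\E{L}$. Put $N'\coloneqq N\Delta\E{O}\in\Perf{B}$; since $\E{O}\subseteq M\Delta N$ we get $\Abs{M\Delta N'}=\Abs{M\Delta N}-\Abs{\E{O}}<\Abs{M\Delta N}$. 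The crucial step is to extract from the subgraph $L\cup O$ a new pair $\Brace{L',N'}\in\mathcal{F}$: following $O$ through $L$ — it may traverse arcs of the base cycle, bisubdivided rungs, or branch vertices of $L$ — one reroutes the base cycle and the rungs that $O$ disturbs along the corresponding arcs of $O$, and checks that the number of rungs of the graph so obtained stays odd. The latter is a mod-$2$ parity bookkeeping: rerouting along an even cycle leaves unchanged the twist invariant that separates the odd M\"obius ladder bisubdivisions — the certificates of non-Pfaffianness — from the untwisted, cube-like bisubdivisions appearing in \cref{lemma:cubeorK33}. This yields $\Brace{L',N'}\in\mathcal{F}$ with $\Abs{M\Delta N'}<\Abs{M\Delta N}$, contradicting the choice of $\Brace{L,N}$; hence $L$ is $M$-conformal.

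For the ``furthermore'', among all $M$-conformal bisubdivisions of odd M\"obius ladders in $B$ choose one, say $L$ of $\mathscr{M}_{4k+2}$, minimising the number of rungs that are not $M$-conformal. Assume $k\geq 2$ but that some rung $\rho$ of $L$ is not $M$-conformal; since $\rho$ is then an $M$-alternating path of even length, each of its two feet — which are branch vertices of $L$ — is matched by $M$ along the base cycle. Hence $\rho$ together with the two arcs of the base cycle joining its feet is an $M$-alternating subgraph along which $M$ may be switched; using that $\mathscr{M}_{4k+2}$ with $k\geq 2$ admits a non-trivial relabelling of its ``rung'' and ``base'' edges (a flexibility absent when $k=1$, i.e.\ for $K_{3,3}$ itself, which is precisely why the rung condition is claimed only for $k\geq 2$), together with the same twist-parity argument as above, this switch produces an $M$-conformal bisubdivision of an odd M\"obius ladder with strictly fewer non-$M$-conformal rungs, contradicting the choice of $L$. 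Thus every rung of $L$ is $M$-conformal.

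The main obstacle is the extraction step of the second paragraph: proving that $L\cup O$ always contains an $N'$-conformal bisubdivision of an \emph{odd} M\"obius ladder needs a careful case analysis of how $O$ meets the base cycle, the rungs and the branch vertices of $L$, together with the bookkeeping that preserves the oddness of the rung count. This — rather than the optimisation set-up — is the technical heart of the statement and is worked out in detail in \cite{mccuaig2004polya}.
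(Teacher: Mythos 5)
The paper does not prove \cref{lemma:conformalmoebiusstrips}; it imports the result verbatim from \cite{mccuaig2004polya}, so there is no proof in the paper to compare against. Judged on its own terms, your proposal is a plausible \emph{outline} of an argument, but it is not a proof. The minimisation set-up of the first two paragraphs is sound: picking $(L,N)$ with $|M\Delta N|$ minimal, locating an $N$-matched vertex $v$ of $L$ at which $M$ disagrees, extracting the $M\Delta N$-cycle $O$ through $v$, and noting $|M\Delta N'|<|M\Delta N|$ for $N'=N\Delta\E{O}$ are all correct and standard. But the claim that $L\cup O$ always contains an $N'$-conformal bisubdivision of some odd M\"obius ladder is exactly the content of the lemma, and you explicitly decline to prove it, gesturing instead at a ``twist invariant'' and ``mod-$2$ parity bookkeeping'' that are never defined and never verified. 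This is the step where all the work lives: $O$ may pass through branch vertices of $L$, traverse several bisubdivided edges, re-enter $L$ many times, or create new degree-$3$ vertices, and one must show in each configuration that the rerouted graph is again a bisubdivision of $\mathscr{M}_{4k'+2}$ with $k'\geq 1$ (in particular odd, not the cube or a longer even ladder), is $N'$-conformal, and uses only $L\cup O$. None of this is established.

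The ``furthermore'' clause is in even worse shape. You appeal to ``a non-trivial relabelling of rung and base edges'' for $k\geq 2$ without saying what it is or how switching $M$ along the cycle $\rho\cup P_1$ (rung plus one base-arc) interacts with it; and you must also check that the switch does not silently destroy $M$-conformality of other rungs or break the odd-ladder structure, which a local switch near two adjacent branch vertices can easily do when $k$ is small. Finally, the sentence ``each of its two feet is matched by $M$ along the base cycle'' is stated as if obvious, but it is the parity fact (odd-length rung, so either both feet or neither foot are matched inside it) that needs one line of justification. In short: the reduction to a minimal pair is fine, but the theorem is the extraction step and the rung-repair step, and both are deferred rather than argued. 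As written this cannot be accepted as a proof of \cref{lemma:conformalmoebiusstrips}.
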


\begin{lemma}\label{lemma:C4andK33}
	Let $B$ be a $K_{3,3}$-containing brace and $C$ a $4$-cycle in $B$, then there exists a perfect matching $M$ of $B$ such that $C$ is $M$-conformal and $B$ has an $M$-conformal bisubdivision of $K_{3,3}$.
\end{lemma}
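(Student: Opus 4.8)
The plan is to start from an arbitrary perfect matching $M_0$ of $B$ for which the given $4$-cycle $C$ is $M_0$-conformal — such a matching exists since $C$ is a $4$-cycle in a brace and hence conformal — and then modify $M_0$ along alternating cycles so that, additionally, $B$ carries an $M_0$-conformal bisubdivision of $K_{3,3}$. Since $B$ contains $K_{3,3}$ as a matching minor, it contains a conformal bisubdivision of $K_{3,3}$, so by \cref{lemma:conformalmoebiusstrips} applied to the matching $M_0$ there is an $M_0$-conformal bisubdivision $L$ of an odd Möbius ladder $\mathscr{M}_{4k+2}$ for some $k\geq 1$, and if $k\geq 2$ the rungs of $L$ are $M_0$-conformal. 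The key point is that once we have an $M_0$-conformal $\mathscr{M}_{4k+2}$-bisubdivision for $k\geq 2$, deleting any single rung yields an $M_0$-conformal bisubdivision of $K_{3,3}$ (a Möbius ladder on $4k+2$ vertices minus a rung is a bisubdivision of $K_{3,3}$, and conformality is inherited by conformal subgraphs once one checks that the matching restricted to the remaining graph is still perfect on it). So for $k\geq 2$ we are immediately done with $M\coloneqq M_0$, because $C$ is $M_0$-conformal by the choice of $M_0$.

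The remaining case is $k=1$, i.e. the only $M_0$-conformal Möbius-ladder bisubdivision guaranteed by the lemma is a bisubdivision of $\mathscr{M}_6$, which is itself a bisubdivision of $K_{3,3}$; here conformality of the $K_{3,3}$-bisubdivision is already in hand, so again $M\coloneqq M_0$ works. Thus the statement follows directly, and the role of \cref{lemma:conformalmoebiusstrips} is precisely to upgrade ``$B$ has \emph{some} conformal $K_{3,3}$-bisubdivision'' to ``$B$ has an $M_0$-conformal one'' for the specific matching $M_0$ we are forced to use by the constraint on $C$.

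The main obstacle I anticipate is the bookkeeping needed when $k\geq 2$ to argue that the $K_{3,3}$-bisubdivision obtained from $L$ by deleting a rung is genuinely $M_0$-conformal: one must verify that $M_0$ restricted to $L$ minus that rung is still a perfect matching of the remaining graph. The lemma tells us the rungs are $M_0$-conformal for $k\geq 2$, meaning each bisubdivided rung is an $M_0$-conformal subpath; when we delete a rung we delete only its internal (degree-two) vertices, and since the rung is $M_0$-conformal those internal vertices are matched among themselves by $M_0$, so their removal leaves $M_0$ perfect on the rest of $L$. Hence $L$ minus a rung is $M_0$-conformal, and being a bisubdivision of $K_{3,3}$, it is the desired structure. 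A minor point to handle cleanly is the degenerate possibility that the $4$-cycle $C$ interacts with $L$, but since we only ever need $C$ to be $M$-conformal — which is guaranteed by the choice of $M_0$ and is preserved because we do not change $M_0$ at all — no such interaction needs to be controlled.
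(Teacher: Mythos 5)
The argument breaks down at the crucial step: your claim that for $k\geq 2$, deleting a single rung from an $M_0$-conformal bisubdivision $L$ of $\mathscr{M}_{4k+2}$ yields a bisubdivision of $K_{3,3}$ is false. Count degree-three vertices: in $\mathscr{M}_{4k+2}$ every vertex has degree three (two base-cycle edges plus one rung), so the corresponding bisubdivision $L$ has exactly $4k+2$ branch vertices. Removing the interior of one bisubdivided rung lowers the degree of precisely two branch vertices to two, leaving $4k$ branch vertices of degree three. A bisubdivision of $K_{3,3}$ has exactly six branch vertices of degree three, and $4k=6$ has no integer solution. So already for $k=2$ the graph you obtain after deleting a rung has eight branch vertices and cannot be a $K_{3,3}$-bisubdivision, conformal or otherwise. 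The conformality bookkeeping you do (internal vertices of the deleted rung are matched among themselves) is correct but moot, since the resulting graph is simply the wrong graph.

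This is exactly why the paper's proof is substantially longer than yours. The paper fixes $M'$ to minimize $k$, and when $k\geq 2$ it does not delete a single rung: it locates two rungs $R_1,R_2$ whose endpoints bound a suitable $M'$-alternating cycle $K$ in $L$ (built from $R_1$, $R_2$, and two arcs of the base cycle), switches $M'$ along $K$ to get a new matching $N$, and then deletes the interiors of \emph{both} $R_1$ and $R_2$; this removes two rungs and four branch vertices, producing a bisubdivision of $\mathscr{M}_{4(k-1)+2}$. It further has to take care that the two bisubdivided edges $P_1,P_2$ carrying the $M$-edges of $C$ survive as bisubdivided edges in the smaller ladder, and that the switching does not touch $M'$ outside $L$ so that $C$ remains conformal. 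Iterating this contradicts minimality of $k$ unless $k=1$, where $\mathscr{M}_6=K_{3,3}$ and you are done, as you correctly observe. Your $k=1$ case is fine, but the induction/minimality step for $k\geq 2$ is missing and cannot be replaced by the single-rung deletion.
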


\begin{proof}
	Let $M'$ be any perfect matching of $B$ for which $C$ is $M'$-conformal.
	By \cref{lemma:conformalmoebiusstrips} there exists an $M'$-conformal bisubdivision $L$ of $\mathscr{M}_{4k+2}$ for some $k\geq 1$.
	In case $k=1$ we are done, so assume $k\geq 2$.
	Let us choose $M'$ such that $k$ is as small as possible.
	We call a path $P$ in $L$ a \emph{bisubdivided edge} if $P$ corresponds to an edge of $\mathscr{M}_{4k+2}$.
	Note that since $C$ is $M'$-conformal, it contains exactly two edges of $M'$.
	Moreover, since $L$ is $M'$-conformal, if $L$ contains a vertex $x$ of $C$, then it also contains the vertex $y$ of $C$ with $xy\in M$.
	Indeed, if a bisubdivided edge $P$ of $L$ contains a vertex $x$ of $C$, then either $x$ is an endpoint of $P$, or $P$ contains the vertex $y$ of $C$ with $xy\in M$.
	Let $\Set{e_1,e_2}=\E{C}\cap M$ and let $P_1$, $P_2$ be the subdivided edges of $L$ such that $e_j\in\E{P_j}$ if $e_j\in\E{L}$.
	If $e_j\notin \E{L}$ for some $j$, let $P_j$ be chosen arbitrarily.
	In case $P_1=P_2$ let us choose $P_2$ to be any non-$P_1$-rung of $L$ instead.
	We show that there exists a perfect matching $N$ of $L$ such that $M\cap\Brace{\E{P_1}\cup\E{P_2}}\subseteq N$ and $L$ contains an $N$-conformal bisubdivision $L'$ of $\mathscr{M}_{4\Brace{k-1}+2}$ such that $P_1$ and $P_2$ are subdivided edges of $L'$.
	Since $M\coloneqq \Brace{M'\setminus \E{L}}\cup N$ is a perfect matching of $B$ for which $C$ is $M$-conformal, this is a contradiction to the choice of $M'$, and thus we must have had $k=1$ in the first place.
	
	Let $x_j$, $y_j$ be the endpoints of $P_j$.
	
	In case $P_1$ and $P_2$ are both rungs, we may assume that $x_1$, $x_2$, $y_1$, $y_2$ appear on $C'$ in the order listed, and $x_1\in V_1$.
	Then $C'$ is divided into four internally disjoint paths $Q_1,\dots,Q_4$ such that $Q_1$ connects $x_1$ to $x_2$, $Q_2$ connects $x_2$ to $y_1$, and so forth.
	Moreover, every rung of $L$ that has an endpoint on $Q_1$ also has an endpoint of $Q_3$,
	similarly for $Q_2$ and $Q_4$.
	Let us call the number of rungs that are different from $P_1$ and $P_2$ and have an endpoint on $Q_j$, the \emph{length} of $Q_j$.
	With $i\geq 2$ at least one of $Q_1$ and $Q_2$ has length at least two.
	Without loss of generality let us assume this to be true for $Q_1$ and thus also for $Q_3$.
	Let $R_1$ and $R_2$ be two rungs whose endpoints on $Q_1$ are internal vertices of $Q_1$ and consecutive, i.\@e.\@ no other rung has an endpoint on the subpath of $Q_1$ connecting $R_1$ to $R_2$.
	Let $Q_1'$ be this subpath and let $Q_3'$ be the corresponding subpath of $Q_3$.
	Then, since $R_1$ and $R_2$ are $M'$-conformal by \cref{lemma:conformalmoebiusstrips}, $K\coloneqq R_1Q_1'R_2Q_2'$ is an $M'$-conformal cycle.
	Let $N\coloneqq \Brace{\Brace{M\cap\E{L}}\setminus\Brace{\E{K}\setminus M}}\cup \Brace{\Brace{\E{K}}\setminus\Brace{M\cap\E{L}}}$, then $R_1$ and $R_2$ are internally $N$-conformal.
	Let $L'$ be the $N$-conformal subgraph of $L$ obtained by deleting all inner vertices of $R_1$ and $R_2$.
	Then $L'$ is a bisubdivision of $\mathscr{M}_{4\Brace{k-1}+2}$ as required.
	
	Now suppose, without loss of generality, that $P_1$ is not a rung of $L$, but $P_2$ is.
	Then one of $x_1$ and $y_1$ is an endpoint of a bisubdivided edge $Q$ of $L$ such that $Q\neq P_1$ and $Q$ is not a rung, but no rung of $L$ which shares an endpoint of $Q$ is $P_2$.
	Let $R_1$ and $R_2$ be those rungs and let $Q'$ be the subdivided edge of $L$ that connects the other two endpoints of $R_1$ and $R_2$.
	Not $K\coloneqq QR_1Q'R_2$ is an $M'$-conformal cycle that does not contain a vertex of $C$.
	We set $N\coloneqq \Brace{\Brace{M\cap\E{L}}\setminus\Brace{\E{K}\setminus M}}\cup \Brace{\Brace{\E{K}}\setminus\Brace{M\cap\E{L}}}$ and define $L'$ as the subgraph of $L$ we obtain by deleting the inner vertices of $R_1$ and $R_2$.
	Then $L'$ is again a bisubdivision of $\mathscr{M}_{4\Brace{k-1}+2}$ as required.
	
	If both $P_1$ and $P_2$ are subpaths of $C'$ we can again find some vertex $z\in\Set{x_1,x_2,y_2,y_2}$ such that $z$ is an endpoint of a bisubdivided edge $Q$ that is a subpath of $C'$ and different from $P_1$ and $P_2$.
	Let $R_1$ and $R_2$ be the two rungs that share endpoints with $Q$ and let $Q'$ be the bisubdivided edge of $L$ that connects the other two endpoints of $R_1$ and $R_2$.
	Note that $z$ can be chosen such that $Q'$ is also different from $P_1$ and $P_2$.
	We define $K$, $N$, and $L'$ as above, and thus the proof is complete.
\end{proof}

So for every $4$-cycle $C$ in a $K_{3,3}$-containing brace there is a perfect matching $M$ such that $C$ is $M$-conformal and there exists an $M$-conformal bisubdivision of $K_{3,3}$ in $B$.
The next step is to show that we may assume $\V{C}$ to be a subset of the vertices of this bisubdivision.

\begin{lemma}\label{lemma:C4inK33}
	Let $B$ be a $K_{3,3}$-containing brace and $C$ a $4$-cycle in $B$, then there exists a perfect matching $M$ of $B$ such that $C$ is $M$-conformal and there is an $M$-conformal bisubdivision $L$ of $K_{3,3}$ with $\V{C}\subseteq\V{L}$.
\end{lemma}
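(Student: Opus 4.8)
The plan is to derive \cref{lemma:C4inK33} from the weaker \cref{lemma:C4andK33} by \emph{absorbing} the vertices of $C$ that are missing from the bisubdivision, using a single local rerouting. First I would apply \cref{lemma:C4andK33} to obtain a perfect matching $M$ of $B$ for which $C$ is $M$-conformal together with an $M$-conformal bisubdivision $L$ of $K_{3,3}$, and over all such pairs $(M,L)$ fix one maximising $\Abs{\V{C}\cap\V{L}}$. If $\V{C}\subseteq\V{L}$ we are done, so I assume not and aim for a contradiction.

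Next I would record a small observation that pins down the situation. Since $C$ is $M$-conformal, $\E{C}\cap M$ has exactly two edges, which together cover all of $\V{C}$. If $v\in\V{C}\cap\V{L}$, then $L$ being $M$-conformal forces the edge of $M$ covering $v$ to have both endpoints in $\V{L}$; that edge is an edge of $C$, so the $C$-neighbour of $v$ matched to $v$ also lies in $\V{L}$. Hence each edge of $\E{C}\cap M$ has either both or neither of its endpoints in $\V{L}$; in particular there is an edge $v_1v_2\in\E{C}\cap M$ with $v_1\in V_1$, $v_2\in V_2$ and $v_1,v_2\notin\V{L}$, and $\Abs{\V{C}\cap\V{L}}\le 2$. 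Since $L$ has at least nine edges, each joining $V_1$ to $V_2$, and at most two vertices of $L$ are forbidden, I may also fix an edge $xy$ of $L$ with $x\in V_1$, $y\in V_2$ and $x,y\notin\V{C}$.

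The core of the argument will be to produce an internally $M$-conformal path $R$ in $B$, internally disjoint from $L$, with endpoints $x$ and $y$ and containing the edge $v_1v_2$ (so that $v_1,v_2$ are interior vertices of $R$); I permit $M$ to be rerouted first within $B-\V{L}$, which disturbs neither the $M$-conformality of $L$ nor that of $C$, provided the edges of $\E{C}\cap M$ lying in $B-\V{L}$ are kept. Granting such an $R$, I would set $L'$ to be $L$ with the edge $xy$ replaced by $R$. Then $L'$ is again a bisubdivision of $K_{3,3}$ (an odd $V_1$--$V_2$ path replaces a $V_1$--$V_2$ edge); and $\V{L'}=\V{L}\cup\Brace{\V{R}\setminus\Set{x,y}}$ is a disjoint union in which every vertex is matched by $M$ inside its own part (for $\V{L}$ because $L$ is $M$-conformal, for $\V{R}\setminus\Set{x,y}$ because $R$ is internally $M$-conformal), hence every vertex of $\V{L'}$ is matched by $M$ inside $\V{L'}$; thus $\V{L'}$ is $M$-conformal and $L'$ is an $M$-conformal subgraph. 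Since $x,y\notin\V{C}$ no vertex of $C$ is lost, so $\V{C}\cap\V{L}\subseteq\V{L'}$, while now also $v_1,v_2\in\V{L'}$. As $C$ stays $M$-conformal, the pair $(M,L')$ contradicts the maximality of $\Abs{\V{C}\cap\V{L}}$; one such step raises this quantity by at least $2$, so no further iteration is needed (even when both edges of $\E{C}\cap M$ lie outside $\V{L}$).

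The hard part will be the construction of $R$. Writing $R=R_1+v_1v_2+R_2$, a colour count forces $R_1$ to join $x$ to $v_2$ and $R_2$ to join $v_1$ to $y$, both internally $M$-conformal, internally disjoint from $L$, and vertex-disjoint from one another --- a $2$-linkage requirement which $2$-extendibility of a brace (\cref{thm:bipartiteextendibility}) does not supply on its own. My plan is to invoke \cref{thm:bipartiteextendibility} to obtain two internally disjoint internally $M$-conformal $x$--$v_2$ paths and two internally disjoint internally $M$-conformal $v_1$--$y$ paths (none of which uses $v_1v_2$, since each is internally $M$-conformal at $v_2$, respectively at $v_1$), then apply the Bipartite Untangling Lemma \cref{lemma:untangletwopaths} to one path from each family --- switching $M$ along any $M$-conformal cycle thereby created, which meets only $B-(\V{L}\cup\V{C})$ and hence leaves all the bookkeeping above intact --- and finally truncate each path at its last vertex on $\V{L}$ to make it internally disjoint from $L$; the freedom in the choice of the edge $xy$, of the rerouting of $M$ on $B-\V{L}$, and of the initial pair $(M,L)$ is what I would use to push this through. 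I expect the genuinely delicate point --- where, as in the other rerouting arguments of this paper, a careful case analysis with accompanying figures is called for --- to be ruling out the residual outcome of the Untangling Lemma in which the two truncated paths share an $M$-conformal subpath and thus cannot be spliced into the single path $R$.
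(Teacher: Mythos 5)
Your opening observation is correct and useful: for $v\in\V{C}\cap\V{L}$, conformality of $L$ forces the $M$-edge covering $v$ to stay inside $\V{L}$, so each edge of $\E{C}\cap M$ lies entirely inside or entirely outside $\V{L}$, and a missing $M$-edge $v_1v_2$ of $C$ exists with both endpoints outside $L$. The overall strategy — absorb the missing part of $C$ by rerouting $L$ through $v_1v_2$ — is also the correct spirit, and matches what the paper does. But the concrete execution has several genuine gaps, the combination of which is essentially the entire content of the paper's proof, so the proposal is not a proof.

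The central problem is the fixed edge $xy$. You choose $xy\in\E{L}$ with $x,y\notin\V{C}$ \emph{before} building the linking paths, and aim to replace exactly that edge with an $x$--$y$ path $R$ through $v_1v_2$ that is internally disjoint from $L$. But $2$-extendibility and the Untangling Lemma only give internally $M$-conformal paths ending at $x$ and $y$ that are free to wander through $\V{L}$. Your proposed fix, ``truncate each path at its last vertex on $\V{L}$,'' changes the $L$-endpoints from $x$ and $y$ to whatever vertices $z,z'\in\V{L}$ the truncation lands on; at that point you are not replacing $xy$ any more, and $z,z'$ need not be adjacent, need not lie on a common bisubdivided edge, and may be branch vertices. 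Turning the truncated pieces into a new $K_{3,3}$-bisubdivision then requires a case analysis on the relative position of $z$ and $z'$ in $L$ (same bisubdivided edge, two bisubdivided edges sharing an endpoint, two disjoint bisubdivided edges), crossed with whether the two pieces are disjoint or overlap in an $M$-conformal subpath. This is precisely the six-case argument the paper carries out; your proposal labels it ``the genuinely delicate point'' and defers it, which is where the proof stops being one. In particular, you do not rule out — and cannot, without doing that analysis — the possibility that after truncation the two pieces land on the same bisubdivided edge or on adjacent ones, where a single ``replace one edge'' operation is simply the wrong move.

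A second, smaller but still real, error: you assert that switching $M$ along cycles produced by the Untangling Lemma ``meets only $B-(\V{L}\cup\V{C})$ and hence leaves all the bookkeeping above intact.'' That is not so. The Untangling Lemma is applied to the untruncated paths from $x$ and $y$, and those paths may well traverse $\V{L}$. The lemma guarantees $M\setminus\E{P_1+P_2}=M'\setminus\E{P_1+P_2}$, which allows edges of $M$ inside $L$ to change. An $M$-alternating cycle through $\V{L}$ that enters along a non-matching edge and leaves along a non-matching edge, once switched, matches a vertex of $\V{L}$ to a vertex outside $\V{L}$ and destroys the $M'$-conformality of $L$. So your invariant ``$L$ remains $M$-conformal throughout'' is not maintained by the proposed manipulations. (Applying the Untangling Lemma \emph{after} truncation would avoid the $\V{L}$ issue but would again move the endpoints, reintroducing the first problem.)

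Finally, note what the paper actually does: it first dispatches the easy case $\Abs{\V{C}\cap\V{L'}}=2$ with a purely local move — the edge $xy$ it replaces is forced to be the unique $M'$-edge of $C$ lying in $L'$, and the replacing path is a subpath of $C$ itself, so internal disjointness from $L'$ is free. Only when $C$ and $L'$ are disjoint does it send paths $P_a,P_b$ from the endpoints $a,b$ of a non-matching edge of $C$ into $L'$, and because the paths start on $C$ their truncation at the first hit of $L'$ preserves the anchored endpoint. Your proposal tries to handle both cases at once by picking a free $L$-edge $xy$ and shooting for it, which is exactly why the truncation step breaks the construction. The proposal would need to be reorganised along these lines (anchor at $C$, not at a pre-chosen $L$-edge) and then supplemented with the landing-point case analysis to become a proof.
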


\begin{proof}
	By \cref{lemma:C4andK33} there exist a perfect matching $M'$ and an $M'$-conformal bisubdivision $L'$ of $K_{3,3}$ in $B$ such that $C$ is $M'$ conformal.
	In case $\V{C}\subseteq\V{L'}$ we are done.
	Next suppose $L'$ contains exactly one of the two edges in $M'\cap\E{C}$, let $xy$ be this edge.
	Then $C$ contains an internally $M'$-conformal path $P$ with $\V{P}=\V{C}$ with endpoints $x$ and $y$.
	Let $M\coloneqq M'\Delta \E{C}$, then $P$ is $M$-conformal and by replacing $xy$ in $L'$ with $P$ we obtain an $M$-conformal bisubdivision $L$ of $K_{3,3}$ as desired.
	So from now on, we may assume $C$ and $L'$ to be vertex disjoint.
	Let $ab\in\E{C}\setminus M'$.
	By using \cref{thm:bipartiteextendibility} and \cref{lemma:untangletwopaths}, we can find two internally $M'$-conformal paths $P_a$ and $P_b$ such that each $P_x$ has $x\in\Set{a,b}$ as an endpoint, has its other endpoint on $L'$ and is otherwise disjoint from $L'$ and $C$.
	Moreover, $P_a$ and $P_b$ are either disjoint, or $P_a\cap P_b$ is an $M'$-conformal path.
	What follows is a case distinction on how $P_a$ and $P_b$ connect $C$ to $L'$.
	For each $x\in\Set{a,b}$ let $s_x$ be the endpoint of $P_x$ on $L'$, and let $U$ be the $M'$-conformal path of length four on $C$ with endpoints $a$ and $b$.
	
	\textbf{Case 1}: $P_a$ and $P_b$ are disjoint and there exists a bisubdivided edge $Q$ of $L'$ containing both $s_a$ and $s_b$.
	
	Since $L'$ is an $M'$-conformal bisubdivision of $K_{3,3}$ and $s_a$ and $s_b$ belong to different colour classes, we can choose $M'$ such that the subpath connecting $s_a$ to $s_b$ on $Q$ is internally $M'$-conformal.
	Then we can simply replace $Q$ by $P_aUP_b$ in order to obtain $L$ as desired.
	
	\textbf{Case 2}: $P_a\cap P_b$ is an $M'$-conformal path, and there exists a bisubdivided edge $Q$ of $L'$ containing both $s_a$ and $s_b$.
	
	Since $L'$ is an $M'$-conformal bisubdivision of $K_{3,3}$ and $s_a$ and $s_b$ belong to different colour classes, we can choose $M'$ such that the subpath $R$ connecting $s_a$ to $s_b$ on $Q$ is internally $M'$-conformal.
	Let $W$ be the internally $M'$-conformal subpath of $P_a+P_b$ with endpoints $s_a$ and $s_b$, then we can replace $Q$ by $W$ in order to obtain a new $M'$-conformal bisubdivision $L''$ of $K_{3,3}$ which meets exactly the requirements of the previous case.
	So by reapplying the arguments from above we can find a conformal bisubdivision $L$ of $K_{3,3}$ as desired.
	
	\textbf{Case 3}: $P_a$ and $P_b$ are disjoint, and there exist bisubdivided edges $Q_a$ and $Q_b$ of $L'$ that share exactly one endpoint such that each $Q_x$ contains $s_x$ for $x\in\Set{a,b}$.
	
	Let $z$ be the common endpoint of $Q_a$ and $Q_b$ and let us assume, without loss of generality, that $z$ belongs to the same colour class as $s_a$.
	We may choose $M'$ such that $Q_a$ is $M'$-conformal.
	Let $R_1$ be the subpath of $Q_a$ that connects the non-$z$-endpoint of $Q_a$ to $s_a$, then $R_1$ is also $M'$-conformal.
	Let $u$ be the non-$z$-endpoint of $Q_b$ and $v$ be the non-$z$-endpoint of the third bisubdivided edge $Q'$ of $L'$ that has $z$ as an endpoint.
	Let $R_2\coloneqq s_aQ_azQ'v$, as well as $R_3$, be the path $P_aUP_bs_bQ_bu$.
	Now $R_2$ and $R_3$ are internally $M'$-conformal and by replacing $Q_a$, $Q_b$ and $Q'$ with $R_1$, $R_2$ and $R_3$ we have found our desired $M'$-conformal bisubdivision of $K_{3,3}$.
	
	\textbf{Case 4}: $P_a\cap P_b$ is an $M'$-conformal path, and there exist bisubdivided edges $Q_a$ and $Q_b$ of $L'$ that share exactly one endpoint such that each $Q_x$ contains $s_x$ for $x\in\Set{a,b}$.
	
	Let $W$ be the internally $M'$-conformal subpath of $P_a+P_b$ with endpoints $s_a$ and $s_b$.
	Let $z$ be the common endpoint of $Q_a$ and $Q_b$ and let us assume, without loss of generality, that $z$ belongs to the same colour class as $s_a$.
	We may choose $M'$ such that $Q_a$ is $M'$-conformal.
	Let $R_1$ be the subpath of $Q_a$ that connects the non-$z$-endpoint of $Q_a$ to $s_a$, then $R_1$ is also $M'$-conformal.
	Let $u$ be the non-$z$-endpoint of $Q_b$ and $v$ be the non-$z$-endpoint of the third bisubdivided edge $Q'$ of $L'$ that has $z$ as an endpoint.
	Let $R_2\coloneqq s_aQ_azQ'v$ as well as $R_3$ be the path $WQ_bu$.
	Now $R_2$ and $R_3$ are internally $M'$-conformal and by replacing $Q_a$, $Q_b$ and $Q'$ with $R_1$, $R_2$ and $R_3$ we have found an $M'$-conformal bisubdivision $L''$ of $K_{3,3}$ together with two disjoint internally $M'$-conformal paths, each linking a vertex of $\Set{a,b}$ to a common bisubdivided edge of $L''$.
	Hence by recurring to the first case, we can finish the argument.
	
	\textbf{Case 5}: $P_a$ and $P_b$ are disjoint and there exist bisubdivided edges $Q_a$ and $Q_b$ of $L'$ that vertex disjoint such that each $Q_x$ contains $s_x$ for $x\in\Set{a,b}$.
	
	Let $Q$ be the unique bisubdivided edge of $L'$ that shares an endpoint, say $v_a$, with $Q_a$ and an endpoint, let us call it $v_b$, with $Q_b$ such that for each $x\in\Set{a,b}$, $v_x$ and $x$ belong to the same colour class of $B$.
	For each $x\in\Set{a,b}$ let $F_x$ be the bisubdivided edge of $L'$ with endpoint $v_x$ that is neither $Q$ nor $Q_x$.
	Moreover, let $R^x_1$ be the subpath of $Q_x$ connecting $s_x$ to the non-$v_x$-endpoint of $Q_x$.
	Since $L'$ is an $M'$-conformal bisubdivision of $K_{3,3}$, we may choose $M'$ such that $Q_a$ and $Q_b$ both are $M'$-conformal.
	Then $R^a_1$ and $R^b_1$ are $M'$-conformal as well.
	For each $x\in\Set{a,b}$ let $R^x_2\coloneqq s_xQ_xv_xF_x$ and let $R$ be the path $P_aUP_b$.
	Then let $L$ be the graph obtained from $L'$ by replacing $Q_a,F_a$, $Q$, $Q_b$, and $F_b$ with the $R_i^x$, $i\in[1,2]$, $x\in\Set{a,b}$, and $R$.
	It is straight forward to check that $L$ is an $M'$-conformal bisubdivision of $K_{3,3}$ as required by the assertion.
	
	\textbf{Case 6}: $P_a\cap P_b$ is an $M'$-conformal path and there exist bisubdivided edges $Q_a$ and $Q_b$ of $L'$ that vertex disjoint such that each $Q_x$ contains $s_x$ for $x\in\Set{a,b}$.
	
	As before with the even numbered cases let $W$ be the internally $M'$-conformal subpath of $P_a+P_b$ with endpoints $s_a$ and $s_b$.
	We then repeat the construction from Case 5 in order to obtain an $M'$-conformal bisubdivision $L''$ of $K_{3,3}$ together with two disjoint internally $M'$-conformal paths that meet the requirements of the first case.
	By reapplying the arguments of the first case, we finally obtain $L$ as desired, and thus our proof is complete.
\end{proof}

Having established \cref{lemma:C4inK33}, the next step on the agenda is to analyse how the edges of $\E{C}\cap M$ can occur in the $M$-conformal bisubdivision $L$ of $K_{3,3}$.
The goal is to identify the cases where we immediately find a conformal cross over $C$, and which cases cannot occur in the first place.

Let $B$ be a $K_{3,3}$-containing brace, $C$ a $4$-cycle in $B$ and $M$ a perfect matching of $B$ such that there exists an $M$-conformal bisubdivision $L$ of $K_{3,3}$ in $B$ for which $\V{C}\subseteq\V{L}$.
Let $\Set{ab,a'b'}=\E{C}\cap M$ such that $a,a'\in V_1$ and let $P,Q$ be two odd length $M$-alternating paths where each $X\in\Set{P,Q}$ has endpoints $a_X,b_X$ such that $a_X\in V_1$.
We say that $e\in\Set{ab,a'b'}$ \emph{occurs} on $P$, if $e\in\E{P}$ and it \emph{occurs in reverse} on $P$ if $P-e$ consists of two paths of even length.
Please note that in case both of $ab$ and $a'b'$ occur on $P$ such that exactly one of them occurs in reverse, then no perfect matching $M'$ for which $P$ is $M'$-conformal or internally $M'$-conformal can contain both edges.
To see this simply observe that an edge occurring in reverse on an $M$-alternating path $P$ belongs to $M$ if and only if $P$ is $M$-conformal.

\begin{observation}\label{obs:homogenousoccurence}
	Let $B$ be a $K_{3,3}$-containing brace, $C$ a $4$-cycle in $B$ and $M$ a perfect matching of $B$ such that there exists an $M$-conformal bisubdivision $L$ of $K_{3,3}$ in $B$ for which $\V{C}\subseteq\V{L}$.
	Let $\Set{ab,a'b'}=\E{C}\cap M$ such that $a,a'\in V_1$ and let $P$ be a bisubdivided edge of $L$.
	If both $ab$ and $a'b'$ occur on $P$, then either both or none of them occurs in reverse.
\end{observation}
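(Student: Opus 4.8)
The plan is to pin down the structure of the bisubdivided edge $P$ inside the $M$-conformal bisubdivision $L$ of $K_{3,3}$ and then read off the statement from two elementary parity computations. First I would observe that the two endpoints of $P$ are branch vertices of $L$ lying in different colour classes (they correspond to adjacent vertices of $K_{3,3}$), and that $P$ has odd length, say $\ell$: each of its $\ell-1$ internal vertices has to be removed by a bicontraction in order to recover the corresponding edge of $K_{3,3}$, and every bicontraction removes two vertices, so $\ell-1$ is even. Label the vertices of $P$ by $w_0,\dots,w_\ell$ and its edges by $e_i=w_{i-1}w_i$. Since $L$ is $M$-conformal, $\E{L}\cap M$ is a perfect matching of $L$, so every internal vertex $w_i$ (which has degree two in $L$) is covered by one of $e_i,e_{i+1}$; hence $M\cap\E{P}$ covers all internal vertices of $P$. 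As $P$ has an even number of vertices, $M\cap\E{P}$ covers an even number of the two endpoints of $P$, that is, both or neither, so $P$ is either $M$-conformal, in which case $M\cap\E{P}=\Set{e_1,e_3,\dots,e_\ell}$, or internally $M$-conformal, in which case $M\cap\E{P}=\Set{e_2,e_4,\dots,e_{\ell-1}}$.

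Next I would record the behaviour of ``occurring in reverse'': deleting $e_i$ splits $P$ into subpaths of lengths $i-1$ and $\ell-i$, and since $\ell$ is odd these are simultaneously even exactly when $i$ is odd. Thus an edge $e_i$ of $P$ occurs in reverse on $P$ if and only if $i$ is odd.

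To conclude, I would combine the two observations. By hypothesis $ab$ and $a'b'$ both lie in $M$ and both occur on $P$, so each of them equals some $e_i$ with $e_i\in M\cap\E{P}$. If $P$ is $M$-conformal, every edge of $M\cap\E{P}$ sits at an odd position, so both $ab$ and $a'b'$ occur in reverse; if $P$ is internally $M$-conformal, every edge of $M\cap\E{P}$ sits at an even position, so neither occurs in reverse. In either case either both or none of $ab,a'b'$ occurs in reverse on $P$, which is the claim. I do not expect a genuine obstacle: the only step requiring care is the justification that a bisubdivided edge of an $M$-conformal bisubdivision is itself $M$-conformal or internally $M$-conformal --- the ``even number of covered endpoints'' argument above --- together with fixing the convention that bisubdivided edges have odd length.
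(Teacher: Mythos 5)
Your proof is correct and follows essentially the same reasoning the paper uses implicitly: the paper states this as an unproved observation, but the key fact is spelled out in the remark immediately preceding it (an edge occurring in reverse on an $M$-alternating path $P$ belongs to $M$ if and only if $P$ is $M$-conformal), and you have simply unfolded that remark into the explicit parity computation — a bisubdivided edge of an $M$-conformal bisubdivision has odd length and must itself be $M$-conformal or internally $M$-conformal, so the positions of its $M$-edges have a fixed parity, forcing all of $ab,a'b'$ to be simultaneously in reverse or simultaneously not.
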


If $P$ and $Q$ have a common endpoint $z$ and are otherwise disjoint, we say that $ab$ and $a'b'$ are \emph{split} over $P$ and $Q$ if exactly one of $ab$ and $a'b'$ occurs on $P$ and the other one occurs on $Q$.
They are said to be \emph{split nicely} if the shortest $\Set{a,b}$-$\Set{a',b'}$-subpath $R$ of $PzQ$ has even length, and $z\in\V{R}$ does not share the colour of the endpoints of $R$.
In case $R$ is even, and $z$ belongs to the same colour class as the two endpoints of $R$ we say $ab$ and $a'b'$ are \emph{split completely} over $P$ and $Q$.
Please note that, if $ab$ and $a'b'$ are split completely over $P$ and $Q$, then each of the two edges occurs in reverse on its respective path.
Moreover, by the discussion above, $P$ and $Q$ would need both be $M$-conformal in order to guarantee $ab,a'b'\in M$.
Hence they can never be split completely over two bisubdivided edges of $L$ that share an endpoint.

\begin{observation}\label{obs:splitcompletely}
	Let $B$ be a $K_{3,3}$-containing brace, $C$ a $4$-cycle in $B$, and $M$ a perfect matching of $B$ such that there exists an $M$-conformal bisubdivision $L$ of $K_{3,3}$ in $B$ for which $\V{C}\subseteq\V{L}$.
	Let $\Set{ab,a'b'}=\E{C}\cap M$ such that $a,a'\in V_1$ and let $P,Q$ be two bisubdivided edges of $L$ sharing a single endpoint.
	If $ab$ and $a'b'$ are split over $P$ and $Q$, then they are not completely split.
\end{observation}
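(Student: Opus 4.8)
The plan is to argue by contradiction. Suppose $ab$ and $a'b'$ are split over the bisubdivided edges $P$ and $Q$ of $L$ and, contrary to the claim, split completely; write $z$ for the single common endpoint of $P$ and $Q$, and assume without loss of generality that $ab$ occurs on $P$ (so $ab\in\E{P}$) and $a'b'$ occurs on $Q$.

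The first step is to extract from ``split completely'' the position of $ab$ on $P$. Let $R$ be the shortest $\Set{a,b}$-$\Set{a',b'}$-subpath of $PzQ$; since $z$ is an endpoint of both $P$ and $Q$, $R$ is the concatenation of a subpath $xPz$ of $P$ (where $x\in\Set{a,b}$ is the vertex of $\Set{a,b}$ closest to $z$ along $P$) with a subpath $zQx'$ of $Q$ (with $x'\in\Set{a',b'}$ analogous), and $z\in\V{R}$. By definition of split completely, $R$ has even length and $z$ lies in the colour class of $x$, so $xPz$ has even length. Distinguishing the two cases $z\in V_1$ and $z\in V_2$ and using $a\in V_1$, $b\in V_2$ together with $P$ having odd length, one checks in each case that both of the two paths into which $ab$ splits $P$ are of even length; that is, $ab$ occurs in reverse on $P$. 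The symmetric argument gives that $a'b'$ occurs in reverse on $Q$. (This is exactly the remark made immediately before the statement; I would reproduce the two-line parity bookkeeping, noting that the degenerate cases where $z$ coincides with an end of $R$ go through verbatim.)

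The second step invokes the observation, already recorded in the text, that an edge occurring in reverse on an $M$-alternating path belongs to $M$ if and only if that path is $M$-conformal. Every bisubdivided edge of the $M$-conformal bisubdivision $L$ is an odd-length $M$-alternating path: its interior vertices have degree two and are therefore matched along the path, which forces the restriction of $M$ to it to be of type $1$ or type $2$. Hence, since $ab\in M$ occurs in reverse on $P$, the path $P$ must be $M$-conformal, and likewise $Q$ must be $M$-conformal.

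Finally, the contradiction: because $P$ and $Q$ share only the vertex $z$ and are otherwise disjoint, the unique edge of $\E{P}$ at $z$ and the unique edge of $\E{Q}$ at $z$ are distinct; $M$-conformality of $P$ puts the former into $M$ and $M$-conformality of $Q$ puts the latter into $M$, so $M$ contains two distinct edges meeting at $z$, contradicting that $M$ is a matching. Therefore $ab$ and $a'b'$ cannot be split completely over $P$ and $Q$. The only genuinely delicate point is the first step — turning ``split completely'' into ``$ab$ occurs in reverse on $P$'' — since that is where all the parity tracking lives (which of $a,b$ is nearer $z$, the colour class of $z$, and the fact that $R$ is chosen shortest so that nothing of $\Set{a,b}$ lies strictly between $z$ and $x$ along $P$); everything after it is a one-line matching argument.
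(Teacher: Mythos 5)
Your proof is correct and takes precisely the paper's route: the paper itself proves the observation in the remarks immediately preceding it (split completely $\Rightarrow$ both edges occur in reverse $\Rightarrow$ both $P$ and $Q$ would have to be $M$-conformal, which is impossible since they share the endpoint $z$), and your writeup simply supplies the parity bookkeeping and the type-$1$/type-$2$ classification of bisubdivided edges that the paper leaves implicit. No gap.
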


\begin{lemma}\label{lemma:easycrossesinK33}
	Let $B$ be a $K_{3,3}$-containing brace, $C$ a $4$-cycle in $B$ and $M$ a perfect matching of $B$ such that there exists an $M$-conformal bisubdivision $L$ of $K_{3,3}$ in $B$ for which $\V{C}\subseteq\V{L}$.
	Let $\Set{ab,a'b'}=\E{C}\cap M$ such that $a,a'\in V_1$ and let $P,Q$ be two bisubdivided edges of $L$ such that $ab$ occurs on $P$ and $a'b'$ occurs on $Q$.
	If $P$ and $Q$ are disjoint, or $ab$ and $a'b'$ are split nicely over $P$ and $Q$, there exist paths $R_1$ and $R_2$ in $L$ that form a conformal cross over $C$.
\end{lemma}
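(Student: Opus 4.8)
The plan is to build the two paths of the cross explicitly inside $L$. First note that, as $\V{C}=\Set{a,b,a',b'}$ has only four vertices, the four pegs of any cross over $C$ are forced to be exactly $a,b,a',b'$; since $C$ is bipartite these appear on $C$ in the cyclic order $a,b,a',b'$, so, up to renaming the two paths, a cross over $C$ consists of a path $R_1$ joining $a$ to $a'$ and a path $R_2$ joining $b$ to $b'$, both of even length. Since $ab\in\E{P}$ and $a,b$ are consecutive on the bisubdivided edge $P$, the edge $ab$ splits $P$ into the subpath $P_a$ from $a$ to one endpoint $x_P$ of $P$ (avoiding $b$) and the subpath $P_b$ from $b$ to the other endpoint $y_P$ (avoiding $a$), with $P=P_a+ab+P_b$ and $\V{P_a}\cap\V{P_b}=\emptyset$; analogously $a'b'$ splits $Q$ into $Q_{a'}$ (from $a'$ to $x_Q$) and $Q_{b'}$ (from $b'$ to $y_Q$). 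The two paths will be obtained by concatenating $P_a$, an $x_P$-$x_Q$-path $S_1$, and $Q_{a'}$ (traversed from $x_Q$ to $a'$) to form $R_1$, and $P_b$, a $y_P$-$y_Q$-path $S_2$, and $Q_{b'}$ (from $y_Q$ to $b'$) to form $R_2$, where $S_1$ and $S_2$ are concatenations of entire bisubdivided edges of $L$ that avoid the interiors of $P$ and $Q$ and are chosen vertex disjoint. Because $\V{P_a}\cap\V{P_b}=\emptyset=\V{Q_{a'}}\cap\V{Q_{b'}}$ and the $S_i$ avoid the interiors of $P$ and $Q$, such a choice automatically makes $R_1,R_2$ vertex disjoint and internally disjoint from $C$, with the prescribed endpoints $a,a'$ and $b,b'$.

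The combinatorial heart is producing the disjoint paths $S_1,S_2$, and it is here that the hypothesis enters. Regard $L$ as a bisubdivision of $K_{3,3}$ with branch vertices $u_1,u_2,u_3$ and $w_1,w_2,w_3$. If $P$ and $Q$ are vertex disjoint, they are bisubdivisions of two independent edges of $K_{3,3}$, say $u_{i_1}w_{j_1}$ and $u_{i_2}w_{j_2}$ with $i_1\neq i_2$ and $j_1\neq j_2$; then $\Set{x_P,y_P}=\Set{u_{i_1},w_{j_1}}$ and $\Set{x_Q,y_Q}=\Set{u_{i_2},w_{j_2}}$, the labels being fixed by which side of $P$ the vertex $a$ lies on, and similarly for $Q$. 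According to whether $\Set{x_P,x_Q}$ lies inside a single colour class or is split between the classes, I would route $S_1$ through the third branch vertex on that side (via the two bisubdivided edges $u_{i_1}w_{j_3}$ and $w_{j_3}u_{i_2}$, or via the single edge $u_{i_1}w_{j_2}$) and $S_2$ symmetrically through the third branch vertex on the other side; in each of the two sub-cases one checks directly that the bisubdivided edges used by $S_1$ are disjoint from those used by $S_2$ and that no branch vertex is shared. If instead $ab$ and $a'b'$ are split nicely over $P$ and $Q$, then $P$ and $Q$ share a single branch vertex $z$, and unwinding the definition of ``split nicely'' (with \cref{obs:splitcompletely} ruling out the ``split completely'' alternative) shows that $z$ is simultaneously the $a$-side endpoint of $P$ and the $a'$-side endpoint of $Q$, i.e.\ $x_P=x_Q=z$ (or symmetrically $y_P=y_Q=z$); this is exactly what prevents $S_1$ and $S_2$ from being forced to meet at $z$, which is what would happen if $z$ lay on ``mixed'' sides. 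Then $S_1$ is trivial, $R_1=P_a+Q_{a'}$ is precisely the even-length path witnessing that $ab,a'b'$ are split nicely, and $S_2$ is routed from $y_P$ to $y_Q$ through the two remaining branch vertices, avoiding $z$, exactly as in the disjoint case.

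It remains to see that $R_1,R_2$ form a \emph{conformal} cross. I would take the perfect matching $M'$ of $B$ consisting of: $M$ restricted to $\V{B}\setminus\V{L}$; on $R_1$ the unique matching alternating from $a$ (so the end-edge at $a$ lies in $M'$) and on $R_2$ the unique matching alternating from $b$; the edge $a'b'\in\E{C}$; and, on every bisubdivided edge of $L$ disjoint from $R_1\cup R_2$, the matching of its interior by consecutive edges. These pieces cover exactly $\V{R_1}\cup\V{R_2}=\V{C+R_1+R_2}$ by edges lying inside $C+R_1+R_2$, and the remaining vertices of $L$ — the even-order interiors of the unused bisubdivided edges together with the at most two branch vertices missed by $R_1\cup R_2$, which can be matched ``inward'' along the bisubdivided edge joining those two branch vertices — are matched off as well, so $M'$ is indeed a perfect matching. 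Consequently $R_1,R_2$ are $M'$-alternating of the common type $3$, $M'\cap\E{C}=\Set{a'b'}$, and $\V{C+R_1+R_2}$ is $M'$-conformal, so $R_1,R_2$ form a conformal cross over $C$; the resulting picture is of the kind shown in \cref{fig:goodcrossinK33}.

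The main obstacle I expect is exactly this last point: coordinating the routing of $S_1,S_2$ with the choice of $M'$ so that the leftover part of $L$ really does carry a perfect matching, and carefully running through the sub-cases of how $x_P,x_Q,y_P,y_Q$ distribute over the two colour classes and of which side of $P$ (resp.\ $Q$) carries $a$ (resp.\ $a'$). Once the routing is fixed this is pure bookkeeping, and the disjoint-paths construction in $K_{3,3}$ is routine.
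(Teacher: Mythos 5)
Your construction is essentially the paper's proof written out in words: both reduce to the same four sub-cases (nicely split, and disjoint with zero, one, or both of $ab,a'b'$ reversed) and exhibit $R_1,R_2$ together with a perfect matching of $B$ inside $L$ making $C+R_1+R_2$ conformal. The paper defers the routing and the matching entirely to Figure~\ref{fig:easycrosses}; you spell out symbolically exactly what those pictures encode (the $P_a,P_b,Q_{a'},Q_{b'}$ decomposition, the choice of $S_1,S_2$ through the remaining branch vertices according to the colour classes of $x_P,x_Q,y_P,y_Q$, and the matching of the leftover interior vertices and at most two missed branch vertices), so the approaches coincide.
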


\begin{figure}[h!]
	\centering
	\begin{subfigure}{0.25\textwidth}
		\centering
		\begin{tikzpicture}
			\pgfdeclarelayer{background}
			\pgfdeclarelayer{foreground}
			\pgfsetlayers{background,main,foreground}
			
			\foreach \x in {0,2,4}
			{
				\node[v:mainempty] () at (\x*60:12mm){};
			}
			\foreach \x in {1,3,5}
			{
				\node[v:main] () at (\x*60:12mm){};
			}
			
			\node[v:mainemptygreen] () at (80:10.7mm){};
			\node () at (80:14mm){$b$};
			\node[v:maingreen] () at (100:10.7mm){};
			\node () at (100:13.5mm){$a$};
			\node[v:maingreen] () at (140:10.7mm){};
			\node () at (140:14mm){$a'$};
			\node[v:mainemptygreen] () at (160:10.7mm){};
			\node () at (160:13.5mm){$b'$};
			
			\node[blue] () at (90:7.8mm){$C$};

			\begin{pgfonlayer}{background}
				
				\draw[e:marker,myLightBlue] (80:10.7mm) -- (60:12mm); 
				\draw[e:marker,myLightBlue] (60:12mm) -- (240:12mm);
				\draw[e:marker,myLightBlue] (240:12mm) -- (180:12mm);
				\draw[e:marker,myLightBlue] (180:12mm) -- (160:10.7mm);
				
				\draw[e:marker,myGreen] (140:10.7mm) -- (120:12mm);
				\draw[e:marker,myGreen] (120:12mm) -- (100:10.7mm);
				
				\draw[e:main] (60:12mm) -- (120:12mm);
				
				\draw[e:main,gray] (0:12mm) -- (60:12mm);
				\draw[e:main,gray] (240:12mm) -- (300:12mm);
				\draw[e:main,gray] (180:12mm) -- (240:12mm);
				
				\draw[e:main,gray] (300:12mm) -- (120:12mm);
				
				\draw[e:main,gray] (0:12mm) -- (180:12mm);
				
				\draw[e:main,gray] (140:10.7mm) -- (160:10.7mm);
				
				\draw[e:main,gray] (60:12mm) -- (80:10.7mm);
				
				\draw[e:main,gray] (120:12mm) -- (100:10.7mm);
				
				\draw[e:main,blue,densely dashed] (140:10.7mm) -- (80:10.7mm);
				
				\draw[e:main,blue,densely dashed] (160:10.7mm) -- (100:10.7mm);

				\draw[e:coloredborder] (300:12mm) -- (0:12mm);
				\draw[e:coloredthin,color=BostonUniversityRed] (300:12mm) -- (0:12mm);
				
				\draw[e:coloredborder] (240:12mm) -- (60:12mm);
				\draw[e:coloredthin,color=BostonUniversityRed] (240:12mm) -- (60:12mm);
				
				\draw[e:coloredborder] (80:10.7mm) -- (100:10.7mm);
				\draw[e:coloredthin,color=BostonUniversityRed] (80:10.7mm) -- (100:10.7mm);
				
				\draw[e:coloredborder] (120:12mm) -- (140:10.7mm);
				\draw[e:coloredthin,color=BostonUniversityRed] (120:12mm) -- (140:10.7mm);
				
				\draw[e:coloredborder] (180:12mm) -- (160:10.7mm);
				\draw[e:coloredthin,color=BostonUniversityRed] (180:12mm) -- (160:10.7mm);

			\end{pgfonlayer}
		\end{tikzpicture}
	\end{subfigure}
	\begin{subfigure}{0.24\textwidth}
		\centering
		\begin{tikzpicture}
			\pgfdeclarelayer{background}
			\pgfdeclarelayer{foreground}
			\pgfsetlayers{background,main,foreground}
			
			\foreach \x in {0,2,4}
			{
				\node[v:mainempty] () at (\x*60:12mm){};
			}
			\foreach \x in {1,3,5}
			{
				\node[v:main] () at (\x*60:12mm){};
			}
			
			\node[v:mainemptygreen] () at (80:10.7mm){};
			\node () at (80:14mm){$b$};
			\node[v:maingreen] () at (100:10.7mm){};
			\node () at (100:13.5mm){$a$};
			
			\node[v:maingreen] () at (0:4mm){};
			\node () at (0.5,-0.2){$a'$};
			\node[v:mainemptygreen] () at (180:4mm){};
			\node () at (-0.5,-0.25){$b'$};
			
			\node[blue] () at (90:7.8mm){$C$};

			\begin{pgfonlayer}{background}
				
				\draw[e:marker,myLightBlue] (80:10.7mm) -- (60:12mm); 
				\draw[e:marker,myLightBlue] (60:12mm) -- (240:12mm);
				\draw[e:marker,myLightBlue] (240:12mm) -- (180:12mm);
				\draw[e:marker,myLightBlue] (180:12mm) -- (180:4mm);    
				
				\draw[e:marker,myGreen] (100:10.7mm) -- (120:12mm);
				\draw[e:marker,myGreen] (120:12mm) -- (300:12mm);
				\draw[e:marker,myGreen] (0:12mm) -- (300:12mm);
				\draw[e:marker,myGreen] (0:12mm) -- (0:4mm);
				
				\draw[e:main,gray] (0:12mm) -- (60:12mm);
				\draw[e:main,gray] (240:12mm) -- (300:12mm);
				\draw[e:main,gray] (180:12mm) -- (240:12mm);
				
				\draw[e:main,gray] (300:12mm) -- (0:12mm);
				
				\draw[e:main,gray] (120:12mm) -- (180:12mm);
				
				\draw[e:main,gray] (60:12mm) -- (80:10.7mm);
				
				\draw[e:main,gray] (120:12mm) -- (100:10.7mm);
				
				\draw[e:main,gray] (0:4mm) -- (180:4mm);
				
				\draw[e:main,blue,densely dashed] (100:10.7mm) -- (180:4mm);
				\draw[e:main,blue,densely dashed] (80:10.7mm) -- (0:4mm);

				\draw[e:coloredborder] (300:12mm) -- (120:12mm);
				\draw[e:coloredthin,color=BostonUniversityRed] (300:12mm) -- (120:12mm);
				
				\draw[e:coloredborder] (240:12mm) -- (60:12mm);
				\draw[e:coloredthin,color=BostonUniversityRed] (240:12mm) -- (60:12mm);
				
				\draw[e:coloredborder] (80:10.7mm) -- (100:10.7mm);
				\draw[e:coloredthin,color=BostonUniversityRed] (80:10.7mm) -- (100:10.7mm);
				
				\draw[e:coloredborder] (0:12mm) -- (0:4mm);
				\draw[e:coloredthin,color=BostonUniversityRed] (0:12mm) -- (0:4mm);
				
				\draw[e:coloredborder] (180:12mm) -- (180:4mm);
				\draw[e:coloredthin,color=BostonUniversityRed] (180:12mm) -- (180:4mm);

			\end{pgfonlayer}
		\end{tikzpicture}
	\end{subfigure}
	\begin{subfigure}{0.24\textwidth}
		\centering
		\begin{tikzpicture}
			\pgfdeclarelayer{background}
			\pgfdeclarelayer{foreground}
			\pgfsetlayers{background,main,foreground}
			
			\foreach \x in {0,2,4}
			{
				\node[v:mainempty] () at (\x*60:12mm){};
			}
			\foreach \x in {1,3,5}
			{
				\node[v:main] () at (\x*60:12mm){};
			}
			
			\node[v:mainemptygreen] () at (80:10.7mm){};
			\node () at (80:14mm){$b$};
			\node[v:maingreen] () at (100:10.7mm){};
			\node () at (100:13.5mm){$a$};
			
			\node[v:mainemptygreen] () at (0:4mm){};
			\node () at (0.5,-0.25){$b'$};
			\node[v:maingreen] () at (180:4mm){};
			\node () at (-0.5,-0.25){$a'$};
			
			\node[v:maingray] () at (0:8mm){};
			\node[v:mainemptygray] () at (180:8mm){};
			
			\node[blue] () at (0.25,0.8){$C$};
			
			\begin{pgfonlayer}{background}
				
				\draw[e:marker,myGreen] (100:10.7mm) -- (120:12mm);
				\draw[e:marker,myGreen] (180:12mm) -- (120:12mm);
				\draw[e:marker,myGreen] (180:12mm) -- (180:4mm);
				
				\draw[e:marker,myLightBlue] (80:10.7mm) -- (60:12mm);
				\draw[e:marker,myLightBlue] (60:12mm) -- (0:12mm);
				\draw[e:marker,myLightBlue] (0:12mm) -- (0:4mm);
				
				\draw[e:main,gray] (180:12mm) -- (240:12mm);
				
				\draw[e:main,gray] (300:12mm) -- (0:12mm);
				
				\draw[e:main,gray] (300:12mm) -- (120:12mm);
				
				\draw[e:main,gray] (240:12mm) -- (60:12mm);
				
				\draw[e:main,gray] (60:12mm) -- (80:10.7mm);
				
				\draw[e:main,gray] (120:12mm) -- (100:10.7mm);
				
				\draw[e:main,gray] (0:4mm) -- (180:4mm);
				
				\draw[e:main,gray] (0:8mm) -- (0:12mm);
				
				\draw[e:main,gray] (180:8mm) -- (180:12mm);   
				
				\draw[e:main,blue,densely dashed] (100:10.7mm) -- (0:4mm);
				\draw[e:main,blue,densely dashed] (80:10.7mm) -- (180:4mm);
				
				\draw[e:coloredborder] (0:12mm) -- (60:12mm);
				\draw[e:coloredthin,color=BostonUniversityRed] (0:12mm) -- (60:12mm);
				
				\draw[e:coloredborder] (240:12mm) -- (300:12mm);
				\draw[e:coloredthin,color=BostonUniversityRed] (240:12mm) -- (300:12mm);
				
				\draw[e:coloredborder] (120:12mm) -- (180:12mm);
				\draw[e:coloredthin,color=BostonUniversityRed] (120:12mm) -- (180:12mm);
				
				\draw[e:coloredborder] (80:10.7mm) -- (100:10.7mm);
				\draw[e:coloredthin,color=BostonUniversityRed] (80:10.7mm) -- (100:10.7mm);
				
				\draw[e:coloredborder] (0:4mm) -- (0:8mm);
				\draw[e:coloredthin,color=BostonUniversityRed] (0:4mm) -- (0:8mm);
				
				\draw[e:coloredborder] (180:4mm) -- (180:8mm);
				\draw[e:coloredthin,color=BostonUniversityRed] (180:4mm) -- (180:8mm);

			\end{pgfonlayer}
		\end{tikzpicture}
	\end{subfigure}
	\begin{subfigure}{0.24\textwidth}
		\centering
		\begin{tikzpicture}
			\pgfdeclarelayer{background}
			\pgfdeclarelayer{foreground}
			\pgfsetlayers{background,main,foreground}
			
			\foreach \x in {0,2,4}
			{
				\node[v:mainempty] () at (\x*60:12mm){};
			}
			\foreach \x in {1,3,5}
			{
				\node[v:main] () at (\x*60:12mm){};
			}
			
			\node[v:maingreen] () at (80:10.7mm){};
			\node () at (80:13.5mm){$a$};
			\node[v:mainemptygreen] () at (100:10.7mm){};
			\node () at (100:14mm){$b$};
			
			\node[v:mainemptygreen] () at (0:4mm){};
			\node () at (0.5,-0.25){$b'$};
			\node[v:maingreen] () at (180:4mm){};
			\node () at (-0.5,-0.25){$a'$};
			
			\node[v:maingray] () at (0:8mm){};
			\node[v:mainemptygray] () at (180:8mm){};
			
			\node[v:maingray] () at (110:11.2mm){};
			\node[v:mainemptygray] () at (70:11.2mm){};
			
			\node[blue] () at (90:7.8mm){$C$};
			
			\begin{pgfonlayer}{background}
				
				\draw[e:marker,myGreen] (80:10.7mm) -- (60:12mm); 
				\draw[e:marker,myGreen] (60:12mm) -- (240:12mm);
				\draw[e:marker,myGreen] (240:12mm) -- (180:12mm);
				\draw[e:marker,myGreen] (180:12mm) -- (180:4mm);    
				
				\draw[e:marker,myLightBlue] (100:10.7mm) -- (120:12mm);
				\draw[e:marker,myLightBlue] (120:12mm) -- (300:12mm);
				\draw[e:marker,myLightBlue] (0:12mm) -- (300:12mm);
				\draw[e:marker,myLightBlue] (0:12mm) -- (0:4mm);
				
				\draw[e:main,gray] (0:12mm) -- (60:12mm);
				\draw[e:main,gray] (240:12mm) -- (300:12mm);
				
				\draw[e:main,gray] (300:12mm) -- (120:12mm);
				
				\draw[e:main,gray] (240:12mm) -- (60:12mm);
				
				\draw[e:main,gray] (120:12mm) -- (180:12mm);
				
				\draw[e:main,gray] (0:4mm) -- (180:4mm);
				
				\draw[e:main,gray] (0:8mm) -- (0:12mm);
				
				\draw[e:main,gray] (180:8mm) -- (180:12mm);
				
				\draw[e:main,gray] (110:11.2mm) -- (100:10.7mm);
				\draw[e:main,gray] (70:11.2mm) -- (80:10.7mm);
				
				\draw[e:main,blue,densely dashed] (100:10.7mm) -- (180:4mm);
				\draw[e:main,blue,densely dashed] (80:10.7mm) -- (0:4mm);

				\draw[e:coloredborder] (180:12mm) -- (240:12mm);
				\draw[e:coloredthin,color=BostonUniversityRed] (180:12mm) -- (240:12mm);
				
				\draw[e:coloredborder] (300:12mm) -- (0:12mm);
				\draw[e:coloredthin,color=BostonUniversityRed] (300:12mm) -- (0:12mm);
				
				\draw[e:coloredborder] (80:10.7mm) -- (100:10.7mm);
				\draw[e:coloredthin,color=BostonUniversityRed] (80:10.7mm) -- (100:10.7mm);
				
				\draw[e:coloredborder] (0:4mm) -- (0:8mm);
				\draw[e:coloredthin,color=BostonUniversityRed] (0:4mm) -- (0:8mm);
				
				\draw[e:coloredborder] (180:4mm) -- (180:8mm);
				\draw[e:coloredthin,color=BostonUniversityRed] (180:4mm) -- (180:8mm);
				
				\draw[e:coloredborder] (120:12mm) -- (110:11.2mm);
				\draw[e:coloredthin,color=BostonUniversityRed] (120:12mm) -- (110:11.2mm);
				
				\draw[e:coloredborder] (60:12mm) -- (70:11.2mm);
				\draw[e:coloredthin,color=BostonUniversityRed] (60:12mm) -- (70:11.2mm);

			\end{pgfonlayer}
		\end{tikzpicture}
	\end{subfigure}
	
	\caption{The conformal crosses over the $4$-cycle $C$ in a bisubdivision of $K_{3,3}$ from \cref{lemma:easycrossesinK33}.} 
	\label{fig:easycrosses}
\end{figure}
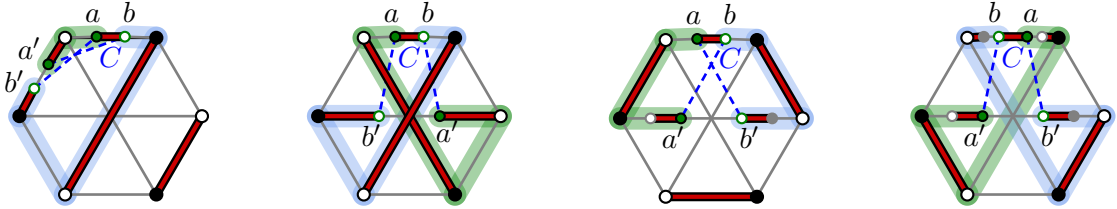

\begin{proof}
	The proof is essentially another case distinction over the following cases:
	\begin{enumerate}
		\item $ab$ and $a'b'$ are nicely split over $P$ and $Q$,
		\item $P$ and $Q$ are disjoint and neither $ab$ nor $a'b'$ occurs in reverse on its respective path,
		\item $P$ and $Q$ are disjoint and, without loss of generality, $a'b'$ occurs in reverse on $Q$, and
		\item $P$ and $Q$ are disjoint and both, $ab$ and $a'b'$, occur in reverse on their respective path.
	\end{enumerate}
	The perfect matchings of $L$ together with the paths $R_1$ and $R_2$ are illustrated in \cref{fig:easycrosses}.
	Please note that the copies of $K_{3,3}$ depicted in the figure are in fact bisubdivisions.
	Where necessary, additional subdivision vertices are drawn, but in general, the edges depicted in a light grey may be subdivided an arbitrary, but even, number of times.
	If in order to depict the respective perfect matching, a bisubdivided edge is marked and bold, this means that the respective path is $M$-conformal, while an unmarked bisubdivided edge represents an internally $M$-conformal path.
\end{proof}

\begin{figure}[h!]
	\centering
	\begin{subfigure}[b]{0.35\textwidth}
		\centering
		\begin{tikzpicture}
			\pgfdeclarelayer{background}
			\pgfdeclarelayer{foreground}
			\pgfsetlayers{background,main,foreground}
			
			\node () at (0,0){ordered};
			
			\node[blue] () at (0,0.8){$C$};
			\node[blue] () at (0,-2.2){$C$};

			\node[v:mainempty] () at (1.5,1.5){};
			\node[v:main] () at (-1.5,1.5){};
			\node[v:main] () at (0.8,0.5){};
			\node[v:mainempty] () at (-0.8,0.5){};
			\node[v:main] () at (0.8,2.5){};
			\node[v:mainempty] () at (-0.8,2.5){};
			
			\node[v:mainempty] () at (1.5,-1.5){};
			\node[v:main] () at (-1.5,-1.5){};
			\node[v:main] () at (0.8,-0.5){};
			\node[v:mainempty] () at (-0.8,-0.5){};
			\node[v:main] () at (0.8,-2.5){};
			\node[v:mainempty] () at (-0.8,-2.5){};
			
			\node[v:maingreen] () at (-0.5,1.5){};
			\node[myGreen] () at (-0.5,1.7){$a$};
			\node[v:mainemptygreen] () at (0.5,1.5){};
			\node[myGreen] () at (0.55,1.75){$b'$};
			\node[v:maingreen] () at (1,1.5){};
			\node[myGreen] () at (1.05,1.75){$a'$};
			\node[v:mainemptygreen] () at (-1,1.5){};
			\node[myGreen] () at (-1,1.75){$b$};
			
			\node[v:maingreen] () at (-0.5,-1.5){};
			\node[myGreen] () at (-0.5,-1.3){$a$};
			\node[v:mainemptygreen] () at (0.5,-1.5){};
			\node[myGreen] () at (0.55,-1.25){$b'$};
			\node[v:maingreen] () at (1,-1.5){};
			\node[myGreen] () at (1.05,-1.25){$a'$};
			\node[v:mainemptygreen] () at (-1,-1.5){};
			\node[myGreen] () at (-1,-1.25){$b$};
			
			\begin{pgfonlayer}{background}
				
				\draw[e:marker,BrightUbe,bend right=55] (-0.5,1.5) to (0.5,1.5);
				
				\draw[e:main] (-0.8,2.5) -- (0.8,0.5);
				\draw[e:main] (0.8,2.5) -- (-0.8,0.5);
				\draw[e:main] (1.5,1.5) -- (0.8,2.5) -- (-0.8,2.5) -- (-1.5,1.5) -- (-0.8,0.5) -- (0.8,0.5) -- (1.5,1.5);
				
				\draw[e:main] (-0.8,-2.5) -- (0.8,-0.5);
				\draw[e:main] (0.8,-2.5) -- (-0.8,-0.5);
				\draw[e:main] (1.5,-1.5) -- (0.8,-2.5) -- (-0.8,-2.5) -- (-1.5,-1.5) -- (-0.8,-0.5) -- (0.8,-0.5) -- (1.5,-1.5);
				
				\draw[e:main] (-1.5,1.5) -- (-1,1.5);
				\draw[e:main] (1.5,1.5) -- (1,1.5);
				\draw[e:main] (-0.5,1.5) -- (0.5,1.5);
				\draw[e:main,myGreen] (-0.5,1.5) -- (-1,1.5);
				\draw[e:main,myGreen] (0.5,1.5) -- (1,1.5);
				
				\draw[e:main] (-1.5,-1.5) -- (-1,-1.5);
				\draw[e:main] (1.5,-1.5) -- (1,-1.5);
				\draw[e:main,blue] (-0.5,-1.5) -- (0.5,-1.5);
				\draw[e:main,myGreen] (-0.5,-1.5) -- (-1,-1.5);
				\draw[e:main,myGreen] (0.5,-1.5) -- (1,-1.5);
				
				\draw[e:main,blue,densely dashed,bend right=60] (-1,1.5) to (1,1.5);
				\draw[e:main,blue,densely dashed,bend right=60] (-0.5,1.5) to (0.5,1.5);
				
				\draw[e:main,blue,densely dashed,bend right=60] (-1,-1.5) to (1,-1.5);
				
			\end{pgfonlayer}
		\end{tikzpicture}
	\end{subfigure}
	\begin{subfigure}[b]{0.3\textwidth}
		\centering
		\begin{tikzpicture}
			\pgfdeclarelayer{background}
			\pgfdeclarelayer{foreground}
			\pgfsetlayers{background,main,foreground}
			
			\node () at (0,0){reversed};
			
			\node[blue] () at (0,0.8){$C$};
			\node[blue] () at (0,-2.2){$C$};
			
			\node[v:mainempty] () at (1.5,1.5){};
			\node[v:main] () at (-1.5,1.5){};
			\node[v:main] () at (0.8,0.5){};
			\node[v:mainempty] () at (-0.8,0.5){};
			\node[v:main] () at (0.8,2.5){};
			\node[v:mainempty] () at (-0.8,2.5){};
			
			\node[v:mainempty] () at (1.5,-1.5){};
			\node[v:main] () at (-1.5,-1.5){};
			\node[v:main] () at (0.8,-0.5){};
			\node[v:mainempty] () at (-0.8,-0.5){};
			\node[v:main] () at (0.8,-2.5){};
			\node[v:mainempty] () at (-0.8,-2.5){};
			
			\node[v:mainemptygreen] () at (-0.5,1.5){};
			\node[myGreen] () at (-0.5,1.75){$b$};
			\node[v:maingreen] () at (0.5,1.5){};
			\node[myGreen] () at (0.55,1.75){$a'$};
			\node[v:mainemptygreen] () at (1,1.5){};
			\node[myGreen] () at (1.05,1.75){$b'$};
			\node[v:maingreen] () at (-1,1.5){};
			\node[myGreen] () at (-1,1.7){$a$};
			
			\node[v:mainemptygreen] () at (-0.5,-1.5){};
			\node[myGreen] () at (-0.5,-1.25){$b$};
			\node[v:maingreen] () at (0.5,-1.5){};
			\node[myGreen] () at (0.55,-1.25){$a'$};
			\node[v:mainemptygreen] () at (1,-1.5){};
			\node[myGreen] () at (1.05,-1.25){$b'$};
			\node[v:maingreen] () at (-1,-1.5){};
			\node[myGreen] () at (-1,-1.3){$a$};
			
			\node[v:mainemptygray] () at (-1.25,1.5){};
			\node[v:maingray] () at (1.25,1.5){};
			
			\node[v:mainemptygray] () at (-1.25,-1.5){};
			\node[v:maingray] () at (1.25,-1.5){};
			
			\begin{pgfonlayer}{background}
				
				\draw[e:marker,BrightUbe,bend right=55] (-0.5,1.5) to (0.5,1.5);
				
				\draw[e:main] (-0.8,2.5) -- (0.8,0.5);
				\draw[e:main] (0.8,2.5) -- (-0.8,0.5);
				\draw[e:main] (1.5,1.5) -- (0.8,2.5) -- (-0.8,2.5) -- (-1.5,1.5) -- (-0.8,0.5) -- (0.8,0.5) -- (1.5,1.5);
				
				\draw[e:main] (-0.8,-2.5) -- (0.8,-0.5);
				\draw[e:main] (0.8,-2.5) -- (-0.8,-0.5);
				\draw[e:main] (1.5,-1.5) -- (0.8,-2.5) -- (-0.8,-2.5) -- (-1.5,-1.5) -- (-0.8,-0.5) -- (0.8,-0.5) -- (1.5,-1.5);
				
				\draw[e:main] (-1.5,1.5) -- (-1,1.5);
				\draw[e:main] (1.5,1.5) -- (1,1.5);
				\draw[e:main] (-0.5,1.5) -- (0.5,1.5);
				\draw[e:main,myGreen] (-0.5,1.5) -- (-1,1.5);
				\draw[e:main,myGreen] (0.5,1.5) -- (1,1.5);
				
				\draw[e:main] (-1.5,-1.5) -- (-1,-1.5);
				\draw[e:main] (1.5,-1.5) -- (1,-1.5);
				\draw[e:main,blue] (-0.5,-1.5) -- (0.5,-1.5);
				\draw[e:main,myGreen] (-0.5,-1.5) -- (-1,-1.5);
				\draw[e:main,myGreen] (0.5,-1.5) -- (1,-1.5);
				
				\draw[e:main,blue,densely dashed,bend right=60] (-1,1.5) to (1,1.5);
				\draw[e:main,blue,densely dashed,bend right=60] (-0.5,1.5) to (0.5,1.5);
				
				\draw[e:main,blue,densely dashed,bend right=60] (-1,-1.5) to (1,-1.5);
				
			\end{pgfonlayer}
		\end{tikzpicture}
	\end{subfigure}
	\begin{subfigure}[b]{0.3\textwidth}
		\centering
		\begin{tikzpicture}
			\pgfdeclarelayer{background}
			\pgfdeclarelayer{foreground}
			\pgfsetlayers{background,main,foreground}
			
			\node () at (0,0){split};
			
			\node[blue] () at (0.6,1.7){$C$};
			\node[blue] () at (0.6,-1.2){$C$};
			
			\node[v:mainempty] () at (1.5,1.5){};
			\node[v:main] () at (-1.5,1.5){};
			\node[v:main] () at (0.8,0.5){};
			\node[v:mainempty] () at (-0.8,0.5){};
			\node[v:main] () at (0.8,2.5){};
			\node[v:mainempty] () at (-0.8,2.5){};
			
			\node[v:maingreen] () at (0.2,2.5){};
			\node[myGreen] () at (0.2,2.7){$a$};
			\node[v:mainemptygreen] () at (-0.2,2.5){};
			\node[myGreen] () at (-0.2,2.75){$b$};
			
			\node[v:maingray] () at (-0.5,2.5){};
			\node[v:mainemptygray] () at (0.5,2.5){};

			\node[v:mainempty] () at (1.5,-1.5){};
			\node[v:main] () at (-1.5,-1.5){};
			\node[v:maingreen] () at (0.8,-0.5){};
			\node[myGreen] () at (0.8,-0.3){$a$};
			\node[v:mainempty] () at (-0.8,-0.5){};
			\node[v:main] () at (0.8,-2.5){};
			\node[v:mainempty] () at (-0.8,-2.5){};
			
			\node[v:mainemptygreen] () at (0.2,-0.5){};
			\node[myGreen] () at (0.2,-0.25){$b$};
			\node[v:maingray] () at (-0.2,-0.5){};
			
			\node[v:maingreen] () at (1.29,1.8){};
			\node[myGreen] () at (1.45,2.1){$a'$};
			
			\node[v:mainemptygreen] () at (1,2.2){};
			\node[myGreen] () at (1.2,2.4){$b'$};
			
			\node[v:maingreen] () at (1.29,-1.2){};
			\node[myGreen] () at (1.45,-0.9){$a'$};
			
			\node[v:mainemptygreen] () at (1,-0.8){};
			\node[myGreen] () at (1.2,-0.6){$b'$};
			
			\begin{pgfonlayer}{background}
				
				\draw[e:marker,BrightUbe,bend right=20] (0.2,2.5) to (1,2.2);
				\draw[e:marker,myLightBlue] (0.2,2.5) -- (0.8,2.5);
				\draw[e:marker,myLightBlue] (0.8,2.5) -- (-0.8,0.5);
				
				\draw[e:main] (-0.8,2.5) -- (0.8,0.5);
				\draw[e:main] (0.8,2.5) -- (-0.8,0.5);
				\draw[e:main] (-1.5,1.5) -- (1.5,1.5);
				\draw[e:main] (1.5,1.5) -- (0.8,2.5) -- (-0.8,2.5) -- (-1.5,1.5) -- (-0.8,0.5) -- (0.8,0.5) -- (1.5,1.5);
				
				\draw[e:main] (-0.8,-2.5) -- (0.8,-0.5);
				\draw[e:main] (0.8,-2.5) -- (-0.8,-0.5);
				\draw[e:main] (-1.5,-1.5) -- (1.5,-1.5);
				
				\draw[e:main] (1.5,-1.5) -- (0.8,-2.5) -- (-0.8,-2.5) -- (-1.5,-1.5) -- (-0.8,-0.5) -- (0.8,-0.5) -- (1.5,-1.5);
				
				\draw[e:main,myGreen] (-0.2,2.5) -- (0.2,2.5){};
				\draw[e:main,myGreen] (1.29,1.8) -- (1,2.2);
				
				\draw[e:main,myGreen] (0.2,-0.5) -- (0.8,-0.5);
				\draw[e:main,myGreen] (1.29,-1.2) -- (1,-0.8);
				
				\draw[e:main,blue] (1,-0.8) -- (0.8,-0.5);
				
				\draw[e:main,blue,densely dashed,bend right=25] (0.2,-0.5) to (1.29,-1.2);
				
				\draw[e:main,blue,densely dashed,bend right=20] (0.2,2.5) to (1,2.2);
				\draw[e:main,blue,densely dashed,bend right=20] (-0.2,2.5) to (1.29,1.8);

			\end{pgfonlayer}
		\end{tikzpicture}
	\end{subfigure}
	\caption{The three possible configurations how the edges $ab$ and $a'b'$ may occur in a bisubdivision of $K_{3,3}$ without immediately yielding a conformal cross over $C$.
		The second line of figures shows how these cases can be reduced.} 
	\label{fig:threeconfigurations}
\end{figure}
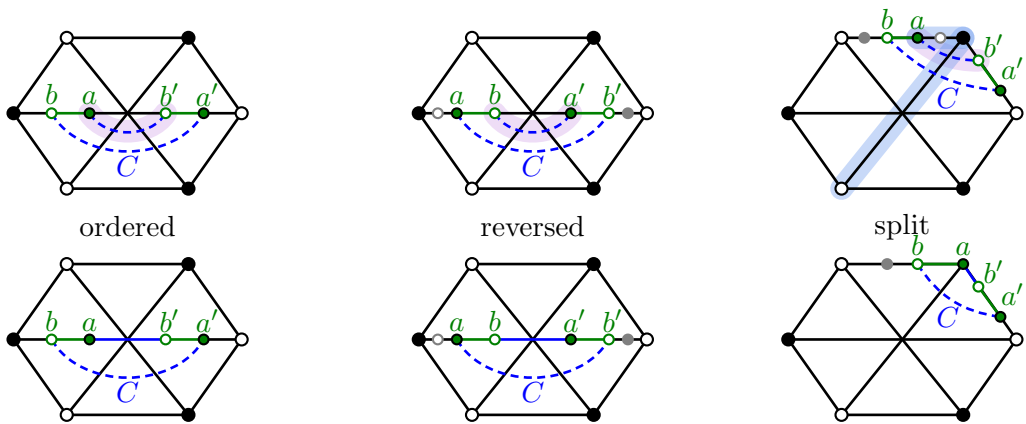

If we can find a conformal bisubdivision of $K_{3,3}$ that fits one of the cases in \cref{lemma:easycrossesinK33}, we are done immediately.
Hence what remains is a discussion of the three cases which are still left.
In \cref{fig:threeconfigurations}, these cases are illustrated.
In two of the three cases, the edges $ab$ and $a'b'$ occur on a single subdivided edge of $L$, while in the last case, the \emph{split case}, $ab$ and $a'b'$ are split over two subdivided edges in such a way that exactly one of $ab$ and $a'b'$ occurs in reverse on its respective path.
In each of the three cases, we can use an edge $e\in\E{C}\setminus \Set{ab,a'b'}$ in order to further reduce $L$ and make sure that we always find a conformal bisubdivision of $K_{3,3}$ which contains at least three edges of $C$.
Let $B$ be a brace, $C$ a $4$-cycle, $M$ a perfect matching of $B$ such that $C$ is $M$-conformal and $L$ an $M$-conformal bisubdivision of $K_{3,3}$ that contains the vertices of $C$.
We say that $L$ \emph{splits $C$} if the way the vertices of $C$ are distributed over the bisubdivided edges of $L$ as they are in the split case in \cref{fig:threeconfigurations}.
As an intermediate step, we want to show that we can always find a bisubdivision of $K_{3,3}$ that splits $C$.
Suppose $ab$ and $a'b'$ occur on a single subdivided edge $P$ of $L$ as in the \emph{ordered} or the \emph{reversed} case from \cref{fig:threeconfigurations}.
Let $u\in V_i$ be an endpoint of $P$ and let $Y$ be the \emph{bisubdivided claw} with centre $u$ in $L$ consisting of the three bisubdivided edges $P$, $Q_1$, and $Q_2$ of $L$ that have $u$ as an endpoint.
Let $T$ be the shortest $u$-$\V{C}$-subpath of $P$.
If there exists an internally $M$-conformal path $R$ that is internally disjoint from $L$ such that $R$ has an endpoint in $\Vi{3-i}{T}$ and its other endpoint lies in $\Vi{i}{L-Y}$ we say that $L$ has a \emph{$V_i$-jump over $C$}.

\begin{lemma}\label{lemma:tightcutsincofnromalsubgraphs}
	Let $B$ be a brace, $M$ a perfect matching of $B$, $H\subseteq B$ an $M$-conformal and matching covered subgraph, and $X\subseteq\V{H}$ such that $\CutG{H}{X}$ is a non-trivial tight cut in $H$.
	Then there exists an internally $M$-conformal path $P$ in $B$ such that $P$ is internally disjoint from $H$ and has its endpoints in the minorities of $X$ and $\V{H}\setminus X$.
\end{lemma}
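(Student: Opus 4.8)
The plan is to recast the statement as asking for an augmenting path of a suitable matching in $B$ with the two majorities of the tight cut deleted, and then to produce that augmenting path from the $2$-extendibility of $B$ by a deficiency (Hall-type) argument. To set things up: since $\partial_H(X)$ is a tight cut of the bipartite matching covered graph $H$, every perfect matching of $H$ meets it in exactly one edge, and a parity count --- the crossing edge of a fixed perfect matching of $H$ leaves $\Abs{X}-1$ vertices of $X$ to be matched inside $X$ --- shows $\Abs{X}$ and $\Abs{\V{H}\setminus X}$ are odd; write $Y\coloneqq\V{H}\setminus X$. Non-triviality gives $\Abs{X}\ge 3$, and I would note we may assume $\Abs{Y}\ge 3$ as well (otherwise $\Minority{Y}=\emptyset$ and there is no endpoint to find). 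By \cref{lemma:ktightmajorityminority}, applied inside $H$, we get $\Abs{\Majority{X}}=\Abs{\Minority{X}}+1$ with $\NeighboursG{H}{\Minority{X}}\subseteq\Majority{X}$, and symmetrically for $Y$; after renaming colour classes we may assume $\Majority{X}\subseteq V_1$, $\Minority{X}\subseteq V_2$, $\Majority{Y}\subseteq V_2$, $\Minority{Y}\subseteq V_1$. Finally, since $C_4$ has no matching covered subgraph admitting a non-trivial tight cut, the case $B\cong C_4$ cannot occur, so by \cref{thm:braces} we may take $B$ to be $2$-extendible.

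Next I would perform the reduction. Put $W\coloneqq\Majority{X}\cup\Majority{Y}$, so that $W\cap V_1=\Majority{X}$ and $W\cap V_2=\Majority{Y}$; let $B^{\circ}\coloneqq B-W$ and $M^{\circ}\coloneqq\CondSet{e\in M}{e\cap W=\emptyset}$. Since $H$ is $M$-conformal, $M$ restricts to a perfect matching of $H$; in particular $M$ matches $\V{H}$ inside $\V{H}$ and matches $\V{B}\setminus\V{H}$ inside itself, and the $M$-partner of any vertex of $\Minority{X}$ is a neighbour of it in $H$, hence lies in $\Majority{X}\subseteq W$ (similarly for $\Minority{Y}$). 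It follows that $M^{\circ}$ is a matching of $B^{\circ}$ whose set of uncovered vertices is exactly $\Minority{X}\cup\Minority{Y}$. As these lie in different colour classes, every $M^{\circ}$-augmenting path of $B^{\circ}$ joins $\Minority{X}$ to $\Minority{Y}$; and, viewed inside $B$, such a path is internally $M$-conformal, internally disjoint from $H$, and has one endpoint in $\Minority{X}$ and one in $\Minority{Y}$, since its internal vertices are $M^{\circ}$-covered (hence lie in $\V{B}\setminus\V{H}$) and its internal edges, belonging to $M^{\circ}\subseteq M$, perfectly match its interior. So it suffices to show that $M^{\circ}$ is not a maximum matching of $B^{\circ}$.

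To do this I would argue by contradiction. Suppose $M^{\circ}$ is maximum, let $A$ be the set of vertices of $B^{\circ}$ reachable from $\Minority{X}$ along an $M^{\circ}$-alternating path starting with a non-matching edge, and set $S\coloneqq A\cap V_2\supseteq\Minority{X}\neq\emptyset$. The standard analysis of the Hungarian alternating forest rooted at $\Minority{X}$ then yields: no vertex of $A\cap V_1$ is $M^{\circ}$-uncovered (such a vertex would lie in $\Minority{Y}$ and close an augmenting path, contradicting maximality), $M^{\circ}$ restricts to a bijection $A\cap V_1\to S\setminus\Minority{X}$, and $\NeighboursG{B^{\circ}}{S}=A\cap V_1$; hence $\Abs{\NeighboursG{B^{\circ}}{S}}=\Abs{S}-\Abs{\Minority{X}}$. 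In $B$, the set $S\subseteq V_2$ can have further neighbours only inside $W\cap V_1=\Majority{X}$, so $\Abs{\Neighbours{S}}\le\Abs{S}-\Abs{\Minority{X}}+\Abs{\Majority{X}}=\Abs{S}+1$. On the other hand $S\subseteq\V{B^{\circ}}\cap V_2=V_2\setminus\Majority{Y}$, so $\Abs{S}\le\Abs{V_2}-\Abs{\Majority{Y}}\le\Abs{V_2}-2$; and then $2$-extendibility of $B$ (\cref{thm:bipartiteextendibility}, with the colour classes interchanged) gives $\Abs{\Neighbours{S}}\ge\Abs{S}+2$, a contradiction. Hence $M^{\circ}$ is not maximum, an $M^{\circ}$-augmenting path between $\Minority{X}$ and $\Minority{Y}$ exists, and carrying it back to $B$ gives the required path $P$.

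I expect the main care to be needed in the reduction step --- verifying that $M^{\circ}$ leaves uncovered exactly the two minorities, and that $M^{\circ}$-augmenting paths in $B^{\circ}$ are exactly the paths being sought in $B$ --- and in checking the size bound $\Abs{S}\le\Abs{V_2}-2$ (which is where $\Abs{Y}\ge 3$ enters) that justifies the appeal to extendibility; the remaining steps are routine.
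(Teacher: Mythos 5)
Your proof is correct, and it takes a genuinely different route from the paper's. The paper invokes the Menger-type characterization of $2$-extendibility (the part of \cref{thm:bipartiteextendibility} producing internally disjoint internally $M$-conformal paths): for $u\in\Minority{X}$, $w\in\Minority{Y}$ it obtains two internally disjoint internally $M$-conformal $u$-$w$-paths, notes that at most one can use the unique cut-edge $e\in M\cap\CutG{H}{X}$ since both endpoints of $e$ would be internal, and then shows that a shortest internally $M$-conformal $\Minority{X}$-$\Minority{Y}$-path $P$ avoiding $e$ is internally disjoint from $H$: an inner vertex $v\in\V{H}$ of $P$ would place its $M$-partner (which lies in $\V{H}$ by $M$-conformality of $H$) as an adjacent inner vertex of $P$; minimality forces both out of the two minorities, hence into $\Majority{X}\cup\Majority{Y}$, so the $M$-edge between them crosses the cut and equals $e$, contradicting that $P$ avoids $e$. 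You instead use the Hall-type neighbourhood characterization: delete the two majorities to form $B^{\circ}$ and $M^{\circ}$, show $M^{\circ}$ is deficient exactly on $\Minority{X}\cup\Minority{Y}$, and then run the Hungarian-forest argument to show $M^{\circ}$ cannot be maximum, since the reachable set $S=A\cap V_2$ would satisfy $\Abs{\Neighbours{S}}\le\Abs{S}+1$ with $\emptyset\neq S$ and $\Abs{S}\le\Abs{V_2}-2$, contradicting $2$-extendibility; the resulting augmenting path translates to the required internally $M$-conformal path. Both routes are valid. Yours trades the compact path-formulation for a longer but more elementary reduction to standard Berge/König machinery and makes the role of the Hall bound and of $\Abs{S}\le\Abs{V_2}-2$ fully explicit, whereas the paper's final sentence silently compresses the tight-cut/parity case analysis spelled out above. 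One small simplification: you need not ``assume'' $\Abs{Y}\ge 3$; non-triviality of a tight cut is symmetric in the two shores and $\Abs{Y}$ is odd, so $\Abs{Y}\ge 3$ holds automatically, which is exactly what gives $\Abs{\Majority{Y}}\ge 2$ and hence $\Abs{S}\le\Abs{V_2}-2$.
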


\begin{proof}
	The claim follows immediately from \cref{thm:bipartiteextendibility}.
	With $B$ being a brace it is $2$-extendible.
	Let $e$ be the unique edge of $M$ in $\CutG{H}{X}$.
	Then there must be an internally $M$-conformal $\Minority{X}$-$\Minority{Y}$-path $P$ in $B$ that avoids $e$.
	If we choose $P$ to be as short as possible, it cannot contain any vertex of $\Minority{X}\cup \Minority{Y}$ as an inner vertex.
	Moreover, since $H$ is $M$-conformal, no vertex of $H$ can be an inner vertex of $P$.	
\end{proof}

\begin{lemma}\label{lemma:reducedK33}
	Let $B$ be a $K_{3,3}$-containing brace and $C$ a $4$-cycle in $B$ such that there is no conformal cross over $C$ in $B$.
	Then there exists a perfect matching $M$ of $B$ such that $C$ is $M$-conformal and there is an $M$-conformal bisubdivision $L$ of $K_{3,3}$ such that $L$ splits $C$, or $L$ has a $V_1$-jump over $C$.
\end{lemma}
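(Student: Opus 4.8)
The plan is to take the bisubdivision supplied by \cref{lemma:C4inK33} and show that, unless it is already in the ``split'' configuration, one can either read off a $V_1$-jump or reroute it into a bisubdivision that splits $C$.

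First I would invoke \cref{lemma:C4inK33} to fix a perfect matching $M$ for which $C$ is $M$-conformal together with an $M$-conformal bisubdivision $L$ of $K_{3,3}$ with $\V{C}\subseteq\V{L}$, and write $\E{C}\cap M=\Set{ab,a'b'}$ with $a,a'\in V_1$ and $b,b'\in V_2$. Since $L$ is $M$-conformal and $a,b,a',b'\in\V{L}$, both $ab$ and $a'b'$ are edges of $L$, hence each lies on a unique bisubdivided edge of $L$. As $B$ has no conformal cross over $C$, \cref{lemma:easycrossesinK33} forbids these two bisubdivided edges from being disjoint and, when they share a branch vertex, forbids $ab$ and $a'b'$ from being split nicely over them; together with \cref{obs:homogenousoccurence} and \cref{obs:splitcompletely} this leaves exactly the three configurations of \cref{fig:threeconfigurations}. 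In the split configuration $L$ splits $C$ by definition, so I may henceforth assume the ordered or the reversed configuration, i.e.\ that $ab$ and $a'b'$ both lie on a single bisubdivided edge $P$ of $L$.

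Let $u\in V_1$ and $w\in V_2$ be the branch vertices joined by $P$, let $Y$ be the bisubdivided claw of $L$ centred at $u$, and let $T$ be the shortest $u$-$\V{C}$-subpath of $P$. If there is an internally $M$-conformal path $R$, internally disjoint from $L$, from a vertex of $\Vi{2}{T}$ to a vertex of $\Vi{1}{L-Y}$, then $L$ has a $V_1$-jump over $C$ and we are done; so assume no such $R$ exists. I would then use $2$-extendibility (\cref{thm:bipartiteextendibility}), the untangling lemma (\cref{lemma:untangletwopaths}) and the tight-cut tool (\cref{lemma:tightcutsincofnromalsubgraphs}) --- the latter applied to $L$, which is an $M$-conformal matching covered subgraph of $B$, possibly after adjoining a single auxiliary internally $M$-conformal path --- to produce an internally $M$-conformal path attaching a vertex of $C$ into $L-Y$. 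Switching $M$ along a suitable $M$-conformal cycle and replacing the corresponding segment of $P$ by this path then yields a perfect matching $M'$ with $C$ still $M'$-conformal and an $M'$-conformal bisubdivision $L'$ of $K_{3,3}$ with $\V{C}\subseteq\V{L'}$ in which $ab$ and $a'b'$ no longer share a bisubdivided edge. Re-running the previous paragraph for $(M',L')$, we are either in the split configuration (and done), or \cref{lemma:easycrossesinK33} directly produces a conformal cross over $C$, contradicting the hypothesis. To ensure this terminates I would attach to each admissible pair $(M,L)$ a complexity measure --- for instance the length of $T$, equivalently the distance along $P$ from $u$ to $C$ --- and either arrange the reroute to strictly decrease it, or, more cleanly, choose $(M,L)$ minimising this measure from the outset, so that the reroute is impossible and the $V_1$-jump necessarily exists.

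I expect the rerouting step to be the main obstacle. It requires a case analysis, in the spirit of the six cases in the proof of \cref{lemma:C4inK33}, of how the path furnished by the tight-cut argument reattaches to $L$, carried out while simultaneously tracking the matching so that $C$ stays conformal, all of $\V{C}$ remains on the new bisubdivision, and the edges $ab$ and $a'b'$ genuinely separate onto distinct bisubdivided edges rather than re-forming an ordered or reversed pattern. A further subtlety is the colour bookkeeping around the ends of $P$: one has to make sure the jump obtained is a $V_1$-jump (taken at the endpoint $u\in V_1$), or otherwise handle a $V_2$-jump by the colour symmetry visible in \cref{fig:threeconfigurations} and convert it.
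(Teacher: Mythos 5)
Your overall plan --- start from the bisubdivision supplied by \cref{lemma:C4inK33}, use \cref{lemma:easycrossesinK33} to rule out configurations that would give a conformal cross, set up a bisubdivided claw around a $V_1$-endpoint $u$ of the bisubdivided edge $P$ carrying both matched edges of $C$, and apply the tight-cut tool (\cref{lemma:tightcutsincofnromalsubgraphs}) to extract a rerouting path --- is the same skeleton the paper uses. But two concrete gaps keep the argument from closing.

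First, the measure you propose to minimise, the length of $T$ (the distance along $P$ from $u$ to $C$), is too coarse to kill the rerouting cases. When the $Y$-endpoint $y$ of the tight-cut path lands on the $v_P$-side of $P$ (strictly between $v_P$ and $C$, hence not on $T$ at all), the paper's rerouting shortens the full bisubdivided edge $P$ but leaves the $u$-to-$C$ segment $T$ unchanged. Worse, when $y$ lands on $Q_1$ or $Q_2$ --- the other two bisubdivided edges of the claw at $u$ --- the rerouting shortens the corresponding $Q_i$ while $P$, and hence $T$, is untouched. Under your measure neither of these is progress, so you cannot conclude that a minimiser forces the reroute to be impossible. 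The paper instead picks $L$ minimising the lexicographic pair $(\Abs{\E{P}},\Abs{\E{Q_1}\cup\E{Q_2}})$, which strictly decreases in both of those subcases; this is essential, not a cosmetic choice.

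Second, you claim the tools ``produce an internally $M$-conformal path attaching a vertex of $C$ into $L-Y$''. \Cref{lemma:tightcutsincofnromalsubgraphs} only hands you a path $F$ from the minority $V_2\cap Y$ to $\Vi{1}{L}\setminus Y$, internally disjoint from $L$; its $Y$-endpoint $y$ can lie anywhere on the interior of $P$, $Q_1$, or $Q_2$, and nothing forces it onto $C$. The case analysis on the position of $y$ --- $y$ on the $u$-side of $P$ gives the $V_1$-jump, $y$ on $C$ gives a bisubdivision that splits $C$, and $y$ elsewhere contradicts the minimal choice of $L$ --- is exactly the content of the proof, not a consequence of the tools alone. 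Correspondingly, your conclusion that after minimising ``the $V_1$-jump necessarily exists'' is too strong: with the right measure the endpoint $y$ is forced either onto $T$ (giving the $V_1$-jump) or onto $C$ itself (giving a new bisubdivision that splits $C$); both outcomes satisfy the lemma, but only the first is a jump.
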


\begin{proof}
	By \cref{lemma:C4inK33,lemma:easycrossesinK33} and the discussion above we know that there are a perfect matching $M$ of $B$ such that $C$ is $M$-conformal and an $M$-conformal bisubdivision $L$ of $K_{3,3}$ that contains the vertices of $C$ such that the way the vertices of $C$ occur in $L$ corresponds to one of the three cases depicted in \cref{fig:threeconfigurations}.
	If $L$ splits $C$ we are done already, so let us assume that there is a bisubdivided edge $P$ of $L$ such that the edges $ab,a'b\in\E{C}\cap M$ occur on $P$ as in the \emph{ordered} or the \emph{reversed} case from \cref{fig:threeconfigurations}.
	Let $u\in V_1$ be an endpoint of $P$.
	Consider the three bisubdivided edges $P$, $Q_1$, and $Q_2$ of $L$ that have $u$ as an endpoint.
	Let us choose $L$ such that the tuple $\Brace{\Abs{\E{P}},\Abs{\E{Q_1}\cup\E{Q_2}}}$ is lexicographically minimised.
	For each $Z\in\Set{P,Q_1,Q_2}$ let $v_Z\in V_2$ be the endpoint of $Z$ different from $u$ and let $Y\coloneqq \Brace{\V{P}\cup\V{Q_1}\cup\V{Q_2}}\setminus\Set{v_P,v_{Q_1},v_{Q_2}}$.
	Now every component of $\InducedSubgraph{L}{Y}-u$ is a path of odd length and thus $\Abs{V_1\cap Y}-\Abs{V_2\cap Y}=1$, moreover, no vertex of $Y\cap V_2$ has a neighbour in $L-Y$ within $L$ and $\Abs{Y\cap\V{C}}\geq 3$.
	Hence $\CutG{L}{Y}$ defines a non-trivial tight cut in $L$.
	For an illustration, see \cref{fig:tightcut}.
	
	\begin{figure}[h!]
		\centering
		\begin{tikzpicture}[scale=0.9]
			\pgfdeclarelayer{background}
			\pgfdeclarelayer{foreground}
			\pgfsetlayers{background,main,foreground}
			
			\node[blue] () at (3,2.4){$C$};
			\node[gray] () at (1,2.4){$P$};
			\node[gray] () at (5,0.5){$Q_{2}$};
			\node[gray] () at (3.5,0.5){$Q_{1}$};

			\node[v:main] () at (0,0.2){};
			
			\node[v:mainempty] () at (0,2){};
			\node () at (0,2.4){$v_{P}$};
			
			\node[v:main] () at (4.5,2){};
			\node () at (4.5,2.4){$u$};
			
			\node[v:mainempty] () at (4.5,-1){};
			\node () at (4.7,-1.4){$v_{Q_{2}}$};
			
			\node[v:mainempty] () at (1.2,-1){};
			\node () at (1.2,-1.4){$v_{Q_{1}}$};
			
			\node[v:main] () at (2.8,-1.5){};
			
			\node[v:maingray] () at (0.5,2){};
			\node[v:mainemptygray] () at (1,2){};
			
			\node[v:maingreen] () at (1.5,2){};
			\node[myGreen] () at (1.5,1.65){$a$};        
			
			\node[v:mainemptygreen] () at (2,2){};
			\node[myGreen] () at (2,1.7){$b$};
			
			\node[v:maingreen] () at (2.5,2){};
			\node[myGreen] () at (2.5,1.7){$a'$};
			
			\node[v:mainemptygreen] () at (3,2){};
			\node[myGreen] () at (3,1.7){$b'$};
			
			\node[v:maingray] () at (3.5,2){};
			\node[v:mainemptygray] () at (4,2){};
			
			\node[v:maingray] () at (4.5,0){};
			\node[v:mainemptygray] () at (4.5,1){};
			
			\node[v:mainemptygray] () at (0,0.8){};
			\node[v:maingray] () at (0,1.4){};
			
			\node[v:mainemptygray] () at (1.5,-0.2){};
			\node[v:maingray] () at (3,-0.6){};
			
			\node[v:maingray] () at (0.8,-0.6){};
			\node[v:mainemptygray] () at (0.4,-0.2){};
			
			\node[v:mainemptygray] () at (3.37,-1.33){};
			\node[v:maingray] () at (3.93,-1.16){};
			
			\node[v:maingray] () at (1.73,-1.16){};
			\node[v:mainemptygray] () at (2.23,-1.33){};
			
			\node[v:maingray] () at (2.3,0){};
			\node[v:mainemptygray] () at (3.4,1){};
			
			\node[v:maingray] () at (0.62,1.22){};
			\node[v:mainemptygray] () at (2.18,-0.72){};
			
			\begin{pgfonlayer}{background}
				
				\draw[e:marker] (0.5,2) -- (4.5,2);
				\draw[e:marker] (4.5,2) -- (4.5,0);
				\draw[e:marker] (4.5,2) -- (2.3,0);
				
				\draw[e:main,gray] (0,2) -- (0.5,2);
				\draw[e:main,gray] (1,2) -- (1.5,2);
				\draw[e:main,gray] (2,2) -- (2.5,2);
				\draw[e:main,gray] (3,2) -- (3.5,2);
				\draw[e:main,gray] (4,2) -- (4.5,2);
				
				\draw[e:main,gray] (0.5,2) -- (1,2);
				
				\draw[e:main,gray] (1.5,2) -- (2,2);
				
				\draw[e:main,gray] (2.5,2) -- (3,2);

				\draw[e:main,gray] (3.5,2) -- (4,2);
				
				\draw[e:main,blue,densely dashed,bend left=45] (1.5,2) to (3,2);
				
				\draw[e:main,gray] (0,2) -- (0,1.4);
				\draw[e:main,gray] (0,0.8) -- (0,0.2);
				\draw[e:main,gray] (0,0.8) -- (0,1.4);
				
				\draw[e:main,gray] (4.5,0) -- (4.5,1);
				\draw[e:main,gray](4.5,1) -- (4.5,2);
				\draw[e:main,gray](4.5,-1) -- (4.5,0);
				
				\draw[e:main,gray] (0,0.2) -- (1.5,-0.2);
				\draw[e:main,gray] (3,-0.6) -- (4.5,-1);
				\draw[e:main,gray] (1.5,-0.2) -- (3,-0.6);
				
				\draw[e:main,gray] (0,0.2) -- (0.4,-0.2);
				\draw[e:main,gray] (0.8,-0.6) -- (1.2,-1);
				\draw[e:main,gray] (0.4,-0.2) -- (0.8,-0.6); 
				
				\draw[e:main,gray] (2.8,-1.5) -- (3.37,-1.33);
				\draw[e:main,gray] (3.93,-1.16) -- (4.5,-1);
				\draw[e:main,gray] (3.37,-1.33) -- (3.93,-1.16);

				\draw[e:main,gray] (1.2,-1) -- (1.73,-1.16);
				\draw[e:main,gray] (2.23,-1.33) -- (2.8,-1.5);
				\draw[e:main,gray] (1.73,-1.16) -- (2.23,-1.33);
				
				\draw[e:main,gray] (3.4,1) -- (4.5,2);
				\draw[e:main,gray] (1.2,-1) -- (2.3,0);
				\draw[e:main,gray] (2.3,0) -- (3.4,1);
				
				\draw[e:main,gray] (0,2) -- (0.62,1.22);
				\draw[e:main,gray] (2.18,-0.72) -- (2.8,-1.5);
				\draw[e:main,gray] (0.62,1.22) -- (2.18,-0.72);

			\end{pgfonlayer}
			
		\end{tikzpicture}
		\caption{The non-trivial tight cut around the bisubdivided claw centred at $u$ in the proof of \cref{lemma:reducedK33}}
		\label{fig:tightcut}
	\end{figure}
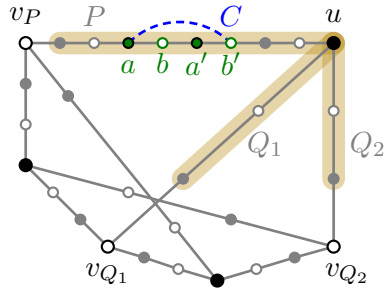
	
	By \cref{lemma:tightcutsincofnromalsubgraphs} there exists an internally $M$-conformal path $F$ in $B$ such that
	\begin{itemize}
		\item $F$ has an endpoint in $V_2\cap Y$,
		\item the other endpoint of $F$ lies in $\Vi{1}{L}\setminus Y$, and
		\item $F$ is internally disjoint from $L$.
	\end{itemize}
	Please note that we may change the perfect matching $M$ within the $M$-conformal subgraph $L$ of $B$ at will, without changing the fact that $F$ is an internally $M$-conformal path with the properties listed above.
	
	Let $y$ be the endpoint of $F$ in $Y$ and let $x$ be its other endpoint as well as $P_x$ be the subdivided edge of $L$ that contains $x$ in case $x$ is a vertex of degree two in $L$.
	What follows is a discussion on the possible positions of $x$ and $y$ in $L$.
	For an illustrative overview on the different cases that might appear consult \cref{fig:firstLreductions}.
	Let $T_{V_1}$ be the shortest subpath of $P$ with one endpoint in $\V{C}$ and $u$ as its other endpoint.
	Similarly, let $T_{V_2}$ be the shortest subpath of $P$ with one endpoint in $\V{C}$ and $v_P$ as its other endpoint.
	At last, let $w_1$ and $w_2$ be the two degree three vertices in $\Vi{1}{L}$ that are different from $u$, let $w_1$ be the endpoint of $P_x$ that lies in $V_1$.
	Given any two vertices $v_1$, $v_2$ of degree three in $L$ that belong to different colour classes let us denote by $E_{v_1v_2}=E_{v_2v_1}$ the subdivided edge of $L$ with endpoints $v_1$ and $v_2$.
	
	\begin{figure}[h!]
		\centering
		\begin{subfigure}[t]{0.3\textwidth}
			\begin{tikzpicture}[scale=0.7]
				\pgfdeclarelayer{background}
				\pgfdeclarelayer{foreground}
				\pgfsetlayers{background,main,foreground}
				
				\node[blue] () at (3,2.4){$C$};
				\node[gray] () at (1,2.4){$P$};
				\node[gray] () at (5,0.5){$Q_{2}$};
				\node[gray] () at (3.5,0.5){$Q_{1}$};
				
				\node () at (2.25,-2.5){$P$ gets shortened};
				\node () at (2.25,-3){\phantom{subdivision splits $C$}};

				\node[v:main] () at (0,0.2){};
				
				\node[v:mainempty] () at (0,2){};
				\node () at (0,2.4){$v_{P}$};
				
				\node[v:main] () at (4.5,2){};
				\node () at (4.5,2.4){$u$};
				
				\node[v:mainempty] () at (4.5,-1){};
				\node () at (4.7,-1.4){$v_{Q_{2}}$};
				
				\node[v:mainempty] () at (1.2,-1){};
				\node () at (1.2,-1.4){$v_{Q_{1}}$};
				
				\node[v:main] () at (2.8,-1.5){};
				
				\node[v:maingray] () at (0.5,2){};
				\node[v:mainemptygray] () at (1,2){};
				\node[v:maingreen] () at (1.5,2){};
				\node[v:mainemptygreen] () at (2,2){};
				\node[v:maingreen] () at (2.5,2){};
				\node[v:mainemptygreen] () at (3,2){};
				\node[v:maingray] () at (3.5,2){};
				\node[v:mainemptygray] () at (4,2){};
				
				\node[v:maingray] () at (4.5,0){};
				\node[v:mainemptygray] () at (4.5,1){};
				
				\node[v:mainemptygray] () at (0,0.8){};
				\node[v:maingray] () at (0,1.4){};

				\node[v:mainemptygray] () at (1.5,-0.2){};
				\node[v:maingray] () at (3,-0.6){};

				\node[v:maingray] () at (0.8,-0.6){};
				\node[v:mainemptygray] () at (0.4,-0.2){};

				\node[v:mainemptygray] () at (3.37,-1.33){};
				\node[v:maingray] () at (3.93,-1.16){};
				
				\node[v:maingray] () at (1.73,-1.16){};
				\node[v:mainemptygray] () at (2.23,-1.33){};

				\node[v:maingray] () at (2.3,0){};
				\node[v:mainemptygray] () at (3.4,1){};

				\node[v:maingray] () at (0.62,1.22){};
				\node[v:mainemptygray] () at (2.18,-0.72){};
				
				\node[Amethyst] () at (1.3,1.4){$F$};

				\begin{pgfonlayer}{background}

					\draw[e:marker,BrightUbe] (0,0.2) -- (0,2);
					\draw[e:marker,BrightUbe] (0,2) -- (4.5,2);
					\draw[e:marker,BrightUbe,bend left=30] (1,2) to (0.62,1.22);
					\draw[e:marker,BrightUbe] (0.62,1.22) to (2.8,-1.5);
					
					\draw[e:main,gray] (0,2) -- (0.5,2);
					\draw[e:main,gray] (1,2) -- (1.5,2);
					\draw[e:main,myGreen] (2,2) -- (2.5,2);
					\draw[e:main,gray] (3,2) -- (3.5,2);
					\draw[e:main,gray] (4,2) -- (4.5,2);
					
					\draw[e:coloredborder] (0.5,2) -- (1,2);
					\draw[e:coloredthin,color=BostonUniversityRed] (0.5,2) -- (1,2);
					
					\draw[e:coloredborder] (1.5,2) -- (2,2);
					\draw[e:coloredthin,color=BostonUniversityRed] (1.5,2) -- (2,2);
					
					\draw[e:coloredborder] (2.5,2) -- (3,2);
					\draw[e:coloredthin,color=BostonUniversityRed] (2.5,2) -- (3,2);

					\draw[e:coloredborder] (3.5,2) -- (4,2);
					\draw[e:coloredthin,color=BostonUniversityRed] (3.5,2) -- (4,2);
					
					\draw[e:main,blue,densely dashed,bend left=45] (1.5,2) to (3,2);
					
					\draw[e:coloredborder] (0,2) -- (0,1.4);
					\draw[e:coloredthin,color=BostonUniversityRed] (0,2) -- (0,1.4);
					\draw[e:coloredborder] (0,0.8) -- (0,0.2);
					\draw[e:coloredthin,color=BostonUniversityRed] (0,0.8) -- (0,0.2);
					\draw[e:main,gray] (0,0.8) -- (0,1.4);
					
					\draw[e:coloredborder] (4.5,0) -- (4.5,1);
					\draw[e:coloredthin,color=BostonUniversityRed] (4.5,0) -- (4.5,1);
					\draw[e:main,gray](4.5,1) -- (4.5,2);
					\draw[e:main,gray](4.5,-1) -- (4.5,0);
					
					\draw[e:main,gray] (0,0.2) -- (1.5,-0.2);
					\draw[e:main,gray] (3,-0.6) -- (4.5,-1);
					\draw[e:coloredborder] (1.5,-0.2) -- (3,-0.6);
					\draw[e:coloredthin,color=BostonUniversityRed] (1.5,-0.2) -- (3,-0.6);
					
					\draw[e:main,gray] (0,0.2) -- (0.4,-0.2);
					\draw[e:main,gray] (0.8,-0.6) -- (1.2,-1);
					\draw[e:coloredborder] (0.4,-0.2) -- (0.8,-0.6); 
					\draw[e:coloredthin,color=BostonUniversityRed] (0.4,-0.2) -- (0.8,-0.6); 
					
					\draw[e:coloredborder] (2.8,-1.5) -- (3.37,-1.33);
					\draw[e:coloredthin,color=BostonUniversityRed] (2.8,-1.5) -- (3.37,-1.33);
					\draw[e:coloredborder] (3.93,-1.16) -- (4.5,-1);
					\draw[e:coloredthin,color=BostonUniversityRed] (3.93,-1.16) -- (4.5,-1);
					\draw[e:main,gray] (3.37,-1.33) -- (3.93,-1.16);

					\draw[e:main,gray] (1.2,-1) -- (1.73,-1.16);
					\draw[e:main,gray] (2.23,-1.33) -- (2.8,-1.5);
					\draw[e:coloredborder] (1.73,-1.16) -- (2.23,-1.33);
					\draw[e:coloredthin,color=BostonUniversityRed] (1.73,-1.16) -- (2.23,-1.33);
					
					\draw[e:coloredborder] (3.4,1) -- (4.5,2);
					\draw[e:coloredthin,color=BostonUniversityRed] (3.4,1) -- (4.5,2);
					\draw[e:coloredborder] (1.2,-1) -- (2.3,0);
					\draw[e:coloredthin,color=BostonUniversityRed] (1.2,-1) -- (2.3,0);
					\draw[e:main,gray] (2.3,0) -- (3.4,1);
					
					\draw[e:main,gray] (0,2) -- (0.62,1.22);
					\draw[e:main,gray] (2.18,-0.72) -- (2.8,-1.5);
					\draw[e:coloredborder] (0.62,1.22) -- (2.18,-0.72);
					\draw[e:coloredthin,color=BostonUniversityRed] (0.62,1.22) -- (2.18,-0.72);
					
					\draw[e:main,Amethyst,bend left=30] (1,2) to (0.62,1.22);

				\end{pgfonlayer}
				
			\end{tikzpicture}
		\end{subfigure}
		\begin{subfigure}[t]{0.3\textwidth}
			\begin{tikzpicture}[scale=0.7]
				\pgfdeclarelayer{background}
				\pgfdeclarelayer{foreground}
				\pgfsetlayers{background,main,foreground}
				
				\node[blue] () at (3,2.4){$C$};
				\node[gray] () at (1,2.4){$P$};
				\node[gray] () at (5,0.5){$Q_{2}$};
				\node[gray] () at (3.5,0.5){$Q_{1}$};
				
				\node () at (2.25,-2.5){We find the path $R$};
				\node () at (2.25,-3){\phantom{subdivision splits $C$}};

				\node[v:main] () at (0,0.2){};
				
				\node[v:mainempty] () at (0,2){};
				\node () at (0,2.4){$v_{P}$};
				
				\node[v:main] () at (4.5,2){};
				\node () at (4.5,2.4){$u$};
				
				\node[v:mainempty] () at (4.5,-1){};
				\node () at (4.7,-1.4){$v_{Q_{2}}$};
				
				\node[v:mainempty] () at (1.2,-1){};
				\node () at (1.2,-1.4){$v_{Q_{1}}$};
				
				\node[v:main] () at (2.8,-1.5){};
				
				\node[v:maingray] () at (0.5,2){};
				\node[v:mainemptygray] () at (1,2){};
				\node[v:maingreen] () at (1.5,2){};
				\node[v:mainemptygreen] () at (2,2){};
				\node[v:maingreen] () at (2.5,2){};
				\node[v:mainemptygreen] () at (3,2){};
				\node[v:maingray] () at (3.5,2){};
				\node[v:mainemptygray] () at (4,2){};
				
				\node[v:maingray] () at (4.5,0){};
				\node[v:mainemptygray] () at (4.5,1){};
				
				\node[v:mainemptygray] () at (0,0.8){};
				\node[v:maingray] () at (0,1.4){};
				
				\node[v:mainemptygray] () at (1.5,-0.2){};
				\node[v:maingray] () at (3,-0.6){};
				
				\node[v:maingray] () at (0.8,-0.6){};
				\node[v:mainemptygray] () at (0.4,-0.2){};

				\node[v:mainemptygray] () at (3.37,-1.33){};
				\node[v:maingray] () at (3.93,-1.16){};
				
				\node[v:maingray] () at (1.73,-1.16){};
				\node[v:mainemptygray] () at (2.23,-1.33){};
				
				\node[v:maingray] () at (2.3,0){};
				\node[v:mainemptygray] () at (3.4,1){};
				
				\node[v:maingray] () at (0.62,1.22){};
				\node[v:mainemptygray] () at (2.18,-0.72){};
				
				\node[Amethyst] () at (3.6,-0.2){$F$};
				
				\begin{pgfonlayer}{background}
					\draw[e:main,gray] (0,2) -- (0.5,2);
					\draw[e:main,gray] (1,2) -- (1.5,2);
					\draw[e:main,myGreen] (2,2) -- (2.5,2);
					\draw[e:main,gray] (3,2) -- (3.5,2);
					\draw[e:main,gray] (4,2) -- (4.5,2);
					
					\draw[e:coloredborder] (0.5,2) -- (1,2);
					\draw[e:coloredthin,color=BostonUniversityRed] (0.5,2) -- (1,2);
					
					\draw[e:coloredborder] (1.5,2) -- (2,2);
					\draw[e:coloredthin,color=BostonUniversityRed] (1.5,2) -- (2,2);
					
					\draw[e:coloredborder] (2.5,2) -- (3,2);
					\draw[e:coloredthin,color=BostonUniversityRed] (2.5,2) -- (3,2);

					\draw[e:coloredborder] (3.5,2) -- (4,2);
					\draw[e:coloredthin,color=BostonUniversityRed] (3.5,2) -- (4,2);
					
					\draw[e:main,blue,densely dashed,bend left=45] (1.5,2) to (3,2);
					
					\draw[e:coloredborder] (0,0.8) -- (0,1.4);
					\draw[e:coloredthin,color=BostonUniversityRed] (0,0.8) -- (0,1.4);
					\draw[e:main,gray](0,2) -- (0,1.4);
					\draw[e:main,gray] (0,0.8) -- (0,0.2);
					
					\draw[e:coloredborder] (4.5,0) -- (4.5,1);
					\draw[e:coloredthin,color=BostonUniversityRed] (4.5,0) -- (4.5,1);
					\draw[e:main,gray](4.5,1) -- (4.5,2);
					\draw[e:main,gray](4.5,-1) -- (4.5,0);
					
					\draw[e:coloredborder] (0,0.2) -- (1.5,-0.2);
					\draw[e:coloredthin,color=BostonUniversityRed] (0,0.2) -- (1.5,-0.2);
					\draw[e:coloredborder] (3,-0.6) -- (4.5,-1);
					\draw[e:coloredthin,color=BostonUniversityRed] (3,-0.6) -- (4.5,-1);
					\draw[e:main,gray] (1.5,-0.2) -- (3,-0.6);
					
					\draw[e:main,gray] (0,0.2) -- (0.4,-0.2);
					\draw[e:main,gray] (0.8,-0.6) -- (1.2,-1);
					\draw[e:coloredborder] (0.4,-0.2) -- (0.8,-0.6); 
					\draw[e:coloredthin,color=BostonUniversityRed] (0.4,-0.2) -- (0.8,-0.6); 
					
					\draw[e:main,gray]   (2.8,-1.5) -- (3.37,-1.33);
					\draw[e:main,gray] (3.93,-1.16) -- (4.5,-1);
					\draw[e:coloredborder] (3.37,-1.33) -- (3.93,-1.16);
					\draw[e:coloredthin,color=BostonUniversityRed] (3.37,-1.33) -- (3.93,-1.16);

					\draw[e:main,gray] (1.2,-1) -- (1.73,-1.16);
					\draw[e:main,gray] (2.23,-1.33) -- (2.8,-1.5);
					\draw[e:coloredborder] (1.73,-1.16) -- (2.23,-1.33);
					\draw[e:coloredthin,color=BostonUniversityRed] (1.73,-1.16) -- (2.23,-1.33);   
					
					\draw[e:coloredborder] (3.4,1) -- (4.5,2);
					\draw[e:coloredthin,color=BostonUniversityRed] (3.4,1) -- (4.5,2);
					\draw[e:coloredborder] (1.2,-1) -- (2.3,0);
					\draw[e:coloredthin,color=BostonUniversityRed] (1.2,-1) -- (2.3,0);
					\draw[e:main,gray] (2.3,0) -- (3.4,1);
					
					\draw[e:coloredborder] (0,2) -- (0.62,1.22);
					\draw[e:coloredthin,color=BostonUniversityRed] (0,2) -- (0.62,1.22);
					\draw[e:coloredborder] (2.18,-0.72) -- (2.8,-1.5);
					\draw[e:coloredthin,color=BostonUniversityRed] (2.18,-0.72) -- (2.8,-1.5);
					\draw[e:main,gray] (0.62,1.22) -- (2.18,-0.72);
					
					\draw[e:main,Amethyst] (4,2) to (3.93,-1.16);
					
				\end{pgfonlayer}
				
			\end{tikzpicture}
		\end{subfigure}
		\begin{subfigure}[t]{0.3\textwidth}
			\begin{tikzpicture}[scale=0.7]
				\pgfdeclarelayer{background}
				\pgfdeclarelayer{foreground}
				\pgfsetlayers{background,main,foreground}
				
				\node[blue] () at (3,2.4){$C$};
				\node[gray] () at (1,2.4){$P$};
				\node[gray] () at (5,0.5){$Q_{2}$};
				\node[gray] () at (3.5,0.5){$Q_{1}$};
				
				\node () at (2.25,-2.5){The new $K_{3,3}$};
				\node () at (2.25,-3){subdivision splits $C$};

				\node[v:main] () at (0,0.2){};
				
				\node[v:mainempty] () at (0,2){};
				\node () at (0,2.4){$v_{P}$};
				
				\node[v:main] () at (4.5,2){};
				\node () at (4.5,2.4){$u$};
				
				\node[v:mainempty] () at (4.5,-1){};
				\node () at (4.7,-1.4){$v_{Q_{2}}$};
				
				\node[v:mainempty] () at (1.2,-1){};
				\node () at (1.2,-1.4){$v_{Q_{1}}$};
				
				\node[v:main] () at (2.8,-1.5){};
				
				\node[v:maingray] () at (0.5,2){};
				\node[v:mainemptygray] () at (1,2){};
				\node[v:maingreen] () at (1.5,2){};
				\node[v:mainemptygreen] () at (2,2){};
				\node[v:maingreen] () at (2.5,2){};
				\node[v:mainemptygreen] () at (3,2){};
				\node[v:maingray] () at (3.5,2){};
				\node[v:mainemptygray] () at (4,2){};
				
				\node[v:maingray] () at (4.5,0){};
				\node[v:mainemptygray] () at (4.5,1){};
				
				\node[v:mainemptygray] () at (0,0.8){};
				\node[v:maingray] () at (0,1.4){};
				
				\node[v:mainemptygray] () at (1.5,-0.2){};
				\node[v:maingray] () at (3,-0.6){};
				
				\node[Amethyst] () at (2.3,1){$F$};
				
				\node[v:maingray] () at (0.8,-0.6){};
				\node[v:mainemptygray] () at (0.4,-0.2){};

				\node[v:mainemptygray] () at (3.37,-1.33){};
				\node[v:maingray] () at (3.93,-1.16){};
				
				\node[v:maingray] () at (1.73,-1.16){};
				\node[v:mainemptygray] () at (2.23,-1.33){};
				
				\node[v:maingray] () at (2.3,0){};
				\node[v:mainemptygray] () at (3.4,1){};
				
				\node[v:maingray] () at (0.62,1.22){};
				\node[v:mainemptygray] () at (2.18,-0.72){};
				
				\begin{pgfonlayer}{background}
					
					\draw[e:marker,BrightUbe] (2.8,-1.5) -- (0,2);
					\draw[e:marker,BrightUbe] (0,2) -- (4.5,2);
					\draw[e:marker,BrightUbe,bend left=25] (2,2) to (3,-0.6);
					\draw[e:marker,BrightUbe] (0,0.2) -- (3,-0.6);
					
					\draw[e:main,gray] (0,2) -- (0.5,2);
					\draw[e:main,gray] (1,2) -- (1.5,2);
					\draw[e:main,myGreen] (2,2) -- (2.5,2);
					\draw[e:main,gray] (3,2) -- (3.5,2);
					\draw[e:main,gray] (4,2) -- (4.5,2);
					
					\draw[e:coloredborder] (0.5,2) -- (1,2);
					\draw[e:coloredthin,color=BostonUniversityRed] (0.5,2) -- (1,2);
					
					\draw[e:coloredborder] (1.5,2) -- (2,2);
					\draw[e:coloredthin,color=BostonUniversityRed] (1.5,2) -- (2,2);
					
					\draw[e:coloredborder] (2.5,2) -- (3,2);
					\draw[e:coloredthin,color=BostonUniversityRed] (2.5,2) -- (3,2);

					\draw[e:coloredborder] (3.5,2) -- (4,2);
					\draw[e:coloredthin,color=BostonUniversityRed] (3.5,2) -- (4,2);
					
					\draw[e:main,blue,densely dashed,bend left=45] (1.5,2) to (3,2);
					
					\draw[e:coloredborder] (0,0.8) -- (0,1.4);
					\draw[e:coloredthin,color=BostonUniversityRed] (0,0.8) -- (0,1.4);
					\draw[e:main,gray](0,2) -- (0,1.4);
					\draw[e:main,gray] (0,0.8) -- (0,0.2);
					
					\draw[e:coloredborder] (4.5,1) -- (4.5,2);
					\draw[e:coloredthin,color=BostonUniversityRed] (4.5,1) -- (4.5,2);
					\draw[e:coloredborder] (4.5,-1) -- (4.5,0);
					\draw[e:coloredthin,color=BostonUniversityRed] (4.5,-1) -- (4.5,0);
					\draw[e:main,gray] (4.5,0) -- (4.5,1);
					
					\draw[e:main,gray] (0,0.2) -- (1.5,-0.2);
					\draw[e:main,gray] (3,-0.6) -- (4.5,-1);
					\draw[e:coloredborder] (1.5,-0.2) -- (3,-0.6);
					\draw[e:coloredthin,color=BostonUniversityRed] (1.5,-0.2) -- (3,-0.6);
					
					\draw[e:coloredborder] (0,0.2) -- (0.4,-0.2);
					\draw[e:coloredthin,color=BostonUniversityRed] (0,0.2) -- (0.4,-0.2);
					\draw[e:coloredborder] (0.8,-0.6) -- (1.2,-1);
					\draw[e:coloredthin,color=BostonUniversityRed] (0.8,-0.6) -- (1.2,-1);
					\draw[e:main,gray] (0.4,-0.2) -- (0.8,-0.6); 
					
					\draw[e:main,gray]   (2.8,-1.5) -- (3.37,-1.33);
					\draw[e:main,gray] (3.93,-1.16) -- (4.5,-1);
					\draw[e:coloredborder] (3.37,-1.33) -- (3.93,-1.16);
					\draw[e:coloredthin,color=BostonUniversityRed] (3.37,-1.33) -- (3.93,-1.16);

					\draw[e:main,gray] (1.2,-1) -- (1.73,-1.16);
					\draw[e:main,gray] (2.23,-1.33) -- (2.8,-1.5);
					\draw[e:coloredborder] (1.73,-1.16) -- (2.23,-1.33);
					\draw[e:coloredthin,color=BostonUniversityRed] (1.73,-1.16) -- (2.23,-1.33);
					
					\draw[e:main,gray] (3.4,1) -- (4.5,2);
					\draw[e:main,gray] (1.2,-1) -- (2.3,0);
					\draw[e:coloredborder] (2.3,0) -- (3.4,1);
					\draw[e:coloredthin,color=BostonUniversityRed] (2.3,0) -- (3.4,1);   
					
					\draw[e:coloredborder] (0,2) -- (0.62,1.22);
					\draw[e:coloredthin,color=BostonUniversityRed] (0,2) -- (0.62,1.22);
					\draw[e:coloredborder] (2.18,-0.72) -- (2.8,-1.5);
					\draw[e:coloredthin,color=BostonUniversityRed] (2.18,-0.72) -- (2.8,-1.5);
					\draw[e:main,gray] (0.62,1.22) -- (2.18,-0.72);
					
					\draw[e:main,Amethyst,bend left=25] (2,2) to (3,-0.6);

				\end{pgfonlayer}
				
			\end{tikzpicture}
		\end{subfigure}
		
		\begin{subfigure}[t]{0.3\textwidth}
			\begin{tikzpicture}[scale=0.7]
				\pgfdeclarelayer{background}
				\pgfdeclarelayer{foreground}
				\pgfsetlayers{background,main,foreground}
				
				\node[blue] () at (3,2.4){$C$};
				\node[gray] () at (1,2.4){$P$};
				\node[gray] () at (5,0.5){$Q_{2}$};
				\node[gray] () at (3.5,0.5){$Q_{1}$};
				
				\node () at (2.25,-2.5){$Q_{1}$ gets shortened};
				
				\node[v:main] () at (0,0.2){};
				
				\node[v:mainempty] () at (0,2){};
				\node () at (0,2.4){$v_{P}$};
				
				\node[v:main] () at (4.5,2){};
				\node () at (4.5,2.4){$u$};
				
				\node[v:mainempty] () at (4.5,-1){};
				\node () at (4.7,-1.4){$v_{Q_{2}}$};
				
				\node[v:mainempty] () at (1.2,-1){};
				\node () at (1.2,-1.4){$v_{Q_{1}}$};
				
				\node[v:main] () at (2.8,-1.5){};
				
				\node[v:maingray] () at (0.5,2){};
				\node[v:mainemptygray] () at (1,2){};
				\node[v:maingreen] () at (1.5,2){};
				\node[v:mainemptygreen] () at (2,2){};
				\node[v:maingreen] () at (2.5,2){};
				\node[v:mainemptygreen] () at (3,2){};
				\node[v:maingray] () at (3.5,2){};
				\node[v:mainemptygray] () at (4,2){};
				
				\node[v:maingray] () at (4.5,0){};
				\node[v:mainemptygray] () at (4.5,1){};
				
				\node[v:mainemptygray] () at (0,0.8){};
				\node[v:maingray] () at (0,1.4){};
				
				\node[v:mainemptygray] () at (1.5,-0.2){};
				\node[v:maingray] () at (3,-0.6){};
				
				\node[v:maingray] () at (0.8,-0.6){};
				\node[v:mainemptygray] () at (0.4,-0.2){};

				\node[v:mainemptygray] () at (3.37,-1.33){};
				\node[v:maingray] () at (3.93,-1.16){};
				
				\node[v:maingray] () at (1.73,-1.16){};
				\node[v:mainemptygray] () at (2.23,-1.33){};
				
				\node[v:maingray] () at (2.3,0){};
				\node[v:mainemptygray] () at (3.4,1){};
				
				\node[v:maingray] () at (0.62,1.22){};
				\node[v:mainemptygray] () at (2.18,-0.72){};
				
				\node[Amethyst] () at (1.5,1.2){$F$};
				
				\begin{pgfonlayer}{background}
					
					\draw[e:marker,BrightUbe,bend left=25] (0,1.4) to (3.4,1);
					\draw[e:marker,myGreen,bend left=25] (0,1.4) to (3.4,1);
					
					\draw[e:marker,BrightUbe] (1.2,-1) -- (4.5,2);
					\draw[e:marker,BrightUbe] (1.2,-1) -- (2.8,-1.5);
					
					\draw[e:marker,myGreen] (0,2) -- (0,0.2);
					\draw[e:marker,myGreen] (0,0.2) -- (4.5,-1);
					
					\draw[e:main,gray] (0,2) -- (0.5,2);
					\draw[e:main,gray] (1,2) -- (1.5,2);
					\draw[e:main,myGreen] (2,2) -- (2.5,2);
					\draw[e:main,gray] (3,2) -- (3.5,2);
					\draw[e:main,gray] (4,2) -- (4.5,2);
					
					\draw[e:coloredborder] (0.5,2) -- (1,2);
					\draw[e:coloredthin,color=BostonUniversityRed] (0.5,2) -- (1,2);
					
					\draw[e:coloredborder] (1.5,2) -- (2,2);
					\draw[e:coloredthin,color=BostonUniversityRed] (1.5,2) -- (2,2);
					
					\draw[e:coloredborder] (2.5,2) -- (3,2);
					\draw[e:coloredthin,color=BostonUniversityRed] (2.5,2) -- (3,2);

					\draw[e:coloredborder] (3.5,2) -- (4,2);
					\draw[e:coloredthin,color=BostonUniversityRed] (3.5,2) -- (4,2);
					
					\draw[e:main,blue,densely dashed,bend left=45] (1.5,2) to (3,2);
					
					\draw[e:coloredborder] (0,0.8) -- (0,1.4);
					\draw[e:coloredthin,color=BostonUniversityRed] (0,0.8) -- (0,1.4);
					\draw[e:main,gray](0,2) -- (0,1.4);
					\draw[e:main,gray] (0,0.8) -- (0,0.2);
					
					\draw[e:coloredborder] (4.5,0) -- (4.5,1);
					\draw[e:coloredthin,color=BostonUniversityRed] (4.5,0) -- (4.5,1);
					\draw[e:main,gray](4.5,1) -- (4.5,2);
					\draw[e:main,gray](4.5,-1) -- (4.5,0);
					
					\draw[e:coloredborder] (0,0.2) -- (1.5,-0.2);
					\draw[e:coloredthin,color=BostonUniversityRed] (0,0.2) -- (1.5,-0.2);
					\draw[e:coloredborder] (3,-0.6) -- (4.5,-1);
					\draw[e:coloredthin,color=BostonUniversityRed] (3,-0.6) -- (4.5,-1);
					\draw[e:main,gray] (1.5,-0.2) -- (3,-0.6);
					
					\draw[e:main,gray] (0,0.2) -- (0.4,-0.2);
					\draw[e:main,gray] (0.8,-0.6) -- (1.2,-1);
					\draw[e:coloredborder] (0.4,-0.2) -- (0.8,-0.6); 
					\draw[e:coloredthin,color=BostonUniversityRed] (0.4,-0.2) -- (0.8,-0.6); 
					
					\draw[e:main,gray]   (2.8,-1.5) -- (3.37,-1.33);
					\draw[e:main,gray] (3.93,-1.16) -- (4.5,-1);
					\draw[e:coloredborder] (3.37,-1.33) -- (3.93,-1.16);
					\draw[e:coloredthin,color=BostonUniversityRed] (3.37,-1.33) -- (3.93,-1.16);
					
					\draw[e:main,gray] (1.2,-1) -- (1.73,-1.16);
					\draw[e:main,gray] (2.23,-1.33) -- (2.8,-1.5);
					\draw[e:coloredborder] (1.73,-1.16) -- (2.23,-1.33);
					\draw[e:coloredthin,color=BostonUniversityRed] (1.73,-1.16) -- (2.23,-1.33);    
					
					\draw[e:coloredborder] (3.4,1) -- (4.5,2);
					\draw[e:coloredthin,color=BostonUniversityRed] (3.4,1) -- (4.5,2);
					\draw[e:coloredborder] (1.2,-1) -- (2.3,0);
					\draw[e:coloredthin,color=BostonUniversityRed] (1.2,-1) -- (2.3,0);
					\draw[e:main,gray] (2.3,0) -- (3.4,1);
					
					\draw[e:coloredborder] (0,2) -- (0.62,1.22);
					\draw[e:coloredthin,color=BostonUniversityRed] (0,2) -- (0.62,1.22);
					\draw[e:coloredborder] (2.18,-0.72) -- (2.8,-1.5);
					\draw[e:coloredthin,color=BostonUniversityRed] (2.18,-0.72) -- (2.8,-1.5);
					\draw[e:main,gray] (0.62,1.22) -- (2.18,-0.72);
					
					\draw[e:main,Amethyst,bend left=25] (0,1.4) to (3.4,1);
					
				\end{pgfonlayer}

			\end{tikzpicture}
		\end{subfigure}
		\begin{subfigure}[t]{0.3\textwidth}
			\begin{tikzpicture}[scale=0.7]
				\pgfdeclarelayer{background}
				\pgfdeclarelayer{foreground}
				\pgfsetlayers{background,main,foreground}
				
				\node[blue] () at (3,2.4){$C$};
				\node[gray] () at (1,2.4){$P$};
				\node[gray] () at (5,0.5){$Q_{2}$};
				\node[gray] () at (3.5,0.5){$Q_{1}$};
				
				\node () at (2.25,-2.5){$Q_{1}$ gets shortened};

				\node[v:main] () at (0,0.2){};
				
				\node[v:mainempty] () at (0,2){};
				\node () at (0,2.4){$v_{P}$};
				
				\node[v:main] () at (4.5,2){};
				\node () at (4.5,2.4){$u$};
				
				\node[v:mainempty] () at (4.5,-1){};
				\node () at (4.7,-1.4){$v_{Q_{2}}$};
				
				\node[v:mainempty] () at (1.2,-1){};
				\node () at (1.2,-1.4){$v_{Q_{1}}$};
				
				\node[v:main] () at (2.8,-1.5){};

				\node[v:maingray] () at (0.5,2){};
				\node[v:mainemptygray] () at (1,2){};
				\node[v:maingreen] () at (1.5,2){};
				\node[v:mainemptygreen] () at (2,2){};
				\node[v:maingreen] () at (2.5,2){};
				\node[v:mainemptygreen] () at (3,2){};
				\node[v:maingray] () at (3.5,2){};
				\node[v:mainemptygray] () at (4,2){};
				
				\node[v:maingray] () at (4.5,0){};
				\node[v:mainemptygray] () at (4.5,1){};
				
				\node[v:mainemptygray] () at (0,0.8){};
				\node[v:maingray] () at (0,1.4){};
				
				\node[v:mainemptygray] () at (1.5,-0.2){};
				\node[v:maingray] () at (3,-0.6){};
				
				\node[v:maingray] () at (0.8,-0.6){};
				\node[v:mainemptygray] () at (0.4,-0.2){};

				\node[v:mainemptygray] () at (3.37,-1.33){};
				\node[v:maingray] () at (3.93,-1.16){};
				
				\node[v:maingray] () at (1.73,-1.16){};
				\node[v:mainemptygray] () at (2.23,-1.33){};
				
				\node[v:maingray] () at (2.3,0){};
				\node[v:mainemptygray] () at (3.4,1){};
				
				\node[v:maingray] () at (0.62,1.22){};
				\node[v:mainemptygray] () at (2.18,-0.72){};
				
				\node[Amethyst] () at (3.3,-0.1){$F$};
				
				\begin{pgfonlayer}{background}
					
					\draw[e:marker,BrightUbe,bend right=40] (3.4,1) to (3,-0.6);
					\draw[e:marker,myGreen,bend right=40] (3.4,1) to (3,-0.6);
					
					\draw[e:marker,BrightUbe] (1.2,-1) -- (4.5,2);
					\draw[e:marker,BrightUbe] (1.2,-1) -- (2.8,-1.5);
					
					\draw[e:marker,myGreen] (0,2) -- (0,0.2);
					\draw[e:marker,myGreen] (0,0.2) -- (4.5,-1);
					
					\draw[e:main,gray] (0,2) -- (0.5,2);
					\draw[e:main,gray] (1,2) -- (1.5,2);
					\draw[e:main,myGreen] (2,2) -- (2.5,2);
					\draw[e:main,gray] (3,2) -- (3.5,2);
					\draw[e:main,gray] (4,2) -- (4.5,2);
					
					\draw[e:coloredborder] (0.5,2) -- (1,2);
					\draw[e:coloredthin,color=BostonUniversityRed] (0.5,2) -- (1,2);
					
					\draw[e:coloredborder] (1.5,2) -- (2,2);
					\draw[e:coloredthin,color=BostonUniversityRed] (1.5,2) -- (2,2);
					
					\draw[e:coloredborder] (2.5,2) -- (3,2);
					\draw[e:coloredthin,color=BostonUniversityRed] (2.5,2) -- (3,2);

					\draw[e:coloredborder] (3.5,2) -- (4,2);
					\draw[e:coloredthin,color=BostonUniversityRed] (3.5,2) -- (4,2);
					
					\draw[e:main,blue,densely dashed,bend left=45] (1.5,2) to (3,2);
					
					\draw[e:coloredborder] (0,2) -- (0,1.4);
					\draw[e:coloredthin,color=BostonUniversityRed] (0,2) -- (0,1.4);
					\draw[e:coloredborder] (0,0.8) -- (0,0.2);
					\draw[e:coloredthin,color=BostonUniversityRed] (0,0.8) -- (0,0.2);
					\draw[e:main,gray] (0,0.8) -- (0,1.4);
					
					\draw[e:coloredborder] (4.5,0) -- (4.5,1);
					\draw[e:coloredthin,color=BostonUniversityRed] (4.5,0) -- (4.5,1);
					\draw[e:main,gray](4.5,1) -- (4.5,2);
					\draw[e:main,gray](4.5,-1) -- (4.5,0);
					
					\draw[e:main,gray] (0,0.2) -- (1.5,-0.2);
					\draw[e:main,gray] (3,-0.6) -- (4.5,-1);
					\draw[e:coloredborder] (1.5,-0.2) -- (3,-0.6);
					\draw[e:coloredthin,color=BostonUniversityRed] (1.5,-0.2) -- (3,-0.6);
					
					\draw[e:main,gray] (0,0.2) -- (0.4,-0.2);
					\draw[e:main,gray] (0.8,-0.6) -- (1.2,-1);
					\draw[e:coloredborder] (0.4,-0.2) -- (0.8,-0.6); 
					\draw[e:coloredthin,color=BostonUniversityRed] (0.4,-0.2) -- (0.8,-0.6); 
					
					\draw[e:coloredborder] (2.8,-1.5) -- (3.37,-1.33);
					\draw[e:coloredthin,color=BostonUniversityRed] (2.8,-1.5) -- (3.37,-1.33);
					\draw[e:coloredborder] (3.93,-1.16) -- (4.5,-1);
					\draw[e:coloredthin,color=BostonUniversityRed] (3.93,-1.16) -- (4.5,-1);
					\draw[e:main,gray] (3.37,-1.33) -- (3.93,-1.16);

					\draw[e:main,gray] (1.2,-1) -- (1.73,-1.16);
					\draw[e:main,gray] (2.23,-1.33) -- (2.8,-1.5);
					\draw[e:coloredborder] (1.73,-1.16) -- (2.23,-1.33);
					\draw[e:coloredthin,color=BostonUniversityRed] (1.73,-1.16) -- (2.23,-1.33);
					
					\draw[e:coloredborder] (3.4,1) -- (4.5,2);
					\draw[e:coloredthin,color=BostonUniversityRed] (3.4,1) -- (4.5,2);
					\draw[e:coloredborder] (1.2,-1) -- (2.3,0);
					\draw[e:coloredthin,color=BostonUniversityRed] (1.2,-1) -- (2.3,0);
					\draw[e:main,gray] (2.3,0) -- (3.4,1);
					
					\draw[e:main,gray] (0,2) -- (0.62,1.22);
					\draw[e:main,gray] (2.18,-0.72) -- (2.8,-1.5);
					\draw[e:coloredborder] (0.62,1.22) -- (2.18,-0.72);
					\draw[e:coloredthin,color=BostonUniversityRed] (0.62,1.22) -- (2.18,-0.72);
					
					\draw[e:main,Amethyst,bend right=40] (3.4,1) to (3,-0.6);
					
				\end{pgfonlayer}
				
			\end{tikzpicture}
		\end{subfigure}
		\begin{subfigure}[t]{0.3\textwidth}
			\begin{tikzpicture}[scale=0.7]
				\pgfdeclarelayer{background}
				\pgfdeclarelayer{foreground}
				\pgfsetlayers{background,main,foreground}
				
				\node[blue] () at (3,2.4){$C$};
				\node[gray] () at (1,2.4){$P$};
				\node[gray] () at (5,0.5){$Q_{2}$};
				\node[gray] () at (3.5,0.5){$Q_{1}$};
				
				\node () at (2.25,-2.5){$Q_{1}$ gets shortened};

				\node[v:main] () at (0,0.2){};
				
				\node[v:mainempty] () at (0,2){};
				\node () at (0,2.4){$v_{P}$};
				
				\node[v:main] () at (4.5,2){};
				\node () at (4.5,2.4){$u$};
				
				\node[v:mainempty] () at (4.5,-1){};
				\node () at (4.7,-1.4){$v_{Q_{2}}$};
				
				\node[v:mainempty] () at (1.2,-1){};
				\node () at (1.2,-1.4){$v_{Q_{1}}$};
				
				\node[v:main] () at (2.8,-1.5){};
				
				\node[v:maingray] () at (0.5,2){};
				\node[v:mainemptygray] () at (1,2){};
				\node[v:maingreen] () at (1.5,2){};
				\node[v:mainemptygreen] () at (2,2){};
				\node[v:maingreen] () at (2.5,2){};
				\node[v:mainemptygreen] () at (3,2){};
				\node[v:maingray] () at (3.5,2){};
				\node[v:mainemptygray] () at (4,2){};
				
				\node[v:maingray] () at (4.5,0){};
				\node[v:mainemptygray] () at (4.5,1){};
				
				\node[v:mainemptygray] () at (0,0.8){};
				\node[v:maingray] () at (0,1.4){};
				
				\node[v:mainemptygray] () at (1.5,-0.2){};
				\node[v:maingray] () at (3,-0.6){};
				
				\node[v:maingray] () at (0.8,-0.6){};
				\node[v:mainemptygray] () at (0.4,-0.2){};
				
				\node[Amethyst] () at (1.7,1.2){$F$};
				
				\node[v:mainemptygray] () at (3.37,-1.33){};
				\node[v:maingray] () at (3.93,-1.16){};
				
				\node[v:maingray] () at (1.73,-1.16){};
				\node[v:mainemptygray] () at (2.23,-1.33){};
				
				\node[v:maingray] () at (2.3,0){};
				\node[v:mainemptygray] () at (3.4,1){};
				
				\node[v:maingray] () at (0.62,1.22){};
				\node[v:mainemptygray] () at (2.18,-0.72){};
				
				\begin{pgfonlayer}{background}
					
					\draw[e:marker,BrightUbe] (0,0.2) -- (0.8,-0.7);
					\draw[e:marker,BrightUbe,bend right=40] (3.4,1) to (0.8,-0.6);
					\draw[e:marker,BrightUbe] (1.2,-1) -- (4.5,2);
					\draw[e:marker,BrightUbe] (1.2,-1) -- (2.8,-1.5);
					
					\draw[e:main,gray] (0,2) -- (0.5,2);
					\draw[e:main,gray] (1,2) -- (1.5,2);
					\draw[e:main,myGreen] (2,2) -- (2.5,2);
					\draw[e:main,gray] (3,2) -- (3.5,2);
					\draw[e:main,gray] (4,2) -- (4.5,2);
					
					\draw[e:coloredborder] (0.5,2) -- (1,2);
					\draw[e:coloredthin,color=BostonUniversityRed] (0.5,2) -- (1,2);
					
					\draw[e:coloredborder] (1.5,2) -- (2,2);
					\draw[e:coloredthin,color=BostonUniversityRed] (1.5,2) -- (2,2);
					
					\draw[e:coloredborder] (2.5,2) -- (3,2);
					\draw[e:coloredthin,color=BostonUniversityRed] (2.5,2) -- (3,2);

					\draw[e:coloredborder] (3.5,2) -- (4,2);
					\draw[e:coloredthin,color=BostonUniversityRed] (3.5,2) -- (4,2);
					
					\draw[e:main,blue,densely dashed,bend left=45] (1.5,2) to (3,2);
					
					\draw[e:coloredborder] (0,2) -- (0,1.4);
					\draw[e:coloredthin,color=BostonUniversityRed] (0,2) -- (0,1.4);
					\draw[e:coloredborder] (0,0.8) -- (0,0.2);
					\draw[e:coloredthin,color=BostonUniversityRed] (0,0.8) -- (0,0.2);
					\draw[e:main,gray] (0,0.8) -- (0,1.4);
					
					\draw[e:coloredborder] (4.5,0) -- (4.5,1);
					\draw[e:coloredthin,color=BostonUniversityRed] (4.5,0) -- (4.5,1);
					\draw[e:main,gray](4.5,1) -- (4.5,2);
					\draw[e:main,gray](4.5,-1) -- (4.5,0);
					
					\draw[e:main,gray] (0,0.2) -- (1.5,-0.2);
					\draw[e:main,gray] (3,-0.6) -- (4.5,-1);
					\draw[e:coloredborder] (1.5,-0.2) -- (3,-0.6);
					\draw[e:coloredthin,color=BostonUniversityRed] (1.5,-0.2) -- (3,-0.6);
					
					\draw[e:main,gray] (0,0.2) -- (0.4,-0.2);
					\draw[e:main,gray] (0.8,-0.6) -- (1.2,-1);
					\draw[e:coloredborder] (0.4,-0.2) -- (0.8,-0.6); 
					\draw[e:coloredthin,color=BostonUniversityRed] (0.4,-0.2) -- (0.8,-0.6); 
					
					\draw[e:coloredborder] (2.8,-1.5) -- (3.37,-1.33);
					\draw[e:coloredthin,color=BostonUniversityRed] (2.8,-1.5) -- (3.37,-1.33);
					\draw[e:coloredborder] (3.93,-1.16) -- (4.5,-1);
					\draw[e:coloredthin,color=BostonUniversityRed] (3.93,-1.16) -- (4.5,-1);
					\draw[e:main,gray] (3.37,-1.33) -- (3.93,-1.16);

					\draw[e:main,gray] (1.2,-1) -- (1.73,-1.16);
					\draw[e:main,gray] (2.23,-1.33) -- (2.8,-1.5);
					\draw[e:coloredborder] (1.73,-1.16) -- (2.23,-1.33);
					\draw[e:coloredthin,color=BostonUniversityRed] (1.73,-1.16) -- (2.23,-1.33);
					
					\draw[e:coloredborder] (3.4,1) -- (4.5,2);
					\draw[e:coloredthin,color=BostonUniversityRed] (3.4,1) -- (4.5,2);
					\draw[e:coloredborder] (1.2,-1) -- (2.3,0);
					\draw[e:coloredthin,color=BostonUniversityRed] (1.2,-1) -- (2.3,0);
					\draw[e:main,gray] (2.3,0) -- (3.4,1);
					
					\draw[e:main,gray] (0,2) -- (0.62,1.22);
					\draw[e:main,gray] (2.18,-0.72) -- (2.8,-1.5);
					\draw[e:coloredborder] (0.62,1.22) -- (2.18,-0.72);
					\draw[e:coloredthin,color=BostonUniversityRed] (0.62,1.22) -- (2.18,-0.72);
					
					\draw[e:main,Amethyst,bend right=40] (3.4,1) to (0.8,-0.6);
					
				\end{pgfonlayer}
				
			\end{tikzpicture}
		\end{subfigure}
		\caption{Examples of the cases occuring in the proof of \cref{lemma:reducedK33}} 
		\label{fig:firstLreductions}
	\end{figure}
	
	\textbf{Case 1}: $y\in\V{T_{V_2}-C}$
	
	Suppose $P_x$ contains the vertex $v_P$, let $W$ be the third bisubdivided edge with endpoint $v_P$.
	Then choose $M$ such that both $P$ and $P_x$ are internally $M$-conformal.
	Now we may replace the three subdivided edges of $L$ with $v_P$ as an endpoint by the following three $M$-alternating paths in order to obtain an $M$-conformal bisubdivision $L'$ of $K_{3,3}$, where the subdivided edge $P'$ that contains $\V{C}$ is strictly shorter than $P$, thereby violating the minimal choice of $L$.
	We set $P'\coloneqq yPu$ and the other paths are $yPv_PW$ and $yFxP_xw_1$
	
	So we may assume $P_x$ does not contain $v_P$ which means that there is $i\in[1,2]$ such that $P_x$ has $v_{Q_i}$ as an endpoint.
	We now aim for a bisubdivision $L'$ of $K_{3,3}$ in which both $x$ and $y$ are vertices of degree three.
	As before, the bisubdivided edge of $L'$ that contains $\V{C}$ will be shorter than $P$ and thus provide a contradiction.
	We now replace the paths $P$, $P_x$, $E_{v_Pw_1}$, $E_{v_Pw_2}$, and $E_{w_1v_{Q_{3-i}}}$ by the paths $yPu$, $F$, $yPv_PE_{v_Pw_2}$, $xP_xv_{Q_i}$, and $xP_xw_1E_{w_1v_{Q_{3-i}}}$ to obtain the graph $L'$.
	Since $L$ is a bisubdivision of $K_{3,3}$, we may choose $M$ such that $L'$ is $M$-conformal and thus we are done with this case.
	
	\textbf{Case 2}: $y\in\V{T_A{V-1}-C}$
	
	In this case $F$ is a $V_1$-jump over $C$ and thus we are done immediately.
	
	\textbf{Case 3}: $y\in\V{C}$
	
	In essence, we can repeat the construction from the first case to obtain an $M$-conformal bisubdivision $L'$ of $K_{3,3}$.
	Since $y\in\V{C}$ we end up with some $L'$ in which the edges $ab$ and $a'b'$ occur on two different subdivided edges that share the endpoint $y$.
	Thus $L'$ splits $C$ and we can close this case.
	
	\textbf{Case 4}: $y\notin\V{P_x}$
	
	We may assume $y\in\V{Q_1}$ as $y\in\V{Q_2}$ can be handled analogously.
	Instead of $P$ as in the first case we reduce the length of $Q_1$ while maintaining the lengths of $P$ and $Q_2$ in order to obtain a contradiction.
	The main idea of the construction, however, remains the same as in the first case and thus we omit the exact construction here.
	In \cref{fig:firstLreductions}, the possible ways to obtain the new $K_{3,3}$-bisubdivision $L'$ are illustrated.
	
	Combining all of these cases, this means that $\CutG{L}{Y}$ cannot be a non-trivial tight cut.
	Since otherwise we are either done since we find a path $F$ that allows us to change $L$ into $L'$ which splits $C$, or $F$ is a $V_1$-jump over $C$.
	However, by construction $\Abs{\V{C}\cap Y}\geq 3$ and $\Abs{\V{L}\setminus Y}\geq 3$ and thus this is impossible.
	It follows that $L$ itself must already split $C$ and so we are done.
\end{proof}

\begin{lemma}\label{lemma:reducedK33withjump}
	Let $B$ be a $K_{3,3}$-containing brace and $C$ a $4$-cycle in $B$ such that there is no conformal cross over $C$ in $B$.
	If there exists a perfect matching $M$ of $B$ such that $C$ is $M$-conformal and there is an $M$-conformal bisubdivision $L$ of $K_{3,3}$ that has a $V_1$-jump over $C$, then there exists a perfect matching $M'$ of $B$ such that $C$ is $M'$-conformal and there is an $M'$-conformal bisubdivision $L'$ of $K_{3,3}$ that either splits $C$, or has both, a $V_1$-jump and a $V_2$-jump over $C$.
\end{lemma}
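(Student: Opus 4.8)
The plan is to show that a single, suitably chosen bisubdivision of $K_{3,3}$ carries \emph{both} jumps at once, by minimising a potential that decreases under every reduction step used in the proof of \cref{lemma:reducedK33} and under its colour-swapped mirror. First I would dispose of the trivial case: if some $M'$-conformal bisubdivision of $K_{3,3}$ with $C$ $M'$-conformal splits $C$, then that bisubdivision is the required $L'$ and we are done. So assume no such bisubdivision splits $C$. Since $C$ has no conformal cross in $B$, \cref{lemma:C4inK33,lemma:easycrossesinK33} (together with the discussion preceding \cref{lemma:reducedK33}) ensure that $M$-conformal bisubdivisions $L$ of $K_{3,3}$ with $\V C\subseteq\V L$ exist, and that every such $L$ is in the ordered or reversed configuration of \cref{fig:threeconfigurations}, i.e. the two edges of $\E C\cap M$ occur on a single bisubdivided edge $P$ of $L$, with endpoints $u\in V_1$ and $v_P\in V_2$.

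Among all pairs $(M,L)$ as above I would choose one minimising $\Brace{\Abs{\E P},\ \Abs{\E{Q_1}}+\Abs{\E{Q_2}}+\Abs{\E{W_1}}+\Abs{\E{W_2}}}$ lexicographically, where $Q_1,Q_2$ are the bisubdivided edges of $L$ incident with $u$ other than $P$ and $W_1,W_2$ those incident with $v_P$ other than $P$ (such a pair exists, as $B$ is finite). I would then run the argument of \cref{lemma:reducedK33} on this $L$ with the claw centred at $u$: form the non-trivial tight cut $\CutG L Y$ around $Y:=\Brace{\V P\cup\V{Q_1}\cup\V{Q_2}}\setminus\Set{v_P,v_{Q_1},v_{Q_2}}$ and apply \cref{lemma:tightcutsincofnromalsubgraphs} to obtain an internally $M$-conformal path $F$, internally disjoint from $L$, from some $y\in\Vi 2 Y$ to some $x\in\Vi 1{\V L\setminus Y}$. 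The case analysis on the position of $y$ is as in \cref{lemma:reducedK33}: if $y$ lies in the interior of the shortest $v_P$-$\V C$-subpath of $P$ then Case~1 produces a bisubdivision with a strictly shorter $C$-carrying edge; if $y\in\V C$ then Case~3 produces a bisubdivision splitting $C$; if $y$ does not lie on $P$ then Case~4 produces a bisubdivision with the same $C$-carrying edge but strictly smaller $\Abs{\E{Q_1}}+\Abs{\E{Q_2}}$ and with $W_1,W_2$ unchanged. Each of these outcomes contradicts either the minimality of $(M,L)$ or the assumption that no bisubdivision splits $C$. Hence the only remaining possibility occurs: $y$ lies in the interior of the shortest $u$-$\V C$-subpath of $P$, and then $F$ witnesses a $V_1$-jump of $L$ over $C$.

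Running the identical argument with the colour classes interchanged and the claw centred at $v_P$ — the tight cut around $Y':=\Brace{\V P\cup\V{W_1}\cup\V{W_2}}\setminus\Set{u,v_{W_1},v_{W_2}}$ and the mirrored Cases $1'$, $3'$, $4'$ — each reduction again either decreases $\Abs{\E P}$, or keeps $\Abs{\E P}$ and decreases $\Abs{\E{W_1}}+\Abs{\E{W_2}}$ while leaving $Q_1,Q_2$ untouched, or splits $C$; all of these are impossible for our minimal $(M,L)$. So the argument terminates in the jump case here as well, producing a $V_2$-jump of $L$ over $C$. Thus $L$ has both a $V_1$-jump and a $V_2$-jump, and $(M,L)$ is the desired $(M',L')$.

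The step I expect to be the main obstacle is the bookkeeping that makes the potential genuinely decrease: one has to verify, from the explicit reconstructions of \cref{lemma:reducedK33} (Cases~1 and~4 and their colour-swapped versions, the latter given only schematically via \cref{fig:firstLreductions}), that a reduction performed around the claw at one endpoint of $P$ never lengthens the bisubdivided edges incident with the \emph{other} endpoint of $P$, and that Case~1-type reductions strictly shorten the $C$-carrying edge rather than merely relocating its interior. Both of these should hold because those reconstructions are local to one bisubdivided claw together with the bisubdivided edges adjacent to it, and the claw at $u$ shares no vertex with $W_1,W_2$ (which meet $P$ only at $v_P$); but confirming it requires working through the constructions and figures of \cref{lemma:reducedK33} with the same level of care as there.
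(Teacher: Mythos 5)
Your proof takes a genuinely different route from the paper's. The paper fixes an $L$ that already carries a $V_1$-jump $R$, centres the claw at the $V_2$-endpoint $v$ of $P$, minimises the two-component potential $\bigl(\Abs{\E{P}},\Abs{\E{E_{va_1}}\cup\E{E_{va_2}}}\bigr)$ over bisubdivisions that retain a $V_1$-jump, and then handles head-on the two ways a reduction step might destroy $R$ (by re-routing $R$ along $F$ if Case~4's $F$ hits it, and by re-minimising and re-running the argument if Case~1's $F$ lands between $\V{C}$ and $R$ on $T_{V_1}$). You instead drop the $V_1$-jump hypothesis entirely, enlarge the potential to $\bigl(\Abs{\E{P}},\ \Abs{\E{Q_1}}+\Abs{\E{Q_2}}+\Abs{\E{W_1}}+\Abs{\E{W_2}}\bigr)$, and run the claw argument independently at both ends of $P$, letting minimality alone kill Cases~1, 3, 4 and their mirrors and so force Case~2 at each end. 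If it works this is cleaner and in fact proves a strictly stronger statement (the $V_1$-jump hypothesis becomes superfluous), which would simplify the derivation of \cref{lemma:reducedK33mustsplit}.

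The obstacle you flag is real and is the crux. The paper's Case~4 text only promises that ``$P$ and $Q_2$'' keep their length; it is silent on $W_1,W_2$, and indeed the reconstruction can touch them, because $x$ (the endpoint of $F$ in $L-Y$) can land on $W_1$ or $W_2$ (this happens in the fourth panel of \cref{fig:firstLreductions}). Tracing that reconstruction shows the natural new bisubdivision makes $x$ and $y$ the new branch vertices replacing $w_1$ and $v_{Q_1}$, giving $W_1'=v_PW_1x\subsetneq W_1$ and $Q_1'=uQ_1y\subsetneq Q_1$ with $P,Q_2,W_2$ untouched, so the potential does decrease; and when $x$ lies on an ``outer'' bisubdivided edge (panels~5 and~6) the $v$-claw is not touched at all. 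So I believe the claim is true, but it is exactly the piece that is not spelled out in \cref{lemma:reducedK33} and that your proof rests on; to make the argument self-contained you would need to write out the Case~4 reconstruction explicitly for each possible location of $x$ (on $W_1$, on $W_2$, on an outer edge, or at $w_1$ or $w_2$) and check the potential, and then repeat for the colour-swapped Case~$4'$. One smaller inaccuracy: in Case~$4'$ you say $Q_1,Q_2$ are ``untouched,'' but as above they may instead be strictly shortened when $x'$ lies on one of them; that still decreases the potential, so the conclusion stands, but the phrasing overstates what is preserved. Finally, note that your minimisation only ranges over pairs $(M,L)$ in which $\V{C}$ lies inside a single bisubdivided edge; this is fine because the trivial split case has been set aside and \cref{lemma:C4inK33,lemma:easycrossesinK33} guarantee at least one such pair exists, but it is worth stating explicitly, as the case distinction and the definition of a $V_i$-jump both presuppose such a $P$.
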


\begin{proof}
	The proof is a slight alteration of the proof of the previous lemma.
	Since $L$ has an $V_1$-jump over $C$, there exists a bisubdivided edge $P$ of $L$ such that $\V{C}\subseteq \V{P}$.
	Let $u\in V_1$ and $v\in V_2$ be the endpoints of $P$.
	Let $a_1$, $a_2$, $b_1$, and $b_2$ be the four degree three vertices of $L$ aside from $u$ and $v$ such that $a_1,a_2\in V_1$.
	Now let $Y\coloneqq \V{P-u}\cup \V{E_{va_1}-a_1}\cup\V{E_{va_2}-a_2}$.
	By the same arguments as in the previous lemma, $\CutG{L}{Y}$ must be a non-trivial tight cut.
	Similar to before we choose $L$ such that the tuple $\Brace{\Abs{\E{P}},\Abs{\E{E_{va_1}}\cup\E{E_{va_2}}}}$ is lexicographically minimised.
	By using the same case distinction as in the proof of \cref{lemma:reducedK33} we either reach a contradiction, find a conformal $K_{3,3}$-bisubdivision $L'$ that splits $C$, or the path $F$ yielded by \cref{lemma:tightcutsincofnromalsubgraphs} is a $V_2$-jump over $C$ in $L$.
	The major difference between this lemma and \cref{lemma:reducedK33} is, that we have to maintain the existence of a $V_1$-jump over $C$.
	In the technique from the proof of the previous lemma, there are two possible ways, the existence of an $V_1$-jump over $C$ in the newly constructed $K_{3,3}$-bisubdivision $L'$ is threatened\footnote{Note that in case $L'$ splits $C$ we are done.}.
	Let $R$ be an $V_1$-jump over $C$ for $L$.
	
	The easier to handle case is the one in which the newly found path $F$ in \textbf{Case 4} of the case distinction intersects $R$.
	However, since $R$ and $F$ are internally $M$-conformal, let $z$ be the first vertex of $R$ on $F$, then $Rz$ still is internally $M$-conformal, and thus in this case, $L'$ still has a $V_1$-jump over $C$.
	
	The more complicated case is a subcase of \textbf{Case 1}.
	Let $T_{V_1}$ be the shortest subpath of $P$ with one endpoint in $\V{C}$ and $u$ as its other endpoint.
	Similarly, let $T_{V_2}$ be the shortest subpath of $P$ with one endpoint in $\V{C}$ and $v$ as its other endpoint.
	If $F$ has its endpoint in $Y$ on the subpath of $T_{V_1}$ that connects $\V{C}$ to $R$, then no subpath of $R$ can be a $V_1$-jump over $C$ for $L'$.
	However, in this case, we have found a conformal $K_{3,3}$-bisubdivision where the path in which both $M$-edges of $C$ occur is shorter than in $L$.
	Among those bisubdivisions choose $L'$ to be one that lexicographically minimises $\Brace{\Abs{\E{P'}},\Abs{\E{E_{u'b'_1}}\cup\E{E_{u'b'_2}}}}$, where the vertices marked with a $'$ are those of $L'$ that naturally correspond to the vertices of $L$. similarly we define $P'$.
	By reapplying the case distinction of \cref{lemma:reducedK33} to $L'$ we either find a $K_{3,3}$-bisubdivision $L''$ that splits $C$, of we find a new $V_1$-jump over $C$ for $L'$ which would contradict our choice of $L$ in the first place since $\Abs{\E{P'}}<\Abs{\E{P}}$.
	Hence if we cannot find a conformal $K_{3,3}$-bisubdivision that splits $C$, we always find one that has both, a $V_1$-jump and a $V_2$-jump over $C$.
\end{proof}

\begin{lemma}\label{lemma:reducedK33mustsplit}
	Let $B$ be a $K_{3,3}$-containing brace and $C$ a $4$-cycle in $B$ such that there is no conformal cross over $C$ in $B$.
	Then there exists a perfect matching $M$ of $B$ such that $C$ is $M$-conformal and there is an $M$-conformal bisubdivision $L$ of $K_{3,3}$ that splits $C$.
\end{lemma}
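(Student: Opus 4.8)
The plan is to bootstrap off the two preceding lemmas and then exclude the only remaining possibility by producing a conformal cross. First I would apply \cref{lemma:reducedK33}: it yields a perfect matching $M$ and an $M$-conformal bisubdivision $L$ of $K_{3,3}$ with $\V{C}\subseteq\V{L}$ which either splits $C$ -- in which case we are done -- or has a $V_1$-jump over $C$. In the latter case \cref{lemma:reducedK33withjump} yields a perfect matching $M'$ and an $M'$-conformal bisubdivision $L'$ of $K_{3,3}$ which either splits $C$ (done) or has \emph{both} a $V_1$-jump $R_1$ and a $V_2$-jump $R_2$ over $C$. So, after renaming, it remains to reach a contradiction from the assumption that $B$ has a perfect matching $M$, an $M$-conformal bisubdivision $L$ of $K_{3,3}$ with $\V{C}\subseteq\V{L}$ that does \textbf{not} split $C$, together with both a $V_1$-jump and a $V_2$-jump over $C$; for this it suffices, by \cref{lemma:goodcrossesmeanK33}, to exhibit a conformal cross over $C$ in $B$.

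To set up the surgery, let $P$ be the bisubdivided edge of $L$ with $\V{C}\subseteq\V{P}$ and endpoints $u\in V_1$, $v\in V_2$. Since $L$ does not split $C$, the two edges of $\E{C}\cap M$ occur on $P$ as in the \emph{ordered} or the \emph{reversed} configuration of \cref{fig:threeconfigurations}, so the two non-matching edges $f,g$ of $C$ are chords with both endpoints on $P$. Let $T_{V_1}$ (resp.\ $T_{V_2}$) be the shortest subpath of $P$ from $u$ (resp.\ $v$) to $\V{C}$, and let $Y_u$, $Y_v$ be the bisubdivided claws of $L$ centred at $u$ and $v$. By definition $R_1$ is an internally $M$-conformal path, internally disjoint from $L$, running from a vertex of $\Vi{2}{T_{V_1}}$ to a vertex of $\Vi{1}{L-Y_u}$, and $R_2$ is the symmetric statement with $v$ in place of $u$. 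If $R_1$ and $R_2$ meet, their four endpoints lie in pairwise distinct positions, so I would first invoke the Bipartite Untangling Lemma (\cref{lemma:untangletwopaths}); in the case that the untangled paths share an $M$-conformal subpath, switch $M$ along the associated $M$-conformal subcycle and reroute so that the two resulting internally $M$-conformal paths are again a $V_1$-jump and a $V_2$-jump (or a symmetric variant). Thus we may assume $R_1$ and $R_2$ are disjoint.

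Now the two jumps can be used to re-route $L$ into the desired conformal $K_{3,3}$-bisubdivision. Each jump lets one detach the segment of $P$ strictly between $\V{C}$ and its endpoint ($u$, resp.\ $v$) and re-attach it to the rest of $L$; performing both re-routings together, and using the two chords $f$ and $g$ to re-wire the four vertices of $C$, one obtains a subgraph $H$ of $B$ that is a bisubdivision of $K_{3,3}$ containing all four edges of $C$. Because $L$ is $M$-conformal and one may switch the matching along the $M$-conformal subcycles arising in the construction -- with \cref{obs:homogenousoccurence} and \cref{obs:splitcompletely} excluding the degenerate parity configurations -- the subgraph $H$ can be arranged to be $M^{\ast}$-conformal for a suitable perfect matching $M^{\ast}$ of $B$. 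By \cref{lemma:goodcrossesmeanK33} this $H$ witnesses a conformal cross over $C$, contradicting the hypothesis; hence the both-jumps case cannot occur, so one of \cref{lemma:reducedK33} or \cref{lemma:reducedK33withjump} already delivered a bisubdivision of $K_{3,3}$ that splits $C$, which is what we wanted.

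I expect the last step to be the main obstacle. Carrying out the two jump-surgeries simultaneously forces a case distinction on which bisubdivided edges of $L-Y_u$ and of $L-Y_v$ carry the far ends of $R_1$ and $R_2$ (in the spirit of the case analysis in the proof of \cref{lemma:reducedK33} and its figures), and in each case one must verify both that the re-routed paths stay pairwise disjoint and that the resulting $K_{3,3}$-bisubdivision is genuinely conformal and contains $C$ as a subgraph, not merely $\V{C}$.
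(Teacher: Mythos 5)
You correctly begin by invoking \cref{lemma:reducedK33} and then \cref{lemma:reducedK33withjump} to reduce to the case where a bisubdivision $L$ has both a $V_1$-jump and a $V_2$-jump over $C$, and that is exactly the paper's opening move. From there, however, your plan diverges, and the step it rests on is missing. The paper does not try to convert the two jumps into a conformal $K_{3,3}$-bisubdivision that contains $C$ as a subgraph. Instead it fixes $L$ to \emph{minimise} the length of the bisubdivided edge $P$ on which $ab$ and $a'b'$ both occur, untangles $R_{V_1}$ and $R_{V_2}$ via \cref{lemma:untangletwopaths} (keeping the case that they meet in an $M$-conformal path, which you cannot simply reroute away), and then runs a case analysis on where the far endpoints of the two jumps land in $L$ (on edges adjacent to $P$; on a common remote bisubdivided edge; on two distinct remote edges, disjoint or sharing an endpoint) to show that in every case one can re-route $L$ into a new $L'$ with a \emph{strictly shorter} edge $P'$ that still carries $ab$ and $a'b'$. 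The contradiction is to minimality, not to the no-cross hypothesis. From that it follows that a minimal $L$ cannot carry both jumps, and the earlier tight-cut arguments then deliver a bisubdivision that splits $C$.

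Your proposal instead aims for the stronger claim that both jumps let you build a bisubdivision of $K_{3,3}$ containing all of $C$ as a subgraph, whence \cref{lemma:goodcrossesmeanK33} gives a conformal cross and a contradiction. You flag this as the main obstacle and leave the surgery unperformed, and the omission is not cosmetic. A $V_i$-jump only gives you an internally $M$-conformal path from the tail $T_{V_i}\setminus\V{C}$ of $P$ to the far side of the tight cut around the claw at $u$ (resp.\ $v$); by itself it does not tell you how the two non-matching chords of $C$ can be wired into a bisubdivision with all pieces pairwise disjoint and the parities correct. Note also that the paper's own proof of \cref{prop:4cycleK33} only reaches a $C$-subgraph bisubdivision \emph{after} first obtaining a bisubdivision that splits $C$ via this very lemma and then performing one more tight-cut/jump round; so the single step you propose, from the ordered/reversed configuration directly to a $C$-subgraph bisubdivision, is larger than any single step the paper ever performs, and you offer no evidence that it is achievable. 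Without a minimality set-up and a case analysis in the spirit of \cref{fig:shrinkingPno1,fig:shrinkingPno2,fig:shrinkingPno3}, the argument remains open.
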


\begin{proof}
	By \cref{lemma:reducedK33} we either find a conformal bisubdivision $L'$ of $K_{3,3}$ that splits $C$, in which case we are done, or we find one with a $V_1$-jump over $C$.
	Then \cref{lemma:reducedK33withjump} might again yield the existence of a conformal bisubdivision $L$ of $K_{3,3}$ that splits $C$ if it does not we find $M$ and $L$ such that $L$ has a $V_1$-jump $R_{V_1}$ and a $V_2$-jump $R_{V_2}$ over $C$.
	Let $P$ be the bisubdivided edge of $L$ that contains the vertices of $C$.
	We may assume $L$ to be a conformal $K_{3,3}$-bisubdivision that minimises the length of $P$ among all conformal bisubdivisions of $K_{3,3}$ for which $ab$ and $a'b'$ occur on a single bisubdivided edge $P$.
	By \cref{lemma:untangletwopaths} we may assume that $R_{V_1}$ and $R_{V_2}$ are either disjoint, or $R_{V_1}\cap R_{V_2}$ is an $M$-conformal path.
	For each $X\in\Set{V_1,V_2}$ let $v_X$ be the endpoint of $R_X$ that does not belong to the bisubdivided edge $P$.
	We have to consider the cases how $v_{V_1}$ and $v_{V_2}$ occur on the bisubdivided edges of $L$ and for each of these cases we need to look at $R_{V_1}$ and $R_{V_2}$ being disjoint or meeting in an $M$-conformal path.
	Let $u\in V_1$ and $v\in V_2$ be the endpoints of $P$ and let $a_1,a_2\in V_1$, $b_1,b_2\in V_2$ be the remaining four vertices of degree three in $L$.
	Then $R_{V_1}$ cannot have an endpoint on $E_{ub_1}$ or $E_{ub_2}$, while $R_{V_2}$ cannot have an endpoint on $E_{va_1}$ or $E_{va_2}$.
	Our goal is to show that $R_{V_1}$ and $R_{V_2}$ can be used to produce a contradiction to the choice of $L$ with respect to the minimality of $P$.
	
	Let us first consider the cases where at least one of $R_{V_1}$ and $R_{V_2}$ has an endpoint on one of the $E_{ub_i}$ or $E_{va_i}$.
	By symmetry, we just need to consider the case where $R_{V_1}$ meets $E_{va_1}$ and $R_{V_2}$ meets $E_{ub_1}$, and the case where $R_{V_1}$ meets $E_{va_1}$ while $R_{V_2}$ meets an arbitrary other bisubdivided edge of $L$, say $E_{a_1b_1}$.
	Please note that in all of these cases, it does not play a role whether $ab$ and $a'b'$ occur in reverse on $P$ or not.
	Hence we only treat the case where $ab$ and $a'b'$ are not reversed.
	In \cref{fig:shrinkingPno1} we give exemplary constructions of a new conformal $K_{3,3}$-bisubdivision $L'$ which still has a bisubdivided edge $P'$ containing $ab$ and $a'b'$, but with $\Abs{\E{P'}}<\Abs{\E{P}}$ this contradicts the choice of $L$.

	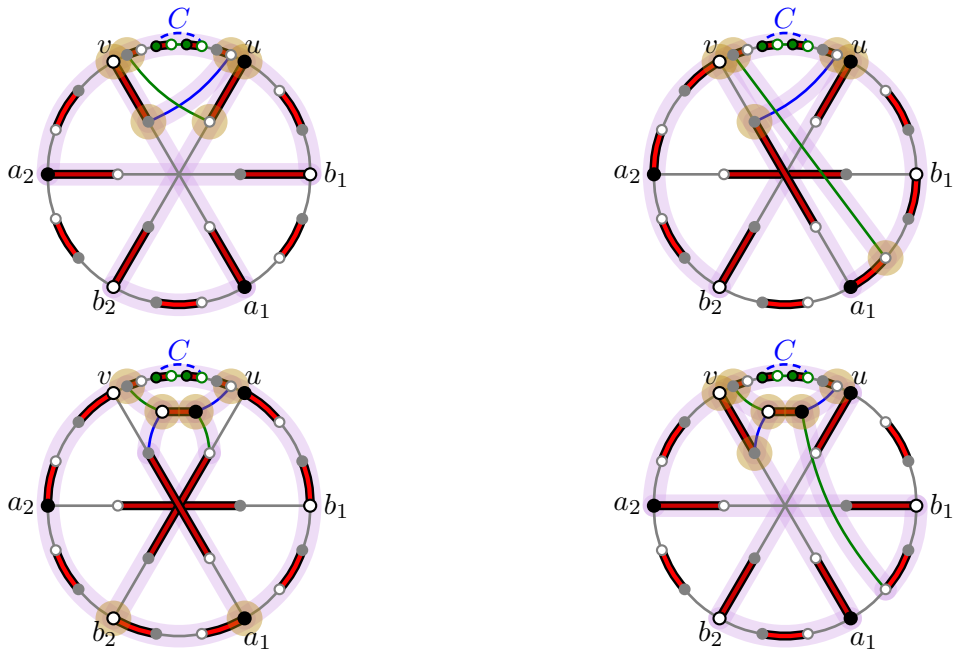
\begin{figure}[h!]
		\centering
		\begin{subfigure}{0.49\textwidth}
			\centering
			\begin{tikzpicture}[scale=1.15]
				\pgfdeclarelayer{background}
				\pgfdeclarelayer{foreground}
				\pgfsetlayers{background,main,foreground}
				
				\node[v:marked] () at (113.34:15mm){};
				\node[v:marked] () at (120:15mm){};
				\node[v:marked] () at (60:15mm){};
				\node[v:marked] () at (60:7mm){};
				\node[v:marked] () at (120:7mm){};
				\node[v:marked] () at (66.66:15mm){};
				
				\foreach \x in {0,2,4}
				{
					\node[v:mainempty] () at (\x*60:15mm){};
					\node[v:maingray] () at (\x*60:7mm){};
				}
				\foreach \x in {1,3,5}
				{
					\node[v:main] () at (\x*60:15mm){};  
					\node[v:mainemptygray] () at (\x*60:7mm){};
				}
				
				\node[v:maingray] () at (20:15mm){};
				\node[v:maingray] () at (140:15mm){};
				\node[v:maingray] () at (220:15mm){};
				\node[v:maingray] () at (260:15mm){};
				\node[v:maingray] () at (340:15mm){};
				
				\node[v:mainemptygray] () at (66.66:15mm){};
				\node[v:maingray] () at (73.32:15mm){};
				\node[v:mainemptygreen] () at (80.01:15mm){};
				\node[v:maingreen] () at (86.68:15mm){};
				\node[v:mainemptygreen] () at (93.35:15mm){};
				\node[v:maingreen] () at (100.01:15mm){};
				\node[v:mainemptygray] () at (106.68:15mm){};
				\node[v:maingray] () at (113.34:15mm){};
				
				\node[v:mainemptygray] () at (40:15mm){};
				\node[v:mainemptygray] () at (160:15mm){};
				\node[v:mainemptygray] () at (200:15mm){};
				\node[v:mainemptygray] () at (280:15mm){};
				\node[v:mainemptygray] () at (320:15mm){};
				
				\node () at (0:18mm){$b_{1}$};
				\node () at (60:17mm){$u$};
				\node () at (120:17mm){$v$};
				\node () at (180:18mm){$a_{2}$};
				\node () at (240:17mm){$b_{2}$};
				\node () at (300:18mm){$a_{1}$};
				
				\node[blue] () at (90:18mm){$C$};
				
				\begin{pgfonlayer}{background}
					
					\draw[e:marker,BrightUbe,bend right=15] (120:7mm) to (66.66:15mm);    
					\draw[e:marker,BrightUbe] (0:15mm) -- (180:15mm);
					\draw[e:marker,BrightUbe] (60:15mm) -- (240:15mm);
					\draw[e:marker,BrightUbe] (120:15mm) -- (300:15mm);
					\draw[e:marker,BrightUbe,bend right=28] (240:15mm) to (300:15mm);
					\draw[e:marker,BrightUbe,bend right=28] (0:15mm) to (60:15mm) to (120:15mm) to (180:15mm);
					
					\draw[e:main,gray] (60:15mm) to (66.66:15mm){};
					\draw[e:coloredborder] (66.66:15mm) to (73.32:15mm){};
					\draw[e:coloredthin,color=BostonUniversityRed] (66.66:15mm) to (73.32:15mm){};
					\draw[e:main,gray] (73.32:15mm) to (80.01:15mm){};
					\draw[e:coloredborder] (80.01:15mm) to (86.68:15mm){};
					\draw[e:coloredthin,color=BostonUniversityRed] (80.01:15mm) to (86.68:15mm){};
					\draw[e:main,myGreen] (86.68:15mm) to (93.35:15mm){};
					\draw[e:coloredborder] (93.35:15mm) to (100.01:15mm){};
					\draw[e:coloredthin,color=BostonUniversityRed] (93.35:15mm) to (100.01:15mm){};     
					\draw[e:main,gray] (100.01:15mm) to (106.68:15mm){};
					\draw[e:coloredborder] (106.68:15mm) to (113.34:15mm){};
					\draw[e:coloredthin,color=BostonUniversityRed] (106.68:15mm) to (113.34:15mm){}; 
					\draw[e:main,gray] (113.34:15mm) to (120:15mm){};
					
					\draw[e:main,blue,densely dashed,bend right=90] (80.01:15mm) to (100.01:15mm);
					
					\draw[e:main,gray,bend right=10] (0:15mm) to (20:15mm);
					\draw[e:coloredborder,bend right=10] (20:15mm) to (40:15mm);
					\draw[e:coloredthin,red,bend right=10] (20:15mm) to (40:15mm);
					\draw[e:main,gray,bend right=10] (40:15mm) to (60:15mm);
					
					\draw[e:main,gray,bend right=10] (120:15mm) to (140:15mm);
					\draw[e:coloredborder,bend right=10] (140:15mm) to (160:15mm);
					\draw[e:coloredthin,red,bend right=10] (140:15mm) to (160:15mm);
					\draw[e:main,gray,bend right=10] (160:15mm) to (180:15mm);
					
					\draw[e:main,gray,bend right=10] (180:15mm) to (200:15mm);
					\draw[e:coloredborder,bend right=10] (200:15mm) to (220:15mm);
					\draw[e:coloredthin,red,bend right=10] (200:15mm) to (220:15mm);
					\draw[e:main,gray,bend right=10] (220:15mm) to (240:15mm);
					
					\draw[e:main,gray,bend right=10] (240:15mm) to (260:15mm);
					\draw[e:coloredborder,bend right=10] (260:15mm) to (280:15mm);
					\draw[e:coloredthin,red,bend right=10] (260:15mm) to (280:15mm);
					\draw[e:main,gray,bend right=10] (280:15mm) to (300:15mm);
					
					\draw[e:main,gray,bend right=10] (300:15mm) to (320:15mm);
					\draw[e:coloredborder,bend right=10] (320:15mm) to (340:15mm);
					\draw[e:coloredthin,red,bend right=10] (320:15mm) to (340:15mm);
					\draw[e:main,gray,bend right=10] (340:15mm) to (0:15mm);
					
					\draw[e:coloredborder] (180:15mm) -- (180:7mm);
					\draw[e:coloredthin,color=BostonUniversityRed] (180:15mm) -- (180:7mm);
					\draw[e:coloredborder] (0:15mm) -- (0:7mm);
					\draw[e:coloredthin,color=BostonUniversityRed] (0:15mm) -- (0:7mm);
					\draw[e:main,gray] (180:7mm) -- (0:7mm);
					
					\draw[e:coloredborder] (240:15mm) -- (240:7mm);
					\draw[e:coloredthin,color=BostonUniversityRed] (240:15mm) -- (240:7mm);
					\draw[e:coloredborder] (60:15mm) -- (60:7mm);
					\draw[e:coloredthin,color=BostonUniversityRed] (60:15mm) -- (60:7mm);
					\draw[e:main,gray] (240:7mm) -- (60:7mm);
					
					\draw[e:coloredborder] (300:15mm) -- (300:7mm);
					\draw[e:coloredthin,color=BostonUniversityRed] (300:15mm) -- (300:7mm);
					\draw[e:coloredborder] (120:15mm) -- (120:7mm);
					\draw[e:coloredthin,color=BostonUniversityRed] (120:15mm) -- (120:7mm);
					\draw[e:main,gray] (300:7mm) -- (120:7mm);
					
					\draw[e:main,blue,bend right=15] (120:7mm) to (66.66:15mm);    
					\draw[e:main,myGreen,bend right=15] (113.34:15mm) to (60:7mm);

				\end{pgfonlayer}
			\end{tikzpicture}
		\end{subfigure}
		\begin{subfigure}{0.49\textwidth}
			\centering
			\begin{tikzpicture}[scale=1.15]
				\pgfdeclarelayer{background}
				\pgfdeclarelayer{foreground}
				\pgfsetlayers{background,main,foreground}
				
				\node[v:marked] () at (113.34:15mm){};
				\node[v:marked] () at (120:15mm){};
				\node[v:marked] () at (60:15mm){};
				\node[v:marked] () at (320:15mm){};
				\node[v:marked] () at (120:7mm){};
				\node[v:marked] () at (66.66:15mm){};
				
				\foreach \x in {0,2,4}
				{
					\node[v:mainempty] () at (\x*60:15mm){};
					\node[v:maingray] () at (\x*60:7mm){};
				}
				\foreach \x in {1,3,5}
				{
					\node[v:main] () at (\x*60:15mm){};  
					\node[v:mainemptygray] () at (\x*60:7mm){};
				}
				
				\node[v:maingray] () at (20:15mm){};
				\node[v:maingray] () at (140:15mm){};
				\node[v:maingray] () at (220:15mm){};
				\node[v:maingray] () at (260:15mm){};
				\node[v:maingray] () at (340:15mm){};
				
				\node[v:mainemptygray] () at (66.66:15mm){};
				\node[v:maingray] () at (73.32:15mm){};
				\node[v:mainemptygreen] () at (80.01:15mm){};
				\node[v:maingreen] () at (86.68:15mm){};
				\node[v:mainemptygreen] () at (93.35:15mm){};
				\node[v:maingreen] () at (100.01:15mm){};
				\node[v:mainemptygray] () at (106.68:15mm){};
				\node[v:maingray] () at (113.34:15mm){};
				
				\node[v:mainemptygray] () at (40:15mm){};
				\node[v:mainemptygray] () at (160:15mm){};
				\node[v:mainemptygray] () at (200:15mm){};
				\node[v:mainemptygray] () at (280:15mm){};
				\node[v:mainemptygray] () at (320:15mm){};
				
				\node () at (0:18mm){$b_{1}$};
				\node () at (60:17mm){$u$};
				\node () at (120:17mm){$v$};
				\node () at (180:18mm){$a_{2}$};
				\node () at (240:17mm){$b_{2}$};
				\node () at (300:18mm){$a_{1}$};
				
				\node[blue] () at (90:18mm){$C$};
				
				\begin{pgfonlayer}{background}
					
					\draw[e:marker,BrightUbe,bend right=15] (120:7mm) to (66.66:15mm);    
					\draw[e:marker,BrightUbe] (320:15mm) to (113.34:15mm);
					\draw[e:marker,BrightUbe] (60:15mm) -- (240:15mm);
					\draw[e:marker,BrightUbe] (120:15mm) -- (300:15mm);
					\draw[e:marker,BrightUbe,bend right=28] (300:15mm) to (0:15mm) to (60:15mm) to (120:15mm) to (180:15mm) to (240:15mm);
					
					\draw[e:main,gray] (60:15mm) to (66.66:15mm){};
					\draw[e:coloredborder] (66.66:15mm) to (73.32:15mm){};
					\draw[e:coloredthin,color=BostonUniversityRed] (66.66:15mm) to (73.32:15mm){};
					\draw[e:main,gray] (73.32:15mm) to (80.01:15mm){};
					\draw[e:coloredborder] (80.01:15mm) to (86.68:15mm){};
					\draw[e:coloredthin,color=BostonUniversityRed] (80.01:15mm) to (86.68:15mm){};
					\draw[e:main,myGreen] (86.68:15mm) to (93.35:15mm){};
					\draw[e:coloredborder] (93.35:15mm) to (100.01:15mm){};
					\draw[e:coloredthin,color=BostonUniversityRed] (93.35:15mm) to (100.01:15mm){};     
					\draw[e:main,gray] (100.01:15mm) to (106.68:15mm){};
					\draw[e:coloredborder] (106.68:15mm) to (113.34:15mm){};
					\draw[e:coloredthin,color=BostonUniversityRed] (106.68:15mm) to (113.34:15mm){}; 
					\draw[e:main,gray] (113.34:15mm) to (120:15mm){};
					
					\draw[e:main,blue,densely dashed,bend right=90] (80.01:15mm) to (100.01:15mm);
					
					\draw[e:main,gray,bend right=10] (0:15mm) to (20:15mm);
					\draw[e:coloredborder,bend right=10] (20:15mm) to (40:15mm);
					\draw[e:coloredthin,red,bend right=10] (20:15mm) to (40:15mm);
					\draw[e:main,gray,bend right=10] (40:15mm) to (60:15mm);
					
					\draw[e:coloredborder,bend right=10] (120:15mm) to (140:15mm);
					\draw[e:coloredthin,red,bend right=10] (120:15mm) to (140:15mm);
					\draw[e:main,gray,bend right=10] (140:15mm) to (160:15mm);
					\draw[e:coloredborder,bend right=10] (160:15mm) to (180:15mm);
					\draw[e:coloredthin,red,bend right=10] (160:15mm) to (180:15mm);
					
					\draw[e:main,gray,bend right=10] (180:15mm) to (200:15mm);
					\draw[e:coloredborder,bend right=10] (200:15mm) to (220:15mm);
					\draw[e:coloredthin,red,bend right=10] (200:15mm) to (220:15mm);
					\draw[e:main,gray,bend right=10] (220:15mm) to (240:15mm);
					
					\draw[e:main,gray,bend right=10] (240:15mm) to (260:15mm);
					\draw[e:coloredborder,bend right=10] (260:15mm) to (280:15mm);
					\draw[e:coloredthin,red,bend right=10] (260:15mm) to (280:15mm);
					\draw[e:main,gray,bend right=10] (280:15mm) to (300:15mm);
					
					\draw[e:coloredborder,bend right=10] (300:15mm) to (320:15mm);
					\draw[e:coloredthin,red,bend right=10] (300:15mm) to (320:15mm);
					\draw[e:main,gray,bend right=10] (320:15mm) to (340:15mm);
					\draw[e:coloredborder,bend right=10] (340:15mm) to (360:15mm);
					\draw[e:coloredthin,red,bend right=10] (340:15mm) to (360:15mm);
					
					\draw[e:coloredborder] (180:7mm) -- (0:7mm);
					\draw[e:coloredthin,color=BostonUniversityRed] (180:7mm) -- (0:7mm);
					\draw[e:main,gray] (180:15mm) -- (180:7mm);
					\draw[e:main,gray] (0:15mm) -- (0:7mm);
					
					\draw[e:coloredborder] (240:15mm) -- (240:7mm);
					\draw[e:coloredthin,color=BostonUniversityRed] (240:15mm) -- (240:7mm);
					\draw[e:coloredborder] (60:15mm) -- (60:7mm);
					\draw[e:coloredthin,color=BostonUniversityRed] (60:15mm) -- (60:7mm);
					\draw[e:main,gray] (240:7mm) -- (60:7mm);
					
					\draw[e:coloredborder] (300:7mm) -- (120:7mm);
					\draw[e:coloredthin,color=BostonUniversityRed] (300:7mm) -- (120:7mm);
					\draw[e:main,gray] (300:15mm) -- (300:7mm);
					\draw[e:main,gray] (120:15mm) -- (120:7mm);
					
					\draw[e:main,blue,bend right=15] (120:7mm) to (66.66:15mm);    
					\draw[e:main,myGreen] (320:15mm) to (113.34:15mm);

				\end{pgfonlayer}
			\end{tikzpicture}
		\end{subfigure}
		
		\begin{subfigure}{0.49\textwidth}
			\centering
			\begin{tikzpicture}[scale=1.15]
				\pgfdeclarelayer{background}
				\pgfdeclarelayer{foreground}
				\pgfsetlayers{background,main,foreground}
				
				\node[v:marked] () at (113.34:15mm){};
				\node[v:marked] () at (240:15mm){};
				\node[v:marked] () at (300:15mm){};
				\node[v:marked] () at (80:11mm){};
				\node[v:marked] () at (100:11mm){};
				\node[v:marked] () at (66.66:15mm){};
				
				\foreach \x in {0,2,4}
				{
					\node[v:mainempty] () at (\x*60:15mm){};
					\node[v:maingray] () at (\x*60:7mm){};
				}
				\foreach \x in {1,3,5}
				{
					\node[v:main] () at (\x*60:15mm){};  
					\node[v:mainemptygray] () at (\x*60:7mm){};
				}
				
				\node[v:maingray] () at (20:15mm){};
				\node[v:maingray] () at (140:15mm){};
				\node[v:maingray] () at (220:15mm){};
				\node[v:maingray] () at (260:15mm){};
				\node[v:maingray] () at (340:15mm){};
				
				\node[v:mainemptygray] () at (66.66:15mm){};
				\node[v:maingray] () at (73.32:15mm){};
				\node[v:mainemptygreen] () at (80.01:15mm){};
				\node[v:maingreen] () at (86.68:15mm){};
				\node[v:mainemptygreen] () at (93.35:15mm){};
				\node[v:maingreen] () at (100.01:15mm){};
				\node[v:mainemptygray] () at (106.68:15mm){};
				\node[v:maingray] () at (113.34:15mm){};
				
				\node[v:mainemptygray] () at (40:15mm){};
				\node[v:mainemptygray] () at (160:15mm){};
				\node[v:mainemptygray] () at (200:15mm){};
				\node[v:mainemptygray] () at (280:15mm){};
				\node[v:mainemptygray] () at (320:15mm){};
				
				\node[v:main] () at (80:11mm){};
				\node[v:mainempty] () at (100:11mm){};
				
				\node () at (0:18mm){$b_{1}$};
				\node () at (60:17mm){$u$};
				\node () at (120:17mm){$v$};
				\node () at (180:18mm){$a_{2}$};
				\node () at (240:17mm){$b_{2}$};
				\node () at (300:18mm){$a_{1}$};
				
				\node[blue] () at (90:18mm){$C$};
				
				\begin{pgfonlayer}{background}
					
					\draw[e:marker,BrightUbe,bend right=15] (80:11mm) to (66.66:15mm);
					\draw[e:marker,BrightUbe,bend right=15] (100:11mm) to (120:7mm);
					\draw[e:marker,BrightUbe,bend left=15] (100:11mm) to (113.34:15mm);
					\draw[e:marker,BrightUbe,bend left=15] (80:11mm) to (60:7mm);
					\draw[e:marker,BrightUbe] (80:11mm) -- (100:11mm);
					
					\draw[e:marker,BrightUbe] (60:7mm) -- (240:15mm);
					\draw[e:marker,BrightUbe] (120:7mm) -- (300:15mm);
					\draw[e:marker,BrightUbe,bend right=28] (300:15mm) to (0:15mm) to (60:15mm) to (120:15mm) to (180:15mm) to (240:15mm) to (300:15mm);
					
					\draw[e:main,gray] (60:15mm) to (66.66:15mm){};
					\draw[e:coloredborder] (66.66:15mm) to (73.32:15mm){};
					\draw[e:coloredthin,color=BostonUniversityRed] (66.66:15mm) to (73.32:15mm){};
					\draw[e:main,gray] (73.32:15mm) to (80.01:15mm){};
					\draw[e:coloredborder] (80.01:15mm) to (86.68:15mm){};
					\draw[e:coloredthin,color=BostonUniversityRed] (80.01:15mm) to (86.68:15mm){};
					\draw[e:main,myGreen] (86.68:15mm) to (93.35:15mm){};
					\draw[e:coloredborder] (93.35:15mm) to (100.01:15mm){};
					\draw[e:coloredthin,color=BostonUniversityRed] (93.35:15mm) to (100.01:15mm){};     
					\draw[e:main,gray] (100.01:15mm) to (106.68:15mm){};
					\draw[e:coloredborder] (106.68:15mm) to (113.34:15mm){};
					\draw[e:coloredthin,color=BostonUniversityRed] (106.68:15mm) to (113.34:15mm){}; 
					\draw[e:main,gray] (113.34:15mm) to (120:15mm){};
					
					\draw[e:main,blue,densely dashed,bend right=90] (80.01:15mm) to (100.01:15mm);
					
					\draw[e:coloredborder,bend right=10] (0:15mm) to (20:15mm);
					\draw[e:coloredthin,red,bend right=10] (0:15mm) to (20:15mm);
					\draw[e:main,gray,bend right=10] (20:15mm) to (40:15mm);
					\draw[e:coloredborder,bend right=10] (40:15mm) to (60:15mm);
					\draw[e:coloredthin,red,bend right=10] (40:15mm) to (60:15mm);
					
					\draw[e:coloredborder,bend right=10] (120:15mm) to (140:15mm);
					\draw[e:coloredthin,red,bend right=10] (120:15mm) to (140:15mm);
					\draw[e:main,gray,bend right=10] (140:15mm) to (160:15mm);
					\draw[e:coloredborder,bend right=10] (160:15mm) to (180:15mm);
					\draw[e:coloredthin,red,bend right=10] (160:15mm) to (180:15mm);
					
					\draw[e:main,gray,bend right=10] (180:15mm) to (200:15mm);
					\draw[e:coloredborder,bend right=10] (200:15mm) to (220:15mm);
					\draw[e:coloredthin,red,bend right=10] (200:15mm) to (220:15mm);
					\draw[e:main,gray,bend right=10] (220:15mm) to (240:15mm);
					
					\draw[e:coloredborder,bend right=10] (240:15mm) to (260:15mm);
					\draw[e:coloredthin,red,bend right=10] (240:15mm) to (260:15mm);
					\draw[e:main,gray,bend right=10] (260:15mm) to (280:15mm);
					\draw[e:coloredborder,bend right=10] (280:15mm) to (300:15mm);
					\draw[e:coloredthin,red,bend right=10] (280:15mm) to (300:15mm);
					
					\draw[e:main,gray,bend right=10] (300:15mm) to (320:15mm);
					\draw[e:coloredborder,bend right=10] (320:15mm) to (340:15mm);
					\draw[e:coloredthin,red,bend right=10] (320:15mm) to (340:15mm);
					\draw[e:main,gray,bend right=10] (340:15mm) to (0:15mm);
					
					\draw[e:coloredborder] (180:7mm) -- (0:7mm);
					\draw[e:coloredthin,color=BostonUniversityRed] (180:7mm) -- (0:7mm);
					\draw[e:main,gray] (180:15mm) -- (180:7mm);
					\draw[e:main,gray] (0:15mm) -- (0:7mm);
					
					\draw[e:coloredborder] (240:7mm) -- (60:7mm);
					\draw[e:coloredthin,color=BostonUniversityRed] (240:7mm) -- (60:7mm);
					\draw[e:main,gray] (240:15mm) -- (240:7mm);
					\draw[e:main,gray] (60:15mm) -- (60:7mm);
					
					\draw[e:coloredborder] (300:7mm) -- (120:7mm);
					\draw[e:coloredthin,color=BostonUniversityRed] (300:7mm) -- (120:7mm);
					\draw[e:main,gray] (300:15mm) -- (300:7mm);
					\draw[e:main,gray] (120:15mm) -- (120:7mm);
					
					\draw[e:coloredborder] (80:11mm) -- (100:11mm);
					\draw[e:coloredthin,color=BostonUniversityRed] (80:11mm) -- (100:11mm);
					
					\draw[e:main,blue,bend right=15] (80:11mm) to (66.66:15mm); \draw[e:main,blue,bend right=15] (100:11mm) to (120:7mm);
					\draw[e:main,myGreen,bend left=15] (100:11mm) to (113.34:15mm);
					\draw[e:main,myGreen,bend left=15] (80:11mm) to (60:7mm);

				\end{pgfonlayer}
			\end{tikzpicture}
		\end{subfigure}
		\begin{subfigure}{0.49\textwidth}
			\centering
			\begin{tikzpicture}[scale=1.15]
				\pgfdeclarelayer{background}
				\pgfdeclarelayer{foreground}
				\pgfsetlayers{background,main,foreground}
				
				\node[v:marked] () at (113.34:15mm){};
				\node[v:marked] () at (120:7mm){};
				\node[v:marked] () at (120:15mm){};
				\node[v:marked] () at (80:11mm){};
				\node[v:marked] () at (100:11mm){};
				\node[v:marked] () at (66.66:15mm){};
				
				\foreach \x in {0,2,4}
				{
					\node[v:mainempty] () at (\x*60:15mm){};
					\node[v:maingray] () at (\x*60:7mm){};
				}
				\foreach \x in {1,3,5}
				{
					\node[v:main] () at (\x*60:15mm){};  
					\node[v:mainemptygray] () at (\x*60:7mm){};
				}
				
				\node[v:maingray] () at (20:15mm){};
				\node[v:maingray] () at (140:15mm){};
				\node[v:maingray] () at (220:15mm){};
				\node[v:maingray] () at (260:15mm){};
				\node[v:maingray] () at (340:15mm){};
				
				\node[v:mainemptygray] () at (66.66:15mm){};
				\node[v:maingray] () at (73.32:15mm){};
				\node[v:mainemptygreen] () at (80.01:15mm){};
				\node[v:maingreen] () at (86.68:15mm){};
				\node[v:mainemptygreen] () at (93.35:15mm){};
				\node[v:maingreen] () at (100.01:15mm){};
				\node[v:mainemptygray] () at (106.68:15mm){};
				\node[v:maingray] () at (113.34:15mm){};
				
				\node[v:mainemptygray] () at (40:15mm){};
				\node[v:mainemptygray] () at (160:15mm){};
				\node[v:mainemptygray] () at (200:15mm){};
				\node[v:mainemptygray] () at (280:15mm){};
				\node[v:mainemptygray] () at (320:15mm){};
				
				\node[v:main] () at (80:11mm){};
				\node[v:mainempty] () at (100:11mm){};
				
				\node () at (0:18mm){$b_{1}$};
				\node () at (60:17mm){$u$};
				\node () at (120:17mm){$v$};
				\node () at (180:18mm){$a_{2}$};
				\node () at (240:17mm){$b_{2}$};
				\node () at (300:18mm){$a_{1}$};
				
				\node[blue] () at (90:18mm){$C$};
				
				\begin{pgfonlayer}{background}
					
					\draw[e:marker,BrightUbe,bend right=15] (80:11mm) to (66.66:15mm);
					\draw[e:marker,BrightUbe,bend right=15] (100:11mm) to (120:7mm);
					\draw[e:marker,BrightUbe,bend left=20] (100:11mm) to (113.34:15mm);
					\draw[e:marker,BrightUbe,bend right=15] (80:11mm) to (320:15mm);
					\draw[e:marker,BrightUbe] (80:11mm) -- (100:11mm);
					
					\draw[e:marker,BrightUbe] (0:15mm) -- (180:15mm);
					\draw[e:marker,BrightUbe] (60:15mm) -- (240:15mm);
					\draw[e:marker,BrightUbe] (120:15mm) -- (300:15mm);
					\draw[e:marker,BrightUbe,bend right=28] (60:15mm) to (120:15mm) to (180:15mm);
					\draw[e:marker,BrightUbe,bend right=28] (240:15mm) to (300:15mm);
					\draw[e:marker,BrightUbe,bend right=19] (320:15mm) to (0:15mm);
					
					\draw[e:main,gray] (60:15mm) to (66.66:15mm){};
					\draw[e:coloredborder] (66.66:15mm) to (73.32:15mm){};
					\draw[e:coloredthin,color=BostonUniversityRed] (66.66:15mm) to (73.32:15mm){};
					\draw[e:main,gray] (73.32:15mm) to (80.01:15mm){};
					\draw[e:coloredborder] (80.01:15mm) to (86.68:15mm){};
					\draw[e:coloredthin,color=BostonUniversityRed] (80.01:15mm) to (86.68:15mm){};
					\draw[e:main,myGreen] (86.68:15mm) to (93.35:15mm){};
					\draw[e:coloredborder] (93.35:15mm) to (100.01:15mm){};
					\draw[e:coloredthin,color=BostonUniversityRed] (93.35:15mm) to (100.01:15mm){};     
					\draw[e:main,gray] (100.01:15mm) to (106.68:15mm){};
					\draw[e:coloredborder] (106.68:15mm) to (113.34:15mm){};
					\draw[e:coloredthin,color=BostonUniversityRed] (106.68:15mm) to (113.34:15mm){}; 
					\draw[e:main,gray] (113.34:15mm) to (120:15mm){};
					
					\draw[e:main,blue,densely dashed,bend right=90] (80.01:15mm) to (100.01:15mm);
					
					\draw[e:main,gray,bend right=10] (0:15mm) to (20:15mm);
					\draw[e:coloredborder,bend right=10] (20:15mm) to (40:15mm);
					\draw[e:coloredthin,red,bend right=10] (20:15mm) to (40:15mm);
					\draw[e:main,gray,bend right=10] (40:15mm) to (60:15mm);
					
					\draw[e:main,gray,bend right=10] (120:15mm) to (140:15mm);
					\draw[e:coloredborder,bend right=10] (140:15mm) to (160:15mm);
					\draw[e:coloredthin,red,bend right=10] (140:15mm) to (160:15mm);
					\draw[e:main,gray,bend right=10] (160:15mm) to (180:15mm);
					
					\draw[e:main,gray,bend right=10] (180:15mm) to (200:15mm);
					\draw[e:coloredborder,bend right=10] (200:15mm) to (220:15mm);
					\draw[e:coloredthin,red,bend right=10] (200:15mm) to (220:15mm);
					\draw[e:main,gray,bend right=10] (220:15mm) to (240:15mm);
					
					\draw[e:main,gray,bend right=10] (240:15mm) to (260:15mm);
					\draw[e:coloredborder,bend right=10] (260:15mm) to (280:15mm);
					\draw[e:coloredthin,red,bend right=10] (260:15mm) to (280:15mm);
					\draw[e:main,gray,bend right=10] (280:15mm) to (300:15mm);
					
					\draw[e:main,gray,bend right=10] (300:15mm) to (320:15mm);
					\draw[e:coloredborder,bend right=10] (320:15mm) to (340:15mm);
					\draw[e:coloredthin,red,bend right=10] (320:15mm) to (340:15mm);
					\draw[e:main,gray,bend right=10] (340:15mm) to (0:15mm);
					
					\draw[e:coloredborder] (180:15mm) -- (180:7mm);
					\draw[e:coloredthin,color=BostonUniversityRed] (180:15mm) -- (180:7mm);
					\draw[e:coloredborder] (0:15mm) -- (0:7mm);
					\draw[e:coloredthin,color=BostonUniversityRed] (0:15mm) -- (0:7mm);
					\draw[e:main,gray] (180:7mm) -- (0:7mm);
					
					\draw[e:coloredborder] (240:15mm) -- (240:7mm);
					\draw[e:coloredthin,color=BostonUniversityRed] (240:15mm) -- (240:7mm);
					\draw[e:coloredborder] (60:15mm) -- (60:7mm);
					\draw[e:coloredthin,color=BostonUniversityRed] (60:15mm) -- (60:7mm);
					\draw[e:main,gray] (240:7mm) -- (60:7mm);
					
					\draw[e:coloredborder] (300:15mm) -- (300:7mm);
					\draw[e:coloredthin,color=BostonUniversityRed] (300:15mm) -- (300:7mm);
					\draw[e:coloredborder] (120:15mm) -- (120:7mm);
					\draw[e:coloredthin,color=BostonUniversityRed] (120:15mm) -- (120:7mm);
					\draw[e:main,gray] (300:7mm) -- (120:7mm);
					
					\draw[e:coloredborder] (80:11mm) -- (100:11mm);
					\draw[e:coloredthin,color=BostonUniversityRed] (80:11mm) -- (100:11mm);
					
					\draw[e:main,blue,bend right=15] (80:11mm) to (66.66:15mm); \draw[e:main,blue,bend right=15] (100:11mm) to (120:7mm);
					\draw[e:main,myGreen,bend left=20] (100:11mm) to (113.34:15mm);
					\draw[e:main,myGreen,bend right=15] (80:11mm) to (320:15mm);

				\end{pgfonlayer}
			\end{tikzpicture}
		\end{subfigure}
		\caption{The construction of the new conformal $K_{3,3}$-bisubdivision in the first case of the proof of \cref{lemma:reducedK33mustsplit}.} 
		\label{fig:shrinkingPno1}
	\end{figure}
	
	For the next case we assume $v_{V_1}$ and $v_{V_2}$ to be vertices of a common bisubdivided edge $Q$ of $L$.
	According to the previous discussion, $Q$ cannot share an endpoint with $P$ and by symmetry, it suffices to only consider one possible choice for $Q$, so let $Q\coloneqq E_{a_2b_1}$.
	The path $Q$ is split into three, possibly trivial, subpaths by the vertices $v_{V_1}$ and $v_{V_2}$.
	Since $Q$ is of odd length, either zero or exactly two of these subpaths are of even length, and these are exactly the two cases we need to distinguish.
	\Cref{fig:shrinkingPno2} shows how to construct the new conformal $K_{3,3}$-bisubdivision $L'$ which yields the desired contradiction.
	
	\begin{figure}[h!]
		\centering
		\begin{subfigure}{0.49\textwidth}
			\centering
			\begin{tikzpicture}[scale=1.15]
				\pgfdeclarelayer{background}
				\pgfdeclarelayer{foreground}
				\pgfsetlayers{background,main,foreground}
				
				\node[v:marked] () at (113.34:15mm){};
				\node[v:marked] () at (66.66:15mm){};
				\node[v:marked] () at (120:15mm){};
				\node[v:marked] () at (180:15mm){};
				\node[v:marked] () at (0:7mm){};
				\node[v:marked] () at (180:7mm){};
				
				\foreach \x in {0,2,4}
				{
					\node[v:mainempty] () at (\x*60:15mm){};
					\node[v:maingray] () at (\x*60:7mm){};
				}
				\foreach \x in {1,3,5}
				{
					\node[v:main] () at (\x*60:15mm){};  
					\node[v:mainemptygray] () at (\x*60:7mm){};
				}
				
				\node[v:maingray] () at (20:15mm){};
				\node[v:maingray] () at (140:15mm){};
				\node[v:maingray] () at (220:15mm){};
				\node[v:maingray] () at (260:15mm){};
				\node[v:maingray] () at (340:15mm){};
				
				\node[v:mainemptygray] () at (66.66:15mm){};
				\node[v:maingray] () at (73.32:15mm){};
				\node[v:mainemptygreen] () at (80.01:15mm){};
				\node[v:maingreen] () at (86.68:15mm){};
				\node[v:mainemptygreen] () at (93.35:15mm){};
				\node[v:maingreen] () at (100.01:15mm){};
				\node[v:mainemptygray] () at (106.68:15mm){};
				\node[v:maingray] () at (113.34:15mm){};
				
				\node[v:mainemptygray] () at (40:15mm){};
				\node[v:mainemptygray] () at (160:15mm){};
				\node[v:mainemptygray] () at (200:15mm){};
				\node[v:mainemptygray] () at (280:15mm){};
				\node[v:mainemptygray] () at (320:15mm){};
				
				\node () at (0:18mm){$b_{1}$};
				\node () at (60:17mm){$u$};
				\node () at (120:17mm){$v$};
				\node () at (180:18mm){$a_{2}$};
				\node () at (240:17mm){$b_{2}$};
				\node () at (300:18mm){$a_{1}$};
				
				\node[blue] () at (90:18mm){$C$};
				
				\begin{pgfonlayer}{background}
					
					\draw[e:marker,BrightUbe,bend right=15] (113.34:15mm) to (180:7mm);
					\draw[e:marker,BrightUbe,bend left=15] (66.66:15mm) to (0:7mm);
					\draw[e:marker,BrightUbe] (0:15mm) -- (180:15mm);
					\draw[e:marker,BrightUbe] (60:15mm) -- (240:15mm);
					\draw[e:marker,BrightUbe] (120:15mm) -- (300:15mm);
					
					\draw[e:marker,BrightUbe,bend right=28] (300:15mm) to (0:15mm);
					
					\draw[e:marker,BrightUbe,bend right=28] (60:15mm) to (120:15mm) to (180:15mm) to (240:15mm);
					
					\draw[e:main,gray] (60:15mm) to (66.66:15mm){};
					\draw[e:coloredborder] (66.66:15mm) to (73.32:15mm){};
					\draw[e:coloredthin,color=BostonUniversityRed] (66.66:15mm) to (73.32:15mm){};
					\draw[e:main,gray] (73.32:15mm) to (80.01:15mm){};
					\draw[e:coloredborder] (80.01:15mm) to (86.68:15mm){};
					\draw[e:coloredthin,color=BostonUniversityRed] (80.01:15mm) to (86.68:15mm){};
					\draw[e:main,myGreen] (86.68:15mm) to (93.35:15mm){};
					\draw[e:coloredborder] (93.35:15mm) to (100.01:15mm){};
					\draw[e:coloredthin,color=BostonUniversityRed] (93.35:15mm) to (100.01:15mm){};     
					\draw[e:main,gray] (100.01:15mm) to (106.68:15mm){};
					\draw[e:coloredborder] (106.68:15mm) to (113.34:15mm){};
					\draw[e:coloredthin,color=BostonUniversityRed] (106.68:15mm) to (113.34:15mm){}; 
					\draw[e:main,gray] (113.34:15mm) to (120:15mm){};
					
					\draw[e:main,blue,densely dashed,bend right=90] (80.01:15mm) to (100.01:15mm);
					
					\draw[e:main,gray,bend right=10] (0:15mm) to (20:15mm);
					\draw[e:coloredborder,bend right=10] (20:15mm) to (40:15mm);
					\draw[e:coloredthin,red,bend right=10] (20:15mm) to (40:15mm);
					\draw[e:main,gray,bend right=10] (40:15mm) to (60:15mm);
					
					\draw[e:coloredborder,bend right=10] (120:15mm) to (140:15mm);
					\draw[e:coloredthin,red,bend right=10] (120:15mm) to (140:15mm);
					\draw[e:main,gray,bend right=10] (140:15mm) to (160:15mm);
					\draw[e:coloredborder,bend right=10] (160:15mm) to (180:15mm);
					\draw[e:coloredthin,red,bend right=10] (160:15mm) to (180:15mm);
					
					\draw[e:main,gray,bend right=10] (180:15mm) to (200:15mm);
					\draw[e:coloredborder,bend right=10] (200:15mm) to (220:15mm);
					\draw[e:coloredthin,red,bend right=10] (200:15mm) to (220:15mm);
					\draw[e:main,gray,bend right=10] (220:15mm) to (240:15mm);
					
					\draw[e:main,gray,bend right=10] (240:15mm) to (260:15mm);
					\draw[e:coloredborder,bend right=10] (260:15mm) to (280:15mm);
					\draw[e:coloredthin,red,bend right=10] (260:15mm) to (280:15mm);
					\draw[e:main,gray,bend right=10] (280:15mm) to (300:15mm);
					
					\draw[e:coloredborder,bend right=10] (300:15mm) to (320:15mm);
					\draw[e:coloredthin,red,bend right=10] (300:15mm) to (320:15mm);
					\draw[e:main,gray,bend right=10] (320:15mm) to (340:15mm);
					\draw[e:coloredborder,bend right=10] (340:15mm) to (360:15mm);
					\draw[e:coloredthin,red,bend right=10] (340:15mm) to (360:15mm);
					
					\draw[e:coloredborder] (180:7mm) -- (0:7mm);
					\draw[e:coloredthin,color=BostonUniversityRed] (180:7mm) -- (0:7mm);
					\draw[e:main,gray] (180:15mm) -- (180:7mm);
					\draw[e:main,gray] (0:15mm) -- (0:7mm);
					
					\draw[e:coloredborder] (240:15mm) -- (240:7mm);
					\draw[e:coloredthin,color=BostonUniversityRed] (240:15mm) -- (240:7mm);
					\draw[e:coloredborder] (60:15mm) -- (60:7mm);
					\draw[e:coloredthin,color=BostonUniversityRed] (60:15mm) -- (60:7mm);
					\draw[e:main,gray] (240:7mm) -- (60:7mm);
					
					\draw[e:coloredborder] (300:7mm) -- (120:7mm);
					\draw[e:coloredthin,color=BostonUniversityRed] (300:7mm) -- (120:7mm);
					\draw[e:main,gray] (300:15mm) -- (300:7mm);
					\draw[e:main,gray] (120:15mm) -- (120:7mm);
					
					\draw[e:main,myGreen,bend right=15] (113.34:15mm) to (180:7mm);
					\draw[e:main,blue,bend left=15] (66.66:15mm) to (0:7mm);

				\end{pgfonlayer}
			\end{tikzpicture}
		\end{subfigure}
		\begin{subfigure}{0.49\textwidth}
			\centering
			\begin{tikzpicture}[scale=1.15]
				\pgfdeclarelayer{background}
				\pgfdeclarelayer{foreground}
				\pgfsetlayers{background,main,foreground}
				
				\node[v:marked] () at (113.34:15mm){};
				\node[v:marked] () at (66.66:15mm){};
				\node[v:marked] () at (120:15mm){};
				\node[v:marked] () at (60:15mm){};
				\node[v:marked] () at (0:4mm){};
				\node[v:marked] () at (180:4mm){};
				
				\foreach \x in {0,2,4}
				{
					\node[v:mainempty] () at (\x*60:15mm){};
					\node[v:maingray] () at (\x*60:7mm){};
				}
				\foreach \x in {1,3,5}
				{
					\node[v:main] () at (\x*60:15mm){};  
					\node[v:mainemptygray] () at (\x*60:7mm){};
				}
				
				\node[v:mainemptygray] () at (0:4mm){};
				\node[v:maingray] () at (180:4mm){};

				\node[v:maingray] () at (20:15mm){};
				\node[v:maingray] () at (140:15mm){};
				\node[v:maingray] () at (220:15mm){};
				\node[v:maingray] () at (260:15mm){};
				\node[v:maingray] () at (340:15mm){};
				
				\node[v:mainemptygray] () at (66.66:15mm){};
				\node[v:maingray] () at (73.32:15mm){};
				\node[v:mainemptygreen] () at (80.01:15mm){};
				\node[v:maingreen] () at (86.68:15mm){};
				\node[v:mainemptygreen] () at (93.35:15mm){};
				\node[v:maingreen] () at (100.01:15mm){};
				\node[v:mainemptygray] () at (106.68:15mm){};
				\node[v:maingray] () at (113.34:15mm){};
				
				\node[v:mainemptygray] () at (40:15mm){};
				\node[v:mainemptygray] () at (160:15mm){};
				\node[v:mainemptygray] () at (200:15mm){};
				\node[v:mainemptygray] () at (280:15mm){};
				\node[v:mainemptygray] () at (320:15mm){};
				
				\node () at (0:18mm){$b_{1}$};
				\node () at (60:17mm){$u$};
				\node () at (120:17mm){$v$};
				\node () at (180:18mm){$a_{2}$};
				\node () at (240:17mm){$b_{2}$};
				\node () at (300:18mm){$a_{1}$};
				
				\node[blue] () at (90:18mm){$C$};
				
				\begin{pgfonlayer}{background}
					
					\draw[e:marker,BrightUbe,bend left=15] (113.34:15mm) to (0:4mm);
					\draw[e:marker,BrightUbe,bend right=15] (66.66:15mm) to (180:4mm);
					\draw[e:marker,BrightUbe] (0:15mm) -- (180:15mm);
					\draw[e:marker,BrightUbe] (60:15mm) -- (240:15mm);
					\draw[e:marker,BrightUbe] (120:15mm) -- (300:15mm);
					
					\draw[e:marker,BrightUbe,bend right=28] (240:15mm) to (300:15mm);
					
					\draw[e:marker,BrightUbe,bend right=28] (0:15mm) to (60:15mm) to (120:15mm) to (180:15mm);
					
					\draw[e:main,gray] (60:15mm) to (66.66:15mm){};
					\draw[e:coloredborder] (66.66:15mm) to (73.32:15mm){};
					\draw[e:coloredthin,color=BostonUniversityRed] (66.66:15mm) to (73.32:15mm){};
					\draw[e:main,gray] (73.32:15mm) to (80.01:15mm){};
					\draw[e:coloredborder] (80.01:15mm) to (86.68:15mm){};
					\draw[e:coloredthin,color=BostonUniversityRed] (80.01:15mm) to (86.68:15mm){};
					\draw[e:main,myGreen] (86.68:15mm) to (93.35:15mm){};
					\draw[e:coloredborder] (93.35:15mm) to (100.01:15mm){};
					\draw[e:coloredthin,color=BostonUniversityRed] (93.35:15mm) to (100.01:15mm){};     
					\draw[e:main,gray] (100.01:15mm) to (106.68:15mm){};
					\draw[e:coloredborder] (106.68:15mm) to (113.34:15mm){};
					\draw[e:coloredthin,color=BostonUniversityRed] (106.68:15mm) to (113.34:15mm){}; 
					\draw[e:main,gray] (113.34:15mm) to (120:15mm){};
					
					\draw[e:main,blue,densely dashed,bend right=90] (80.01:15mm) to (100.01:15mm);
					
					\draw[e:main,gray,bend right=10] (0:15mm) to (20:15mm);
					\draw[e:coloredborder,bend right=10] (20:15mm) to (40:15mm);
					\draw[e:coloredthin,red,bend right=10] (20:15mm) to (40:15mm);
					\draw[e:main,gray,bend right=10] (40:15mm) to (60:15mm);
					
					\draw[e:main,gray,bend right=10] (120:15mm) to (140:15mm);
					\draw[e:coloredborder,bend right=10] (140:15mm) to (160:15mm);
					\draw[e:coloredthin,red,bend right=10] (140:15mm) to (160:15mm);
					\draw[e:main,gray,bend right=10] (160:15mm) to (180:15mm);
					
					\draw[e:main,gray,bend right=10] (180:15mm) to (200:15mm);
					\draw[e:coloredborder,bend right=10] (200:15mm) to (220:15mm);
					\draw[e:coloredthin,red,bend right=10] (200:15mm) to (220:15mm);
					\draw[e:main,gray,bend right=10] (220:15mm) to (240:15mm);
					
					\draw[e:main,gray,bend right=10] (240:15mm) to (260:15mm);
					\draw[e:coloredborder,bend right=10] (260:15mm) to (280:15mm);
					\draw[e:coloredthin,red,bend right=10] (260:15mm) to (280:15mm);
					\draw[e:main,gray,bend right=10] (280:15mm) to (300:15mm);
					
					\draw[e:main,gray,bend right=10] (300:15mm) to (320:15mm);
					\draw[e:coloredborder,bend right=10] (320:15mm) to (340:15mm);
					\draw[e:coloredthin,red,bend right=10] (320:15mm) to (340:15mm);
					\draw[e:main,gray,bend right=10] (340:15mm) to (0:15mm);
					
					\draw[e:coloredborder] (180:15mm) -- (180:7mm);
					\draw[e:coloredthin,color=BostonUniversityRed] (180:15mm) -- (180:7mm);
					\draw[e:coloredborder] (0:15mm) -- (0:7mm);
					\draw[e:coloredthin,color=BostonUniversityRed] (0:15mm) -- (0:7mm);
					\draw[e:main,gray] (180:7mm) -- (180:4mm);
					\draw[e:main,gray] (0:7mm) -- (0:4mm);
					\draw[e:coloredborder] (180:4mm) -- (0:4mm);
					\draw[e:coloredthin,color=BostonUniversityRed] (180:4mm) -- (0:4mm);

					\draw[e:coloredborder] (240:15mm) -- (240:7mm);
					\draw[e:coloredthin,color=BostonUniversityRed] (240:15mm) -- (240:7mm);
					\draw[e:coloredborder] (60:15mm) -- (60:7mm);
					\draw[e:coloredthin,color=BostonUniversityRed] (60:15mm) -- (60:7mm);
					\draw[e:main,gray] (240:7mm) -- (60:7mm);
					
					\draw[e:coloredborder] (300:15mm) -- (300:7mm);
					\draw[e:coloredthin,color=BostonUniversityRed] (300:15mm) -- (300:7mm);
					\draw[e:coloredborder] (120:15mm) -- (120:7mm);
					\draw[e:coloredthin,color=BostonUniversityRed] (120:15mm) -- (120:7mm);
					\draw[e:main,gray] (300:7mm) -- (120:7mm);
					
					\draw[e:main,myGreen,bend left=15] (113.34:15mm) to (0:4mm);
					\draw[e:main,blue,bend right=15] (66.66:15mm) to (180:4mm);        
					
				\end{pgfonlayer}
			\end{tikzpicture}
		\end{subfigure}

		\begin{subfigure}{0.49\textwidth}
			\centering
			\begin{tikzpicture}[scale=1.15]
				\pgfdeclarelayer{background}
				\pgfdeclarelayer{foreground}
				\pgfsetlayers{background,main,foreground}
				
				\node[v:marked] () at (113.34:15mm){};
				\node[v:marked] () at (66.66:15mm){};
				\node[v:marked] () at (100:11mm){};
				\node[v:marked] () at (80:11mm){};
				\node[v:marked] () at (0:7mm){};
				\node[v:marked] () at (180:7mm){};
				
				\foreach \x in {0,2,4}
				{
					\node[v:mainempty] () at (\x*60:15mm){};
					\node[v:maingray] () at (\x*60:7mm){};
				}
				\foreach \x in {1,3,5}
				{
					\node[v:main] () at (\x*60:15mm){};  
					\node[v:mainemptygray] () at (\x*60:7mm){};
				}
				
				\node[v:maingray] () at (20:15mm){};
				\node[v:maingray] () at (140:15mm){};
				\node[v:maingray] () at (220:15mm){};
				\node[v:maingray] () at (260:15mm){};
				\node[v:maingray] () at (340:15mm){};
				
				\node[v:mainemptygray] () at (66.66:15mm){};
				\node[v:maingray] () at (73.32:15mm){};
				\node[v:mainemptygreen] () at (80.01:15mm){};
				\node[v:maingreen] () at (86.68:15mm){};
				\node[v:mainemptygreen] () at (93.35:15mm){};
				\node[v:maingreen] () at (100.01:15mm){};
				\node[v:mainemptygray] () at (106.68:15mm){};
				\node[v:maingray] () at (113.34:15mm){};
				
				\node[v:mainemptygray] () at (40:15mm){};
				\node[v:mainemptygray] () at (160:15mm){};
				\node[v:mainemptygray] () at (200:15mm){};
				\node[v:mainemptygray] () at (280:15mm){};
				\node[v:mainemptygray] () at (320:15mm){};
				
				\node[v:main] () at (80:11mm){};
				\node[v:mainempty] () at (100:11mm){};
				
				\node () at (0:18mm){$b_{1}$};
				\node () at (60:17mm){$u$};
				\node () at (120:17mm){$v$};
				\node () at (180:18mm){$a_{2}$};
				\node () at (240:17mm){$b_{2}$};
				\node () at (300:18mm){$a_{1}$};
				
				\node[blue] () at (90:18mm){$C$};
				
				\begin{pgfonlayer}{background}
					
					\draw[e:marker,BrightUbe,bend right=15] (80:11mm) to (66.66:15mm);
					\draw[e:marker,BrightUbe,bend right=15] (100:11mm) to (0:7mm);
					\draw[e:marker,BrightUbe,bend left=20] (100:11mm) to (113.34:15mm);
					\draw[e:marker,BrightUbe,bend left=15] (80:11mm) to (180:7mm);
					\draw[e:marker,BrightUbe] (80:11mm) -- (100:11mm);
					
					\draw[e:marker,BrightUbe] (0:15mm) -- (180:15mm);
					\draw[e:marker,BrightUbe] (60:15mm) -- (240:15mm);
					\draw[e:marker,BrightUbe] (120:15mm) -- (300:15mm);
					\draw[e:marker,BrightUbe,bend right=28] (240:15mm) to (300:15mm);
					
					\draw[e:marker,BrightUbe,bend right=28] (0:15mm) to (60:15mm);
					\draw[e:marker,BrightUbe,bend right=28] (120:15mm) to (180:15mm);
					\draw[e:marker,BrightUbe,bend right=28] (66.66:15mm) to (113.34:15mm);
					
					\draw[e:main,gray] (60:15mm) to (66.66:15mm){};
					\draw[e:coloredborder] (66.66:15mm) to (73.32:15mm){};
					\draw[e:coloredthin,color=BostonUniversityRed] (66.66:15mm) to (73.32:15mm){};
					\draw[e:main,gray] (73.32:15mm) to (80.01:15mm){};
					\draw[e:coloredborder] (80.01:15mm) to (86.68:15mm){};
					\draw[e:coloredthin,color=BostonUniversityRed] (80.01:15mm) to (86.68:15mm){};
					\draw[e:main,myGreen] (86.68:15mm) to (93.35:15mm){};
					\draw[e:coloredborder] (93.35:15mm) to (100.01:15mm){};
					\draw[e:coloredthin,color=BostonUniversityRed] (93.35:15mm) to (100.01:15mm){};     
					\draw[e:main,gray] (100.01:15mm) to (106.68:15mm){};
					\draw[e:coloredborder] (106.68:15mm) to (113.34:15mm){};
					\draw[e:coloredthin,color=BostonUniversityRed] (106.68:15mm) to (113.34:15mm){}; 
					\draw[e:main,gray] (113.34:15mm) to (120:15mm){};
					
					\draw[e:main,blue,densely dashed,bend right=90] (80.01:15mm) to (100.01:15mm);
					
					\draw[e:coloredborder,bend right=10] (0:15mm) to (20:15mm);
					\draw[e:coloredthin,red,bend right=10] (0:15mm) to (20:15mm);
					\draw[e:main,gray,bend right=10] (20:15mm) to (40:15mm);
					\draw[e:coloredborder,bend right=10] (40:15mm) to (60:15mm);
					\draw[e:coloredthin,red,bend right=10] (40:15mm) to (60:15mm);
					
					\draw[e:coloredborder,bend right=10] (120:15mm) to (140:15mm);
					\draw[e:coloredthin,red,bend right=10] (120:15mm) to (140:15mm);
					\draw[e:main,gray,bend right=10] (140:15mm) to (160:15mm);
					\draw[e:coloredborder,bend right=10] (160:15mm) to (180:15mm);
					\draw[e:coloredthin,red,bend right=10] (160:15mm) to (180:15mm);
					
					\draw[e:main,gray,bend right=10] (180:15mm) to (200:15mm);
					\draw[e:coloredborder,bend right=10] (200:15mm) to (220:15mm);
					\draw[e:coloredthin,red,bend right=10] (200:15mm) to (220:15mm);
					\draw[e:main,gray,bend right=10] (220:15mm) to (240:15mm);
					
					\draw[e:coloredborder,bend right=10] (240:15mm) to (260:15mm);
					\draw[e:coloredthin,red,bend right=10] (240:15mm) to (260:15mm);
					\draw[e:main,gray,bend right=10] (260:15mm) to (280:15mm);
					\draw[e:coloredborder,bend right=10] (280:15mm) to (300:15mm);
					\draw[e:coloredthin,red,bend right=10] (280:15mm) to (300:15mm);
					
					\draw[e:main,gray,bend right=10] (300:15mm) to (320:15mm);
					\draw[e:coloredborder,bend right=10] (320:15mm) to (340:15mm);
					\draw[e:coloredthin,red,bend right=10] (320:15mm) to (340:15mm);
					\draw[e:main,gray,bend right=10] (340:15mm) to (0:15mm);
					
					\draw[e:coloredborder] (180:7mm) -- (0:7mm);
					\draw[e:coloredthin,color=BostonUniversityRed] (180:7mm) -- (0:7mm);
					\draw[e:main,gray] (180:15mm) -- (180:7mm);
					\draw[e:main,gray] (0:15mm) -- (0:7mm);
					
					\draw[e:coloredborder] (240:7mm) -- (60:7mm);
					\draw[e:coloredthin,color=BostonUniversityRed] (240:7mm) -- (60:7mm);
					\draw[e:main,gray] (240:15mm) -- (240:7mm);
					\draw[e:main,gray] (60:15mm) -- (60:7mm);
					
					\draw[e:coloredborder] (300:7mm) -- (120:7mm);
					\draw[e:coloredthin,color=BostonUniversityRed] (300:7mm) -- (120:7mm);
					\draw[e:main,gray] (300:15mm) -- (300:7mm);
					\draw[e:main,gray] (120:15mm) -- (120:7mm);
					
					\draw[e:coloredborder] (80:11mm) -- (100:11mm);
					\draw[e:coloredthin,color=BostonUniversityRed] (80:11mm) -- (100:11mm);
					
					\draw[e:main,blue,bend right=15] (80:11mm) to (66.66:15mm); \draw[e:main,blue,bend right=15] (100:11mm) to (0:7mm);
					\draw[e:main,myGreen,bend left=20] (100:11mm) to (113.34:15mm);
					\draw[e:main,myGreen,bend left=15] (80:11mm) to (180:7mm);

				\end{pgfonlayer}
			\end{tikzpicture}
		\end{subfigure}
		\begin{subfigure}{0.49\textwidth}
			\centering
			\begin{tikzpicture}[scale=1.15]
				\pgfdeclarelayer{background}
				\pgfdeclarelayer{foreground}
				\pgfsetlayers{background,main,foreground}
				
				\node[v:marked] () at (113.34:15mm){};
				\node[v:marked] () at (66.66:15mm){};
				\node[v:marked] () at (100:11mm){};
				\node[v:marked] () at (80:11mm){};
				\node[v:marked] () at (0:4mm){};
				\node[v:marked] () at (180:4mm){};
				
				\foreach \x in {0,2,4}
				{
					\node[v:mainempty] () at (\x*60:15mm){};
					\node[v:maingray] () at (\x*60:7mm){};
				}
				\foreach \x in {1,3,5}
				{
					\node[v:main] () at (\x*60:15mm){};  
					\node[v:mainemptygray] () at (\x*60:7mm){};
				}
				
				\node[v:mainemptygray] () at (0:4mm){};
				\node[v:maingray] () at (180:4mm){};
				
				\node[v:maingray] () at (20:15mm){};
				\node[v:maingray] () at (140:15mm){};
				\node[v:maingray] () at (220:15mm){};
				\node[v:maingray] () at (260:15mm){};
				\node[v:maingray] () at (340:15mm){};
				
				\node[v:mainemptygray] () at (66.66:15mm){};
				\node[v:maingray] () at (73.32:15mm){};
				\node[v:mainemptygreen] () at (80.01:15mm){};
				\node[v:maingreen] () at (86.68:15mm){};
				\node[v:mainemptygreen] () at (93.35:15mm){};
				\node[v:maingreen] () at (100.01:15mm){};
				\node[v:mainemptygray] () at (106.68:15mm){};
				\node[v:maingray] () at (113.34:15mm){};
				
				\node[v:mainemptygray] () at (40:15mm){};
				\node[v:mainemptygray] () at (160:15mm){};
				\node[v:mainemptygray] () at (200:15mm){};
				\node[v:mainemptygray] () at (280:15mm){};
				\node[v:mainemptygray] () at (320:15mm){};
				
				\node[v:main] () at (80:11mm){};
				\node[v:mainempty] () at (100:11mm){};
				
				\node () at (0:18mm){$b_{1}$};
				\node () at (60:17mm){$u$};
				\node () at (120:17mm){$v$};
				\node () at (180:18mm){$a_{2}$};
				\node () at (240:17mm){$b_{2}$};
				\node () at (300:18mm){$a_{1}$};
				
				\node[blue] () at (90:18mm){$C$};
				
				\begin{pgfonlayer}{background}
					
					\draw[e:marker,BrightUbe,bend right=15] (80:11mm) to (66.66:15mm);
					\draw[e:marker,BrightUbe,bend left=15] (100:11mm) to (180:4mm);
					\draw[e:marker,BrightUbe,bend left=20] (100:11mm) to (113.34:15mm);
					\draw[e:marker,BrightUbe,bend right=15] (80:11mm) to (0:4mm);
					\draw[e:marker,BrightUbe] (80:11mm) -- (100:11mm);
					
					\draw[e:marker,BrightUbe] (0:15mm) -- (180:15mm);
					\draw[e:marker,BrightUbe] (60:15mm) -- (240:15mm);
					\draw[e:marker,BrightUbe] (120:15mm) -- (300:15mm);
					
					\draw[e:marker,BrightUbe,bend right=28] (60:15mm) to (120:15mm);
					\draw[e:marker,BrightUbe,bend right=28] (180:15mm) to (240:15mm);
					\draw[e:marker,BrightUbe,bend right=28] (300:15mm) to (360:15mm);
					
					\draw[e:main,gray] (60:15mm) to (66.66:15mm){};
					\draw[e:coloredborder] (66.66:15mm) to (73.32:15mm){};
					\draw[e:coloredthin,color=BostonUniversityRed] (66.66:15mm) to (73.32:15mm){};
					\draw[e:main,gray] (73.32:15mm) to (80.01:15mm){};
					\draw[e:coloredborder] (80.01:15mm) to (86.68:15mm){};
					\draw[e:coloredthin,color=BostonUniversityRed] (80.01:15mm) to (86.68:15mm){};
					\draw[e:main,myGreen] (86.68:15mm) to (93.35:15mm){};
					\draw[e:coloredborder] (93.35:15mm) to (100.01:15mm){};
					\draw[e:coloredthin,color=BostonUniversityRed] (93.35:15mm) to (100.01:15mm){};     
					\draw[e:main,gray] (100.01:15mm) to (106.68:15mm){};
					\draw[e:coloredborder] (106.68:15mm) to (113.34:15mm){};
					\draw[e:coloredthin,color=BostonUniversityRed] (106.68:15mm) to (113.34:15mm){}; 
					\draw[e:main,gray] (113.34:15mm) to (120:15mm){};
					
					\draw[e:main,blue,densely dashed,bend right=90] (80.01:15mm) to (100.01:15mm);
					
					\draw[e:main,gray,bend right=10] (0:15mm) to (20:15mm);
					\draw[e:coloredborder,bend right=10] (20:15mm) to (40:15mm);
					\draw[e:coloredthin,red,bend right=10] (20:15mm) to (40:15mm);
					\draw[e:main,gray,bend right=10] (40:15mm) to (60:15mm);
					
					\draw[e:main,gray,bend right=10] (120:15mm) to (140:15mm);
					\draw[e:coloredborder,bend right=10] (140:15mm) to (160:15mm);
					\draw[e:coloredthin,red,bend right=10] (140:15mm) to (160:15mm);
					\draw[e:main,gray,bend right=10] (160:15mm) to (180:15mm);
					
					\draw[e:main,gray,bend right=10] (180:15mm) to (200:15mm);
					\draw[e:coloredborder,bend right=10] (200:15mm) to (220:15mm);
					\draw[e:coloredthin,red,bend right=10] (200:15mm) to (220:15mm);
					\draw[e:main,gray,bend right=10] (220:15mm) to (240:15mm);
					
					\draw[e:main,gray,bend right=10] (240:15mm) to (260:15mm);
					\draw[e:coloredborder,bend right=10] (260:15mm) to (280:15mm);
					\draw[e:coloredthin,red,bend right=10] (260:15mm) to (280:15mm);
					\draw[e:main,gray,bend right=10] (280:15mm) to (300:15mm);
					
					\draw[e:main,gray,bend right=10] (300:15mm) to (320:15mm);
					\draw[e:coloredborder,bend right=10] (320:15mm) to (340:15mm);
					\draw[e:coloredthin,red,bend right=10] (320:15mm) to (340:15mm);
					\draw[e:main,gray,bend right=10] (340:15mm) to (0:15mm);
					
					\draw[e:coloredborder] (180:15mm) -- (180:7mm);
					\draw[e:coloredthin,color=BostonUniversityRed] (180:15mm) -- (180:7mm);
					\draw[e:coloredborder] (0:15mm) -- (0:7mm);
					\draw[e:coloredthin,color=BostonUniversityRed] (0:15mm) -- (0:7mm);
					\draw[e:main,gray] (180:7mm) -- (180:4mm);
					\draw[e:main,gray] (0:7mm) -- (0:4mm);
					\draw[e:coloredborder] (180:4mm) -- (0:4mm);
					\draw[e:coloredthin,color=BostonUniversityRed] (180:4mm) -- (0:4mm);
					
					\draw[e:coloredborder] (240:15mm) -- (240:7mm);
					\draw[e:coloredthin,color=BostonUniversityRed] (240:15mm) -- (240:7mm);
					\draw[e:coloredborder] (60:15mm) -- (60:7mm);
					\draw[e:coloredthin,color=BostonUniversityRed] (60:15mm) -- (60:7mm);
					\draw[e:main,gray] (240:7mm) -- (60:7mm);
					
					\draw[e:coloredborder] (300:15mm) -- (300:7mm);
					\draw[e:coloredthin,color=BostonUniversityRed] (300:15mm) -- (300:7mm);
					\draw[e:coloredborder] (120:15mm) -- (120:7mm);
					\draw[e:coloredthin,color=BostonUniversityRed] (120:15mm) -- (120:7mm);
					\draw[e:main,gray] (300:7mm) -- (120:7mm);
					
					\draw[e:coloredborder] (80:11mm) -- (100:11mm);
					\draw[e:coloredthin,color=BostonUniversityRed] (80:11mm) -- (100:11mm);
					
					\draw[e:main,blue,bend right=15] (80:11mm) to (66.66:15mm); \draw[e:main,blue,bend left=15] (100:11mm) to (180:4mm);
					\draw[e:main,myGreen,bend left=20] (100:11mm) to (113.34:15mm);
					\draw[e:main,myGreen,bend right=15] (80:11mm) to (0:4mm);
					
				\end{pgfonlayer}
			\end{tikzpicture}
		\end{subfigure}
		\caption{The construction of the new conformal $K_{3,3}$-bisubdivision in the second case of the proof of \cref{lemma:reducedK33mustsplit}.} 
		\label{fig:shrinkingPno2}
	\end{figure}
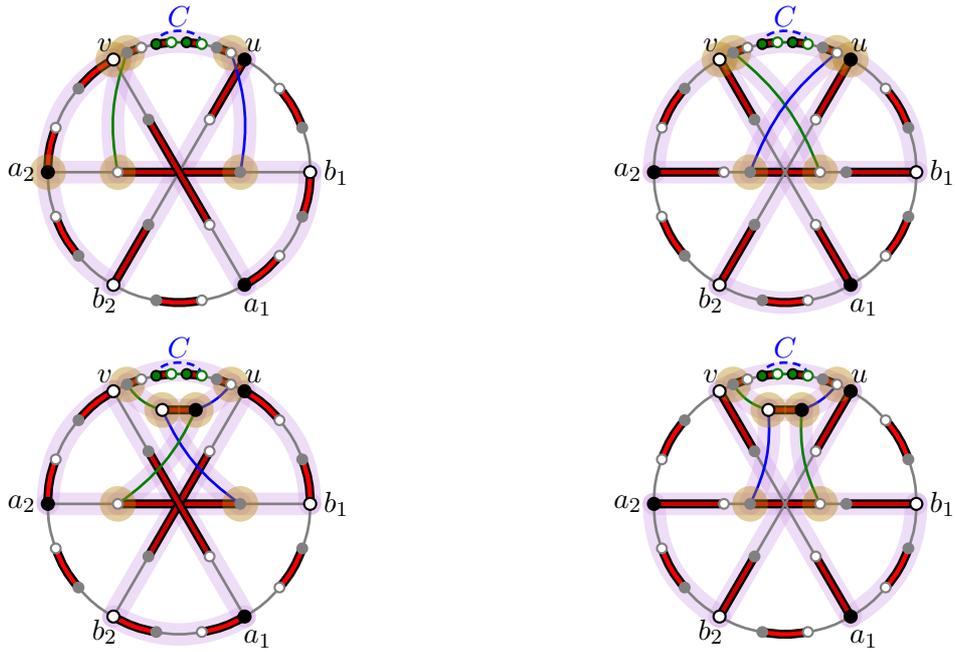
	
	For the last case, we may assume $v_{V_1}$ and $v_{V_2}$ to belong to different bisubdivided edges $Q_1$ and $Q_2$ such that neither $Q_i$ shares an endpoint with $P$.
	Here we need to distinguish between $Q_1$ and $Q_2$ sharing an endpoint and being disjoint.
	\Cref{fig:shrinkingPno3} illustrates the construction of $L'$.
	
	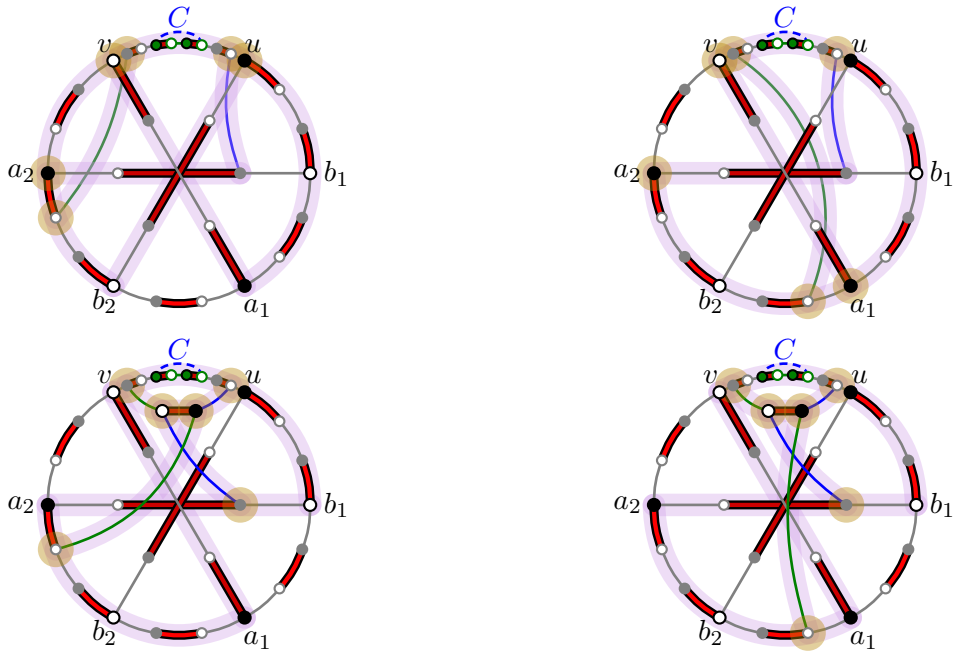
\begin{figure}[h!]
		\centering
		\begin{subfigure}{0.49\textwidth}
			\centering
			\begin{tikzpicture}[scale=1.15]
				\pgfdeclarelayer{background}
				\pgfdeclarelayer{foreground}
				\pgfsetlayers{background,main,foreground}
				
				\node[v:marked] () at (113.34:15mm){};
				\node[v:marked] () at (66.66:15mm){};
				\node[v:marked] () at (120:15mm){};
				\node[v:marked] () at (60:15mm){};
				\node[v:marked] () at (200:15mm){};
				\node[v:marked] () at (180:15mm){};
				
				\foreach \x in {0,2,4}
				{
					\node[v:mainempty] () at (\x*60:15mm){};
					\node[v:maingray] () at (\x*60:7mm){};
				}
				\foreach \x in {1,3,5}
				{
					\node[v:main] () at (\x*60:15mm){};  
					\node[v:mainemptygray] () at (\x*60:7mm){};
				}
				
				\node[v:maingray] () at (20:15mm){};
				\node[v:maingray] () at (140:15mm){};
				\node[v:maingray] () at (220:15mm){};
				\node[v:maingray] () at (260:15mm){};
				\node[v:maingray] () at (340:15mm){};
				
				\node[v:mainemptygray] () at (66.66:15mm){};
				\node[v:maingray] () at (73.32:15mm){};
				\node[v:mainemptygreen] () at (80.01:15mm){};
				\node[v:maingreen] () at (86.68:15mm){};
				\node[v:mainemptygreen] () at (93.35:15mm){};
				\node[v:maingreen] () at (100.01:15mm){};
				\node[v:mainemptygray] () at (106.68:15mm){};
				\node[v:maingray] () at (113.34:15mm){};
				
				\node[v:mainemptygray] () at (40:15mm){};
				\node[v:mainemptygray] () at (160:15mm){};
				\node[v:mainemptygray] () at (200:15mm){};
				\node[v:mainemptygray] () at (280:15mm){};
				\node[v:mainemptygray] () at (320:15mm){};
				
				\node () at (0:18mm){$b_{1}$};
				\node () at (60:17mm){$u$};
				\node () at (120:17mm){$v$};
				\node () at (180:18mm){$a_{2}$};
				\node () at (240:17mm){$b_{2}$};
				\node () at (300:18mm){$a_{1}$};
				
				\node[blue] () at (90:18mm){$C$};
				
				\begin{pgfonlayer}{background}
					\draw[e:main,myGreen,bend left=15] (113.34:15mm) to (200:15mm);
					\draw[e:main,blue,bend right=15] (66.66:15mm) to (0:7mm);        
					\draw[e:marker,BrightUbe,bend left=15] (113.34:15mm) to (200:15mm);
					\draw[e:marker,BrightUbe,bend right=15] (66.66:15mm) to (0:7mm);
					\draw[e:marker,BrightUbe] (0:7mm) -- (180:15mm);
					\draw[e:marker,BrightUbe] (60:15mm) -- (240:15mm);
					\draw[e:marker,BrightUbe] (120:15mm) -- (300:15mm);
					
					\draw[e:marker,BrightUbe,bend right=28] (300:15mm) to (0:15mm) to (60:15mm) to (120:15mm) to (180:15mm) to (240:15mm);
					\draw[e:main,gray] (60:15mm) to (66.66:15mm){};
					\draw[e:coloredborder] (66.66:15mm) to (73.32:15mm){};
					\draw[e:coloredthin,color=BostonUniversityRed] (66.66:15mm) to (73.32:15mm){};
					\draw[e:main,gray] (73.32:15mm) to (80.01:15mm){};
					\draw[e:coloredborder] (80.01:15mm) to (86.68:15mm){};
					\draw[e:coloredthin,color=BostonUniversityRed] (80.01:15mm) to (86.68:15mm){};
					\draw[e:main,myGreen] (86.68:15mm) to (93.35:15mm){};
					\draw[e:coloredborder] (93.35:15mm) to (100.01:15mm){};
					\draw[e:coloredthin,color=BostonUniversityRed] (93.35:15mm) to (100.01:15mm){};     
					\draw[e:main,gray] (100.01:15mm) to (106.68:15mm){};
					\draw[e:coloredborder] (106.68:15mm) to (113.34:15mm){};
					\draw[e:coloredthin,color=BostonUniversityRed] (106.68:15mm) to (113.34:15mm){}; 
					\draw[e:main,gray] (113.34:15mm) to (120:15mm){};
					
					\draw[e:main,blue,densely dashed,bend right=90] (80.01:15mm) to (100.01:15mm);
					
					\draw[e:coloredborder,bend right=10] (0:15mm) to (20:15mm);
					\draw[e:coloredthin,red,bend right=10] (0:15mm) to (20:15mm);
					\draw[e:main,gray,bend right=10] (20:15mm) to (40:15mm);
					\draw[e:coloredborder,bend right=10] (40:15mm) to (60:15mm);
					\draw[e:coloredthin,red,bend right=10] (40:15mm) to (60:15mm);
					
					\draw[e:main,gray,bend right=10] (120:15mm) to (140:15mm);
					\draw[e:coloredborder,bend right=10] (140:15mm) to (160:15mm);
					\draw[e:coloredthin,red,bend right=10] (140:15mm) to (160:15mm);
					\draw[e:main,gray,bend right=10] (160:15mm) to (180:15mm);
					
					\draw[e:coloredborder,bend right=10] (180:15mm) to (200:15mm);
					\draw[e:coloredthin,red,bend right=10] (180:15mm) to (200:15mm);
					\draw[e:main,gray,bend right=10] (200:15mm) to (220:15mm);
					\draw[e:coloredborder,bend right=10] (220:15mm) to (240:15mm);
					\draw[e:coloredthin,red,bend right=10] (220:15mm) to (240:15mm);

					\draw[e:main,gray,bend right=10] (240:15mm) to (260:15mm);
					\draw[e:coloredborder,bend right=10] (260:15mm) to (280:15mm);
					\draw[e:coloredthin,red,bend right=10] (260:15mm) to (280:15mm);
					\draw[e:main,gray,bend right=10] (280:15mm) to (300:15mm);

					\draw[e:main,gray,bend right=10] (300:15mm) to (320:15mm);
					\draw[e:coloredborder,bend right=10] (320:15mm) to (340:15mm);
					\draw[e:coloredthin,red,bend right=10] (320:15mm) to (340:15mm);
					\draw[e:main,gray,bend right=10] (340:15mm) to (0:15mm);
					
					\draw[e:coloredborder] (180:7mm) -- (0:7mm);
					\draw[e:coloredthin,color=BostonUniversityRed] (180:7mm) -- (0:7mm);
					\draw[e:main,gray] (180:15mm) -- (180:7mm);
					\draw[e:main,gray] (0:15mm) -- (0:7mm);
					
					\draw[e:coloredborder] (240:7mm) -- (60:7mm);
					\draw[e:coloredthin,color=BostonUniversityRed] (240:7mm) -- (60:7mm);
					\draw[e:main,gray] (240:15mm) -- (240:7mm);
					\draw[e:main,gray] (60:15mm) -- (60:7mm);
					
					\draw[e:coloredborder] (300:15mm) -- (300:7mm);
					\draw[e:coloredthin,color=BostonUniversityRed] (300:15mm) -- (300:7mm);
					\draw[e:coloredborder] (120:15mm) -- (120:7mm);
					\draw[e:coloredthin,color=BostonUniversityRed] (120:15mm) -- (120:7mm);
					\draw[e:main,gray] (300:7mm) -- (120:7mm);
					
				\end{pgfonlayer}
			\end{tikzpicture}
		\end{subfigure}
		\begin{subfigure}{0.49\textwidth}
			\centering
			\begin{tikzpicture}[scale=1.15]
				\pgfdeclarelayer{background}
				\pgfdeclarelayer{foreground}
				\pgfsetlayers{background,main,foreground}
				
				\node[v:marked] () at (113.34:15mm){};
				\node[v:marked] () at (66.66:15mm){};
				\node[v:marked] () at (120:15mm){};
				\node[v:marked] () at (300:15mm){};
				\node[v:marked] () at (280:15mm){};
				\node[v:marked] () at (180:15mm){};
				
				\foreach \x in {0,2,4}
				{
					\node[v:mainempty] () at (\x*60:15mm){};
					\node[v:maingray] () at (\x*60:7mm){};
				}
				\foreach \x in {1,3,5}
				{
					\node[v:main] () at (\x*60:15mm){};  
					\node[v:mainemptygray] () at (\x*60:7mm){};
				}
				
				\node[v:maingray] () at (20:15mm){};
				\node[v:maingray] () at (140:15mm){};
				\node[v:maingray] () at (220:15mm){};
				\node[v:maingray] () at (260:15mm){};
				\node[v:maingray] () at (340:15mm){};
				
				\node[v:mainemptygray] () at (66.66:15mm){};
				\node[v:maingray] () at (73.32:15mm){};
				\node[v:mainemptygreen] () at (80.01:15mm){};
				\node[v:maingreen] () at (86.68:15mm){};
				\node[v:mainemptygreen] () at (93.35:15mm){};
				\node[v:maingreen] () at (100.01:15mm){};
				\node[v:mainemptygray] () at (106.68:15mm){};
				\node[v:maingray] () at (113.34:15mm){};
				
				\node[v:mainemptygray] () at (40:15mm){};
				\node[v:mainemptygray] () at (160:15mm){};
				\node[v:mainemptygray] () at (200:15mm){};
				\node[v:mainemptygray] () at (280:15mm){};
				\node[v:mainemptygray] () at (320:15mm){};
				
				\node () at (0:18mm){$b_{1}$};
				\node () at (60:17mm){$u$};
				\node () at (120:17mm){$v$};
				\node () at (180:18mm){$a_{2}$};
				\node () at (240:17mm){$b_{2}$};
				\node () at (300:18mm){$a_{1}$};
				
				\node[blue] () at (90:18mm){$C$};
				
				\begin{pgfonlayer}{background}
					\draw[e:main,myGreen,bend left=40] (113.34:15mm) to (280:15mm);
					\draw[e:main,blue,bend right=15] (66.66:15mm) to (0:7mm);        
					\draw[e:marker,BrightUbe,bend left=40] (113.34:15mm) to (280:15mm);
					\draw[e:marker,BrightUbe,bend right=15] (66.66:15mm) to (0:7mm);
					\draw[e:marker,BrightUbe] (0:7mm) -- (180:15mm);
					\draw[e:marker,BrightUbe] (120:15mm) -- (300:15mm);
					
					\draw[e:marker,BrightUbe,bend right=28] (180:15mm) to (240:15mm) to (300:15mm) to (0:15mm) to (60:15mm) to (120:15mm);
					\draw[e:main,gray] (60:15mm) to (66.66:15mm){};
					\draw[e:coloredborder] (66.66:15mm) to (73.32:15mm){};
					\draw[e:coloredthin,color=BostonUniversityRed] (66.66:15mm) to (73.32:15mm){};
					\draw[e:main,gray] (73.32:15mm) to (80.01:15mm){};
					\draw[e:coloredborder] (80.01:15mm) to (86.68:15mm){};
					\draw[e:coloredthin,color=BostonUniversityRed] (80.01:15mm) to (86.68:15mm){};
					\draw[e:main,myGreen] (86.68:15mm) to (93.35:15mm){};
					\draw[e:coloredborder] (93.35:15mm) to (100.01:15mm){};
					\draw[e:coloredthin,color=BostonUniversityRed] (93.35:15mm) to (100.01:15mm){};     
					\draw[e:main,gray] (100.01:15mm) to (106.68:15mm){};
					\draw[e:coloredborder] (106.68:15mm) to (113.34:15mm){};
					\draw[e:coloredthin,color=BostonUniversityRed] (106.68:15mm) to (113.34:15mm){}; 
					\draw[e:main,gray] (113.34:15mm) to (120:15mm){};
					
					\draw[e:main,blue,densely dashed,bend right=90] (80.01:15mm) to (100.01:15mm);
					
					\draw[e:coloredborder,bend right=10] (0:15mm) to (20:15mm);
					\draw[e:coloredthin,red,bend right=10] (0:15mm) to (20:15mm);
					\draw[e:main,gray,bend right=10] (20:15mm) to (40:15mm);
					\draw[e:coloredborder,bend right=10] (40:15mm) to (60:15mm);
					\draw[e:coloredthin,red,bend right=10] (40:15mm) to (60:15mm);
					
					\draw[e:main,gray,bend right=10] (120:15mm) to (140:15mm);
					\draw[e:coloredborder,bend right=10] (140:15mm) to (160:15mm);
					\draw[e:coloredthin,red,bend right=10] (140:15mm) to (160:15mm);
					\draw[e:main,gray,bend right=10] (160:15mm) to (180:15mm);
					
					\draw[e:coloredborder,bend right=10] (180:15mm) to (200:15mm);
					\draw[e:coloredthin,red,bend right=10] (180:15mm) to (200:15mm);
					\draw[e:main,gray,bend right=10] (200:15mm) to (220:15mm);
					\draw[e:coloredborder,bend right=10] (220:15mm) to (240:15mm);
					\draw[e:coloredthin,red,bend right=10] (220:15mm) to (240:15mm);

					\draw[e:main,gray,bend right=10] (240:15mm) to (260:15mm);
					\draw[e:coloredborder,bend right=10] (260:15mm) to (280:15mm);
					\draw[e:coloredthin,red,bend right=10] (260:15mm) to (280:15mm);
					\draw[e:main,gray,bend right=10] (280:15mm) to (300:15mm);

					\draw[e:main,gray,bend right=10] (300:15mm) to (320:15mm);
					\draw[e:coloredborder,bend right=10] (320:15mm) to (340:15mm);
					\draw[e:coloredthin,red,bend right=10] (320:15mm) to (340:15mm);
					\draw[e:main,gray,bend right=10] (340:15mm) to (0:15mm);
					
					\draw[e:coloredborder] (180:7mm) -- (0:7mm);
					\draw[e:coloredthin,color=BostonUniversityRed] (180:7mm) -- (0:7mm);
					\draw[e:main,gray] (180:15mm) -- (180:7mm);
					\draw[e:main,gray] (0:15mm) -- (0:7mm);
					
					\draw[e:coloredborder] (240:7mm) -- (60:7mm);
					\draw[e:coloredthin,color=BostonUniversityRed] (240:7mm) -- (60:7mm);
					\draw[e:main,gray] (240:15mm) -- (240:7mm);
					\draw[e:main,gray] (60:15mm) -- (60:7mm);
					
					\draw[e:coloredborder] (300:15mm) -- (300:7mm);
					\draw[e:coloredthin,color=BostonUniversityRed] (300:15mm) -- (300:7mm);
					\draw[e:coloredborder] (120:15mm) -- (120:7mm);
					\draw[e:coloredthin,color=BostonUniversityRed] (120:15mm) -- (120:7mm);
					\draw[e:main,gray] (300:7mm) -- (120:7mm);
					
				\end{pgfonlayer}
			\end{tikzpicture}
		\end{subfigure}

		\begin{subfigure}{0.49\textwidth}
			\centering
			\begin{tikzpicture}[scale=1.15]
				\pgfdeclarelayer{background}
				\pgfdeclarelayer{foreground}
				\pgfsetlayers{background,main,foreground}
				
				\node[v:marked] () at (113.34:15mm){};
				\node[v:marked] () at (66.66:15mm){};
				\node[v:marked] () at (100:11mm){};
				\node[v:marked] () at (80:11mm){};
				\node[v:marked] () at (0:7mm){};
				\node[v:marked] () at (200:15mm){};
				
				\foreach \x in {0,2,4}
				{
					\node[v:mainempty] () at (\x*60:15mm){};
					\node[v:maingray] () at (\x*60:7mm){};
				}
				\foreach \x in {1,3,5}
				{
					\node[v:main] () at (\x*60:15mm){};  
					\node[v:mainemptygray] () at (\x*60:7mm){};
				}
				
				\node[v:maingray] () at (20:15mm){};
				\node[v:maingray] () at (140:15mm){};
				\node[v:maingray] () at (220:15mm){};
				\node[v:maingray] () at (260:15mm){};
				\node[v:maingray] () at (340:15mm){};
				
				\node[v:mainemptygray] () at (66.66:15mm){};
				\node[v:maingray] () at (73.32:15mm){};
				\node[v:mainemptygreen] () at (80.01:15mm){};
				\node[v:maingreen] () at (86.68:15mm){};
				\node[v:mainemptygreen] () at (93.35:15mm){};
				\node[v:maingreen] () at (100.01:15mm){};
				\node[v:mainemptygray] () at (106.68:15mm){};
				\node[v:maingray] () at (113.34:15mm){};
				
				\node[v:mainemptygray] () at (40:15mm){};
				\node[v:mainemptygray] () at (160:15mm){};
				\node[v:mainemptygray] () at (200:15mm){};
				\node[v:mainemptygray] () at (280:15mm){};
				\node[v:mainemptygray] () at (320:15mm){};
				
				\node[v:main] () at (80:11mm){};
				\node[v:mainempty] () at (100:11mm){};
				
				\node () at (0:18mm){$b_{1}$};
				\node () at (60:17mm){$u$};
				\node () at (120:17mm){$v$};
				\node () at (180:18mm){$a_{2}$};
				\node () at (240:17mm){$b_{2}$};
				\node () at (300:18mm){$a_{1}$};
				
				\node[blue] () at (90:18mm){$C$};
				
				\begin{pgfonlayer}{background}
					\draw[e:marker,BrightUbe,bend right=15] (80:11mm) to (66.66:15mm);
					\draw[e:marker,BrightUbe,bend right=15] (100:11mm) to (0:7mm);
					\draw[e:marker,BrightUbe,bend left=20] (100:11mm) to (113.34:15mm);
					\draw[e:marker,BrightUbe,bend left=30] (80:11mm) to (200:15mm);
					\draw[e:marker,BrightUbe] (80:11mm) -- (100:11mm);
					
					\draw[e:marker,BrightUbe] (0:15mm) -- (180:15mm);
					\draw[e:marker,BrightUbe] (120:15mm) -- (300:15mm);
					
					\draw[e:marker,BrightUbe,bend right=28] (0:15mm) to (60:15mm) to (120:15mm);
					\draw[e:marker,BrightUbe,bend right=28] (180:15mm) to (240:15mm) to (300:15mm);
					
					\draw[e:main,gray] (60:15mm) to (66.66:15mm){};
					\draw[e:coloredborder] (66.66:15mm) to (73.32:15mm){};
					\draw[e:coloredthin,color=BostonUniversityRed] (66.66:15mm) to (73.32:15mm){};
					\draw[e:main,gray] (73.32:15mm) to (80.01:15mm){};
					\draw[e:coloredborder] (80.01:15mm) to (86.68:15mm){};
					\draw[e:coloredthin,color=BostonUniversityRed] (80.01:15mm) to (86.68:15mm){};
					\draw[e:main,myGreen] (86.68:15mm) to (93.35:15mm){};
					\draw[e:coloredborder] (93.35:15mm) to (100.01:15mm){};
					\draw[e:coloredthin,color=BostonUniversityRed] (93.35:15mm) to (100.01:15mm){};     
					\draw[e:main,gray] (100.01:15mm) to (106.68:15mm){};
					\draw[e:coloredborder] (106.68:15mm) to (113.34:15mm){};
					\draw[e:coloredthin,color=BostonUniversityRed] (106.68:15mm) to (113.34:15mm){}; 
					\draw[e:main,gray] (113.34:15mm) to (120:15mm){};
					
					\draw[e:main,blue,densely dashed,bend right=90] (80.01:15mm) to (100.01:15mm);
					
					\draw[e:coloredborder,bend right=10] (0:15mm) to (20:15mm);
					\draw[e:coloredthin,red,bend right=10] (0:15mm) to (20:15mm);
					\draw[e:main,gray,bend right=10] (20:15mm) to (40:15mm);
					\draw[e:coloredborder,bend right=10] (40:15mm) to (60:15mm);
					\draw[e:coloredthin,red,bend right=10] (40:15mm) to (60:15mm);
					
					\draw[e:main,gray,bend right=10] (120:15mm) to (140:15mm);
					\draw[e:coloredborder,bend right=10] (140:15mm) to (160:15mm);
					\draw[e:coloredthin,red,bend right=10] (140:15mm) to (160:15mm);
					\draw[e:main,gray,bend right=10] (160:15mm) to (180:15mm);
					
					\draw[e:coloredborder,bend right=10] (180:15mm) to (200:15mm);
					\draw[e:coloredthin,red,bend right=10] (180:15mm) to (200:15mm);
					\draw[e:main,gray,bend right=10] (200:15mm) to (220:15mm);
					\draw[e:coloredborder,bend right=10] (220:15mm) to (240:15mm);
					\draw[e:coloredthin,red,bend right=10] (220:15mm) to (240:15mm);

					\draw[e:main,gray,bend right=10] (240:15mm) to (260:15mm);
					\draw[e:coloredborder,bend right=10] (260:15mm) to (280:15mm);
					\draw[e:coloredthin,red,bend right=10] (260:15mm) to (280:15mm);
					\draw[e:main,gray,bend right=10] (280:15mm) to (300:15mm);

					\draw[e:main,gray,bend right=10] (300:15mm) to (320:15mm);
					\draw[e:coloredborder,bend right=10] (320:15mm) to (340:15mm);
					\draw[e:coloredthin,red,bend right=10] (320:15mm) to (340:15mm);
					\draw[e:main,gray,bend right=10] (340:15mm) to (0:15mm);
					
					\draw[e:coloredborder] (180:7mm) -- (0:7mm);
					\draw[e:coloredthin,color=BostonUniversityRed] (180:7mm) -- (0:7mm);
					\draw[e:main,gray] (180:15mm) -- (180:7mm);
					\draw[e:main,gray] (0:15mm) -- (0:7mm);
					
					\draw[e:coloredborder] (240:7mm) -- (60:7mm);
					\draw[e:coloredthin,color=BostonUniversityRed] (240:7mm) -- (60:7mm);
					\draw[e:main,gray] (240:15mm) -- (240:7mm);
					\draw[e:main,gray] (60:15mm) -- (60:7mm);
					
					\draw[e:coloredborder] (300:15mm) -- (300:7mm);
					\draw[e:coloredthin,color=BostonUniversityRed] (300:15mm) -- (300:7mm);
					\draw[e:coloredborder] (120:15mm) -- (120:7mm);
					\draw[e:coloredthin,color=BostonUniversityRed] (120:15mm) -- (120:7mm);
					\draw[e:main,gray] (300:7mm) -- (120:7mm);
					
					\draw[e:coloredborder] (80:11mm) -- (100:11mm);
					\draw[e:coloredthin,color=BostonUniversityRed] (80:11mm) -- (100:11mm);
					
					\draw[e:main,blue,bend right=15] (80:11mm) to (66.66:15mm); \draw[e:main,blue,bend right=15] (100:11mm) to (0:7mm);
					\draw[e:main,myGreen,bend left=20] (100:11mm) to (113.34:15mm);
					\draw[e:main,myGreen,bend left=30] (80:11mm) to (200:15mm);
					
				\end{pgfonlayer}
			\end{tikzpicture}
		\end{subfigure}
		\begin{subfigure}{0.49\textwidth}
			\centering
			\begin{tikzpicture}[scale=1.15]
				\pgfdeclarelayer{background}
				\pgfdeclarelayer{foreground}
				\pgfsetlayers{background,main,foreground}
				
				\node[v:marked] () at (113.34:15mm){};
				\node[v:marked] () at (66.66:15mm){};
				\node[v:marked] () at (100:11mm){};
				\node[v:marked] () at (80:11mm){};
				\node[v:marked] () at (0:7mm){};
				\node[v:marked] () at (280:15mm){};
				
				\foreach \x in {0,2,4}
				{
					\node[v:mainempty] () at (\x*60:15mm){};
					\node[v:maingray] () at (\x*60:7mm){};
				}
				\foreach \x in {1,3,5}
				{
					\node[v:main] () at (\x*60:15mm){};  
					\node[v:mainemptygray] () at (\x*60:7mm){};
				}
				
				\node[v:maingray] () at (20:15mm){};
				\node[v:maingray] () at (140:15mm){};
				\node[v:maingray] () at (220:15mm){};
				\node[v:maingray] () at (260:15mm){};
				\node[v:maingray] () at (340:15mm){};
				
				\node[v:mainemptygray] () at (66.66:15mm){};
				\node[v:maingray] () at (73.32:15mm){};
				\node[v:mainemptygreen] () at (80.01:15mm){};
				\node[v:maingreen] () at (86.68:15mm){};
				\node[v:mainemptygreen] () at (93.35:15mm){};
				\node[v:maingreen] () at (100.01:15mm){};
				\node[v:mainemptygray] () at (106.68:15mm){};
				\node[v:maingray] () at (113.34:15mm){};
				
				\node[v:mainemptygray] () at (40:15mm){};
				\node[v:mainemptygray] () at (160:15mm){};
				\node[v:mainemptygray] () at (200:15mm){};
				\node[v:mainemptygray] () at (280:15mm){};
				\node[v:mainemptygray] () at (320:15mm){};
				
				\node[v:main] () at (80:11mm){};
				\node[v:mainempty] () at (100:11mm){};
				
				\node () at (0:18mm){$b_{1}$};
				\node () at (60:17mm){$u$};
				\node () at (120:17mm){$v$};
				\node () at (180:18mm){$a_{2}$};
				\node () at (240:17mm){$b_{2}$};
				\node () at (300:18mm){$a_{1}$};
				
				\node[blue] () at (90:18mm){$C$};
				
				\begin{pgfonlayer}{background}
					
					\draw[e:marker,BrightUbe,bend right=15] (80:11mm) to (66.66:15mm);
					\draw[e:marker,BrightUbe,bend right=15] (100:11mm) to (0:7mm);
					\draw[e:marker,BrightUbe,bend left=20] (100:11mm) to (113.34:15mm);
					\draw[e:marker,BrightUbe,bend right=15] (80:11mm) to (280:15mm);
					\draw[e:marker,BrightUbe] (80:11mm) -- (100:11mm);
					
					\draw[e:marker,BrightUbe] (0:15mm) -- (180:15mm);
					\draw[e:marker,BrightUbe] (120:15mm) -- (300:15mm);
					
					\draw[e:marker,BrightUbe,bend right=28] (0:15mm) to (60:15mm) to (120:15mm);
					\draw[e:marker,BrightUbe,bend right=28] (180:15mm) to (240:15mm) to (300:15mm);
					
					\draw[e:main,gray] (60:15mm) to (66.66:15mm){};
					\draw[e:coloredborder] (66.66:15mm) to (73.32:15mm){};
					\draw[e:coloredthin,color=BostonUniversityRed] (66.66:15mm) to (73.32:15mm){};
					\draw[e:main,gray] (73.32:15mm) to (80.01:15mm){};
					\draw[e:coloredborder] (80.01:15mm) to (86.68:15mm){};
					\draw[e:coloredthin,color=BostonUniversityRed] (80.01:15mm) to (86.68:15mm){};
					\draw[e:main,myGreen] (86.68:15mm) to (93.35:15mm){};
					\draw[e:coloredborder] (93.35:15mm) to (100.01:15mm){};
					\draw[e:coloredthin,color=BostonUniversityRed] (93.35:15mm) to (100.01:15mm){};     
					\draw[e:main,gray] (100.01:15mm) to (106.68:15mm){};
					\draw[e:coloredborder] (106.68:15mm) to (113.34:15mm){};
					\draw[e:coloredthin,color=BostonUniversityRed] (106.68:15mm) to (113.34:15mm){}; 
					\draw[e:main,gray] (113.34:15mm) to (120:15mm){};
					
					\draw[e:main,blue,densely dashed,bend right=90] (80.01:15mm) to (100.01:15mm);
					
					\draw[e:coloredborder,bend right=10] (0:15mm) to (20:15mm);
					\draw[e:coloredthin,red,bend right=10] (0:15mm) to (20:15mm);
					\draw[e:main,gray,bend right=10] (20:15mm) to (40:15mm);
					\draw[e:coloredborder,bend right=10] (40:15mm) to (60:15mm);
					\draw[e:coloredthin,red,bend right=10] (40:15mm) to (60:15mm);
					
					\draw[e:main,gray,bend right=10] (120:15mm) to (140:15mm);
					\draw[e:coloredborder,bend right=10] (140:15mm) to (160:15mm);
					\draw[e:coloredthin,red,bend right=10] (140:15mm) to (160:15mm);
					\draw[e:main,gray,bend right=10] (160:15mm) to (180:15mm);
					
					\draw[e:coloredborder,bend right=10] (180:15mm) to (200:15mm);
					\draw[e:coloredthin,red,bend right=10] (180:15mm) to (200:15mm);
					\draw[e:main,gray,bend right=10] (200:15mm) to (220:15mm);
					\draw[e:coloredborder,bend right=10] (220:15mm) to (240:15mm);
					\draw[e:coloredthin,red,bend right=10] (220:15mm) to (240:15mm);

					\draw[e:main,gray,bend right=10] (240:15mm) to (260:15mm);
					\draw[e:coloredborder,bend right=10] (260:15mm) to (280:15mm);
					\draw[e:coloredthin,red,bend right=10] (260:15mm) to (280:15mm);
					\draw[e:main,gray,bend right=10] (280:15mm) to (300:15mm);

					\draw[e:main,gray,bend right=10] (300:15mm) to (320:15mm);
					\draw[e:coloredborder,bend right=10] (320:15mm) to (340:15mm);
					\draw[e:coloredthin,red,bend right=10] (320:15mm) to (340:15mm);
					\draw[e:main,gray,bend right=10] (340:15mm) to (0:15mm);
					
					\draw[e:coloredborder] (180:7mm) -- (0:7mm);
					\draw[e:coloredthin,color=BostonUniversityRed] (180:7mm) -- (0:7mm);
					\draw[e:main,gray] (180:15mm) -- (180:7mm);
					\draw[e:main,gray] (0:15mm) -- (0:7mm);
					
					\draw[e:coloredborder] (240:7mm) -- (60:7mm);
					\draw[e:coloredthin,color=BostonUniversityRed] (240:7mm) -- (60:7mm);
					\draw[e:main,gray] (240:15mm) -- (240:7mm);
					\draw[e:main,gray] (60:15mm) -- (60:7mm);
					
					\draw[e:coloredborder] (300:15mm) -- (300:7mm);
					\draw[e:coloredthin,color=BostonUniversityRed] (300:15mm) -- (300:7mm);
					\draw[e:coloredborder] (120:15mm) -- (120:7mm);
					\draw[e:coloredthin,color=BostonUniversityRed] (120:15mm) -- (120:7mm);
					\draw[e:main,gray] (300:7mm) -- (120:7mm);
					
					\draw[e:coloredborder] (80:11mm) -- (100:11mm);
					\draw[e:coloredthin,color=BostonUniversityRed] (80:11mm) -- (100:11mm);
					
					\draw[e:main,blue,bend right=15] (80:11mm) to (66.66:15mm); \draw[e:main,blue,bend right=15] (100:11mm) to (0:7mm);
					\draw[e:main,myGreen,bend left=20] (100:11mm) to (113.34:15mm);
					\draw[e:main,myGreen,bend right=15] (80:11mm) to (280:15mm);
					
				\end{pgfonlayer}
			\end{tikzpicture}
		\end{subfigure}
		\caption{The construction of the new conformal $K_{3,3}$-bisubdivision in the third case of the proof of \cref{lemma:reducedK33mustsplit}.} 
		\label{fig:shrinkingPno3}
	\end{figure}
	
	So whenever we find both a $V_1$-jump and a $V_2$-jump in $L$, we are able to find a conformal bisubdivision $L'$ of $K_{3,3}$ with a bisubdivided edge $P'$ that contains all of $C$ but is shorter than $P$ in the previous bisubdivision. 
	Thus by choosing $L$ with minimal $P$, we still find a non-trivial tight cut as in the proofs of \cref{lemma:reducedK33,lemma:reducedK33withjump}, but neither of these tight cuts may yield a $V_1$-jump.
	Hence we must be able to construct a conformal bisubdivision of $K_{3,3}$ that splits $C$.
\end{proof}

With this we are ready to close this section with the proof of \cref{prop:4cycleK33}.

\begin{proof}[Proof of \Cref{prop:4cycleK33}]
	Suppose $B$ is a counterexample, so there exists a $4$-cycle $C$ in $B$ such that $C$ is not a subgraph of a conformal $K_{3,3}$-bisubdivision.
	By \cref{lemma:goodcrossesmeanK33} this means that there is no conformal cross over $C$ in $B$ and thus, by \cref{lemma:reducedK33mustsplit} there exists a conformal bisubdivision $L$ of $K_{3,3}$ that splits $C$.
	As we have seen in \cref{fig:threeconfigurations} we may choose $L$ such that one of the degree three vertices in $L$ belongs to $C$, let us call that vertex $u$.
	Let $P_1$, $P_2$, and $P_3$ be the bisubdivided edges of $L$ that have $u$ as an endpoint and let $v_i$ be the other endpoint of $P_i$ for all $i\in [1,3]$.
	Let $Y\coloneqq \bigcup_{i=1}^3\V{P_i-v_i}$ and let us choose $L$ among all conformal bisubdivisions of $K_{3,3}$ in $B$ that split $C$ to be one where $\Abs{Y}$ is minimal.
	As $L$ splits $C$ we still have $\Abs{Y\cap\V{C}}\geq 3$ and $\Abs{\V{L}\setminus Y}\geq 3$ and thus $Y$ is, as we have seen before, a non-trivial tight cut whose majority is exactly the colour class $u$ belongs to.
	Without loss of generality let us assume the minority of $Y$ to be in $V_1$.
	Observe that $C\cap L$ forms an $M$-conformal path that must contain internal vertices of two different bisubdivided edges of $L$.
	By \cref{lemma:tightcutsincofnromalsubgraphs} there must exist an internally $M$-conformal path $F$ that has one endpoint in $Y\cap V_1$ and the other one in $\Vi{2}{L-Y}$ such that $F$ is internally disjoint from $L$.
	Recall the constructions illustrated in \cref{fig:firstLreductions} and suppose the endpoint of $F$ in $Y$ is an interior vertex of $C\cap Y$.
	If this is the case, we find a conformal bisubdivision $L'$ of $K_{3,3}$ in which $ab$ and $a'b'$ belong to two different bisubdivided edges which do not share an endpoint.
	By \cref{lemma:easycrossesinK33} this means we find a conformal bisubdivision of $K_{3,3}$ which contains $C$ as a subgraph, contradicting $B$ being a counterexample.
	Hence $F$ cannot contain an inner vertex of $C\cap L$.
	But in this case, we can find a conformal $K_{3,3}$-bisubdivision $L'$ that splits $C$ such that $Y'$, which is defined for $L'$ in the same way as $Y$ is defined for $L$, contains fewer vertices than $Y$ which contradicts our choice of $L$.
	So either way we reach a contradiction and thus there cannot be a counterexample to our claim.
\end{proof}

\section{An Algorithm for $2$-MLP}\label{sec:algorithm}

To obtain an algorithmic solution for $2$-MLP, we use \cref{prop:4cycleK33} together with \cref{cor:pfaffianalg}.
On a high level, we run into the following problems:
First, we do not know for which perfect matching $M$ of $B$ we might be able to find a solution for $2$-MLP and since there is a potentially exponential number of perfect matchings in $B$ it clearly does not suffice to simply test all of them.
Indeed, such an approach is doomed from the beginning since trying to solve $2$-MLP for a fixed perfect matching is equivalent to the Directed $2$-Disjoint Paths Problem.
So we take a slightly different approach.
Let $a_1,a_2,b_1,b_2$ be the four vertices of an instance of $2$-MLP.
Let $\Abs{\V{B}}=n$, then $B$ contains $\frac{n}{2}$ vertices of each colour.
For each $x\in\Set{a_1,a_2,b_1,b_2}$ we may choose from among the $\frac{n}{2}$ vertices of the opposite colour in order to find a neighbour that might be matched to $x$ by some perfect matching of $B$.
In total this means there are at most
\begin{align*}
	2\Choose{\frac{n}{2}}{2}=2\frac{\frac{n}{2}\Brace{\frac{n}{2}-1}}{2}\in\Fkt{\mathcal{O}}{n^2}
\end{align*}
many choices of edges that might cover our four terminal vertices in a perfect matching of $B$.
Let $F\subseteq\E{B}$ be a set of at most four edges such that each vertex from among $a_1,a_2,b_1$, and $b_2$ is covered by an edge of $F$.
Next we need to decide whether $F$ is contained in a perfect matching of $B$, which can be done by the Hopcroft-Karp algorithm in time $\Fkt{\mathcal{O}}{n^{\frac{5}{2}}}$ \cite{hopcroft1973n}.
In case such a perfect matching exists, we then alter the graph $B$ locally which takes up constant time.
The main concern of this section is to introduce this local construction and to show that the existence of a conformal cross over a well-chosen $4$-cycle certifies the existence of the desired linkage in a way that makes use of the matching edges in $F$.
The key to deciding whether a conformal cross over our $4$-cycle exists is \cref{prop:4cycleK33} in combination with \cref{cor:pfaffianalg}.
In total, this approach decides $2$-MLP in time $\Fkt{\mathcal{O}}{n^5}$.

Let $B$ be a bipartite graph with a perfect matching, $F\subseteq\E{B}$ and $X\subseteq\V{B}$.
The set $F$ is said to be an \emph{$X$-cover}, if every edge in $F$ contains a vertex of $X$ and every vertex in $X$ is covered by an edge of $F$.
If $F$ is an extendible set of edges in $B$ and $M$ is a perfect matching $B$ with $F\subseteq M$, $M$ is said to \emph{extend} $F$.
From the discussion above it is clear that there are $\Fkt{\mathcal{O}}{\Abs{\V{B}}^2}$ many $X$-covers in $B$ for any set $X\subseteq\V{B}$ with $\Abs{X\cap V_1}=\Abs{X\cap V_2}=2$.
Given distinct vertices $a_1,a_2\in V_1$, $b_1,b_2\in V_2$, and an extendible $\Set{a_1,a_2,b_1,b_2}$-cover $F\subseteq\E{B}$ we say that $B$ is an \emph{$F$-instance} of $2$-MLP for $\Brace{a_1,a_2}$ and $\Brace{b_1,b_2}$ if there exists a perfect matching $M$ of $B$ that extends $F$ such that there are two disjoint internally $M$-conformal paths $P_1$ and $P_2$ such that $P_i$ has endpoints $a_i$ and $b_i$ for each $i\in[1,2]$.

\begin{definition}
	Let $B$ be a bipartite graph with a perfect matching, $a_1,a_2\in V_1$ and $b_1,b_2\in V_2$ four distinct vertices of $B$, and $F$ an extendible $\Set{a_1,a_2,b_1,b_2}$-cover of size four.
	Let $ua_2,vb_1\in F$.
	We define the following transformation of $B$ with respect to $F$, $\Brace{a_1,a_2}$, and $\Brace{b_1,b_2}$.
	\begin{align*}
		\MLPInstance{3}{B,F,\Set{a_1,a_2,b_1,b_2}}&\coloneqq B-u-v+a_2b_1\\
		\MLPCover{3}{B,F,\Set{a_1,a_2,b_1,b_2}}&\coloneqq \Brace{F\setminus\Set{ua_2,vb_1}}\cup\Set{a_2b_1}
	\end{align*}
\end{definition}

\begin{lemma}\label{lemma:reducefrom4to3}
	Let $B$ be a bipartite graph with a perfect matching, $a_1,a_2\in V_1$ and $b_1,b_2\in V_2$ four distinct vertices of $B$, and $F$ an extendible $\Set{a_1,a_2,b_1,b_2}$-cover of size four.
	Then $B$ is an $F$-instance of $2$-MLP for $\Brace{a_1,a_2}$ and $\Brace{b_1,b_2}$ if and only if $\MLPInstance{3}{B,F,\Set{a_1,a_2,b_1,b_2}}$ is a $\MLPCover{3}{B,F,\Set{a_1,a_2,b_1,b_2}}$-instance of $2$-MLP for $\Brace{a_1,a_2}$ and $\Brace{b_1,b_2}$.
\end{lemma}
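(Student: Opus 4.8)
The plan is to first nail down the rigid combinatorial shape of the cover $F$, then isolate the single structural fact that makes the reduction correct, and finally transport a witness across the construction in each direction. Write $X=\Set{a_1,a_2,b_1,b_2}$, $B'\coloneqq\MLPInstance{3}{B,F,X}$ and $F'\coloneqq\MLPCover{3}{B,F,X}$. First I would observe that, $F$ being extendible, it is a matching, so its four edges are pairwise disjoint; since $F$ is an $X$-cover of size four and $\Abs{X}=4$, no edge of $F$ can contain two of the terminals (an edge carrying two terminals would leave only two terminals for the remaining three edges, each of which must carry one). Hence $F=\Set{u_1a_1,ua_2,vb_1,v_2b_2}$ for four pairwise distinct non-terminal vertices with $u\in V_2$ and $v\in V_1$; these $u,v$ are exactly the vertices deleted to form $B'$, and $a_2b_1$ is the edge added, while $F'=\Brace{F\setminus\Set{ua_2,vb_1}}\cup\Set{a_2b_1}$.

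The step I expect to carry the weight is the following elementary observation, which I would state and prove once: if $M\supseteq F$ is a perfect matching and $Q$ is an internally $M$-conformal path with endpoints $s\in V_1$ and $t\in V_2$, then the $M$-edge at $s$ is not an edge of $Q$, and its other endpoint lies on $Q$ only if it equals $t$ — because the internal vertices of $Q$ are perfectly matched by $M$ using edges of $Q$, so no internal vertex of $Q$ is $M$-matched to an endpoint of $Q$; symmetrically for $t$. From this I would deduce that whenever $B$ is an $F$-instance, witnessed by $M\supseteq F$ and disjoint internally $M$-conformal paths $P_1$ (endpoints $a_1,b_1$) and $P_2$ (endpoints $a_2,b_2$), the vertices $u$ and $v$ lie on neither path: $u$ is $M$-matched to the endpoint $a_2$ of $P_2$, which by disjointness is not on $P_1$, so the observation plus the fact that $u$ is a non-terminal force $u\notin\V{P_2}$ and $u\notin\V{P_1}$ (if $u$ were internal to $P_1$ then $a_2\in\V{P_1}$, impossible; if an endpoint of $P_1$ then $u$ would be a terminal), and symmetrically $v\notin\V{P_1}\cup\V{P_2}$. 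Moreover the edge $a_2b_1$ joins a vertex of $P_2$ to a vertex of $P_1$, hence lies on neither path.

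With this in hand, both directions are bookkeeping. For the forward direction I would set $M'\coloneqq\Brace{M\setminus\Set{ua_2,vb_1}}\cup\Set{a_2b_1}$; this is a perfect matching of $B'$ extending $F'$ — it covers $\V{B}\setminus\Set{u,v}=\V{B'}$, with the newly exposed $a_2,b_1$ matched by the added edge — and since $u$, $v$ and the edge $a_2b_1$ all avoid $P_1,P_2$ we get $M'\cap\E{P_i}=M\cap\E{P_i}$, so $P_1,P_2$ remain disjoint internally $M'$-conformal paths in $B'$, i.e.\ $B'$ is an $F'$-instance. For the converse, from a witness $M'\supseteq F'$, $P'_1$ ($a_1,b_1$), $P'_2$ ($a_2,b_2$) for $B'$, I would note that $a_2b_1\in F'\subseteq M'$ and, as $a_2\in\V{P'_2}$ and $b_1\in\V{P'_1}$ sit on disjoint paths, $a_2b_1$ is on neither $P'_i$; also $u,v\notin\V{B'}$, so the $P'_i$ are paths of $B$ avoiding $a_2b_1$. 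Then $M\coloneqq\Brace{M'\setminus\Set{a_2b_1}}\cup\Set{ua_2,vb_1}$ is a perfect matching of $B$ extending $F$, it agrees with $M'$ on each $\E{P'_i}$, and hence $P'_1,P'_2$ witness that $B$ is an $F$-instance. The only genuinely nontrivial point is the observation about the $M$-edges at the endpoints of an internally conformal path; once that is available, the rest is pure vertex-set and edge-set bookkeeping.
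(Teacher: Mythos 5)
Your proof is correct and takes essentially the same route as the paper: exchange the two $F$-edges $ua_2,vb_1$ for the single edge $a_2b_1$ in the perfect matching (and back for the converse), and observe that $u,v$ avoid both paths and the edge $a_2b_1$ joins the two paths, so the paths and their internal conformality are undisturbed. You also spell out why $u,v\notin\V{P_1}\cup\V{P_2}$, which the paper asserts without justification, so your write-up is slightly more complete at that point; the paper's detour through the auxiliary $4$-cycle $ua_2b_1vu$ in $B+uv+a_2b_1$ is only a cosmetic repackaging of the same swap.
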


\begin{proof}
	If $B$ is an $F$-instance of $2$-MLP for $\Brace{a_1,a_2}$ and $\Brace{b_1,b_2}$ there is a perfect matching $M$ that extends $F$ such that there exist disjoint and internally $M$-conformal paths $P_1$ and $P_2$ where $P_i$ has endpoints $a_i$ and $b_i$ for each $i\in[1,2]$.
	Let $ua_2,vb_1\in F$, then $\Set{u,v}\cap\Brace{\V{P_1}\cup\V{P_2}}=\emptyset$.
	Let us add the edges $uv$ and $a_2b_1$ to $B$, then $C\coloneqq ua_2b_1vu$ is an $M$-conformal $4$-cycle in $B$.
	We set $M'\coloneqq \Brace{M'\setminus\E{C}}\cup\Set{uv,a_2b_1}$, then $P_1$ and $P_2$ are internally $M'$-conformal paths that still exist in $\MLPInstance{3}{B,F,\Set{a_1,a_2,b_1,b_2}}$ and $M'\setminus\Set{uv}$ is a perfect matching of $\MLPInstance{3}{B,F,\Set{a_1,a_2,b_1,b_2}}$ that extends $\MLPCover{3}{B,F,\Set{a_1,a_2,b_1,b_2}}$.
	Hence $\MLPInstance{3}{B,F,\Set{a_1,a_2,b_1,b_2}}$ is a $\MLPCover{3}{B,F,\Set{a_1,a_2,b_1,b_2}}$-instance of $2$-MLP for $\Brace{a_1,a_2}$ and $\Brace{b_1,b_2}$.
	
	Now assume that $\MLPInstance{3}{B,F,\Set{a_1,a_2,b_1,b_2}}$ is a $\MLPCover{3}{B,F,\Set{a_1,a_2,b_1,b_2}}$-instance of $2$-MLP for $\Brace{a_1,a_2}$ and $\Brace{b_1,b_2}$.
	As before let $M$ be a perfect matching extending $\MLPCover{3}{B,F,\Set{a_1,a_2,b_1,b_2}}$ and let $P_1$, $P_2$ be the corresponding internally $M$-conformal paths.
	Let $ua_2,vb_1\in F$, then $M\cup\Set{uv}$ is a perfect matching of $B+a_2b_1+uv$ and $C\coloneqq ua_2b_1vu$ is an $M\cup\Set{uv}$-conformal $4$-cycle in $B+a_2b_1+uv$.
	We set $M'\coloneqq \Brace{M'\setminus\E{C}}\cup\Set{ua_2,vb_1}$, then $P_1$ and $P_2$ are internally $M'$-conformal paths in $B+a_2b_1+uv$ that still exist in $B$ and $M'$ is also a perfect matching of $B$.
	Thus $B$ is an $F$-instance of $2$-MLP for $\Brace{a_1,a_2}$ and $\Brace{b_1,b_2}$.
\end{proof}

\begin{definition}
	Let $B$ be a bipartite graph with a perfect matching, $a_1,a_2\in V_1$ and $b_1,b_2\in V_2$ four distinct vertices of $B$, $S\coloneqq\Set{a_1,a_2,b_1,b_2}$, and $F$ an extendible $S$-cover of size at least three such that $a_2b_1\in F$ if and only if $\Abs{F}=3$.
	Let $ua_1,vb_2\in F$.
	We define the following transformation of $B$ with respect to $F$, $\Brace{a_1,a_2}$, and $\Brace{b_1,b_2}$.
	If $\Abs{F}=3$ use the following construction:
	\begin{align*}
		\MLPInstance{2}{B,F,S}&\coloneqq B-u-v+a_1b_2\\
		\MLPCover{2}{B,F,S}&\coloneqq \Brace{F\setminus\Set{ua_1,vb_2}}\cup\Set{a_1b_2}
	\end{align*}
	Otherwise, we can first obtain an instance where our extendible cover has size three as required:
	\begin{align*}
		\MLPInstance{2}{B,F,S}&\coloneqq\MLPInstance{2}{\MLPInstance{3}{B,F,S},\MLPCover{3}{B,F,S},S}\\
		\MLPCover{2}{B,F,S}&\coloneqq \MLPCover{2}{\MLPInstance{3}{B,F,S},\MLPCover{3}{B,F,S},S}
	\end{align*}
\end{definition}

\begin{lemma}\label{lemma:reducefrom3to2}
	Let $B$ be a bipartite graph with a perfect matching, $a_1,a_2\in V_1$ and $b_1,b_2\in V_2$ four distinct vertices of $B$, $S\coloneqq\Set{a_1,a_2,b_1,b_2}$, and $F$ an extendible $S$-cover of size at least three.
	Then $B$ is an $F$-instance of $2$-MLP for $\Brace{a_1,a_2}$ and $\Brace{b_1,b_2}$ if and only if $\MLPInstance{2}{B,F,S}$ is a $\MLPCover{2}{B,F,S}$-instance of $2$-MLP for $\Brace{a_1,a_2}$ and $\Brace{b_1,b_2}$.
\end{lemma}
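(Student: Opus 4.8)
The plan is to follow the same template as the proof of \cref{lemma:reducefrom4to3}, since the construction $\MLPInstance{2}{\cdot}$ and $\MLPCover{2}{\cdot}$ are built in exactly the analogous way: a local surgery that deletes the two endpoints $u,v$ of the cover edges $ua_1,vb_2$ and replaces them by a single new edge $a_1b_2$, recording this replacement in the cover. The only genuine new wrinkle is that $\MLPInstance{2}{\cdot}$ is defined by cases on $\Abs{F}$, so I would first dispose of the case $\Abs{F}\geq 4$ by a one-line reduction to the case $\Abs{F}=3$ using \cref{lemma:reducefrom4to3}. Indeed, by \cref{lemma:reducefrom4to3}, $B$ is an $F$-instance of $2$-MLP for $\Brace{a_1,a_2}$ and $\Brace{b_1,b_2}$ if and only if $\MLPInstance{3}{B,F,S}$ is a $\MLPCover{3}{B,F,S}$-instance, and by definition $\MLPCover{3}{B,F,S}$ has size three and contains $a_2b_1$, so it meets the hypothesis of the size-three construction; chaining this with the size-three case of the current lemma (which is exactly how $\MLPInstance{2}{B,F,S}$ is defined when $\Abs{F}\geq 4$) yields the claim. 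Hence it suffices to treat $\Abs{F}=3$.

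For the case $\Abs{F}=3$ I would argue exactly as in \cref{lemma:reducefrom4to3}, with the roles of the pairs swapped. First I would check that, since $F$ is an $S$-cover of size three and $a_2b_1\in F$, the other two edges of $F$ are of the form $ua_1$ and $vb_2$ with $u\in V_2\setminus\Set{b_1,b_2}$, $v\in V_1\setminus\Set{a_1,a_2}$, and $u\neq v$; in particular $u,v\notin S$. For the forward direction, given a perfect matching $M$ extending $F$ together with disjoint internally $M$-conformal paths $P_1,P_2$ linking $a_i$ to $b_i$, I note that $ua_1,vb_2\in M$ forces $u,v\notin\V{P_1}\cup\V{P_2}$ (an internal vertex of an internally $M$-conformal path is covered inside the path by an edge of $M$, and the endpoints are $a_1,a_2,b_1,b_2$). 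Adding the edges $uv$ and $a_1b_2$ makes $C\coloneqq ua_1b_2vu$ an $M$-conformal $4$-cycle, so switching to $M'\coloneqq\Brace{M\setminus\E{C}}\cup\Set{uv,a_1b_2}$ preserves all of $P_1,P_2$ as internally $M'$-conformal paths, and $M'\setminus\Set{uv}$ is a perfect matching of $\MLPInstance{2}{B,F,S}=B-u-v+a_1b_2$ extending $\MLPCover{2}{B,F,S}=\Brace{F\setminus\Set{ua_1,vb_2}}\cup\Set{a_1b_2}$. The reverse direction is the mirror image: starting from a perfect matching $M$ of $B-u-v+a_1b_2$ extending the new cover and its two paths, put back $u,v$ and the edge $uv$, form $C\coloneqq ua_1b_2vu$ which is $\Brace{M\cup\Set{uv}}$-conformal in $B+a_1b_2+uv$, and switch along $C$ to get $M'\coloneqq\Brace{(M\cup\Set{uv})\setminus\E{C}}\cup\Set{ua_1,vb_2}$, a perfect matching that avoids the added edges $a_1b_2$ and $uv$, hence is a perfect matching of $B$ extending $F$, and $P_1,P_2$ survive as internally $M'$-conformal paths in $B$.

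There is essentially no hard step here; the proof is a routine adaptation of \cref{lemma:reducefrom4to3}, and I expect it will be stated as such (``the proof is almost identical to that of \cref{lemma:reducefrom4to3}''). The one point that deserves a sentence of care is the verification that the paths $P_1$ and $P_2$ genuinely avoid the deleted vertices $u$ and $v$ — this is what makes the surgery legitimate — and that the new edge $a_1b_2$ (respectively $uv$) is never used by the matchings we produce when passing between $B$ and $\MLPInstance{2}{B,F,S}$, so that the paths and matchings on one side literally transport to the other. The only mild subtlety specific to this lemma, compared to \cref{lemma:reducefrom4to3}, is the bookkeeping in the case distinction on $\Abs{F}$: one must make sure that after applying $\MLPInstance{3}{\cdot}$ the resulting cover $\MLPCover{3}{B,F,S}$ really does satisfy the precondition ``$a_2b_1\in F$ if and only if $\Abs{F}=3$'' demanded by the size-three construction, which it does by the very definition of $\MLPCover{3}{\cdot}$. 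With that observation in place the two-case argument closes cleanly.
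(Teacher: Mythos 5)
Your proposal matches the paper's proof essentially verbatim: the paper also reduces the $\Abs{F}=4$ case to $\Abs{F}=3$ via \cref{lemma:reducefrom4to3}, then carries out the identical local surgery (delete $u,v$, add $a_1b_2$, switch the matching along the $4$-cycle $ua_1b_2vu$) mirroring the argument for \cref{lemma:reducefrom4to3}. Both directions of your case $\Abs{F}=3$ agree with the paper's, including the key observation that $u,v$ are avoided by the paths $P_1,P_2$ because they are matched to $a_1,b_2$ outside the paths.
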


\begin{proof}
	We only have to consider the case $\Abs{F}=3$, since the case $\Abs{F}=4$ follows, by \cref{lemma:reducefrom4to3}, with the same arguments.
	
	If $B$ is an $F$-instance of $2$-MLP for $\Brace{a_1,a_2}$ and $\Brace{b_1,b_2}$ there is a perfect matching $M$ that extends $F$ such that there exist disjoint and internally $M$-conformal paths $P_1$ and $P_2$ where $P_i$ has endpoints $a_i$ and $b_i$ for each $i\in[1,2]$.
	Let $ua_1,vb_2\in F$, then $\Set{u,v}\cap\Brace{\V{P_1}\cup\V{P_2}}=\emptyset$.
	Let us add the edges $uv$ and $a_1b_2$ to $B$, then $C\coloneqq ua_1b_2vu$ is an $M$-conformal $4$-cycle in $B$.
	We set $M'\coloneqq \Brace{M'\setminus\E{C}}\cup\Set{uv,a_1b_2}$, then $P_1$ and $P_2$ are internally $M'$-conformal paths that still exist in $\MLPInstance{2}{B,F,S}$ and $M'\setminus\Set{uv}$ is a perfect matching of $\MLPInstance{2}{B,F,S}$ that extends $\MLPCover{2}{B,F,S}$.
	Hence $\MLPInstance{2}{B,F,S}$ is a $\MLPCover{2}{B,F,S}$-instance of $2$-MLP for $\Brace{a_1,a_2}$ and $\Brace{b_1,b_2}$.
	
	Now assume that $\MLPInstance{2}{B,F,S}$ is a $\MLPCover{2}{B,F,S}$-instance of $2$-MLP for $\Brace{a_1,a_2}$ and $\Brace{b_1,b_2}$.
	As before let $M$ be a perfect matching extending $\MLPCover{2}{B,F,S}$ and let $P_1$, $P_2$ be the corresponding internally $M$-conformal paths.
	Let $ua_1,vb_2\in F$, then $M\cup\Set{uv}$ is a perfect matching of $B+a_1b_2+uv$ and $C\coloneqq ua_1b_2vu$ is an $M\cup\Set{uv}$-conformal $4$-cycle in $B+a_1b_2+uv$.
	We set $M'\coloneqq \Brace{M'\setminus\E{C}}\cup\Set{ua_1,vb_2}$, then $P_1$ and $P_2$ are internally $M'$-conformal paths in $B+a_1b_2+uv$ that still exist in $B$ and $M'$ is also a perfect matching of $B$.
	Thus $B$ is an $F$-instance of $2$-MLP for $\Brace{a_1,a_2}$ and $\Brace{b_1,b_2}$.
\end{proof}

\begin{definition}
	Let $B$ be a bipartite graph with a perfect matching, $a_1,a_2\in V_1$ and $b_1,b_2\in V_2$ four distinct vertices of $B$, $S\coloneqq\Set{a_1,a_2,b_1,b_2}$, and $F$ an extendible $S$-cover such that $a_2b_1\in F$ if and only if $\Abs{F}\leq 3$.
	We define the following transformation of $B$ with respect to $F$, $\Brace{a_1,a_2}$, and $\Brace{b_1,b_2}$.
	If $\Abs{F}=2$, and therefore $F=\Set{a_1b_2,a_2b_1}$, use the following construction:
	Let $x,y$ be two distinct vertices that do not belong to $B$.
	\begin{align*}
		\MLPInstance{}{B,F,S}&\coloneqq B+x+y+\Set{xy,xb_1,xb_2,ya_1,ya_2}
	\end{align*}
	Otherwise, we can first obtain an instance where our extendible cover has size two as required above:
	\begin{align*}
		\MLPInstance{}{B,F,S}&\coloneqq\MLPInstance{}{\MLPInstance{2}{B,F,S},\MLPCover{2}{B,F,S},S}
	\end{align*}
	In either case let $\MLPCycle{B,F,S}$ be the $4$-cycle $a_1yxb_2$.
\end{definition}

\begin{lemma}\label{reducefrom2toK33}
	Let $B$ be a bipartite graph with a perfect matching, $a_1,a_2\in V_1$ and $b_1,b_2\in V_2$ four distinct vertices of $B$, $S\coloneqq\Set{a_1,a_2,b_1,b_2}$, and $F$ an extendible $S$-cover such that $a_2b_1\in F$ if and only if $\Abs{F}\leq 3$.
	Then $B$ is an $F$-instance of $2$-MLP for $\Brace{a_1,a_2}$ and $\Brace{b_1,b_2}$ if and only if there exists a conformal cross over $\MLPCycle{B,F,S}$ in $\MLPInstance{}{B,F,S}$.
\end{lemma}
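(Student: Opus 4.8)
The plan is to peel the gadget down to the base case $\Abs{F}=2$ and then set up an explicit two‑way dictionary between linkages in $B$ and conformal crosses in the gadget. First I would invoke \cref{lemma:reducefrom4to3,lemma:reducefrom3to2}: since for $\Abs{F}>2$ the definitions of $\MLPInstance{}{\cdot}$, $\MLPCover{}{\cdot}$ and $\MLPCycle{\cdot}$ simply route the instance through $\MLPInstance{2}{\cdot}$ (and through $\MLPInstance{3}{\cdot}$ if $\Abs{F}=4$), and these two lemmas say that being an $F$‑instance of $2$-MLP is preserved at each such step, it suffices to prove the stated equivalence when $\Abs{F}=2$, i.e.\ $F=\Set{a_1b_2,a_2b_1}$, $\MLPInstance{}{B,F,S}=B+x+y+\Set{xy,xb_1,xb_2,ya_1,ya_2}=:B^+$, and $\MLPCycle{B,F,S}$ is the $4$-cycle $C=a_1yxb_2$. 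Here $x\in V_1$, $y\in V_2$, and $C$ is conformal in $B^+$ because $F$ is extendible. Since $C$ is a $4$-cycle and opposite vertices lie in the same colour class, any cross over $C$ consists of a path joining $\Set{a_1,x}$ and a path joining $\Set{y,b_2}$.

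For the forward direction I would start from a perfect matching $M$ of $B$ extending $F$ and disjoint internally $M$-conformal paths $P_1$ from $a_1$ to $b_1$ and $P_2$ from $a_2$ to $b_2$. Put $M'\coloneqq\Brace{M\setminus\Set{a_2b_1}}\cup\Set{b_1x,ya_2}$, a perfect matching of $B^+$, and $P_1'\coloneqq P_1+b_1x$, $P_2'\coloneqq ya_2+P_2$. Then $P_1'$ is an $M'$-alternating $a_1$-$x$-path and $P_2'$ an $M'$-alternating $y$-$b_2$-path (both of type $3$), they are disjoint and internally disjoint from $C$, and their endpoints $a_1,y,x,b_2$ occur on $C$ in this cyclic order, so they form a matching cross over $C$. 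It is conformal: $\V{C+P_1'+P_2'}=\V{P_1}\cup\V{P_2}\cup\Set{x,y}$, and its complement in $B^+$ is $B-\Brace{\V{P_1}\cup\V{P_2}}$, on which $M$ restricts to a perfect matching because $M$ saturates $\V{P_1}\cup\V{P_2}$ internally, the endpoints being matched via $a_1b_2$ and $a_2b_1$.

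For the converse I would take a conformal cross over $C$ in $B^+$ given by paths $P_1'$ (joining $a_1,x$) and $P_2'$ (joining $y,b_2$), and set $H\coloneqq C+P_1'+P_2'$. Because $P_1'$ is internally disjoint from $C$ and the only neighbour of $x$ outside $\V{C}$ is $b_1$, the neighbour of $x$ on $P_1'$ is $b_1$; likewise the neighbour of $y$ on $P_2'$ is $a_2$. Hence $Q_1\coloneqq a_1P_1'b_1$ and $Q_2\coloneqq a_2P_2'b_2$ are disjoint paths avoiding $x$ and $y$, so they live in $B$ and join $a_1$ to $b_1$ and $a_2$ to $b_2$. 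Now I would apply \cref{lemma:matchingsingoodcrosses} to the edge $a_1b_2\in\E{C}$ to obtain a perfect matching $M_e$ of $H$ with $M_e\cap\E{C}=\Set{a_1b_2}$; since $x$ and $y$ have degree three in $H$ and both of their $C$-edges are excluded, $M_e$ must contain $b_1x$ and $ya_2$, so $M_e$ saturates the interior of each $Q_i$ using only edges of $Q_i$. Combining $M_e\cap\Brace{\E{Q_1}\cup\E{Q_2}}$ with $\Set{a_1b_2,a_2b_1}$ and with a perfect matching of $B^+-\V{H}=B-\Brace{\V{Q_1}\cup\V{Q_2}}$ (which exists since $\V{H}$ is conformal) gives a perfect matching $M$ of $B$ extending $F$ for which $Q_1$ and $Q_2$ are disjoint internally $M$-conformal paths, i.e.\ $B$ is an $F$-instance.

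The step I expect to be the main obstacle is the converse: one has to pin down that in \emph{every} cross over $C$ the path through $a_1$ necessarily re-enters the gadget at $x$ through $b_1$ (and symmetrically for $y$), and then that singling out the edge $a_1b_2$ in \cref{lemma:matchingsingoodcrosses} forces the gadget‑part of the matching into precisely the shape that lets a linkage of $B$ be read off; the verification that the reassembled edge set is a perfect matching and contains $F$ is routine but has to be carried out with care about which vertices each piece covers.
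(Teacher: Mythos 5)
Your proof is correct and follows essentially the same route as the paper: both reduce to $\Abs{F}=2$ via \cref{lemma:reducefrom4to3,lemma:reducefrom3to2} and then read off the correspondence between linkages in $B$ and crosses over $\MLPCycle{B,F,S}$ in the gadget graph. The only deviations are low-level: in the forward direction the paper builds the $M'$-conformal $K_{3,3}$-bisubdivision $H=P_1+P_2+\InducedSubgraph{\MLPInstance{}{B,F,S}}{S\cup\Set{x,y}}$ (taking $M'=M\cup\Set{xy}$) and then invokes \cref{lemma:goodcrossesmeanK33}, whereas you switch the matching locally and exhibit the two cross paths $P_1+b_1x$ and $ya_2+P_2$ directly; and in the converse the paper identifies $C+P_1+P_2+a_2b_1$ as a conformal $K_{3,3}$-bisubdivision and picks the matching with $a_1b_2,xy,a_2b_1$ in it, whereas you call \cref{lemma:matchingsingoodcrosses} to force the same matching on $C+P_1'+P_2'$. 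One cosmetic caveat: \cref{lemma:matchingsingoodcrosses} is stated for braces and $\MLPInstance{}{B,F,S}$ need not be one; however, its proof uses only the existence of a perfect matching of $C+P_1+P_2$, so your invocation is harmless — but you should flag that you are using the lemma beyond its literal hypotheses.
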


\begin{proof}
	In case $\Abs{F}\geq 3$ we may replace $B$ and $F$ by $\MLPInstance{2}{B,F,S}$ and $\MLPCover{2}{B,F,S}$ without influencing the fact whether $B$ is an $F$-instance of $2$-MLP for $\Brace{a_1,a_2}$ and $\Brace{b_1,b_2}$ by \cref{lemma:reducefrom3to2}.
	Hence, without loss of generality, we may assume $\Abs{F}=2$.
	
	If $B$ is an $F$-instance of $2$-MLP for $\Brace{a_1,a_2}$ and $\Brace{b_1,b_2}$ there is a perfect matching $M$ that extends $F$ such that there exist disjoint and internally $M$-conformal paths $P_1$ and $P_2$ where $P_i$ has endpoints $a_i$ and $b_i$ for each $i\in[1,2]$.
	Then $M'\coloneqq M\cup\Set{xy}$ is a perfect matching of $\MLPInstance{}{B,F,S}$ and $P_1$, $P_2$ are internally $M'$-conformal paths in $\MLPInstance{}{B,F,S}$ that, in particular, avoid the vertices $x$ and $y$.
	Hence $H\coloneqq P_1+P_2+\InducedSubgraph{\MLPInstance{}{B,F,S}}{S\cup\Set{x,y}}$ is an $M'$-conformal subgraph of $\MLPInstance{}{B,F,S}$.
	It is straightforward to see that $H$ is indeed a bisubdivision of $K_{3,3}$ that contains $\MLPCycle{B,F,S}$ as a subgraph, see \cref{fig:K33bisubdivisioninReduction} for an illustration.
	By \cref{lemma:goodcrossesmeanK33} this means that there is a conformal cross over $\MLPCycle{B,F,S}$ in $\MLPInstance{}{B,F,S}$ and thus we are done with the forward direction.
	
	\begin{figure}[h!]
		\centering
		\begin{tikzpicture}
			\pgfdeclarelayer{background}
			\pgfdeclarelayer{foreground}
			\pgfsetlayers{background,main,foreground}
			
			\node[v:main] () at (-2,1){};
			\node () at (-2,1.3){$a_{1}$};
			
			\node[v:main] () at (2,1){};
			\node () at (2,1.3){$a_{2}$};
			
			\node[v:maingray] () at (0,-1){};
			\node[Amethyst] () at (0,-1.3){$y$};
			
			\node[v:mainemptygray] () at (0,1){};
			\node[Amethyst] () at (0,1.3){$x$};
			
			\node[v:mainempty] () at (-2,-1){};
			\node () at (-2,-1.3){$b_{2}$};
			
			\node[v:mainempty] () at (2,-1){};
			\node () at (2,-1.3){$b_{1}$};
			
			\node[myGreen] () at (-3,1.7){$P_{1}$};
			\node[myGreen] () at (-3,-1.7){$P_{2}$};
			
			\node[Amethyst] () at (-1,0.5){$C(B,F,S)$};
			
			\begin{pgfonlayer}{background}
				
				\draw[e:marker,BrightUbe] (-2,-1) -- (0,-1);
				\draw[e:marker,BrightUbe] (0,-1) -- (0,1);
				\draw[e:marker,BrightUbe] (0,1) -- (-2,1);
				\draw[e:marker,BrightUbe] (-2,1) -- (-2,-1);
				
				\draw[e:main,gray] (-2,-1) -- (2,-1);
				\draw[e:main,gray] (-2,1) -- (2,1);
				
				\draw[e:coloredborder] (2,1) -- (2,-1);
				\draw[e:coloredthin,color=BostonUniversityRed] (2,1) -- (2,-1);
				\draw[e:coloredborder] (-2,1) -- (-2,-1);
				\draw[e:coloredthin,color=BostonUniversityRed] (-2,1) -- (-2,-1);
				\draw[e:coloredborder] (0,1) -- (0,-1);
				\draw[e:coloredthin,color=BostonUniversityRed] (0,1) -- (0,-1);

				\draw[e:main,myGreen,bend left=45] (-2,1) to (-2.6, 1.7) to (-2,2.4);
				\draw[e:main,myGreen,decorate] (-2,2.4) -- (2,2.4);
				\draw[e:main,myGreen, bend left=45] (2,2.4) to (3.5,0.7) to (2,-1); 
				
				\draw[e:main,myGreen,bend right=45] (-2,-1) to (-2.6, -1.7) to (-2,-2.4);
				\draw[e:main,myGreen] (-2,-2.4) -- (2,-2.4);
				\draw[e:main,myGreen, bend right=45] (2,-2.4) to (3.5,-0.7) to (2,1); 
			\end{pgfonlayer}
			
		\end{tikzpicture}    
		\caption{A conformal bisubdivision of $K_{3,3}$ containing the $4$-cycle $\MLPCycle{B,F,S}$.} 
		\label{fig:K33bisubdivisioninReduction}
	\end{figure}
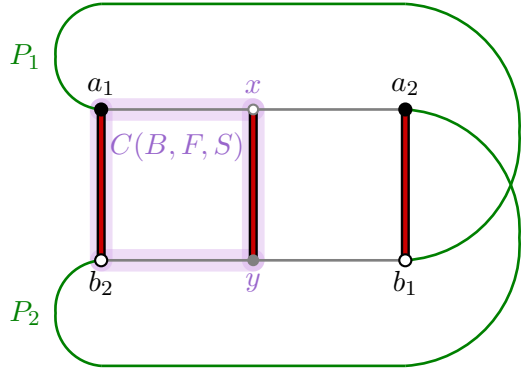
	
	For the reverse direction let $P_1$ and $P_2$ be the two alternating paths that form the conformal cross over $\MLPCycle{B,F,S}$ in $\MLPInstance{}{B,F,S}$ such that $P_1$ has $a_1$ as an endpoint while $P_2$ as $b_2$ as an endpoint.
	Then, in particular, $P_1$ and $P_2$ are of even length.
	Since $x$ and $y$ both are of degree exactly three in $\MLPInstance{}{B,F,S}$, $P_1+P_2$ must contain all neighbours of $x$ and $y$ and thus $S\subseteq\V{P_1+P_2}$.
	Since $P_1$ and $P_2$ form a conformal cross over $\MLPCycle{B,F,S}$, $H\coloneqq \MLPCycle{B,F,S}+P_1+P_2+a_2b_1$ is a conformal subgraph of $\MLPInstance{}{B,F,S}$.
	Indeed, $H$ is a bisubdivision of $K_{3,3}$ and thus there exists a perfect matching $M$ of $\MLPInstance{}{B,F,S}$ such that $a_1b_2,xy,a_2b_1\in M$ and $H$ is $M$-conformal.
	Let $P_1'\coloneqq a_1P_1b_1$ and $P_2'\coloneqq b_2P_2a_2$, then the $P_i'$ are disjoint and  internally $M'$-conformal paths.
	Moreover, $M'\setminus\Set{xy}$ is a perfect matching of $B$ that extends $F$, and thus $B$ is a $F$-instance of $2$-MLP for $\Brace{a_1,a_2}$ and $\Brace{b_1,b_2}$.
\end{proof}

Our goal is to reduce $2$-MLP to the detection of $K_{3,3}$-free braces.
For this we need to make sure that, in case we are dealing with a 'Yes'-instance, the bisubdivision of $K_{3,3}$ cannot vanish somehow.

\begin{lemma}\label{lemma:pathsovertightcuts}
	Let $B$ be a bipartite matching covered graph, $\CutG{B}{X}$ a non-trivial tight cut in $B$, and $M$ a perfect matching in $B$.
	If $P$ is an internally $M$-conformal path with both endpoints in $X$ but $\E{P}\cap\CutG{B}{X}\neq\emptyset$, then $\E{P}\cap\CutG{B}{X}\cap M\neq\emptyset$ and $\Abs{\E{P}\cap\CutG{B}{X}}=2$.
\end{lemma}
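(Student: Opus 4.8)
The plan is to decompose $P$ into its maximal subpaths lying in $\Complement{X}$ and to exploit the colour restrictions that tightness places on the edges of $\CutG{B}{X}$.

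First I would record the structural consequences of tightness. Since every perfect matching of $B$ meets $\CutG{B}{X}$ in exactly one edge, exactly one vertex of $X$ is matched outside $X$, so $\Abs{X}$ is odd; as $\Abs{\V{B}}$ is even, $\Abs{\Complement{X}}$ is odd too. Applying \cref{lemma:ktightmajorityminority} to $X$ and, since $\CutG{B}{X}=\CutG{B}{\Complement{X}}$, also to $\Complement{X}$, gives $\Neighbours{\Minority{X}}\subseteq\Majority{X}$ and $\Neighbours{\Minority{\Complement{X}}}\subseteq\Majority{\Complement{X}}$. A short count using $\Abs{V_1}=\Abs{V_2}$ shows that the majority of $X$ and the majority of $\Complement{X}$ lie in different colour classes, so after renaming the colour classes if necessary we may assume $\Majority{X}\subseteq V_1$ and $\Majority{\Complement{X}}\subseteq V_2$. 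Then every edge of $\CutG{B}{X}$ has its $X$-endpoint in $V_1$ and its $\Complement{X}$-endpoint in $V_2$: a minority vertex of $\Complement{X}$ lies in $V_1$ and has all of its neighbours inside $\Complement{X}$, hence cannot be the $\Complement{X}$-end of a cut edge, and symmetrically on the $X$-side.

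Next I would analyse $P$. Deleting its two endpoints from $P$ leaves a path whose vertices are all matched by $M$ along edges of $P$, so this inner path has an even number of vertices; hence $P$ has odd length and, $B$ being bipartite, its two endpoints lie in different colour classes (both in $X$ by hypothesis). Let $Q=u_1\dots u_\ell$ be any maximal subpath of $P$ contained in $\Complement{X}$. Because both endpoints of $P$ lie in $X$, on $P$ there is a vertex $u_0\in X$ immediately before $u_1$ and a vertex $u_{\ell+1}\in X$ immediately after $u_\ell$; the edges $u_0u_1$ and $u_\ell u_{\ell+1}$ are cut edges and are the only edges of $P$ with exactly one end in $\V{Q}$. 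By the colour restriction above $u_1,u_\ell\in V_2$, and since $Q$ is a subpath of a bipartite graph with both ends in $V_2$ it has an odd number of vertices. Therefore $M$ cannot saturate $\V{Q}$ using edges of $Q$, so some vertex of $\V{Q}$ is matched by $M$ to a vertex outside $\V{Q}$; as that vertex is an internal vertex of $P$ and hence matched by $M$ along an edge of $P$, one of the two boundary edges $u_0u_1,u_\ell u_{\ell+1}$ lies in $M$.

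Finally I would assemble the conclusion. Tightness gives $\Abs{M\cap\CutG{B}{X}}=1$, so $P$ contains at most one matching edge of $\CutG{B}{X}$; by the previous step every maximal $\Complement{X}$-subpath of $P$ contributes at least one, so $P$ has exactly one such subpath $Q$, which exists because $\E{P}\cap\CutG{B}{X}\neq\emptyset$ forces $P$ to meet $\Complement{X}$. Hence $\E{P}\cap\CutG{B}{X}\cap M\neq\emptyset$, and since every edge of $\E{P}\cap\CutG{B}{X}$ is a boundary edge of $Q$ while $Q$ has exactly two distinct boundary edges, both cut edges, we get $\Abs{\E{P}\cap\CutG{B}{X}}=2$. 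I expect the only delicate point to be the colour bookkeeping of the first step — verifying that $\Majority{X}$ and $\Majority{\Complement{X}}$ land in opposite colour classes and that this pins down the colours of the two ends of every cut edge; everything afterwards is a one-line parity observation applied to each $\Complement{X}$-excursion of $P$, and the degenerate case $\ell=1$ (a single vertex matched across one of its two boundary edges) needs no separate treatment.
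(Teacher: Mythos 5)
Your proof is correct, and it takes a genuinely different route from the paper's, so a comparison is worthwhile. You first pin down the colour structure of $\CutG{B}{X}$, then decompose $P$ into its maximal $\Complement{X}$-excursions and show by a parity count (each excursion has an odd number of vertices, all interior to $P$) that every excursion forces an $M$-edge of the cut; tightness then leaves room for exactly one excursion, from which both conclusions fall out at once. The paper argues more locally: for the first conclusion it takes the first two cut edges $e_1,e_2$ along $P$, supposes neither lies in $M$, and observes that the $\Complement{X}$-segment between their $\Complement{X}$-ends is $M$-conformal and hence of odd length, which forces both colour classes of $X$ to have neighbours across the cut, contradicting \cref{lemma:ktightmajorityminority}; it then proves $\Abs{\E{P}\cap\CutG{B}{X}}=2$ by a separate argument that truncates $P$ to a suitable suffix $a'P$ and re-applies the first claim to produce a second $M$-edge in the cut, contradicting tightness. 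Both proofs rest on the same two ingredients — the one-sided neighbourhood constraint from \cref{lemma:ktightmajorityminority} and the parity of an $M$-conformal $\Complement{X}$-segment — but your excursion decomposition delivers both conclusions in one pass, whereas the paper's second half is the more delicate step, requiring a careful choice of truncation point before reinvoking the first part. One small economy you could make: once you know every $X$-endpoint of a cut edge lies in $\Majority{X}$, bipartiteness already forces every $\Complement{X}$-endpoint into the opposite colour class, so the application of \cref{lemma:ktightmajorityminority} to $\Complement{X}$ and the accompanying comparison of majorities are redundant (though harmless).
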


\begin{proof}
	Let $a\in V_1$ and $b\in V_2$ be the two endpoints of $P$ and let us traverse $P$ from $a$ towards $b$.
	Let $e_1$ be the first edge of $\E{P}\cap\CutG{B}{X}$ we encounter this way and let $e_2$ be the second edge.
	Moreover let $x_i$ be the endpoint of $e_i$ in $\Complement{X}$ and suppose $\Set{e_1,e_2}\cap M=\emptyset$.
	By choice of $e_1$ and $e_2$ the path $x_1Px_2$ lies completely in $\Complement{X}$ and is $M$-conformal.
	Thus $x_1Px_2$ must be of odd length and therefore $x_1$ and $x_2$ must be of different colour.
	Hence both $X\cap V_1$ and $X\cap V_2$ must have a neighbour in $\Complement{X}$, this, however, contradicts \cref{lemma:ktightmajorityminority}, and thus one of the two edges must be an edge of $M$.
	
	Suppose $P$ has more than two edges in $\CutG{B}{X}$.
	If the majority of $X$ is in $V_1$, then the second endpoint, say $y_2$, of $e_1$ must be a vertex of $V_1$ as well and $e_2\in M$.
	In this case let $a'\coloneqq y_2$.
	If on the other hand the majority of $X$ is in $V_2$, then $y_2\in V_2$ and thus $e_1\in M$ implying $e_2\notin M$.
	Hence $y_2$ must be covered by an edge $e'\in M\cap\E{P}$ with second endpoint $a'$.
	In either case, $a'P$ is an internally $M$-conformal path with both endpoints in $X$ and an edge in $\CutG{B}{X}$.
	By the arguments above, this means that $\E{a'P}\cap\CutG{B}{X}\cap M\neq\emptyset$, but this means $\Abs{M\cap\CutG{B}{X}}\geq 2$ contradicting $\CutG{B}{X}$ being a tight cut.
	Hence $\Abs{\E{P}\cap\CutG{B}{X}}=2$.
\end{proof}

\begin{lemma}\label{lemma:conformaldomino}
	Let $B$ be a bipartite graph with a perfect matching and $H\subseteq B$ a conformal subgraph $B$ such that $\Vi{1}{H}=\Set{a_1,a_2,y}$, $\Vi{2}{H}=\Set{b_1,b_2,x}$, $\E{H}=\Set{a_1b_2,a_2b_1,xy,a_1x,a_2x,b_1y,b_2y}$, and $\DegG{B}{x}=\DegG{B}{y}=3$.
	Let $C\coloneqq a_1xyb_2a_1$, then there is a conformal cross over $C$ in $B$ if and only if $B$ has a brace $J$ such that $H\subseteq J$ and $J$ contains $K_{3,3}$.
\end{lemma}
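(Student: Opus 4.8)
The plan is to reduce the statement to the case where $B$ is a brace, in which it is exactly \cref{lemma:goodcrossesmeanK33} combined with \cref{prop:4cycleK33}, and then to transport the relevant object through the tight cut decomposition of $B$. First note that $H$ is the \emph{domino}: one has $H=C+R$ with $R$ the path $xa_2b_1y$, internally disjoint from $C$, whose endpoints are the endpoints of the edge $xy\in\E{C}$; in this guise the lemma is the brace-theoretic reformulation of \cref{reducefrom2toK33} that makes \cref{cor:pfaffianalg} applicable. We may assume $B$ to be matching covered, since $H$ and any conformal cross over $C$ consist of admissible edges and hence lie in the matching covered component of $B$ that also carries every brace of $B$ containing $H$. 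For the backward direction, suppose $J$ is a brace of $B$ with $H\subseteq J$ (hence $C\subseteq J$) that contains $K_{3,3}$. By \cref{prop:4cycleK33} applied to $J$, the cycle $C$ is a subgraph of a conformal bisubdivision $L$ of $K_{3,3}$ in $J$; since $J$ arises from $B$ by tight cut contractions that never identify the vertices of $C\subseteq\V{H}$, we may undo these contractions one at a time, rerouting the affected branch paths of $L$ through the matching covered shores that are blown up, to obtain a conformal bisubdivision $L'$ of $K_{3,3}$ in $B$ with $C\subseteq L'$. The construction in the first part of the proof of \cref{lemma:goodcrossesmeanK33}, which uses only that $L'$ is conformal and not that the host graph is a brace, then yields a conformal cross over $C$ in $B$.

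For the forward direction, fix a conformal cross over $C$ in $B$ with paths $P_1,P_2$ and a perfect matching $M$ for which $C+P_1+P_2$ is conformal; since four pegs can lie on $C$ in the required cyclic order only in one way, $P_1$ joins $a_1$ to $y$ and $P_2$ joins $x$ to $b_2$. I would first show that every vertex of $H$ has degree at least three in $B$: the edge of $P_2$ at $x$ is neither $xa_1$ nor $xy$, since $a_1$ and $y$ lie on $C$ but are not ends of $P_2$, so it is $xa_2$ and $a_2\in\V{P_2}$; symmetrically $yb_1\in\E{P_1}$ and $b_1\in\V{P_1}$; the edges of $P_1$ at $a_1$ and of $P_2$ at $b_2$ must leave $C$, so $\DegG{B}{a_1}\geq 3$ and $\DegG{B}{b_2}\geq 3$; and the second $P_2$-edge at the internal vertex $a_2$ is not $a_2b_1$ (else $b_1\in\V{P_1}\cap\V{P_2}$), whence $\DegG{B}{a_2}\geq 3$, and likewise $\DegG{B}{b_1}\geq 3$. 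The main step is then to prove that no non-trivial tight cut of $B$ — nor of any contraction of $B$ that leaves $\V{H}$ intact, whose tight cuts restrict to tight cuts of $B$ on $\V{H}$ — separates $\V{H}$. Here one uses that in a bipartite matching covered graph every tight cut shore is odd, so \cref{lemma:ktightmajorityminority} applies: were some non-trivial $\CutG{B}{X}$ to split $\V{H}$, then, as $\Set{xa_1,xa_2,xy}$ are all edges at $x$ and $\Set{yb_1,yb_2,xy}$ all edges at $y$, placing $x$ or $y$ in the minority of its shore would push all of $\V{H}$ to one side, so $x$ and $y$ lie in the majorities of their shores; a case analysis on the positions of $a_1,a_2,b_1,b_2$, using that each of them has degree at least three and that $\Abs{M\cap\CutG{B}{X}}=1$, then eliminates the remaining configurations. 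Recursing always into the side containing $\V{H}$, the tight cut decomposition of $B$ — which consists of braces only, $B$ being bipartite — thus reaches a brace $J$ with $\V{H}\subseteq\V{J}$, and since tight cut contractions never delete edges, $H\subseteq J$.

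It remains to carry the conformal cross down to $J$. Each contraction performed above contracts a shore $\Complement{X}$ with $\V{H}\subseteq X$, so $\V{C}\subseteq X$ and the endpoints of $P_1$ and $P_2$ lie in $X$; applying \cref{lemma:pathsovertightcuts} to the internally $M$-conformal subpaths of $P_1$ and $P_2$ shows that a path which leaves $X$ crosses $\CutG{B}{X}$ exactly twice and contains the unique matching edge of the cut, so at most one of $P_1,P_2$ leaves $X$, and contracting $\Complement{X}$ collapses that single excursion to the new vertex while keeping the configuration a conformal cross over $C$ (same pegs, still pairwise disjoint, still internally disjoint from $C$). Iterating over the decomposition, $J$ carries a conformal cross over $C$; by \cref{lemma:goodcrossesmeanK33} applied to the brace $J$ the cycle $C$ then lies in a conformal bisubdivision of $K_{3,3}$ in $J$, so $J$ contains $K_{3,3}$. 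Together with the previous paragraph this produces the brace demanded by the lemma.

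The step I expect to be the main obstacle is the case analysis showing that the domino $H$ is never broken up by the tight cut decomposition: the degree-three hypotheses on $x$ and $y$, reinforced by the degree bounds on $a_1,a_2,b_1,b_2$ forced by the cross in the first step, are precisely what drives it, but verifying all possible positions of a separating tight cut against \cref{lemma:ktightmajorityminority} and the tightness condition is the delicate part. Transporting the cross into $J$ and lifting the $K_{3,3}$-bisubdivision out of $J$ should be routine once \cref{lemma:pathsovertightcuts} and the uniqueness of the tight cut decomposition are in hand.
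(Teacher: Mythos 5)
Your high-level plan matches the paper's: pass to the tight cut decomposition, invoke \cref{prop:4cycleK33} and \cref{lemma:goodcrossesmeanK33}, and carry or inflate the relevant objects across contractions. The backward direction agrees with the paper, which is similarly terse about rerouting branch paths of the bisubdivision through blown-up shores when the contractions are undone.

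The forward direction has a concrete gap. You stake the argument on the claim that no non-trivial tight cut of $B$ separates $\V{H}$, and support it by the assertion that ``placing $x$ or $y$ in the minority of its shore would push all of $\V{H}$ to one side''. That implication is false: by \cref{lemma:ktightmajorityminority}, putting $x$ in the minority of a shore $X$ forces only $\Set{a_1,a_2,y}\subseteq X$; since $y\in V_1$ then lies in the \emph{majority} of $X$, nothing keeps $b_1$ and $b_2$ out of $\Complement{X}$. So ``$x$ and $y$ lie in the majorities of their shores'' is not established, and the degree-based case analysis you defer has no hope of closing from the theta graph $H$ alone, which simply does not carry enough matching-theoretic structure to forbid a tight cut from cutting into it. The paper avoids this by working with the conformal $K_{3,3}$-bisubdivision $H'=C+P_1+P_2+a_2b_1$ arising from the cross (an object you in fact build in your first paragraph and then set aside). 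It takes a minimal counterexample and observes that if $\CutG{B}{X}$ meets $H'$ non-trivially then $\CutG{H'}{X}$ is a tight cut of the bisubdivision $H'$, whose unique non-planar brace must retain all six branch vertices, which forces at least five vertices of $\V{H}$ onto one shore; \cref{lemma:pathsovertightcuts} then controls how $P_1,P_2$ cross, and contracting the appropriate shore preserves a conformal $K_{3,3}$-bisubdivision containing $H$. It is the bisubdivision, not $H$ or the cross on its own, that is robust under tight cut contraction, so the correct order is to carry $H'$ down to a brace rather than first locate $H$ in a brace and only afterwards re-derive a cross there.
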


\begin{proof}
	Let $S\coloneqq\Set{a_1,a_2,b_2,b_2}$.
	Suppose there is a conformal cross over $C$ in $B$.
	This case starts out similar to the reverse direction of the previous lemma.
	Let $P_1$ and $P_2$ be the two alternating paths that form the conformal cross over $C$ in $B$ such that $P_1$ has $a_1$ as an endpoint while $P_2$ as $b_2$ as an endpoint.
	Then, in particular, $P_1$ and $P_2$ are of even length.
	Since $x$ and $y$ both are of degree exactly three in $B$ by our assumption, $P_1+P_2$ must contain all neighbours of $x$ and $y$ and thus $S\subseteq\V{P_1+P_2}$.
	Since $P_1$ and $P_2$ form a conformal cross over $C$, $H'\coloneqq C+P_1+P_2+a_2b_1$ is a conformal subgraph of $B$.
	Indeed, $H'$ is a bisubdivision of $K_{3,3}$
	
	Let us choose $B$ to be a minimal counterexample.
	In case $B$ is brace, it must contain $K_{3,3}$ since it contains a conformal bisubdivision of $K_{3,3}$ and thus we are done.
	Hence we may assume that there is a non-trivial tight cut $\CutG{B}{X}$ in $B$.
	If $X$, or $\Complement{X}$, is disjoint from $H'$, one of the two tight cut contractions of $\CutG{B}{X}$, let us call it $B'$, still contains $H'$ as a conformal subgraph and by choice of $B$, the assertion holds true for $B'$ and we find a brace $J$ of $B'$ as desired.
	By \cref{thm:tightcutuniqueness}, this means that $J$ is a brace of $B$ and thus $B$ cannot be a counterexample.
	Hence both $X$ and $\Complement{X}$ must contain vertices of $H'$.
	Observe that $\CutG{H'}{X}$ is also a tight cut of $H'$.
	Now $H'$ has exactly one $K_{3,3}$ and, possibly, a bunch of $C_4$ as its list of braces.
	Moreover, the brace $J'$ of $H'$ that is isomorphic to $K_{3,3}$ must contain all six degree three vertices of $H'$, or remainders of them.
	Indeed, this means that either $X$ or $\Complement{X}$ contains at least five vertices of $H$.
	By \cref{lemma:pathsovertightcuts} this means that at most one of the two paths $P_1$ and $P_2$ may have an edge in $\CutG{B}{X}$.
	If none of the two paths has an edge in $\CutG{B}{X}$, then one of the two tight cut contractions of $\CutG{B}{X}$ contains all of $H'$ as a conformal subgraph, contradicting $B$ being a minimal counterexample as before.
	Hence we may assume $\E{P_1}\cap\CutG{B}{X}\neq\emptyset$.
	First, assume $\Abs{\E{P_1}\cap\CutG{B}{X}}\geq 2$.
	We claim that both endpoints of $P_1$ belong to one of the two shores, say $X$ and $\Abs{\E{P_1}\cap\CutG{B}{X}}= 2$.
	To see this let $Q_1,\dots,Q_{\ell}$, $\ell\geq 2$ be the components of $P_1-\CutG{B}{X}$ with vertex sets in $\Complement{X}$.
	By \cref{lemma:ktightmajorityminority} each $Q_j$ must have both endpoints in the same colour class and thus is of even length.
	Thus for each $Q_j$ there exists an edge in $M\cap\CutG{B}{X}\cap\E{P_1}$ covering an endpoint of $Q_j$.
	Consequently, with $\ell\geq 2$ this contradicts $\CutG{B}{X}$ being tight.
	Hence  $\Abs{\E{P_1}\cap\CutG{B}{X}}\geq 2$.
	However, if $P_1$ would have an endpoint in both $X$ and $\Complement{X}$, then $\Abs{\CutG{B}{X}}$ would be odd.
	Also note that in case both endpoints of $P_1$ are in $X$, then all of $H$ must be in $X$ since otherwise, we could choose a perfect matching of $H'$ with at least two edges in $\CutG{B}{X}$.
	Hence after contracting the shore that does not contain an endpoint of $P_1$, we obtain a matching covered graph that contains a conformal $K_{3,3}$-bisubdivision with $H$ as a subgraph.
	In case $\Abs{\E{P_1}\cap\CutG{B}{X}}=1$ exactly one endpoint of $P_1$ must be contained in, say, $\Complement{X}$, while the rest of $H$ belongs to $X$.
	Again, after contracting the shore that does not contain an endpoint of $P_1$ we obtain a matching covered graph that contains a conformal $K_{3,3}$-bisubdivision with $H$ as a subgraph.
	Hence in neither case $B$ can be a minimal counterexample, and thus no such $B$ can exist.
	
	The reverse follows among similar lines.
	If there is a $K_{3,3}$-containing brace $J$ of $B$ such that $H\subseteq J$, then, by \cref{prop:4cycleK33} there must be a conformal bisubdivision $L$ of $K_{3,3}$ in $J$ that contains $C$ as a subgraph.
	Indeed, as we have seen before, we can choose $L$ such that $H\subseteq L$ and thus there must be a conformal bisubdivision $L'$ of $K_{3,3}$ in $B$ such that $H\subseteq L'$.
	According to \cref{lemma:goodcrossesmeanK33}, this means that there is a conformal cross over $C$ in $B$.
\end{proof}

\begin{lemma}[\cite{robertson1999permanents}]\label{lemma:tightcutdecomposition}
	There exists an algorithm that, given a bipartite and matching covered graph $B$ as input, computes a list of all braces of $B$ in time $\Fkt{\mathcal{O}}{\Abs{\V{B}}\Abs{\E{B}}}$.
\end{lemma}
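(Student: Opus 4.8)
The statement is the bipartite case of the tight cut decomposition algorithm of \cite{robertson1999permanents} (see also \cite{mccuaig2004polya}), and I would present the proof along the following lines. By Lov\'asz' uniqueness theorem for tight cut decompositions (\cite{lovasz1987matching}, cf.\ \cref{thm:tightcutuniqueness}), the braces of $B$ are precisely the graphs appearing at the leaves of \emph{any} decomposition tree obtained by repeatedly replacing a bipartite matching covered graph $H$ that admits a non-trivial tight cut $\CutG{H}{X}$ by the two tight cut contractions of $\CutG{H}{X}$ (contracting $\V{H}\setminus X$ and contracting $X$), and the leaf-list is independent of the choices made. Hence it suffices to supply (i) a subroutine $\mathsf{FindCut}(H)$ that either returns a non-trivial tight cut of a bipartite matching covered $H$ or correctly reports that $H$ is a brace, and (ii) an amortised analysis showing that the resulting recursion runs in total time $\Fkt{\mathcal{O}}{\Abs{\V{B}}\Abs{\E{B}}}$.

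For (i) I would first compute a single perfect matching $M$ of $B$ with the Hopcroft--Karp algorithm; since the unique $M$-edge of a tight cut is retained by the corresponding contraction, every graph $H$ arising in the recursion inherits a perfect matching and no further matching computation is ever needed. Next I would characterise the non-trivial tight cuts of $H$ combinatorially. By \cref{lemma:ktightmajorityminority} such a cut is $\CutG{H}{X}$ with $\Abs{X}$ odd, one colour class $\Vi{i}{H}$ the majority, $\Abs{X\cap\Vi{i}{H}}=\Abs{X\cap\Vi{3-i}{H}}+1$, and $\NeighboursG{H}{X\cap\Vi{3-i}{H}}\subseteq X\cap\Vi{i}{H}$. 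Since $H$ is matching covered, every non-empty $Y\subsetneq\Vi{3-i}{H}$ satisfies $\Abs{\NeighboursG{H}{Y}}\geq\Abs{Y}+1$ (otherwise every perfect matching of $H$ would match $Y\cup\NeighboursG{H}{Y}$ internally, so no edge would leave this set and it would be a connected component of $H$, contradicting connectedness together with $Y\subsetneq\Vi{3-i}{H}$); combined with $\Abs{X\cap\Vi{i}{H}}=\Abs{Y}+1$ this forces $X\cap\Vi{i}{H}=\NeighboursG{H}{Y}$. Consequently the non-trivial tight cuts of $H$ are exactly the cuts $\CutG{H}{Y\cup\NeighboursG{H}{Y}}$ for non-empty $Y\subsetneq\Vi{1}{H}$ or $Y\subsetneq\Vi{2}{H}$ with $\Abs{\NeighboursG{H}{Y}}=\Abs{Y}+1$ and $\Abs{\V{H}}-2\Abs{Y}-1\geq 2$. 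Finding such a $Y$, or certifying that none exists, is a shortest-neighbourhood-deficiency computation which — exploiting the perfect matching already in hand rather than recomputing a flow from scratch — can be carried out by a single augmenting-path style search of $H$ in time $\Fkt{\mathcal{O}}{\Abs{\E{H}}}$; if it fails then the neighbourhood condition of part (ii) of \cref{thm:bipartiteextendibility} holds in both colour classes (and the base case $H=C_4$ is detected directly), so $H$ is $2$-extendible and therefore a brace by \cref{thm:braces}.

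For (ii), note that each contraction replaces $H$ on $m$ vertices by two graphs $H_1,H_2$ with $\Abs{\V{H_1}}+\Abs{\V{H_2}}=m+2$ and $\Abs{\V{H_i}}\geq 4$; since every leaf brace has at least four vertices, a short induction bounds the number of leaves — hence the number of contractions and the total number of nodes of the decomposition tree — by $\Fkt{\mathcal{O}}{\Abs{\V{B}}}$. As edges only vanish or merge under contraction, each edge of $B$ is present in at most $\Fkt{\mathcal{O}}{\Abs{\V{B}}}$ of the graphs on its root-to-leaf path, so $\sum_{H}\Abs{\E{H}}=\Fkt{\mathcal{O}}{\Abs{\V{B}}\Abs{\E{B}}}$; together with the per-node cost $\Fkt{\mathcal{O}}{\Abs{\E{H}}}$ from (i) this yields the claimed bound. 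I expect the genuine obstacle to sit exactly at the interface of (i) and (ii): making $\mathsf{FindCut}$ run in \emph{linear} time per call when it is handed the inherited (rather than a freshly recomputed) perfect matching, so that the overall cost does not degrade to $\Fkt{\mathcal{O}}{\Abs{\V{B}}^{2}\Abs{\E{B}}}$. This careful bookkeeping is precisely the part carried out in detail in \cite{robertson1999permanents} (and independently in \cite{mccuaig2004polya}), which I would invoke for the exact implementation rather than reconstruct from scratch.
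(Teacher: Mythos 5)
The paper offers no proof here; \cref{lemma:tightcutdecomposition} is stated purely as a citation to \cite{robertson1999permanents}. Your reconstruction is structurally the right one for that reference: reduce to repeatedly locating a non-trivial tight cut, invoke Lov\'asz's uniqueness of the tight cut decomposition to justify that greedy choices produce the correct list of braces, and bound the total work by an amortisation over the decomposition tree. Your reformulation of non-trivial tight cuts via \cref{lemma:ktightmajorityminority} is correct, and so is the rephrasing in terms of minimum-surplus sets $Y$ with $\Abs{\NeighboursG{H}{Y}}=\Abs{Y}+1$ in one colour class (the argument that matching-coveredness forces surplus $\geq 1$ for proper subsets uses admissibility of edges rather than mere connectedness, but the conclusion stands). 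Your counting of the tree -- $\Fkt{\mathcal{O}}{\Abs{\V{B}}}$ nodes since each contraction adds exactly two vertices and every brace has at least four, and $\sum_H\Abs{\E{H}}=\Fkt{\mathcal{O}}{\Abs{\V{B}}\Abs{\E{B}}}$ since the depth is $\Fkt{\mathcal{O}}{\Abs{\V{B}}}$ -- is also fine.

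The genuine gap is exactly where you flag it: the claim that a ``single augmenting-path style search'' finds a minimum-surplus set $Y$ with $\Abs{\NeighboursG{H}{Y}}=\Abs{Y}+1$, or certifies $2$-extendibility, in $\Fkt{\mathcal{O}}{\Abs{\E{H}}}$ time is not established. Detecting whether a bipartite matching covered $H$ admits a non-trivial tight cut amounts, via the $M$-direction $\DirM{H}{M}$, to detecting whether that strongly connected digraph has a strong articulation point; the naive vertex-by-vertex test costs $\Fkt{\mathcal{O}}{\Abs{\V{H}}\Abs{\E{H}}}$ per call, which would blow the total budget up to $\Fkt{\mathcal{O}}{\Abs{\V{B}}^2\Abs{\E{B}}}$, and a guaranteed linear-time single-call bound is not something one gets ``for free'' from having the inherited matching in hand. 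What the cited sources actually do is more global: they exploit the elementary (Dulmage--Mendelsohn) decomposition and charge the search cost across several levels of the recursion rather than obtaining a per-node linear bound. So your proposal is a correct high-level account, but as written it does not substantiate the running time, and the missing piece is not a routine implementation detail but the substantive bookkeeping that makes the $\Fkt{\mathcal{O}}{\Abs{\V{B}}\Abs{\E{B}}}$ bound go through; deferring to \cite{robertson1999permanents,mccuaig2004polya} at that point, as you do, is the honest move.
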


With this, we are finally ready for the proof of \cref{prop:2linkage}

\begin{proof}[Proof of \Cref{prop:2linkage}]
	Let $B$ be a bipartite graph with a perfect matching and $S\coloneqq\Set{a_1,a_2,b_1,b_2}$ be the set of terminals we received as input for the $2$-MLP.
	By the discussion at the start of this section we only have to check for each of the at most $\Abs{\V{B}}^2$ $S$-covers $F$ whether they are extendible and whether $B$ is an $F$-instance of $2$-MLP for $\Brace{a_1,a_2}$ and $\Brace{b_1,b_2}$.
	To check whether $F$ is extendable we have to check whether $B-\V{F}$ has a perfect matching which can be done by the Hopcroft-Karp algorithm in time $\Fkt{\mathcal{O}}{n^{\frac{5}{2}}}$ \cite{hopcroft1973n}.
	So we may assume $F$ to be extendible.
	
	In case $F=\Set{a_1b_1,a_2b_2}$ we can stop immediately and return the answer 'Yes'.
	
	If $\Abs{F\cap \Set{a_1b_1,a_2b_2}}=1$ we can reduce the problem of finding our $2$-linkage to the reachability problem in digraphs as follows.
	Without loss of generality let us assume $a_1a_2\in F$ and let $M$ be a perfect matching of $B$ that extends $F$.
	Moreover, let $e_a\in M$ be the edge covering $a_2$ while $e_b$ is the edge of $M$ covering $b_2$.
	There exists a perfect matching $M'$ of $B$ that extends $F$ such that there is an internally $M'$-conformal path with endpoints $a_2$ and $b_2$ in $B-a_1-b_1$ if and only if there is an internally $M$-conformal path $P$ with endpoints $a_2$ and $b_2$ in $B-a_1-b_1$ by \cref{thm:bipartiteextendibility}.
	Testing whether such a path exists is equivalent to the reachability problem in digraphs, i.\@e.\@ the problem of deciding whether there exists a directed $s$-$t$-path for given vertices $s$ and $t$.
	Hence this can be done in polynomial time.
	
	Thus we may assume $F\cap\Set{a_1b_1,a_2b_2}=\emptyset$.
	By \cref{reducefrom2toK33} we can translate the problem into the decision problem, whether there is a conformal cross over the $4$-cycle $\MLPCycle{B,F,S}$ in $\MLPInstance{}{B,F,S}$.
	Let $H$ be the subgraph of $\MLPInstance{}{B,F,S}$ induced by $S\cup\Set{x,y}$, then \cref{lemma:conformaldomino} allows us to return 'Yes' if and only if $\MLPInstance{}{B,F,S}$ has a $K_{3,3}$-containing brace $J$ with $H\subseteq J$.
	\Cref{lemma:tightcutdecomposition} finds all braces of $\MLPInstance{}{B,F,S}$ in time $\Fkt{\mathcal{O}}{\Abs{\V{B}}^3}$ and if there is a brace $J$ with $H\subseteq J$ we can use \cref{cor:pfaffianalg} to decide in time $\Fkt{\mathcal{O}}{\Abs{\V{B}}^3}$ whether $J$ is $K_{3,3}$-free.
\end{proof}

\section{Conclusion}\label{sec:conclusion}

This paper is part of an ongoing series of papers which tries to extend the graph minors theory of Robertson and Seymour to matching minors in bipartite graphs.
While matching minors have already proven to be a powerful tool in the resolution of the Pfaffian Recognition Problem \cite{mccuaig2004polya,robertson1999permanents}, so far few attempts have been made to further understand the properties of matching minors in bipartite graphs.
In the previous paper of the series \cite{giannopoulou2021excluding} it was proven that the exclusion of a planar matching minor in a bipartite graph leads to a relatively well behaved tree-structure, similar to how excluding a planar minor leads to bounded treewidth.
In the same paper it was also shown that excluding a planar matching minor leads to an $\XP$-algorithm for the bipartite $t$-DAPP.
In the context of these findings, this paper is a consequential next step towards a matching theoretic version of the Flat Wall Theorem, which is the content of the next paper in line.
Our hope is that positive results like these spark an increase of interest in the topic and act as a base for deeper structural insights for bipartite matching covered graphs.

\subsection{Relations with Digraphs}

As mentioned in the introduction, there exists a tight interaction of digraphs and bipartite graphs with perfect matchings.
This becomes particularly obvious by considering the following operation:
Let $B$ be a bipartite graph with a perfect matching $M$.
Then let $\vec{B}$ be the orientation of $B$ obtained by orienting every edge of $B$ from $V_1$ to $V_2$.
Finally let $\DirM{B}{M}$ be the digraph obtained from $\vec{B}$ by contracting every edge of $M$.
See \cref{fig:Mdirection} for an illustration.

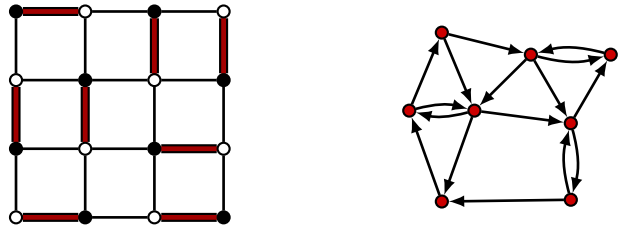
\begin{figure}[h!]
	\begin{center}
		\begin{tikzpicture}[scale=0.7]
			
			\pgfdeclarelayer{background}
			\pgfdeclarelayer{foreground}
			
			\pgfsetlayers{background,main,foreground}

			\begin{pgfonlayer}{main}
				
				\node (C) [] {};
				
				\node (C1) [v:ghost, position=180:40mm from C] {};
				
				\node (C2) [v:ghost, position=0:0mm from C] {};
				
				\node (C3) [v:ghost, position=0:40mm from C] {};

				
				
				\node (a) [v:main,position=0:0mm from C1] {};
				\node (ain) [v:ghost,position=0:2.25mm from a] {};
				
				\node (b) [v:main,position=0:13mm from a,fill=white] {};
				\node (e) [v:main,position=270:13mm from a,fill=white] {};
				
				\node (c) [v:main,position=0:13mm from b] {};
				\node (cin) [v:ghost,position=270:2.25mm from c] {};
				\node (f) [v:main,position=270:13mm from b] {};
				\node (fin) [v:ghost,position=270:2.25mm from f] {};
				\node (i) [v:main,position=270:13mm from e] {};
				\node (iin) [v:ghost,position=90:2.25mm from i] {};
				
				\node (d) [v:main,position=0:13mm from c,fill=white] {};
				\node (g) [v:main,position=270:13mm from c,fill=white] {};
				\node (j) [v:main,position=0:13mm from i,fill=white] {};
				\node (m) [v:main,position=270:13mm from i,fill=white] {};
				
				\node (h) [v:main,position=0:13mm from g] {};
				\node (hin) [v:ghost,position=90:2.25mm from h] {};
				\node (k) [v:main,position=270:13mm from g] {};
				\node (kin) [v:ghost,position=0:2.25mm from k] {};
				\node (n) [v:main,position=270:13mm from j] {};
				\node (nin) [v:ghost,position=180:2.25mm from n] {};
				
				\node (l) [v:main,position=0:13mm from k,fill=white] {};
				\node (o) [v:main,position=270:13mm from k,fill=white] {};
				
				\node (p) [v:main,position=0:13mm from o] {};
				\node (pin) [v:ghost,position=180:2.25mm from p] {};
				
				

				
				
				\node (ab) [v:main,fill=BostonUniversityRed,position=270:4mm from C3] {};
				\node (ei) [v:main,fill=BostonUniversityRed,position=247.5:16mm from ab] {};
				\node (fj) [v:main,fill=BostonUniversityRed,position=292.5:16mm from ab] {};
				\node (mn) [v:main,fill=BostonUniversityRed,position=270:32mm from ab] {};
				\node (cg) [v:main,fill=BostonUniversityRed,position=45:15mm from fj] {};
				\node (dh) [v:main,fill=BostonUniversityRed,position=0:15mm from cg] {};
				\node (kl) [v:main,fill=BostonUniversityRed,position=300:15mm from cg] {};
				\node (op) [v:main,fill=BostonUniversityRed,position=270:14.5mm from kl] {};
				
				

				
				
				\draw (b) [e:main] to (c);
				\draw (b) [e:main] to (f);
				
				\draw (e) [e:main] to (a);
				\draw (e) [e:main] to (f);
				
				\draw (d) [e:main] to (c);
				
				\draw (g) [e:main] to (f);
				\draw (g) [e:main] to (h);
				\draw (g) [e:main] to (k);
				
				\draw (j) [e:main] to (i);
				\draw (j) [e:main] to (k);
				\draw (j) [e:main] to (n);
				
				\draw (m) [e:main] to (i);
				
				\draw (l) [e:main] to (h);
				\draw (l) [e:main] to (p);
				
				\draw (o) [e:main] to (k);
				\draw (o) [e:main] to (n);

				
				
				
				
				\draw (ab) [e:main,->] to (fj);
				\draw (ab) [e:main,->] to (cg);
				
				\draw (ei) [e:main,->,bend left=15] to (fj);
				\draw (ei) [e:main,->] to (ab);
				
				\draw (fj) [e:main,->,bend left=15] to (ei);
				\draw (fj) [e:main,->] to (mn);
				\draw (fj) [e:main,->] to (kl);
				
				\draw (mn) [e:main,->] to (ei);
				
				\draw (cg) [e:main,->,bend right=15] to (dh);
				\draw (cg) [e:main,->] to (fj);
				\draw (cg) [e:main,->] to (kl);
				
				\draw (dh) [e:main,->,bend right=15] to (cg);
				
				\draw (kl) [e:main,->] to (dh);
				\draw (kl) [e:main,->,bend left=15] to (op);
				
				\draw (op) [e:main,->] to (mn);
				\draw (op) [e:main,->,bend left=15] to (kl);
				
				
				
			\end{pgfonlayer}
			

			\begin{pgfonlayer}{background}
				
				\draw (b) [e:coloredborder] to (a);
				\draw (e) [e:coloredborder] to (i);
				\draw (d) [e:coloredborder] to (h);
				\draw (g) [e:coloredborder] to (c);
				\draw (j) [e:coloredborder] to (f);
				\draw (m) [e:coloredborder] to (n);
				\draw (l) [e:coloredborder] to (k);
				\draw (o) [e:coloredborder] to (p);
				
				\draw (b) [e:colored,color=BostonUniversityRed] to (a);
				\draw (e) [e:colored,color=BostonUniversityRed] to (i);
				\draw (d) [e:colored,color=BostonUniversityRed] to (h);
				\draw (g) [e:colored,color=BostonUniversityRed] to (c);
				\draw (j) [e:colored,color=BostonUniversityRed] to (f);
				\draw (m) [e:colored,color=BostonUniversityRed] to (n);
				\draw (l) [e:colored,color=BostonUniversityRed] to (k);
				\draw (o) [e:colored,color=BostonUniversityRed] to (p);
				
			\end{pgfonlayer}	
			
			\begin{pgfonlayer}{foreground}

			\end{pgfonlayer}
		\end{tikzpicture}
	\end{center}
	\caption{Left: A bipartite graph $B$ with a perfect matching $M$. Right: The arising $M$-direction $\DirM{B}{M}$.}
	\label{fig:Mdirection}
\end{figure}

Clearly, once the colour classes $V_1$ and $V_2$ are uniquely identified with the direction of edges (in our case we say that edges go from $V_1$ to $V_2$) then the operation of forming the $M$-direction is invertible.
Indeed, one can obtain from every digraph a uniquely determined bipartite graph with a perfect matching and vice versa.
Moreover as it turns out, several structural properties of $B$ are reflect in $\DirM{B}{M}$ and vice versa.
These properties range from simple observations like `$B$ is connected if and only if $\DirM{B}{M}$ is weakly connected.' up to 'For each bipartite graph $J$ with a perfect matching there exists a unique family $\mathcal{J}$ of digraphs such that $B$ contains $J$ as a matching minor if and only if $\DirM{B}{M}$ contains a member of $\mathcal{J}$ as a butterfly minor.'.
This is proven and discussed in \cite{giannopoulou2021excluding}.
The second property in particular allows us to combine the theory of butterfly minors in digraphs and the theory of matching minors in bipartite graphs into one unified theory.
This angle of viewing digraphs and bipartite graphs with perfect matchings as related objects gives us a tool to have some sort of control over certain infinite antichains of the butterfly minor relation and, it also gives us a way to use the matching theoretic results of this paper to obtain structural results on digraphs while avoiding such problems as the absence of a directed Two Paths Theorem.

To see how this would be possible consider the example of the Directed $2$-Disjoint Paths Problem.
This problem is $\NP$-complete and by using the $M$-direction and its inverse one can easily see that the problem of finding two disjoint directed paths between given terminal vertices in a digraph can be translated into asking whether there exist disjoint internally $M$-conformal paths between given terminal vertices in the corresponding bipartite graph.
Hence, if we only slightly alter the definition of $t$-DAPP by not asking for the existence of a perfect matching for which a solution exists, but by insisting on the question whether there is solution for this particular perfect matching, the problem suddenly becomes $\NP$-hard.
So from the additional flexibility of being able to change the perfect matching possible tools can arise to deal with problems which are intractable on digraphs otherwise.
It would be interesting to see where the limits of this additional flexibility lie.

\bibliographystyle{alphaurl}
\bibliography{literature}

\end{document}